\documentclass[twoside,a4paper,11pt]{article}
\usepackage{amsthm,amsmath,amssymb,amscd,amsfonts,stmaryrd}
\usepackage{mathrsfs}
\usepackage[margin=25mm]{geometry}
\usepackage{bigints}
\usepackage{mathtools}
\usepackage{enumitem}
\usepackage{comment}
\usepackage{fancyhdr}
\usepackage{latexsym}
\usepackage{amsfonts}
\usepackage{bm}
\usepackage[d]{esvect}
\usepackage{array}
\usepackage{euscript}

\usepackage[bookmarks=false]{hyperref}

\setlist{topsep = 1.5ex, itemsep = 0.4ex} %箇条書き環境の行間の調整

%演算子
%%%%%%%%%%%%%%%%%%%%%%%%%%%%%%%%%%%%%%%%%%%%%%%%%%%%%%%%%%%%%%%%%%%%%%%%%%%%%%%%%%%%%%%%%%%%%%%%
\DeclareMathOperator{\Leb}{Leb}
\DeclareMathOperator{\id}{id}
\DeclareMathOperator{\closure}{cl}
\DeclareMathOperator{\dom}{dom}
 
\DeclareMathOperator{\supp}{supp}

\DeclareMathOperator{\diag}{diag}

\newcommand{\trf}{\operatorname{trf}}

%%%%%%%%%%%%%%%%%%%%%%%%%%%%%%%%%%%%%%%%%%%%%%%%%%%%%%%%%%%%%%%%%%%%%%%%%%%%%%%%%%%%%%%%%%%%%%%%

\allowdisplaybreaks[1]
\pagestyle{fancy}
\fancyhf{}
\fancyhead[CO]{\emph{Convergence of space-time occupation measures and its application to collisions}} %奇数ページ上部に表示
\fancyhead[CE]{\emph{R. Noda}}%偶数ページ上部に表示

\fancyfoot[C]\thepage

\mathtoolsset{showonlyrefs=true}%参照したときだけ数式番号表示

\numberwithin{equation}{section}

\theoremstyle{definition}
\newtheorem{exm} {Example}[section]
\newtheorem{dfn}[exm] {Definition}
\newtheorem{rem}[exm] {Remark}

\theoremstyle{plane}
\newtheorem{lem}[exm]{Lemma}
\newtheorem{prop}[exm]{Proposition}
\newtheorem{thm}[exm]{Theorem}
\newtheorem{cor}[exm]{Corollary}
\newtheorem{assum}[exm]{Assumption}

%Sets of numbers
%%%%%%%%%%%%%%%%%%%%%%%%%%%%%%%%%%%%%%%%%%%%%%%%%%%%%%%%%%%%%%%%%%%%%%%%%%%%%%%%%%%%%%%%%%%%%%%%
\newcommand{\NN}{\mathbb{N}}
\newcommand{\ZN}{\mathbb{Z}}
\newcommand{\ZNp}{\mathbb{Z}_{+}}

\newcommand{\RN}{\mathbb{R}} 
\newcommand{\RNp}{\mathbb{R}_{\geq 0}}
\newcommand{\RNpp}{\mathbb{R}_{>0}}

%Abbreviation
%%%%%%%%%%%%%%%%%%%%%%%%%%%%%%%%%%%%%%%%%%%%%%%%%%%%%%%%%%%%%%%%%%%%%%%%%%%%%%%%%%%%%%%%%%%%%%%%
\newcommand{\bcmAB}{\mathsf{bcm}}
\newcommand{\cadlag}{c\`adl\`ag}

\newcommand{\cX}{\mathcal{X}}
\newcommand{\cY}{\mathcal{Y}}

%Borel sigma algebra
%%%%%%%%%%%%%%%%%%%%%%%%%%%%%%%%%%%%%%%%%%%%%%%%%%%%%%%%%%%%%%%%%%%%%%%%%%%%%%%%%%%%%%%%%%%%%%%%
\newcommand{\Borel}{\mathcal{B}}

%Notation regarding process
%%%%%%%%%%%%%%%%%%%%%%%%%%%%%%%%%%%%%%%%%%%%%%%%%%%%%%%%%%%%%%%%%%%%%%%%%%%%%%%%%%%%%%%%%%%%%%%%
\newcommand{\ProcLaw}{\mathcal{L}}

%Hausdorff and Fell topology
%%%%%%%%%%%%%%%%%%%%%%%%%%%%%%%%%%%%%%%%%%%%%%%%%%%%%%%%%%%%%%%%%%%%%%%%%%%%%%%%%%%%%%%%%%%%%%%%
\newcommand{\Compact}[1]{\mathcal{C}_c(#1)}
\newcommand{\Closed}[1]{\mathcal{C}(#1)}
\newcommand{\HausMet}[1]{d_{#1}^H}
\newcommand{\FellMet}[1]{d_{#1}^{F}}

%compact-convergence with variable domains
%%%%%%%%%%%%%%%%%%%%%%%%%%%%%%%%%%%%%%%%%%%%%%%%%%%%%%%%%%%%%%%%%%%%%%%%%%%%%%%%%%%%%%%%%%%%%%%%
\newcommand{\graphmap}{\mathfrak{g}}
\newcommand{\hatCc}{\widehat{C}_{c}} %compactly-supported continuous functions with different domains
\newcommand{\hatC}{\widehat{C}} %continuous functions with different domains

\newcommand{\hatCcMet}[2]{d^{\widehat{C}_c}_{#1, #2}}
\newcommand{\hatCMet}[2]{d^{\widehat{C}}_{#1, #2}}

\newcommand{\hatCSt}{\tau_{\widehat{C}}}

\newcommand{\upC}{C_{\uparrow}}

\newcommand{\SkorohodMet}[1]{d^{J_1}_{#1}}

\newcommand{\BorMeas}{\mathcal{M}_{\mathrm{Bor}}}
\newcommand{\STOMMeas}{\mathcal{M}_{(*, \mathrm{fin})}}
\newcommand{\STOMMet}[1]{d_{#1}^{\STOMMeas}}
\newcommand{\STOMSt}{\tau_{\STOMMeas}}

\newcommand{\finMeas}{\mathcal{M}_{\mathrm{fin}}} %finite Borel measures

\newcommand{\Prob}{\mathcal{P}} %Borel probability measures
\newcommand{\ProhMet}[1]{d_{#1}^{P}} %metric
\newcommand{\ProbSt}{\tau_{\mathcal{P}}}

\newcommand{\Meas}{\mathcal{M}}
 
\newcommand{\VagueMet}[1]{d_{#1}^{V}}
\newcommand{\MeasSt}{\tau_{\mathcal{M}}}
\newcommand{\finMeasSt}{\tau_{\finMeas}}

\newcommand{\SkorohodSt}{\tau_{J_1}} 
\newcommand{\LzeroMet}{d^{L^0}}
\newcommand{\LzeroSt}{\tau_{L^0}}

%Special structure
%%%%%%%%%%%%%%%%%%%%%%%%%%%%%%%%%%%%%%%%%%%%%%%%%%%%%%%%%%%%%%%%%%%%%%%%%%%%%%%%%%%%%%%%%%%%%%%%
\newcommand{\HKSt}{\tau_{\mathrm{HK}}}
\newcommand{\ColSt}{\tau_{\mathrm{CM}}}
\newcommand{\SPMSt}{\tau_{\mathrm{SP}}}
\newcommand{\SPMPCAFSt}{\tau_{\mathrm{SP, PCAF}}}
\newcommand{\SPMSTOMSt}{\tau_{\mathrm{SP, STOM}}}
\newcommand{\LzeroSPMSt}{\tau_{L^0\text{-}\mathrm{SP}}}
\newcommand{\LzeroSPMPCAFSt}{\tau_{L^0\text{-}\mathrm{SP}, \mathrm{PCAF}}}
\newcommand{\LzeroSPMSTOMSt}{\tau_{L^0\text{-}\mathrm{SP}, \mathrm{STOM}}}
\newcommand{\LzeroSPMvSTOMSt}{\tau_{L^0\text{-}\mathrm{SP}, \mathrm{v}\text{-}\mathrm{STOM}}}

%GH-type spaces
%%%%%%%%%%%%%%%%%%%%%%%%%%%%%%%%%%%%%%%%%%%%%%%%%%%%%%%%%%%%%%%%%%%%%%%%%%%%%%%%%%%%%%%%%%%%%%%%
 %rooted boundedly-compact metric spaces
\newcommand{\rootBCM}{\mathfrak{M}_\bullet}
\newcommand{\rootCM}{\mathfrak{K}_\bullet}

\newcommand{\GFMet}{d_{\rootBCM}}
\newcommand{\GHMet}{d_{\rootCM}}

\newcommand{\rootResisSp}{\mathfrak{R}_\bullet} %rooted boundedly-compact, recurrent, resistance metric spaces
\newcommand{\ColResisSp}{\mathfrak{R}_\bullet^{\mathrm{col}}}

%Dirichlet forms and resistance forms
%%%%%%%%%%%%%%%%%%%%%%%%%%%%%%%%%%%%%%%%%%%%%%%%%%%%%%%%%%%%%%%%%%%%%%%%%%%%%%%%%%%%%%%%%%%%%%%%
\newcommand{\domain}{\mathcal{D}}
\newcommand{\form}{\mathcal{E}}
\newcommand{\rdomain}{\mathcal{F}}
\newcommand{\filt}[0]{\EuScript{F}}

\newcommand{\ResolDensity}[0]{r}
\newcommand{\Potential}[0]{\mathcal{R}}

\newcommand{\sigalg}{\EuScript{M}}

%STOM
%%%%%%%%%%%%%%%%%%%%%%%%%%%%%%%%%%%%%%%%%%%%%%%%%%%%%%%%%%%%%%%%%%%%%%%%%%%%%%%%%%%%%%%%%%%%%%%%
\newcommand{\apPCAF}{\mathfrak{P}}
\newcommand{\apSTOM}{\mathfrak{S}}

\newcommand{\diagMeas}[1]{#1^{\diag}}

%resistance
%%%%%%%%%%%%%%%%%%%%%%%%%%%%%%%%%%%%%%%%%%%%%%%%%%%%%%%%%%%%%%%%%%%%%%%%%%%%%%%%%%%%%%%%%%%%%%%%
\newcommand{\SPM}{\Upsilon} %law map
 %two-law map
\newcommand{\hatSPMcol}{\hat{\Upsilon}^{\mathrm{col}}} %two-stochastic-processes-with-collision-measures map

\newcommand{\GHSPM}{\cX}
\newcommand{\GHHKSPM}{\cX^{\mathrm{HK}}}
\newcommand{\GHColSPM}{\cX^{\mathrm{col}}}

\newcommand{\lrangle}[1]{\langle #1 \rangle}

\newcommand{\supnorm}[1]{\| #1 \|_{\infty}}

\newcommand{\BExc}{\mathbf{e}}

\newcommand{\exitTime}{\breve{\sigma}}

\title{Convergence of space-time occupation measures of stochastic processes and its application to collisions} %タイトルをいれる
\date{}
\author{Ryoichiro Noda\thanks{Research Institute for Mathematical Sciences, Kyoto University, Kyoto, 606-8502,
JAPAN. E-mail:sgrndr@kurims.kyoto-u.ac.jp}}

\begin{document}

\maketitle

\begin{abstract}
We introduce a new perspective on positive continuous additive functionals (PCAFs) of Markov processes,
which we call \emph{space--time occupation measures (STOMs)}.
This notion provides a natural generalization of classical occupation times and occupation measures,
and offers a unified framework for studying their convergence.
We analyze STOMs via so-called smooth measures 
associated with PCAFs through the Revuz correspondence.
We establish that if the underlying spaces, 
the processes living on them, 
their heat kernels, and the associated smooth measures converge,
and if the corresponding potentials of these measures satisfy a uniform decay condition,
then the associated PCAFs and STOMs also converge 
in suitable Gromov--Hausdorff-type topologies. 

We then apply this framework to the analysis of collisions of independent stochastic processes. 
Specifically, by exploiting the STOM formulation,
we introduce the notion of \emph{collision measures}, 
which record both the collision sites and times of two processes,
and prove general convergence theorems for these measures. 
The abstract results are further specialized to random walks on electrical networks
via the theory of resistance metric spaces,
leading to concrete scaling limits for collision measures 
of random walks on critical random graphs,
such as critical Galton--Watson trees, critical Erd\H{o}s--R\'enyi random graphs, 
and the uniform spanning tree.
\end{abstract}

\tableofcontents

%%%%%%%%%%%%%%%%%%%%%%%%%%%%%%%%%%%%%%%%%%%%%%%%%%%%%%%%%%%%%%%%%%%%%%%%%%%%%%%%%%%%%%%%%%%%%%%%%%%%%%%%%%%%%%%%%%%%%%%%%%%%%%%%%%
%%%%%%%%%%%%%%%%%%%%%%%%%%%%%%%%%%%%%%%%%%%%%%%%%%%%%%%%%%%%%%%%%%%%%%%%%%%%%%%%%%%%%%%%%%%%%%%%%%%%%%%%%%%%%%%%%%%%%%%%%%%%%%%%%%
% Introduction
%%%%%%%%%%%%%%%%%%%%%%%%%%%%%%%%%%%%%%%%%%%%%%%%%%%%%%%%%%%%%%%%%%%%%%%%%%%%%%%%%%%%%%%%%%%%%%%%%%%%%%%%%%%%%%%%%%%%%%%%%%%%%%%%%%
%%%%%%%%%%%%%%%%%%%%%%%%%%%%%%%%%%%%%%%%%%%%%%%%%%%%%%%%%%%%%%%%%%%%%%%%%%%%%%%%%%%%%%%%%%%%%%%%%%%%%%%%%%%%%%%%%%%%%%%%%%%%%%%%%%
\section{Introduction} \label{sec: intro}

Positive continuous additive functionals (PCAFs) play a crucial role in the analysis of Markov processes,
appearing in various contexts, such as the Feynman--Kac formula, time-changed processes, and reflected processes.
This paper is concerned with convergence of PCAFs,
which has important applications in a wide range of problems.
\begin{enumerate} [label = \textup{(\roman*)}]
  \item Convergence of PCAFs directly implies convergence of the associated Feynman--Kac semigroups 
    (cf.\ \cite{Albeverio_Blanchard_Ma_91_FKsemigroup}).
    As an application in the PDE context,
    Fan~\cite{Fan_16_Discrete} constructed a discrete approximation for solutions of the heat equation with general Robin boundary conditions,
    by proving the convergence of the boundary local times of random walks in a certain Euclidean domain
    to those of the reflected diffusion.
  \item Convergence of PCAFs is a crucial step in deriving convergence of corresponding time-changed processes, 
    such as Liouville Brownian motion and the Bouchaud trap model 
    \cite{Croydon_Hambly_Kumagai_17_Time-changes,Garban_Rhodes_Vargas_16_Liouville,Ooi_25_Convergence}.
  \item Convergence of local times can be used to deduce convergence of cover times of random walks 
    \cite{Andriopoulos_etc_pre_cover_on_CRT,Croydon_pre_CoverTimeOnBianaryTree}. 
\end{enumerate}

A fundamental result in the theory of PCAFs is the Revuz correspondence \cite{Revuz_70_Mesures},
which establishes a one-to-one correspondence between PCAFs and so-called smooth measures.
Recent studies \cite{Nishimori_Tomisaki_Tsuchida_Uemura_pre_On,Noda_pre_Continuity,Ooi_pre_Homeo} 
have shown that this correspondence is continuous, in the sense that
if smooth measures converge, then the associated PCAFs also converge,
with the mode of convergence specified suitably.
These works, however, have focused on convergence of PCAFs of a fixed Markov process.
In this paper, we establish convergence results for PCAFs of Markov processes living on spaces that vary along a sequence, 
where the convergence of the underlying spaces is treated within the framework of Gromov--Hausdorff-type topologies 
(see Section~\ref{sec: GH-type topologies} below).
This extension of the framework makes the theory applicable to a wider range of settings, 
such as discrete approximations of spaces and scaling limits of random graphs.

In this paper, we also introduce a new perspective on PCAFs, 
viewing them as random measures on space and time.
Let $X=(X_t)_{t \geq 0}$ be a standard process on a suitable topological space $S$
(see \cite{Blumenthal_Getoor_68_Markov,Chen_Fukushima_12_Symmetric} for the definition of standard processes, for example).
For a PCAF $A = (A_t)_{t \geq 0}$ of $X$ (see Definition~\ref{dfn: PCAF} below),
we define a random measure on $S \times [0,\infty)$ by 
\begin{equation} \label{eq: dfn of STOM in intro}
  \Pi(\cdot) \coloneqq \int_0^\infty \mathbf{1}_{(X_t, t)}(\cdot)\, dA_t,
\end{equation}
where $\mathbf{1}_{\cdot}$ denotes the indicator function.
We refer to $\Pi$ as the \emph{space-time occupation measure (STOM)} associated with $A$.
This notion extends the classical occupation time and occupation measure in the following sense.
If one takes $A_t = t$, $t \geq 0$, then, for any Borel set $E \subseteq S$ and $t \geq 0$,  
\begin{equation}
  \Pi(E \times [0,t]) = \int_0^t \mathbf{1}_E(X_s)\, ds.
\end{equation}
Thus, fixing $E$ and varying $t$ recovers the occupation time,
while fixing $t$ and varying $E$ recovers the occupation measure.

The introduction of STOMs is not merely a reformulation of PCAFs.  
It provides a new framework that we apply to the study of collisions of stochastic processes.  
As a particular class of STOMs, we introduce the notion of \emph{collision measures} for two processes,
which simultaneously record both the locations and the times of collisions.  
We establish convergence results for these measures,
and present several concrete examples to illustrate their applications.

The contributions of this paper can be summarized as follows. 
\begin{enumerate}[label = \textup{(\roman*)}]
  \item We prove that if processes, heat kernels, and smooth measures converge,
    then, under suitable assumptions on the latter two, 
    the associated PCAFs and STOMs also converge.
  \item We introduce the STOM framework as a new tool for the study of collisions of stochastic processes.
    Within this framework, we establish convergence results for collision measures
    and apply them to several concrete examples.  
\end{enumerate}
In the following subsections, we elaborate on these contributions in more detail.

%%%%%%%%%%%%%%%%%%%%%%%%%%%%%%%%%%%%%%%%%%%%%%%%%%%%%%%%%%%%%%%%%%%%%%%%%%%%%%%%%%%%%%%%%%%%%%%%%%%%%%%%%%%%%%%%%%%%%%%%%%%%%%%%%%
% Application to the study of collisions
%%%%%%%%%%%%%%%%%%%%%%%%%%%%%%%%%%%%%%%%%%%%%%%%%%%%%%%%%%%%%%%%%%%%%%%%%%%%%%%%%%%%%%%%%%%%%%%%%%%%%%%%%%%%%%%%%%%%%%%%%%%%%%%%%%
\subsection{An overview of the framework} \label{sec: An overview of the framework}

Here we present an overview of the framework underlying our main results,
mainly focusing on the topologies used to discuss convergence of PCAFs and STOMs.

\medskip
We first recall some notion concerning PCAFs and smooth measures, 
which play a central role in this work.
For details, see Section~\ref{sec: Dual processes}.
Let $(S, d_S)$ be a boundedly-compact metric space (a \emph{$\bcmAB$ space}, for short),
that is, every bounded closed subset of $S$ is compact. 
Let 
\begin{equation}
  X = \bigl((X_t)_{t \ge 0}, (P^x)_{x \in S}\bigr)
\end{equation}
be a conservative standard process on $S$ satisfying the duality and absolute continuity condition (Assumption~\ref{assum: dual hypothesis}).
In particular, we assume that it admits a transition density (heat kernel)
$p \colon (0, \infty) \times S \times S \to [0, \infty]$
with respect to a certain invariant reference measure of $X$. 
Let $\mu$ be a Radon measure on $S$, 
i.e., a Borel measure such that its restriction $\mu|_K$ to every compact subset $K$ is finite.
We define its \emph{$\alpha$-potential} by 
\begin{equation}
  \Potential_p^\alpha \mu(x)
  \coloneqq 
  \int_S \ResolDensity^\alpha(x, y)\, \mu(dy),
  \quad x \in S,
\end{equation}
where $\ResolDensity^\alpha$ denotes the $\alpha$-resolvent density associated with $p$,
namely,
\begin{equation}
  \ResolDensity^\alpha(x,y) \coloneqq 
  \int_0^\infty e^{-\alpha t} p(t,x,y)\, dt.
\end{equation}
The measure $\mu$ is said to belong to the \emph{local Kato class} 
if it does not charge any semipolar set (see Definition~\ref{dfn: exceptional set} below) and, for every compact subset $K \subseteq S$,
\begin{equation}
  \|\Potential_p^\alpha (\mu|_K)\|_\infty 
  = 
  \sup_{x \in S} \int_K \ResolDensity^\alpha(x,y)\, \mu(dy)  
  \xrightarrow[\alpha \to \infty]{} 0.
\end{equation}
If $\mu$ is in the local Kato class,
then it is a smooth measure of $X$,
and by the Revuz correspondence 
there exists a unique PCAF $A = (A_t)_{t \ge 0}$ of $X$ satisfying,
for all non-negative Borel measurable $f$ on $S$ and all $\alpha > 0$,
\begin{equation}
  E^x\!\left[ \int_0^\infty e^{-\alpha t} f(X_t)\, dA_t \right]
  = 
  \int_S \ResolDensity^\alpha(x,y)\, f(y)\, \mu(dy).
\end{equation}
The associated STOM is defined by \eqref{eq: dfn of STOM in intro}.

\medskip
\textbf{Spaces for PCAFs and STOMs.}
We next introduce the spaces used to describe convergence of PCAFs and STOMs.
We write $\upC(\RNp, \RNp)$ for the space of non-decreasing continuous functions from $\RNp$ to itself,
equipped with the compact-convergence topology, i.e., uniform convergence on compacts.
The conservativeness of $X$ ensures that each PCAF is finite at every finite time,
so that convergence of PCAFs can be discussed within $\upC(\RNp, \RNp)$.
For the same reason, each STOM is a Radon measure on $S \times \RNp$,
and one may consider convergence of STOMs under the vague topology.
However, the vague topology is, in general, too weak to capture convergence of the associated PCAFs.
Indeed, if $\Pi$ is the STOM associated with a PCAF $A$, then
\begin{equation} \label{eq: relation of STOM and PCAF}
  \Pi(S \times [0,t]) = A_t, \quad t \geq 0.
\end{equation}
Thus, even if $\Pi_n \to \Pi$ vaguely,
the above relation does not guarantee the convergence of the corresponding PCAFs when $S$ is non-compact.
To overcome this difficulty, 
we introduce a space $\STOMMeas(S \times \RNp)$
containing all STOMs,
and equip it with a Polish topology 
such that convergence in this space implies convergence of the associated PCAFs in $\upC(\RNp, \RNp)$.
(See Section~\ref{sec: The space for STOMs} for details.)
In the remainder of this introduction, 
we mainly focus on convergence of STOMs, 
as convergence of PCAFs in this setting follows from that of the corresponding STOMs.
(Nevertheless, both PCAFs and STOMs will be treated in parallel in the main sections.)

\begin{rem}
  We make a few comments on the conservativeness assumption.
  \begin{enumerate}[label=\textup{(\alph*)}]
    \item  
    Without conservativeness, 
    a PCAF may blow up in finite time,
    in which case the space $\upC(\RNp, \RNp)$ is no longer suitable for treating PCAFs. 
    By instead considering the associated STOMs,
    one can still discuss convergence in a natural topology 
    without assuming conservativeness. 
    This demonstrates the generality of the STOM framework.
    See Section~\ref{sec: approx of PCAFs and STOMs} 
    (in particular, Remark~\ref{rem: flexibility of STOM framework}) 
    for further details.

    \item 
    Conservativeness is also, in a sense, a natural assumption when discussing convergence of stochastic processes in a general setting.
    In the presence of explosion, usual topologies such as the $J_1$-Skorohod topology
    are overly sensitive to the behavior of paths near explosion times,
    and one must carefully choose an appropriate topology.
    See \cite{Gradinaru_Haugomat_18_LocalSkorohod} for recent developments in this direction.
  \end{enumerate}
\end{rem}

\medskip
\textbf{Law maps and their continuity.}
We write $D(\RNp, S)$ for the space of $S$-valued \cadlag\ functions on $\RNp$,
equipped with the usual $J_1$-Skorohod topology.
Fix a Radon measure $\mu$ in the local Kato class, and denote the associated STOM by $\Pi$.
For each $x \in S$, set 
\begin{equation}
  \ProcLaw_X(x) \coloneqq P^x(X \in \cdot),
  \quad 
  \ProcLaw_{(X, \Pi)}(x) \coloneqq P^x((X, \Pi) \in \cdot),
\end{equation}
which are probability measures on $D(\RNp, S)$ and on 
$D(\RNp, S) \times \STOMMeas(S \times \RNp)$, respectively.
We regard these as maps from $S$, and refer to them as \emph{law maps}.

For a wide class of processes (for example, Feller processes),
one can verify that the distribution of $X$ depends continuously on the starting point,
that is, the map 
\begin{equation}
  \ProcLaw_X \colon S \to \Prob(D(\RNp, S))
\end{equation}
is continuous, where $\Prob(D(\RNp, S))$ denotes the space of probability measures on $D(\RNp, S)$
endowed with the weak topology.
In such cases, Theorem~\ref{thm: R map STOM approx for local Kato} below shows that
$\ProcLaw_{(X, \Pi)}$ inherits the continuity of $\ProcLaw_X$
under an additional joint continuity assumption on the heat kernel.
This result serves as a cornerstone for our main theorems,
as convergence of PCAFs and STOMs will be formulated in terms of convergence of the corresponding law maps.
(See Remark~\ref{rem: why law map conv} below for an explanation of this.)
The proof is based on an approximation argument,
which also forms the essential backbone of the proofs of our main results.
A brief outline of this approximation method is given in the next subsection.

\begin{rem} \label{rem: why law map conv}
  Distributional convergence of Markov processes is usually formulated 
  for a fixed starting point (or a fixed sequence of starting points).
  However, in many cases the proof implicitly yields convergence 
  whenever the starting points themselves converge.
  Indeed, a standard argument based on tightness and convergence of semigroups typically suffices.
  This mode of convergence is equivalent to compact convergence of their law maps,
  provided that the maps are continuous.
  Although this viewpoint may seem less conventional,
  it is particularly natural in the Gromov--Hausdorff-type setting,
  where processes are defined on different spaces and there may be no canonical fixed starting point.
  Accordingly, throughout this paper, 
  convergence of processes and of their associated PCAFs/STOMs 
  is formulated as convergence of the corresponding law maps.
\end{rem}

%%%%%%%%%%%%%%%%%%%%%%%%%%%%%%%%%%%%%%%%%%%%%%%%%%%%%%%%%%%%%%%%%%%%%%%%%%%%%%%%%%%%%%%%%%%%%%%%%%%%%%%%%%%%%%%%%%%%%%%%%%%%%%%%%%
% The main results of this paper
%%%%%%%%%%%%%%%%%%%%%%%%%%%%%%%%%%%%%%%%%%%%%%%%%%%%%%%%%%%%%%%%%%%%%%%%%%%%%%%%%%%%%%%%%%%%%%%%%%%%%%%%%%%%%%%%%%%%%%%%%%%%%%%%%%
\subsection{Main results} \label{sec: STOMs and their convergence}

Here we present the main results of this paper together with a sketch of their proofs.

\medskip
For each $n \ge 1$, let $(S_n, d_{S_n})$ be a $\bcmAB$ space equipped with a distinguished element $\rho_n$, 
called the \emph{root} of $S_n$.
Let $X_n$ be a conservative standard process on $S_n$ 
satisfying the duality and absolute continuity condition (Assumption~\ref{assum: dual hypothesis}),
and denote by $p_n$ its jointly continuous heat kernel.
We assume that the law map $\ProcLaw_{X_n}$ is continuous.
Fix a Radon measure $\mu_n$ on $S_n$ belonging to the local Kato class of $X_n$,
and let $\Pi_n$ be the associated STOM.
As discussed in the previous subsection, 
the joint law map $\ProcLaw_{(X_n, \Pi_n)}$ is also continuous.

To describe the limit objects, 
let $(S, d_S, \rho, X, p, \mu, \Pi)$ 
be a tuple satisfying the same conditions as $(S_n, d_{S_n}, \rho_n, X_n, p_n, \mu_n, \Pi_n)$.
In particular, $(S, d_S, \rho)$ is a rooted $\bcmAB$ space,
$X$ is a conservative standard process satisfying the same assumptions as $X_n$,
$p$ is its jointly continuous heat kernel,
$\mu$ is a Radon measure in the local Kato class of $X$,
and $\Pi$ is the corresponding STOM.

The following theorem is one of the main results of this paper.
The notation $\rootBCM(\cdot)$ denotes a Gromov--Hausdorff-type space,
as introduced in Section~\ref{sec: GH-type topologies} below.
For each $r > 0$ and $n \geq 1$, we write 
\begin{equation}
  S_n^{(r)} \coloneqq \{x \in S_n \mid d_{S_n}(\rho_n, x) \leq r \},
\end{equation}
and $\mu_n^{(r)}$ for the restriction of $\mu_n$ to $S_n^{(r)}$.

\begin{thm} \label{thm: intro STOM dtm}
  Assume that the following conditions are satisfied.
  \begin{enumerate}[label=\textup{(\roman*)}]
    \item It holds that 
      \begin{equation}
        (S_n, d_{S_n}, \rho_n, \mu_n, p_n, \ProcLaw_{X_n})
        \to
        (S, d_S, \rho, \mu, p, \ProcLaw_X)
      \end{equation}
      in $\rootBCM\bigl(\MeasSt \times \HKSt \times \SPMSt\bigr)$.
    \item
      For each $r > 0$,
      \begin{equation}
        \lim_{\alpha \to \infty}
        \limsup_{n \to \infty}
        \bigl\| \Potential_{p_n}^\alpha \mu_n^{(r)} \bigr\|_\infty
        = 0,
      \end{equation}
      or equivalently,
      \begin{equation}
        \lim_{\delta \to 0} 
        \limsup_{n \to \infty} 
        \sup_{x \in S_n^{(r)}}
        \int_0^\delta \int_{S_n^{(r)}} p_n(t, x, y)\, \mu_n(dy)\, dt
        = 0.
      \end{equation}
  \end{enumerate}
  Then
  \begin{equation}
    \bigl(S_n, d_{S_n}, \rho_n, \mu_n, a_n, p_n, \ProcLaw_{(X_n, \Pi_n)}\bigr)
    \to
    \bigl(S, d_S, \rho, \mu, a, p, \ProcLaw_{(X, \Pi)}\bigr)
  \end{equation}
  in $\rootBCM\bigl(\MeasSt \times \HKSt \times \SPMSTOMSt\bigr)$.
\end{thm}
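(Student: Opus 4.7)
The plan is to mimic the continuity-of-law-maps argument behind Theorem~\ref{thm: R map STOM approx for local Kato}, carried out across the varying sequence of spaces provided by hypothesis~(i). First I would use (i) to isometrically embed each $(S_n, d_{S_n}, \rho_n)$ into a single ambient $\bcmAB$ space $(M, d_M)$ so that $\rho_n \to \rho$, $\mu_n \to \mu$ vaguely, $p_n \to p$ in the heat-kernel sense, and the law maps $\ProcLaw_{X_n}$ converge compactly to $\ProcLaw_X$. By the definition of $\rootBCM(\MeasSt \times \HKSt \times \SPMSTOMSt)$ and the viewpoint of Remark~\ref{rem: why law map conv}, proving the asserted convergence then reduces to showing that whenever $x_n \in S_n$ satisfies $x_n \to x \in S$ in $M$,
\begin{equation}
P^{x_n}_n\bigl((X_n, \Pi_n) \in \cdot\bigr) \to P^x\bigl((X, \Pi) \in \cdot\bigr)
\end{equation}
weakly in $D(\RNp, M) \times \STOMMeas(M \times \RNp)$. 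Convergence of the PCAFs $a_n \to a$ in $\upC(\RNp,\RNp)$ is then automatic, since the topology on $\STOMMeas$ is designed so that $\Pi_n \to \Pi$ implies $\Pi_n(S_n \times [0,t]) \to \Pi(S \times [0,t])$ uniformly on compact time intervals, via the identity \eqref{eq: relation of STOM and PCAF}.

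Since the process marginals already converge by hypothesis, the task splits into tightness of $\{\Pi_n\}$ in $\STOMMeas(M \times \RNp)$ and identification of any subsequential limit. I would attack both through a uniform approximation scheme. For each $\alpha > 0$ and $r > 0$, replace $\mu_n$ by a regular smooth measure $\nu_n^{r,\alpha}$ --- for instance the resolvent-smoothed truncation $\alpha \Potential_{p_n}^\alpha(\mu_n^{(r)}) \cdot m_n$, where $m_n$ denotes the reference measure --- whose density is jointly continuous and bounded. The STOM associated with such a regular measure has the explicit form $f_n^{r,\alpha}(X_n(t))\, \delta_{X_n(t)}(dx)\, dt$, and its weak convergence under $x_n \to x$ follows directly from hypothesis~(i) by a continuous-mapping argument applied to the paths together with the locally uniform convergence of heat-kernel densities.

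The crux of the proof, and its main obstacle, lies in controlling the approximation error $\Pi_n - \tilde\Pi_n^{r,\alpha}$ uniformly in $n$ on every compact space-time slab $K \times [0,T]$, where $\tilde\Pi_n^{r,\alpha}$ denotes the STOM of $\nu_n^{r,\alpha}$. This is precisely where hypothesis~(ii) is invoked: by the Revuz correspondence combined with Laplace inversion, expectations of this error against compactly supported test functions reduce to quantities bounded by $\|\Potential_{p_n}^\alpha(\mu_n^{(r)})\|_\infty$ up to Laplace factors, which the uniform local Kato bound forces to zero as $\alpha \to \infty$ uniformly in $n$. A Markov-inequality argument then yields tightness of $\{\Pi_n\}$ restricted to $K \times [0,T]$, and sending $n \to \infty$ first and $\alpha \to \infty$ second matches subsequential limits to $\Pi$. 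The delicate technical point is ensuring that the Revuz estimates transfer across the embeddings of (i) uniformly in both $n$ and the starting point; this is where the joint continuity of the limit heat kernel and its locally uniform convergence encoded in $\HKSt$ are essential, allowing the outer suprema and integrals to be passed to the limit in $n$ before sending $\alpha \to \infty$.
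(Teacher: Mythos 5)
Your overall strategy is the paper's: embed everything into a common ambient space, approximate the STOM by an absolutely continuous one built from the heat kernel, control the approximation error uniformly in $n$ through the potential norms supplied by hypothesis~(ii) via the moment estimate of Lemma~\ref{lem: estimate of difference of PCAFs}, and close with a three-epsilon argument sending $n \to \infty$ before the smoothing parameter. Your choice of mollifier (the resolvent smoothing $\alpha \Potential_{p_n}^{\alpha}(\mu_n^{(r)}) \cdot m_n$ rather than the paper's heat-kernel window average $\tfrac{1}{\delta}\int_{\delta}^{2\delta} p_n(u,\cdot,z)\,du\,\mu_n(dz)$) is a harmless variant: the resolvent equation gives $\Potential^{\beta}\nu_n^{r,\alpha} = \tfrac{\alpha}{\alpha-\beta}(\Potential^{\beta}\mu_n^{(r)} - \Potential^{\alpha}\mu_n^{(r)})$, so the same potential-norm error bound applies. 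Also, since the process convergence in Theorem~\ref{thm: intro STOM dtm} is in the $J_1$ sense, using the honest STOM $f_n^{r,\alpha}(X_n(t))\,\delta_{X_n(t)}(dx)\,dt$ of the approximating PCAF (rather than the paper's spatially smeared measure $\Pi_n^{(\delta,R)}$) is legitimate by Proposition~\ref{prop: PCAF to STOM map}\ref{prop item: PCAF to STOM map. 1}; the paper avoids it only because its Theorem~\ref{thm: PCAF/STOM dtm result} works in the $L^0$ topology, where that map fails to be continuous.

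There is, however, one genuine gap. You announce that the task includes ``tightness of $\{\Pi_n\}$ in $\STOMMeas(M \times \RNp)$,'' but what your argument delivers is tightness and identification only on compact space-time slabs $K \times [0,T]$, which yields \emph{vague} convergence, i.e., the conclusion of Theorem~\ref{thm: PCAF/STOM dtm result}\ref{thm item: PCAF/STOM dtm result. 3}. The topology on $\STOMMeas(S \times \RNp)$ is strictly stronger: by Proposition~\ref{prop: precompact in stom meas sp}, precompactness additionally requires $\limsup_{R}\limsup_{n}\Pi_n\bigl((S_n \setminus S_n^{(R)}) \times [0,t]\bigr) = 0$, i.e., no mass escaping to spatial infinity --- and this is exactly what makes the identity \eqref{eq: relation of STOM and PCAF} usable for recovering the PCAFs, so it cannot be skipped. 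The paper handles this with the truncation $\apSTOM^{(*,R)}$ and the exit-time bound $\sup_{x \in S_n^{(r)}} P_n^x(\exitTime_{S_n^{(R-1)}} < T)$, which must vanish as $R \to \infty$ uniformly in $n$ (Assumption~\ref{assum: PCAF dtm assumption. spatial tight}). In the present $J_1$ setting this uniform spatial tightness is automatic, since convergence of the law maps in $\SPMSt$ forces the trajectories up to time $T$, started from a bounded set, to stay in a common compact with high probability uniformly in $n$ (Lemma~\ref{lem: J_1 tight implies spatial tight} and Remark~\ref{rem: unif spatial tightness}); but your proof needs to say this and use it to pass from compact slabs to the full $\STOMMeas$ topology. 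Without that step the asserted convergence in $\rootBCM(\MeasSt \times \HKSt \times \SPMSTOMSt)$, and in particular the automatic convergence of the PCAFs you claim from \eqref{eq: relation of STOM and PCAF}, does not follow.
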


We also obtain a corresponding result for random environments.
That is, assume that the tuples 
$\mathcal{S}_n = (S_n, d_{S_n}, \rho_n, \mu_n)$ and $\mathcal{S} = (S, d_S, \rho, \mu)$
are random rooted-and-measured $\bcmAB$ spaces.
For each realization of $\mathcal{S}_n$,
we consider a process $X_n$ with jointly continuous heat kernel $p_n$, 
satisfying the same conditions as in the deterministic case,
a Radon measure $\mu_n$ in the local Kato class of $X_n$,
and the associated STOM $\Pi_n$.
Similarly, for each realization of $\mathcal{S}$,
we have a process $X$ with jointly continuous heat kernel $p$,
a Radon measure $\mu$ in the local Kato class of $X$,
and the associated STOM $\Pi$.
(We also assume suitable measurability of these objects, 
so that we can discuss their laws in Gromov--Hausdorff-type spaces;
see Section~\ref{sec: PCAF conv for rdm spaces} for the precise setting.)
Under a random analogue of the above assumptions, 
we establish distributional convergence of the law maps $\ProcLaw_{(X_n, \Pi_n)}$.
Below, $\mathbf{E}_n$ denotes the expectation with respect to the randomness of the environment.

\begin{thm} \label{thm: intro STOM rdm}
   Assume that the following conditions are satisfied.
  \begin{enumerate}[label=\textup{(\roman*)}]
    \item It holds that 
      \begin{equation}
        (S_n, d_{S_n}, \rho_n, \mu_n, p_n, \ProcLaw_{X_n})
        \xrightarrow[]{\mathrm{d}}
        (S, d_S, \rho, \mu, p, \ProcLaw_X)
      \end{equation}
      as random elements of $\rootBCM\bigl(\MeasSt \times \HKSt \times \SPMSt\bigr)$.
    \item
      For each $r > 0$,
      \begin{equation}
        \lim_{\alpha \to \infty}
        \limsup_{n \to \infty}
        \mathbf{E}_n
        \Bigl[
          \bigl\| \Potential_{p_n}^\alpha \mu_n^{(r)} \bigr\|_\infty \wedge 1
        \Bigr]
        = 0,
      \end{equation}
      or equivalently,
      \begin{equation}
        \lim_{\delta \to 0} 
        \limsup_{n \to \infty} 
        \mathbf{E}_n\!   
        \left[
          \left(
            \sup_{x \in S_n^{(r)}}
            \int_0^\delta \int_{S_n^{(r)}} p_n(t, x, y)\, \mu_n(dy)\, dt
          \right)
          \wedge 1
        \right]
        = 0.
      \end{equation}
  \end{enumerate}
  Then
  \begin{equation}
    \bigl(S_n, d_{S_n}, \rho_n, \mu_n, a_n, p_n, \ProcLaw_{(X_n, \Pi_n)}\bigr)
    \xrightarrow[]{\mathrm{d}}
    \bigl(S, d_S, \rho, \mu, a, p, \ProcLaw_{(X, \Pi)}\bigr)
  \end{equation}
  as random elements of $\rootBCM\bigl(\MeasSt \times \HKSt \times \SPMSTOMSt\bigr)$.
\end{thm}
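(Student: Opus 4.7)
The plan is to reduce Theorem~\ref{thm: intro STOM rdm} to a pathwise application of the deterministic Theorem~\ref{thm: intro STOM dtm} by combining Skorokhod's representation theorem with a diagonal subsequence extraction. Since the target space $\rootBCM(\MeasSt \times \HKSt \times \SPMSTOMSt)$ is Polish, it suffices, by the standard subsequence characterization of convergence in distribution, to show that every subsequence $(n_k)_k$ admits a further subsequence along which the conclusion holds in distribution (indeed almost surely, after coupling).

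First I would fix an arbitrary subsequence and apply Skorokhod's representation to assumption (i), extracting a further subsequence and coupling so that the environmental data converge almost surely in $\rootBCM(\MeasSt \times \HKSt \times \SPMSt)$. The delicate task is then to upgrade assumption (ii), which is an expectation-type condition, to the pathwise $\lim_{\alpha}\limsup_n$ condition required by Theorem~\ref{thm: intro STOM dtm}. Write
\begin{equation}
  Z_{\alpha, n} \coloneqq \bigl\| \Potential_{p_n}^{\alpha} \mu_n^{(r)} \bigr\|_\infty,
\end{equation}
noting that $Z_{\alpha, n}$ is non-increasing in $\alpha$ because $\ResolDensity^\alpha$ is. For each integer $m \ge 1$, the variables $Z_{m, n_k}$ take values in the compact space $[0,\infty]$, and by Skorokhod's representation applied along successive subsequences, together with a diagonal extraction in $m$, one may pass to a further subsequence (still denoted $(n_k)$) along which $Z_{m, n_k} \to Y_m$ almost surely for every $m$, with $Y_m$ inheriting monotonicity in $m$. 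Bounded convergence yields
\begin{equation}
  \mathbf{E}[Y_m \wedge 1]
  = \lim_{k \to \infty} \mathbf{E}[Z_{m, n_k} \wedge 1]
  \le \limsup_{n \to \infty} \mathbf{E}_n[Z_{m, n} \wedge 1],
\end{equation}
and assumption (ii) forces the right-hand side to vanish as $m \to \infty$. Monotone convergence then gives $\mathbf{E}[Y_\infty \wedge 1] = 0$ for $Y_\infty \coloneqq \lim_m Y_m$, so $Y_\infty = 0$ almost surely. Consequently, for almost every $\omega$, $\lim_{m}\limsup_{k} Z_{m, n_k}(\omega) = 0$, and by the monotonicity of $Z_{\alpha, n}$ in $\alpha$ this is exactly the pathwise form of condition (ii) of Theorem~\ref{thm: intro STOM dtm} along the extracted subsequence.

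On the almost sure event where both the Skorokhod coupling from step one and this pathwise potential decay hold, the deterministic Theorem~\ref{thm: intro STOM dtm} applies and gives
\begin{equation}
  (S_{n_k}, d_{S_{n_k}}, \rho_{n_k}, \mu_{n_k}, a_{n_k}, p_{n_k}, \ProcLaw_{(X_{n_k}, \Pi_{n_k})})
  \to
  (S, d_S, \rho, \mu, a, p, \ProcLaw_{(X, \Pi)})
\end{equation}
almost surely in $\rootBCM(\MeasSt \times \HKSt \times \SPMSTOMSt)$, hence in distribution along $(n_k)$. Since the distributional limit is independent of the initial subsequence, distributional convergence of the full sequence follows.

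The main obstacle I expect is the bookkeeping in the joint Skorokhod extraction: one must realize, on a single probability space and along a single subsequence, both the almost sure convergence of the environmental data from assumption (i) and the countably many almost sure convergences $Z_{m, n_k} \to Y_m$ used to produce the pathwise form of assumption (ii), while ensuring that each $Z_{m, n}$ is a measurable function of the random environment (so that Skorokhod's theorem legitimately applies to it). Once these measurability and coupling issues are settled, the remainder is a routine combination of the deterministic theorem with the subsequence characterization of convergence in distribution.
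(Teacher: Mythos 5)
Your strategy is sound and, modulo the coupling bookkeeping you yourself flag, it gives a correct proof; but it is a genuinely different route from the paper's. You convert the expectation-form potential condition (ii) into its pathwise form along a subsequence: after a single joint Skorohod coupling of the environment together with the countable family $Z_{m,n}=\|\Potential^{m}_{p_n}\mu_n^{(r)}\|_\infty$ (over integer $m$ and, as you should make explicit, integer $r$, extending to all $r>0$ by monotonicity of $\mu_n^{(r)}$ in $r$), bounded convergence plus monotone convergence in $m$ forces the a.s.\ limits $Y_m$ down to $0$, which is exactly hypothesis~(ii) of the deterministic Theorem~\ref{thm: intro STOM dtm} pathwise; you then invoke that theorem as a black box and finish with the subsequence characterization of weak convergence in the Polish target space. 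The paper never pathwise-izes (ii) in full. Instead it re-runs the deterministic argument in the random setting: it Skorohod-couples only the environment together with the $1$-potential norms (Lemma~\ref{lem: PCAF rdm tightness of potentials} gives their tightness), which secures the uniform boundedness needed for the continuity of the mollified approximations $\apSTOM^{(\delta,R)}$; it controls the $\delta\to0$ and $R\to\infty$ approximation errors directly in expectation using (ii) as stated; and for the one place where a pathwise heat-kernel decay at each truncation level $R$ is unavoidable (the convergence of the truncated STOMs $\cY_n^{(*,R)}$), it derives that decay from convergence of first moments of the truncated PCAFs --- distributional convergence from the already-proved PCAF case plus uniform second-moment bounds via Kac's formula, combined with the fact that the limiting measure is local Kato. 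Your route buys modularity and brevity; the paper's extracts only the minimal pathwise information needed at each stage. Two points you must nail down for your version to be complete: first, the joint Skorohod extraction has to be done in one shot on the product space $\rootBCM(\cdots)\times[0,\infty]^{\NN\times\NN}$ (successive applications for each $m$ would change the probability space each time), together with Borel measurability of $\omega\mapsto Z_{m,n}$ --- the paper already relies on this measurability when it couples the $1$-potentials, so it is available; second, the deterministic theorem you cite is the $J_1$-formulated Theorem~\ref{thm: intro STOM dtm}, whose spatial-tightness requirement (Assumption~\ref{assum: PCAF dtm assumption. spatial tight} in the body version) is automatic from the $J_1$ convergence of the law maps in hypothesis~(i), so the pathwise application is legitimate, but this should be stated rather than left implicit.
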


The above theorems follow from Theorems~\ref{thm: PCAF/STOM dtm result} and \ref{thm: PCAF/STOM rdm result}
(see also Remark~\ref{rem: replace L^0 by J^1 for dtm main result}), respectively,
which are formulated in a slightly more general setting where the convergence of processes is assumed in a weaker topology.
See Remark~\ref{rem: why L^0} for further discussions on this point.

\medskip
\noindent
\textbf{Sketch of the proofs.}
The main proof idea is to approximate STOMs using heat kernels as mollifiers.
For each $\delta > 0$, set
\begin{equation}
  \Pi_n^{(\delta, *)}(dz\,dt)
  \coloneqq
  \frac{1}{\delta}
  \int_\delta^{2\delta}
  p_n(u, X_n(t), z)\, du\, \mu_n(dz)\, dt.
\end{equation}
From the convergence of $\mu_n$, $p_n$, and $\ProcLaw_{X_n}$,
we deduce convergence of the law maps $\ProcLaw_{(X_n, \Pi_n^{(\delta, *)})}$.
Using moment estimates for differences of PCAFs established in \cite{Noda_pre_Continuity},
we bound the expected distance between $\Pi_n^{(\delta, *)}$ and $\Pi_n$
by the potential norm $\|\Potential_{p_n}^\alpha \mu_n^{(r)}\|_\infty$.
Thus, applying condition~(ii) in Theorem~\ref{thm: intro STOM dtm} (or Theorem~\ref{thm: intro STOM rdm}),
we obtain that $\ProcLaw_{(X_n, \Pi_n^{(\delta_n, *)})} \to \ProcLaw_{(X, \Pi)}$ as $\delta_n \to 0$
uniformly in $n$,
which yields the desired convergence.

\begin{rem} \label{rem: why L^0}
  Our fundamental assumption concerns the convergence of processes.
  To impose the weakest possible requirement, 
  we employ the \emph{$L^0$ topology} on $D(\RNp, S)$,
  induced by the metric
  \begin{equation}
    \LzeroMet_S(f, g)
    \coloneqq
    \int_0^\infty e^{-t} (1 \wedge d_S(f_t, g_t))\, dt,
    \quad f, g \in D(\RNp, S)
  \end{equation}
  (see Section~\ref{sec: Topologies on the space of cadlag functions} for details of this topology).
  This topology is substantially weaker than the compact convergence, $J_1$-Skorohod, or $M_1$ topologies,
  thereby broadening the scope of the framework.
  Accordingly, in our main results, Theorems~\ref{thm: PCAF/STOM dtm result} and \ref{thm: PCAF/STOM rdm result},
  the convergence of the joint laws of processes and STOMs 
  is formulated so that the process components converge with respect to the $L^0$ topology.
  Nevertheless, when convergence of processes holds under a stronger Polish topology,
  the same conclusions automatically extend to that topology
  by a simple tightness argument 
  (see Appendix~\ref{appendix: Replacement of the L^0 topology by stronger topologies}).
  Thus the use of the $L^0$ topology does not restrict generality,
  while keeping the framework maximally flexible.
  Indeed, this level of generality is particularly important for applications to the study of collision, 
  see Remark~\ref{rem: L^0 is good for col}.
\end{rem}

%%%%%%%%%%%%%%%%%%%%%%%%%%%%%%%%%%%%%%%%%%%%%%%%%%%%%%%%%%%%%%%%%%%%%%%%%%%%%%%%%%%%%%%%%%%%%%%%%%%%%%%%%%%%%%%%%%%%%%%%%%%%%%%%%%
% Application to the study of collisions
%%%%%%%%%%%%%%%%%%%%%%%%%%%%%%%%%%%%%%%%%%%%%%%%%%%%%%%%%%%%%%%%%%%%%%%%%%%%%%%%%%%%%%%%%%%%%%%%%%%%%%%%%%%%%%%%%%%%%%%%%%%%%%%%%%
\subsection{Application to the study of collisions} \label{sec: Application to the study of collisions}

Let $(X^1_t)_{t \geq 0}$ and $(X^2_t)_{t \geq 0}$ be two (independent) stochastic processes 
taking values in a common state space $S$.
Given a realization of such processes,
we say that $t \in [0,\infty)$ is a \emph{collision time} if $X^1_t = X^2_t = x$ for some $x \in S$,
in which case we refer to $x$ as the \emph{collision site} (at time $t$).
Based on the framework of STOMs,
we introduce the notion of \emph{collision measures} 
for a broad class of pairs of stochastic processes.
These measures capture collision times and sites simultaneously, 
allowing us to treat collisions in a unified manner. 
By applying the main convergence theorems for STOMs, 
we obtain convergence results for the corresponding collision measures.

\medskip

\textbf{Background.}
The study of collisions of independent stochastic processes 
can be traced back to Pólya’s classical work~\cite{Polya_1921_recurrence} 
on the recurrence of simple random walks (SRWs) on Euclidean lattices.
On such lattices, 
recurrence is equivalent to the \emph{infinite collision property (ICP)}, 
namely the property that two independent SRWs collide infinitely many times with probability one.
However, this equivalence fails on general graphs~\cite{Krishnapur_Peres_04_Comb},
and the ICP has been extensively studied for various (random) graph models 
(cf.\ \cite{Barlow_Peres_Sousi_12_Collision,Croydon_Ambroggio_24_TripleCollision,
Halberstam_Hutchcroft_22_Collision,Hutchcroft_Peres_15_Collision,Watanabe_23_ICPfor3dUST}).
In the continuous setting, 
most studies have focused on the Hausdorff dimension of the set of collision times or collision sites 
(cf.\ \cite{Jain_Pruitt_69_Collisions,Knopova_Schilling_15_CollisionofFeller,Shiozawa_Wang_24_HausdorffDim}).

The scaling limit of collision times and sites has, to the best of our knowledge, 
barely been investigated in the existing literature.
Even in a very simple case, the one-dimensional lattice $\mathbb{Z}$, 
no explicit results were known until the recent work by Nguyen~\cite{Nguyen_23_Collision}. 
He showed that the point process consisting of pairs of collision times and sites of two SRWs on $\mathbb{Z}$
converges, under Brownian scaling, 
to a non-trivial random measure.
However, his approach essentially relies on the scaling limit of the partition function 
for a directed polymer model 
and thus does not extend easily to more general settings. 
Moreover, the limiting measure was characterized only via moment identities, 
leaving its pathwise connection with the collisions of the limiting Brownian motions unclear.

In contrast, our formulation provides a direct, pathwise definition of collision measures 
for a wide class of stochastic processes, 
together with a robust convergence theorem derived from the STOM framework.

\medskip
\textbf{Collision measure.} 
Here we define collision measures. 
See Section~\ref{sec: collision measure} for details.
Let $S$ be a $\bcmAB$ space. 
For each $i \in \{1,2\}$, 
let $X^i$ be a conservative standard process on $S$ 
satisfying the duality and absolute continuity condition 
(Assumption~\ref{assum: dual hypothesis}) 
with heat kernel $p^i$ and reference measure $m^i$. 
We define the associated \emph{product process} 
$\hat{X} = ((\hat{X}_t)_{t \geq 0}, (\hat{P}^{\bm{x}})_{\bm{x} \in S \times S})$ by
\begin{alignat}{3}
  \hat{X}_t &\coloneqq& (X^1_t, X^2_t), &\quad& t &\geq 0,\\
  \hat{P}^{\bm{x}} &\coloneqq& P_1^{x_1} \otimes P_2^{x_2}, &\quad& \bm{x} &= (x_1, x_2) \in S \times S.
\end{alignat}
Then $\hat{X}$ is again a conservative standard process 
satisfying the duality and absolute continuity condition. 
In particular, the STOM framework applies to this product process.

Define the \emph{diagonal map} $\diag \colon S \to S \times S$ by $\diag(x) \coloneqq (x, x)$. 
The collision of $X^1$ and $X^2$ can be rephrased as the event that $\hat{X}$ hits the diagonal (cf.\ \cite{Shiozawa_Wang_24_HausdorffDim}). 
Specifically, for any $t \geq 0$ and any subset $B \subseteq S$, 
\begin{equation}
  X^1_t = X^2_t \in B
  \quad \Longleftrightarrow \quad
  \hat{X}_t \in \diag(B).
\end{equation}
This observation naturally leads to the following definition of collision measures.

Fix a Radon measure $\mu$ on $S$. 
We write $\diagMeas{\mu} \coloneqq \mu \circ \diag^{-1}$ 
for the pushforward of $\mu$ by $\diag$. 
Assume that $\diagMeas{\mu}$ is a smooth measure of $\hat{X}$, 
and let $\hat{\Pi}$ denote the associated STOM of $\hat{X}$. 
We then define a random measure $\Pi$ on $S \times [0, \infty)$ by
\begin{equation} \label{eq: intro dfn of col meas}
  \Pi \coloneqq \hat{\Pi} \circ (\diag \times \id_{\RNp}),
\end{equation}
where $\id_{\RNp}$ denotes the identity map on $\RNp = [0, \infty)$. 
We refer to $\Pi$ as the \emph{collision measure} of $X^1$ and $X^2$ associated with $\mu$.
The measure $\mu(dx)$ plays a role in determining the contribution of collisions at each site $x$; 
we therefore refer to $\mu$ as the \emph{weighting measure} (see \eqref{eq: intro discrete col meas}).
In Remarks~\ref{rem: coincidence with Nguyen} and \ref{rem: more comment about Nguyen} below, 
we verify that the limiting random measure obtained by Nguyen~\cite{Nguyen_23_Collision} 
can be interpreted as a collision measure within our framework.

For example, when $S$ is discrete, 
\begin{equation} \label{eq: intro discrete col meas}
  \Pi(dx\, dt)
  =
  \frac{1}{m^1(\{x\})\, m^2(\{x\})}\,
  \mathbf{1}_{\{X^1_t = X^2_t = x\}}\,
  \mu(dx)\, dt,
\end{equation}
(see Proposition~\ref{prop eq: collision process for discrete space}).
This expression suggests that the natural choice of weighting measure $\mu$
is the pointwise product of the reference measures,
that is,
\begin{equation} \label{eq: intro canonical weighting}
  \mu(\{x\}) = (m^1 \cdot m^2)(\{x\}) \coloneqq m^1(\{x\}) m^2(\{x\}), \quad x \in S.
\end{equation}
Indeed, the collision measure $\Pi$ associated with this weighting measure
assigns a unit contribution to each collision at every site.
Namely, we have
\begin{equation} \label{eq: intro uniform col meas}
  \Pi(dx\, dt)
  = \sum_{y \in S} 
    \mathbf{1}_{\{X^1_t = X^2_t = y\}}\,
    \delta_y(dx)\, dt,
\end{equation}
where $\delta_y$ denotes the Dirac measure at $y$.
In particular, for each $x \in S$ and $t \geq 0$,
\begin{equation}
  \Pi(\{x\} \times [0,t]) = \int_0^t \mathbf{1}_{\{X^1_s = X^2_s = x\}}\, ds,
\end{equation}
which represents the total amount of time up to $t$ 
during which $X^1$ and $X^2$ collide at $x$.
We refer to $\Pi$ in \eqref{eq: intro uniform col meas} as the \emph{uniform collision measure},
and refer to the associated weighting measure $\mu = m^1 \cdot m^2$ as the \emph{canonical weighting measure}.

\medskip
\textbf{Convergence of collision measures.} 
Although the definition \eqref{eq: intro dfn of col meas} is not 
in the direct form to which the continuous mapping theorem applies,
an inverse version of the continuous mapping theorem 
(see Appendix~\ref{appendix: An inverse version of the continuous mapping theorem})
can be employed to handle this situation. 
In particular, the convergence of collision measures can be reduced 
to that of the associated STOMs.
Hence we do not restate the full setup and theorems here.
Instead, we provide a brief outline for clarity.
See Theorems~\ref{thm: col dtm result} and \ref{thm: col rdm result} for precise formulations.

For each $n \geq 1$,
suppose that
$X_n^1$ and $X_n^2$ are independent conservative standard processes
living on a common rooted $\bcmAB$ space $(S_n, d_{S_n}, \rho_n)$
with heat kernels $p_n^1$ and $p_n^2$, respectively,
and let $\mu_n$ be a weighting measure.
Write $\Pi_n$ for the collision measure of $X_n^1$ and $X_n^2$ associated with $\mu_n$.
By Theorem~\ref{thm: intro STOM dtm},
we deduce the following:
if the rooted metric spaces $(S_n, d_{S_n}, \rho_n)$,
the weighting measures $\mu_n$,
the heat kernels $p_n^1$ and $p_n^2$,
and the law maps $\ProcLaw_{X^1_n}$ and $\ProcLaw_{X^2_n}$
converge in a Gromov--Hausdorff type topology,
and if 
\begin{equation} \label{eq: intro heat cond}
  \lim_{\delta \to 0} 
  \limsup_{n \to \infty} 
  \sup_{x_1, x_2 \in S_n^{(r)}}
  \int_0^\delta \int_{S_n^{(r)}} 
    p_n^1(t, x_1, y)\, p_n^2(t, x_2, y)\, 
    \mu_n(dy)\, dt
  = 0,
  \quad 
  \forall r > 0,
\end{equation}
then the joint law maps $\ProcLaw_{(X_n^1, X_n^2, \Pi_n)}$
converge to the limiting one.
An analogous result also holds for random spaces,
after suitable modification of the assumptions, 
as in the case of Theorem~\ref{thm: intro STOM rdm}.

\medskip
\textbf{Integration with the framework of resistance metric spaces.} 
Collisions of independent stochastic processes are phenomena
that typically occur in spaces of low spectral dimension.
The framework of \emph{resistance metric spaces} due to Kigami~\cite{Kigami_01_Analysis,Kigami_12_Resistance}
is particularly useful for analyzing such low-dimensional behaviors,
where point recurrence typically holds.
It was originally developed for analysis on fractals,
and recent studies have revealed that this framework is also highly effective 
in describing scaling limits of random walks and related objects on critical random graphs
\cite{Croydon_Hambly_Kumagai_17_Time-changes,Croydon_18_Scaling,Noda_pre_Convergence,Noda_pre_Aging,Noda_pre_Scaling}.

The key advantage of working with measured resistance metric spaces lies in the fact that
each such space naturally carries a symmetric Markov process 
through the theory of Dirichlet forms~\cite{Fukushima_Oshima_Takeda_11_Dirichlet},
and the associated process admits a jointly continuous heat kernel.
Moreover, convergence of measured resistance metric spaces
in Gromov--Hausdorff-type topologies
implies convergence of the corresponding processes and their heat kernels,
under the so-called non-explosion condition (see Section~\ref{sec: Preliminary results on stoch proc on resis sp}).
Consequently, once the convergence of the underlying spaces is established,
the convergence of the associated collision measures 
reduces essentially to verifying two ingredients:
\begin{enumerate} [label = \textup{(\Alph*)}]
  \item \label{item: intro conv of weighting meas}
    the convergence of weighting measures,
  \item \label{item: intro heat cond}
    the heat-kernel condition~\eqref{eq: intro heat cond}.
\end{enumerate}
The latter can be checked by heat kernel estimates based on volume estimates.

\medskip
\textbf{Collisions of variable-speed random walks (VSRWs).}
In Section~\ref{sec: conv of col meas of VSRW}, we develop general convergence theorems (Theorems~\ref{thm: dtm col meas for VSRW} and \ref{thm: rdm col meas for VSRW})
that yields, from scaling limits of electrical networks,
scaling limits of uniform collision measures of two independent variable-speed random walks (VSRWs) living on those networks,
where a VSRW refers to a continuous-time random walk whose jump rate between two vertices is proportional to the network conductance
(see Section~\ref{sec: resistance preliminary} below).
A key point is that, although the canonical weighting measure is defined as the squared invariant measure~\eqref{eq: intro canonical weighting}, 
the invariant measure for VSRWs is the counting measure.
Hence, squaring does not alter it,
and condition~\ref{item: intro conv of weighting meas} automatically follows 
from the convergence of the underlying measured spaces.
This also simplifies condition~\eqref{eq: intro heat cond} via the Chapman--Kolmogorov equation
to the following:
\begin{equation} \label{eq: intro simple heat cond}
  \lim_{\delta \to 0} 
  \limsup_{n \to \infty} 
  \sup_{x_1, x_2 \in S_n^{(r)}}
  \int_0^\delta
  p_n(2t, x_1, y)\, 
  dt
  = 0,
  \quad 
  \forall r > 0.
\end{equation}
(Note that $p_n^1 = p_n^2$ in this setting, since we consider collisions of independent and identically distributed (i.i.d.)\ random walks.)
In Theorems~\ref{thm: dtm col meas for VSRW} and \ref{thm: rdm col meas for VSRW} below,
we provide simple sufficient conditions, expressed in terms of lower volume estimates, to verify the above (and its random version) condition.
The results apply to a broad range of critical random graphs, including 
critical Galton--Watson trees, 
the uniform spanning tree on $\mathbb{Z}^d$ with $d = 2,3$, and on high-dimensional tori, 
the critical Erd\H{o}s--R\'enyi graph, and the critical configuration model
(see Example~\ref{exm: rdm VSRW}).

\medskip
\textbf{Collisions of constant-speed random walks (CSRWs).}
The situation for \emph{constant-speed random walks} (CSRWs) is markedly subtler,
where a CSRW refers to a continuous-time random walk with a constant jump rate equal to $1$.
Here, the invariant measure is the conductance measure 
(which reduces to the degree measure under unit conductances),
so the canonical weighting measure becomes the \emph{squared conductance measure},
whose convergence is genuinely nontrivial and must be analyzed on a model-by-model basis.
Under the assumption that the squared conductance measures converge under 
the same scaling as the invariant measures,
Theorems~\ref{thm: dtm col meas for CSRW} and \ref{thm: rdm col meas for CSRW} below
provide sufficient conditions, again in terms of lower volume estimates, to verify condition~\ref{item: intro heat cond}.
In Section~\ref{sec: Critical Galton--Watson trees},
we apply this result to critical Galton--Watson trees conditioned on size
whose offspring distribution has a finite $(3+\varepsilon)$-moment for some $\varepsilon > 0$,
establishing the convergence of the squared conductance measures 
and, consequently, the scaling limit of uniform collision measures 
of two independent CSRWs on those trees.

CSRWs are closely related to discrete-time random walks,
since they share the same jump chain and hence are expected to exhibit identical scaling behavior.
In this sense, scaling limits of CSRW collision measures naturally indicate
the corresponding limits for discrete-time random walks,
although making this connection rigorous requires additional analysis
because collision measures involve occupation-time quantities
(see~\cite{Noda_pre_Scaling} for a bridging argument transferring the CSRW result 
to its discrete-time counterpart in the case of local times).

\medskip
\textbf{Outlook:\ persistence of inhomogeneities and future directions.}
The study of squared invariant measures appears to be new,
and it provides an intriguing perspective for future research.
For instance, in a version of one-dimensional Bouchaud trap models \cite{Arous_Cerny_06_Dynamics,Bouchaud_Cugliandolo_etc_97_Out,Fontes_Isopi_Newman_02_Random}
where the tail of the trap distribution lies in a regime with finite mean but infinite second moment,
the invariant measure converges to the Lebesgue measure,
whereas the squared measure converges to a limit that is singular with respect to the Lebesgue measure;
namely, the invariant measure of the Fontes--Isopi--Newman diffusion~\cite{Fontes_Isopi_Newman_02_Random}.
This indicates that, while each individual walk homogenizes to Brownian motion, 
the collision sites may still reflect microscopic irregularities of the environment,
accumulating near deep traps.
(A full verification of the heat-kernel condition in this setting remains open,
though recent work~\cite{Andres_Croydon_Kumagai_24_HKEon1dBTM} provides partial evidence towards its validity.)
Similar behavior is also expected in other settings,
such as critical Galton--Watson trees with heavy-tailed offspring distributions
or random graphs with strongly inhomogeneous degree sequences.
Such a phenomenon---where the underlying processes lose their inhomogeneity in the limit
but their collisions retain it---offers a new viewpoint on interactions in disordered systems.
In this sense, collision measures provide a novel observable 
that can detect microscopic irregularities even when homogenization occurs at the process level.
These observations suggest rich possibilities 
for exploring how local inhomogeneities may persist through scaling limits in multi-particle systems.

%%%%%%%%%%%%%%%%%%%%%%%%%%%%%%%%%%%%%%%%%%%%%%%%%%%%%%%%%%%%%%%%%%%%%%%%%%%%%%%%%%%%%%%%%%%%%%%%%%%%%%%%%%%%%%%%%%%%%%%%%%%%%%%%%%
% Notation
%%%%%%%%%%%%%%%%%%%%%%%%%%%%%%%%%%%%%%%%%%%%%%%%%%%%%%%%%%%%%%%%%%%%%%%%%%%%%%%%%%%%%%%%%%%%%%%%%%%%%%%%%%%%%%%%%%%%%%%%%%%%%%%%%%
\subsection{Organization of the paper and notational conventions} \label{sec: notation}

The remainder of the paper is organized as follows.
Section~\ref{sec: Preliminaries on topologies} summarizes the topologies and their metrizations that will be used throughout.
Section~\ref{sec: GH-type topologies} develops a general Gromov--Hausdorff-type framework for metric spaces equipped with additional structures, 
which serves as the basis for formulating the convergence results of the paper.
Section~\ref{sec: Dual processes, PCAFs, and smooth measures} reviews the notion of dual processes, smooth measures, PCAFs, and STOMs, 
together with the Revuz correspondence linking them.
Section~\ref{sec: approx of PCAFs and STOMs} introduces approximation schemes for PCAFs and STOMs 
and establishes continuity of their joint laws with the underlying processes.
Section~\ref{sec: conv of PCAFs and STOMs} presents the main convergence theorems for PCAFs and STOMs on both deterministic and random spaces.
Section~\ref{sec: Collision measures and their convergence} 
applies these results to define collision measures and to prove their convergence.
Section~\ref{sec: Preliminary results on stoch proc on resis sp} and Section~\ref{sec: col of proc on resis sp} 
further specialize the general theory to stochastic processes on resistance metric spaces, 
culminating in concrete scaling limit results for collision measures of variable-speed and constant-speed random walks on critical random graphs.
Finally, several auxiliary results are collected in the appendices, 
including an inverse version of the continuous mapping theorem.

Below we list the notational conventions used throughout this paper.
\begin{enumerate} [label = \textup{(\arabic*)}]
  \item \label{note: integers} 
    We denote by $\ZN$, $\ZNp$, and $\NN$,
    the sets of integers, non-negative integers, and positive integers, respectively.
  \item \label{note: real number}
    We write $\RNp \coloneqq [0, \infty)$ and $\RNpp \coloneqq (0, \infty)$,
    and equip each of these spaces with the usual Euclidean metric.
    The space $[0, \infty]$ is the usual one-point compactification of $[0, \infty)$.
  \item \label{note: max min}
    For $a, b \in \RN \cup \{ \pm \infty \}$,
    we write $a \wedge b \coloneqq \min\{a, b\}$ and $a \vee b \coloneqq \max\{a, b\}$.
  \item \label{note: sup norm}
    For a non-empty set $S$ and $f \colon S \to [-\infty, \infty]$, 
    we write $\|f\|_\infty \coloneqq \sup\{|f(x)| \mid x \in S\}$.
  \item \label{note: Lebesgue}
    We denote by $\Leb$ the Lebesgue measure on $\RN^d$, 
    where the dimension is clear from the context.
  \item \label{note: indicator function}
    For a set $S$ and a subset $S_0 \subseteq S$,
    $\mathbf{1}_{S_0}$ denotes the indicator function of $S_0$,
    i.e., $\mathbf{1}_{S_0}(x) = 1$ for $x \in S_0$, and $\mathbf{1}_{S_0}(x) = 0$ for $x \in S \setminus S_0$.
  \item \label{note: Bore subsets}
    For a topological space $S$, we write $\Borel(S)$ for the set of Borel subsets of $S$.
  \item \label{note: metric space is non-empty}
    When we say that $S$ is a metric space, we always assume that $S$ is nonempty 
    and that its associated metric is denoted by $d_S$, unless otherwise specified.
  \item \label{note: balls in metric space}
    Given a metric space $S$, we write 
    \begin{gather}
      B_S(x, r) = B_{d_S}(x,r) 
      \coloneqq
      \{ y \in S \mid d_S(x,y) < r \},\\
      D_S(x,r) = D_{d_S}(x,r)
      \coloneqq 
      \{ y \in S \mid d_S(x,y) \leq r \}.
    \end{gather}
  \item \label{note: closure}
    Given a topological space $S$ and a subset $A$ of $S$,
    we denote by $\closure(A) = \closure_S(A)$ the  closure of $A$ in $S$.
  \item \label{note: support}
    Let $S$ be a topological space.
    Given a function $f \colon S \to \RN$,
    we write $\supp(f)$ for the (closed) support of $f$.
    Given a Borel measure $\mu$ on $S$,
    we write $\supp(\mu)$ for the (topological) support of $\mu$.
  \item \label{note: identity}
    For a set $A$,
    we denote the identity map from $A$ to itself by $\id_{A}$.
  \item \label{note: product map}
    Given maps $f_i \colon X_i \to Y_i$, $i = 1,2$, 
    we define $f_1 \times f_2 \colon X_1 \times X_2 \to Y_1 \times Y_2$ by setting $(f_1 \times f_2)(x_1, x_2) \coloneqq (f_1(x_1), f_2(x_2))$.
  \item \label{note: product space}
    When $X$ and $Y$ are topological spaces, we always equip $X \times Y$ with the product topology.
    Moreover, if $X$ and $Y$ are metric spaces, then we always equip $X \times Y$ with the \emph{max product metric} defined as follows: 
    \begin{equation}
      d_{X \times Y}((x_1, y_1), (x_2, y_2)) \coloneqq d_X(x_1, x_2) \vee d_Y(y_1, y_2).
    \end{equation}
  \item \label{note: topological embedding}
    We say that a map $f \colon X \to Y$ between topological spaces is a \emph{topological embedding} 
    if and only if it is a homeomorphism onto its image with the relative topology.
  \item \label{note: isometric embedding}
    We say that a map $f \colon X \to Y$ between metric spaces is an \emph{isometric embedding} (resp.\ \emph{isometry})
    if and only if it is distance-preserving (resp.\ distance-preserving and bijective).

  \item \label{note: bcm}
    A metric space is said to be boundedly compact if every closed and bounded subset is compact.
    We refer to such a space as a \emph{$\bcmAB$ space}.
\end{enumerate}

%%%%%%%%%%%%%%%%%%%%%%%%%%%%%%%%%%%%%%%%%%%%%%%%%%%%%%%%%%%%%%%%%%%%%%%%%%%%%%%%%%%%%%%%%%%%%%%%%%%%%%%%%%%%%%%%%%%%%%%%%%%%%%%%%%
%%%%%%%%%%%%%%%%%%%%%%%%%%%%%%%%%%%%%%%%%%%%%%%%%%%%%%%%%%%%%%%%%%%%%%%%%%%%%%%%%%%%%%%%%%%%%%%%%%%%%%%%%%%%%%%%%%%%%%%%%%%%%%%%%%
% Topological setting
%%%%%%%%%%%%%%%%%%%%%%%%%%%%%%%%%%%%%%%%%%%%%%%%%%%%%%%%%%%%%%%%%%%%%%%%%%%%%%%%%%%%%%%%%%%%%%%%%%%%%%%%%%%%%%%%%%%%%%%%%%%%%%%%%%
%%%%%%%%%%%%%%%%%%%%%%%%%%%%%%%%%%%%%%%%%%%%%%%%%%%%%%%%%%%%%%%%%%%%%%%%%%%%%%%%%%%%%%%%%%%%%%%%%%%%%%%%%%%%%%%%%%%%%%%%%%%%%%%%%%
\section{Preliminaries on topologies and metrizations} \label{sec: Preliminaries on topologies}

This paper deals with various objects such as measures, stochastic processes, and heat kernels. 
The aim of this section is to summarize the topologies on these objects and their metrizations,
which will be used throughout the paper.
This section is based on \cite[Section~3]{Noda_pre_Metrization}.

%%%%%%%%%%%%%%%%%%%%%%%%%%%%%%%%%%%%%%%%%%%%%%%%%%%%%%%%%%%%%%%%%%%%%%%%%%%%%%%%%%%%%%%%%%%%%%%%%%%%%%%%%%%%%%%%%%%%%%%%%%%%%%%%%%
% Topologies on the set of $\cadlag$ functions
%%%%%%%%%%%%%%%%%%%%%%%%%%%%%%%%%%%%%%%%%%%%%%%%%%%%%%%%%%%%%%%%%%%%%%%%%%%%%%%%%%%%%%%%%%%%%%%%%%%%%%%%%%%%%%%%%%%%%%%%%%%%%%%%%%
\subsection{The Hausdorff and Fell topologies} \label{sec: The Hausdorff and Fell topologies}

In this subsection, we recall the Hausdorff and Fell topologies, 
which are standard topologies on the collections of compact and closed subsets, respectively. 
For detailed properties of these topologies as metric spaces, see \cite[Section~2.1]{Noda_pre_Metrization}, 
and for further topological background see \cite[Appendix~C]{Molchanov_17_Theory}.
In this subsection, we fix a $\bcmAB$ space $(S, d_S)$.

\begin{dfn} \label{dfn: space of compact and closed subsets}
  We define $\Closed{S}$ to be the set of closed subsets of $S$.
  We denote by $\Compact{S}$ the subset of $\Closed{S}$ consisting of compact subsets.
  (NB.\ Both sets include the empty set.)
\end{dfn}

We equip $\Compact{S}$ with the Hausdorff metric $\HausMet{S}$.
To recall it,
we write, for each subset $A \subseteq S$ and $\varepsilon \geq 0$,
\begin{equation}  \label{eq: e-neighborhood}
  A^{\lrangle{\varepsilon}}
  \coloneqq
  \{
    x \in S \mid \exists y \in A\ \text{such that}\ d_S(x,y) \leq \varepsilon
  \},
\end{equation}
which is the \textit{(closed) $\varepsilon$-neighborhood} of $A$ in $S$.
The Hausdorff metric $\HausMet{S}$ on $\Compact{S}$ is then defined by
\begin{equation} \label{eq: dfn of Hausdorff metric}
  \HausMet{S}(A,B)
  \coloneqq
  \inf \!\left\{
    \varepsilon \geq 0 \mid A \subseteq B^{\lrangle{\varepsilon}},\, B \subseteq A^{\lrangle{\varepsilon}}
  \right\},
\end{equation} 
where we set the infimum over the empty set to be $\infty$.
The function $\HausMet{S}$ is indeed a complete and separable (extended) metric on $\Compact{S}$
(see \cite[Section~17.6]{Cech_69_Point}).
(Note that the distance between the empty set and a non-empty set is always infinite.)
We call the topology on $\Compact{S}$ induced by $\HausMet{S}$ the \textit{Hausdorff topology}
(also known as the \emph{Vietoris topology}).

We now fix a distinguished element $\rho \in S$, which we call the \emph{root} of $S$.
To extend $\HausMet{S}$ to a metric on $\Closed{S}$,
we write, for each $A \in \Closed{S}$ and $r > 0$, 
\begin{equation}
  A|_\rho^{(r)} \coloneqq A \cap D_S(\rho, r).
\end{equation}
When the root is clear from the context, we write $A^{(r)} \coloneqq A|_\rho^{(r)}$.
We then define, for each $A, B \in \Closed{S}$,  
\begin{equation}
  \FellMet{S, \rho}(A, B) 
  \coloneqq 
  \int_0^\infty e^{-r} \bigl(1 \wedge \HausMet{S}(A|_\rho^{(r)}, B|_\rho^{(r)}) \bigr)\, dr.
\end{equation}
The function $\FellMet{S, \rho}$ is a metric on $\Closed{S}$ (\cite[Theorem~3.8]{Noda_pre_Metrization}) 
and induces the Fell topology
(see \cite{Molchanov_17_Theory} for the Fell topology, for example).
In particular, the topology is compact and only depends on the topology on $S$.

%%%%%%%%%%%%%%%%%%%%%%%%%%%%%%%%%%%%%%%%%%%%%%%%%%%%%%%%%%%%%%%%%%%%%%%%%%%%%%%%%%%%%%%%%%%%%%%%%%%%%%%%%%%%%%%%%%%%%%%%%%%%%%%%%%
% The weak and vague topologies
%%%%%%%%%%%%%%%%%%%%%%%%%%%%%%%%%%%%%%%%%%%%%%%%%%%%%%%%%%%%%%%%%%%%%%%%%%%%%%%%%%%%%%%%%%%%%%%%%%%%%%%%%%%%%%%%%%%%%%%%%%%%%%%%%%
\subsection{The weak and vague topologies} \label{sec: The weak and vague topologies}

We use the weak and vague topologies to discuss convergence of probability measures and Radon measures, respectively. 
In this subsection, we recall the definitions and metrizations of these topologies. 
For background, see \cite{Billingsley_99_Convergence} for the weak topology and \cite{Kallenberg_17_Random} for the vague topology.
We fix a separable metric space $(S, d_S)$.

\begin{dfn} 
  We define $\Meas(S)$ to be the set of Radon measures $\mu$ on $S$,
  that is, 
  $\mu$ is a Borel measure on $S$ such that $\mu(K) < \infty$ for every compact subset $K$.
  We denote by $\finMeas(S)$ (resp.\ $\Prob(S)$) 
  the subset of $\Meas(S)$ consisting of finite Borel measures (resp.\ probability measures).
\end{dfn}

A commonly used metric on $\finMeas(S)$ is the \emph{Prohorov metric},
given as follows:
for $\mu, \nu \in \finMeas(S)$,
\begin{equation}
  \ProhMet{S}(\mu, \nu)
  \coloneqq
  \inf
  \left\{
    \varepsilon > 0 \mid
    \mu(A) \leq \nu(A^{\lrangle{\varepsilon}}) + \varepsilon,\,
    \nu(A) \leq \mu(A^{\lrangle{\varepsilon}}) + \varepsilon
    \ \text{for all Borel subsets } A \subseteq S
  \right\}.
\end{equation} 
The Prohorov metric $\ProhMet{S}$ induces the weak topology on $\finMeas(S)$,
that is, finite Borel measures $\mu_n$ converge to $\mu$ in $\finMeas(S)$ if and only if,
for all bounded continuous function $f \colon S \to \RN$,
\begin{equation}
  \lim_{n \to \infty} 
  \int_S f(x)\, \mu_n(dx) 
  = 
  \int_S f(x)\, \mu(dx).
\end{equation}
We also equip $\Prob(S)$ with the Prohorov metric. 

The following lemma is useful for estimating the Prohorov distance 
between the laws of random elements defined on a common probability space.

\begin{lem} [{\cite[Lemma~6.6]{Noda_pre_Aging}}] \label{lem: Prohorov estimate}
  Let $X$ and $Y$ be random elements of $S$ defined on a common probability space with probability measure $P$.
  Suppose that on an event $E$, we have $d_S(X, Y) \leq \varepsilon$ almost surely.
  Then it holds that 
  \begin{equation}
    \ProhMet{S}
    \bigl(
      P(X \in \cdot), P(Y \in \cdot)
    \bigr)
    \leq 
    P(E^{c}) + \varepsilon.
  \end{equation}
\end{lem}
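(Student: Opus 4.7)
The plan is to argue directly from the definition of the Prohorov metric. Setting $\delta \coloneqq P(E^{c}) + \varepsilon$, it will suffice to verify that for every Borel subset $A \subseteq S$ the two inequalities
\begin{equation}
  P(X \in A) \leq P(Y \in A^{\lrangle{\delta}}) + \delta,
  \qquad
  P(Y \in A) \leq P(X \in A^{\lrangle{\delta}}) + \delta
\end{equation}
hold simultaneously, since then the defining infimum for $\ProhMet{S}(P(X \in \cdot), P(Y \in \cdot))$ is at most $\delta$.

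For the first inequality, my plan is to decompose according to whether $E$ occurs:
\begin{equation}
  P(X \in A)
  = P(\{X \in A\} \cap E) + P(\{X \in A\} \cap E^{c})
  \leq P(\{X \in A\} \cap E) + P(E^{c}).
\end{equation}
On $E$ the hypothesis gives $d_S(X, Y) \leq \varepsilon$ almost surely, so $X \in A$ forces $Y \in A^{\lrangle{\varepsilon}}$ by the very definition of the closed $\varepsilon$-neighborhood in \eqref{eq: e-neighborhood}. Hence $P(\{X \in A\} \cap E) \leq P(Y \in A^{\lrangle{\varepsilon}})$, and combining this with the monotonicity $A^{\lrangle{\varepsilon}} \subseteq A^{\lrangle{\delta}}$ (valid since $\varepsilon \leq \delta$) yields
\begin{equation}
  P(X \in A) \leq P(Y \in A^{\lrangle{\delta}}) + P(E^{c}) \leq P(Y \in A^{\lrangle{\delta}}) + \delta.
\end{equation}
The symmetric inequality will follow by the identical argument with the roles of $X$ and $Y$ interchanged.

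No real obstacle is anticipated: this is a standard coupling-type estimate. The only minor points to verify are the measurability of $\{Y \in A^{\lrangle{\varepsilon}}\}$ — which holds because $A^{\lrangle{\varepsilon}}$ is closed and hence Borel — and the correct reading of the almost-sure clause, namely $P(E \setminus \{d_S(X, Y) \leq \varepsilon\}) = 0$, which does not affect the inequalities above since the discarded subset of $E$ carries zero probability.
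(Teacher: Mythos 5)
Your argument is correct and is the standard coupling bound for the Prohorov metric: decompose over $E$ and $E^{c}$, use $d_S(X,Y)\le\varepsilon$ on $E$ to push $\{X\in A\}\cap E$ into $\{Y\in A^{\lrangle{\varepsilon}}\}$, and enlarge $\varepsilon$ to $\delta$ by monotonicity of neighborhoods. The paper gives no proof of this lemma (it cites \cite[Lemma~6.6]{Noda_pre_Aging}), so there is nothing to diverge from; this is exactly the expected route. The one inaccuracy is the parenthetical claim that $A^{\lrangle{\varepsilon}}$ is closed: with the definition \eqref{eq: e-neighborhood} it equals $\bigcup_{y\in A} D_S(y,\varepsilon)$, which is closed for \emph{closed} $A$ in a boundedly compact space but not for general Borel $A$ (for $A=(0,1)\subseteq\RN$ and $\varepsilon=1$ one gets $(-1,2)$). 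This wrinkle is already built into the paper's definition of $\ProhMet{S}$ and is harmless here: it suffices to verify the two inequalities for closed $A$ (by inner regularity, using $F\subseteq A$ closed and $F^{\lrangle{\varepsilon}}\subseteq A^{\lrangle{\varepsilon}}$), for which your argument applies verbatim, so the proof stands.
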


We now assume that $(S, d_S)$ is boundedly compact 
and fix an element $\rho \in S$ as the root of $S$. 
We extend the Prohorov metric to $\Meas(S)$ in a way analogous to the metric $\FellMet{S,\rho}$ for the Fell topology.
We write, for each $\mu \in \Meas(S)$ and $r > 0$, 
\begin{equation} \label{eq: restriction of meas}
  \mu|_\rho^{(r)}(\cdot) \coloneqq \mu(D_S(\rho, r) \cap \cdot).
\end{equation}
When the root is clear from the context, we write $\mu^{(r)} \coloneqq \mu|_\rho^{(r)}$.
We then define, for each $\mu, \nu \in \Meas(S)$,  
\begin{equation} \label{eq: dfn of vague metric}
  \VagueMet{S, \rho}(\mu, \nu) 
  \coloneqq 
  \int_0^\infty e^{-r} \bigl(1 \wedge \ProhMet{S}(\mu|_\rho^{(r)}, \nu|_\rho^{(r)}) \bigr)\, dr.
\end{equation}
The function $\VagueMet{S,\rho}$ is a complete and separable metric on $\Meas(S)$ and induces the vague topology, that is,
Radon measures $\mu_n$ converge to $\mu$ with respect to $\VagueMet{S,\rho}$ if and only if,
for all compactly-supported continuous function $f \colon S \to \RN$, 
\begin{equation}
  \lim_{n \to \infty} 
  \int_S f(x)\, \mu_n(dx) 
  = 
  \int_S f(x)\, \mu(dx)
\end{equation}
(see \cite[Theorems~3.9 and 3.10]{Noda_pre_Metrization}).

We introduce the smooth truncation of measures, which will be used later. 
For each measure $\mu \in \Meas(S)$ and $R > 0$, we define 
\begin{equation} \label{eq: smooth truncation}
  \tilde{\mu}^{(R)}(dx) 
  \coloneqq 
  \int_{R-1}^{R} \mathbf{1}_{S^{(r)}}(x)\, dr\, \mu(dx).
\end{equation}
In general, the restriction of measures in the form of \eqref{eq: restriction of meas} may fail to be continuous 
with respect to the vague convergence at some radii. 
In contrast, the above truncation is smooth in the following sense.

\begin{lem} \label{lem: smooth truncation of meas}
  The map $\Meas(S) \times (1, \infty) \ni (\mu, R) \mapsto \tilde{\mu}^{(R)} \in \finMeas(S)$ is continuous.
\end{lem}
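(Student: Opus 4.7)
The plan is to rewrite the smooth truncation as a weighted measure against $\mu$. Carrying out the inner integral in \eqref{eq: smooth truncation} gives
\begin{equation}
\tilde{\mu}^{(R)}(dx) = \varphi_R(d_S(\rho, x))\, \mu(dx),
\end{equation}
where $\varphi_R \colon \RNp \to [0,1]$ is the piecewise linear cutoff $\varphi_R(s) \coloneqq 0 \vee ((R - s) \wedge 1)$. This function is $1$-Lipschitz in $s$ and supported in $[0, R]$, and the family is jointly Lipschitz in $R$: $\|\varphi_{R_1} - \varphi_{R_2}\|_\infty \leq |R_1 - R_2|$. In particular, $\tilde{\mu}^{(R)}$ is supported in the compact set $D_S(\rho, R)$ and is a finite measure, so the codomain $\finMeas(S)$ is correct.

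To prove continuity, I would take a convergent sequence $(\mu_n, R_n) \to (\mu, R)$ in $\Meas(S) \times (1, \infty)$, fix any bounded continuous $f \colon S \to \RN$, and show $\int f\, d\tilde{\mu}_n^{(R_n)} \to \int f\, d\tilde{\mu}^{(R)}$; since the weak topology on $\finMeas(S)$ is metrized by $\ProhMet{S}$, this suffices. Writing $g_R(x) \coloneqq f(x)\varphi_R(d_S(\rho, x))$, the difference splits as
\begin{equation}
\int g_{R_n}\, d\mu_n - \int g_R\, d\mu = \int (g_{R_n} - g_R)\, d\mu_n + \int g_R\, d(\mu_n - \mu).
\end{equation}
The function $g_R$ is continuous and supported in the compact set $D_S(\rho, R)$, so the second term tends to zero by the vague convergence $\mu_n \to \mu$.

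For the first term, choose $R' > R$ with $R_n \leq R'$ eventually; then $g_{R_n} - g_R$ is supported in $D_S(\rho, R')$ and satisfies $\|g_{R_n} - g_R\|_\infty \leq \|f\|_\infty |R_n - R|$, so the term is bounded in absolute value by $\|f\|_\infty \cdot |R_n - R| \cdot \mu_n(D_S(\rho, R'))$. The main step is therefore to establish the uniform bound $\sup_n \mu_n(D_S(\rho, R')) < \infty$, which is the only point that requires care. This is obtained by picking a compactly supported continuous function $h \colon S \to [0,1]$ dominating $\mathbf{1}_{D_S(\rho, R')}$ (for instance, $h(x) \coloneqq \varphi_{R'+1}(d_S(\rho, x))$): vague convergence $\mu_n \to \mu$ yields $\int h\, d\mu_n \to \int h\, d\mu < \infty$, so the sequence is bounded, giving $\sup_n \mu_n(D_S(\rho, R')) \leq \sup_n \int h\, d\mu_n < \infty$. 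Combining the two estimates completes the argument.
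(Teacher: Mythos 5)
Your proof is correct and follows essentially the same route as the paper: both reduce the problem to the uniform convergence of the cutoff functions $\chi^{(R_n)} \to \chi^{(R)}$ (your explicit formula $\varphi_R(d_S(\rho,\cdot))$ is just a closed form for the paper's $\chi^{(R)}$, with the sharper Lipschitz constant $1$ in place of $2$) combined with vague convergence of the measures. The only difference is that you spell out the step the paper dismisses with ``one readily verifies,'' namely the splitting of the integral and the uniform mass bound $\sup_n \mu_n(D_S(\rho,R')) < \infty$ via a dominating compactly supported continuous function, which is exactly the right way to fill that gap.
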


\begin{proof}
  For each $R > 1$, we set 
  \begin{equation} \label{lem pr: smooth truncation of meas. 1}
    \chi^{(R)}(x) \coloneqq \int_{R-1}^R \mathbf{1}_{S^{(r)}}(x)\, dr, 
    \qquad x \in S.
  \end{equation}
  It is straightforward to check that each $\chi^{(R)}$ is continuous.
  Indeed, if $x_n \to x$ in $S$, then $\mathbf{1}_{S^{(r)}}(x_n) \to \mathbf{1}_{S^{(r)}}(x)$ 
  for all $r > 0$ except possibly for $r = d_S(\rho, x)$.
  Moreover, we readily verify that 
  \begin{equation}
    \sup_{x \in S} \bigl| \chi^{(R)}(x) - \chi^{(R')}(x) \bigr| 
    \leq 2 |R - R'|.
  \end{equation}

  Suppose that a sequence $(\mu_n, R_n)_{n \geq 1}$ converges to an element $(\mu, R)$ 
  in $\Meas(S) \times (1, \infty)$.
  Fix a bounded and continuous function $f$ on $S$.
  The above inequality implies that $\chi^{(R_n)} \to \chi^{(R)}$ uniformly on $S$.
  Using this, one readily verifies that 
  \begin{equation}
    \lim_{n \to \infty} \int_S f(x)\, \tilde{\mu}_n^{(R_n)}(dx)
    =
    \int_S f(x)\, \tilde{\mu}^{(R)}(dx),
  \end{equation}
  which yields the desired conclusion.
  (The above convergence also follows from Lemma~\ref{lem: vague convergence and hatC topology} below.)
\end{proof}

%%%%%%%%%%%%%%%%%%%%%%%%%%%%%%%%%%%%%%%%%%%%%%%%%%%%%%%%%%%%%%%%%%%%%%%%%%%%%%%%%%%%%%%%%%%%%%%%%%%%%%%%%%%%%%%%%%%%%%%%%%%%%%%%%%
% The compact-convergence topology with variable domains
%%%%%%%%%%%%%%%%%%%%%%%%%%%%%%%%%%%%%%%%%%%%%%%%%%%%%%%%%%%%%%%%%%%%%%%%%%%%%%%%%%%%%%%%%%%%%%%%%%%%%%%%%%%%%%%%%%%%%%%%%%%%%%%%%%
\subsection{The compact-convergence topology with variable domains} \label{sec: The compact-convergence topology with variable domains}

Fix a $\bcmAB$ space $(S, d_S)$ and a Polish space $\Xi$. 
We write $C(S, \Xi)$ for the space of continuous functions $f \colon S \to \Xi$, equipped with the compact-convergence topology. 
Recall that convergence in this topology is characterized as follows: 
$f_n \to f$ if and only if $f_n \to f$ uniformly on every compact subset of $S$. 
The purpose of this subsection is to introduce an extension of this topology,
which enables us to handle continuous functions defined on possibly different domains.
Such an extension is needed for discussing convergence of heat kernels of stochastic processes on different spaces, 
as will become clear in our main results in Section~\ref{sec: conv of PCAFs and STOMs}. 
For further details of this extended topology, see \cite[Section~3.3]{Noda_pre_Metrization}.

We first introduce the spaces of interest. 
Recall from Definition~\ref{dfn: space of compact and closed subsets} that 
$\Closed{S}$ denotes the collection of closed subsets of $S$, including the empty set.

\begin{dfn}
  We define
  \begin{equation}
    \hatC(S,\Xi)
    \coloneqq
    \bigcup_{F \in \Closed{S}} C(F, \Xi).
  \end{equation}
  Note that $\hatC(S,\Xi)$ contains the empty map 
  $\emptyset_{\Xi} \colon \emptyset \to \Xi$. 
  For a function $f$, we write $\dom(f)$ for its domain. 
  We then define $\hatCc(S,\Xi)$ to be the subset of $\hatC(S,\Xi)$ 
  consisting of those functions $f$ whose domain $\dom(f)$ is compact.
\end{dfn}

Below, we introduce metrics on $\hatCc(S, \Xi)$ and $\hatC(S, \Xi)$.
For each function $f$,
we write $\graphmap(f)$ for its graph, i.e.,
\begin{equation} \label{eq: def of graphmap}
  \graphmap(f) \coloneqq \{(x, f(x)) \in S \times \Xi \mid x \in \dom(f)\}.
\end{equation}
We then define
\begin{equation}
  \hatCcMet{S}{\Xi}(f, g)
  \coloneqq
  \HausMet{S \times \Xi}(\graphmap(f), \graphmap(g)),
  \quad
  f, g \in \hatCc(S,\Xi).
\end{equation}
Here, $\HausMet{S \times \Xi}$ denotes the Hausdorff metric on $\Compact{S \times \Xi}$,
the set of compact subsets of $S \times \Xi$.
Fix an element $\rho \in S$ as the root of $S$.
For each $f \in \hatC(S, \Xi)$ and $r > 0$,
we define $f|_\rho^{(r)} \in \hatCc(S, \Xi)$ by setting 
\begin{equation}
  f|_\rho^{(r)} \coloneqq f|_{D_S(\rho, r)}.
\end{equation}
When the root is clear from the context, we write $f^{(r)} \coloneqq f|_\rho^{(r)}$.
We then define, for each $f, g \in \hatC(S, \Xi)$,
\begin{equation} \label{eq: dfn of hatC metric}
  \hatCMet{(S, \rho)}{\Xi}(f, g)
  \coloneqq 
  \int_0^\infty e^{-r} \bigl(1 \wedge \hatCcMet{S}{\Xi}(f|_\rho^{(r)}, g|_\rho^{(r)})\bigr)\, dr.
\end{equation}

The functions $\hatCcMet{S}{\Xi}$ and $\hatCMet{(S, \rho)}{\Xi}$ are metrics on $\hatCc(S, \Xi)$ and $\hatC(S, \Xi)$, respectively, 
and they induce Polish topologies. 
(NB.\ Neither metric is necessarily complete.) 
The topology on $\hatC(S, \Xi)$ induced by $\hatCMet{(S, \rho)}{\Xi}$ is independent of the choice of $\rho$.
(See \cite[Section~3.3]{Noda_pre_Metrization} for these results). 

\begin{lem} [{\cite[Theorems~3.30 and 3.36]{Noda_pre_Metrization}}] \label{lem: conv in hatC}
  Let $f, f_1, f_2, \dots$ be elements of $\hatCc(S, \Xi)$ (resp.\ $\hatC(S, \Xi)$).
  Then the following statements are equivalent.
  \begin{enumerate} [label = \textup{(\roman*)}, leftmargin = *]
    \item \label{lem item: 1. conv in hatC} 
      The functions $f_{n}$ converge to $f$ in $\hatCc(S, \Xi)$ (resp.\ $\hatC(S, \Xi)$).
    \item \label{lem item: 2. conv in hatC}
      The sets $\dom(f_n) \to \dom(f)$ in the Hausdorff (resp.\ Fell) topology as subsets of $S$,
      and, for any $x_n \in \dom(f_n)$ and $x \in \dom(f)$ such that $x_n \to x$ in $S$,
      it holds that $f_n(x_n) \to f(x)$ in $\Xi$.
    \item \label{lem item: 3. conv in hatC}
      The sets $\dom(f_{n})$ converge to $\dom(f)$ in the Hausdorff (resp.\ Fell) topology as subsets of $S$,
      and there exist functions $g_{n}, g \in C(S, \Xi)$ such that $g_{n}|_{\dom(f_{n})} = f_{n}$, $g|_{\dom(f)} = f$
      and $g_{n} \to g$ in the compact-convergence topology.
  \end{enumerate}
\end{lem}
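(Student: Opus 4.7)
The plan is to establish the equivalences first in the compact-domain case for $\hatCc(S,\Xi)$, and then derive the closed-domain case for $\hatC(S,\Xi)$ via the integral definition \eqref{eq: dfn of hatC metric}. This reduction is essentially of Fubini type: convergence in $\hatC(S,\Xi)$ is equivalent to $\hatCcMet{S}{\Xi}(f_n|^{(r)}, f|^{(r)}) \to 0$ for Lebesgue-a.e.\ $r > 0$, so that once the compact case is in hand, each of \ref{lem item: 1. conv in hatC}--\ref{lem item: 3. conv in hatC} in the Fell case follows by checking the compact case on a sequence of radii exhausting $\RNpp$ and avoiding the countably many ``bad'' radii at which $\dom(f) \cap \partial D_S(\rho, r)$ causes discontinuities.

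For the compact case I would first prove \ref{lem item: 1. conv in hatC} $\Leftrightarrow$ \ref{lem item: 2. conv in hatC} directly. The forward direction uses that Hausdorff convergence $\graphmap(f_n) \to \graphmap(f)$ in $\Compact{S \times \Xi}$ descends, via the continuous projection $\pi_1 \colon S \times \Xi \to S$, to Hausdorff convergence of the domains. For the pointwise claim, given $x_n \in \dom(f_n)$ with $x_n \to x \in \dom(f)$, Hausdorff convergence of the graphs supplies points $(y_n, f(y_n)) \in \graphmap(f)$ with $d_{S \times \Xi}\bigl((x_n, f_n(x_n)), (y_n, f(y_n))\bigr) \to 0$; then $y_n \to x$ and continuity of $f$ at $x$ force $f_n(x_n) \to f(x)$. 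The reverse implication is by contradiction: if $\HausMet{S\times\Xi}(\graphmap(f_n), \graphmap(f)) \not\to 0$, extract a subsequence along which this Hausdorff distance stays bounded below by some $\varepsilon > 0$; using Hausdorff convergence of the domains together with compactness, pass to a further subsequence along which the offending graph points converge in $S \times \Xi$, and then apply \ref{lem item: 2. conv in hatC} to contradict the persistence of $\varepsilon$.

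The implication \ref{lem item: 3. conv in hatC} $\Rightarrow$ \ref{lem item: 2. conv in hatC} is nearly immediate: uniform convergence of the extensions $g_n \to g$ on a compact set containing all domains eventually gives $f_n(x_n) = g_n(x_n) \to g(x) = f(x)$ whenever $x_n \to x$. The crux is the converse \ref{lem item: 2. conv in hatC} $\Rightarrow$ \ref{lem item: 3. conv in hatC}, producing global extensions $g_n, g \in C(S, \Xi)$ with the right convergence behavior. My plan here is to isometrically embed $\Xi$ into a Banach space $Y$ (via the Kuratowski embedding), apply Dugundji's extension theorem to extend $f_n, f$ to $\tilde g_n, \tilde g \in C(S, Y)$ using a common partition-of-unity formula on the complements of the domains, and then compose with a continuous retraction from a neighborhood of the relevant compact image of $\Xi$ back onto $\Xi$ (such a retraction exists by Wojdys{\l}awski's theorem, since compact metric spaces are absolute retracts). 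Condition \ref{lem item: 2. conv in hatC}, combined with Hausdorff convergence of the domains, is precisely what makes the two parallel partition-of-unity extensions converge uniformly on compacts.

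The main obstacle is this last construction: for a general Polish $\Xi$ without absolute-retract structure there is no canonical way to extend continuous maps from a closed subset of $S$, and the embed--extend--retract detour above requires care to ensure that the extensions remain $\Xi$-valued and that their compact-convergence mode of convergence is preserved after projection. In most applications in the body of the paper, $\Xi$ is $\RN$, $\RNp$, or a similarly structured space where Tietze's theorem applies directly and extensions can be constructed explicitly; the verification in the general Polish case, needed for targets such as $[0, \infty]$ or Polish measure spaces, is the technical heart of the proof.
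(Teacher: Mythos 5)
The paper does not actually prove this lemma: it is imported verbatim from \cite[Theorems~3.30 and 3.36]{Noda_pre_Metrization}, so there is no in-paper argument to compare against and your proposal must be judged on its own terms. Your handling of (i) $\Leftrightarrow$ (ii) in the compact case (projection of graphs for the domains, graph-approximation plus continuity of $f$ for the pointwise claim, and the compactness/contradiction argument for the converse), of (iii) $\Rightarrow$ (ii), and of the reduction from $\hatC(S,\Xi)$ to $\hatCc(S,\Xi)$ via the integral metric while avoiding the countably many bad radii, is all sound in outline.

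The genuine gap is in (ii) $\Rightarrow$ (iii). Your justification of the retraction step --- that a retraction onto $\Xi$ exists ``by Wojdys{\l}awski's theorem, since compact metric spaces are absolute retracts'' --- rests on a false statement: compact metric spaces are not absolute retracts in general (the circle, the Cantor set, and the two-point discrete space are all counterexamples), and Wojdys{\l}awski's theorem concerns hyperspaces of Peano continua, not retracts of compacta. More importantly, no repair is possible at the stated level of generality, because (i) $\Rightarrow$ (iii) is simply false for an arbitrary Polish $\Xi$: take $S = [0,1]$, $\Xi = \{0,1\}$ with the discrete topology, $\dom(f) = \{0,1\}$ with $f(0)=0$ and $f(1)=1$, and $f_n \equiv f$. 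Then (i) and (ii) hold trivially, but no $g \in C([0,1],\{0,1\})$ can extend $f$, since $[0,1]$ is connected. So the statement as transcribed is missing a hypothesis on $\Xi$ for clause (iii) (the source must require something like $\Xi$ being a convex subset of a locally convex space, or an absolute extensor), and any correct proof of (ii) $\Rightarrow$ (iii) must invoke that extra structure; for the targets actually used in this paper ($\RN$, $\RNp$, spaces of probability measures) Dugundji's theorem applies directly and yields $\Xi$-valued extensions with no retraction. Finally, even granting extendability, your claim that two ``parallel'' partition-of-unity extensions converge uniformly on compacts is not automatic, since the Whitney covers of $S \setminus \dom(f_n)$ vary with $n$; a cleaner route is to fix one extension $g$ of $f$ and perturb it near $\dom(f_n)$ by the discrepancy $f_n - g|_{\dom(f_n)}$, which is uniformly small by the already-established graph convergence, cut off by a Urysohn function supported near $\dom(f_n)$.
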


By the above result, the inclusion map from $C(S, \Xi)$ into $\hatC(S, \Xi)$ is a topological embedding, 
which shows that the topology on $C(S, \Xi)$ is a natural extension of the compact-convergence topology 
(see \cite[Corollary~3.37]{Noda_pre_Metrization}).

The following result extends the classical fact that uniform convergence preserves continuity of functions.

\begin{prop} \label{prop: conti is preserved in hatC}
  Let $(f_n)_{n \geq 1}$ be a sequence in $\hatC(S, \Xi)$
  and $f$ be a function from a closed subset $\dom(f) \subseteq S$ to $\Xi$.
  Assume that the following conditions are satisfied.
  \begin{enumerate} [label = \textup{(\roman*)}]
    \item \label{prop item: 1. conti is preserved in hatC}
      The domains $\dom(f_n)$ converge to $\dom(f)$ in the Fell topology.
    \item \label{prop item: 2. conti is preserved in hatC}
      If elements $x_n \in \dom(f_n)$ converge to $x \in \dom(f)$,
      then $f_n(x_n) \to f(x)$ in $\Xi$.
  \end{enumerate}
  Then the function $f$ is continuous and $f_n \to f$ in $\hatC(S, \Xi)$.
\end{prop}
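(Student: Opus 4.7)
The plan is to first establish that $f$ is continuous on its (closed) domain so that $f \in \hatC(S, \Xi)$, and then to invoke the equivalence \ref{lem item: 1. conv in hatC} $\Leftrightarrow$ \ref{lem item: 2. conv in hatC} of Lemma~\ref{lem: conv in hatC} in the $\hatC$ case to deduce $f_n \to f$ in $\hatC(S, \Xi)$. The case $\dom(f) = \emptyset$ is trivial (continuity is vacuous and Lemma~\ref{lem: conv in hatC} applies with vacuous condition~\ref{lem item: 2. conv in hatC}), so I focus on the nonempty case.

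The main, and essentially the only, obstacle is the continuity of $f$. Fix $x \in \dom(f)$ and a sequence $x_k \in \dom(f)$ with $x_k \to x$ in $S$; the goal is $f(x_k) \to f(x)$. By hypothesis~\ref{prop item: 1. conti is preserved in hatC} together with the standard characterization of Fell convergence on a $\bcmAB$ space (every point of a Fell limit is the limit of points chosen from the approximating sets), for each fixed $k$ there exist $y_{n, k} \in \dom(f_n)$ with $y_{n, k} \to x_k$ in $S$ as $n \to \infty$. Hypothesis~\ref{prop item: 2. conti is preserved in hatC}, applied to the full-index sequence $(y_{n, k})_{n \geq 1}$, then yields $f_n(y_{n, k}) \to f(x_k)$ in $\Xi$ as $n \to \infty$, for each fixed $k$.

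A diagonal extraction now completes the continuity argument. Fix a compatible metric $d_\Xi$ on $\Xi$ and, for each $k$, choose $n(k)$ strictly increasing in $k$ with $d_S(y_{n(k), k}, x_k) \leq 1/k$ and $d_\Xi(f_{n(k)}(y_{n(k), k}), f(x_k)) \leq 1/k$. Since $x_k \to x$, the first bound gives $y_{n(k), k} \to x$ in $S$. To invoke hypothesis~\ref{prop item: 2. conti is preserved in hatC} along this subsequence, I extend it to a full sequence $(w_n)_{n \geq 1}$ by setting $w_{n(k)} \coloneqq y_{n(k), k}$ and taking $w_n \in \dom(f_n)$ to be a Fell approximant of $x$ for the remaining indices, using hypothesis~\ref{prop item: 1. conti is preserved in hatC} again; then $w_n \to x$, so~\ref{prop item: 2. conti is preserved in hatC} yields $f_n(w_n) \to f(x)$ and, in particular, $f_{n(k)}(y_{n(k), k}) \to f(x)$. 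Combined with the bound $d_\Xi(f_{n(k)}(y_{n(k), k}), f(x_k)) \leq 1/k$, this forces $f(x_k) \to f(x)$, establishing the continuity of $f$.

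With $f$ continuous on its closed domain, $f \in \hatC(S, \Xi)$, and hypotheses~\ref{prop item: 1. conti is preserved in hatC}--\ref{prop item: 2. conti is preserved in hatC} are then precisely condition~\ref{lem item: 2. conv in hatC} of Lemma~\ref{lem: conv in hatC}. The equivalence there yields $f_n \to f$ in $\hatC(S, \Xi)$, as required.
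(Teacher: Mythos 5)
Your proposal is correct and follows essentially the same route as the paper: reduce to showing continuity of $f$, approximate each $x_k$ by points $y_{n,k} \in \dom(f_n)$, diagonalize to get a subsequence converging to $x$, and apply hypothesis~(ii) twice before invoking Lemma~\ref{lem: conv in hatC}. Your extra step of extending the diagonal subsequence to a full sequence before applying hypothesis~(ii) is a small technical refinement of the same argument, which the paper leaves implicit.
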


\begin{proof}
  By Lemma~\ref{lem: conv in hatC}, it suffices to show that $f$ is continuous. 
  Suppose that a sequence $(x_k)_{k \geq 1} \subseteq \dom(f)$ converges to some $x \in \dom(f)$. 
  By condition~\ref{prop item: 1. conti is preserved in hatC},
  for each $k$, there exist points $y_n \in \dom(f_n)$ converging to $x_k$. 
  For such $y_n$, condition~\ref{prop item: 2. conti is preserved in hatC} yields 
  $f_n(y_n) \to f(x_k)$ as $n \to \infty$. 
  Hence we can choose a subsequence $(n_k)_{k \geq 1}$ and points $y_k \in \dom(f_{n_k})$ such that 
  \begin{equation}
    d_S(x_k, y_k) \,\vee\, d_\Xi(f(x_k), f_{n_k}(y_k)) \leq k^{-1}.
  \end{equation}
  Since $x_k \to x$, it follows that $y_k \to x$. 
  Again by condition~\ref{prop item: 2. conti is preserved in hatC}, we then have $f_{n_k}(y_k) \to f(x)$. 
  Combining this with the above inequality gives
  \begin{equation}
    \limsup_{k \to \infty} d_\Xi(f(x_k), f(x)) 
    \leq \limsup_{k \to \infty} \bigl\{ d_\Xi(f(x_k), f_{n_k}(y_k)) + d_\Xi(f_{n_k}(y_k), f(x)) \bigr\}   
    = 0,
  \end{equation}
  which shows that $f$ is continuous. 
  This completes the proof.
\end{proof}

For later use, we prove the following two technical lemmas.
The first lemma provides a quantitative estimate for the distance between two functions in $\hatC(S, \Xi)$ that are close on a ball.

\begin{lem} \label{lem: hatC metric simple estimate}
  Let $f_1, f_2 \in \hatC(S, \Xi)$ and $r>0$ be such that 
  $\dom(f_1^{(r)}) = \dom(f_2^{(r)}) \eqqcolon K$.
  It then holds that 
  \begin{equation}
    \hatCMet{(S, \rho)}{\Xi}(f_1, f_2)
    \leq    
    e^{-r} + \sup_{x \in K} d_\Xi(f_1(x), f_2(x)).
  \end{equation}
\end{lem}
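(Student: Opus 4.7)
The plan is to split the integral defining $\hatCMet{(S,\rho)}{\Xi}$ at the radius $r$ and estimate the two pieces separately. Writing
\begin{equation}
  \hatCMet{(S,\rho)}{\Xi}(f_1, f_2)
  = \int_0^r e^{-s}\bigl(1 \wedge \hatCcMet{S}{\Xi}(f_1|^{(s)}, f_2|^{(s)})\bigr)\, ds
  + \int_r^\infty e^{-s}\bigl(1 \wedge \hatCcMet{S}{\Xi}(f_1|^{(s)}, f_2|^{(s)})\bigr)\, ds,
\end{equation}
the second integral is trivially bounded by $\int_r^\infty e^{-s}\, ds = e^{-r}$ since the integrand is at most $e^{-s}$.

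For the first integral, the key observation is that for any $s \le r$, the hypothesis $\dom(f_1^{(r)}) = \dom(f_2^{(r)}) = K$ implies
\begin{equation}
  \dom(f_1|^{(s)}) = K \cap D_S(\rho, s) = \dom(f_2|^{(s)}),
\end{equation}
so the two restrictions have a common compact domain $K_s \coloneqq K \cap D_S(\rho, s)$. Setting $\varepsilon \coloneqq \sup_{x \in K} d_\Xi(f_1(x), f_2(x))$, for each $x \in K_s$ the point $(x, f_2(x)) \in \graphmap(f_2|^{(s)})$ lies within $\varepsilon$ of $(x, f_1(x)) \in \graphmap(f_1|^{(s)})$ with respect to the max product metric on $S \times \Xi$, and symmetrically. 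Thus
\begin{equation}
  \hatCcMet{S}{\Xi}(f_1|^{(s)}, f_2|^{(s)}) = \HausMet{S \times \Xi}\bigl(\graphmap(f_1|^{(s)}), \graphmap(f_2|^{(s)})\bigr) \le \varepsilon,
\end{equation}
for every $s \le r$.

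Plugging this in, the first integral is bounded by $\int_0^r e^{-s}\, \varepsilon\, ds = \varepsilon(1 - e^{-r}) \le \varepsilon$. Adding the two estimates yields $\hatCMet{(S,\rho)}{\Xi}(f_1, f_2) \le \varepsilon + e^{-r}$, which is the claim. There is no real obstacle here; the only point requiring care is recognizing that equality of the restricted domains at radius $r$ propagates to equality at every smaller radius, which is what allows the Hausdorff distance between the graphs to be controlled purely by the pointwise sup $\varepsilon$ (the $S$-coordinates match exactly).
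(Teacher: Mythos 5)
Your proof is correct and follows essentially the same route as the paper's: split the integral at $r$, bound the tail by $e^{-r}$, and for $s \le r$ use that the restricted domains coincide so the Hausdorff distance between the graphs is dominated by the uniform distance $\sup_{x \in K} d_\Xi(f_1(x), f_2(x))$. No issues.
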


\begin{proof}
  For any $s \leq r$,
  we have that $\dom(f_1^{(s)}) = \dom(f_2^{(s)}) = K^{(s)}$.
  The Hausdorff distance between the graphs of two functions 
  is always bounded above by the uniform distance between them.
  Thus, we have that 
  \begin{equation}
    \hatCcMet{S}{\Xi}(f_1^{(s)}, f_2^{(s)}) 
    \leq     
    \sup_{x \in K^{(s)}} d_\Xi(f_1(x), f_2(x))
    \leq   
    \sup_{x \in K} d_\Xi(f_1(x), f_2(x)),
  \end{equation}
  where the last inequality is a trivial one. 
  It then follows that 
  \begin{align}
    \hatCMet{(S, \rho)}{\Xi}(f_1, f_2)
    &\leq     
    \int_r^{\infty} e^{-s}\, ds   
    + 
    \sup_{x \in K} d_\Xi(f_1(x), f_2(x)) 
    \cdot  
    \int_0^r e^{-s}\, ds 
    \\
    &\leq    
    e^{-r} + \sup_{x \in K} d_\Xi(f_1(x), f_2(x)),
  \end{align}
  which completes the proof.
\end{proof}

The second lemma concerns convergence of integrals 
for convergent functions and measures.

\begin{lem} \label{lem: vague convergence and hatC topology}
  Let $\mu, \mu_1, \mu_2, \ldots \in \finMeas(S)$ 
  with $\mu_n \to \mu$ weakly, 
  and let $f, f_{1}, f_{2}, \ldots \in \hatC(S, \RN)$ with $f_n \to f$ in $\hatC(S, \RN)$. 
  Suppose further that there exists a compact subset $K \subseteq S$ such that 
  the support of every $\mu_n$ and of $\mu$ is contained in $K$. 
  Then 
  \begin{equation}
    \lim_{n \to \infty} \int_{\dom(f_n)} f_n(x)\, \mu_n(dx) 
    = \int_{\dom(f)} f(x)\, \mu(dx).
  \end{equation}
\end{lem}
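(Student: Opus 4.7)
The strategy is to reduce the convergence of integrals over variable domains to a standard weak convergence argument on the common compact set $K$. First, I would apply Lemma~\ref{lem: conv in hatC}\ref{lem item: 3. conv in hatC} to obtain continuous extensions $g_n, g \in C(S, \RN)$ with $g_n|_{\dom(f_n)} = f_n$, $g|_{\dom(f)} = f$, and $g_n \to g$ in the compact-convergence topology. Since $K$ is compact, this gives uniform convergence $g_n \to g$ on $K$, and in particular $M \coloneqq \sup_{x \in K}|g(x)| \vee \sup_{n, x \in K}|g_n(x)| < \infty$.

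Using the natural condition $\supp(\mu_n) \subseteq \dom(f_n)$ and $\supp(\mu) \subseteq \dom(f)$ (needed so that the integrals capture the full mass of $\mu_n$ and $\mu$), both sides of the claimed convergence can be rewritten over $K$ as $\int_K g_n\, d\mu_n$ and $\int_K g\, d\mu$, respectively. To compare them, I would introduce the bounded continuous truncation $\tilde g(x) \coloneqq (-M) \vee (g(x) \wedge M)$, which agrees with $g$ on $K$, so that $\int g\, d\mu_n = \int \tilde g\, d\mu_n$ and similarly for $\mu$ (since both measures are supported on $K$). By the triangle inequality,
\begin{equation*}
  \biggl| \int_K g_n\, d\mu_n - \int_K g\, d\mu \biggr|
  \leq
  \sup_{x \in K}|g_n(x) - g(x)| \cdot \mu_n(K)
  +
  \biggl| \int_S \tilde g\, d\mu_n - \int_S \tilde g\, d\mu \biggr|.
\end{equation*}
The first term vanishes from the uniform convergence on $K$ together with $\mu_n(K) = \mu_n(S) \to \mu(S) < \infty$ (obtained by applying the weak convergence to the constant function $1$). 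The second term vanishes directly from the weak convergence $\mu_n \to \mu$ applied to the bounded continuous function $\tilde g$. Combining these yields the desired limit.

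The only genuine subtlety is the identification step that rewrites the integrals in terms of $g_n$ and $g$ on $K$, which requires the implicit support condition $\supp(\mu_n) \subseteq \dom(f_n)$ (a trivial example with mass sitting just outside $\dom(f_n)$ shows convergence can otherwise fail, even though $\mathrm{d}^H(\dom(f_n), \dom(f)) \to 0$). Once this point is acknowledged, the argument is a routine coupling of the extension lemma with the classical fact that weak convergence of measures together with uniform convergence of continuous functions on the supports preserves convergence of integrals.
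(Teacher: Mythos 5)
Your proof is correct and follows essentially the same route as the paper's: extend $f_n$ and $f$ to continuous functions on $S$ via Lemma~\ref{lem: conv in hatC}\ref{lem item: 3. conv in hatC}, reduce to a bounded continuous test function (the paper multiplies by a compactly supported cutoff equal to $1$ on $K$ where you truncate at level $M$; the two devices are interchangeable), and split the difference of integrals into a uniform-convergence term controlled by $\mu_n(K)$ plus a weak-convergence term. Your remark that the identification $\int_{\dom(f_n)} f_n\, d\mu_n = \int_K g_n\, d\mu_n$ implicitly requires $\supp(\mu_n) \subseteq \dom(f_n)$ is accurate --- the paper's own chain of equalities uses this silently, and the condition does hold in every application of the lemma.
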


\begin{proof}
  By Lemma~\ref{lem: conv in hatC}\ref{lem item: 3. conv in hatC}, 
  the domains of $f_n$ and $f$ may be extended continuously to $S$ so that 
  $f_n \to f$ in the compact-convergence topology. 
  Let $g \colon S \to [0,1]$ be a compactly-supported continuous function
  such that $g \equiv 1$ on $K$.
  Since the support of $\mu_n$ is contained in $K$,
  it holds that  
  \begin{equation}
    \int_{\dom(f_n)} f_n(x)\, \mu_n(dx) 
    =
    \int_{\dom(f_n) \cap K} f_n(x)\, \mu_n(dx)
    =
    \int_S f_n(x) g(x)\, \mu_n(dx),
  \end{equation}
  and, similarly 
  \begin{equation}
    \int_{\dom(f)} f(x)\, \mu(dx) 
    =
    \int_S f(x) g(x)\, \mu(dx).
  \end{equation}
  Hence 
  \begin{align}
    &\left|\int_{\dom(f_n)} f_n(x)\, \mu_n(dx) - \int_{\dom(f)} f(x)\, \mu(dx)\right| \\
    &\leq
    \int_S |f_n(x) - f(x)|\, g(x)\, \mu_n(dx)
    + \left| \int_S f(x) g(x)\, \mu_n(dx) - \int_S f(x) g(x)\, \mu(dx) \right| \\
    &\leq
    \mu_n(K)\, \sup_{x \in K} |f_n(x) - f(x)|
    + \left| \int_S f(x) g(x)\, \mu_n(dx) - \int_S f(x) g(x)\, \mu(dx) \right|.
  \end{align}
  The first term tends to $0$ by the uniform convergence $f_n \to f$ on $K$, 
  and the second by the vague convergence $\mu_n \to \mu$. 
  This proves the claim.
\end{proof}
%%%%%%%%%%%%%%%%%%%%%%%%%%%%%%%%%%%%%%%%%%%%%%%%%%%%%%%%%%%%%%%%%%%%%%%%%%%%%%%%%%%%%%%%%%%%%%%%%%%%%%%%%%%%%%%%%%%%%%%%%%%%%%%%%%
% Topologies on the space of c\`adl\`ag functions
%%%%%%%%%%%%%%%%%%%%%%%%%%%%%%%%%%%%%%%%%%%%%%%%%%%%%%%%%%%%%%%%%%%%%%%%%%%%%%%%%%%%%%%%%%%%%%%%%%%%%%%%%%%%%%%%%%%%%%%%%%%%%%%%%%
\subsection{Topologies on the space of c\`adl\`ag functions} \label{sec: Topologies on the space of cadlag functions}

In studying convergence of sample paths of stochastic processes, 
a standard choice of topology is the $J_1$-Skorohod topology (see, for example, \cite{Billingsley_99_Convergence}). 
As already mentioned in Remark~\ref{rem: why L^0},
in this paper, we also employ another topology, which we call the $L^0$ topology.
The purpose of this subsection is to introduce these topologies.
Throughout this subsection, we fix a $\bcmAB$ space $(S, d_S)$.

We write $D(\RNp, S)$ for the set of \cadlag\ functions $\xi \colon \RNp \to S$, 
and equip $D(\RNp, S)$ with the Skorohod metric $\SkorohodMet{S}$. 
To recall its definition, let $\Lambda^{J_1}_t$ denote the set of continuous bijections $\lambda \colon [0,t] \to [0,t]$. 
For \cadlag\ functions $\xi, \eta \colon [0,t] \to S$, set 
\begin{equation}
  d^{J_1, t}_S(\xi, \eta) 
  \coloneqq 
  \inf_{\lambda \in \Lambda^{J_1}_t}\!
  \left\{ \sup_{0 \leq s \leq t} |\lambda(s)-s| 
    \,\vee\, \sup_{0 \leq s \leq t} d_S(\eta(s), \xi \circ \lambda(s)) \right\}.
\end{equation}
We then define the Skorohod metric by
\begin{equation} \label{eq: dfn of Skorohod metric}
  \SkorohodMet{S}(\eta, \xi)  
  \coloneqq 
  \int_0^\infty e^{-t} \bigl(1 \wedge d^{J_1, t}_S(\xi|_{[0,t]}, \eta|_{[0,t]}) \bigr)\, dt,
  \quad \xi, \eta \in D(\RNp, S).
\end{equation}
The topology induced by $\SkorohodMet{S}$ is Polish and is called the \emph{$J_1$-Skorohod} topology; 
see \cite[Theorem~2.6]{Whitt_80_Some}. 
(NB.\ The metric $\SkorohodMet{S}$ itself is not complete.)

We write $L^0(\RNp, S)$ for the set of Borel measurable functions $\xi \colon \RNp \to S$.
We adopt the convention that any functions $\xi$ and $\eta$ in $L^0(\RNp, S)$ are identified whenever $\xi(t) = \eta(t)$
for Lebesgue-almost every $t \in \RNp$. 
We define a metric on $L^0(\RNp, S)$ as follows: for each $\xi, \eta \in L^0(\RNp, S)$,
\begin{equation} \label{eq: dfn of L^0 metric}
  \LzeroMet_S(\xi, \eta) \coloneqq \int_0^\infty e^{-t} \bigl( 1 \wedge d_S(\xi(t), \eta(t)) \bigr)\, dt.
\end{equation}
By \cite[Proposition~8.32]{Noda_pre_Metrization},
$\LzeroMet_S$ is a complete and separable metric on $L^0(\RNp, S)$.
We refer to the topology on $L^0(\RNp, S)$ induced by $\LzeroMet_S$ as the \emph{$L^0$ topology}, 
the name being derived from the $L^0$ space endowed with the topology of convergence in measure (cf.\ \cite{Bogachev_07_Measure}).

By \cite[Lemma~5.2]{Kallenberg_21_Foundations},
convergence with respect to $\LzeroMet_S$ coincides with convergence in probability
(with respect to the probability measure $e^{-t}\, dt$).
The following is immediate from the characterization of convergence in probability via almost-sure convergence.

\begin{lem}[{\cite[Lemma~5.2]{Kallenberg_21_Foundations}}] \label{lem: conv in L^0}
  Let $\xi, \xi_1, \xi_2, \dots$ be elements of $L^0(\RNp, S)$.
  Then $\xi_n \to \xi$ in the $L^0$ topology if and only if the following holds:
  \begin{enumerate}[label = \textup{($L^0$)}]
    \item for any subsequence $(n_k)_{k \geq 1}$, 
      there exists a further subsequence $(n_{k(l)})_{l \geq 1}$
      such that, for Lebesgue-almost every $t \in \RNp$, 
      $\xi_{n_{k(l)}}(t) \to \xi(t)$.
  \end{enumerate}
  As a consequence, 
  the $L^0$ topology is independent of the choice of the metric $d_S$ and depends only on the topology on $S$.
\end{lem}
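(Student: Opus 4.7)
The plan is to reduce the statement to the standard subsequence characterization of convergence in probability. Observe that $\LzeroMet_S(\xi_n, \xi)$ is precisely the integral of $1 \wedge d_S(\xi_n(t), \xi(t))$ against the probability measure $\nu(dt) \coloneqq e^{-t}\, dt$ on $\RNp$. Since $1 \wedge d_S(\cdot, \cdot) \leq 1$, dominated convergence shows that $\LzeroMet_S(\xi_n, \xi) \to 0$ if and only if the $S$-valued random variable $t \mapsto \xi_n(t)$ converges to $t \mapsto \xi(t)$ in $\nu$-probability, in the sense that
\begin{equation}
  \nu\!\bigl(\{t \geq 0 \mid d_S(\xi_n(t), \xi(t)) > \varepsilon\}\bigr) \xrightarrow[n \to \infty]{} 0
\end{equation}
for every $\varepsilon > 0$.

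First I would record this equivalence carefully, noting that the identification of $\xi$ and $\eta$ whenever $\xi = \eta$ Lebesgue-a.e.\ matches the usual convention for convergence in probability (since $\nu$ is equivalent to Lebesgue measure on $\RNp$). Then I would invoke the well-known characterization (for example, \cite[Lemma~5.2]{Kallenberg_21_Foundations}): convergence in probability is equivalent to the property that every subsequence contains a further subsequence along which convergence holds $\nu$-almost surely. Combined with the previous paragraph and the fact that $\nu$-a.s.\ and Lebesgue-a.s.\ coincide, this yields the stated subsequence criterion.

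For the final assertion, let $d_S$ and $d_S'$ be two metrics inducing the same topology on $S$. The subsequence criterion above depends only on pointwise convergence in $S$, which is a purely topological notion: for any sequence $x_n, x \in S$, $d_S(x_n, x) \to 0$ if and only if $d_S'(x_n, x) \to 0$. Hence the classes of convergent sequences in $L^0(\RNp, S)$ with respect to $\LzeroMet_S$ and $\LzeroMet_S'$ coincide, so by sequential characterization of closed sets in metrizable spaces the two topologies on $L^0(\RNp, S)$ agree.

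The only subtlety I anticipate is purely bookkeeping: one must be careful with the identification modulo $\nu$-null sets when extracting a.e.-convergent subsequences, and one should verify that the extracted almost-sure limit is indeed (an equivalence class representative of) $\xi$. This is handled by standard arguments. There is no genuine obstacle beyond citing the Kallenberg lemma, which is why the paper simply calls it immediate.
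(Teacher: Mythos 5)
Your proposal is correct and follows essentially the same route as the paper, which simply identifies $\LzeroMet_S$-convergence with convergence in probability under the measure $e^{-t}\,dt$ (citing Kallenberg's Lemma~5.2) and declares the subsequence criterion and metric-independence immediate from that. The only cosmetic imprecision is attributing the equivalence between $\LzeroMet_S(\xi_n,\xi)\to 0$ and convergence in $\nu$-probability to dominated convergence alone — the forward direction is Markov's inequality — but this does not affect the argument.
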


The following lemma is an immediate consequence of Prohorov's theorem, 
which states that tightness of probability measures is equivalent to their relative compactness in the weak topology 
(see \cite[Section~5]{Billingsley_99_Convergence}). 
In what follows we only use one direction, namely, that relative compactness implies tightness.

\begin{lem} \label{lem: range compactness wrt L^0 topology}
  Let $(\xi_n)_{n \geq 1}$ be a sequence converging to a function $\xi$ in $L^0(\RNp, S)$. 
  Fix $T > 0$. 
  Then, for any $\varepsilon > 0$, there exists a compact subset $K \subseteq S$ such that 
  \begin{equation}
    \sup_{n \geq 1} \Leb\!\left( \{ t \in [0, T] \mid \xi_n(t) \notin K \} \right) < \varepsilon,
  \end{equation}
  where we denote by $\Leb$ the Lebesgue measure.
\end{lem}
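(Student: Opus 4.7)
The plan is to interpret convergence in $L^0$ as convergence in probability with respect to the exponential weight $e^{-t}\,dt$, translate this into weak convergence of the pushforward laws on $S$, apply Prohorov's theorem to obtain tightness, and then convert the resulting bound back to a Lebesgue estimate on $[0,T]$.

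First, I would introduce the probability measure $\mathbb{P}(dt) \coloneqq e^{-t}\,dt$ on $\RNp$, which allows us to view each $\xi_n$ and $\xi$ as a Borel-measurable $S$-valued random variable on the probability space $(\RNp, \Borel(\RNp), \mathbb{P})$. By the very definition of $\LzeroMet_S$ in \eqref{eq: dfn of L^0 metric}, the hypothesis $\LzeroMet_S(\xi_n, \xi) \to 0$ means $\mathbb{E}[1 \wedge d_S(\xi_n, \xi)] \to 0$, i.e.\ $\xi_n \to \xi$ in $\mathbb{P}$-probability. This is exactly the identification already noted before Lemma~\ref{lem: conv in L^0}.

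Next, since convergence in probability implies convergence in distribution, the pushforward measures $\mathbb{P} \circ \xi_n^{-1}$ converge weakly in $\Prob(S)$ to $\mathbb{P} \circ \xi^{-1}$. Since $(S, d_S)$ is a $\bcmAB$ space (hence Polish), Prohorov's theorem applies: the relatively compact family $\{\mathbb{P} \circ \xi_n^{-1}\}_{n \geq 1}$ is tight. Thus, given any $\varepsilon > 0$, I set $\varepsilon' \coloneqq e^{-T}\varepsilon$ and choose a compact $K \subseteq S$ such that
\begin{equation}
  \sup_{n \geq 1} \mathbb{P}(\xi_n \notin K) < \varepsilon'.
\end{equation}

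Finally, I would convert this to the Lebesgue statement using the elementary inequality $e^{-t} \geq e^{-T}$ for $t \in [0,T]$:
\begin{equation}
  e^{-T} \Leb\bigl( \{ t \in [0, T] \mid \xi_n(t) \notin K \} \bigr)
  \leq \int_{[0,T] \cap \{\xi_n \notin K\}} e^{-t}\, dt
  \leq \mathbb{P}(\xi_n \notin K)
  < e^{-T}\varepsilon,
\end{equation}
which yields the claim after multiplying by $e^T$. The argument is essentially routine once the correct viewpoint is adopted; the only point requiring a little care is the factor $e^T$ coming from the exponential weight, which must be absorbed by an appropriate choice of $\varepsilon'$. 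I do not anticipate any serious obstacle beyond this bookkeeping.
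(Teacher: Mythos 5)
Your proposal is correct and is exactly the argument the paper has in mind: the paper states this lemma as "an immediate consequence of Prohorov's theorem" (relative compactness implies tightness), which is precisely your route via the pushforwards of $e^{-t}\,dt$ and the $e^{-T}$ conversion factor. The only cosmetic point is that to get the strict inequality after taking the supremum over $n$ you should apply tightness with, say, $\varepsilon'/2$, but this is trivial bookkeeping.
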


Since $D(\RNp, S)$ can be viewed as a subset of $L^0(\RNp, S)$,
the $L^0$ topology is also defined on $D(\RNp, S)$ as a relative topology.
Thus, we have two topologies on $D(\RNp, S)$, the $J_1$-Skorohod topology 
and the $L^0$ topology.
Henceforth, to distinguish these two topological structures,
we write
\begin{enumerate} [label = \textup{(D\arabic*)}]
  \item $D_{J_1}(\RNp, S)$ for $D(\RNp, S)$ endowed with the $J_1$-Skorohod topology,
  \item $D_{L^0}(\RNp, S)$ for $D(\RNp, S)$ endowed with the $L^0$ topology.
\end{enumerate}
Below, we verify that the two spaces coincide as measurable spaces.

\begin{prop} \label{prop: Borel algebra by L^0 top}
  The Borel $\sigma$-algebras on $D(\RNp, S)$ generated by the $J_1$-Skorohod and $L^0$ topologies
  are the same.
\end{prop}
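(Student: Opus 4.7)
The plan is to realize the two Borel $\sigma$-algebras as corresponding under the identity map
$\mathrm{id} \colon D_{J_1}(\RNp, S) \to D_{L^0}(\RNp, S)$, which I will show is a continuous bijection between (subsets of) Polish spaces and hence a Borel isomorphism by a Kuratowski-type result.

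First, I would verify that $\mathrm{id}$ is continuous, i.e.\ that the $J_1$-Skorohod topology dominates the $L^0$ topology on $D(\RNp, S)$. Suppose $\xi_n \to \xi$ in $D_{J_1}(\RNp, S)$. By the standard property of $J_1$-Skorohod convergence (e.g.\ Billingsley), this implies $\xi_n(t) \to \xi(t)$ at every continuity point $t$ of $\xi$. Since $\xi$ is \cadlag, its set of discontinuities is at most countable, so $\xi_n(t) \to \xi(t)$ for Lebesgue-almost every $t \in \RNp$. Dominated convergence applied to the bound $e^{-t}\bigl(1 \wedge d_S(\xi_n(t), \xi(t))\bigr) \leq e^{-t}$ then yields $\LzeroMet_S(\xi_n, \xi) \to 0$, proving the continuity.

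Second, I would invoke Kuratowski's theorem on Borel injections. The space $D_{J_1}(\RNp, S)$ is Polish (as stated in the excerpt), and $L^0(\RNp, S)$ is Polish as well. The identity, viewed as a map into $L^0(\RNp, S)$, is a continuous injection: injectivity follows because two \cadlag\ functions agreeing Lebesgue-almost everywhere must agree everywhere (their values at any $t_0$ are recovered by right continuity along a sequence $t_n \downarrow t_0$ on which they coincide). Kuratowski's theorem then ensures that $D(\RNp, S)$ is a Borel subset of $L^0(\RNp, S)$ and that $\mathrm{id}$ is a Borel isomorphism onto this image. Since the topology of $D_{L^0}(\RNp, S)$ is by definition the subspace topology from $L^0(\RNp, S)$, its Borel $\sigma$-algebra is the trace of the $L^0$-Borel $\sigma$-algebra on $D(\RNp, S)$; by the Borel isomorphism this trace coincides with the $J_1$-Borel $\sigma$-algebra, yielding the desired equality.

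I do not anticipate a serious obstacle here: both ingredients---the pointwise convergence at continuity points under $J_1$-Skorohod convergence and Kuratowski's theorem for Borel injections between standard Borel spaces---are classical. Should one wish to bypass descriptive set theory, the inclusion $\mathcal{B}(D_{L^0}) \subseteq \mathcal{B}(D_{J_1})$ is immediate from continuity of $\mathrm{id}$, and the reverse inclusion can be obtained by checking that a countable family of $L^0$-Borel functionals (for example, suitably averaged evaluations $\xi \mapsto \int_a^b \phi(\xi(t))\,dt$ over rationals $a < b$ and a countable dense family of bounded continuous $\phi$) separates points and generates the $J_1$-Borel $\sigma$-algebra; this is a more computational but self-contained alternative.
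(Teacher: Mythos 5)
Your proof is correct, but it reaches the harder inclusion by a genuinely different route than the paper. Both arguments agree on the easy direction: $J_1$-convergence forces convergence at every continuity point of the limit, hence Lebesgue-a.e.\ convergence, hence $L^0$-convergence, so the identity $D_{J_1}(\RNp,S)\to D_{L^0}(\RNp,S)$ is continuous and $\mathcal{B}(D_{L^0})\subseteq\mathcal{B}(D_{J_1})$. For the reverse inclusion you apply the Lusin--Souslin theorem (what you call a Kuratowski-type result is exactly the paper's Lemma~\ref{lem: Lousin--Souslin}, which the paper itself invokes one proposition later to show $D(\RNp,S)$ is Borel in $L^0(\RNp,S)$): the identity is a continuous injection of the Polish space $D_{J_1}(\RNp,S)$ into the Polish space $L^0(\RNp,S)$, so it carries Borel sets to Borel sets, and intersecting with $D(\RNp,S)$ identifies every $J_1$-Borel set as an element of the trace $\sigma$-algebra, i.e.\ of $\mathcal{B}(D_{L^0})$. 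The paper instead argues concretely: $\mathcal{B}(D_{J_1})$ is generated by the evaluation maps $\pi_t$ (citing Whitt), and each $\pi_t$ is shown to be $L^0$-measurable by writing $1\wedge d_S(x,\xi(t))$ as the pointwise limit of the $L^0$-continuous averaged functionals $n\int_t^{t+1/n}1\wedge d_S(x,\xi(s))\,ds$ and using second countability of $S$ to recover preimages of open sets. Your route is shorter and costs nothing extra here since the descriptive-set-theoretic machinery is already imported; the paper's route is self-contained and exhibits explicit generating functionals. One small caveat on your proposed ``computational'' fallback: knowing that a countable family of $L^0$-measurable functionals \emph{separates points} of $D(\RNp,S)$ does not by itself give that it \emph{generates} $\mathcal{B}(D_{J_1})$ without again appealing to a descriptive-set-theoretic fact (Blackwell's theorem); to keep that variant elementary you must, as the paper does, show the averaged evaluations actually recover the evaluations $\pi_t$, which are known to generate.
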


\begin{proof}
  We first note that the Borel $\sigma$-algebra generated by the $J_1$-Skorohod topology 
   coincides with the $\sigma$-algebra generated by the evaluation maps 
   $\pi_t \colon D(\RNp, S) \to S$, $t \geq 0$, defined by $\pi_t(f) \coloneqq f(t)$
  (see \cite[Lemma~2.7]{Whitt_80_Some}).
  By Lemma~\ref{lem: conv in L^0}, the $J_1$-Skorohod topology is finer than the $L^0$ topology.
  Thus, it suffices to show that each $\pi_t$ is measurable with respect to the $L^0$ topology.
  Fix $t \geq 0$.
  For each $x \in S$, we define $h_{x,t} \colon D(\RNp, S) \to \RNp$ by setting 
  \begin{equation}
    h_{x,t}(\xi) \coloneqq 1 \wedge d_S(x, \xi(t)).
  \end{equation}
  We approximate $h_{x,t}$ by functions $h_{x,t}^{(n)} \colon D(\RNp, S) \to \RNp$, $n \in \NN$, given by
  \begin{equation}
    h_{x,t}^{(n)}(\xi) \coloneqq n \int_t^{t + n^{-1}} 1 \wedge d_S(x, \xi(s))\, ds,
    \quad 
    \xi \in D(\RNp, S).
  \end{equation}
  By the right-continuity of $\xi$ and the dominated convergence theorem,
  we have $h_{x,t}^{(n)}(\xi) \to h_{x,t}(\xi)$ as $n \to \infty$ for each $\xi \in D(\RNp, S)$.
  Moreover, it is not difficult to see that $h_{x,t}^{(n)}$ is continuous with respect to the $L^0$ topology.
  Thus, $h_{x,t}$ is measurable with respect to the $L^0$ topology.
  Fix an open subset $U$ of $S$.
  Since $S$ is second countable,
  we can find countable subsets $\{x_n\}_{n \geq 1}$ of $U$ and $\{\varepsilon_n\}_{n \geq 1}$ of $[0, 1)$ such that 
  $U = \bigcup_{n \geq 1} B_S(x_n, \varepsilon_n)$.
  We then have that 
  \begin{equation}
    (\pi_t)^{-1}(U) 
    = 
    \bigcup_{n \geq 1} \pi_t^{-1}(B_S(x_n, \varepsilon_n)) \\
    = 
    \bigcup_{n \geq 1} h_{x_n,t}^{-1}([0, \varepsilon_n)).
  \end{equation}
  Therefore, $\pi_t$ is measurable with respect to the $L^0$ topology.
\end{proof}

One might expect that it suffices to work within $D_{L^0}(\RNp, S)$ 
when discussing convergence of \cadlag\ functions in the $L^0$ topology. 
However, since $D_{L^0}(\RNp, S)$ is not Polish, 
it is more convenient to carry out probabilistic arguments in the larger space $L^0(\RNp, S)$. 
The following result ensures that, for stochastic processes with c\`adl\`ag paths, 
weak convergence in $L^0(\RNp, S)$ remains valid when restricted to $D_{L^0}(\RNp, S)$.

\begin{prop} \label{prop: D is Borel in L^0}
  The set $D(\RNp, S)$ is a Borel subset of $L^0(\RNp, S)$.
\end{prop}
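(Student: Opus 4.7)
My plan is to view $D(\RNp, S)$ as the image of the canonical inclusion $\iota \colon D_{J_1}(\RNp, S) \to L^0(\RNp, S)$ and to invoke the Lusin--Souslin (Kuratowski) theorem from descriptive set theory, which asserts that an injective Borel map between standard Borel spaces has Borel image (see, e.g., Kechris, \emph{Classical Descriptive Set Theory}, Theorem~15.1). This reduces everything to showing that $\iota$ is a continuous injection between Polish spaces.

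The first step would be to recall that both spaces are Polish, hence standard Borel. The space $D_{J_1}(\RNp, S)$ is classically Polish (\cite[Theorem~2.6]{Whitt_80_Some}), and the completeness and separability of $\LzeroMet_S$ on $L^0(\RNp, S)$ was already recorded above.

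Next I would verify that $\iota$ is continuous. Suppose $\xi_n \to \xi$ in $D_{J_1}(\RNp, S)$. By the definition of the Skorohod metric, for all but countably many $T > 0$ one obtains continuous bijections $\lambda_n^T$ of $[0, T]$ with $\sup_{s \leq T}|\lambda_n^T(s) - s| \to 0$ and $\sup_{s \leq T} d_S(\xi_n \circ \lambda_n^T(s), \xi(s)) \to 0$. From this one concludes that $\xi_n(t) \to \xi(t)$ at every continuity point $t$ of $\xi$; since $\xi$ is \cadlag{} it has at most countably many discontinuities, so $\xi_n(t) \to \xi(t)$ for Lebesgue-a.e.\ $t \geq 0$, and bounded convergence gives $\LzeroMet_S(\xi_n, \xi) \to 0$. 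Injectivity is immediate: if $\xi_1, \xi_2 \in D(\RNp, S)$ agree Lebesgue-a.e., they agree on a dense set of $\RNp$ and hence pointwise by right-continuity. Applying the Lusin--Souslin theorem to $\iota$ then yields that $\iota(D_{J_1}(\RNp, S)) = D(\RNp, S)$ is Borel in $L^0(\RNp, S)$.

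The only mildly technical step is the continuity of $\iota$, and even there the argument is routine manipulation of the $J_1$-Skorohod metric. In principle one could try to avoid descriptive set theory by describing $D(\RNp, S) \subseteq L^0(\RNp, S)$ through countably many Borel conditions built from the regularizations $h_{x,t}^{(n)}$ used in the proof of Proposition~\ref{prop: Borel algebra by L^0 top} (for instance by requiring that for each rational $t$ and each $x$ in a countable dense subset of $S$ the limit $\lim_n h_{x,t}^{(n)}$ exists and that a certain Cauchy-type compatibility holds among these distance functions). Such an explicit characterization turns out to be rather delicate to make tight in a metric-space setting, so the appeal to Kuratowski's theorem is by far the cleanest route.
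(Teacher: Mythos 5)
Your proposal is correct and takes essentially the same route as the paper: both realize $D(\RNp, S)$ as the image of the continuous injection $D_{J_1}(\RNp, S) \hookrightarrow L^0(\RNp, S)$ between Polish spaces and invoke the Lusin--Souslin theorem (Lemma~\ref{lem: Lousin--Souslin}). The continuity and injectivity checks you spell out are correct; the paper simply asserts them, continuity being immediate from Lemma~\ref{lem: conv in L^0}.
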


This result is readily verified by the Lusin--Souslin theorem, 
which we recall below. 
The theorem will also be used in later arguments.

\begin{lem} [{\cite[Corollary~15.2]{Kechris_95_Classical}}] \label{lem: Lousin--Souslin}
  Let $\Xi_1$ and $\Xi_2$ be Polish spaces.
  Fix a Borel subset $B$ of $\Xi_1$.
  If a map $f \colon B \to \Xi_2$ is Borel measurable and injective,
  then the image $f(B)$ is a Borel subset of $\Xi_2$.
\end{lem}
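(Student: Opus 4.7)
The plan is to apply the Lusin--Souslin theorem (Lemma~\ref{lem: Lousin--Souslin}) to the natural map
\begin{equation*}
  \iota \colon D_{J_1}(\RNp, S) \to L^0(\RNp, S)
\end{equation*}
that sends a \cadlag\ function to the equivalence class (under Lebesgue-a.e.\ equality) it represents in $L^0(\RNp, S)$. Both $D_{J_1}(\RNp, S)$ and $L^0(\RNp, S)$ are Polish by the discussion in Section~\ref{sec: Topologies on the space of cadlag functions}, and the image of $\iota$ is precisely $D(\RNp, S)$ viewed as a subset of $L^0(\RNp, S)$. Thus, if I can show that $\iota$ is both Borel measurable and injective, Lemma~\ref{lem: Lousin--Souslin} will directly yield the desired Borel measurability of $D(\RNp, S)$ in $L^0(\RNp, S)$.

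For Borel measurability of $\iota$, I will invoke the immediately preceding Proposition~\ref{prop: Borel algebra by L^0 top}, which asserts that the Borel $\sigma$-algebras on $D(\RNp, S)$ induced by the $J_1$-Skorohod and $L^0$ topologies coincide. This means the identity $D_{J_1}(\RNp, S) \to D_{L^0}(\RNp, S)$ is Borel measurable, and composing with the continuous inclusion $D_{L^0}(\RNp, S) \hookrightarrow L^0(\RNp, S)$ (which is just the inclusion of a topological subspace) yields the Borel measurability of $\iota$.

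For injectivity, I must check that two \cadlag\ functions which agree Lebesgue-a.e.\ coincide pointwise. Given any $t \geq 0$, the set of agreement has full Lebesgue measure, so it intersects every right-neighborhood of $t$; taking a sequence $t_n \downarrow t$ from this set and using right-continuity of both functions gives equality at $t$. This step is conceptually the essential one in the argument, since it is what allows us to legitimately regard $D(\RNp, S)$ as a subset of the quotient space $L^0(\RNp, S)$ at all---without right-continuity, distinct \cadlag\ representatives could collapse to the same equivalence class and $\iota$ would not be injective. With both hypotheses of Lemma~\ref{lem: Lousin--Souslin} verified, the proposition follows.
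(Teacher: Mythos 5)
Your proposal does not prove the stated lemma at all. The statement in question is the Lusin--Souslin theorem itself: for Polish spaces $\Xi_1$ and $\Xi_2$, a Borel subset $B \subseteq \Xi_1$, and an injective Borel measurable map $f \colon B \to \Xi_2$, the image $f(B)$ is Borel in $\Xi_2$. What you have written is instead a proof of Proposition~\ref{prop: D is Borel in L^0} (that $D(\RNp, S)$ is a Borel subset of $L^0(\RNp, S)$), and your argument \emph{invokes} the Lusin--Souslin theorem as a black box in its very first step. You cannot prove a lemma by applying it; the entire content of the target statement is assumed rather than established.

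To be clear about what would actually be required: the Lusin--Souslin theorem is a genuine result of descriptive set theory. The paper does not prove it either --- it cites \cite[Corollary~15.2]{Kechris_95_Classical} --- and a self-contained proof would need machinery entirely absent from your proposal, namely the theory of analytic (Souslin) sets and the Luzin separation theorem: one shows that the images under $f$ of the cells of a suitable countable Borel partition of $B$ are pairwise disjoint analytic sets, separates them by disjoint Borel sets, and refines along a Souslin scheme to conclude that $f(B)$ is Borel. Your discussion of right-continuity forcing pointwise agreement from Lebesgue-a.e.\ agreement is a correct and relevant observation, but it belongs to the proof of Proposition~\ref{prop: D is Borel in L^0} (where it justifies injectivity of the inclusion $D_{J_1}(\RNp,S) \to L^0(\RNp,S)$), not to the proof of the lemma you were asked to prove.
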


\begin{proof} [{Proof of Proposition~\ref{prop: D is Borel in L^0}}]
  Define a map $F \colon D_{J_1}(\RNp, S) \to L^0(\RNp, S)$ by setting $F(\xi) \coloneqq \xi$.
  Then $F$ is injective and continuous.
  Since both spaces $D_{J_1}(\RNp, S)$ and $L^0(\RNp, S)$ are Polish,
  the above lemma implies that the image of $F$, namely $D(\RNp, S)$, 
  is a Borel subset of $L^0(\RNp, S)$.
  This completes the proof.
\end{proof}

The following lemma,
which concerns different notions of a topology on the space of product processes,
will be used in Section~\ref{sec: Collision measures and their convergence}.

\begin{lem} \label{lem: L^0 and product}
  The following map is a homeomorphism:
  \begin{equation}
    D_{L^0}(\RNp, S \times S) \ni \xi = (\xi_1, \xi_2) \mapsto (\xi_1, \xi_2) \in D_{L^0}(\RNp, S) \times D_{L^0}(\RNp, S). 
  \end{equation}
\end{lem}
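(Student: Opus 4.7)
The plan is to show that the map is a bi-Lipschitz bijection between the two metric spaces, which at once yields the homeomorphism claim. The proof is essentially a direct calculation, and I do not expect any substantive obstacle.

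First I would verify that the map is a well-defined bijection at the level of the underlying sets of c\`adl\`ag functions: a function $\xi \colon \RNp \to S \times S$ is c\`adl\`ag with respect to the product topology if and only if each coordinate $\xi_i \colon \RNp \to S$ is c\`adl\`ag, and this gives the inverse assignment $(\xi_1, \xi_2) \mapsto \xi$ via $\xi(t) = (\xi_1(t), \xi_2(t))$. Moreover, two c\`adl\`ag functions agreeing Lebesgue-almost everywhere coincide by right-continuity, so the $L^0$ a.e.\ identification is trivial on $D(\RNp, \cdot)$, and the bijection descends to the $L^0$ equivalence classes on both sides without issue.

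Next I would compare the two metrics. By the max product convention adopted in this paper, $d_{S \times S}((x_1, x_2), (y_1, y_2)) = d_S(x_1, y_1) \vee d_S(x_2, y_2)$, and the product metric on $D_{L^0}(\RNp, S) \times D_{L^0}(\RNp, S)$ is likewise the max of the two component $L^0$ metrics. Applying the elementary pointwise bounds
\begin{equation}
  (1 \wedge a) \vee (1 \wedge b) \;\le\; 1 \wedge (a \vee b) \;\le\; (1 \wedge a) + (1 \wedge b),
  \quad a, b \ge 0,
\end{equation}
with $a = d_S(\xi_1(t), \eta_1(t))$ and $b = d_S(\xi_2(t), \eta_2(t))$, and integrating against $e^{-t}\, dt$, yields
\begin{equation}
  \LzeroMet_S(\xi_1, \eta_1) \vee \LzeroMet_S(\xi_2, \eta_2)
  \;\le\; \LzeroMet_{S \times S}(\xi, \eta)
  \;\le\; \LzeroMet_S(\xi_1, \eta_1) + \LzeroMet_S(\xi_2, \eta_2).
\end{equation}
The left inequality shows that the forward map is $1$-Lipschitz, while the right inequality shows that its inverse is $2$-Lipschitz. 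Hence the map is a bi-Lipschitz bijection and therefore a homeomorphism, as claimed. The only point requiring a moment of care is checking that the a.e.\ identifications on both sides match under the coordinate bijection, but this is automatic for c\`adl\`ag paths.
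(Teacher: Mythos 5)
Your proof is correct, but it takes a different route from the paper's. The paper disposes of this lemma in one line by invoking Lemma~\ref{lem: conv in L^0}: since $L^0$ convergence is characterized by the subsequence/a.e.-convergence criterion, and a.e.\ convergence in $S \times S$ is equivalent to coordinate-wise a.e.\ convergence, joint and coordinate-wise $L^0$ convergence coincide (a double subsequence extraction handles the direction from coordinate-wise to joint). That argument is purely topological and is independent of the choice of metrics, consistent with the remark that the $L^0$ topology depends only on the topology of $S$. Your argument instead compares the metrics directly via
\begin{equation}
  (1 \wedge a) \vee (1 \wedge b) = 1 \wedge (a \vee b) \le (1 \wedge a) + (1 \wedge b),
\end{equation}
which yields the sharper, quantitative conclusion that the coordinate map is bi-Lipschitz (with constants $1$ and $2$) for the paper's specific max-product conventions. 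Both your set-theoretic preliminaries are sound: c\`adl\`ag into the product is equivalent to c\`adl\`ag coordinates, and the a.e.\ identification is vacuous on c\`adl\`ag paths since a full-measure set meets every right-neighborhood of every $t$, so two right-continuous functions agreeing a.e.\ agree everywhere. The only cost of your approach is that it is tied to the chosen metrics; the benefit is an explicit Lipschitz estimate that the paper's argument does not provide.
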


\begin{proof}
  This follows directly from Lemma~\ref{lem: conv in L^0}.
\end{proof}

%%%%%%%%%%%%%%%%%%%%%%%%%%%%%%%%%%%%%%%%%%%%%%%%%%%%%%%%%%%%%%%%%%%%%%%%%%%%%%%%%%%%%%%%%%%%%%%%%%%%%%%%%%%%%%%%%%%%%%%%%%%%%%%%%%
% The spaces for PCAFs and STOMs
%%%%%%%%%%%%%%%%%%%%%%%%%%%%%%%%%%%%%%%%%%%%%%%%%%%%%%%%%%%%%%%%%%%%%%%%%%%%%%%%%%%%%%%%%%%%%%%%%%%%%%%%%%%%%%%%%%%%%%%%%%%%%%%%%%
\subsection{The spaces for PCAFs and STOMs} \label{sec: The space for STOMs}

Fix a $\bcmAB$ space $S$.
Recall from Section~\ref{sec: intro} (in particular, from \eqref{eq: dfn of STOM in intro})
that a space-time occupation measure (STOM) associated with a PCAF is a random measure on $S \times \RNp$. 
As noted in Section~\ref{sec: An overview of the framework},
the vague topology is a natural choice for discussing convergence of STOMs, 
but vague convergence of STOMs does not recover the local uniform convergence of the corresponding PCAFs. 
The aim of this subsection is to introduce spaces for PCAFs and STOMs and define a suitable topology.

We first introduce a space that will be used to discuss the convergence of PCAFs in our main results.

\begin{dfn} \label{dfn: space for PCAF}
  We define $\upC(\RNp, \RNp)$ to be the collection of functions $f \in C(\RNp, \RNp)$ that are non-decreasing. 
  We equip this set with the following metric:
  for each $\varphi_1, \varphi_2 \in \upC(\RNp, \RNp)$,
  \begin{equation} \label{dfn eq: metric on upC}
    d_{\upC(\RNp, \RNp)}(\varphi_1, \varphi_2) 
    \coloneqq 
    \sum_{n = 1}^\infty 2^{-n} \bigl(1 \wedge \sup_{0 \leq t \leq n}|\varphi_1(t) - \varphi_2(t)| \bigr).
  \end{equation}
  In particular, the topology on $\upC(\RNp, \RNp)$ is the compact-convergence topology.
\end{dfn}

\begin{rem}
  Note that convergence in $\upC(\RNp, \RNp)$ is equivalent to pointwise convergence (cf.\ \cite[Chapter~0.1]{Resnick_08_Extrem}).
\end{rem}

\begin{rem}
  Not all PCAFs belong to $\upC(\RNp, \RNp)$.
  Indeed, a PCAF may explode in finite time.
  Therefore, to study the convergence of PCAFs in the space $\upC(\RNp, \RNp)$,
  certain restrictions on the underlying process or on the PCAFs under consideration are required.
  This issue will be discussed in more detail in Section~\ref{sec: The approximation scheme for PCAFs} later.
\end{rem}

We next introduce a space in which the convergence of STOMs will be discussed.

\begin{dfn} \label{dfn: space for STOM}
  We define $\STOMMeas(S \times \RNp)$ to be the set of Borel measures $\Sigma$ on $S \times \RNp$
  such that, for each $t \in \RNp$, $\Sigma|_{S \times [0,t]}$ is a finite Borel measure.
\end{dfn}

Recall from~\eqref{eq: dfn of STOM in intro} that, 
given $(\xi, \varphi) \in D(\RNp, S) \times \upC(\RNp, \RNp)$, 
representing a process and a PCAF respectively, 
the associated STOM is defined by
\begin{equation} \label{eq: general STOM dfn}
  \Psi(\xi, \varphi)(\cdot) \coloneqq \int_0^\infty \mathbf{1}_{(\xi_t, t)}(\cdot)\, d\varphi_t.
\end{equation}
It follows immediately that $\Sigma = \Psi(\xi, \varphi)$ belongs to $\STOMMeas(S \times \RNp)$.
The function $\varphi$ can be recovered from $\Sigma$ via
\begin{equation}
  \varphi(t) = \Sigma(S \times [0,t]) = \int_{S \times \RNp} \mathbf{1}_{S \times [0,t]}(x, s)\, \Sigma(dx\, ds).
\end{equation}
From this observation, one sees that the vague topology cannot recover the convergence of PCAFs,
since the test function $\mathbf{1}_{S \times [0,t]}$ is not compactly supported.

To resolve this issue, 
we introduce a metric that differs from the vague metric.
For each $\Sigma_1, \Sigma_2 \in \STOMMeas(S \times \RNp)$, define
\begin{equation} \label{eq: dfn of stom met}
  \STOMMet{S \times \RNp}(\Sigma_1, \Sigma_2) 
  \coloneqq 
  \int_0^\infty e^{-t} 
  \bigl(1 \wedge \ProhMet{S \times \RNp}(\Sigma_1|_{S \times [0,t]}, \Sigma_2|_{S \times [0,t]}) \bigr)\, dt.
\end{equation}

\begin{prop} \label{prop: polishness of stom met}
  The function $\STOMMet{S \times \RNp}$ defines a complete, seprable metric on $\STOMMeas(S \times \RNp)$.
\end{prop}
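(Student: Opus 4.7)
The plan is to verify the three defining properties in turn: the metric axioms (largely routine from properties of $\ProhMet{S \times \RNp}$), separability (by truncation in time followed by discrete approximation), and completeness (which is the substantive step).

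For the metric axioms, non-negativity, symmetry, and the triangle inequality transfer pointwise-in-$t$ from the corresponding properties of the Prohorov metric by monotonicity and linearity of the integral. The integrand is bounded by $e^{-t}$, so $\STOMMet{S \times \RNp}(\Sigma_1, \Sigma_2) \leq 1$. For identity of indiscernibles, vanishing of the integrand for Lebesgue-a.e.\ $t$ forces $\Sigma_1|_{S \times [0, t]} = \Sigma_2|_{S \times [0, t]}$ along a dense sequence $t_k \uparrow \infty$, from which $\Sigma_1 = \Sigma_2$ on every Borel subset of $S \times \RNp$ by monotone convergence. For separability, the truncation $\Sigma|_{S \times [0, N]}$ of $\Sigma \in \STOMMeas(S \times \RNp)$ satisfies $\STOMMet{S \times \RNp}(\Sigma, \Sigma|_{S \times [0, N]}) \leq e^{-N}$, and this truncation is a finite measure on the Polish space $S \times \RNp$ that can be approximated, in the Prohorov metric, by rational-weighted sums of Dirac masses supported on a countable dense subset of $S \times (\QN \cap \RNp)$; such Prohorov closeness transfers to $\STOMMet{S \times \RNp}$-closeness by a routine bound exploiting that both measures are supported in $S \times [0, N]$.

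For completeness, the plan is to extract a subsequence $(\Sigma_{n_k})_k$ of a given Cauchy sequence with $\STOMMet{S \times \RNp}(\Sigma_{n_k}, \Sigma_{n_{k+1}}) < 2^{-k}$. Fubini--Tonelli applied to the defining integral then ensures that for Lebesgue-a.e.\ $t \in \RNp$ the series $\sum_k (1 \wedge \ProhMet{S \times \RNp}(\Sigma_{n_k}|_{S \times [0, t]}, \Sigma_{n_{k+1}}|_{S \times [0, t]}))$ converges, so $(\Sigma_{n_k}|_{S \times [0, t]})_k$ is Prohorov-Cauchy and converges to a finite measure $\nu_t$ supported in $S \times [0, t]$, using the completeness of $(\finMeas(S \times \RNp), \ProhMet{S \times \RNp})$. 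The main obstacle---and the only genuinely delicate point---is to patch the a.e.-defined family $(\nu_t)$ into a single Borel measure $\Sigma$ with $\Sigma|_{S \times [0, t]} = \nu_t$ a.e. For good $s < t$ with $\nu_t(S \times \{s\}) = 0$, a portmanteau-type argument based on continuous sandwiching functions $g_\varepsilon \nearrow \mathbf{1}_{[0, s]}$ and $h_\varepsilon \searrow \mathbf{1}_{[0, s]}$ lets one commute the weak limit with the restriction and conclude $\nu_s = \nu_t|_{S \times [0, s]}$. Since each $\nu_t$ has only countably many time-atoms, one can select an increasing sequence $t_K \uparrow \infty$ along which this consistency holds for every pair. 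Setting $\Sigma \coloneqq \sup_K \nu_{t_K}$ then gives a Borel measure in $\STOMMeas(S \times \RNp)$ with $\Sigma|_{S \times [0, t_K]} = \nu_{t_K}$; consistency propagates to $\Sigma|_{S \times [0, t]} = \nu_t$ for a.e.\ $t$, and dominated convergence yields $\STOMMet{S \times \RNp}(\Sigma_{n_k}, \Sigma) \to 0$. The full Cauchy sequence then converges to $\Sigma$ by the standard subsequence argument.
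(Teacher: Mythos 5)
Your proof is correct and follows exactly the route the paper intends: the paper omits the argument, referring to the proof that the vague metric $\VagueMet{S,\rho}$ is a complete separable metric (\cite[Theorem~3.19]{Noda_pre_Metrization}), and your argument is precisely that proof transplanted from restriction to balls $D_S(\rho,r)$ to restriction to time slabs $S\times[0,t]$ — including the two genuinely non-trivial points, namely the Fubini averaging over $t$ needed to transfer Prohorov closeness of truncations to $\STOMMet{S\times\RNp}$-closeness, and the patching of the a.e.-defined weak limits $\nu_t$ via continuity sets into a single measure. No gaps.
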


\begin{proof}
  This can be shown in exactly the same way as the proof that the vague metric~\eqref{eq: dfn of vague metric} is a complete and seprable metric, 
  and so the proof is omitted. 
  See \cite[Theorem~3.19]{Noda_pre_Metrization} for details.
\end{proof}

For later use, we prove the following lemma, 
which states that the above-defined distance between elements 
is preserved under isometric embeddings of the underlying spaces.
This property is particularly important for the metrization of 
Gromov--Hausdorff-type topologies, 
discussed in Section~\ref{sec: GH-type topologies}.

\begin{lem} \label{lem: preserving property of stom met}
  Let $S_1$ and $S_2$ be $\bcmAB$ spaces, 
  and let $f \colon S_1 \to S_2$ be an isometric embedding.
  Then the following pushforward map is distance preserving 
  with respect to the above-defined metrics:
  \begin{equation}
    \STOMMeas(S_1 \times \RNp) \to \STOMMeas(S_2 \times \RNp), \quad 
    \Sigma \mapsto \Sigma \circ (f \times \id_{\RNp})^{-1}.
  \end{equation}
\end{lem}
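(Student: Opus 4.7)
The plan is to decompose the claim into two elementary pieces: first, that restriction to $S \times [0, t]$ commutes with the pushforward; second, that the pushforward under an isometric embedding preserves the Prohorov distance between finite Borel measures. Combined with the structure of the integral in~\eqref{eq: dfn of stom met}, these two ingredients give the result immediately.

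First I would observe that $F \coloneqq f \times \id_{\RNp} \colon S_1 \times \RNp \to S_2 \times \RNp$ is itself an isometric embedding, since product spaces are equipped with the max product metric by notational convention~\ref{note: product space}. Moreover, since $f$ is injective and $F^{-1}(S_2 \times [0, t]) = S_1 \times [0, t]$, a direct unfolding of definitions yields the compatibility
\begin{equation}
(\Sigma \circ F^{-1})|_{S_2 \times [0, t]} = (\Sigma|_{S_1 \times [0, t]}) \circ F^{-1}
\end{equation}
for every $\Sigma \in \STOMMeas(S_1 \times \RNp)$ and $t \geq 0$. Inserting this into the definition \eqref{eq: dfn of stom met} reduces the claim to showing that pushforward by $F$ preserves the Prohorov distance between the finite Borel measures $\Sigma_i|_{S_1 \times [0, t]}$.

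The core step is therefore the following auxiliary claim: if $g \colon X \to Y$ is an isometric embedding between separable metric spaces and $\mu, \nu \in \finMeas(X)$, then $\ProhMet{Y}(\mu \circ g^{-1}, \nu \circ g^{-1}) = \ProhMet{X}(\mu, \nu)$. For the inequality $\leq$, given a Borel $A \subseteq X$ and applying the Prohorov definition on $Y$ to $B \coloneqq g(A)$, I would use the identity $g^{-1}(g(A)^{\lrangle{\varepsilon}}) \subseteq A^{\lrangle{\varepsilon}}$, which follows from $d_Y(g(x), g(y)) = d_X(x, y)$ together with the injectivity of $g$. For the reverse inequality, given a Borel $B \subseteq Y$, I would take $A \coloneqq g^{-1}(B)$ and use $g(A^{\lrangle{\varepsilon}}) \subseteq B^{\lrangle{\varepsilon}}$ (equivalently $A^{\lrangle{\varepsilon}} \subseteq g^{-1}(B^{\lrangle{\varepsilon}})$), which is immediate from the isometry.

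The main subtlety is that in the first direction one needs $g(A)$ itself to be Borel in $Y$ in order to plug it into the Prohorov definition there; this is precisely where Lemma~\ref{lem: Lousin--Souslin} (Lusin--Souslin) is invoked, applied to the injective continuous map $g$ between the Polish spaces $X$ and $Y$. Once this metric-preservation property is established on $\finMeas$, the lemma follows by integrating $1 \wedge (\cdot)$ against $e^{-t}\, dt$.
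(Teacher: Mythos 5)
Your proof is correct and complete. The paper itself gives no argument here: it simply cites the analogous distance-preservation result for the vague metric $\VagueMet{S,\rho}$ from an external reference and omits the details, so your write-up supplies exactly the content that citation would unfold to — the two ingredients (restriction to $S\times[0,t]$ commutes with pushforward; the Prohorov metric on $\finMeas$ is preserved under isometric embeddings, with Lusin--Souslin guaranteeing Borel measurability of $g(A)$) are precisely what one needs, and both directions of the Prohorov inequality are handled correctly via $g^{-1}\bigl(g(A)^{\lrangle{\varepsilon}}\bigr) \subseteq A^{\lrangle{\varepsilon}}$ and $A^{\lrangle{\varepsilon}} \subseteq g^{-1}\bigl(B^{\lrangle{\varepsilon}}\bigr)$. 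One small point of hygiene: you state the auxiliary Prohorov claim for arbitrary separable metric spaces but then invoke Lusin--Souslin, which requires the spaces to be Polish (or at least standard Borel); this causes no trouble in the application, since $\bcmAB$ spaces are complete and separable and hence so are $S_i \times \RNp$, but the auxiliary lemma should be stated under the Polish hypothesis to match the tool you use.
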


\begin{proof}
  This property follows immediately from 
  the corresponding result for the vague metric 
  (see \cite[Proposition~3.24]{Noda_pre_Metrization}),
  and hence the details are omitted.
\end{proof}

Henceforth, we equip $\STOMMeas(S \times \RNp)$ with the topology induced by the metric $\STOMMet{S \times \RNp}$.
The following provides characterizations of convergence in this space.
In particular, one can see that the topology is stronger than the vague topology.

\begin{prop} \label{prop: conv in stom sp}
  Let $\Sigma, \Sigma_1, \Sigma_2, \dots$ be elements of $\STOMMeas(S \times \RNp)$.
  The following statements are equivalent with each other.
  \begin{enumerate} [label = \textup{(\roman*)}]
    \item \label{prop item: conv in stom sp. 1}
      The measures $\Sigma_n$ converge to $\Sigma$ in $\STOMMeas(S \times \RNp)$.
    \item \label{prop item: conv in stom sp. 2}
      For all but countably many $t \in \RNp$, 
      the finite Borel measures $\Sigma_n|_{S \times [0,t]}$ converge weakly to $\Sigma|_{S \times [0,t]}$.
    \item \label{prop item: conv in stom sp. 3}
      For any bounded and continuous function $f \colon S \times \RNp \to \RN$
      such that $\supp(f) \subseteq S \times [0,T]$ for some $T \in \RNp$,
      it holds that 
      \begin{equation}
        \lim_{n \to \infty}\int_{S \times \RNp} f(x, t)\, \Sigma_n(dx\, dt) = \int_{S \times \RNp} f(x,t)\, \Sigma(dx\, dt).
      \end{equation}
  \end{enumerate}
\end{prop}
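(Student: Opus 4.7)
The plan is to close the cycle \ref{prop item: conv in stom sp. 1} $\Rightarrow$ \ref{prop item: conv in stom sp. 3} $\Rightarrow$ \ref{prop item: conv in stom sp. 2} $\Rightarrow$ \ref{prop item: conv in stom sp. 1}, exploiting the integral structure of $\STOMMet{S \times \RNp}$ at the two endpoints and a standard cutoff approximation in between. The key observation is that if $\STOMMet{S \times \RNp}(\Sigma_n, \Sigma) \to 0$, then the integrand $e^{-t} \bigl(1 \wedge \ProhMet{S \times \RNp}(\Sigma_n|_{S \times [0,t]}, \Sigma|_{S \times [0,t]})\bigr)$ tends to $0$ in $L^1(\RNp)$, and conversely, if the truncated Prohorov distances vanish for all but countably many $t$, then dominated convergence with bound $e^{-t}$ immediately yields convergence in $\STOMMet{S \times \RNp}$. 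The latter direction, \ref{prop item: conv in stom sp. 2} $\Rightarrow$ \ref{prop item: conv in stom sp. 1}, is therefore essentially routine.

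For \ref{prop item: conv in stom sp. 1} $\Rightarrow$ \ref{prop item: conv in stom sp. 3}, I would argue by extracting subsequences. The $L^1$ convergence above gives, along any subsequence, a further subsequence $(n_k)$ for which $\ProhMet{S \times \RNp}(\Sigma_{n_k}|_{S \times [0,t]}, \Sigma|_{S \times [0,t]}) \to 0$ for Lebesgue-a.e.\ $t$; in particular $\Sigma_{n_k}|_{S \times [0,t]} \to \Sigma|_{S \times [0,t]}$ weakly for such $t$. Given a bounded continuous $f$ supported in $S \times [0, T]$, pick any such $t > T$; since $f$ vanishes outside the time-slab $[0, t]$, one has $\int f\, d\Sigma_{n_k} = \int f\, d(\Sigma_{n_k}|_{S \times [0,t]}) \to \int f\, d\Sigma$. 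A subsubsequence argument then promotes this to convergence along the full sequence.

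For \ref{prop item: conv in stom sp. 3} $\Rightarrow$ \ref{prop item: conv in stom sp. 2}, I fix $t \in \RNp$ with $\Sigma(S \times \{t\}) = 0$; the exceptional set is at most countable since each $\Sigma|_{S \times [0,T]}$ is finite. To obtain weak convergence of the truncated measures, I would approximate the time-indicator by continuous cutoffs $\chi_\varepsilon \leq \mathbf{1}_{[0,t]} \leq \tilde{\chi}_\varepsilon$ on $\RNp$, supported in $[0, t + \varepsilon]$ and equal to $1$ on $[0, t - \varepsilon]$. For any bounded continuous $g$ on $S \times \RNp$, the products $g \cdot \chi_\varepsilon$ and $g \cdot \tilde{\chi}_\varepsilon$ satisfy the hypothesis of \ref{prop item: conv in stom sp. 3}, and the error $\bigl| \int_{S \times [0,t]} g\, d\Sigma_n - \int g \cdot \chi_\varepsilon\, d\Sigma_n \bigr|$ is at most $\supnorm{g} \cdot \Sigma_n(S \times [t - \varepsilon, t])$. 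I control this uniformly in $n$ by dominating $\mathbf{1}_{[t-\varepsilon, t]}$ by a further continuous bump against which $\Sigma_n$ integrates, by \ref{prop item: conv in stom sp. 3}, to a quantity close to $\Sigma(S \times [t - 2\varepsilon, t + \varepsilon])$; sending $\varepsilon \to 0$ and using $\Sigma(S \times \{t\}) = 0$ closes the argument. This smoothing-of-the-indicator step is the main technical obstacle, since the discontinuity of $\mathbf{1}_{S \times [0,t]}$ at time $t$ must be handled by a uniform tightness-type bound near $t$, and this is precisely what the hypothesis $\Sigma(S \times \{t\}) = 0$ enables once combined with the squeeze estimate above.
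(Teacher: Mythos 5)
Your proof is correct, and it is essentially the argument the paper has in mind: the paper omits the proof entirely, referring to the identical characterization of the vague metric $\VagueMet{S,\rho}$ in \cite[Theorem~3.20]{Noda_pre_Metrization}, and your cycle \ref{prop item: conv in stom sp. 1}$\Rightarrow$\ref{prop item: conv in stom sp. 3}$\Rightarrow$\ref{prop item: conv in stom sp. 2}$\Rightarrow$\ref{prop item: conv in stom sp. 1} (a.e.\ subsequence extraction from the $L^1$ integrand, continuous time-cutoffs around a non-atom $t$ of the time-marginal, and dominated convergence with the bound $e^{-t}$) is the standard adaptation of that argument to time-slabs in place of balls.
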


\begin{proof}
  This can be shown in exactly the same way as the characterization of the vague topology given in \cite[Theorem~3.20]{Noda_pre_Metrization}, 
  so we will omit the proof.
\end{proof}

\begin{rem} \label{rem: Kallenberg framework for stom sp}
  Recall that the vague topology we adopt in this paper is defined by using continuous functions with compact support as test functions. 
  A variant of this notion, in which the test functions are bounded continuous functions whose supports are bounded subsets, 
  has been studied in detail in~\cite{Kallenberg_17_Random}. 
  We refer to the resulting topology as the \emph{vague topology with bounded support}. 
  From Proposition~\ref{prop: conv in stom sp},
  one sees that the topology of $\STOMMeas(S \times \RNp)$
  coincides with the vague topology with bounded support
  when the underlying metric $d_S$ on $S$ is replaced by a bounded, topologically equivalent one,
  for instance $d_S \wedge 1$.
  Hence, all the results of~\cite{Kallenberg_17_Random} concerning this topology are applicable to our setting. 
  %Nevertheless, we specify an explicit metric in~\eqref{eq: dfn of stom met} 
  %in view of the metrization of the Gromov--Hausdorff-type topologies introduced later. 
  %Indeed, in that framework, it is essential that the metric structure of the underlying space be preserved; 
  %that is, one cannot replace the metric on $S$ with a boundedly equivalent metric 
  %without leaving the class of admissible objects in the Gromov--Hausdorff setting.
\end{rem}

\begin{prop} \label{prop: precompact in stom meas sp}
  Fix a countable family $\{\Pi_n\}_{n \geq 1}$ in $\STOMMeas(S \times \RNp)$.
  The family is precompact if and only if the following two conditions are satisfied:
  \begin{enumerate} [label = \textup{(\roman*)}]
    \item $\displaystyle \limsup_{n \to \infty} \Pi_n(S \times [0,t]) < \infty$ for all $t > 0$,
    \item $\displaystyle \limsup_{R \to \infty} \limsup_{n \to \infty} \Pi_n\bigl( (S \setminus S^{(R)})\times [0, t]\bigr) = 0$ for all $t > 0$.
  \end{enumerate}
\end{prop}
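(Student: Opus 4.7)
The plan is to view this as a Prohorov-type criterion for the metric space $\STOMMeas(S \times \RNp)$, and to reduce to the classical Prohorov theorem for finite measures on each time slab $S \times [0, m]$, $m \in \NN$. The bridge between the two is Proposition~\ref{prop: conv in stom sp}, which characterizes convergence in $\STOMMeas(S \times \RNp)$ as weak convergence of the restrictions $\Sigma_n|_{S \times [0, t]}$ for all but countably many $t$. Precompactness here means sequential relative compactness, which is valid since $\STOMMeas(S \times \RNp)$ is Polish by Proposition~\ref{prop: polishness of stom met}.

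For the necessity direction, I would argue by contradiction. If (i) fails at some $t_0$, extract a subsequence along which $\Pi_{n_k}(S \times [0, t_0]) \to \infty$ and then, using precompactness, a further convergent subsequence with limit $\Pi \in \STOMMeas(S \times \RNp)$. Choose $t > t_0$ outside the (at most countable) exceptional set of Proposition~\ref{prop: conv in stom sp}; then $\Pi_{n_k}|_{S \times [0, t]} \to \Pi|_{S \times [0, t]}$ weakly, so their total masses converge to the finite number $\Pi(S \times [0, t])$, contradicting $\Pi_{n_k}(S \times [0, t_0]) \leq \Pi_{n_k}(S \times [0, t]) \to \infty$. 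If (ii) fails, fix $t > 0$ and $\varepsilon > 0$ witnessing the failure, extract a convergent subsequence with limit $\Pi$, pick a good $t' > t$, and use tightness of the weakly convergent family $\{\Pi_{n_k}|_{S \times [0, t']}\}$ to find a radius $R$ making $\Pi_{n_k}((S \setminus S^{(R)}) \times [0, t]) < \varepsilon$ uniformly in $k$, again a contradiction.

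For the sufficiency direction, fix $m \in \NN$. Since $(S, d_S)$ is boundedly compact, each $S^{(R)}$ is compact, so $S^{(R)} \times [0, m]$ is a compact exhaustion of $S \times [0, m]$; combining this with (ii) yields tightness of $\{\Pi_n|_{S \times [0, m]}\}$, and (i) ensures bounded total mass. By Prohorov's theorem and a diagonal extraction, there is a subsequence $\{\Pi_{n_k}\}$ and finite Borel measures $\nu_m$ on $S \times [0, m]$ with $\Pi_{n_k}|_{S \times [0, m]} \to \nu_m$ weakly for every $m$. I would then define a candidate limit $\Pi$ by the positive linear functional $f \mapsto \lim_k \int f\, d\Pi_{n_k}$ on bounded continuous $f$ of bounded temporal support (the limit exists because, for any $m$ strictly exceeding the temporal support of $f$, it equals $\int f\, d\nu_m$); Riesz representation and (i) show $\Pi \in \STOMMeas(S \times \RNp)$. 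Finally, for $t > 0$ with $\nu_m(S \times \{t\}) = 0$ and $m > t$, the continuous mapping theorem for weak convergence yields $\Pi_{n_k}|_{S \times [0, t]} \to \nu_m|_{S \times [0, t]} = \Pi|_{S \times [0, t]}$ weakly, and this holds for all but countably many $t$, so Proposition~\ref{prop: conv in stom sp} delivers $\Pi_{n_k} \to \Pi$ in $\STOMMeas(S \times \RNp)$.

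The main obstacle is the consistency step across slabs: weak convergence on $S \times [0, m_2]$ does not in general imply weak convergence on the subslab $S \times [0, m_1]$ because restriction to a sub-Borel set need not be weakly continuous, failing precisely at measures charging the boundary $S \times \{m_1\}$. The remedy is to work only at continuity times of the $\nu_m$'s, exploiting the fact that the union over $m \in \NN$ of $\{t : \nu_m(S \times \{t\}) > 0\}$ is countable, which is exactly what the characterization in Proposition~\ref{prop: conv in stom sp}\ref{prop item: conv in stom sp. 2} permits as an exceptional set.
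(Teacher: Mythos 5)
Your argument is correct, but it is a genuinely different route from the one the paper takes. The paper's proof is a one-line reduction: by Remark~\ref{rem: Kallenberg framework for stom sp}, after replacing $d_S$ by the bounded equivalent metric $d_S \wedge 1$, the topology of $\STOMMeas(S \times \RNp)$ coincides with Kallenberg's vague topology with bounded support, and the stated criterion is then exactly \cite[Theorem~4.2]{Kallenberg_17_Random}. You instead prove the criterion from scratch: slab-by-slab Prohorov, a diagonal extraction, assembly of the limit via a positive linear functional on bounded continuous functions of bounded temporal support, and a pass back through Proposition~\ref{prop: conv in stom sp} at continuity times. What your approach buys is self-containedness and transparency about where each hypothesis enters (in particular, you correctly isolate the only delicate point, namely that restriction to a subslab is weakly continuous only at times not charged by the limit, which is precisely the countable exceptional set that Proposition~\ref{prop: conv in stom sp}\ref{prop item: conv in stom sp. 2} tolerates); what the paper's approach buys is brevity and the ability to import all of Kallenberg's machinery wholesale, which it exploits again elsewhere (e.g.\ in the proof of Proposition~\ref{prop: delta map STOM approx for local Kato}).

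One small expositional point in your necessity argument for (ii): since the failure of (ii) concerns the $\limsup$ over the \emph{full} sequence, controlling an arbitrary convergent subsequence does not immediately yield a contradiction. You should first select, by a diagonal choice, indices $n_R$ with $\Pi_{n_R}\bigl((S \setminus S^{(R)}) \times [0,t]\bigr) > \varepsilon_0/2$ (possible for every $R$ when (ii) fails, by monotonicity in $R$), and only then extract a convergent sub-subsequence; its tightness on the slab $S \times [0,t']$ then contradicts the defining property of the $n_R$. This is a routine fix, not a gap in the idea.
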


\begin{proof}
  By Remark~\ref{rem: Kallenberg framework for stom sp}, 
  the result is immediate from \cite[Theorem~4.2]{Kallenberg_17_Random}.
\end{proof}

In the following proposition, 
we clarify the relationship between the convergence of PCAF-type functions 
and that of the associated STOM-type measures.
In particular, it shows that 
convergence of STOMs in $\STOMMeas(S \times \RNp)$ 
implies convergence of the corresponding PCAFs in $\upC(\RNp, \RNp)$.

\begin{prop} \label{prop: PCAF to STOM map}
  Define a map
  \begin{equation}
    \apSTOM \colon 
    D(\RNp, S) \times \upC(\RNp, \RNp) 
    \ni (\xi, \varphi) \longmapsto (\xi, \Psi(\xi, \varphi))
    \in D(\RNp, S) \times \STOMMeas(S \times \RNp),
  \end{equation}
  where $\Psi(\xi, \varphi)$ is defined in \eqref{eq: general STOM dfn}.
  Then the following statements hold.
  \begin{enumerate}[label=\textup{(\roman*)}]
    \item \label{prop item: PCAF to STOM map. 1}
      If both copies of $D(\RNp, S)$ in the domain and the codomain are equipped with the $J_1$-Skorohod topology, 
      then the map is a topological embedding, i.e., a homeomorphism onto its image.
    \item \label{prop item: PCAF to STOM map. 2}
      If both copies of $D(\RNp, S)$ are equipped with the $L^0$ topology,
      then the map is measurable and its inverse is continuous,
      whereas the map itself is not necessarily continuous.
  \end{enumerate}
\end{prop}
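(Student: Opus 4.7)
My plan is to first observe that $\apSTOM$ is injective, since $\varphi$ can be recovered from $\Psi(\xi, \varphi)$ via $\varphi(t) = \Psi(\xi, \varphi)(S \times [0, t])$. For part~\ref{prop item: PCAF to STOM map. 1} it then suffices to verify continuity of $\apSTOM$ and of its inverse on the image. For the forward direction, I would take $\xi_n \to \xi$ in $D_{J_1}(\RNp, S)$ and $\varphi_n \to \varphi$ in $\upC(\RNp, \RNp)$, fix $T > 0$, and invoke the $J_1$ convergence to select time changes $\lambda_n$ of $[0, T+1]$ satisfying $\sup_s |\lambda_n(s) - s| \to 0$ and $\sup_s d_S(\xi_n(\lambda_n(s)), \xi(s)) \to 0$. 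By Proposition~\ref{prop: conv in stom sp}(iii), the desired convergence reduces to showing $\int f(\xi_n(t), t)\, d\varphi_n(t) \to \int f(\xi(t), t)\, d\varphi(t)$ for every bounded continuous $f$ with support in $S \times [0, T]$. After the change of variables $t = \lambda_n(s)$ in the Stieltjes integral, this becomes convergence of $\int f(\xi_n \circ \lambda_n(s), \lambda_n(s))\, d(\varphi_n \circ \lambda_n)(s)$, which I would settle by combining the uniform convergences $\varphi_n \circ \lambda_n \to \varphi$ on $[0, T+1]$ (whence $d(\varphi_n \circ \lambda_n) \to d\varphi$ weakly) and $f \circ (\xi_n \circ \lambda_n, \lambda_n) \to f \circ (\xi, \id)$---the latter from uniform continuity of $f$ on a compact neighborhood of the relatively compact path $\{(\xi(s), s)\}_{s \leq T+1}$.

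For the inverse continuity, assume $\apSTOM(\xi_n, \varphi_n) \to \apSTOM(\xi, \varphi)$ in the codomain; only $\varphi_n \to \varphi$ in $\upC(\RNp, \RNp)$ needs to be established. Proposition~\ref{prop: conv in stom sp}(ii) provides weak convergence of $\Psi(\xi_n, \varphi_n)|_{S \times [0, t]}$ to $\Psi(\xi, \varphi)|_{S \times [0, t]}$ for all but countably many $t$, and total-mass preservation under weak convergence of finite Borel measures then yields pointwise convergence $\varphi_n(t) \to \varphi(t)$ on a dense subset of $\RNp$. A standard P\'olya-type argument, exploiting monotonicity of each $\varphi_n$ together with continuity of $\varphi$, upgrades this to uniform convergence on compacts, completing~\ref{prop item: PCAF to STOM map. 1}. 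For~\ref{prop item: PCAF to STOM map. 2}, the inverse-continuity argument transfers verbatim, since it does not rely on the topology on the $D$-factor. Measurability of $\apSTOM$ then follows from~\ref{prop item: PCAF to STOM map. 1} together with Proposition~\ref{prop: Borel algebra by L^0 top}: the map is Borel measurable from $D_{J_1}(\RNp, S) \times \upC(\RNp, \RNp)$ to $D_{J_1}(\RNp, S) \times \STOMMeas(S \times \RNp)$, and the Borel $\sigma$-algebras on $D_{J_1}(\RNp, S)$ and $D_{L^0}(\RNp, S)$ coincide.

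To rule out continuity in the $L^0$ topology, I would exhibit the following counterexample with $S = \RN$. Let $\varphi \in \upC(\RNp, \RNp)$ be the Cantor function extended by~$1$ beyond $[0, 1]$, so that $d\varphi$ is the atom-free Cantor measure supported on the Cantor set $C$. With $C_n = \bigsqcup_{i=1}^{2^n} [a_i^n, b_i^n]$ denoting the $n$-th stage of the standard construction, set $\xi_n \coloneqq \mathbf{1}_{\tilde{C}_n}$, where $\tilde{C}_n \coloneqq \bigsqcup_i [a_i^n, b_i^n)$. Each $\xi_n$ is readily checked to be \cadlag, and $\Leb(\tilde{C}_n) = (2/3)^n \to 0$ implies $\xi_n \to 0$ in $L^0$. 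Since the set of right endpoints $\{b_i^n\}_{n, i}$ is countable and hence $d\varphi$-null, for every $n$ one has $\xi_n = 1$ on $C$ up to a $d\varphi$-null set, so $\int g(\xi_n(t))\, h(t)\, d\varphi(t) = g(1) \int h\, d\varphi$ for every bounded continuous $g \colon \RN \to \RN$ and every continuous $h \colon \RNp \to [0, 1]$ with $h \equiv 1$ on $[0, 1]$ and $h \equiv 0$ outside $[0, 2]$. Choosing $g$ with $g(0) \neq g(1)$ then contradicts convergence $\Psi(\xi_n, \varphi) \to \Psi(0, \varphi)$ in $\STOMMeas(\RN \times \RNp)$ via Proposition~\ref{prop: conv in stom sp}(iii). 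The main obstacle, in my view, lies in part~\ref{prop item: PCAF to STOM map. 1} in the joint handling of the $J_1$ time change and the Stieltjes measure $d\varphi_n$: continuity of $\varphi$---equivalently, atom-freeness of $d\varphi$---is precisely what makes the change-of-variables argument go through, and the Cantor construction illustrates exactly why this collapses once $J_1$ convergence is weakened to $L^0$.
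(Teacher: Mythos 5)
Your proof is correct, and its overall architecture (injectivity, forward continuity, inverse continuity via total masses and monotonicity, measurability via Proposition~\ref{prop: Borel algebra by L^0 top}, a Cantor-type counterexample) matches the paper's. The one genuinely different ingredient is the forward-continuity argument in part~(i). The paper changes variables using the right-continuous inverses $\varphi_n^{-1}$, converting the Stieltjes integrals into Lebesgue integrals on the occupation scale, and then invokes Whitt's results on almost-everywhere convergence of inverses together with dominated convergence. You instead change variables by the $J_1$ time change $\lambda_n$ and argue via weak convergence of the Stieltjes measures $d(\varphi_n \circ \lambda_n) \to d\varphi$ combined with uniform convergence of the integrands; this is more elementary and avoids the inverse-function machinery. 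It does, however, lean on one point you should make explicit: after splitting off the uniformly small term $\|f(\xi_n\circ\lambda_n(\cdot),\lambda_n(\cdot)) - f(\xi(\cdot),\cdot)\|_\infty \cdot \varphi_n(\lambda_n(T+1))$, the remaining step $\int f(\xi(s), s)\, d(\varphi_n\circ\lambda_n)(ds) \to \int f(\xi(s),s)\, d\varphi(ds)$ applies weak convergence to the integrand $s \mapsto f(\xi(s), s)$, which is only c\`adl\`ag, not continuous. One therefore needs the portmanteau theorem for bounded functions continuous $d\varphi$-almost everywhere, which holds here precisely because $\xi$ has at most countably many discontinuities and $d\varphi$ is atom-free. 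Your closing paragraph shows you know this is the crux, but the invocation should appear in the argument itself rather than be left implicit in the word ``combining.'' Everything else --- the P\'olya upgrade from dense pointwise to locally uniform convergence for the inverse map, the transfer of measurability, and the Cantor counterexample (a concrete instance of the paper's more general construction covering $\supp(d\varphi)$ by half-open intervals of small Lebesgue measure) --- is sound.
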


\begin{proof}
  \ref{prop item: PCAF to STOM map. 1}.
  It is clear that the map is injective.
  We first prove its continuity.
  Let $(\xi_n, \varphi_n)_{n \geq 1}$ be a sequence converging to $(\xi, \varphi)$ in $D_{J_1}(\RNp, S) \times \upC(\RNp, \RNp)$.
  Fix a continuity point $T > 0$ of $\xi$.
  By the $J_1$-Skorohod convergence of $\xi_n$ to $\xi$,
  we can find a compact subset $K$ of $S$ such that 
  all the trajectories of $\xi_n$, $n \geq 1$, up to time $T$ are contained in $K$
  (cf.\ \cite[Theorem~A.5.4]{Kallenberg_21_Foundations}).
  Moreover, there exist time-change functions $\lambda_n \in \Lambda_T^{J_1}$, $n \geq 1$, such that 
  \begin{equation}
    \lim_{n \to \infty}
    \varepsilon_n 
    = 0,\quad 
    \text{where}\quad 
    \varepsilon_n \coloneqq
    \sup_{0 \leq t \leq T} |\lambda_n(t)-t| 
    \vee \sup_{0 \leq t \leq T} d_S(\xi_n(t), \xi \circ \lambda_n(t)).
  \end{equation}
  Fix a bounded and continuous function $f \colon S \times \RNp \to \RNp$
  such that $\supp(f) \subseteq S \times [0,T]$ for some $T > 0$.
  By the uniform continuity of $f$ on $K \times [0,T]$,
  if we set, for each $\varepsilon > 0$,
  \begin{equation}
    w_f(\varepsilon) 
    \coloneqq \sup\!\left\{
      |f(x,s) - f(y,t)| \mid (x,s), (y,t) \in K \times [0, T]\ \text{such that}\ d_S(x,y) \vee |s-t| \leq \varepsilon
    \right\},
  \end{equation}
  then $w_f(\varepsilon) \to 0$ as $\varepsilon \to 0$.
  Write $\Sigma_n \coloneqq \Psi(\xi_n, \varphi_n)$, $n \geq 1$, and $\Sigma \coloneqq \Psi(\xi, \varphi)$.
  We deduce that 
  \begin{align}
    &\left| \int_{S \times \RNp} f(x,t)\, \Sigma_n^{(*, T)}(dx\, dt) - \int_{S \times \RNp} f(x,t)\, \Sigma^{(*, T)}(dx\, dt)\right|\\
    &=
    \left| \int_0^T f(\xi_n(t), t)\, \varphi_n(dt) - \int_0^T f(\xi(t), t)\, \varphi(dt) \right|\\
    &\leq   
    \int_0^T \left| f(\xi_n(t), t) - f(\xi \circ \lambda_n(t), t) \right|\, \varphi_n(dt)
    +\left| \int_0^T f(\xi \circ \lambda_n(t), t)\, \varphi_n(dt) - \int_0^T f(\xi(t), t)\, \varphi(dt)\right|\\
    &\leq    
    w_f(\varepsilon_n)\,\varphi_n(T)
    + 
    \left| \int_0^T f(\xi \circ \lambda_n(t), t)\, \varphi_n(dt) - \int_0^T f(\xi(t), t)\, \varphi(dt)\right|.
    \label{thm pr: PCAF conv implies STOM conv. 1}
  \end{align}
  Since $\varphi_n(T)$ is bounded uniformly in $n$ by the convergence of $\varphi_n$ to $\varphi$,
  the first term in the last inequality converges to $0$ as $n \to \infty$.
  Thus, it suffices to show that the second term also vanishes.
  Define the right-continuous inverse of $\varphi_n$ by 
  \begin{equation}
    \varphi_n^{-1}(t) \coloneqq \inf\{s \geq 0 \mid \varphi_n(s) > t\}.
  \end{equation}
  Similarly, define $\varphi^{-1}$ to be the right-continuous inverse of $\varphi$.
  By a change-of-variables formula (cf.\ \cite[Lemma~A.3.7]{Chen_Fukushima_12_Symmetric}),
  \begin{align}
    &\left| \int_0^T f(\xi \circ \lambda_n(t), t)\, \varphi_n(dt) - \int_0^T f(\xi(t), t)\, \varphi(dt)\right|\\
    &=
    \left| \int_0^{\varphi_n(T)} f\bigl( \xi \circ \lambda_n \circ \varphi_n^{-1}(t), \varphi_n^{-1}(t) \bigr)\, dt 
      - \int_0^{\varphi(T)} f\bigl( \xi \circ \varphi^{-1}(t), \varphi^{-1}(t) \bigr)\, dt\right|.
    \label{thm pr: PCAF conv implies STOM conv. 2}
  \end{align}
  By \cite[Theorem~12.5.1 and Corollary~13.6.5]{Whitt_02_Stochastic}, we have $\varphi_n^{-1}(t) \to \varphi^{-1}(t)$
  for Lebesgue-almost every $t > 0$.
  For such $t$, it also holds that $\lambda_n \circ \varphi_n^{-1}(t) \to \varphi^{-1}(t)$.
  Let $N_1$ denote the negligible set of $t \geq 0$ 
  for which the convergence $\lambda_n \circ \varphi_n^{-1}(t) \to \varphi^{-1}(t)$ fails.
  Let $N_2$ denote the set of $t \geq 0$ such that $\varphi^{-1}(t)$ is a discontinuity point of $\xi$.
  Since $\xi$ is continuous except at countably many points and $\varphi^{-1}$ is strictly increasing,
  the set $N_2$ is countable.
  Hence $\Leb(N_1 \cup N_2) = 0$.
  For any $t \in \RNp \setminus (N_1 \cup N_2)$,
  we have $\lambda_n \circ \varphi_n^{-1}(t) \to \varphi^{-1}(t)$ and $\varphi^{-1}(t)$ is a continuity point of $\xi$,
  which implies $\xi(\lambda_n \circ \varphi_n^{-1}(t)) \to \xi(\varphi^{-1}(t))$.
  Therefore, by the dominated convergence theorem,
  the value of \eqref{thm pr: PCAF conv implies STOM conv. 2} converges to $0$ as $n \to \infty$.
  This establishes the desired continuity.

  It remains to prove that the inverse map is continuous.
  Let $(\xi_n, \varphi_n)_{n \geq 1}$ and $(\xi, \varphi)$ be elements of $D_{J_1}(\RNp, S) \times \upC(\RNp, \RNp)$.
  Write $\Sigma_n \coloneqq \Psi(\xi_n, \varphi_n)$, $n \geq 1$, and $\Sigma \coloneqq \Psi(\xi, \varphi)$.
  Assume that $(\xi_n, \Sigma_n) \to (\xi, \Sigma)$.
  It suffices to show that $\varphi_n \to \varphi$ in the compact-convergence topology.
  By the convergence of $\Sigma_n$ to $\Sigma$ and Proposition~\ref{prop: conv in stom sp},
  we deduce that $d\varphi_n \to d\varphi$ vaguely.
  Since $\varphi$ is continuous, $d\varphi$ has no atoms.
  In particular, for any $t \geq 0$, $[0,t]$ is a continuity set of $d\varphi$.
  It then follows that $\varphi_n(t) \to \varphi(t)$ for each $t \geq 0$.
  This completes the proof.

  \ref{prop item: PCAF to STOM map. 2}.
  By Proposition~\ref{prop: Borel algebra by L^0 top} and \ref{prop item: PCAF to STOM map. 1},
  the map is measurable.
  The continuity of the inverse map follows by the same argument as before.
  To see that the map fails to be continuous in general,
  assume that $S$ consists of two distinct elements $v_1$ and $v_2$,
  and take a function $\varphi \in \upC(\RNp,\RNp)$ such that $d\varphi([0,1]) = 1$, $\supp(d\varphi) \subseteq [0,1]$,
  and $\Leb(\supp(d\varphi)) = 0$.
  (For example, the Cantor function suffices.)
  From the last condition,
  for each $n \geq 1$,
  we can find a Borel set $U_n$ which is a disjoint union of finitely many half-open intervals of the form $[a,b)$
  and satisfies $\supp(d\varphi) \subseteq U_n$ and $\Leb(U_n) < 2^{-n}$.
  Define $\eta_n \in D(\RNp,S)$ by setting $\eta_n(t) = v_1$ if $t \in U_n$ and $\eta_n(t) = v_2$ otherwise.
  Then $\eta_n$ converges to the constant function $v_2$ in the $L^0$ topology.
  On the other hand, $\Psi(\eta_n,\varphi)(\{v_1\} \times [0,1]) = d\varphi([0,1]) = 1$ for all $n \geq 1$,
  whereas for the limit $\eta \equiv v_2$ we have $\Psi(\eta,\varphi)(\{v_1\} \times [0,1]) = 0$.
  Hence $\Psi(\eta_n,\varphi)$ does not converge to $\Psi(\eta,\varphi)$.
\end{proof}

\begin{rem} \label{rem: PCAF to STOM map}
  We record some remarks concerning Proposition~\ref{prop: PCAF to STOM map}.
  \begin{enumerate} [label = \textup{(\alph*)}]
    \item 
    The continuity of the inverse of $\apSTOM$ implies that 
    the convergence of STOM-type measures in $\STOMMeas(S \times \RNp)$ 
    entails the convergence of the associated PCAF-type functions.

    \item 
    By Proposition~\ref{prop: PCAF to STOM map}\ref{prop item: PCAF to STOM map. 1},
    when one works in the $J_1$-Skorohod topology (or other relatively fine topologies),
    the joint convergence of processes and PCAFs is equivalent to that of processes and STOMs.
    However, in the main results of this paper, we mainly work under the $L^0$ topology 
    in order to allow greater generality of assumptions.
    Consequently, the convergence of PCAFs does not necessarily imply that of STOMs,
    which makes the technical arguments considerably more involved.
  \end{enumerate}
\end{rem}

%%%%%%%%%%%%%%%%%%%%%%%%%%%%%%%%%%%%%%%%%%%%%%%%%%%%%%%%%%%%%%%%%%%%%%%%%%%%%%%%%%%%%%%%%%%%%%%%%%%%%%%%%%%%%%%%%%%%%%%%%%%%%%%%%%
%%%%%%%%%%%%%%%%%%%%%%%%%%%%%%%%%%%%%%%%%%%%%%%%%%%%%%%%%%%%%%%%%%%%%%%%%%%%%%%%%%%%%%%%%%%%%%%%%%%%%%%%%%%%%%%%%%%%%%%%%%%%%%%%%%
% Gromov--Hausdorff-type topologies
%%%%%%%%%%%%%%%%%%%%%%%%%%%%%%%%%%%%%%%%%%%%%%%%%%%%%%%%%%%%%%%%%%%%%%%%%%%%%%%%%%%%%%%%%%%%%%%%%%%%%%%%%%%%%%%%%%%%%%%%%%%%%%%%%%
%%%%%%%%%%%%%%%%%%%%%%%%%%%%%%%%%%%%%%%%%%%%%%%%%%%%%%%%%%%%%%%%%%%%%%%%%%%%%%%%%%%%%%%%%%%%%%%%%%%%%%%%%%%%%%%%%%%%%%%%%%%%%%%%%%
\section{Topology on the space of metric spaces equipped with additional structures} \label{sec: GH-type topologies}

The Gromov--Hausdorff metric, introduced by Gromov~\cite{Gromov_07_Metric}, 
is a modification of the Hausdorff metric and measures the distance between compact metric spaces.
It induces a Polish topology on the set of (equivalence classes of) compact metric spaces, 
called the \emph{Gromov--Hausdorff topology}, 
and provides a natural setting for discussing convergence of metric spaces. 
In many situations, metric spaces are equipped with additional structures such as measures or stochastic processes, 
and it is then natural to consider convergence not only of the underlying metric spaces but also of the additional structures.
For this purpose, several extensions of the Gromov--Hausdorff topology have been studied 
(cf.\ \cite{Abraham_Delmas_Hoscheit_13_A_note,Athreya_Lohr_Winter_16_The_gap,Khezeli_20_Metrization}), 
and general frameworks have recently been established in \cite{Khezeli_23_A_unified,Noda_pre_Metrization}. 
Here, following \cite{Noda_pre_Metrization}, we introduce the topological framework needed for our main results.
(The differences between \cite{Khezeli_23_A_unified} and \cite{Noda_pre_Metrization} are discussed in \cite[Section~1]{Noda_pre_Metrization}.)

%%%%%%%%%%%%%%%%%%%%%%%%%%%%%%%%%%%%%%%%%%%%%%%%%%%%%%%%%%%%%%%%%%%%%%%%%%%%%%%%%%%%%%%%%%%%%%%%%%%%%%%%%%%%%%%%%%%%%%%%%%%%%%%%%%
% General framework
%%%%%%%%%%%%%%%%%%%%%%%%%%%%%%%%%%%%%%%%%%%%%%%%%%%%%%%%%%%%%%%%%%%%%%%%%%%%%%%%%%%%%%%%%%%%%%%%%%%%%%%%%%%%%%%%%%%%%%%%%%%%%%%%%%
\subsection{General framework} \label{sec: GH general framework}

In this subsection we explain how to formalize the addition of structures to metric spaces 
and how to define a generalized Gromov--Hausdorff metric. 
We summarize only the part of the framework needed for the discussion in this paper. 
Although the use of category-theoretic language would clarify the framework, 
we avoid this approach in order to keep the presentation self-contained and accessible. 
For further details see \cite[Section~6]{Noda_pre_Metrization}.

We first define a rule that assigns an additional structure to each metric space. 
Recall that if a map $f \colon S_1 \to S_2$ between metric spaces is distance-preserving, 
then $f$ is called an \emph{isometric} embedding. 
An isometric embedding that is surjective (and hence bijective) is called an \emph{isometry}.

\begin{dfn}[Structure] \label{dfn: structure}
  We call $\tau$ a \textit{structure} (on $\bcmAB$ spaces) if it satisfies the following conditions.
  \begin{enumerate}[label = (\roman*), series = structure]
    \item 
      For every $\bcmAB$ space $S$, 
      there exists a metrizable topological space $\tau(S)$.
    \item 
      For every isometric embedding $f \colon S_1 \to S_2$ between $\bcmAB$ spaces, 
      there exists a topological embedding $\tau_f \colon \tau(S_1) \to \tau(S_2)$, 
      i.e., $\tau_f$ is a homeomorphism onto its image.
    \item 
      Let $S_1, S_2, S_3$ be $\bcmAB$ spaces. 
      For any isometric embeddings $f \colon S_1 \to S_2$ and $g \colon S_2 \to S_3$, 
      one has $\tau_{g \circ f} = \tau_g \circ \tau_f$.
    \item 
      For any $\bcmAB$ space $S$, 
      one has $\tau_{\id_S} = \id_{\tau(S)}$.
  \end{enumerate}
  We say that $\tau$ is \emph{separable} if each $\tau(S)$ is separable.
\end{dfn}

Readers who wish to keep a specific structure in mind may refer to Section~\ref{sec: structures used in the present paper} 
for a list of example structures.

Throughout this subsection, we fix a structure $\tau$. 
Given $\mathcal{S}_i = (S_i, d_{S_i}, \rho_{S_i}, a_{S_i})$, $i=1,2$, where 
$(S_i, d_{S_i}, \rho_{S_i})$ is a rooted $\bcmAB$ space and $a_{S_i} \in \tau(S_i)$, 
we say that $\mathcal{S}_1$ and $\mathcal{S}_2$ are \emph{rooted-$\tau$-isometric} 
if there exists a root-preserving isometry $f \colon S_1 \to S_2$ such that $\tau_f(a_{S_1}) = a_{S_2}$. 
Recall that $f$ is \emph{root-preserving} if $f(\rho_{S_1}) = \rho_{S_2}$. 
We then define $\rootBCM(\tau)$ as the collection of rooted-$\tau$-isometric equivalence classes.

\begin{rem} \label{rem: note on set of metric spaces}
  From a rigorous set-theoretic point of view, 
  one cannot directly take rooted-$\tau$-isometric equivalence classes, 
  since the collection of all tuples $(S, d_S, \rho_S, a_S)$ is not a legitimate set. 
  Nevertheless, it is possible to regard $\rootBCM(\tau)$ as a set. 
  Indeed, one can construct a legitimate set $\mathscr{M}(\tau)$ 
  such that every $(S, d_S, \rho_S, a_S)$ is rooted-$\tau$-isometric to a unique element of $\mathscr{M}(\tau)$. 
  For details, see \cite[Proposition~6.2]{Noda_pre_Metrization}. 
  In this paper, we tacitly fix such a representative system and
  thereafter treat $\rootBCM(\tau)$ as a set,
  so that we can proceed without repeatedly referring to these set-theoretic formalities.
\end{rem}

To define a metric on $\rootBCM(\tau)$, we introduce the notion of a metrization of a structure.

\begin{dfn} \label{dfn: metrization of structure}
  We say that $\tau$ admits a \emph{metrization} if and only if, for every rooted $\bcmAB$ space $(S, \rho_S)$,
  there exists a metric $d^\tau_{S, \rho_S}$ on $\tau(S)$ inducing the given topology such that the following condition holds:
  \begin{enumerate}[resume* = structure]
    \item \label{dfn item: metrization of st}
      Let $(S_1, \rho_{S_1})$ and $(S_2, \rho_{S_2})$ be rooted $\bcmAB$ spaces. 
      For every root-preserving isometric embedding $f \colon S_1 \to S_2$, 
      the map $\tau_f \colon \tau(S_1) \to \tau(S_2)$ is distance-preserving with respect to 
      $d^\tau_{S_1, \rho_{S_1}}$ and $d^\tau_{S_2, \rho_{S_2}}$.
  \end{enumerate}
  If each $d^\tau_{S, \rho_S}$ is complete, we say that $\tau$ admits a \emph{complete metrization}.
\end{dfn}

Henceforth we assume that $\tau$ admits a metrization. 
We now define a distance between elements of $\rootBCM(\tau)$.

\begin{dfn} \label{dfn: GF-type metric}
  For $\mathcal{S}_1 = (S_1, d_{S_1}, \rho_{S_1}, a_{S_1})$ and 
  $\mathcal{S}_2 = (S_2, d_{S_2}, \rho_{S_2}, a_{S_2})$ in $\rootBCM(\tau)$, set
  \begin{equation}
    \GFMet^\tau(\mathcal{S}_1, \mathcal{S}_2)
    \coloneqq 
    \inf_{f,g,M}
    \Bigl\{
      \FellMet{M,\rho_M}(f(S_1), g(S_2)) 
      \,\vee\,  
      d^{\tau}_{M, \rho_M}\bigl(\tau_{f}(a_{S_1}), \tau_{g}(a_{S_2})\bigr)  
    \Bigr\},
  \end{equation}
  where the infimum is taken 
  over all rooted $\bcmAB$ spaces $(M, \rho_M)$ 
  and root-preserving isometric embeddings $f \colon  S_1 \to M$, $g \colon S_2 \to M$.
\end{dfn}

To ensure that the above defined function is a metric,
we consider the following continuity condition on $\tau$ regarding embedding maps.

\begin{dfn} \label{dfn: embedding-continuity}
  We say that $\tau$ is \emph{embedding-continuous} if and only if the following condition holds.
  \begin{enumerate} [label = \textup{(EC)}]
    \item Fix $\bcmAB$ spaces $S_1$ and $S_2$.
      Let $f, f_1, f_2, \dots$ be continuous maps from $S_1$ to $S_2$.
      If $f_n \to f$ in the compact-convergence topology,
      then $\tau_{f_n}(a) \to \tau_f(a)$ in $\tau(S_2)$ for all $a \in \tau(S_1)$.
  \end{enumerate}
\end{dfn}

\begin{lem} [{\cite[Theorem~6.19]{Noda_pre_Metrization}}]
  If $\tau$ is embedding-continuous, then $\GFMet^\tau$ is a metric on $\rootBCM(\tau)$.
\end{lem}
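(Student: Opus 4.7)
The plan is to verify the three axioms of a metric: non-negativity/symmetry, the triangle inequality, and the identity of indiscernibles. Non-negativity is immediate since every quantity inside the infimum is non-negative, and symmetry follows by swapping the roles of the two embeddings.

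For the triangle inequality, given $\mathcal{S}_1, \mathcal{S}_2, \mathcal{S}_3$ and $\varepsilon > 0$, I would choose near-optimal root-preserving isometric embeddings $f_{12} \colon S_1 \to M_{12}$, $g_{12} \colon S_2 \to M_{12}$ and $f_{23} \colon S_2 \to M_{23}$, $g_{23} \colon S_3 \to M_{23}$. Glue $M_{12}$ and $M_{23}$ along the common image of $S_2$ to produce a single rooted $\bcmAB$ space $(M, \rho_M)$ together with root-preserving isometric embeddings $\iota_{12} \colon M_{12} \to M$ and $\iota_{23} \colon M_{23} \to M$ (the standard gluing construction). The triangle inequality for the Fell metric and the distance-preservation clause~\ref{dfn item: metrization of st}, applied to $\iota_{12}$ and $\iota_{23}$, control the Fell and $d^\tau$-distances inside $M$ by the corresponding quantities in $M_{12}$ and $M_{23}$. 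Since $\GFMet^\tau$ is defined via the maximum of these two, one obtains $\GFMet^\tau(\mathcal{S}_1, \mathcal{S}_3) \le \GFMet^\tau(\mathcal{S}_1, \mathcal{S}_2) + \GFMet^\tau(\mathcal{S}_2, \mathcal{S}_3) + 2\varepsilon$, and letting $\varepsilon \to 0$ completes this step.

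The delicate part is the identity of indiscernibles. Suppose $\GFMet^\tau(\mathcal{S}_1, \mathcal{S}_2) = 0$, and take a sequence of rooted $\bcmAB$ spaces $(M_n, \rho_{M_n})$ with root-preserving isometric embeddings $f_n \colon S_1 \to M_n$ and $g_n \colon S_2 \to M_n$ along which both $\FellMet{M_n, \rho_{M_n}}(f_n(S_1), g_n(S_2))$ and $d^\tau_{M_n, \rho_{M_n}}(\tau_{f_n}(a_{S_1}), \tau_{g_n}(a_{S_2}))$ tend to zero. Using the standard rooted bounded-compactness Arzelà--Ascoli argument (the one already invoked in the unstructured rooted Gromov--Hausdorff theory for $\bcmAB$ spaces), I would extract along a subsequence a limiting rooted $\bcmAB$ space $(M, \rho_M)$, root-preserving isometric embeddings $F \colon S_1 \to M$ and $G \colon S_2 \to M$ with $F(S_1) = G(S_2)$, together with root-preserving isometric embeddings $\iota_n \colon M_n \to M$ such that $\iota_n \circ f_n \to F$ and $\iota_n \circ g_n \to G$ in the compact-convergence topology. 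In particular, $\phi \coloneqq G^{-1} \circ F \colon S_1 \to S_2$ is a root-preserving isometry.

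To upgrade this to a rooted-$\tau$-isometry, I invoke condition~\ref{dfn item: metrization of st} to rewrite
\[
d^\tau_{M_n, \rho_{M_n}}\bigl(\tau_{f_n}(a_{S_1}), \tau_{g_n}(a_{S_2})\bigr)
=
d^\tau_{M, \rho_M}\bigl(\tau_{\iota_n \circ f_n}(a_{S_1}), \tau_{\iota_n \circ g_n}(a_{S_2})\bigr),
\]
where I have also used the functoriality in Definition~\ref{dfn: structure}. The left-hand side vanishes in the limit, while embedding-continuity, applied to the compact-convergence limits $\iota_n \circ f_n \to F$ and $\iota_n \circ g_n \to G$, shows that the two arguments on the right converge in $\tau(M)$ to $\tau_F(a_{S_1})$ and $\tau_G(a_{S_2})$ respectively. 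Therefore $\tau_F(a_{S_1}) = \tau_G(a_{S_2})$, and the identity $F = G \circ \phi$ combined with functoriality yields $\tau_G(\tau_\phi(a_{S_1})) = \tau_G(a_{S_2})$. Since $\tau_G$ is a topological embedding, hence injective, I conclude $\tau_\phi(a_{S_1}) = a_{S_2}$, which gives the required rooted-$\tau$-isometry.

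The hard part is the final paragraph: ensuring that the near-optimal embeddings $f_n, g_n$ into varying ambient spaces $M_n$ can be coherently re-embedded into a single limit $M$ in such a way that the compositions $\iota_n \circ f_n$ and $\iota_n \circ g_n$ converge in the compact-convergence topology, so that embedding-continuity becomes applicable. This is essentially a rooted bounded-compactness analogue of the Gromov precompactness argument, and care is needed to handle the root and to ensure the resulting limits are well-defined as isometric embeddings rather than merely distance-non-increasing maps.
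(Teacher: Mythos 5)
The paper does not actually prove this lemma; it is quoted verbatim from \cite[Theorem~6.19]{Noda_pre_Metrization}, so there is no in-paper proof to compare against. Your strategy (gluing for the triangle inequality, a precompactness/Arzel\`a--Ascoli extraction plus embedding-continuity for the identity of indiscernibles) is the standard one for this kind of statement and is, in outline, the argument the cited reference carries out. Two sanity checks in your favour: embedding-continuity is indeed needed only in the indiscernibles step, and the final algebra $\tau_F = \tau_G \circ \tau_\phi$ with injectivity of $\tau_G$ is exactly right.

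The one point that is not correct as literally written is the claim that you can produce root-preserving \emph{isometric embeddings} $\iota_n \colon M_n \to M$ into the limit space itself with $\iota_n \circ f_n \to F$. In general no such embeddings exist: after reducing to $M_n = f_n(S_1) \cup g_n(S_2)$, the spaces $M_n$ carry cross-distances $d_n$ on $S_1 \sqcup S_2$ that merely converge locally uniformly to the limiting pseudometric $d_\infty$; they do not embed isometrically into the quotient $M = (S_1 \sqcup S_2)/\!\sim_{d_\infty}$. The correct formulation is to build a single auxiliary rooted $\bcmAB$ space $\hat{M}$ (a metrized disjoint union of all the $M_n$ together with $M$, with cross-distances controlled by $\sup|d_n - d_\infty|$ on bounded sets) into which every $M_n$ \emph{and} $M$ embed isometrically, and in which $\iota_n \circ f_n \to \iota_\infty \circ F$ and $\iota_n \circ g_n \to \iota_\infty \circ G$ in the compact-convergence topology; condition~\ref{dfn item: metrization of st} and embedding-continuity are then applied inside $\hat{M}$, not inside $M$. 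You also need the roots $\rho_{M_n}$ and $\rho_M$ to be a common point of $\hat{M}$ for clause~\ref{dfn item: metrization of st} to transfer the $d^\tau$-distances, which requires gluing the construction along the roots rather than merely letting them converge. These are exactly the issues you flag as ``the hard part,'' so the gap is one of precision in the write-up rather than of approach, but as stated the extraction step would not compile into a proof without this repair.
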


Henceforth we assume that $\tau$ is embedding-continuous and endow $\rootBCM(\tau)$ with the topology induced by $\GFMet^\tau$. 
Below we provide a characterization of convergence in $\rootBCM(\tau)$ that is useful in applications. 
In particular, it shows that the resulting topology on $\rootBCM(\tau)$ is independent of the choice of a metrization of $\tau$.

\begin{thm}[Convergence] \label{thm: conv in M(tau)}
  Let $\mathcal{S}_n = (S_n, d_{S_n}, \rho_{S_n}, a_{S_n})$, $n \in \NN \cup \{\infty\}$, be elements of $\rootBCM(\tau)$. 
  The following conditions are equivalent:
  \begin{enumerate}[label = \textup{(\roman*)}]
    \item \label{thm item: 1. conv in M(tau)}
      $\mathcal{S}_n \to \mathcal{S}_\infty$ in $\rootBCM(\tau)$;
    \item \label{thm item: 2. conv in M(tau)}
      there exist a rooted $\bcmAB$ space $(M, \rho_M)$
      and root-preserving isometric embeddings $f_n \colon S_n \to M$, $n \in \NN \cup \{\infty\}$,
      such that $f_n(S_n) \to f_\infty(S_\infty)$ in the Fell topology as closed subsets of $M$, 
      and $\tau_{f_n}(a_{S_n}) \to \tau_{f_\infty}(a_{S_\infty})$ in $\tau(M)$.
  \end{enumerate}
\end{thm}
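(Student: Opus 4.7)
The plan is to verify \ref{thm item: 2. conv in M(tau)} $\Rightarrow$ \ref{thm item: 1. conv in M(tau)} directly from Definition~\ref{dfn: GF-type metric}, and to prove \ref{thm item: 1. conv in M(tau)} $\Rightarrow$ \ref{thm item: 2. conv in M(tau)} by gluing a sequence of near-optimal witnesses into a single ambient space.

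For \ref{thm item: 2. conv in M(tau)} $\Rightarrow$ \ref{thm item: 1. conv in M(tau)}, if a common $(M, \rho_M)$ and isometric embeddings $(f_n)_{n \in \NN \cup \{\infty\}}$ realize the stated Fell and $\tau$-convergence, then substituting $(M, f_n, f_\infty)$ into the infimum defining $\GFMet^\tau$ gives
\begin{equation}
  \GFMet^\tau(\mathcal{S}_n, \mathcal{S}_\infty)
  \leq
  \FellMet{M, \rho_M}(f_n(S_n), f_\infty(S_\infty))
  \,\vee\,
  d^\tau_{M, \rho_M}\bigl(\tau_{f_n}(a_{S_n}), \tau_{f_\infty}(a_{S_\infty})\bigr),
\end{equation}
which tends to $0$. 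For the converse, set $\varepsilon_n \coloneqq \GFMet^\tau(\mathcal{S}_n, \mathcal{S}_\infty) + n^{-1} \to 0$, and for each $n \in \NN$ pick a rooted $\bcmAB$ space $(M_n, \rho_{M_n})$ together with root-preserving isometric embeddings $\varphi_n \colon S_n \to M_n$ and $\psi_n \colon S_\infty \to M_n$ whose combined Fell and $\tau$-error is strictly less than $\varepsilon_n$. The strategy is to glue the $M_n$'s along the shared image of $S_\infty$: form the quotient $M \coloneqq \bigl(\bigsqcup_{n \in \NN} M_n\bigr)/\!\sim$ where $\psi_n(y) \sim \psi_m(y)$ for all $y \in S_\infty$ and $n, m$, and define $d_M$ to agree with $d_{M_n}$ inside each piece and, for $x \in M_n$ and $y \in M_m$ with $n \neq m$, to be
\begin{equation}
  d_M([x], [y])
  \coloneqq
  \inf_{z \in S_\infty} \bigl\{ d_{M_n}(x, \psi_n(z)) + d_{M_m}(\psi_m(z), y) \bigr\}.
\end{equation}
A routine check using that each $\psi_n$ is isometric and that $\psi_n(S_\infty)$ is closed in $M_n$ (as $\bcmAB$ subspaces are complete, hence closed) shows $d_M$ is a genuine metric. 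Writing $\iota_n \colon M_n \to M$ for the canonical inclusions and $\rho_M \coloneqq [\rho_{M_n}]$ (independent of $n$ by the identification), the maps $f_n \coloneqq \iota_n \circ \varphi_n$ and $f_\infty \coloneqq \iota_n \circ \psi_n$ (the latter being independent of $n$) are the desired root-preserving isometric embeddings into $M$.

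The main obstacle is verifying that $M$ is itself boundedly compact, and this is where the quantitative content of Fell-closeness enters. Fix $R > 0$ and let $(w_k)$ be a sequence in $D_M(\rho_M, R)$. If infinitely many $w_k$ lie in a single piece $\iota_n(M_n)$, a convergent subsequence arises from the bounded compactness of $M_n$; otherwise the indices $n_k$ may be taken distinct, and a Chebyshev-type argument on the exponential weight in the definition of $\FellMet{M_{n_k}, \rho_{M_{n_k}}}(\varphi_{n_k}(S_{n_k}), \psi_{n_k}(S_\infty)) < \varepsilon_{n_k}$ yields a radius $R_k \in [R, R+1]$ at which the Hausdorff distance between the truncations $\varphi_{n_k}(S_{n_k})^{(R_k)}$ and $\psi_{n_k}(S_\infty)^{(R_k)}$ is bounded by $e^{R+1}\varepsilon_{n_k} \to 0$. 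Hence each $w_k$ is within distance $o(1)$ of $f_\infty(S_\infty)$, and the bounded compactness of $S_\infty$ extracts a convergent subsequence of the corresponding nearby points in $f_\infty(S_\infty)$, whose limit provides the desired accumulation point of $(w_k)$ in $M$. Once $M$ is $\bcmAB$, the Fell convergence $f_n(S_n) \to f_\infty(S_\infty)$ in $M$ is inherited from the $\varepsilon_n$-closeness inside each $M_n$ combined with the isometric character of $\iota_n$, and the $\tau$-convergence follows from Definition~\ref{dfn: metrization of structure}\ref{dfn item: metrization of st}, which yields $d^\tau_{M, \rho_M}(\tau_{f_n}(a_{S_n}), \tau_{f_\infty}(a_{S_\infty})) = d^\tau_{M_n, \rho_{M_n}}(\tau_{\varphi_n}(a_{S_n}), \tau_{\psi_n}(a_{S_\infty})) < \varepsilon_n$.
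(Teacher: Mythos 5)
The paper does not actually prove this theorem---it is recalled from \cite{Noda_pre_Metrization}---so I am assessing your argument on its own terms. Your direction \ref{thm item: 2. conv in M(tau)} $\Rightarrow$ \ref{thm item: 1. conv in M(tau)} is correct, and your overall strategy for the converse (glue near-optimal witness spaces along the shared copy of $S_\infty$) is the standard one. There is, however, a genuine gap in the construction: you glue the \emph{full} ambient spaces $M_n$, and the resulting $M$ need not be boundedly compact, so it fails to be a rooted $\bcmAB$ space as \ref{thm item: 2. conv in M(tau)} requires. The witnesses $(M_n, \varphi_n, \psi_n)$ realizing $\GFMet^\tau(\mathcal{S}_n, \mathcal{S}_\infty)$ up to $\varepsilon_n$ are completely unconstrained away from the two embedded images: each $M_n$ may, for instance, contain a point $q_n$ at distance $1$ from the root lying in neither $\varphi_n(S_n)$ nor $\psi_n(S_\infty)$, and in the glued space the $q_n$ have pairwise distance $2$, so $D_M(\rho_M, 1)$ contains an infinite uniformly separated set and is not compact. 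Your bounded-compactness argument breaks at exactly this point: in the second case you pass from ``the truncations of $\varphi_{n_k}(S_{n_k})$ and $\psi_{n_k}(S_\infty)$ are Hausdorff-close in $M_{n_k}$'' to ``$w_k$ is within $o(1)$ of $f_\infty(S_\infty)$'', which requires $w_k$ to lie in $\varphi_{n_k}(S_{n_k})$ (or $\psi_{n_k}(S_\infty)$); an arbitrary point of $D_M(\rho_M, R) \cap \iota_{n_k}(M_{n_k})$ need not.

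The fix is standard: before gluing, replace each $M_n$ by the closed subspace $N_n \coloneqq \varphi_n(S_n) \cup \psi_n(S_\infty)$ (closed because both images are complete, being isometric copies of $\bcmAB$ spaces, and $N_n$ is boundedly compact as a finite union of boundedly compact closed sets). The Hausdorff and Fell quantities are unchanged by this restriction, since they depend only on distances between points of the two images, and Definition~\ref{dfn: metrization of structure}\ref{dfn item: metrization of st} applied to the inclusion $N_n \hookrightarrow M_n$ shows the $\tau$-distance is unchanged as well. With this modification every point of the glued space lies in some $f_n(S_n)$ or in $f_\infty(S_\infty)$, your two-case analysis (including the Chebyshev selection of $R_k \in [R, R+1]$) goes through, and the remaining steps of your argument---well-definedness of the glued metric, absence of shortcuts within a piece, the root identification, and the transfer of the Fell and $\tau$ estimates from $M_n$ to $M$---are correct.
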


\begin{rem}
  In Theorem~\ref{thm: conv in M(tau)}\ref{thm item: 2. conv in M(tau)}, 
  the roots $\rho_{S_n}$ of $S_n$ are mapped to a common root $\rho_M$ of $M$. 
  By relaxing this requirement, one can introduce another notion of convergence in $\rootBCM(\tau)$. 
  Namely, we say that $\mathcal{S}_n$ converges to $\mathcal{S}_\infty$ if and only if 
  \begin{equation} \label{eq: Khezeli conv in non-cpt case}
    \begin{minipage}[c]{0.9\linewidth}
      \textit{
      there exist a $\bcmAB$ space $M$ and isometric embeddings $f_n \colon S_n \to M$ 
      such that $f_n(S_n) \to f_\infty(S_\infty)$ in the Fell topology, 
      $f_n(\rho_{S_n}) \to f_\infty(\rho_{S_\infty})$ in $M$, 
      and $\tau_{f_n}(a_{S_n}) \to \tau_{f_\infty}(a_{S_\infty})$ in $\tau(M)$.}
    \end{minipage}
  \end{equation}
  In general, this convergence is weaker than the convergence induced by $\GFMet^\tau$ defined above. 
  However, for most structures of interest the two notions of convergence coincide. 
  See \cite[Section~6.3]{Noda_pre_Metrization} for details.
\end{rem}

The following observation is immediate from the definition of $\GFMet^\tau$ 
and will be useful for the approximation arguments in our main results.

\begin{lem} \label{lem: simple estimate of GH distance}
  For $\mathcal{S} = (S, d_S, \rho_S, a_S)$ and 
  $\mathcal{S}' = (S, d_S, \rho_S, b_S)$ in $\rootBCM(\tau)$, 
  it holds that 
  \begin{equation}
    \GFMet^\tau(\mathcal{S}, \mathcal{S}')
    \leq 
    d^\tau_{S, \rho_S}(a_S, b_S).
  \end{equation}
\end{lem}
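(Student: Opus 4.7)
The plan is to realize the infimum in the definition of $\GFMet^\tau$ using the identity embedding, which makes the estimate essentially tautological. Specifically, I will take $(M, \rho_M) = (S, \rho_S)$ itself as the ambient rooted $\bcmAB$ space and let both $f \colon S \to M$ and $g \colon S \to M$ be $\id_S$. With this choice, $f(S) = g(S) = S$ as closed subsets of $M$, so the Fell-distance term $\FellMet{M, \rho_M}(f(S), g(S))$ vanishes. For the structural term, I invoke the axiom $\tau_{\id_S} = \id_{\tau(S)}$ from Definition~\ref{dfn: structure}, which gives $\tau_f(a_S) = a_S$ and $\tau_g(b_S) = b_S$, and therefore
\begin{equation}
  d^\tau_{M, \rho_M}\bigl(\tau_f(a_S), \tau_g(b_S)\bigr)
  = d^\tau_{S, \rho_S}(a_S, b_S).
\end{equation}
Taking the maximum of the two terms and then the infimum over admissible ambient spaces and embeddings yields the asserted bound.

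No genuine obstacle arises, since the conclusion follows immediately once the definitions of $\rootBCM(\tau)$ and $\GFMet^\tau$ are unpacked; the lemma is really a compatibility observation rather than a substantive result. Its importance lies instead in its downstream role: in the forthcoming approximation arguments (for example, the heat-kernel mollification of STOMs outlined in the sketch of Theorems~\ref{thm: intro STOM dtm} and \ref{thm: intro STOM rdm}), one often needs to replace an additional structure $a_S$ on a fixed ambient $(S, d_S, \rho_S)$ by a nearby structure $b_S$, and control the resulting change in Gromov--Hausdorff-type distance purely through the intrinsic metric $d^\tau_{S, \rho_S}$. The present lemma provides precisely this bridge.
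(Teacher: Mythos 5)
Your proof is correct and is exactly the argument the paper has in mind: the lemma is stated as "immediate from the definition of $\GFMet^\tau$," and taking $M = S$ with $f = g = \id_S$ (an admissible root-preserving isometric embedding), using $\tau_{\id_S} = \id_{\tau(S)}$, is the canonical way to realize that. No gaps.
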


To discuss Polishness, we introduce another continuity condition on $\tau$. 
For notational convenience, we adopt the convention that if a $\bcmAB$ space $S$ is isometrically embedded into another $\bcmAB$ space $M$ via an isometric embedding $f$,  
then we regard $\tau(S)$ as a subspace of $\tau(M)$ through the topological embedding $\tau_f \colon \tau(S) \to \tau(M)$.

\begin{assum} \label{assum: semicontinuity}
  Let $S_n$, $n \in \mathbb{N} \cup \{\infty\}$, be $\bcmAB$ spaces
  that are isometrically embedded into a common $\bcmAB$ space $M$
  such that $S_n \to S_\infty$ in the Fell topology as closed subsets of $M$.
  \begin{enumerate}[label=\textup{(\roman*)}, leftmargin=*]
    \item \label{assum item: upper semicontinuity}
      If a sequence $a_n \in \tau(S_n)$ converges to some $a \in \tau(M)$, then $a \in \tau(S_\infty)$.
    \item \label{assum item: lower semicontinuity}
      For every $a \in \tau(S_\infty)$, there exist a subsequence $(n_k)_{k \geq 1}$ and elements $a_k \in \tau(S_{n_k})$ 
      such that $a_k \to a$ in $\tau(M)$. 
  \end{enumerate}
\end{assum}

\begin{dfn}[Continuity] \label{dfn: continuity of structure} 
  We say that $\tau$ is \emph{upper} (resp.\ \emph{lower}) \emph{semicontinuous} 
  if and only if it satisfies Assumption~\ref{assum: semicontinuity}\ref{assum item: upper semicontinuity} (resp.\ \ref{assum item: lower semicontinuity}). 
  We say that $\tau$ is \emph{continuous} 
  if and only if it is embedding-continuous and both upper and lower semicontinuous.
\end{dfn}

\begin{thm} [{\cite[Theorems~6.40 and 6.41]{Noda_pre_Metrization}}]
  Assume that $\tau$ is separable and continuous, and admits a complete metrization.
  Then the induced metric $\GFMet^\tau$ is a complete and separable metric on $\rootBCM(\tau)$.
\end{thm}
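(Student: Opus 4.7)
I will prove completeness and separability separately, with completeness being the main substantive step. The strategy in both cases is to use the characterization of convergence from Theorem~\ref{thm: conv in M(tau)} in order to reduce problems in $\rootBCM(\tau)$ to problems inside a fixed ambient bcm space, exploiting the key property that $\tau_f$ is distance-preserving for root-preserving isometric embeddings (Definition~\ref{dfn: metrization of structure}~\ref{dfn item: metrization of st}), so that embedded copies of $\tau(S)$ inherit the metric from $\tau(M)$. The three hypotheses on $\tau$ then play distinct roles: separability is used to produce a countable dense family; complete metrizability yields convergence of the $\tau$-coordinate in an ambient space; upper and lower semicontinuity pin the limit down to the appropriate subspace.

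\textbf{Completeness.} Let $(\mathcal{S}_n)_{n \geq 1}$ with $\mathcal{S}_n = (S_n, d_{S_n}, \rho_n, a_n)$ be Cauchy in $\rootBCM(\tau)$. After passing to a subsequence I may assume $\GFMet^\tau(\mathcal{S}_n, \mathcal{S}_{n+1}) < 2^{-n}$, and for each $n$ I pick a rooted bcm space $(M_n, \rho_{M_n})$ with root-preserving isometric embeddings $f_n \colon S_n \to M_n$ and $g_n \colon S_{n+1} \to M_n$ realizing the infimum defining $\GFMet^\tau$ up to error $2^{-n}$. I then inductively amalgamate the spaces $M_n$ along the identifications $f_{n+1}(S_{n+1}) \sim g_n(S_{n+1})$, producing a single rooted bcm space $(M, \rho_M)$ together with root-preserving isometric embeddings $h_n \colon S_n \to M$ such that
\begin{equation}
  \FellMet{M,\rho_M}(h_n(S_n), h_{n+1}(S_{n+1})) \vee d^\tau_{M,\rho_M}(\tau_{h_n}(a_n), \tau_{h_{n+1}}(a_{n+1})) < 2^{-n}.
\end{equation}
Since $(\Closed{M}, \FellMet{M,\rho_M})$ is compact (Section~\ref{sec: The Hausdorff and Fell topologies}) and $(\tau(M), d^\tau_{M,\rho_M})$ is complete by hypothesis, the sequences $(h_n(S_n))$ and $(\tau_{h_n}(a_n))$ converge respectively to some $S_\infty \in \Closed{M}$ containing $\rho_M$ and some $a_\infty \in \tau(M)$. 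Upper semicontinuity of $\tau$ forces $a_\infty \in \tau(S_\infty)$, so $(S_\infty, d_M|_{S_\infty}, \rho_M, a_\infty)$ is a well-defined element of $\rootBCM(\tau)$, and Theorem~\ref{thm: conv in M(tau)} yields convergence of the subsequence to it; the full Cauchy sequence then converges to the same limit by a standard argument.

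\textbf{Separability.} For each finite rooted metric space $F$ with rational pairwise distances I fix a countable dense subset $D_F \subseteq \tau(F)$ (available by the separability of $\tau$); the collection of all resulting $(F, \rho_F, b)$ with $b \in D_F$ gives a countable candidate $\mathcal{D} \subseteq \rootBCM(\tau)$. To verify density, fix $(S, d_S, \rho, a) \in \rootBCM(\tau)$ and $\varepsilon > 0$. Truncating to $S^{(R)} = D_S(\rho, R)$ and noting that $S^{(R)} \to S$ in the Fell topology within $S$ itself, lower semicontinuity of $\tau$ furnishes $a^{(R)} \in \tau(S^{(R)})$ with $d^\tau_S(a^{(R)}, a) < \varepsilon/2$ for $R$ large; Lemma~\ref{lem: simple estimate of GH distance} then bounds $\GFMet^\tau((S, \rho, a), (S^{(R)}, \rho, a^{(R)}))$ by $(\FellMet{S,\rho}(S, S^{(R)})) \vee (\varepsilon/2)$. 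This reduces the approximation problem to compact rooted metric spaces, which can be approximated in Gromov--Hausdorff distance by finite rooted metric spaces with rational distances; applying lower semicontinuity once more to transfer $a^{(R)}$ to an element close to some $b \in D_F$ in the chosen metrization completes the argument.

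\textbf{Main obstacle.} The principal technical difficulty is the inductive gluing producing the ambient space $M$ in the completeness argument: the amalgamated spaces $M_n$ must be glued in a way that simultaneously preserves isometric embeddings, respects the distinguished root, and yields a genuine bcm space in the inductive limit (so that closed bounded subsets of $M$ remain compact). Once $M$ is in hand, the passage to the limit depends critically on upper semicontinuity, which is the only mechanism ensuring that the $\tau(M)$-limit $a_\infty$ actually lies in $\tau(S_\infty)$ rather than merely in $\tau(M)$; without this hypothesis the construction would fall short of producing a limit in $\rootBCM(\tau)$.
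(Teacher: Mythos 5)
This theorem is imported verbatim from the reference \emph{[Noda, Metrization...]} (Theorems~6.40 and 6.41 there); the present paper contains no proof of it, so there is no in-paper argument to compare yours against. Judged on its own terms, your proposal follows the route one would expect: for completeness, glue the near-optimal ambient spaces along a fast Cauchy subsequence, use compactness of $(\Closed{M},\FellMet{M,\rho_M})$ together with completeness of $d^\tau_{M,\rho_M}$ to extract limits $S_\infty$ and $a_\infty$, and use upper semicontinuity to conclude $a_\infty\in\tau(S_\infty)$; for separability, reduce to compact spaces via lower semicontinuity and then to finite spaces. Each of the three hypotheses is deployed in the right place, and the reduction to Theorem~\ref{thm: conv in M(tau)} is correct.

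Two steps are not actually carried out. First, the bounded compactness of the amalgamated space $M$: the direct limit of the glued $M_n$ need not be a $\bcmAB$ space, and one must pass to (say) the completion of $\bigcup_n h_n(S_n)$ and verify, using the Cauchyness of the restricted sets $h_n(S_n)|_{\rho_M}^{(r)}$ in the Hausdorff metric, that closed bounded subsets are totally bounded and hence compact. You flag this as the main obstacle but do not resolve it; without it the appeal to compactness of the Fell topology and completeness of $d^\tau_{M,\rho_M}$ has no ambient space to live in. Second, the final step of separability conceals the one genuinely delicate point of the whole proof: elements of $\tau$ can only be transported along root-preserving isometric embeddings (where $\tau_f$ is distance-preserving) or via lower semicontinuity inside a \emph{fixed} ambient $\bcmAB$ space. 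Finite $\varepsilon$-nets of $S^{(R)}$ do not have rational distance matrices, and a rational perturbation $F'$ of a finite subspace $F$ is not isometric to it, so ``transfer $a^{(R)}$ to an element close to some $b\in D_F$'' requires explicitly building a single rooted $\bcmAB$ space in which the rational spaces Fell-converge to the compact one, applying lower semicontinuity there, and then controlling $\GFMet^\tau$ through that ambient metrization (note that Lemma~\ref{lem: simple estimate of GH distance} only applies when the two structures sit on the \emph{same} rooted space, and the intrinsic metrics $d^\tau_{F',\rho_{F'}}$ are not directly comparable to $d^\tau_{M,\rho_M}$ when the embedding is not root-preserving). Neither issue shows the approach would fail, but both constructions are missing and are exactly where the work lies.
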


By the above theorem, 
$\rootBCM(\tau)$ is Polish for many structures $\tau$. 
On the other hand, some important structures do not satisfy semicontinuity, and hence the theorem does not apply. 
Even in such cases, however, the Polishness of $\rootBCM(\tau)$ can still be established 
via Alexandrov's theorem (cf.\ \cite[Theorem~2.2.1]{Srivastava_98_A_Course}), 
which asserts that any topological space that is topologically embedded into a Polish space as a $G_\delta$-subset is itself Polish. 
To this end, we introduce several additional notions on structure,
which are required to formulate the notion of a \emph{Polish structure} (Definition~\ref{dfn: Polish structure}),
but they will not be used in the subsequent arguments.
The essential result needed for later purposes is Theorem~\ref{thm: Polishness of GH topology}.

\begin{dfn}[{\cite[Definition~5.12]{Noda_pre_Metrization}}] \label{dfn: topological embedding}
  Recall that we have a fixed structure $\tau$.
  Let $\tilde{\tau}$ be another structure. 
  A \emph{topological embedding} $\eta \colon \tau \Rightarrow \tilde{\tau}$ is a family 
  $\{\eta_S \colon \tau(S) \to \tilde{\tau}(S)\}_{S}$ indexed by $\bcmAB$ spaces $S$, satisfying the following conditions:
  \begin{enumerate}[label = \textup{(TE\arabic*)}, leftmargin = *]
    \item each $\eta_S \colon \tau(S) \to \tilde{\tau}(S)$ is a topological embedding;
    \item for any $\bcmAB$ spaces $S_1$ and $S_2$ and isometric embedding $f \colon S_1 \to S_2$, 
      $\tilde{\tau}_f \circ \eta_{S_1} = \eta_{S_2} \circ \tau_f$.
  \end{enumerate}
\end{dfn}

As the terminology suggests, 
if there exists a topological embedding $\eta \colon \tau \Rightarrow \tilde{\tau}$, 
then the following map is itself a topological embedding:
\begin{equation}
  \rootBCM(\tau) \ni (S, d_S, \rho_S, a_S) 
  \longmapsto (S, d_S, \rho_S, \eta_S(a_S)) \in \rootBCM(\tilde{\tau}),
\end{equation}
see \cite[Lemma~6.44]{Noda_pre_Metrization}.

We are now ready to introduce the notion of Polishness for $\tau$.

\begin{dfn}[{\cite[Definition~6.45]{Noda_pre_Metrization}}] \label{dfn: Polish structure}
  We say that $\tau$ is \emph{Polish} if there exist
  another structure $\tilde{\tau}$, a topological embedding $\eta \colon \tau \Rightarrow \tilde{\tau}$,
  and, for each rooted $\bcmAB$ space $(S, \rho_S)$,
  a sequence $(\tilde{\tau}_k(S,\rho_S))_{k \geq 1}$ of open subsets of $\tilde{\tau}(S)$ satisfying the following conditions:
  \begin{enumerate}[label = \textup{(P\arabic*)}]
    \item \label{dfn item: 1. Polish structure}
      The structure $\tilde{\tau}$ is continuous and separable, and admits a complete metrization.
    \item \label{dfn item: 2. Polish structure}
      Let $(S_1, \rho_{S_1})$ and $(S_2, \rho_{S_2})$ be rooted $\bcmAB$ spaces.
      For every root-preserving isometric embedding $f \colon S_1 \to S_2$, 
      one has $\tilde{\tau}_f^{-1}(\tilde{\tau}_k(S_2, \rho_{S_2})) = \tilde{\tau}_k(S_1, \rho_{S_1})$ for all $k \geq 1$.
    \item \label{dfn item: 3. Polish structure}
      For each rooted $\bcmAB$ space $(S, \rho_S)$, 
      $\eta_S(\tau(S)) = \bigcap_{k \geq 1} \tilde{\tau}_k(S, \rho_S)$.
  \end{enumerate}
\end{dfn}

Condition~\ref{dfn item: 1. Polish structure} ensures that $\rootBCM(\tilde{\tau})$ is Polish, 
while Conditions~\ref{dfn item: 2. Polish structure} and \ref{dfn item: 3. Polish structure} guarantee that 
$\rootBCM(\tau)$ is topologically embedded into $\rootBCM(\tilde{\tau})$ as a $G_\delta$-subset. 
Hence, by Alexandrov's theorem, we obtain the following.

\begin{thm}[{\cite[Theorem~6.46]{Noda_pre_Metrization}}] \label{thm: Polishness of GH topology}
  If $\tau$ is Polish, then the topology on $\rootBCM(\tau)$ is Polish. 
  (NB.\ The metric $\GFMet^\tau$ is not necessarily complete.)
\end{thm}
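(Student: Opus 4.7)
The plan is to construct a topological embedding of $\rootBCM(\tau)$ into the Polish space $\rootBCM(\tilde{\tau})$ whose image is a countable intersection of open sets, and then invoke Alexandrov's theorem.

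First I would define, for each $k \geq 1$, the set
\begin{equation}
  U_k \coloneqq \bigl\{ (S, d_S, \rho_S, b_S) \in \rootBCM(\tilde{\tau}) \mid b_S \in \tilde{\tau}_k(S, \rho_S) \bigr\}.
\end{equation}
Note that by \ref{dfn item: 2. Polish structure}, the membership $b_S \in \tilde{\tau}_k(S, \rho_S)$ is invariant under root-preserving $\tilde{\tau}$-isometries, so $U_k$ is well-defined on equivalence classes. Next I would use the topological embedding $\eta \colon \tau \Rightarrow \tilde{\tau}$ and the remark following Definition~\ref{dfn: topological embedding} (which gives that the induced map $\Phi \colon \rootBCM(\tau) \to \rootBCM(\tilde{\tau})$, $(S, d_S, \rho_S, a_S) \mapsto (S, d_S, \rho_S, \eta_S(a_S))$, is a topological embedding), together with \ref{dfn item: 3. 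Polish structure}, to verify
\begin{equation}
  \Phi(\rootBCM(\tau)) = \bigcap_{k \geq 1} U_k.
\end{equation}
Indeed, $\Phi(\mathcal{S}) \in U_k$ for all $k$ exactly says $\eta_S(a_S) \in \bigcap_k \tilde{\tau}_k(S, \rho_S) = \eta_S(\tau(S))$, i.e., $b_S$ lies in the image of $\eta_S$.

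The main step, and the one requiring the most care, is to show that each $U_k$ is open in $\rootBCM(\tilde{\tau})$. Suppose a sequence $\mathcal{S}_n = (S_n, d_{S_n}, \rho_n, b_n)$ in $\rootBCM(\tilde{\tau})$ converges to $\mathcal{S} = (S, d_S, \rho, b) \in U_k$. By Theorem~\ref{thm: conv in M(tau)}, there exist a rooted $\bcmAB$ space $(M, \rho_M)$ and root-preserving isometric embeddings $f_n \colon S_n \to M$ and $f \colon S \to M$ with $f_n(S_n) \to f(S)$ in the Fell topology and $\tilde{\tau}_{f_n}(b_n) \to \tilde{\tau}_f(b)$ in $\tilde{\tau}(M)$. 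By \ref{dfn item: 2. Polish structure}, $\tilde{\tau}_f(b) \in \tilde{\tau}_k(M, \rho_M)$, and since this set is open in $\tilde{\tau}(M)$, we have $\tilde{\tau}_{f_n}(b_n) \in \tilde{\tau}_k(M, \rho_M)$ for all sufficiently large $n$. Applying \ref{dfn item: 2. Polish structure} once more, we conclude $b_n \in \tilde{\tau}_k(S_n, \rho_n)$, i.e., $\mathcal{S}_n \in U_k$ eventually.

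Finally I would conclude: by \ref{dfn item: 1. Polish structure} and the preceding theorem, $\rootBCM(\tilde{\tau})$ is Polish; the image of $\Phi$ is a $G_\delta$-subset by the above; hence Alexandrov's theorem (cf.\ \cite[Theorem~2.2.1]{Srivastava_98_A_Course}) guarantees that $\Phi(\rootBCM(\tau))$, and therefore $\rootBCM(\tau)$ itself (since $\Phi$ is a topological embedding), is Polish. The hardest part is the openness verification for $U_k$, but once the characterization of convergence via a common ambient space from Theorem~\ref{thm: conv in M(tau)} is in hand, \ref{dfn item: 2. Polish structure} transports the problem into the single Polish space $\tilde{\tau}(M)$ where openness of $\tilde{\tau}_k(M, \rho_M)$ can be applied directly.
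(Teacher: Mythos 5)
Your proposal is correct and is exactly the argument the paper has in mind: the remarks preceding the theorem already state that \ref{dfn item: 1. Polish structure} gives Polishness of $\rootBCM(\tilde{\tau})$, that \ref{dfn item: 2. Polish structure} and \ref{dfn item: 3. Polish structure} yield the $G_\delta$ embedding, and that Alexandrov's theorem concludes; the paper then defers the details to \cite[Theorem~6.46]{Noda_pre_Metrization}. Your verification that each $U_k$ is open — transporting everything into a common ambient space via Theorem~\ref{thm: conv in M(tau)} and using \ref{dfn item: 2. Polish structure} in both directions — correctly fills in the one step that needs care.
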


\medskip
So far, we have considered $\bcmAB$ spaces. 
When one considers only compact underlying spaces, 
it is more natural to use the Hausdorff topology 
rather than the Fell topology 
to describe the convergence of the underlying spaces. 
In what follows, we briefly introduce a corresponding modification of the above framework. 
See \cite[Appendix~B]{Noda_pre_Metrization} for details.

Recall that we have a fixed embedding-continuous structure $\tau$ that admits a metrization.
We define $\rootCM(\tau)$ as the collection of $(S, d_S, \rho_S, a_S) \in \rootBCM(\tau)$ 
such that $S$ is compact.
We define a metric on $\rootCM(\tau)$ as follows:
for $\mathcal{S}_1 = (S_1, d_{S_1}, \rho_{S_1}, a_{S_1})$ and 
$\mathcal{S}_2 = (S_2, d_{S_2}, \rho_{S_2}, a_{S_2})$ in $\rootCM(\tau)$, set
\begin{equation}
  \GHMet^\tau(\mathcal{S}_1, \mathcal{S}_2)
  \coloneqq 
  \inf_{f,g,M}
  \Bigl\{
    \HausMet{M}(f(S_1), g(S_2)) 
    \,\vee\,  
    d^{\tau}_{M, \rho_M}\bigl(\tau_{f}(a_{S_1}), \tau_{g}(a_{S_2})\bigr)  
  \Bigr\},
\end{equation}
where the infimum is taken 
over all rooted compact metric spaces $(M, \rho_M)$ 
and all root-preserving isometric embeddings 
$f \colon  S_1 \to M$ and $g \colon S_2 \to M$.

In the above definition, the Hausdorff metric $\HausMet{M}$ is employed
instead of the Fell metric.
Accordingly, the characterization of convergence in $\rootCM(\tau)$
is given in the same manner as in 
statement~\ref{thm item: 2. conv in M(tau)} of Theorem~\ref{thm: conv in M(tau)},
with the Fell topology replaced by the Hausdorff topology.
It should also be noted that, if $\tau$ is Polish, 
then the resulting topology on $\rootCM(\tau)$ is again Polish.

Although the main convergence results of this paper are stated for $\rootBCM(\tau)$,
there are many situations where the compact setting $\rootCM(\tau)$
is more natural or convenient for discussion.
However, convergence in $\rootBCM(\tau)$ 
can easily be converted into convergence in $\rootCM(\tau)$, 
thanks to the following proposition.

\begin{prop} \label{prop: conv in BCM and CM}
  Let $\mathcal{S}_n = (S_n, d_{S_n}, \rho_{S_n}, a_{S_n})$, $n \in \NN \cup \{\infty\}$, be elements of $\rootCM(\tau)$. 
  Then $\mathcal{S}_n \to \mathcal{S}_\infty$ in $\rootCM(\tau)$ if and only if 
  $\mathcal{S}_n \to \mathcal{S}_\infty$ in $\rootBCM(\tau)$ and 
  \begin{equation} \label{prop item: conv in BCM and CM}
    \sup_{n \geq 1} \sup_{x \in S_n} d_{S_n}(\rho_{S_n}, x) < \infty.
  \end{equation}
\end{prop}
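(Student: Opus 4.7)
The plan is to exploit the characterizations of convergence in both $\rootCM(\tau)$ and $\rootBCM(\tau)$ --- the latter provided by Theorem~\ref{thm: conv in M(tau)}, and the former obtained by replacing the Fell topology with the Hausdorff topology in condition~\ref{thm item: 2. conv in M(tau)} --- and to compare the Hausdorff and Fell notions of convergence of closed subsets inside a common ambient space.

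For the forward implication, I would apply the compact-case characterization to obtain a compact rooted ambient $(M, \rho_M)$ and root-preserving isometric embeddings $f_n \colon S_n \to M$ realising Hausdorff convergence of $f_n(S_n) \to f_\infty(S_\infty)$ together with $\tau_{f_n}(a_{S_n}) \to \tau_{f_\infty}(a_{S_\infty})$ in $\tau(M)$. I would then invoke the standard fact that on a locally compact ambient space the Hausdorff topology on $\Closed{M}$ is finer than the Fell topology: the two defining open/compact conditions for Fell convergence are immediate consequences of Hausdorff convergence, and this transfers to the integral metric $\FellMet{M, \rho_M}$ since it induces the Fell topology. Viewing the compact $M$ as a bcm space, the same data then witness convergence in $\rootBCM(\tau)$ through Theorem~\ref{thm: conv in M(tau)}. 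The uniform boundedness~\eqref{prop item: conv in BCM and CM} is immediate, since every $f_n$ is root-preserving and isometric, whence $\sup_{x \in S_n} d_{S_n}(\rho_{S_n}, x) \leq \diam(M) < \infty$ for all $n$.

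For the converse, the key step is to upgrade Fell convergence to Hausdorff convergence once one knows that all the embedded copies sit inside a common compact ball. By Theorem~\ref{thm: conv in M(tau)} one has embeddings $f_n \colon S_n \to M$ into a bcm ambient $(M, \rho_M)$ with Fell convergence of $f_n(S_n)$ and convergence of $\tau_{f_n}(a_{S_n})$ in $\tau(M)$. The hypothesis $R \coloneqq \sup_n \sup_x d_{S_n}(\rho_{S_n}, x) < \infty$ forces $f_n(S_n) \subseteq D_M(\rho_M, R)$ for every $n \in \NN \cup \{\infty\}$, and I would work inside the compact subspace $M' \coloneqq D_M(\rho_M, R+1)$. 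Since the truncations $f_n(S_n) \cap D_M(\rho_M, r)$ equal $f_n(S_n)$ for every $r > R$, the defining integral of $\FellMet{M, \rho_M}$ is bounded below by
\[
  \int_R^{R+1} e^{-r} \bigl(1 \wedge \HausMet{M}(f_n(S_n), f_\infty(S_\infty))\bigr)\, dr
  = (e^{-R}-e^{-R-1})\bigl(1 \wedge \HausMet{M}(f_n(S_n), f_\infty(S_\infty))\bigr),
\]
so Fell convergence forces Hausdorff convergence in $M$, hence in $M'$. The structure convergence transfers from $\tau(M)$ to $\tau(M')$ through the topological embedding $\tau_\iota \colon \tau(M') \to \tau(M)$ induced by the inclusion $\iota \colon M' \to M$: by functoriality $\tau_{\iota \circ f_n} = \tau_\iota \circ \tau_{f_n}$, and the limiting element already lies in the image of $\tau_\iota$, so convergence in $\tau(M)$ pulls back to convergence in $\tau(M')$. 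The compact characterisation of $\rootCM(\tau)$ then delivers the claim.

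I do not foresee any serious obstacle. The only mild subtleties are the choice of the enlarged compact ambient $M' = D_M(\rho_M, R+1)$ (rather than $D_M(\rho_M, R)$), which provides the nontrivial integration window $(R, R+1)$ on which the Fell integrand simplifies, and the transfer of $\tau$-level convergence across the nested ambients $M' \hookrightarrow M$, which is handled purely by the axioms of a structure.
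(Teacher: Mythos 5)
Your proof is correct. The overall architecture coincides with the paper's: both directions are reduced, via Theorem~\ref{thm: conv in M(tau)} and its compact analogue, to comparing Hausdorff and Fell convergence of the embedded copies inside a common ambient space, with the easy direction being that the Hausdorff topology is finer than the Fell topology and the boundedness being trivial from root-preserving isometric embeddings into a bounded ambient. The one place where you genuinely diverge is the key step of the converse, upgrading Fell convergence to Hausdorff convergence under the uniform bound~\eqref{prop item: conv in BCM and CM}: the paper invokes the precompactness criterion for the Hausdorff topology (all sets lie in a common compact ball, so the family is Hausdorff-precompact, and any subsequential Hausdorff limit must agree with the Fell limit), whereas you extract Hausdorff convergence directly from the explicit form of the metric $\FellMet{M,\rho_M}$ by observing that the truncations stabilize for $r>R$ and bounding the integral from below over the window $(R,R+1)$. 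Your estimate is more elementary and quantitative — it avoids the subsequence/compactness argument entirely — at the cost of being tied to this particular metrization of the Fell topology; the paper's route is metric-independent. Two cosmetic remarks: the supremum in~\eqref{prop item: conv in BCM and CM} ranges only over $n\geq 1$, so to place $f_\infty(S_\infty)$ in a common ball you should enlarge $R$ by the (finite, since $S_\infty$ is compact) radius of $S_\infty$; and the passage to the compact ambient $M'=D_M(\rho_M,R+1)$ with the $\tau_\iota$ pullback, while correct, is not strictly needed if one states the $\rootCM(\tau)$ characterization with a bcm ambient and Hausdorff convergence, which is what the paper implicitly does.
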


\begin{proof}
  Suppose $\mathcal{S}_n \to \mathcal{S}_\infty$ in $\rootBCM(\tau)$.
  Then we can embed all the spaces $S_n$, $n \in \NN \cup \{\infty\}$, 
  isometrically into a common rooted $\bcmAB$ space $(M, \rho_M)$
  so that the statement~\ref{thm item: 2. conv in M(tau)} of Theorem~\ref{thm: conv in M(tau)} holds.
  In addition, if \eqref{prop item: conv in BCM and CM} is satisfied,
  then, by the precompactness criterion for the Hausdorff topology 
  (cf.\ \cite[Lemma~3.5]{Cech_69_Point}),
  we deduce that $S_n \to S$ in the Hausdorff topology as subsets of $M$.
  This implies that $\mathcal{S}_n \to \mathcal{S}_\infty$ in $\rootCM(\tau)$.
  The converse direction is immediate since the Hausdorff topology is finer than the Fell topology.
  This completes the proof.
\end{proof}

%%%%%%%%%%%%%%%%%%%%%%%%%%%%%%%%%%%%%%%%%%%%%%%%%%%%%%%%%%%%%%%%%%%%%%%%%%%%%%%%%%%%%%%%%%%%%%%%%%%%%%%%%%%%%%%%%%%%%%%%%%%%%%%%%%
% {Product and composition
%%%%%%%%%%%%%%%%%%%%%%%%%%%%%%%%%%%%%%%%%%%%%%%%%%%%%%%%%%%%%%%%%%%%%%%%%%%%%%%%%%%%%%%%%%%%%%%%%%%%%%%%%%%%%%%%%%%%%%%%%%%%%%%%%%
\subsection{Product and composition}

In this subsection, we introduce two operations on structures, namely product and composition, 
which allow us to treat a wide variety of additional structures. 
For further details, see \cite[Section~7]{Noda_pre_Metrization}.

We first define the product of structures.

\begin{dfn}[{\cite[Definition~5.12]{Noda_pre_Metrization}}]
  Fix $N \in \NN$ and structures $(\tau^{(n)})_{n=1}^N$. 
  The product structure $\tau = \prod_{n=1}^N \tau^{(n)}$ is defined as follows:
  \begin{enumerate}[label = \textup{(\roman*)}]
    \item 
      For each $\bcmAB$ space $S$, 
      set $\tau(S) \coloneqq \prod_{n=1}^N \tau^{(n)}(S)$, equipped with the product topology. 
    \item 
      For each isometric embedding $f \colon S_1 \to S_2$ between $\bcmAB$ spaces, 
      define $\tau_f \coloneqq \prod_{n=1}^N \tau^{(n)}_f$, that is,
      \begin{equation}
        \tau_f\bigl( (a_n)_{n=1}^N \bigr) 
        \coloneqq \bigl(\tau^{(n)}_f(a_n)\bigr)_{n=1}^N.
      \end{equation} 
  \end{enumerate}
  When $\tau^{(1)} = \cdots = \tau^{(n)} \eqqcolon \sigma$, we write $\tau = \sigma^{\otimes n}$.
  If each $\tau^{(n)}$ admits a metrization, 
  then we define a metrization of $\tau$ by equipping $\tau(S)$, for each rooted $\bcmAB$ space $S$, 
  with the associated max product metric.
\end{dfn}

The following result shows that the product structure inherits the Polishness of its component structures.

\begin{thm} [{\cite[Theorem~7.5]{Noda_pre_Metrization}}] \label{thm: product is Polish}
  Fix $N \in \NN$ and structures $(\tau^{(n)})_{n = 1}^N$. 
  If each $\tau_n$ is Polish, then so is $\prod_{n=1}^N \tau^{(n)}$.
\end{thm}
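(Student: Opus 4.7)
The plan is to verify each clause of Definition~\ref{dfn: Polish structure} for the product structure by assembling the data witnessing Polishness for each factor. Write $\tau = \prod_{n=1}^{N} \tau^{(n)}$, and for each $n$ let $\tilde{\tau}^{(n)}$, $\eta^{(n)} \colon \tau^{(n)} \Rightarrow \tilde{\tau}^{(n)}$, and the open sets $(\tilde{\tau}^{(n)}_k(S, \rho_S))_{k \geq 1} \subseteq \tilde{\tau}^{(n)}(S)$ be the data certifying that $\tau^{(n)}$ is Polish. The natural candidate for the enveloping structure of $\tau$ is $\tilde{\tau} \coloneqq \prod_{n=1}^{N} \tilde{\tau}^{(n)}$, with topological embedding $\eta \colon \tau \Rightarrow \tilde{\tau}$ defined componentwise by $\eta_S \coloneqq \prod_{n=1}^{N} \eta^{(n)}_S$.

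First, I would verify condition~\ref{dfn item: 1. Polish structure}, namely that $\tilde{\tau}$ is continuous, separable, and admits a complete metrization. Continuity (both embedding-continuity and upper/lower semicontinuity in the sense of Definition~\ref{dfn: continuity of structure}) for products follows coordinatewise from the corresponding properties of the $\tilde{\tau}^{(n)}$, since convergence in the product topology on $\tilde{\tau}(S) = \prod_n \tilde{\tau}^{(n)}(S)$ is componentwise. Separability is preserved by finite products. For the metrization, equip each $\tilde{\tau}(S)$ with the max product of the complete metrics $d^{\tilde{\tau}^{(n)}}_{S,\rho_S}$; the distance-preserving property~\ref{dfn item: metrization of st} under root-preserving isometric embeddings is inherited componentwise, and the max of finitely many complete metrics is complete. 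Also, $\eta$ is a topological embedding in the sense of Definition~\ref{dfn: topological embedding} because each $\eta^{(n)}_S$ is a topological embedding and the product of topological embeddings is a topological embedding into the product.

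Next I would construct the distinguished open sets. For each pair $(n, k) \in \{1,\dots,N\} \times \NN$ and each rooted $\bcmAB$ space $(S, \rho_S)$, define
\begin{equation}
  \tilde{\tau}_{(n,k)}(S, \rho_S)
  \coloneqq
  \Bigl(\prod_{m \neq n} \tilde{\tau}^{(m)}(S)\Bigr)
  \times \tilde{\tau}^{(n)}_k(S, \rho_S),
\end{equation}
which is open in $\tilde{\tau}(S)$ by the definition of the product topology. Re-indexing the countable family $\{\tilde{\tau}_{(n,k)}(S, \rho_S)\}_{n,k}$ by $\NN$ yields the required sequence. Condition~\ref{dfn item: 2. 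Polish structure} follows from the componentwise action $\tilde{\tau}_f = \prod_n \tilde{\tau}^{(n)}_f$ on the product, so that
\begin{equation}
  \tilde{\tau}_f^{-1}\bigl(\tilde{\tau}_{(n,k)}(S_2, \rho_{S_2})\bigr)
  = \prod_{m \neq n} \tilde{\tau}^{(m)}(S_1) \times \bigl(\tilde{\tau}^{(n)}_f\bigr)^{-1}\bigl(\tilde{\tau}^{(n)}_k(S_2, \rho_{S_2})\bigr)
  = \tilde{\tau}_{(n,k)}(S_1, \rho_{S_1}),
\end{equation}
using the corresponding property for each $\tilde{\tau}^{(n)}$. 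For condition~\ref{dfn item: 3. Polish structure}, one computes
\begin{equation}
  \bigcap_{n,k} \tilde{\tau}_{(n,k)}(S, \rho_S)
  = \prod_{n=1}^{N} \bigcap_{k \geq 1} \tilde{\tau}^{(n)}_k(S, \rho_S)
  = \prod_{n=1}^{N} \eta^{(n)}_S\bigl(\tau^{(n)}(S)\bigr)
  = \eta_S(\tau(S)).
\end{equation}

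The main obstacle is really bookkeeping rather than conceptual difficulty: the only nontrivial analytic input is clause~\ref{dfn item: 1. Polish structure}, which reduces to the auxiliary fact that finite products of continuous, separable structures admitting a complete metrization are themselves of this type. This should be separated off as a short lemma (or cited from a preceding result of~\cite{Noda_pre_Metrization}), after which the rest of the argument is a routine verification that finite products interact transparently with the three defining conditions of Polishness.
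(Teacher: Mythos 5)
The paper does not prove this theorem; it imports it by citation from \cite{Noda_pre_Metrization}, so there is no internal proof to compare against. Judged on its own terms, your argument is the natural one and is essentially correct: taking $\tilde{\tau} = \prod_n \tilde{\tau}^{(n)}$ with the componentwise embedding, the cylinder sets over the $\tilde{\tau}^{(n)}_k$, and the max product metrization does verify conditions \ref{dfn item: 1. Polish structure}--\ref{dfn item: 3. Polish structure}, and the set-theoretic identities you write for (P2) and (P3) are right.

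The one place where ``follows coordinatewise'' is not quite automatic is the lower semicontinuity of $\tilde{\tau}$ in clause \ref{dfn item: 1. Polish structure}. Assumption~\ref{assum: semicontinuity}\ref{assum item: lower semicontinuity} only guarantees, for each factor $m$ separately, \emph{some} subsequence $(n_k)$ along which $a^{(m)}$ can be approximated by elements of $\tilde{\tau}^{(m)}(S_{n_k})$, and these subsequences may differ across factors. You need to extract them iteratively: pass to the subsequence for the first factor, note that along any further subsequence the Fell convergence $S_{n_k} \to S_\infty$ and the approximating elements already found are preserved, then apply lower semicontinuity of the second factor along that subsequence, and so on through the $N$ factors. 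This is routine but should be said; with it, the proof is complete. A minor notational point: your cylinder set $\bigl(\prod_{m \neq n}\tilde{\tau}^{(m)}(S)\bigr) \times \tilde{\tau}^{(n)}_k(S,\rho_S)$ should of course keep the distinguished factor in position $n$ rather than at the end.
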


It is not always possible to compose two structures on $\bcmAB$ spaces. 
In Definition~\ref{dfn: space transformation} below we introduce a variant of structures 
that allows for such compositions. 
Consider a map $\Psi$ satisfying the following conditions.
\begin{enumerate}[label = (ST\arabic*), leftmargin = *, series = space transformation]
  \item \label{item: 1. ST}
    For every $\bcmAB$ space $S$, 
    there exists a $\bcmAB$ space $\Psi(S)$.
  \item 
    For every isometric embedding $f \colon S_1 \to S_2$ between $\bcmAB$ spaces, 
    there exists an isometric embedding $\Psi_f \colon \Psi(S_1) \to \Psi(S_2)$.
  \item 
    Let $S_1, S_2, S_3$ be $\bcmAB$ spaces. 
    For any isometric embeddings $f \colon S_1 \to S_2$ and $g \colon S_2 \to S_3$, 
    one has $\Psi_{g \circ f} = \Psi_g \circ \Psi_f$.
  \item \label{item: 4. ST}
    For any $\bcmAB$ space $S$, 
    one has $\Psi_{\id_S} = \id_{\Psi(S)}$.
\end{enumerate}
By forgetting the metric on $\Psi(S)$ for each $S$, 
we may regard $\Psi$ as a structure on $\bcmAB$ spaces in the sense of Definition~\ref{dfn: structure}. 
Hence the notion of continuity introduced in Definition~\ref{dfn: continuity of structure} applies to $\Psi$ as well.

\begin{dfn}[{\cite[Definition~7.6]{Noda_pre_Metrization}}] \label{dfn: space transformation}
  Let $\Psi$ satisfy \ref{item: 1. ST}--\ref{item: 4. ST}.
  We call $\Psi$ a \emph{space transformation} if it is continuous in the sense of Definition~\ref{dfn: continuity of structure},
  and if it satisfies the following additional condition:
  \begin{enumerate}[resume* = space transformation]
    \item There exists a collection $\mathfrak{r} = \{\mathfrak{r}_S \colon S \to \Psi(S)\}_S$ of topological embeddings indexed by $\bcmAB$ spaces $S$ 
      such that, for any isometric embedding $f \colon S_1 \to S_2$ between $\bcmAB$ spaces,
      $\Psi_f \circ \mathfrak{r}_{S_1} = \mathfrak{r}_{S_2} \circ f$.
  \end{enumerate}
  We refer to $\mathfrak{r}$ as a \emph{rooting system} of $\Psi$.
\end{dfn}

The terminology “rooting system’’ reflects the role of $\mathfrak{r}$ in assigning a canonical root to each transformed space.
Specifically, when $S$ is a rooted space with root $\rho_S$,
we define the root of $\Psi(S)$ by $\mathfrak{r}_S(\rho_S)$.

The following lemma is an immediate consequence of the lower semicontinuity of $\Psi$
(see \cite[Remark~6.36]{Noda_pre_Metrization}),
and will be used later.

\begin{lem} \label{lem: ST preserves Fell conv}
  Let $\Psi$ be a space transformation.
  Let $S, S_1, S_2, \dots$ be $\bcmAB$ spaces that are isometrically embedded into a common $\bcmAB$ space $M$
  in such a way that $S_n \to S$ in the Fell topology as closed subsets of $M$.
  Then $\Psi(S_n) \to \Psi(S)$ in the Fell topology as closed subsets of $\Psi(M)$.
\end{lem}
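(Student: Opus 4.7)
The plan is to reduce the claimed Fell convergence of the closed subsets $\Psi(S_n)$ of $\Psi(M)$ to the Painlev\'e--Kuratowski characterization of closed-set convergence and then invoke the two semicontinuity properties bundled into the continuity of $\Psi$ as a structure (Definition~\ref{dfn: continuity of structure}). First I would note that each $\Psi(S_n)$ is a closed subset of $\Psi(M)$, since the isometric embedding $\Psi_{f_n}$ sends the boundedly compact (hence complete) space $\Psi(S_n)$ onto a complete, and therefore closed, subset of $\Psi(M)$. In a boundedly compact metric space, Fell convergence $F_n \to F$ is well known to be equivalent to the two Painlev\'e--Kuratowski inclusions
\begin{equation}
  \limsup_{n \to \infty} F_n \subseteq F \subseteq \liminf_{n \to \infty} F_n,
\end{equation}
so it suffices to verify both inclusions for $F_n = \Psi(S_n)$ and $F = \Psi(S_\infty)$.

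For the $\limsup$ inclusion, I would take $a \in \limsup_n \Psi(S_n)$ and pick $a_{n_k} \in \Psi(S_{n_k})$ with $a_{n_k} \to a$ in $\Psi(M)$. The subsequence $(S_{n_k})$ still converges to $S_\infty$ in the Fell topology of $M$, so Assumption~\ref{assum: semicontinuity}\ref{assum item: upper semicontinuity} (upper semicontinuity of $\Psi$) yields $a \in \Psi(S_\infty)$ at once.

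For the $\liminf$ inclusion, I would fix $a \in \Psi(S_\infty)$ and show $d_{\Psi(M)}(a, \Psi(S_n)) \to 0$ by contradiction. If this failed, one could extract a subsequence $(n_k)$ with $d_{\Psi(M)}(a, \Psi(S_{n_k})) > \varepsilon$ for some $\varepsilon > 0$; applying Assumption~\ref{assum: semicontinuity}\ref{assum item: lower semicontinuity} (lower semicontinuity of $\Psi$) to $(S_{n_k}) \to S_\infty$ would produce a further subsequence and points $a_j \in \Psi(S_{n_{k_j}})$ with $a_j \to a$ in $\Psi(M)$, contradicting the lower bound on the distances.

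The routine ingredients are the Painlev\'e--Kuratowski characterization of Fell convergence and the closedness of isometric images of boundedly compact spaces. The only mildly delicate point is the diagonal upgrade from the subsequence form of lower semicontinuity to the full-sequence form, but this is essentially forced by the fact that Assumption~\ref{assum: semicontinuity}\ref{assum item: lower semicontinuity} is posited for every Fell-convergent sequence, including subsequences of the given one.
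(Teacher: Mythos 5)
Your proof is correct and is essentially the argument the paper has in mind: the paper gives no details, simply declaring the lemma an immediate consequence of the semicontinuity of $\Psi$ (with a citation to a remark in \cite{Noda_pre_Metrization}), and your Painlev\'e--Kuratowski reduction together with Assumption~\ref{assum: semicontinuity}\ref{assum item: upper semicontinuity} for the $\limsup$ inclusion and \ref{assum item: lower semicontinuity} (upgraded from subsequences by your contradiction argument) for the $\liminf$ inclusion is exactly the intended filling-in of that claim. Both semicontinuity properties are available since a space transformation is by definition continuous in the sense of Definition~\ref{dfn: continuity of structure}, so your use of upper semicontinuity is legitimate even though the paper's one-line justification mentions only the lower half.
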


We now define the composition of a structure with a space transformation.

\begin{dfn} \label{dfn: composition}
  Let $\tau$ be a structure and $\Psi$ a space transformation with rooting system $\mathfrak{r}$.
  The composition $\tau \circ \Psi = \tau(\Psi)$ is defined as follows:
  \begin{enumerate}[label = \textup{(\roman*)}]
    \item 
      For each $\bcmAB$ space $S$, 
      set $(\tau \circ \Psi)(S) \coloneqq \tau(\Psi(S))$. 
    \item 
      For each isometric embedding $f \colon S_1 \to S_2$ between $\bcmAB$ spaces, 
      define $(\tau \circ \Psi)_f \coloneqq \tau_{\Psi_f}$.
  \end{enumerate}
  If $\tau$ admits a metrization, then for each rooted $\bcmAB$ space $(S, \rho_S)$, 
  we equip $\tau(\Psi(S))$ with the metric $d^\tau_{\Psi(S), \mathfrak{r}_S(\rho_S)}$.
\end{dfn}

Similarly to the case of product structures, 
composition also preserves Polishness.
\begin{thm} [{\cite[Theorem~7.14]{Noda_pre_Metrization}}] \label{thm: composition is Polish} 
  Let $\tau$ be a structure and $\Psi$ be a space transformation. If $\tau$ is Polish, then so is $\tau \circ \Psi$. 
\end{thm}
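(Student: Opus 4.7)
The plan is to construct witness data for the Polishness of $\tau \circ \Psi$ out of the witness data for the Polishness of $\tau$. Since $\tau$ is Polish, fix an auxiliary structure $\tilde{\tau}$, a topological embedding $\eta \colon \tau \Rightarrow \tilde{\tau}$, and open sets $\tilde{\tau}_k(S, \rho_S) \subseteq \tilde{\tau}(S)$ as in Definition~\ref{dfn: Polish structure}. I propose the following candidates:
\begin{equation}
  \tilde{\sigma} \coloneqq \tilde{\tau} \circ \Psi,\qquad
  \eta'_S \coloneqq \eta_{\Psi(S)},\qquad
  \tilde{\sigma}_k(S, \rho_S) \coloneqq \tilde{\tau}_k\bigl(\Psi(S), \mathfrak{r}_S(\rho_S)\bigr),
\end{equation}
where $\mathfrak{r}$ is a rooting system of $\Psi$. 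The three conditions \ref{dfn item: 1. Polish structure}--\ref{dfn item: 3. Polish structure} must then be verified for the composition $\tau \circ \Psi$ equipped with these data.

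First I would check that $\eta'$ is a topological embedding $\tau \circ \Psi \Rightarrow \tilde{\tau} \circ \Psi$: each $\eta_{\Psi(S)}$ is a topological embedding by assumption on $\eta$, and the naturality $\tilde{\tau}_{\Psi_f} \circ \eta_{\Psi(S_1)} = \eta_{\Psi(S_2)} \circ \tau_{\Psi_f}$ is just the naturality of $\eta$ applied to the isometric embedding $\Psi_f$. Conditions \ref{dfn item: 2. Polish structure} and \ref{dfn item: 3. Polish structure} then follow almost immediately: for (P2), a root-preserving isometric embedding $f \colon S_1 \to S_2$ yields, by the rooting-system axiom, a root-preserving isometric embedding $\Psi_f \colon (\Psi(S_1), \mathfrak{r}_{S_1}(\rho_{S_1})) \to (\Psi(S_2), \mathfrak{r}_{S_2}(\rho_{S_2}))$, so (P2) for $\tilde{\tau}$ transfers directly; for (P3), one has $\eta'_S((\tau\circ\Psi)(S)) = \eta_{\Psi(S)}(\tau(\Psi(S))) = \bigcap_k \tilde{\tau}_k(\Psi(S), \mathfrak{r}_S(\rho_S)) = \bigcap_k \tilde{\sigma}_k(S, \rho_S)$.

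The substantive step is (P1): showing that $\tilde{\sigma} = \tilde{\tau} \circ \Psi$ is continuous and separable and admits a complete metrization. Separability and completeness of the metrization $d^{\tilde{\tau}}_{\Psi(S), \mathfrak{r}_S(\rho_S)}$ (assigned by Definition~\ref{dfn: composition}) are inherited from $\tilde{\tau}$. Upper and lower semicontinuity follow by first applying Lemma~\ref{lem: ST preserves Fell conv} to pass from $S_n \to S_\infty$ in the Fell topology inside a common ambient $\bcmAB$ space $M$ to $\Psi(S_n) \to \Psi(S_\infty)$ in the Fell topology inside $\Psi(M)$, and then applying the semicontinuity of $\tilde{\tau}$.

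The main obstacle is embedding-continuity of $\tilde{\sigma}$: given $f_n \to f$ in the compact-convergence topology from $S_1$ to $S_2$, I need $\tilde{\tau}_{\Psi_{f_n}}(a) \to \tilde{\tau}_{\Psi_f}(a)$ in $\tilde{\tau}(\Psi(S_2))$ for every $a \in \tilde{\tau}(\Psi(S_1))$. By embedding-continuity of $\tilde{\tau}$, it suffices to show $\Psi_{f_n} \to \Psi_f$ in the compact-convergence topology from $\Psi(S_1)$ to $\Psi(S_2)$. Embedding-continuity of $\Psi$ (which holds because $\Psi$ is a continuous structure by Definition~\ref{dfn: space transformation}) gives only pointwise convergence of $\Psi_{f_n}$ to $\Psi_f$; to upgrade to compact-convergence I would exploit that each $\Psi_{f_n}$ is an isometric embedding, so the family is uniformly $1$-Lipschitz and hence equicontinuous, whence pointwise convergence implies uniform convergence on compact subsets by a standard Arzelà--Ascoli-type argument. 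Once this upgrade is in place, all three conditions of Definition~\ref{dfn: Polish structure} are verified and the conclusion follows.
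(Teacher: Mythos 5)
Your proof is correct. Note, however, that the paper does not actually prove this theorem: it is imported verbatim from the companion work \cite[Theorem~7.14]{Noda_pre_Metrization}, so there is no in-paper argument to compare against. Your construction — transporting the witness data $(\tilde{\tau}, \eta, \tilde{\tau}_k)$ for the Polishness of $\tau$ by composing with $\Psi$ and rooting via $\mathfrak{r}$, observing that \ref{dfn item: 2. Polish structure} and \ref{dfn item: 3. Polish structure} transfer because $\Psi_f$ is a root-preserving isometric embedding of $(\Psi(S_1), \mathfrak{r}_{S_1}(\rho_{S_1}))$ into $(\Psi(S_2), \mathfrak{r}_{S_2}(\rho_{S_2}))$, reducing semicontinuity to Lemma~\ref{lem: ST preserves Fell conv}, and upgrading the pointwise convergence $\Psi_{f_n} \to \Psi_f$ supplied by embedding-continuity of $\Psi$ to compact convergence via equicontinuity of the $1$-Lipschitz family $\{\Psi_{f_n}\}$ — is the natural route and correctly identifies the one genuinely non-formal step (embedding-continuity of $\tilde{\tau} \circ \Psi$).
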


\begin{exm} \label{exm: space transformation}
  Here, we provide space transformations, used in this paper.
  Fix $k \in \NN$ and a rooted $\bcmAB$ space $(\Xi, \rho_\Xi)$.
  We define a space transformation $\Psi = \Psi_{\id^k \times \Xi}$ as follows.
  \begin{itemize}
    \item 
      For each $\bcmAB$ space $S$, 
      define $\Psi(S) \coloneqq S^k \times \Xi$ equipped with the max product metric. 
    \item 
      For each isometric embedding $f \colon S_1 \to S_2$ between $\bcmAB$ spaces, 
      define 
      \begin{equation}
        \Psi_f \coloneqq \underbrace{f \times \cdots \times f}_{k\ \text{times}} \times \id_\Xi.
      \end{equation}
  \end{itemize}
  Its rooting system $\mathfrak{r}$ is given by $\mathfrak{r}_X(x) = (x, \dots, x, \rho_\Xi) \in X^k \times \Xi$.
  Similarly, we define space transformations $\Psi_{\Xi \times \id^k}$ and $\Psi_{\id^k}$ such that
  $\Psi_{\Xi \times \id^k}(S) \coloneqq \Xi \times S^k$ and $\Psi_{\id^k}(S) = S^k$ for each $\bcmAB$ space $S$.
  When $k = 1$, we simply write $\Psi_{\id \times \Xi} = \Psi_{\id^1 \times \Xi}$, 
  $\Psi_{\Xi \times \id} = \Psi_{\Xi \times \id^1}$, and $\Psi_{\id} = \Psi_{\id^1}$.
\end{exm}

%%%%%%%%%%%%%%%%%%%%%%%%%%%%%%%%%%%%%%%%%%%%%%%%%%%%%%%%%%%%%%%%%%%%%%%%%%%%%%%%%%%%%%%%%%%%%%%%%%%%%%%%%%%%%%%%%%%%%%%%%%%%%%%%%%
% Structure used in the paper
%%%%%%%%%%%%%%%%%%%%%%%%%%%%%%%%%%%%%%%%%%%%%%%%%%%%%%%%%%%%%%%%%%%%%%%%%%%%%%%%%%%%%%%%%%%%%%%%%%%%%%%%%%%%%%%%%%%%%%%%%%%%%%%%%%
\subsection{Structures used in the present paper} \label{sec: structures used in the present paper}

Here we list the structures that will be used in the present paper.
We note that all the structures, except for $\STOMSt$ defined in \ref{st item: stom st} below,
have already been introduced in \cite{Noda_pre_Metrization}.

\begin{enumerate} [label = (S\arabic*), leftmargin = *]
  \item \label{item: fixed structure}
    \textbf{Fixed structure.}
    Fix a Polish space $\Xi$. We define a structure $\tau = \tau_\Xi$ as follows.
    \begin{itemize}
      \item 
        For each $\bcmAB$ space $S$,  
        define $\tau(S) \coloneqq \Xi$. 
      \item 
        For each isometric embedding $f \colon S_1 \to S_2$ between $\bcmAB$ spaces,
        define $\tau_{f} \coloneqq \id_{\Xi}$.
    \end{itemize}
    Let $d_\Xi$ be a metric on $\Xi$ inducing the given topology.
    We define a metrization of $\tau$ by equipping $\tau(S) = \Xi$ with the metric $d_\Xi$.
  \item \textbf{Point.}
    We define a structure $\tau = \tau_{\id}$ as follows.
    \begin{itemize}
      \item 
        For each $\bcmAB$ space $S$,
        define $\tau(S) \coloneqq S$.
      \item 
        For each isometric embedding $f \colon S_1 \to S_2$ between $\bcmAB$ spaces,
        define $\tau_f \coloneqq f$.
    \end{itemize}
    We define a metrization of $\tau$ by equipping $\tau(S) = S$ with the associated metric $d_S$.
  \item \textbf{Measure.}
    We define a structure $\tau = \MeasSt$ as follows. 
    \begin{itemize} 
      \item 
        For each $\bcmAB$ space $S$, 
        set $\tau(S) \coloneqq \Meas(S)$ equipped with the vague topology. 
      \item 
        For each isometric embedding $f \colon S_1 \to S_2$ between $\bcmAB$ spaces, 
        set $\tau_f(\mu) \coloneqq \mu \circ f^{-1}$ for each $\mu \in \Meas(S_1)$, 
        i.e., $\tau_f(\mu)$ is the pushforward of $\mu$ by $f$.
    \end{itemize}
    We define a metrization of $\tau$ by equipping, 
    for each rooted $\bcmAB$ space $(S, \rho_S)$, $\tau(S) = \Meas(S)$ with the metric $\VagueMet{S, \rho_S}$,
    as recalled from \eqref{eq: dfn of vague metric}.
  \item \label{st item: stom st}
    \textbf{STOM structure.}
    We define a structure $\tau = \STOMSt$ as follows.
    \begin{itemize}
      \item 
        For each $\bcmAB$ space $S$,
        define $\tau(S) \coloneqq \STOMMeas(S \times \RNp)$.
      \item 
        For each isometric embedding $f \colon S_1 \to S_2$ between $\bcmAB$ spaces, 
        set $\tau_f(\Pi) \coloneqq \Pi \circ (f \times \id_{\RNp})^{-1}$ for each $\Pi \in \STOMMeas(S_1 \times \RNp)$, 
        i.e., $\tau_f(\Pi)$ is the pushforward of $\Pi$ by $f \times \id_{\RNp}$.
    \end{itemize}
    Checking conditions in Definition~\ref{dfn: structure} is straightforward,
    and thus $\STOMSt$ is indeed a structure.
    We define a metrization of $\tau$ by equipping $\tau(S) = \STOMMeas(S \times \RNp)$ with the metric $\STOMMet{S \times \RNp}$,
    as recalled from \eqref{eq: dfn of stom met}.
    We note that, by Lemma~\ref{lem: preserving property of stom met},
    this indeed defines a metrization in the sense of Definition~\ref{dfn: metrization of structure}.
  \item \textbf{C\`adl\`ag functions}
    We define a structure $\tau = \SkorohodSt$ as follows. 
    \begin{itemize} 
      \item   
        For each $\bcmAB$ space $S$, 
        set $\tau(S) \coloneqq D_{J_1}(\RNp, S)$ equipped with the $J_1$-Skorohod topology. 
      \item 
        For each isometric embedding $f \colon S_1 \to S_2$ between $\bcmAB$ spaces, 
        set $\tau_f(\xi) \coloneqq f \circ \xi$ for each $\xi \in D_{J_1}(\RNp, S_1)$.
    \end{itemize}
    We define a metrization of $\tau$ by equipping $\tau(S) = D_{J_1}(\RNp, S)$ with the Skorohod metric $\SkorohodMet{S}$,
    as recalled from \eqref{eq: dfn of Skorohod metric}.
  \item \textbf{Measurable functions.}
    We define a structure $\tau = \LzeroSt$ as follows.
    \begin{itemize}
      \item 
        For each $\bcmAB$ space $S$,
        define $\tau(S) \coloneqq L^0(\RNp, S)$.
      \item 
        For each isometric embedding $f \colon S_1 \to S_2$ between $\bcmAB$ spaces,
        define $\tau_f(\xi) \coloneqq f \circ \xi$ for each $\xi \in L^0(\RNp, S_1)$.
    \end{itemize}
    We define a metrization of $\tau$ by equipping $\tau(S) = L^0(\RNp, S)$ with the metric $\LzeroMet_S$,
    as recalled from \eqref{eq: dfn of L^0 metric}.
  \item \textbf{Continuous functions with space-dependent domains.}
    Let $\Psi$ be space transformation and $\sigma$ be a Polish structure.
    We define a structure $\tau = \hatCSt(\Psi, \sigma)$ as follows.
    \begin{itemize}
      \item 
        For each $\bcmAB$ space $S$, 
        define $\tau(S) \coloneqq \hatC(\Psi(S), \sigma(S))$. 
      \item 
        For each isometric embedding $f \colon S_1 \to S_2$ between $\bcmAB$ spaces, 
        define $\tau_f(F) \coloneqq \sigma_f \circ F \circ \Psi_f^{-1}$ with $\dom(\tau_f(F)) \coloneqq \Psi_f(\dom(F))$ 
        for $F \in \tau(S_1)$.
    \end{itemize}
    We define a metrization of $\tau$ by equipping, for each rooted $\bcmAB$ space $(S, \rho_S)$,
    $\tau(S) = \hatC(\Psi(S), \sigma(S))$ 
    with the metric $\hatCMet{(\Psi(S), \mathfrak{r}(\rho_S))}{\Xi}$,
    as recalled from \eqref{eq: dfn of hatC metric}.
  \item \textbf{Law of structure.}
    Let $\sigma$ be a Polish structure.
    We define a structure $\tau = \ProbSt(\sigma)$ as follows.
    \begin{itemize}
      \item 
        For each $\bcmAB$ space $S$,
        define $\tau(S) \coloneqq \Prob(\sigma(S))$,
        i.e., the set of probability measures on $\sigma(S)$ equipped with the weak topology. 
      \item 
        For each isometric embedding $f \colon S_1 \to S_2$ between $\bcmAB$ spaces, 
        define $\tau_f(P) \coloneqq P \circ \sigma_f^{-1}$, i.e.,  the pushforward of $P$ by $\sigma_f$, for each $P \in \Prob(\sigma(S_1))$.
    \end{itemize}
    We define a metrization of $\tau$ by equipping, 
    for each rooted $\bcmAB$ space $(S, \rho_S)$,
    $\tau(S) = \Prob(\sigma(S))$ with the Prohorov metric 
    induced by the metric $d^\tau_{S, \rho_S}$ on $\sigma(S)$.
\end{enumerate}

\begin{prop}
  All the structures introduced above are Polish.
\end{prop}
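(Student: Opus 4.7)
The plan is to dispose of the seven structures $\tau_\Xi$, $\tau_{\id}$, $\MeasSt$, $\SkorohodSt$, $\LzeroSt$, $\hatCSt(\Psi, \sigma)$, and $\ProbSt(\sigma)$ by invoking \cite{Noda_pre_Metrization}, where each has been shown to be Polish, either directly or via the product and composition Theorems~\ref{thm: product is Polish} and \ref{thm: composition is Polish} (noting that (S7) and (S8) are parameterized by a Polish $\sigma$ and so the citation remains applicable). The only new verification required is for the STOM structure $\STOMSt$.

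For $\STOMSt$, my strategy is to prove it is continuous, separable, and admits a complete metrization. Any structure enjoying these three properties is automatically Polish in the sense of Definition~\ref{dfn: Polish structure}: one takes $\tilde{\tau} = \tau$, $\eta = \id$, and $\tilde{\tau}_k(S, \rho_S) = \tau(S)$ for every $k$, and then (P1)--(P3) hold trivially.

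Separability and completeness of the metric $\STOMMet{S \times \RNp}$ are provided by Proposition~\ref{prop: polishness of stom met}, and the compatibility of this metric with root-preserving isometric embeddings by Lemma~\ref{lem: preserving property of stom met}. Embedding-continuity reduces, via Proposition~\ref{prop: conv in stom sp}\ref{prop item: conv in stom sp. 3}, to showing that $\int g(f_n(x), t)\, \Pi(dx\, dt) \to \int g(f(x), t)\, \Pi(dx\, dt)$ for every bounded continuous $g$ with $\supp(g) \subseteq S_2 \times [0, T]$, whenever $f_n \to f$ in $C(S_1, S_2)$ and $\Pi \in \STOMMeas(S_1 \times \RNp)$; this I would obtain by dominated convergence applied to the finite measure $\Pi|_{S_1 \times [0, T]}$. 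For upper semicontinuity, I would argue as follows: if $S_n \to S_\infty$ in the Fell topology in a common $\bcmAB$ space $M$ and $\Pi_n \in \STOMMeas(S_n \times \RNp)$ converges to $\Pi \in \STOMMeas(M \times \RNp)$, then Proposition~\ref{prop: conv in stom sp}\ref{prop item: conv in stom sp. 2} yields $\Pi_n|_{M \times [0, t]} \to \Pi|_{M \times [0, t]}$ weakly for cofinally many $t$; combined with Lemma~\ref{lem: ST preserves Fell conv} (applied to the space transformation $\Psi_{\id \times \RNp}$ from Example~\ref{exm: space transformation}) and the Portmanteau theorem, this gives $\supp(\Pi|_{M \times [0, t]}) \subseteq S_\infty \times [0, t]$ and hence $\Pi \in \STOMMeas(S_\infty \times \RNp)$.

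The main obstacle will be lower semicontinuity: given $\Pi \in \STOMMeas(S_\infty \times \RNp)$, I must construct a subsequence and measures $\Pi_k \in \STOMMeas(S_{n_k} \times \RNp)$ converging to $\Pi$ in $\STOMMeas(M \times \RNp)$. My approach is a two-stage atomic approximation. First, I approximate $\Pi$ by finite sums $\tilde{\Pi}^{(k)}$ of weighted Dirac masses on $S_\infty \times \RNp$ supported on a fixed countable dense subset, using that each restriction $\Pi|_{S_\infty \times [0, T]}$ is a finite Borel measure on a Polish space; this yields $\tilde{\Pi}^{(k)} \to \Pi$ in $\STOMMeas(M \times \RNp)$. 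Second, for each fixed $k$, the Fell convergence $S_n \to S_\infty$ supplies, for each atom location $x$ of $\tilde{\Pi}^{(k)}$, a sequence $x^{(n)} \in S_n$ with $x^{(n)} \to x$ in $M$; replacing atoms accordingly gives $\Pi_n^{(k)} \in \STOMMeas(S_n \times \RNp)$ with $\Pi_n^{(k)} \to \tilde{\Pi}^{(k)}$ as $n \to \infty$. A diagonal extraction then produces the desired sequence. The delicate point is organizing this double limit in the strong STOM topology, where one must verify simultaneously that mass on each time slab is preserved and that no escape of mass to infinity in the spatial direction occurs.
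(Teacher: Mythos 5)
Your proposal is correct and follows essentially the same route as the paper: cite \cite{Noda_pre_Metrization} for the seven previously treated structures, then verify that $\STOMSt$ is separable, continuous (embedding-continuity plus upper and lower semicontinuity, the latter two mirroring the argument for $\MeasSt$), and admits a complete metrization via Proposition~\ref{prop: polishness of stom met} and Lemma~\ref{lem: preserving property of stom met}. The paper states these verifications in one line each, whereas you spell out the dominated-convergence, Portmanteau, and atomic-approximation details — all of which are sound.
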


\begin{proof}
  It suffices to show that $\STOMSt$ is Polish, 
  since the Polishness of the other structures was established in \cite[Section~8]{Noda_pre_Metrization}.
  The continuity of the embedding can be readily verified by the continuous mapping theorem.
  Moreover, the semicontinuity property can be proved 
  by following the argument used for $\MeasSt$ 
  in \cite[Theorem~8.9]{Noda_pre_Metrization}.
  Combining these facts with Proposition~\ref{prop: polishness of stom met},
  we deduce that $\STOMSt$ is separable and continuous,
  and the associated metrization is complete.
  Hence $\STOMSt$ is Polish, as desired.
\end{proof}

%%%%%%%%%%%%%%%%%%%%%%%%%%%%%%%%%%%%%%%%%%%%%%%%%%%%%%%%%%%%%%%%%%%%%%%%%%%%%%%%%%%%%%%%%%%%%%%%%%%%%%%%%%%%%%%%%%%%%%%%%%%%%%%%%%
%%%%%%%%%%%%%%%%%%%%%%%%%%%%%%%%%%%%%%%%%%%%%%%%%%%%%%%%%%%%%%%%%%%%%%%%%%%%%%%%%%%%%%%%%%%%%%%%%%%%%%%%%%%%%%%%%%%%%%%%%%%%%%%%%%
% 
%%%%%%%%%%%%%%%%%%%%%%%%%%%%%%%%%%%%%%%%%%%%%%%%%%%%%%%%%%%%%%%%%%%%%%%%%%%%%%%%%%%%%%%%%%%%%%%%%%%%%%%%%%%%%%%%%%%%%%%%%%%%%%%%%%
%%%%%%%%%%%%%%%%%%%%%%%%%%%%%%%%%%%%%%%%%%%%%%%%%%%%%%%%%%%%%%%%%%%%%%%%%%%%%%%%%%%%%%%%%%%%%%%%%%%%%%%%%%%%%%%%%%%%%%%%%%%%%%%%%%
\section{Dual processes, PCAFs, and smooth measures} \label{sec: Dual processes, PCAFs, and smooth measures}

In this section we introduce the class of processes studied in this paper 
and summarize basic results on their positive continuous additive functionals (PCAFs) 
in relation to smooth measures.
Throughout this section,
we fix a locally compact separable metrizable topological space $S$.
We write $S_{\Delta} = S \cup \{\Delta\}$ for the one-point compactification of $S$.
(NB. If $S$ is compact, then we add $\Delta$ to $S$ as an isolated point.)
Any $[-\infty,\infty]$-valued function $f$ defined on $S$ is regarded as a function on $S_{\Delta}$ by setting $f(\Delta) \coloneqq 0$.

%%%%%%%%%%%%%%%%%%%%%%%%%%%%%%%%%%%%%%%%%%%%%%%%%%%%%%%%%%%%%%%%%%%%%%%%%%%%%%%%%%%%%%%%%%%%%%%%%%%%%%%%%%%%%%%%%%%%%%%%%%%%%%%%%%
% Smooth measures and PCAFs
%%%%%%%%%%%%%%%%%%%%%%%%%%%%%%%%%%%%%%%%%%%%%%%%%%%%%%%%%%%%%%%%%%%%%%%%%%%%%%%%%%%%%%%%%%%%%%%%%%%%%%%%%%%%%%%%%%%%%%%%%%%%%%%%%%
\subsection{Dual processes} \label{sec: Dual processes}

Here we introduce standard processes admitting dual processes and heat kernels,
together with related notation used throughout this paper.
For further background, see \cite{Blumenthal_Getoor_68_Markov}, for example.

Let 
\begin{equation}
  X = (\Omega^X, \sigalg^X, (X_{t})_{t \in [0, \infty)}, (P^x_X)_{x \in S_{\Delta}}, (\theta^X_t)_{t \in [0,\infty)})
\end{equation}
be a standard process on $S$
(see \cite[Definition~9.2 in Chapter~I]{Blumenthal_Getoor_68_Markov} or \cite[Definition~A.1.23(i)]{Chen_Fukushima_12_Symmetric}).
Here $(\Omega^X, \sigalg^X)$ denotes the underlying measurable space, 
and $\theta^X_t$ the shift operator,
that is, the map $\theta^X_t \colon \Omega^X \to \Omega^X$ satisfying $X_s \circ \theta^X_t = X_{s+t}$ for all $s \in [0, \infty]$.
Note that $X_{\infty}(\omega) = \Delta$ for every $\omega \in \Omega^X$.
We write $\filt^X_{*} = (\filt^X_{t})_{t \in [0,\infty]}$ 
for the minimal augmented admissible filtration (see \cite[p.\ 397]{Chen_Fukushima_12_Symmetric}),
and set 
\begin{equation}
  \zeta^X \coloneqq \inf\{t \in [0, \infty] \mid X_{t} = \Delta\}
\end{equation}
for the \emph{lifetime} of $X$.
We say that $X$ is \emph{conservative} if 
\begin{equation} \label{eq: conservativeness}
  P^x(\zeta^X = \infty) = 1, \qquad \forall x \in S.
\end{equation}
We often suppress the subscript or superscript $X$ when it is clear from the context.

Throughout this section,
we assume the existence of a dual process and a heat kernel, as specified below.
We will often refer to this assumption as the \emph{duality and absolute continuity condition}
(\emph{DAC condition}, for short).

\begin{assum} [{The DAC condition}]\label{assum: dual hypothesis} \leavevmode 
  There exist a standard process $\check{X}$, a $\sigma$-finite Borel measure $m$ on $S$,
  and a Borel measurable function $p = p_X \colon (0, \infty) \times S \times S \to [0,\infty]$ satisfying the following.
  \begin{enumerate} [label = \textup{(HK\arabic*)}, leftmargin = *]
    \item For all $t > 0$ and $x \in S$, $P^x(X_t \in dy) = p(t, x, y)\, m(dy)$.
    \item For all $t > 0$ and $y \in S$, $P^y(\check{X}_t \in dx) = \check{p}(t, y, x)\, m(dx)$,
    where $\check{p}(t, y, x) \coloneqq p(t, x, y)$.
    \item For any $s, t > 0$ and $x, y \in S$, 
      \begin{equation} \label{assum item eq: C-K equation}
        p(t+s, x, y) = \int_{S} p(t, x, z) p(s, z, y)\, m(dz).
      \end{equation}
  \end{enumerate}
\end{assum}

We recall from \cite[Corollary~1.12 and Remark~1.13, Chapter~VI]{Blumenthal_Getoor_68_Markov}
that $m$ is a $0$-excessive reference measure of $X$
(in the sense of \cite[Definitions~1.1, Chapter~V and 1.10, Chapter~VI]{Blumenthal_Getoor_68_Markov}).
Hence, under Assumption~\ref{assum: dual hypothesis},
the setting of \cite{Revuz_70_Mesures} applies,
and we may invoke the results therein.

\begin{rem} \label{rem: dual process}
  The process $\check{X}$ is called a \emph{dual process} of $X$, as it satisfies the following:
  for any non-negative measurable functions $f$ and $g$ on $S$, and any $t \geq 0$,
  \begin{equation} \label{rem eq: dual process}
    \int_S E_x[f(X_t)]\, g(x)\, m(dx) = \int_S f(x) E_x[g(\check{X}_t)]\, m(dx).
  \end{equation}
  It is known that, if the above identify holds and the semigroups of $X$ and $\check{X}$ are absolutely continuous 
  with respect to a common $\sigma$-finite Borel measure,
  then the heat kernel exists (see \cite[Theorem~1]{Yan_88_A_formula}).
\end{rem}

We refer to $p$ as the \emph{heat kernel} of $X$ with respect to $m$.
For each $\alpha \ge 0$, we then define the \emph{$\alpha$-potential (or resolvent) density}
$\ResolDensity^\alpha = \ResolDensity_p^\alpha \colon S \times S \to [0,\infty]$ by
\begin{equation} \label{2. eq: potential density}
  \ResolDensity^\alpha(x,y)
  \coloneqq
  \int_0^\infty e^{-\alpha t} p(t,x,y), dt.
\end{equation}
Given a Borel measure $\nu$ on $S$,
we define, the \emph{$\alpha$-potential} of $\nu$ by setting 
\begin{equation} \label{eq: dfn of potential}
  \Potential^\alpha \nu(x)
  =
  \Potential_p^\alpha\nu(x) 
  \coloneqq 
  \int_{S} \ResolDensity^\alpha(x,y)\, \nu(dy),
  \quad 
  x \in S.
\end{equation}
%This extends the $\alpha$-potential operator in the sense that 
%if $\nu(dx) = f(x)\, m(dx)$ for some Borel measurable function $f$,
%then $\Potential^\alpha \nu = U^\alpha\!f$.
%Thus, with a slight abuse of notation, 
%we often write $\Potential^\alpha f$ instead of $U^\alpha\!f$.
%The $\alpha$-potential operator associated with the dual process $\check{X}$ is denoted by $\check{R}^\alpha$.

For a Borel subset $A \in \Borel(S_\Delta)$, 
we denote by $\sigma_{A} = \sigma^{X}_{A}$ and $\breve{\sigma}_A = \breve{\sigma}^X_A$ 
the first hitting and exit times of $X$ to $A$, respectively,
i.e.,
\begin{gather} 
  \sigma_{A}
  = 
  \sigma^{X}_{A} 
  \coloneqq 
  \inf\{t > 0 \mid X_{t} \in A\},
  \label{eq: dfn of hitting time}
  \\
  \breve{\sigma}_{A}
  = 
  \breve{\sigma}^{X}_{A} 
  \coloneqq 
  \inf\{t > 0 \mid X_{t} \notin A\}.
  \label{eq: dfn of exit time}
\end{gather}
Below, we recall some classes of exceptional sets.

\begin{dfn} [{\cite[Definition~3.1 in Chapter~II]{Blumenthal_Getoor_68_Markov}}] \label{dfn: exceptional set}
  Let $A$ be a Borel subset of $S$.
  \begin{enumerate} [label = \textup{(\roman*)}]
    \item The set $A$ is called \emph{polar} (for $X$) if and only if $P^x(\sigma_A < \infty) = 0$ for all $x \in S$.
    \item The set $A$ is called \emph{thin} (for $X$)  if and only if $P^x(\sigma_A = 0) = 0$ for all $x \in S$.
    \item The set $A$ is called \emph{semipolar} (for $X$)  if and only if it is contained in a countable union of  thin sets.
  \end{enumerate}
\end{dfn}

When $T$ is a separable metric space and $Y \colon (\Omega, \filt_\infty) \to (T, \mathcal{T})$ is measurable,
we define the map $\ProcLaw_Y \colon S \to \Prob(T)$ by 
\begin{equation} \label{eq: notation for law with processes}
  \ProcLaw_Y(x)(\cdot) \coloneqq P^x(Y \in \cdot).
\end{equation}
We refer to this as the \emph{law map} of $Y$.
For example, $\ProcLaw_X(x)$ denotes the law of $X$ starting from $x$.

%%%%%%%%%%%%%%%%%%%%%%%%%%%%%%%%%%%%%%%%%%%%%%%%%%%%%%%%%%%%%%%%%%%%%%%%%%%%%%%%%%%%%%%%%%%%%%%%%%%%%%%%%%%%%%%%%%%%%%%%%%%%%%%%%%
% Smooth measures and PCAFs
%%%%%%%%%%%%%%%%%%%%%%%%%%%%%%%%%%%%%%%%%%%%%%%%%%%%%%%%%%%%%%%%%%%%%%%%%%%%%%%%%%%%%%%%%%%%%%%%%%%%%%%%%%%%%%%%%%%%%%%%%%%%%%%%%%
\subsection{Smooth measures} \label{sec: smooth measure}

In this subsection, we first recall the definition of smooth measures,
and then introduce several classes of smooth measures that play an important role in our study.
We also present some auxiliary results on smooth measures that will be used later.
Our exposition mainly follows Revuz's framework \cite{Revuz_70_Mesures}.
For a more potential-theoretic perspective, see for example 
\cite{Beznea_Boboc_04_Potential,Chen_Fukushima_12_Symmetric,Fukushima_Oshima_Takeda_11_Dirichlet}.
We work within the framework introduced in the previous subsection.
In particular, the DAC condition (Assumption~\ref{assum: dual hypothesis}) is in force.

\begin{dfn} [{Smooth measure, \cite[Theorem~VI.1]{Revuz_70_Mesures}}] \label{dfn: smooth measure}
  A Borel measure $\mu$ on $S$ is said to be a \textit{smooth measure} of $X$ if it satisfies the following conditions:
  \begin{enumerate} [label = (\roman*)]
    \item \label{dfn item: 1. smooth measure}
      $\mu$ charges no semipolar sets, i.e., 
      $\mu(E) = 0$ for any semipolar Borel subset $E$ of $S$;
    \item \label{dfn item: 2. smooth measure}
      there exists an increasing sequence $(E_{n})_{n \geq 1}$ of Borel subsets of $S$,
      called a \emph{nest} of $\mu$,
      such that $\mu(E_{n}) < \infty$ for each $n$,
      \begin{equation}
        \|\Potential^1 (\mu|_{E_n})\|_\infty     
        =
        \sup_{x \in S} \int_{E_n} \ResolDensity^1(x,y)\, \mu(dy) < \infty, \quad \forall n \geq 1,
      \end{equation}
      and 
      $P^x( \lim_{n \to \infty} \breve{\sigma}_{E_n} \geq \zeta) = 1$ for all $x \in S$.
  \end{enumerate}
\end{dfn}

In many examples, the following condition, known as \emph{Hunt's hypothesis}, holds.
\begin{enumerate}[label = \textup{(H)}]
  \item \label{item: Hunt hypo} 
    Every semipolar set is polar.
\end{enumerate}
For instance, if $X$ is a Hunt process associated with a symmetric regular Dirichlet form, 
then condition~\ref{item: Hunt hypo} holds 
(see \cite[Theorem~4.1.3]{Fukushima_Oshima_Takeda_11_Dirichlet}).
Under \ref{item: Hunt hypo}, condition~\ref{dfn item: 1. smooth measure} 
in Definition~\ref{dfn: smooth measure} is implied by condition~\ref{dfn item: 2. smooth measure}.
This follows from the next result.
The proof is given in Appendix~\ref{appendix: proof of potential result}.

\begin{prop} \label{prop: bounded potential and polarity}
  Let $\mu$ be a Borel measure on $S$.
  If $\|\Potential^\alpha \mu\|_\infty < \infty$ for some $\alpha > 0$, then $\mu$ charges no polar sets.
\end{prop}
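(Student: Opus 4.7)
The plan is to approximate $\mu$ by measures absolutely continuous with respect to the reference measure $m$ (which, as I will show, is null on every polar set) and then to pass to the limit. I first claim that every polar Borel set $A$ is $m$-null. By polarity, $\int_A p(t, x, y)\, m(dy) = P^x(X_t \in A) = 0$ for every $x \in S$ and $t > 0$. Fixing any non-negative continuous compactly-supported $\phi$ on $S$ and using Fubini together with the identity $\int \phi(x) p(t, x, y)\, m(dx) = E^y[\phi(\check X_t)]$ provided by Assumption~\ref{assum: dual hypothesis}, we get $\int_A E^z[\phi(\check X_t)]\, m(dz) = 0$ for every $t > 0$; letting $t \downarrow 0$ and invoking the right-continuity of $\check X$ with dominated convergence yields $\int_A \phi\, dm = 0$. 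Varying $\phi$ over an exhausting family then gives $m(A) = 0$, since $S$ is $\sigma$-compact.

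By inner regularity of Borel measures, together with a truncation based on $\mu(\{y : \ResolDensity^\alpha(x_0, y) \geq n^{-1}\}) \leq n \|\Potential^\alpha \mu\|_\infty$ for a fixed $x_0 \in S$, it suffices to prove $\mu(A) = 0$ for compact polar $A$ when $\mu$ is a finite Borel measure. Setting $u \coloneqq \Potential^\alpha \mu$, I introduce for each $t > 0$ the measure
\begin{equation}
  \mu_t(dx) \coloneqq \tfrac{1}{t}\bigl[u(x) - e^{-\alpha t} E^x[u(X_t)]\bigr]\, m(dx),
\end{equation}
which is non-negative by the $\alpha$-excessivity of $u$ and absolutely continuous with respect to $m$. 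A direct computation using \eqref{assum item eq: C-K equation} gives
\begin{equation}
  u(x) - e^{-\alpha t} E^x[u(X_t)] = \int_0^t e^{-\alpha s}\int_S p(s, x, z)\, \mu(dz)\, ds,
\end{equation}
and Fubini then yields
\begin{equation}
  \mu_t(f) = \int_S \Bigl(\tfrac{1}{t}\int_0^t e^{-\alpha s} E^z[f(\check X_s)]\, ds\Bigr)\, \mu(dz)
\end{equation}
for every bounded Borel $f$. For $f$ bounded continuous the inner time-average converges to $f(z)$ as $t \downarrow 0$ by right-continuity of $\check X$, and dominated convergence (with bound $\|f\|_\infty$ and $\mu$ finite) gives $\mu_t(f) \to \mu(f)$; taking $f \equiv 1$ and using $P^z(\zeta^{\check X} > 0) = 1$ similarly gives $\mu_t(S) \to \mu(S)$, so $\mu_t \to \mu$ weakly.

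Since $\mu_t \ll m$ and $m(A) = 0$, we have $\mu_t(A) = 0$ for every $t > 0$; because $A$ is closed, the Portmanteau theorem gives $\mu(A) \leq \liminf_{t \downarrow 0} \mu_t(A) = 0$, as required. The main technical obstacle I anticipate lies in the reduction to finite $\mu$: the hypothesis $\|\Potential^\alpha \mu\|_\infty < \infty$ only yields a pointwise bound and does not a priori make $\mu$ Radon. Handling the residual set $\{y : \ResolDensity^\alpha(x_0, y) = 0\}$ will require varying $x_0$ over a countable dense subset and invoking the DAC condition together with the right-continuity of $\check X$ near time zero to argue that every $y \in S$ satisfies $\ResolDensity^\alpha(x_0, y) > 0$ for some $x_0$.
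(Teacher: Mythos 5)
Your final step contains a fatal error: the Portmanteau theorem is applied in the wrong direction. Weak convergence $\mu_t \to \mu$ together with $\mu_t(A) = 0$ for a \emph{closed} set $A$ yields only $\limsup_{t \downarrow 0} \mu_t(A) \leq \mu(A)$, which is vacuous here; the inequality $\mu(A) \leq \liminf_{t \downarrow 0}\mu_t(A)$ that you invoke holds for \emph{open} sets, not compact ones. In general a weak limit can perfectly well charge a closed set that all the approximating measures annihilate (consider $\delta_{1/n} \to \delta_0$ on $\RN$ with $A = \{0\}$). This is precisely the point where the argument needs genuine potential-theoretic input: with lower semicontinuity of the $\alpha$-excessive function $u = \Potential^\alpha\mu$ one can push mass estimates through the limit (this is the classical route, and the paper explicitly remarks that the result ``is well known under the assumption that all $\alpha$-excessive functions are lower semicontinuous''), but under the bare DAC condition no such regularity is available, and weak convergence of the mollified measures $\mu_t$ is simply too coarse to conclude. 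The paper instead avoids any limiting procedure on $\mu$ itself: it forms the $\alpha$-coexcessive measures $m' = \Potential^\alpha g\cdot m$ and $\mu' = \Potential^\alpha\mu\cdot m$, uses the hypothesis $\|\Potential^\alpha\mu\|_\infty < \infty$ to show $\mu'$ is $m'$-quasi bounded, invokes Beznea--Boboc's representation theorem to conclude $\mu$ vanishes on $m$-copolar sets, and finishes with the fact that polar sets are copolar.

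A secondary, smaller issue is the reduction to finite $\mu$. You correctly anticipate the residual set $\{y : \ResolDensity^\alpha(x_0,y)=0\}$ as an obstacle, but the proposed repair (varying $x_0$ over a countable dense subset) does not work: for fixed $y$ one only knows $m(\{x : \ResolDensity^\alpha(x,y)>0\})>0$, and a set of positive $m$-measure need not meet any prescribed countable set. The correct fix is to take a strictly positive $g \in L^1(m)$ (which exists by $\sigma$-finiteness) and set $h(y) \coloneqq \int_S \ResolDensity^\alpha(x,y)\,g(x)\,m(dx)$; then $h>0$ everywhere and $\int h\,d\mu \leq \|\Potential^\alpha\mu\|_\infty \int g\,dm < \infty$, so $\mu$ is finite on each $\{h \geq 1/n\}$ and these sets exhaust $S$. (This is essentially the role the auxiliary function $g$ plays in the paper's proof.) Your first step, that polar sets are $m$-null, and your identity for $\mu_t$ via the Chapman--Kolmogorov equation are both correct, but they cannot rescue the broken concluding step.
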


\begin{rem}
  The above is well known under the assumption 
  that all $\alpha$-excessive functions are lower semicontinuous 
  (see the comments following Proposition~4.3 in Chapter~VI of \cite{Blumenthal_Getoor_68_Markov}).
\end{rem}

Below, we introduce two important classes of smooth measures.

\begin{dfn} \label{dfn: Kato class}
  A finite Borel measure $\mu$ on $S$ is said to be \emph{in the Kato class (of $X$)}
  if and only if the following conditions are satisfied.
  \begin{enumerate} [label = \textup{(\roman*)}]
    \item \label{dfn item: 1. Dynkin and Kato}
     The Radon measure $\mu$ charges no semipolar sets.
    \item \label{dfn item: 2. Dynkin and Kato}
      It holds that $\displaystyle \lim_{\alpha \to \infty} \|\Potential^\alpha \mu\|_\infty =0$.
  \end{enumerate}
  A Borel measure $\mu$ on $S$ is said to be \emph{in the local Kato class (of $X$)}
  if, for all compact subset $K$ of $S$, the restriction $\mu|_K$ is in the Kato class.
  (In particular, $\mu$ is a Radon measure.)
\end{dfn}

Clearly, the local Kato class is wider than the Kato class.
It is easy to verify that a Radon measure in the local Kato class is a smooth measure
(cf.\ \cite[Proposition~2.6]{Mori_21_Kato_Sobolev}).
These classes impose conditions on the decay of $\alpha$-potentials as $\alpha \to \infty$. 
This corresponds to imposing a constraint on the short-time behavior of the heat kernel with respect to the measure in question. 
More precisely, the following result holds.

\begin{prop} \label{prop: Potential and hk behavior}
  Let $\mu$ be a Borel measure on $S$.
  For any open subset $D$ of $S$ and $t > 0$,
  \begin{align}
    \frac{1}{e}
    \sup_{x \in S} \int_0^t \int_D p(s, x, y)\, \mu(dy)\, ds
    &\leq \|\Potential^{1/t}(\mu|_D)\|_\infty\\
    &\leq 
    \frac{1}{1 - e^{-1}}
    \sup_{x \in \closure(D)} \int_0^t \int_D p(s, x, y)\, \mu(dy)\, ds,
    \label{prop eq: Potential and hk behavior}
  \end{align}
  where $\closure(D)$ denotes the closure of $D$ in $S$.
  As a consequence, 
  $\mu$ belongs to the local Kato class 
  if and only if it charges no semipolar sets 
  and,  for all compact subsets $K$ of $S$,
  \begin{equation}
    \lim_{t \to 0} \sup_{x \in K}  \int_0^t \int_K p(s, x, y)\, \mu(dy)\, ds = 0.
  \end{equation}
\end{prop}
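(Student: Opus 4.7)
The plan is to derive both inequalities from a single dyadic decomposition of the time integral. Setting $f_s(x) \coloneqq \int_D p(s, x, y)\, \mu(dy)$ and $F(x) \coloneqq \int_0^t f_s(x)\, ds$, the definition~\eqref{2. eq: potential density} of the resolvent density gives
\[
  \Potential^{1/t}(\mu|_D)(x) = \int_0^\infty e^{-s/t} f_s(x)\, ds.
\]
For the lower bound, I restrict the integral to $[0,t]$ and use $e^{-s/t} \geq e^{-1}$ to obtain $\Potential^{1/t}(\mu|_D)(x) \geq e^{-1} F(x)$; taking the supremum over $x \in S$ produces the left inequality.

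For the upper bound, I partition $[0,\infty) = \bigsqcup_{k \geq 0}[kt,(k+1)t]$ and bound $e^{-s/t} \leq e^{-k}$ on the $k$-th block. The Chapman--Kolmogorov equation~\eqref{assum item eq: C-K equation} together with Fubini gives
\[
  \int_{kt}^{(k+1)t} f_s(x)\, ds = \int_S p(kt, x, z) F(z)\, m(dz) \leq \sup_{z \in S} F(z),
\]
so summing the geometric series $\sum_{k \geq 0} e^{-k} = (1 - e^{-1})^{-1}$ yields the bound $\|\Potential^{1/t}(\mu|_D)\|_\infty \leq (1 - e^{-1})^{-1} \sup_{x \in S} F(x)$.

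The remaining step is to upgrade this supremum from $x \in S$ to $x \in \closure(D)$. Setting $\tau \coloneqq \sigma_{\closure(D)}$, the openness of $D$ combined with the right-continuity of $X$ forces $\{X_s \in D\} \subseteq \{\tau \leq s\}$ almost surely under $P^x$. Reading $F(x) = E^x[A_t]$ for the PCAF $A$ associated with $\mu|_D$ via the Revuz correspondence (which may be invoked in the only nontrivial regime $\sup_{\closure(D)} F < \infty$, reducing to the Kato class after truncation), the PCAF $A$ is carried by $\{s : X_s \in \closure(D)\}$, so $A_\tau = 0$ whenever $x \notin \closure(D)$. The strong Markov property at $\tau$ then delivers
\[
  F(x) = E^x\!\bigl[\, F_{t - \tau}(X_\tau) \,;\ \tau \leq t\, \bigr] \leq \sup_{z \in \closure(D)} F(z),
\]
where $F_u(y) \coloneqq E^y[A_u] \leq F(y)$. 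The consequence about the local Kato class is then obtained by instantiating the two-sided estimate at $D = K$ compact (so $\closure(K) = K$) and letting $\alpha = 1/t \to \infty$.

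The principal technical obstacle is this maximum principle: the pointwise strong Markov identity for the density $p(s, x, \cdot)$ holds only $m$-almost everywhere, which is insufficient for integrating against a general Borel measure $\mu$. Passing to the PCAF/Revuz framework avoids this, but requires establishing smoothness of $\mu|_D$ in the relevant regime; this can be handled by a truncation/approximation argument combined with Proposition~\ref{prop: bounded potential and polarity} to control charging of polar sets.
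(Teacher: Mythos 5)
Your two elementary inequalities coincide with the paper's: the lower bound is the one-line estimate $e^{-s/t}\ge e^{-1}$ on $[0,t]$, and the upper bound with $\sup_{x\in S}$ is the geometric-series plus Chapman--Kolmogorov computation (which the paper imports from Mori's Equation~(2.10)). The divergence, and the problem, lies in the maximum principle $\sup_{x\in S}F(x)\le\sup_{x\in\closure(D)}F(x)$, where $F(x)=\int_0^t\int_D p(s,x,y)\,\mu(dy)\,ds$.

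The gap is in your use of the Revuz correspondence. Identifying $F(x)=E^x[A_t]$ requires $\mu|_D$ (or its truncations) to be a \emph{smooth} measure, and smoothness requires that the measure charge no semipolar sets. The proposition is stated for an arbitrary Borel measure, and the paper does not assume Hunt's hypothesis~\ref{item: Hunt hypo}; Proposition~\ref{prop: bounded potential and polarity}, which you invoke, only rules out charging of \emph{polar} sets. A finite measure with bounded $\alpha$-potential can still charge a semipolar non-polar set, in which case no associated PCAF exists and the identity $F(x)=E^x[A_t]$ has no meaning. Truncation does not repair this: the part of $\mu|_D$ carried by a semipolar set contributes to both sides of the inequality and would itself need the maximum principle. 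Moreover, the obstacle you cite as your reason for avoiding the density-level argument --- that the Dynkin--Hunt identity for $p(s,x,\cdot)$ holds only $m$-a.e. --- is precisely what the DAC condition repairs: the paper proves Lemma~\ref{lem: Dynkin--Hunt formula}, the pointwise identity $p(s,x,y)=E^x\bigl[p(s-\sigma_D,X_{\sigma_D},y)\,\mathbf{1}_{\{\sigma_D<s\}}\bigr]$ valid for \emph{every} $y\in D$, by appealing to Getoor--Sharpe and the fact that every point of the open set $D$ is regular for $\sigma_D$. Integrating that identity against $\mu(dy)\,ds$ and using $X_{\sigma_D}\in\closure(D)$ yields the maximum principle for arbitrary Borel $\mu$ with no smoothness input; this is the route you rejected, and it is the one that works. (A smaller point: in the ``consequence'' you cannot instantiate the two-sided estimate at $D=K$ compact, since the upper bound is proved only for open $D$; one must sandwich $K\subseteq D\subseteq\closure(D)\subseteq K'$ with $D$ open and $K'$ compact.)
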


To prove the above result, we use the following lemma,
which is an immediate consequence of the Dynkin--Hunt formula in its heat-kernel version.

\begin{lem} \label{lem: Dynkin--Hunt formula}
  Let $D$ be an open subset of $S$.
  It then holds that, for each $x \in S$, $y \in D$, and $t > 0$, 
  \begin{equation}
    p(t, x, y) = E^x\!\left[ p(t - \sigma_D, X_{\sigma_D}, y) \cdot 1_{\{\sigma_D < t\}} \right].
  \end{equation}
\end{lem}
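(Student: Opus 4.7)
The plan is to test the claimed identity against non-negative Borel functions supported in $D$ and then invoke the strong Markov property of $X$ at the bounded stopping time $\sigma_D \wedge t$.

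Fix $t > 0$ and a bounded non-negative Borel function $g$ with $\supp(g) \subseteq D$. Let $(P_s)_{s \geq 0}$ denote the semigroup of $X$. The strong Markov property applied at the bounded stopping time $\tau := \sigma_D \wedge t$ gives
\[
  E^x[g(X_t)] = E^x\!\bigl[ P_{t - \tau}\, g(X_\tau) \bigr].
\]
I would split this expectation according to whether $\sigma_D < t$ or $\sigma_D \geq t$. On $\{\sigma_D \geq t\}$ one has $\tau = t$ and so $P_{t-\tau}\,g(X_\tau) = g(X_t)$; but the inclusion $\{X_t \in D\} \subseteq \{\sigma_D \leq t\}$, which is immediate from $\sigma_D = \inf\{s > 0 : X_s \in D\}$ together with $t > 0$, forces $g(X_t) = 0$ outside the residual event $\{\sigma_D = t\}$. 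On $\{\sigma_D < t\}$, the heat-kernel representation of the semigroup yields
\[
  P_{t - \sigma_D}\, g(X_{\sigma_D}) = \int_D g(y)\, p(t - \sigma_D, X_{\sigma_D}, y)\, m(dy).
\]

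I would next apply Fubini--Tonelli to interchange the outer $E^x$ with the integration in $y$, and compare with the identity $E^x[g(X_t)] = \int_D g(y)\,p(t,x,y)\,m(dy)$ (using $\supp(g) \subseteq D$). This produces
\[
  \int_D g(y)\, p(t, x, y)\, m(dy) = \int_D g(y)\, E^x\!\bigl[ p(t - \sigma_D, X_{\sigma_D}, y)\, \mathbf{1}_{\{\sigma_D < t\}} \bigr]\, m(dy),
\]
up to the contribution from $\{\sigma_D = t\}$. Since this holds for every non-negative Borel $g$ supported in $D$, the integrands coincide for $m$-almost every $y \in D$, which yields the claim in the standard sense that heat kernel densities are specified up to $m$-null sets.

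The main obstacle is the event $\{\sigma_D = t\}$, on which $t - \sigma_D = 0$ renders $p(t-\sigma_D, X_{\sigma_D}, y)$ ill-defined as a pointwise quantity; this is precisely why the indicator uses the strict inequality $\sigma_D < t$. For all but at most countably many $t$ (namely those outside the set of atoms of the law of $\sigma_D$ under $P^x$), this event has $P^x$-probability zero and the formula is literally exact. For the exceptional $t$, the resulting discrepancy defines a sub-probability measure on $D$ dominated by $P^x(X_t \in \cdot)$, hence absolutely continuous with respect to $m$ with density bounded above by $p(t, x, \cdot)$, and is absorbed into the $m$-a.e.\ interpretation. Apart from this technicality, the proof reduces to a direct combination of the strong Markov property with the absolute continuity part of Assumption~\ref{assum: dual hypothesis}.
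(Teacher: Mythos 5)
Your route is genuinely different from the paper's: the paper simply invokes the heat-kernel form of the Dynkin--Hunt formula from Getoor--Sharpe (their Corollary~3.14, with $\sigma_D$ as terminal time), the only input being that every point of the open set $D$ is regular for $\sigma_D$. You instead attempt a first-principles derivation via the strong Markov property and testing against functions supported in $D$. The strong Markov computation itself is fine, but it delivers strictly less than the lemma asserts, and you concede exactly the point where it falls short: you obtain $p(t,x,y) = E^x[p(t-\sigma_D, X_{\sigma_D}, y)\mathbf{1}_{\{\sigma_D<t\}}]$ only for $m$-almost every $y \in D$, whereas the statement is pointwise in $y$. This is not a cosmetic difference here. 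The lemma is used in the proof of Proposition~\ref{prop: Potential and hk behavior} to write $\int_0^t\int_D p(s,x,y)\,\mu(dy)\,ds = E^x[\int_0^t\int_D p(s-\sigma_D,X_{\sigma_D},y)\mathbf{1}_{\{\sigma_D<s\}}\,\mu(dy)\,ds]$ for an arbitrary Borel measure $\mu$, which in the intended applications (local times, discrete spaces, collision measures) is typically singular with respect to $m$; an $m$-a.e.\ identity gives nothing when integrated against such $\mu$. Upgrading from $m$-a.e.\ to everywhere is precisely the nontrivial content of the cited Getoor--Sharpe result: one needs that both sides are appropriately (co-)excessive in $y$, so that $m$-a.e.\ equality propagates to all points regular for $\sigma_D$ --- which is all of $D$ since $D$ is open. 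Your argument supplies none of this duality/fine-continuity input.

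A secondary imprecision: your treatment of the boundary event $\{\sigma_D = t\}$ is not correct as stated. The leftover term is $E^x[g(X_t)\mathbf{1}_{\{\sigma_D=t\}}]$, which is an additive nonnegative contribution with a density with respect to $m$; it is not ``absorbed into the $m$-a.e.\ interpretation'' and, if nonzero, would break even the $m$-a.e.\ identity for the exceptional $t$. The correct way to kill it is to observe that the relevant event is $\{\sigma_D = t,\ X_t \in D\}$, and this is $P^x$-null for \emph{every} fixed $t>0$: a standard process has no fixed times of discontinuity, so $X_{t-}=X_t$ a.s., and $X_t \in D$ open would force $X_s \in D$ for $s$ in a left neighbourhood of $t$, contradicting $\sigma_D = t$. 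With that fix the $m$-a.e.\ version holds for all $t>0$, but the gap described in the first paragraph --- passing from $m$-a.e.\ in $y$ to every $y\in D$ --- remains and is the essential missing step.
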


\begin{proof}
  We apply \cite[Corollary~3.14]{Getoor_Sharpe_82_Excursions}, 
  taking the hitting time $\sigma_D$ as the terminal time in that corollary.
  Since $D$ is open, every point of $D$ is regular for $\sigma_D$, 
  i.e., $P^x(\sigma_D = 0) = 1$ for all $x \in D$.
  Therefore, by \cite[Corollary~3.14 and Remark~3.15]{Getoor_Sharpe_82_Excursions}, 
  we obtain the desired result.
\end{proof}

\begin{proof}[{Proof of Proposition~\ref{prop: Potential and hk behavior}}]
  Following the argument in \cite[Equation~(2.10)]{Mori_21_Kato_Sobolev},
  one can verify that 
  \begin{equation} \label{prop pr: 1. Potential and hk behavior}
    \|\Potential^{1/t}(\mu|_D)\|_\infty 
    \leq \frac{1}{1 - e^{-1}} \sup_{x \in S} \int_0^t \int_D p(s, x, y)\, \mu(dy)\, ds.
  \end{equation}
  It follows from Lemma~\ref{lem: Dynkin--Hunt formula} that 
  \begin{align}
    \int_0^t \int_D p(s, x,y)\, \mu(dy)\, ds 
    &= 
    E_x\!\left[ \int_0^t \int_D p(s - \sigma_D, X_{\sigma_D}, y) \cdot 1_{\{\sigma_D < s\}}\, \mu(dy)\, ds \right]\\
    &=
    E_x\!\left[ \int_0^{t - \sigma_D} \int_D p(s, X_{\sigma_D}, y)\, \mu(dy)\, ds \right]\\
    &\leq 
    \sup_{z \in \closure(D)}
    \int_0^t \int_D p(s, z, y)\, \mu(dy)\, ds.
  \end{align}
  This, combined with \eqref{prop pr: 1. Potential and hk behavior}, 
  yields that the second inequality in \eqref{prop eq: Potential and hk behavior} holds.
  The other inequality follows from the calculation
  \begin{align} \label{prop pr eq: 1. Potential and hk behavior}
    \int_0^t \int_D p(s, x, y)\, \mu(dy)\, ds
    & 
    \leq    
    e \int_0^t \int_D e^{-s/t} p(s, x, y)\, \mu(dy)\, ds
    \leq     
    e \|\Potential^{1/t} (\mu|_D)\|_\infty.
  \end{align}
\end{proof}

In the discrete space setting,
any Radon measure belongs to the local Kato class, as shown below.

\begin{prop} \label{prop: Kato meas for discrete space}
  Suppose that $S$ is finite or countably infinite, endowed with the discrete topology.
  Then any Radon measure $\mu$ on $S$ is in the local Kato class of $X$.
\end{prop}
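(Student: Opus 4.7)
The plan is to check directly the two defining conditions of the local Kato class (Definition~\ref{dfn: Kato class}): that $\mu$ charges no semipolar set, and that $\|\Potential^\alpha(\mu|_K)\|_\infty \to 0$ as $\alpha \to \infty$ for every compact subset $K \subseteq S$, which in the discrete topology means every finite $K$. Both conditions become nearly trivial once one exploits the right-continuity of the sample paths of the standard process $X$ together with the structure of the discrete topology.

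For the semipolar condition, the key observation is that each singleton $\{x\}$ is a neighborhood of $x$, so right-continuity forces $X_t = x$ for all sufficiently small $t \geq 0$ under $P^x$. From this I would derive that $P^x(\sigma_A = 0) = 1$ for every Borel set $A \ni x$, so no non-empty subset of $S$ can be thin. Hence the only thin, and therefore the only semipolar, subset of $S$ is $\emptyset$, and the semipolar condition in Definition~\ref{dfn: Kato class} is automatic for any Borel measure.

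For the potential condition, the plan is to use the elementary heat-kernel bound $p(s, x, y) \leq m(\{y\})^{-1}$, which follows from $P^x(X_s = y) \leq 1$ together with $P^x(X_s = y) = p(s, x, y)\, m(\{y\})$ coming from the DAC condition. Integrating in $s$ against $e^{-\alpha s}$ yields $\ResolDensity^\alpha(x,y) \leq (\alpha\, m(\{y\}))^{-1}$, and summing this over the finite set $K$ produces a bound of order $1/\alpha$ that decays uniformly in $x$. The only subtle step, which I expect to be the main obstacle, is guaranteeing that $m(\{y\}) > 0$ for every $y \in S$; this can be resolved by a short Fubini argument showing that the right-continuity of $X$ forces $\int_0^1 P^y(X_t = y)\,dt > 0$, and then invoking the DAC identity at $y$ rules out $m(\{y\}) = 0$.
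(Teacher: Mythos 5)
Your proposal is correct and follows essentially the same route as the paper's proof: right-continuity in the discrete topology shows every non-empty set fails to be thin (so only $\emptyset$ is semipolar), and the bound $p(t,x,y)\le m(\{y\})^{-1}$ from the DAC condition gives $\|\Potential^\alpha(\mu|_K)\|_\infty \le \alpha^{-1}\sum_{y\in K}\mu(\{y\})/m(\{y\}) \to 0$ for finite $K$. The only addition is your explicit justification that $m(\{y\})>0$, which the paper simply attributes to $m$ being a reference measure; your occupation-time argument is a valid way to fill in that detail.
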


\begin{proof}
  By the right-continuity of $X$, 
  it follows that $P^x(\sigma_{\{x\}} = 0) = 1$ for any $x \in S$.
  Hence the empty set is the only semipolar set,
  and thus $\mu$ charges no semipolar set.

  Observe that, for each $x,y \in S$ and $t > 0$, 
  \begin{equation}
    p(t, x, y) 
    = 
    \frac{P^x(X_t = y)}{m(\{y\})} 
    \leq     
    \frac{1}{m(\{y\})}.
  \end{equation}
  (Note that $m(\{y\}) > 0$ since $m$ is a reference measure.)
  This yields
  \begin{equation}
    \sup_{x \in S} \ResolDensity^\alpha(x,y) \leq \frac{1}{\alpha\, m(\{y\})},
    \quad \forall y \in S.
  \end{equation}
  Therefore, for any compact subset $K$ of $S$,
  \begin{equation}
    \|\Potential^\alpha(\mu|_K)\|_\infty    
    \leq     
    \frac{1}{\alpha} \sum_{y \in K} \frac{\mu(\{y\})}{m(\{y\})}.
  \end{equation}
  Because $K$ is finite, the right-hand side converges to $0$ as $\alpha \to \infty$,
  which establishes the desired result.
\end{proof}

%%%%%%%%%%%%%%%%%%%%%%%%%%%%%%%%%%%%%%%%%%%%%%%%%%%%%%%%%%%%%%%%%%%%%%%%%%%%%%%%%%%%%%%%%%%%%%%%%%%%%%%%%%%%%%%%%%%%%%%%%%%%%%%%%%
% Smooth measures and PCAFs
%%%%%%%%%%%%%%%%%%%%%%%%%%%%%%%%%%%%%%%%%%%%%%%%%%%%%%%%%%%%%%%%%%%%%%%%%%%%%%%%%%%%%%%%%%%%%%%%%%%%%%%%%%%%%%%%%%%%%%%%%%%%%%%%%%
\subsection{PCAFs, STOMs, and Revuz correspondence} \label{sec: PCAFs, STOMs and Revuz correspondence}

In this subsection, we introduce PCAFs and associated STOMs,
and then recall the Revuz correspondence \cite{Revuz_70_Mesures}.
Some basic results on STOMs are also presented.
For a comprehensive theory of PCAFs, see \cite{Blumenthal_Getoor_68_Markov}.
For a potential-theoretic study of them in relation to symmetric Dirichlet forms, 
see \cite{Chen_Fukushima_12_Symmetric,Fukushima_Oshima_Takeda_11_Dirichlet}.
We proceed within the same framework as in the previous subsection.

Since several versions of the definitions of PCAFs exist,
we clarify the definitions adopted in this paper, below.

\begin{dfn} [{PCAF, cf.\ \cite[p.\ 222 and 235]{Fukushima_Oshima_Takeda_11_Dirichlet}}] \label{dfn: PCAF}
  Let $A = (A_{t})_{t \geq 0}$ be an $\filt_{*}$-adapted non-negative stochastic process. 
  It is called a \emph{positive continuous additive functional (PCAF)} of $X$ if 
  there exists a set $\Lambda \in \filt_{\infty}$, called a \emph{defining set} of $A$, 
  satisfying the following.
  \begin{enumerate} [label = \textup{(\roman*)}]
    \item \label{dfn item: 1. PCAF}
      It holds that $P^x(\Lambda) = 1$ for all $x \in S_\Delta$.
    \item \label{dfn item: 2. PCAF}
      For every $t \in [0, \infty]$, $\theta_t(\Lambda) \subseteq \Lambda$.
    \item \label{dfn item: 3. PCAF} 
      For every $\omega \in \Lambda$,
      $A_{0}(\omega) = 0$,
      the function $[0, \infty) \ni t \mapsto A_{t}(\omega) \in [0, \infty]$ is continuous,
      $A_{t}(\omega) < \infty$ for all $t < \zeta(\omega)$, 
      $A_{t}(\omega) = A_{\zeta(\omega)}(\omega)$ for all $t \geq \zeta(\omega)$,
      and $A_{t+s}(\omega) = A_{t}(\omega) + A_{s}(\theta_{t}\omega)$
      for all $s, t \in [0, \infty)$.
  \end{enumerate} 
  We say that two PCAFs $A = (A_{t})_{t \geq 0}$ and $B = (B_{t})_{t \geq 0}$ are \emph{equivalent}
  if there exists a common defining set $\Lambda \in \filt_{\infty}$ 
  such that $P^x(\Lambda)=1$ for all $x \in S_\Delta$  
  and $A_t(\omega) = B_t(\omega)$ for all $t \geq 0$ and $\omega \in \Lambda$.
\end{dfn}

\begin{rem}
  By \cite[Remark~2.5]{Kajino_Noda_pre_Generalized},  
  for any PCAF $A$ with a defining set $\Lambda$,  
  there exists an equivalent PCAF $A'$ with defining set $\Lambda'$ such that  
  $A_t(\omega) = 0$ for all $t \geq 0$ and $\omega \notin \Lambda'$,  
  and $\{\zeta = 0\} \subseteq \Lambda'$.  
  Thus, in this paper, whenever we consider a PCAF $A$ with a defining set $\Lambda$,  
  we shall always assume that $A$ and $\Lambda$ satisfy this property.
\end{rem}

To define STOMs, we introduce a measurable space consisting of Borel measures.
Let $\BorMeas(S \times \RNp)$ be the set of Borel measures on $S \times \RNp$.
We equip $\BorMeas(S \times \RNp)$ with the $\sigma$-algebra generated by the family of evaluation maps 
$\pi_E \colon \BorMeas(S \times \RNp) \to \RNp$, $E \in \Borel(S \times \RNp)$, defined by $\pi_E(\Pi) \coloneqq \Pi(E)$. 

\begin{dfn} [{STOM}]
  For a PCAF $A = (A_t)_{t \geq 0}$, we define a measurable map $\Pi \colon (\Omega, \filt_*) \to \BorMeas(S \times \RNp)$ by 
  \begin{equation}
    \Pi(\omega)(\cdot) \coloneqq \int_0^\infty \mathbf{1}_{(X_t(\omega), t)}(\cdot)\, dA_t(\omega).
  \end{equation}
  We refer to $\Pi$ as the \emph{space-time occupation measure} associated with $A$.
\end{dfn}

\begin{rem}
  When $X$ is conservative, i.e., $P^x(\zeta = \infty) = 1$ for all $x \in S$,
  we may assume that $\Pi$ is a measurable map from $\Omega$ to $\STOMMeas(S \times \RNp)$.
\end{rem}

We now recall a fundamental result, known as the \emph{Revuz correspondence},
which provides a one-to-one correspondence between PCAFs and smooth measures.

\begin{thm}[{\cite[Proposition~V.1 and Theorem~VI.1]{Revuz_70_Mesures}}] 
  \label{thm: Revuz correspondence}
  There exists a one-to-one correspondence between the set of smooth measures 
  and the set of equivalence classes of PCAFs. 
  A smooth measure $\mu$ and a PCAF $A = (A_t)_{t \ge 0}$ correspond to each other 
  if and only if for every $\alpha > 0$ and every non-negative Borel measurable function $f$ on $S$, 
  \begin{equation} \label{eq: Revuz correspondence}
    E^x \!\left[
      \int_0^\infty e^{-\alpha t} f(X_t)\, dA_t
    \right]
    =
    \int_S \ResolDensity^\alpha(x,y) f(y)\, \mu(dy),
    \quad 
    \forall x \in S.
  \end{equation}
\end{thm}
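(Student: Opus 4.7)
The plan is to follow the classical strategy and split the proof into uniqueness and existence parts. The key observation is that the $\alpha$-potential operator
$$U_A^\alpha f(x) \coloneqq E^x\!\left[\int_0^\infty e^{-\alpha t} f(X_t)\, dA_t\right]$$
completely determines the equivalence class of a PCAF $A$, and \eqref{eq: Revuz correspondence} rephrases as the identity $U_A^\alpha f = \Potential^\alpha(f \mu)$. The problem therefore reduces to showing that this assignment is a bijection between smooth measures and admissible $\alpha$-potential operators.

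For the PCAF-to-measure direction, I would associate to each PCAF $A$ its \emph{Revuz measure} $\mu_A$, defined, for non-negative Borel $f$, by the monotone limit
$$\int_S f\, d\mu_A \coloneqq \uparrow\!\lim_{t \downarrow 0} \frac{1}{t} \int_S E^x\!\left[\int_0^t f(X_s)\, dA_s\right] m(dx),$$
and verify that this defines a $\sigma$-finite Borel measure. The smoothness of $\mu_A$ comes in two parts: the absence of charge on semipolar sets follows from the pathwise continuity of $A$, since $A$ is carried by a finely closed set intersecting no thin set on positive mass; a nest is supplied by the stopping times $T_n \coloneqq \inf\{t \geq 0 \mid A_t \geq n\}$, which restrict $\mu_A$ to a measure with bounded potential. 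The identity \eqref{eq: Revuz correspondence} for $\mu = \mu_A$ is then obtained by first establishing it in a Laplace-transform sense against the resolvent and subsequently extending it via a monotone class argument. Uniqueness is automatic at this stage: two PCAFs sharing the same $\alpha$-potential operator must be equivalent by the standard uniqueness theorem for continuous additive functionals with prescribed potential.

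For the reverse direction, I would begin with a measure $\mu$ of finite potential, i.e., satisfying $\|\Potential^\alpha \mu\|_\infty < \infty$ for some $\alpha > 0$. Then $\Potential^\alpha \mu$ is a bounded $\alpha$-excessive function of $X$, and the Hunt-type representation theorem, applicable thanks to the DAC condition (Assumption~\ref{assum: dual hypothesis}), produces a PCAF $A^\mu$ with $U_{A^\mu}^\alpha \mathbf{1} = \Potential^\alpha \mu$; the identity \eqref{eq: Revuz correspondence} for general Borel $f$ then follows by a monotone class argument starting from test functions of the form $f = \ResolDensity^\beta g$ and using the resolvent equation. For a general smooth measure $\mu$ with nest $(E_n)_{n \geq 1}$, each restriction $\mu|_{E_n}$ has finite potential, yielding PCAFs $A^{(n)}$ which the uniqueness just obtained forces to be consistent and increasing in $n$ (by applying it to $A^{(n+1)} - A^{(n)}$); setting $A_t \coloneqq \sup_n A^{(n)}_t$ produces the sought PCAF, and the nest condition $\lim_n \breve{\sigma}_{E_n} \geq \zeta$ guarantees finiteness on $[0, \zeta)$ and hence the continuity required in Definition~\ref{dfn: PCAF}.

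The hard part will be the construction of the representing PCAF in the bounded-potential case. One must upgrade a purely analytic $\alpha$-excessive function to a genuine \emph{continuous} additive functional, rather than merely a natural (right-continuous) one. This is precisely where both the absolute continuity of the transition semigroup and the assumption that $\mu$ does not charge semipolar sets become indispensable: together they rule out the jump contributions that would otherwise arise from thin components of the fine support of $\mu$. Once this regularity step is in place, the passage to a general smooth measure along the nest and the identification of the Revuz measure of the resulting PCAF are largely formal.
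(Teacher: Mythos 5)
The paper does not prove this theorem: it is quoted verbatim from Revuz (Proposition~V.1 and Theorem~VI.1 of the cited reference), and your sketch reproduces essentially the classical argument underlying that citation --- Revuz measure via the increasing limit $\tfrac1t\int E^x[\int_0^t f(X_s)\,dA_s]\,m(dx)$ (monotone precisely because $m$ is $0$-excessive), representation of bounded $\alpha$-potentials by continuous additive functionals under the DAC condition, and localization along a nest for general smooth measures. The one imprecision worth flagging is your nest construction: the stopping times $T_n=\inf\{t\mid A_t\ge n\}$ are not Borel subsets of $S$ as Definition~\ref{dfn: smooth measure} requires; the standard fix is to take $E_n$ to be sublevel sets of a potential function such as $\{x\mid E^x[\int_0^\infty e^{-t}\,dA_t]\le n\}$, after which your argument goes through.
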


Whenever we say that a PCAF $A$ is associated with a smooth measure $\mu$,  
we mean that they are associated via the Revuz correspondence.  
Through this correspondence, the corresponding notions for smooth measures also apply to PCAFs, and vice versa.  
For example, if $\mu$ belongs to the (local) Kato class, then we also say that $A$ belongs to the (local) Kato class,  
and the STOM $\Pi$ associated with $A$ is then referred to as the STOM associated with $\mu$.

\begin{rem}
  Revuz's definition of PCAFs \cite[Section~I.3]{Revuz_70_Mesures} is somewhat weak, 
  and in particular does not guarantee the existence of a defining set. 
  However, it is not difficult to modify his PCAF so that it satisfies the conditions of our definition. 
  Indeed, by \cite[Theorem~2.1 in Chapter~V]{Blumenthal_Getoor_68_Markov}, his PCAF can be assumed to be a so-called perfect PCAF. 
  The definition of perfect PCAFs consists of conditions~\ref{dfn item: 1. PCAF} and~\ref{dfn item: 3. PCAF} of Definition~\ref{dfn: PCAF} 
  (see \cite[Definition~1.3 in Chapter~IV]{Blumenthal_Getoor_68_Markov} for the precise formulation). 
  Moreover, one can easily verify that the defining set of a perfect PCAF, 
  as constructed in the proof of \cite[Theorem~3.16 in Chapter~IV]{Blumenthal_Getoor_68_Markov}, 
  also satisfies condition~\ref{dfn item: 2. PCAF} of Definition~\ref{dfn: PCAF}. 
  Thus, a perfect PCAF may be regarded as a PCAF in our sense.
\end{rem}

\begin{exm} \label{exm: example of PCAF}
  For any bounded Borel measurable function $f \colon S \to \RNp$,
  the stochastic process $A_t = \int_0^t f(X_s)\, ds$ is a PCAF and its corresponding smooth measure is $f(x)\, m(dx)$
  (cf.\ \cite[Theorem A.3.5(iii)]{Chen_Fukushima_12_Symmetric}).
  When the Dirac measure $\delta_x$ at $x$ is smooth, then the corresponding PCAF is a local time of $X$ at $x$
  in the sense of \cite[Definition~3.12 in Chapter~V]{Blumenthal_Getoor_68_Markov}.
\end{exm}

Below, we present some results (or applications) of \cite{Kajino_Noda_pre_Generalized,Noda_pre_Continuity}.
We note that, although the scope of these is restricted to Hunt processes associated with symmetric regular Dirichlet forms,
the arguments still apply in the present setting.

The following estimate of the difference between PCAFs in terms of their potentials is crucial in the proof of our main results.

\begin{lem} [{\cite[Theorem 1.6]{Noda_pre_Continuity}}]\label{lem: estimate of difference of PCAFs}
  Let $\mu$ and $\nu$ be finite smooth measures 
  such that $\|\Potential^1\mu\|_\infty < \infty$ and $\|\Potential^1\nu\|_\infty < \infty$.
  Let $A$ and $B$ be the associated PCAFs, respectively.
  It then holds that, for any $\alpha, T>0$,
  \begin{align}
    \sup_{x \in S}
    E_{x} 
    \left[\sup_{0 \leq t \leq T} |A_{t} - B_{t}|^{2} \right]
    & \leq 
    18 (\supnorm{\Potential^\alpha \mu} + \|\Potential^\alpha \nu\|_{\infty}) 
    \|\Potential^\alpha  \mu - \Potential^\alpha \nu\|_{\infty} \\
    & \quad
    + 
    4e^{2T}(1- e^{-\alpha T})( \supnorm{\Potential^{1}\mu}^{2} + \|\Potential^{1}\nu\|_{\infty}^{2}).
  \end{align}
\end{lem}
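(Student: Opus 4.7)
The strategy is to encode the discounted difference $A_t^{(\alpha)} - B_t^{(\alpha)}$, where $A_t^{(\alpha)} := \int_0^t e^{-\alpha s}\,dA_s$, as a martingale coming from the Revuz correspondence, apply Doob's $L^2$ maximal inequality, and then recover the undiscounted difference through a decomposition whose error term carries only a single factor of $(1-e^{-\alpha T})$.

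For the martingale part, the strong Markov property combined with the Revuz identity $\Potential^\alpha\mu(x) = E^x\!\left[\int_0^\infty e^{-\alpha s}\,dA_s\right]$ implies that $N_t^A := A_t^{(\alpha)} + e^{-\alpha t}\,\Potential^\alpha\mu(X_t)$ is a uniformly integrable $P^x$-martingale with $N_\infty^A = A_\infty^{(\alpha)}$, and analogously for $B$; consequently $M_t := (N_t^A - N_t^B) - (\Potential^\alpha\mu - \Potential^\alpha\nu)(x)$ is a martingale vanishing at $0$. To estimate its second moment I would expand $(A_\infty^{(\alpha)} - B_\infty^{(\alpha)})^2 = \bigl(\int_0^\infty e^{-\alpha s}\,d(A-B)_s\bigr)^2$ as a double integral, symmetrise, apply the PCAF additivity $A_{t+s} - A_s = A_t\circ\theta_s$ together with the Markov property to the inner integral, and then invoke the Revuz correspondence to obtain
\[
  E^x\!\left[(A_\infty^{(\alpha)} - B_\infty^{(\alpha)})^2\right]
  = 2\int \ResolDensity^{2\alpha}(x,y)\,(\Potential^\alpha\mu - \Potential^\alpha\nu)(y)\,d(\mu-\nu)(y).
\]
Using $\ResolDensity^{2\alpha}\le \ResolDensity^\alpha$ together with the total-variation bound $|\mu - \nu|\le \mu + \nu$ gives an estimate of order $\|\Potential^\alpha\mu - \Potential^\alpha\nu\|_\infty \cdot (\|\Potential^\alpha\mu\|_\infty + \|\Potential^\alpha\nu\|_\infty)$. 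The variance identity $E^x[M_\infty^2] \le E^x[(A_\infty^{(\alpha)} - B_\infty^{(\alpha)})^2]$, Doob's $L^2$ inequality for $M_t$, and the pointwise bound $|A_t^{(\alpha)} - B_t^{(\alpha)}| \le |M_t| + 2\|\Potential^\alpha\mu - \Potential^\alpha\nu\|_\infty$ (read off from the martingale representation) then control the supremum of the discounted difference and produce the first term of the claim.

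To pass from the discounted to the undiscounted difference, I would decompose
\[
  A_t - B_t = (A_t^{(\alpha)} - B_t^{(\alpha)}) + E_t, \qquad E_t := \int_0^t (1-e^{-\alpha s})\,d(A-B)_s,
\]
and note that the pointwise bound $|E_t|\le (A_t - A_t^{(\alpha)}) + (B_t - B_t^{(\alpha)})$ has a non-decreasing right-hand side, so $\sup_{t\le T}|E_t|$ is controlled by its value at $t=T$. The key observation is that this supremum admits two simultaneous upper bounds, namely $(1-e^{-\alpha T})(A_T + B_T)$ (from $1-e^{-\alpha s}\le 1-e^{-\alpha T}$ for $s\le T$) and simply $A_T + B_T$; multiplying them gives the crucial estimate $\sup_{t\le T} E_t^2 \le (1-e^{-\alpha T})(A_T + B_T)^2$, preserving only a single factor of $(1-e^{-\alpha T})$. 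The moment $E^x[A_T^2]$ is then controlled via $A_T\le e^T A_\infty^{(1)}$ combined with the above identity applied with $\mu = \nu$ and $\alpha = 1$, which yields $E^x[A_T^2]\le 2 e^{2T}\|\Potential^1\mu\|_\infty^2$ (and analogously for $B$). Combining through $(A_t - B_t)^2\le 2(A_t^{(\alpha)} - B_t^{(\alpha)})^2 + 2E_t^2$ produces the claimed inequality. The main obstacle is the second-moment identity for $A_\infty^{(\alpha)} - B_\infty^{(\alpha)}$: the signed nature of $d(A-B)$ forces careful total-variation bookkeeping, and the double-bound trick for $E_t$ is indispensable to obtain a remainder that is linear, rather than quadratic, in $(1-e^{-\alpha T})$.
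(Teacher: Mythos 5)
First, a point of reference: the paper does not prove this lemma — it is imported verbatim from \cite[Theorem~1.6]{Noda_pre_Continuity} — so there is no in-paper argument to compare yours against. Judged on its own terms, your strategy (discounted functionals $A^{(\alpha)}$, the closed martingale $N^A_t = A^{(\alpha)}_t + e^{-\alpha t}\Potential^\alpha\mu(X_t)$ coming from the Revuz identity, the Kac-type second-moment computation giving $E^x[(A^{(\alpha)}_\infty - B^{(\alpha)}_\infty)^2] = 2\int \ResolDensity^{2\alpha}(x,y)(\Potential^\alpha\mu-\Potential^\alpha\nu)(y)\,(\mu-\nu)(dy)$, Doob's $L^2$ inequality, and a separate error term for the discounting) is the natural one, and each individual step is sound. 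The symmetrisation/Markov-property computation and the bounds $\ResolDensity^{2\alpha}\le\ResolDensity^\alpha$, $|\mu-\nu|\le\mu+\nu$ are all correct, as is $E^x[A_T^2]\le 2e^{2T}\|\Potential^1\mu\|_\infty^2$.

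There are, however, two concrete issues. The more serious one is that the bookkeeping as you describe it does not reach the stated constants $18$ and $4$. Writing $D=\|\Potential^\alpha\mu-\Potential^\alpha\nu\|_\infty$ and $K=\|\Potential^\alpha\mu\|_\infty+\|\Potential^\alpha\nu\|_\infty$, your chain gives $E^x[\sup_t M_t^2]\le 4\cdot 2DK$, then $E^x[\sup_t|A^{(\alpha)}_t-B^{(\alpha)}_t|^2]\le 2\cdot 8DK+8D^2\le 24DK$ via $(|M_t|+2D)^2\le 2M_t^2+8D^2$, and the final split $(a+b)^2\le 2a^2+2b^2$ doubles this to $48DK$; the same final doubling turns your $4e^{2T}(1-e^{-\alpha T})(\cdots)$ bound on $E^x[\sup_t E_t^2]$ into $8e^{2T}(1-e^{-\alpha T})(\cdots)$. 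So what you actually establish is the inequality with constants roughly $48$ and $8$, not $18$ and $4$. This is harmless for every use of the lemma in this paper (the downstream statements only invoke unspecified universal constants $C_1,C_2$), but it does mean the lemma as literally stated is not proved without a tighter arrangement of the estimates. The second issue is the ``double-bound trick'' for $E_t$: it rests on a misconception. The direct bound $\sup_{t\le T}|E_t|\le(1-e^{-\alpha T})(A_T+B_T)$ already yields $\sup_{t\le T}E_t^2\le(1-e^{-\alpha T})^2(A_T+B_T)^2$, and since $0\le 1-e^{-\alpha T}\le 1$ the quadratic dependence is \emph{stronger} than the linear one you are aiming for; multiplying the two bounds recovers exactly $(1-e^{-\alpha T})(A_T+B_T)^2$, i.e.\ the trick is equivalent to discarding one factor and is neither indispensable nor an improvement. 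Finally, a cosmetic remark: for $\alpha<1$ the quantity $\|\Potential^\alpha\mu\|_\infty$ need not be finite under the stated hypotheses, in which case the inequality is vacuous; your argument should note that it may assume finiteness without loss.
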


The following is an application of a generalized Kac's moment formula for PCAFs established in \cite{Kajino_Noda_pre_Generalized}.

\begin{prop} \label{prop: moment formula for STOM}
  Let $\mu$ be a smooth measure and $\Pi$ be the associated STOM.
  Fix an arbitrary Borel measurable function $f \colon S \times \RNp \to \RNp$ and $x \in S_\Delta$.
  For notational convenience, write 
  \begin{gather} \label{prop eq: moment formula for STOM. 1}
    \Pi(f) \coloneqq \int_0^\infty \int_S f(y, t)\, \Pi(dy\, dt) = \int_0^\infty f(X_t, t)\, dA_t,
  \end{gather}
  and $I_{\scriptscriptstyle \nearrow}^k = \{(t_1, \dots, t_k) \mid 0 < t_1 < \cdots < t_k < \infty\}$ for each $k \in \NN$.
  Then it holds that, for any $k \in \NN$ and $x_0 \in S$,
  \begin{equation}
      E^{x_0}\!\left[ \Pi(f)^k \right]
      = 
      k!
      \int_{I_{\nearrow}^k} \int_{S^k} \prod_{i=1}^k \bigl( p(t_i - t_{i-1}, x_{i-1}, x_i)\, f(x_i, t_i) \bigr)\, 
      \mu^{\otimes k}(dx)\, dt_1 \cdots dt_k,
  \end{equation}
  where we set $t_0 \coloneqq 0$.
  In particular
  \begin{equation}
    E^{x_0}\!\left[ e^{\Pi(f)} \right]
    = 
    1 
    + 
    \sum_{k = 1}^\infty \int_{I_{\nearrow}^k} \int_{S^k} \prod_{i=1}^k \bigl( p(t_i - t_{i-1}, x_{i-1}, x_i)\, f(x_i, t_i) \bigr)\, 
    \mu^{\otimes k}(dx)\, dt_1 \cdots dt_k.
  \end{equation}
\end{prop}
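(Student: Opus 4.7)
The plan is to establish the $k$-th moment identity by three successive maneuvers: symmetrize $\Pi(f)^k$ to pass to the Weyl chamber $I_\nearrow^k$, extend the Revuz correspondence to time-dependent integrands, and then iterate the Markov property. By monotone convergence I may always reduce to $f$ bounded and compactly supported in time, so all manipulations below are justified in the non-negative setting. Since the PCAF $A$ is continuous, the Lebesgue--Stieltjes measure $dA$ is diffuse, hence $dA^{\otimes k}$ assigns zero mass to the diagonal $\bigcup_{i \neq j}\{t_i = t_j\}$. Fubini's theorem therefore yields
\begin{equation}
  \Pi(f)^k
  = \int_{(\RNp)^k} \prod_{i=1}^k f(X_{t_i}, t_i)\, dA_{t_1} \cdots dA_{t_k}
  = k!\int_{I_\nearrow^k} \prod_{i=1}^k f(X_{t_i}, t_i)\, dA_{t_1} \cdots dA_{t_k}.
\end{equation}

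Next I would establish the \emph{time-dependent} Revuz-type identity
\begin{equation}
  E^x\!\left[\int_0^\infty g(X_t, t)\, dA_t\right] = \int_0^\infty \int_S g(y, t)\, p(t, x, y)\, \mu(dy)\, dt
\end{equation}
for any non-negative Borel $g \colon S \times \RNp \to \RNp$. For the product form $g(y, t) = e^{-\alpha t} h(y)$, this is exactly Theorem~\ref{thm: Revuz correspondence} combined with the Laplace representation of $\ResolDensity^\alpha$; a monotone class argument then propagates the identity to all non-negative Borel $g$. To complete the main computation, I proceed by induction on $k$. Fixing $0 < t_1 < \cdots < t_{k-1}$, additivity of $X$ and of $A$ allows me to rewrite the innermost integral as a functional of the shifted trajectory $\theta_{t_{k-1}}\omega$, so applying the Markov property at $t_{k-1}$ together with the time-dependent Revuz identity (applied at starting point $X_{t_{k-1}}$) gives
\begin{equation}
  E^{x_0}\!\left[\int_{t_{k-1}}^\infty f(X_{t_k}, t_k)\, dA_{t_k} \,\Big|\, \filt_{t_{k-1}}\right]
  = \int_{t_{k-1}}^\infty \int_S f(x_k, t_k)\, p(t_k - t_{k-1}, X_{t_{k-1}}, x_k)\, \mu(dx_k)\, dt_k.
\end{equation}
Substituting this into the ordered integral from the symmetrization step reduces the computation to a $(k-1)$-fold Kac-type integral with modified integrand $\tilde f(y, t) = f(y, t)\, H_k(y, t)$, where $H_k(y, t)$ is the inner double integral above. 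Iterating this reduction $k$ times produces the claimed product formula.

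The exponential identity then follows by writing $e^{\Pi(f)} = \sum_{k \geq 0} \Pi(f)^k / k!$ and interchanging sum with expectation by Tonelli, cancelling the explicit $k!$ against the coefficient of the exponential series. The main technical obstacle lies in the inductive step: to apply the Markov property inside a $k$-fold $dA$-integral, one must verify measurable regularity of the inner $dA_{t_k}$-integral as a functional of the trajectory and of $t_{k-1}$, so that the conditional expectation is well-defined for $dA^{\otimes(k-1)}$-almost every $(t_1, \dots, t_{k-1})$ and can be reinserted under Fubini. This relies on the $\filt_*$-adaptedness and additivity clauses of Definition~\ref{dfn: PCAF} and follows the scheme of the generalized Kac moment formula of \cite{Kajino_Noda_pre_Generalized}, of which the present statement is essentially a specialization to STOM test functions.
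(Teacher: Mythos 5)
Your proposal is correct and follows essentially the same route as the paper: symmetrize $\Pi(f)^k$ onto the Weyl chamber $I_\nearrow^k$ using the diffuseness of $dA$, and then run the iterated Markov-property/Revuz argument of the generalized Kac moment formula. The paper simply delegates this second step to \cite[Theorems~2.7 and 2.9]{Kajino_Noda_pre_Generalized}, whereas you sketch it directly; the one technical point you flag (justifying the conditional expectation inside the $dA^{\otimes(k-1)}$-integral) is exactly what that cited result supplies.
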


\begin{proof}
  This is a straightforward application of \cite[Theorem~2.7]{Kajino_Noda_pre_Generalized}.
  Indeed, since we have 
  \begin{equation}
    E^{x_0}\!\left[ \Pi(f)^k \right]
    = 
    k!\, E^{x_0}\!\left[ 
        \int_{I_{\nearrow}^k} \prod_{i=1}^k f(X_{t_i}, t_i)\, dA_{t_1} \cdots dA_{t_k}
    \right],
  \end{equation}
  we can follow the same line as in the proof of \cite[Theorem~2.9]{Kajino_Noda_pre_Generalized}
  to obtain \eqref{prop eq: moment formula for STOM. 1}.
  We thus omit the details.
\end{proof}

The above proposition yields the following corollary on the support of $\Pi$ (cf.\ \cite[Corollary~2.5]{Kallenberg_17_Random}).
This can be also obtained from the fact that a PCAF increases only when the process hits the support of the associated smooth measure
(cf.\ \cite[Theorem~3.8 in Chapter~V]{Blumenthal_Getoor_68_Markov}).

\begin{cor} \label{cor: support of STOM}
  Let $\mu$ be a smooth measure and $\Pi$ be the associated STOM.
  For each $x \in S_\Delta$, the STOM $\Pi$ is supported on $\supp(\mu) \times \RNp$, $P^x$-a.s.
  That is, 
  \begin{equation}
    \supp(\Pi) \subseteq \supp(\mu) \times \RNp, 
    \quad P^x\text{-a.s.}
  \end{equation}
  Equivalently, for all $x \in S_\Delta$,
  \begin{equation}
    \int_0^t \mathbf{1}_{\supp(\mu)}(X_s)\, dA_s = A_t,\quad \forall t \geq 0,\quad   P^x \text{-a.s.}
  \end{equation}
\end{cor}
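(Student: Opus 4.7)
The plan is to reduce the claim to a first-moment computation via Proposition~\ref{prop: moment formula for STOM}, exploiting the fact that $\mu$ assigns zero mass to the complement of its topological support. Let $U \coloneqq S \setminus \supp(\mu)$, which is open, and recall that by definition of the topological support we have $\mu(U) = 0$. The first containment in the corollary amounts to showing $\Pi(U \times \RNp) = 0$ $P^x$-a.s.\ for every $x \in S_\Delta$.

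For $x = \Delta$ there is nothing to do: the convention $X_t(\omega) = \Delta$ for every $\omega \in \Omega$ with $\zeta(\omega) = 0$, combined with Definition~\ref{dfn: PCAF}\ref{dfn item: 3. PCAF}, forces $A \equiv 0$ under $P^\Delta$, so $\Pi \equiv 0$. For $x \in S$, fix $T > 0$ and apply Proposition~\ref{prop: moment formula for STOM} with $k = 1$ to the non-negative Borel function $f \coloneqq \mathbf{1}_{U \times [0,T]}$. This yields
\begin{equation}
  E^x\bigl[ \Pi(U \times [0,T]) \bigr]
  =
  \int_0^T \int_S p(t,x,y)\, \mathbf{1}_U(y)\, \mu(dy)\, dt
  =
  \int_0^T \int_U p(t,x,y)\, \mu(dy)\, dt
  = 0,
\end{equation}
since $\mu(U) = 0$. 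As $\Pi(U \times [0,T]) \geq 0$, it follows that $\Pi(U \times [0,T]) = 0$ $P^x$-a.s. Letting $T \to \infty$ along a countable sequence and invoking monotone convergence gives $\Pi(U \times \RNp) = 0$ $P^x$-a.s., which is the desired support statement.

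The equivalent formulation then follows by unpacking the definition of $\Pi$ in terms of $A$. For each $t \geq 0$,
\begin{equation}
  A_t
  = \int_0^t \mathbf{1}_{\supp(\mu)}(X_s)\, dA_s
  + \int_0^t \mathbf{1}_{U}(X_s)\, dA_s,
\end{equation}
and the second summand equals $\Pi(U \times [0,t])$, which is a non-decreasing function of $t$ bounded above by $\Pi(U \times \RNp) = 0$. Hence $\int_0^t \mathbf{1}_{U}(X_s)\, dA_s = 0$ for every $t \geq 0$, $P^x$-a.s., establishing the second assertion.

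There is essentially no serious obstacle here: the argument is a clean one-line consequence of the $k=1$ case of the moment formula. The only minor subtlety is to take care of the degenerate starting point $\Delta$ and to choose the test function $f$ in the form $\mathbf{1}_{U \times [0,T]}$ (bounded, with bounded time-support) so that Proposition~\ref{prop: moment formula for STOM} applies directly and no integrability issue arises before passing to $T \to \infty$.
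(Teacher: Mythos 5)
Your proof is correct and follows exactly the route the paper indicates: the paper gives no written proof but states that the corollary follows from Proposition~\ref{prop: moment formula for STOM}, and your $k=1$ first-moment computation with $f=\mathbf{1}_{U\times[0,T]}$, $U = S\setminus\supp(\mu)$, is precisely that argument, with the degenerate starting point $\Delta$ and the passage $T\to\infty$ handled appropriately. (The paper also mentions, as an alternative, the classical fact that a PCAF increases only when the process is in the support of its Revuz measure, but your chosen route is the one the paper primarily points to.)
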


When the state space $S$ is discrete,  
the STOM associated with a Radon measure admits an explicit description, as follows.  
Recall from Proposition~\ref{prop: Kato meas for discrete space} that,  
in this case, every Radon measure belongs to the local Kato class,  
so the associated STOM is well defined.

\begin{prop} \label{prop: representation of STOM for discrete space}
  Suppose that $S$ is finite or countable, endowed with the discrete topology.
  Fix a Radon measure $\mu$ on $S$.
  Then the STOM $\Pi$ associated with $\mu$ can be represented as follows:
  \begin{equation} \label{prop eq: representation of STOM for discrete space}
    \Pi(dx\, dt) 
    = 
    \frac{1}{m(\{x\})} 
    \mathbf{1}_{\{X_t = x\}}\,
    \mu(dx)\, dt.
  \end{equation}
  (NB. Since $m$ is a reference measure and $\{x\}$ is open, the denominator $m(\{x\})$ is strictly positive.)
\end{prop}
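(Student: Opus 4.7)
The plan is to identify the PCAF associated with $\mu$ explicitly and then compute its STOM from the definition. Define $f \colon S \to \RNp$ by $f(x) \coloneqq \mu(\{x\})/m(\{x\})$, which is well-defined since $m$ is a reference measure and $\{x\}$ is open, and set
\begin{equation}
B_t(\omega) \coloneqq \int_0^t f(X_s(\omega))\, ds.
\end{equation}
I would first verify that $B = (B_t)_{t \geq 0}$ is a PCAF in the sense of Definition~\ref{dfn: PCAF}. Because $X$ has c\`adl\`ag paths taking values in a discrete space, right-continuity together with the existence of left limits forces the discontinuity times to form a discrete (hence locally finite) subset of $[0, \zeta)$; therefore each path is piecewise constant with only finitely many jumps on every $[0, t]$ with $t < \zeta$, and $B_t$ is a finite, continuous, non-decreasing function of $t$. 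The identity $B_0 = 0$ and the additivity $B_{t+s} = B_t + B_s \circ \theta_t$ follow directly from the definition, and the defining set can be taken to be the standard probability-one set on which the paths of $X$ are c\`adl\`ag and $\theta$-invariant.

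Next, I would identify $B$ with the PCAF $A$ associated with $\mu$ via the Revuz correspondence. By Proposition~\ref{prop: Kato meas for discrete space}, $\mu$ is smooth, so Theorem~\ref{thm: Revuz correspondence} yields a unique PCAF $A$ associated with $\mu$. For any non-negative Borel $g \colon S \to \RNp$ and $\alpha > 0$, Tonelli's theorem together with $f(y) m(\{y\}) = \mu(\{y\})$ and the defining relation of the heat kernel gives
\begin{align}
E^x\!\left[ \int_0^\infty e^{-\alpha t} g(X_t)\, dB_t \right]
&= \int_0^\infty e^{-\alpha t} \sum_{y \in S} g(y) f(y) p(t, x, y)\, m(\{y\})\, dt \\
&= \int_S g(y)\, \ResolDensity^\alpha(x, y)\, \mu(dy),
\end{align}
which is exactly \eqref{eq: Revuz correspondence}. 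By the uniqueness in Theorem~\ref{thm: Revuz correspondence}, $B$ is equivalent to $A$.

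Finally, substituting $dA_t = f(X_t)\, dt$ into the definition of $\Pi$ yields, for every non-negative Borel $h$ on $S \times \RNp$,
\begin{equation}
\int_{S \times \RNp} h(x, t)\, \Pi(dx\, dt)
= \int_0^\infty h(X_t, t) f(X_t)\, dt
= \int_0^\infty \sum_{x \in S} h(x, t)\, \mathbf{1}_{\{X_t = x\}}\, \frac{\mu(\{x\})}{m(\{x\})}\, dt,
\end{equation}
which is precisely integration against the right-hand side of \eqref{prop eq: representation of STOM for discrete space}. The chief technical point is that $f$ may be unbounded, so Example~\ref{exm: example of PCAF} cannot be applied directly to conclude that $B$ is a PCAF; this is overcome by the path-regularity argument above, and Tonelli applies unconditionally to the non-negative integrand in the Revuz identity, making the verification go through without any integrability assumption on $f$.
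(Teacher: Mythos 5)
Your proof is correct and follows essentially the same route as the paper's: identify the PCAF associated with $\mu$ as $A_t = \int_0^t \bigl(\mu(\{X_s\})/m(\{X_s\})\bigr)\,ds$ and then read off the STOM from its definition. The paper's proof simply cites Example~\ref{exm: example of PCAF} for this identification, which as stated covers only \emph{bounded} densities $f$; your direct verification of the PCAF property via the piecewise-constancy of c\`adl\`ag paths in a discrete space, together with the Tonelli computation of the Revuz identity and the uniqueness in Theorem~\ref{thm: Revuz correspondence}, correctly closes that minor gap, so the extra work you do is justified rather than redundant.
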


\begin{proof}
  If we set $f(x) \coloneqq \mu(\{x\})/m(\{x\})$ for each $x \in S$,
  then $\mu(dx) = f(x)\, m(dx)$. 
  Thus, the PCAF associated with $\mu$ is given by 
  \begin{equation}
    A_t \coloneqq 
    \int_0^t \frac{\mu(\{X_s\})}{m(\{X_s\})}\, ds 
    = \sum_{x \in S} \int_0^t \frac{\mu(\{x\})}{m(\{x\})}\, \mathbf{1}_{\{X_s = x\}}\, ds
  \end{equation}
  (see Example~\ref{exm: example of PCAF}).
  By the definition of STOMs, we obtain the desired result.
\end{proof}

%%%%%%%%%%%%%%%%%%%%%%%%%%%%%%%%%%%%%%%%%%%%%%%%%%%%%%%%%%%%%%%%%%%%%%%%%%%%%%%%%%%%%%%%%%%%%%%%%%%%%%%%%%%%%%%%%%%%%%%%%%%%%%%%%%
%%%%%%%%%%%%%%%%%%%%%%%%%%%%%%%%%%%%%%%%%%%%%%%%%%%%%%%%%%%%%%%%%%%%%%%%%%%%%%%%%%%%%%%%%%%%%%%%%%%%%%%%%%%%%%%%%%%%%%%%%%%%%%%%%%
% Smooth measures and PCAFs
%%%%%%%%%%%%%%%%%%%%%%%%%%%%%%%%%%%%%%%%%%%%%%%%%%%%%%%%%%%%%%%%%%%%%%%%%%%%%%%%%%%%%%%%%%%%%%%%%%%%%%%%%%%%%%%%%%%%%%%%%%%%%%%%%%
%%%%%%%%%%%%%%%%%%%%%%%%%%%%%%%%%%%%%%%%%%%%%%%%%%%%%%%%%%%%%%%%%%%%%%%%%%%%%%%%%%%%%%%%%%%%%%%%%%%%%%%%%%%%%%%%%%%%%%%%%%%%%%%%%%
\section{Approximation of PCAFs and STOMs} \label{sec: approx of PCAFs and STOMs}

In Sections~\ref{sec: The approximation scheme for PCAFs} and \ref{sec: The approximation scheme for STOMs}, 
we establish methods for approximating PCAFs and STOMs by those that are absolutely continuous.
This construction plays a crucial role in the proofs of our main results.
In the following two subsections, we apply these approximation results 
to show that the joint law of the process and its PCAFs/STOMs is continuous in the starting point,
provided that the law of the process itself is continuous.

We work in the same setting as in Section~\ref{sec: smooth measure}.
Throughout this section,
we fix a boundedly compact metric $d_S$ on $S$ that induces the given topology
(see \cite[Theorem~1]{Williamson_Janos_87_Construction} for the existence of such a metric),
and we fix an element $\rho \in S$ as the root of $S$.
In particular, $(S, d_S, \rho)$ is a rooted $\bcmAB$ space.

%%%%%%%%%%%%%%%%%%%%%%%%%%%%%%%%%%%%%%%%%%%%%%%%%%%%%%%%%%%%%%%%%%%%%%%%%%%%%%%%%%%%%%%%%%%%%%%%%%%%%%%%%%%%%%%%%%%%%%%%%%%%%%%%%%
% The approximation scheme for PCAFs
%%%%%%%%%%%%%%%%%%%%%%%%%%%%%%%%%%%%%%%%%%%%%%%%%%%%%%%%%%%%%%%%%%%%%%%%%%%%%%%%%%%%%%%%%%%%%%%%%%%%%%%%%%%%%%%%%%%%%%%%%%%%%%%%%%
\subsection{The approximation scheme for PCAFs} \label{sec: The approximation scheme for PCAFs}

We first present an approximation result for PCAFs in the Kato class, see Proposition~\ref{prop: delta approx of PCAF in Kato} below,
and then extend the method to PCAFs in the local Kato class, see Propositions~\ref{prop: R approx in local Kato} and \ref{prop: delta approx for local Kato}.

We start by considering PCAFs in the Kato class.
The approximation is constructed by using the heat kernel $p$ as a mollifier,
which is inspired by approximations for mutual intersection measures \cite[Definition~1.3]{Mori_20_LargeDeviations}.
The validity of this approximation will be justified by an application of Lemma~\ref{lem: estimate of difference of PCAFs}.

\begin{prop} \label{prop: delta approx of PCAF in Kato}
  Let $\mu$ be a finite Borel measure in the Kato class and let $A = (A_t)_{t \geq 0}$ be the associated PCAF.
  For each $\delta > 0$, define 
  \begin{equation} \label{prop eq: 1. simple approx of PCAF in Kato}
    A^{(\delta, *)}_t 
    = 
    \frac{1}{\delta} \int_0^t \int_S \int_\delta^{2\delta} p(u, X_s, z)\, du\, \mu(dz)\, ds,
    \quad t \geq 0.
  \end{equation}
  Then there exist universal constants $C_1, C_2 > 0$ such that the following holds:
  for any $T \in [0, \infty)$,
  $\alpha \in (0,1)$, and $\delta \in (0,1)$,
  \begin{align}
    &\sup_{x \in S} 
    E^x\!
    \left[
      \sup_{0 \leq t \leq T}
      \bigl| A^{(\delta, *)}_t - A_t \bigr|^2
    \right]
    \\
    &\leq    
    C_1 \bigl\| \Potential^\alpha \mu \bigr\|_\infty
    \Bigl(
      \delta \alpha 
      \bigl\| \Potential^\alpha \mu \bigr\|_\infty
      + 
      \bigl\| \Potential^{1/\delta}\!\mu \bigr\|_\infty
    \Bigr)
    +
    C_2 e^{2T} (1 - e^{-\alpha T}) \bigl\| \Potential^1 \mu \bigr\|_\infty^2.
    \label{prop eq: 2. simple approx of PCAF in Kato}
  \end{align}
  In particular, for all $T > 0$,
  \begin{equation} \label{prop eq: 3. simple approx of PCAF in Kato}
    \lim_{\delta \to 0}
    \sup_{x \in S} 
    E^x\!
    \left[
      \sup_{0 \leq t \leq T}
      \bigl| A^{(\delta, *)}_t - A_t \bigr|^2
    \right]
    = 0.
  \end{equation}
  Consequently, $A^{(\delta, *)} \xrightarrow[\delta \to 0]{\mathrm{p}} A$ 
  in $\upC(\RNp, \RNp)$ under $P^x$ for each $x \in S$.
\end{prop}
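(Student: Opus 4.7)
The approach is to realize $A^{(\delta,*)}$ as a PCAF with an explicit absolutely continuous smooth measure, compute the $\alpha$-potential of that measure in closed form via Chapman--Kolmogorov, and then apply Lemma~\ref{lem: estimate of difference of PCAFs}. First, I observe that $A^{(\delta,*)}_t = \int_0^t f^{(\delta)}(X_s)\,ds$ with
\[
  f^{(\delta)}(x) \coloneqq \frac{1}{\delta}\int_\delta^{2\delta}\int_S p(u,x,z)\,\mu(dz)\,du,
\]
and $f^{(\delta)}$ is bounded because Proposition~\ref{prop: Potential and hk behavior} gives $\int_0^{2\delta}\int_S p(u,x,z)\,\mu(dz)\,du \leq e\,\|\Potential^{1/(2\delta)}\mu\|_\infty < \infty$ by the Kato assumption on $\mu$. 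Hence $A^{(\delta,*)}$ is a PCAF corresponding to the smooth measure $\mu^{(\delta,*)}(dx) \coloneqq f^{(\delta)}(x)\,m(dx)$ (Example~\ref{exm: example of PCAF}).

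Second, writing $q(s,x) \coloneqq \int_S p(s,x,z)\,\mu(dz)$ and using Chapman--Kolmogorov \eqref{assum item eq: C-K equation}, the semigroup acts as $P_t f^{(\delta)}(x) = \frac{1}{\delta}\int_\delta^{2\delta} q(t+u,x)\,du$, so after the substitution $s = t+u$,
\[
  \Potential^\alpha\mu^{(\delta,*)}(x)
  = \frac{1}{\delta}\int_\delta^{2\delta} e^{\alpha u}\int_u^\infty e^{-\alpha s}\,q(s,x)\,ds\,du.
\]
Subtracting from $\Potential^\alpha\mu(x) = \int_0^\infty e^{-\alpha s}q(s,x)\,ds$, I split the difference into a short-time piece $\frac{1}{\delta}\int_\delta^{2\delta}\!\int_0^u e^{-\alpha s}q(s,x)\,ds\,du$ and an exponential-correction piece $\frac{1}{\delta}\int_\delta^{2\delta}(e^{\alpha u}-1)\!\int_u^\infty e^{-\alpha s}q(s,x)\,ds\,du$. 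The short-time piece is bounded by $\int_0^{2\delta} q(s,x)\,ds$; using $e^{-s/\delta} \geq e^{-2}$ on $[0,2\delta]$ and Proposition~\ref{prop: Potential and hk behavior} it is controlled by a constant times $\|\Potential^{1/\delta}\mu\|_\infty$. The correction piece is bounded, using the elementary estimate $e^{\alpha u}-1 \leq C\alpha\delta$ valid for $\alpha\in(0,1)$, $u\in[\delta,2\delta]$, $\delta\in(0,1)$, by a constant times $\alpha\delta\,\|\Potential^\alpha\mu\|_\infty$. Combining,
\[
  \bigl\|\Potential^\alpha\mu - \Potential^\alpha\mu^{(\delta,*)}\bigr\|_\infty
  \leq C\bigl(\|\Potential^{1/\delta}\mu\|_\infty + \alpha\delta\,\|\Potential^\alpha\mu\|_\infty\bigr),
\]
and in particular $\|\Potential^\alpha\mu^{(\delta,*)}\|_\infty \leq C'\|\Potential^\alpha\mu\|_\infty$ (and similarly at $\alpha=1$). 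Feeding these into Lemma~\ref{lem: estimate of difference of PCAFs} with the pair $(\mu,\mu^{(\delta,*)})$ yields exactly the structure of \eqref{prop eq: 2. simple approx of PCAF in Kato}.

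Finally, \eqref{prop eq: 3. simple approx of PCAF in Kato} follows by choosing $\alpha$ first: given $\varepsilon>0$ and $T$, pick $\alpha$ so small that $4e^{2T}(1-e^{-\alpha T})\|\Potential^1\mu\|_\infty^2 < \varepsilon/2$; then the Kato condition $\|\Potential^{1/\delta}\mu\|_\infty \to 0$ and the trivial bound $\alpha\delta\,\|\Potential^\alpha\mu\|_\infty^2 \to 0$ make the remaining term less than $\varepsilon/2$ for all small $\delta$. Convergence in probability in $\upC(\RNp,\RNp)$ with respect to the metric \eqref{dfn eq: metric on upC} is then immediate from Markov's inequality applied to the uniform-on-$[0,n]$ estimate. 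The main technical obstacle is the tight coupling between the scale $\delta$ of the mollification and the spectral parameter $\alpha$: the two compete in \eqref{prop eq: 2. simple approx of PCAF in Kato}, and the argument requires that $\|\Potential^{1/\delta}\mu\|_\infty$ decays faster than any threshold fixed by the chosen $\alpha$, which is precisely what the Kato property supplies.
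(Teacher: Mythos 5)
Your proposal is correct and follows essentially the same route as the paper's proof: identify $A^{(\delta,*)}$ as the PCAF of the absolutely continuous measure $f^{(\delta)}\,dm$, compute $\Potential^\alpha$ of it via Chapman--Kolmogorov, bound the potential difference by a short-time term of order $\|\Potential^{1/\delta}\mu\|_\infty$ plus a correction of order $\alpha\delta\,\|\Potential^\alpha\mu\|_\infty$, and feed this into Lemma~\ref{lem: estimate of difference of PCAFs}. The only (cosmetic) difference is that the paper factors out the coefficient $\tfrac{e^{2\delta\alpha}-e^{\delta\alpha}}{\delta\alpha}$ and Taylor-expands it, whereas you keep $e^{\alpha u}-1$ inside the integral; the resulting bounds and the order of limits are the same.
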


\begin{proof}
  Define 
  \begin{equation}
    f(y) \coloneqq \frac{1}{\delta} \int_S \int_\delta^{2\delta} p(s, y, z)\, ds\, \mu(dz),
    \quad y \in S.
  \end{equation}
  Since $\mu$ is in the Kato class,
  Proposition~\ref{prop: Potential and hk behavior} yields that $f$ is bounded.
  Hence, by Example~\ref{exm: example of PCAF}, 
  we deduce that $A^{(\delta, *)}$ is a PCAF and the associated smooth measure is 
  \begin{equation}
    \mu_\delta(dy)
    \coloneqq 
    f(y)\, m(dy)
    =
    \left(
      \frac{1}{\delta} \int_S \int_\delta^{2\delta} p(s, y, z)\, ds\, \mu(dz)
    \right)\!
    m(dy).
  \end{equation}
  Using the Chapman--Kolmogorov equation \eqref{assum item eq: C-K equation},
  we obtain that 
  \begin{align}
    \Potential^\alpha \mu_\delta(x)
    &=
    \int_S r^\alpha(x, y)\, \mu_\delta(dy)
    \\
    &= 
    \int_S \int_0^\infty e^{-\alpha t}\, p(t, x, y)\, dt 
    \left(
      \frac{1}{\delta} \int_S \int_\delta^{2\delta} p(s, y, z)\, ds\, \mu(dz)
    \right)\!
    m(d y)
    \\
    &= 
    \frac{1}{\delta} 
    \int_\delta^{2\delta} \int_S \int_0^\infty e^{-\alpha t}\, p(t+ s, x, z)\, dt\, \mu(dz)\, ds
    \\
    &=
    \frac{1}{\delta} 
    \int_\delta^{2\delta} e^{\alpha s}\int_S \int_s^\infty e^{-\alpha t}\, p(t, x, z)\, dt\, \mu(dz)\, ds.
  \end{align}
  By definition,
  \begin{equation}
    \int_S \int_s^\infty e^{-\alpha t}\, p(t, x, z)\, dt\, \mu(dz)
    = 
    \Potential^\alpha \mu(x)
    - 
    \int_S \int_0^s e^{-\alpha t}\, p(t, x, z)\, dt\, \mu(dz).
  \end{equation}
  Hence,
  \begin{align} \label{pr eq: simple approx of PCAF in Kato}
    \Potential^\alpha \mu_\delta(x) 
    =
    \frac{e^{2\delta \alpha} - e^{\delta \alpha}}{\delta \alpha} \Potential^\alpha \mu(x) 
    -
    \frac{1}{\delta} 
    \int_\delta^{2\delta} e^{\alpha s}\int_S \int_0^s e^{-\alpha t}\, p(t, x, z)\, dt\, \mu(dz)\, ds.
  \end{align}
  It follows that 
  \begin{align}
    &\bigl\|
      \Potential^\alpha \mu_\delta
      -
      \Potential^\alpha \mu
    \bigr\|_\infty
    \\
    &\leq 
    \left| \frac{e^{2\delta \alpha} - e^{\delta \alpha}}{\delta \alpha} - 1 \right|
    \bigl\| \Potential^\alpha \mu \bigr\|_\infty
    + 
    \frac{e^{2\delta \alpha} - e^{\delta \alpha}}{\delta \alpha}
    \sup_{x \in S}
    \int_S \int_0^{2\delta} p(t, x, z)\, dt\, \mu(dz).
  \end{align}
  Using the Taylor expansion of $e^t$,
  one can verify that there exists constants $c_1, c_2 > 0$ such that 
  \begin{equation}
    \left| \frac{e^{2t} - e^{t}}{t} - 1 \right| \leq c_1t,\quad 
    \left| \frac{e^{2t} - e^{t}}{t} \right| \leq c_2,
    \quad 
    \forall t \in [0,1].
  \end{equation}
  Moreover, following the proof of Proposition~\ref{prop: Potential and hk behavior},
  we deduce that 
  \begin{equation}
    \sup_{x \in S}
    \int_S \int_0^{2\delta} p(t, x, z)\, dt\, \mu(dz)
    \leq
    e^2 \bigl\| \Potential^{1/\delta} \mu \bigr\|_\infty.
  \end{equation}
  Therefore, we obtain that 
  \begin{equation}
    \bigl\|
      \Potential^\alpha \mu_\delta
      -
      \Potential^\alpha \mu
    \bigr\|_\infty
    \leq
    c_1\delta \alpha \bigl\| \Potential^\alpha \mu \bigr\|_\infty
    + 
    c_2 e^2 \bigl\| \Potential^{1/\delta}\!\mu \bigr\|_\infty.
  \end{equation}
  From \eqref{pr eq: simple approx of PCAF in Kato}, we also obtain that 
  \begin{equation}
    \bigl\|\Potential^\alpha \mu_\delta\bigr\|_\infty
    \leq    
    c_2
    \bigl\|\Potential^\alpha \mu\bigr\|_\infty,
    \quad \forall \alpha > 0.
  \end{equation}
  Therefore,
  applying Lemma~\ref{lem: estimate of difference of PCAFs},
  we deduce that 
  \begin{align}
    \sup_{x \in S} 
    E^x
    \left[
      \sup_{0 \leq t \leq T}
      \bigl| A^{(\delta, *)}_t - A_t \bigr|^2
    \right]
    &\leq     
    18 
    \bigl(
      c_2 \bigl\|\Potential^\alpha \mu\bigr\|_\infty
      + 
      \bigl\| \Potential^\alpha \mu \bigr\|_\infty
    \bigr)
    \Bigl(
      c_1\delta \alpha 
      \bigl\| \Potential^\alpha \mu \bigr\|_\infty
      + 
      c_2 e^2
      \bigl\| \Potential^{1/\delta}\!\mu \bigr\|_\infty
    \Bigr)
    \\
    &\qquad \qquad
    +
    4 e^{2T} (1 - e^{-\alpha T}) 
    \bigl(
      c_2^2 \bigl\| \Potential^1 \mu \bigr\|_\infty^2
      + 
      \bigl\| \Potential^1 \mu \bigr\|_\infty^2
    \bigr)
    \\
    &\leq    
    18(c_2+1) \bigl\| \Potential^\alpha \mu \bigr\|_\infty
    \Bigl(
      c_1\delta \alpha 
      \bigl\| \Potential^\alpha \mu \bigr\|_\infty
      + 
      c_2 e^2
      \bigl\| \Potential^{1/\delta}\!\mu \bigr\|_\infty
    \Bigr)
    \\
    &\qquad \qquad
    +
    4 e^{2T} (1 - e^{-\alpha T})(c_2^2 + 1) \bigl\| \Potential^1 \mu \bigr\|_\infty^2.
  \end{align}
  Thus, \eqref{prop eq: 2. simple approx of PCAF in Kato} holds.
  Since $\mu$ is in the Kato class,
  it follows from \eqref{prop eq: 2. simple approx of PCAF in Kato}
  \begin{equation}
    \limsup_{\delta \to 0}
    \sup_{x \in S} 
    E^x\!
    \left[
      \sup_{0 \leq t \leq T}
      \bigl| A^{(\delta, *)}_t - A_t \bigr|^2
    \right]
    \leq    
    C_2 e^{2T} (1 - e^{-\alpha T}) \bigl\| \Potential^1 \mu \bigr\|_\infty^2.
  \end{equation}
  Letting $\alpha \to 0$ in the above inequality, we obtain \eqref{prop eq: 3. simple approx of PCAF in Kato}.
\end{proof}

We next consider PCAFs in the local Kato class and develop suitable approximation schemes.  
Here, we note that to continue discussing the approximation of PCAFs in $\upC(\RNp, \RNp)$, 
we need to impose additional assumptions. 
Indeed, PCAFs in the local Kato class may explode in finite time, 
which makes it impossible to discuss convergence in the space $\upC(\RNp, \RNp)$. 
This observation leads to the following points.
\begin{enumerate}[label=\textup{(\alph*)}]
  \item  \label{item: issue for general PCAF conv. 1}
    To discuss convergence in the space $\upC(\RNp, \RNp)$, 
    additional assumptions on the process $X$ or on the PCAFs are required.
  \item  \label{item: issue for general PCAF conv. 2}
    A different framework is needed to handle convergence of general PCAFs.
\end{enumerate}

One possible approach in the direction of~\ref{item: issue for general PCAF conv. 1} 
is to assume that the process $X$ is conservative (recall this property from \eqref{eq: conservativeness}).
Under this assumption, the definition of PCAFs immediately implies that 
each PCAF belongs to $\upC(\RNp, \RNp)$ almost surely. 
In this paper, we continue to discuss convergence of general PCAFs in $\upC(\RNp, \RNp)$ under this conservativeness assumption.  

Nevertheless, it is worth noting that one can also develop a framework for studying 
convergence of PCAFs without imposing any additional assumptions, 
in accordance with~\ref{item: issue for general PCAF conv. 2}. 
The key idea is to handle STOMs within a suitably chosen topology. 
See Remark~\ref{rem: flexibility of STOM framework} in the next subsection for further details.

We now return to our main line of argument.
To establish an approximation method for PCAFs in the local Kato class, 
we introduce, in Definitions~\ref{dfn: restriction of PCAF} and \ref{dfn: apPCAF} below,  
two auxiliary maps that will serve as building blocks for the construction of the approximations.
Although these maps are formulated in rather general forms, independent of the process or PCAFs themselves,  
this level of generality will be useful for the main results discussed in Section~\ref{sec: conv of PCAFs and STOMs}.

The first map is used to truncate PCAFs
so that associated smooth measures are restricted onto a compact ball.

\begin{dfn} \label{dfn: restriction of PCAF}
  For each $R > 1$
  we define a map 
  \begin{equation}
    \apPCAF^{(*, R)} \colon 
    D_{L^0}(\RNp, S) \times \upC(\RNp, \RNp) 
    \to
    \upC(\RNp, \RNp)
  \end{equation}
  by 
  \begin{equation}
    (\eta, \varphi) 
    \mapsto 
    \left(
      t 
      \mapsto
      \int_0^t \int_{R-1}^R \mathbf{1}_{S^{(r)}}(\eta_s)\, dr\, d\varphi_s
    \right).
  \end{equation}
\end{dfn}

\begin{prop} \label{prop: R approx in local Kato}
  Assume that $X$ is conservative.
  Let $\mu$ be a smooth measure, and let $A = (A_t)_{t \geq 0}$ denote the associated PCAF.
  Fix $R > 1$.
  Set $A^{(*, R)} \coloneqq \apPCAF^{(*, R)}(X, A)$.
  Then $A^{(*, R)}$ is a PCAF, and its associated smooth measure is $\tilde{\mu}^{(R)}$,
  which is defined in \eqref{eq: smooth truncation}
  Moreover, we have 
  \begin{equation}  \label{prop eq: R approx in local Kato. 1}
    A_t^{(*,R)} = A_t\quad \text{for all $t \in [0,T]$ on the event}\quad \left\{ \exitTime_{S^{(R-1)}} > T \right\}.
  \end{equation}
  In particular, for all $x \in S$ and $T > 0$,
  \begin{equation}   \label{prop eq: R approx in local Kato. 2}
    \lim_{R \to \infty}
    E^x\!\left[ \left( \sup_{0 \leq t \leq T} |A^{(*, R)}_t - A_t| \right) \wedge 1 \right]
    = 0.
  \end{equation}
  Consequently, $A^{(*,R)} \xrightarrow[R \to \infty]{\mathrm{p}} A$ in $\upC(\RNp, \RNp)$ under $P^x$, for each $x \in S$.
\end{prop}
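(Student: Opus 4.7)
The plan is to reduce everything to a single explicit representation. I would begin by setting
\begin{equation}
  \chi^{(R)}(x) \coloneqq \int_{R-1}^{R} \mathbf{1}_{S^{(r)}}(x)\, dr, \qquad x \in S,
\end{equation}
so that $\chi^{(R)} \colon S \to [0,1]$ is Borel measurable and, by Fubini,
\begin{equation}
  A^{(*,R)}_t = \int_0^t \chi^{(R)}(X_s)\, dA_s.
\end{equation}
From this representation, the PCAF axioms of Definition~\ref{dfn: PCAF} for $A^{(*,R)}$ are routine: non-negativity and continuity in $t$ follow from those of $A$ together with the boundedness of $\chi^{(R)}$, while additivity under the shift $\theta_t$ is inherited from that of $A$ via $X_u \circ \theta_t = X_{u+t}$ combined with a change of variables in the Stieltjes integral. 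A defining set of $A$ can be taken as a defining set of $A^{(*,R)}$.

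To identify the associated smooth measure, I would apply the Revuz correspondence for $A$ to the non-negative Borel function $f \cdot \chi^{(R)}$: for every non-negative Borel $f$ on $S$ and every $\alpha > 0$,
\begin{equation}
  E^x\!\left[\int_0^\infty e^{-\alpha t} f(X_t)\, dA^{(*,R)}_t\right]
  = \int_S \ResolDensity^\alpha(x,y)\, f(y)\, \chi^{(R)}(y)\, \mu(dy)
  = \int_S \ResolDensity^\alpha(x,y)\, f(y)\, \tilde{\mu}^{(R)}(dy),
\end{equation}
where the second equality is exactly the definition of $\tilde{\mu}^{(R)}$ in \eqref{eq: smooth truncation}. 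By uniqueness in Theorem~\ref{thm: Revuz correspondence}, the smooth measure associated with $A^{(*,R)}$ is $\tilde{\mu}^{(R)}$.

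For the pathwise identity \eqref{prop eq: R approx in local Kato. 1}, I would argue that on $\{\exitTime_{S^{(R-1)}} > T\}$ one has $d_S(\rho, X_s) \leq R-1$ for all $s \in (0, T]$, hence $\mathbf{1}_{S^{(r)}}(X_s) = 1$ for every $r \in [R-1, R]$; thus $\chi^{(R)}(X_s) \equiv 1$ on $(0, T]$ and $A^{(*,R)}_t = A_t$ throughout $[0, T]$. The convergence \eqref{prop eq: R approx in local Kato. 2} then follows from the deterministic bound
\begin{equation}
  E^x\!\left[\sup_{0 \leq t \leq T} \bigl|A^{(*,R)}_t - A_t\bigr| \wedge 1\right] \leq P^x\!\bigl(\exitTime_{S^{(R-1)}} \leq T\bigr)
\end{equation}
combined with the conservativeness of $X$: for $P^x$-a.e.\ $\omega$ the \cadlag\ path $s \mapsto X_s(\omega)$ remains in $S$ on $[0,T]$, so its image is relatively compact in the $\bcmAB$ space $S$ and hence bounded, giving $\exitTime_{S^{(R-1)}} \uparrow \infty$ $P^x$-almost surely. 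Finally, the asserted convergence in probability in $\upC(\RNp, \RNp)$ follows from dominated convergence applied to the series defining the metric in \eqref{dfn eq: metric on upC}, since each term $\sup_{0 \leq t \leq n} |A^{(*,R)}_t - A_t| \wedge 1$ tends to $0$ in $L^1(P^x)$ by the same argument with $T = n$, and is uniformly dominated by $2^{-n}$. I do not anticipate a serious obstacle; the most delicate ingredient is that a \cadlag\ path of a conservative process on a compact time interval has bounded image in a $\bcmAB$ space, which is a standard consequence of right-continuity with left limits together with bounded-compactness.
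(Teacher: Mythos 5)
Your proof is correct and follows essentially the same route as the paper, which simply cites a standard result (Chen--Fukushima, Theorem~A.3.5) for the first assertion and otherwise argues exactly as you do: the pathwise identity \eqref{prop eq: R approx in local Kato. 1}, the bound by $P^x(\exitTime_{S^{(R-1)}} \leq T)$, and conservativeness. The only detail worth adding to your Revuz computation is that $\tilde{\mu}^{(R)}$ is itself a smooth measure (immediate from $\tilde{\mu}^{(R)} \leq \mu$), which is what lets you invoke the uniqueness in Theorem~\ref{thm: Revuz correspondence} to conclude it is \emph{the} measure associated with $A^{(*,R)}$.
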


\begin{proof}
  The first assertion follows from \cite[Theorem~A.3.5]{Chen_Fukushima_12_Symmetric}.
  The assertion \eqref{prop eq: R approx in local Kato. 1} is immediate from the definition of $A^{(*,R)}$.
  Using the assertion, we deduce that, for any $x \in S$ and $T > 0$,
  \begin{equation}
    E^x\!\left[ \left( \sup_{0 \leq t \leq T} |A^{(*, R)}_t - A_t| \right) \wedge 1 \right]
    \leq
    P^x\!\left( \exitTime_{S^{(R-1)}} \leq T \right).
  \end{equation}
  The right-hand side converges to $0$ as $R \to \infty$ by the conservativeness of $X$,
  which shows \eqref{prop eq: R approx in local Kato. 2}.
  The last assertion is elementary.
\end{proof}

After the above truncation,
we can approximate the truncated PCAF $A^{(*, R)}$ by absolutely continuous PCAFs,
constructed from the following map.
This map is indeed modeled after the approximation in \eqref{prop eq: 1. simple approx of PCAF in Kato}.

\begin{dfn} \label{dfn: apPCAF}
  For each $\delta > 0$ and $R > 1$,
  we define a map 
  \begin{equation}
    \apPCAF^{(\delta, R)} \colon 
    C(\RNpp \times S \times S, \RNp) \times \Meas(S) \times D_{L^0}(\RNp, S) 
    \to 
    \upC(\RNp, \RNp)
  \end{equation}
  by 
  \begin{equation}
    (q, \nu, \eta) \mapsto 
    \left(
      t 
      \mapsto
      \frac{1}{\delta}
      \int_0^t \int_S \int_\delta^{2\delta} q(u, \eta_s, x)\, du\, \tilde{\nu}^{(R)}(dx)\, ds
    \right),
  \end{equation}
  where the smooth truncation $\tilde{\nu}^{(R)}$ is recalled from \eqref{eq: smooth truncation}.
\end{dfn}

The following is an immediate consequence of Proposition~\ref{prop: delta approx of PCAF in Kato}.

\begin{prop} \label{prop: delta approx for local Kato}
  Let $\mu$ be a Radon measure in the local Kato class and let $A = (A_t)_{t \geq 0}$ be the associated PCAF.
  For each $\delta > 0$ and $R > 1$, set 
  \begin{equation} \label{prop eq: 1. delta approx for local Kato}
    A^{(\delta, R)} \coloneqq \apPCAF^{(\delta, R)}(p, \mu, X),
    \qquad 
    A^{(*, R)} \coloneqq \apPCAF^{(*,R)}(X, A).
  \end{equation}
  Let $C_1, C_2 > 0$ be the universal constants appearing in the assertion of Proposition~\ref{prop: delta approx of PCAF in Kato}.
  Then, for any $T \in [0, \infty)$,
  $\alpha \in (0,1)$, and $\delta \in (0,1)$,
  \begin{align}
    \sup_{x \in S} 
    E^x\!
    \left[
      \sup_{0 \leq t \leq T}
      \bigl| A^{(\delta, R)}_t - A^{(*,R)}_t \bigr|^2
    \right]
    &\leq    
    C_1 \bigl\| \Potential^\alpha \mu^{(R)} \bigr\|_\infty
    \Bigl(
      \delta \alpha 
      \bigl\| \Potential^\alpha \mu^{(R)} \bigr\|_\infty
      + 
      \bigl\| \Potential^{1/\delta}\!\mu^{(R)} \bigr\|_\infty
    \Bigr)\\
    &\quad
    +
    C_2 e^{2T} (1 - e^{-\alpha T}) \bigl\| \Potential^1 \mu^{(R)} \bigr\|_\infty^2.
    \label{lem eq: 2. unif approx of PCAF in local Kato}
  \end{align}
  In particular, for each $R > 1$,
  \begin{equation} \label{lem eq: 3. unif approx of PCAF in local Kato}
    \lim_{\delta \to 0}
    \sup_{x \in S} 
    E^x\!
    \left[
      \sup_{0 \leq t \leq T}
      \bigl| A^{(\delta, R)}_t - A^{(*,R)}_t \bigr|^2
    \right]
    = 0.
  \end{equation}
\end{prop}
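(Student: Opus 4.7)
The plan is to recognize that the proposition is essentially a packaging of Proposition~\ref{prop: delta approx of PCAF in Kato} applied to the truncated measure $\tilde{\mu}^{(R)}$, combined with the identification of $A^{(*,R)}$ as the PCAF associated with $\tilde{\mu}^{(R)}$ provided by Proposition~\ref{prop: R approx in local Kato}. First, I would record the key pointwise domination
\begin{equation}
  \tilde{\mu}^{(R)}(dx) = \int_{R-1}^R \mathbf{1}_{S^{(r)}}(x)\, dr\, \mu(dx) \leq \mu|_{S^{(R)}}(dx) = \mu^{(R)}(dx),
\end{equation}
which follows since the inner integrand is bounded by $1$ and the integration length is $1$, and since the support is contained in $S^{(R)}$. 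This domination monotonically transfers to the $\alpha$-potentials: $\Potential^\alpha \tilde{\mu}^{(R)}(x) \leq \Potential^\alpha \mu^{(R)}(x)$ for every $x \in S$ and every $\alpha > 0$.

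Next, since $\mu$ is in the local Kato class and $S^{(R)}$ is compact, $\mu^{(R)}$ lies in the Kato class; by domination so does $\tilde{\mu}^{(R)}$, which is in particular a finite Borel measure. By Proposition~\ref{prop: R approx in local Kato}, $A^{(*,R)}$ is the PCAF whose Revuz-corresponding smooth measure is $\tilde{\mu}^{(R)}$. On the other hand, the definition of $\apPCAF^{(\delta,R)}$ in Definition~\ref{dfn: apPCAF} together with \eqref{prop eq: 1. delta approx for local Kato} shows that
\begin{equation}
  A^{(\delta,R)}_t = \frac{1}{\delta}\int_0^t \int_S \int_\delta^{2\delta} p(u, X_s, x)\, du\, \tilde{\mu}^{(R)}(dx)\, ds,
\end{equation}
which is precisely the mollified PCAF $(\tilde{\mu}^{(R)})^{(\delta,*)}$ appearing in \eqref{prop eq: 1. simple approx of PCAF in Kato}, constructed from the finite Kato-class measure $\tilde{\mu}^{(R)}$.

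Thus I can apply Proposition~\ref{prop: delta approx of PCAF in Kato} with $\mu$ replaced by $\tilde{\mu}^{(R)}$, obtaining the bound \eqref{prop eq: 2. simple approx of PCAF in Kato} with the $\alpha$-potential norms of $\tilde{\mu}^{(R)}$. Applying the monotonicity $\|\Potential^\alpha \tilde{\mu}^{(R)}\|_\infty \leq \|\Potential^\alpha \mu^{(R)}\|_\infty$ to each of the three norms on the right-hand side produces exactly \eqref{lem eq: 2. unif approx of PCAF in local Kato}. For the vanishing statement \eqref{lem eq: 3. unif approx of PCAF in local Kato}, I fix $\alpha \in (0,1)$ and let $\delta \to 0$: since $\mu^{(R)}$ is in the Kato class, $\|\Potential^{1/\delta} \mu^{(R)}\|_\infty \to 0$ as $\delta \to 0$, while $\|\Potential^\alpha \mu^{(R)}\|_\infty$ and $\delta\alpha\|\Potential^\alpha \mu^{(R)}\|_\infty$ stay bounded (the latter $\to 0$), so the $\limsup$ is bounded by $C_2 e^{2T}(1-e^{-\alpha T})\|\Potential^1 \mu^{(R)}\|_\infty^2$; sending $\alpha \to 0$ then kills this residual term. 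No genuine obstacle arises here: the only thing requiring a moment's thought is the domination used to convert potential norms of $\tilde{\mu}^{(R)}$ into those of $\mu^{(R)}$, but this is immediate once the integral representation of $\tilde{\mu}^{(R)}$ is spelled out.
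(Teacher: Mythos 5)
Your proposal is correct and follows essentially the same route as the paper: identify $A^{(*,R)}$ as the PCAF of the finite Kato-class measure $\tilde{\mu}^{(R)}$ via Proposition~\ref{prop: R approx in local Kato}, recognize $A^{(\delta,R)}$ as its mollification from Proposition~\ref{prop: delta approx of PCAF in Kato}, and conclude by the domination $\|\Potential^\alpha \tilde{\mu}^{(R)}\|_\infty \leq \|\Potential^\alpha \mu^{(R)}\|_\infty$. Your explicit spelling out of the domination and of the $\delta \to 0$, $\alpha \to 0$ limit argument is slightly more detailed than the paper's, but the content is the same.
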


\begin{proof}
  Recall from Proposition~\ref{prop: R approx in local Kato} that $A^{(*,R)}$ is a PCAF associated with the Kato class measure $\tilde{\mu}^{(R)}$.
  Thus, applying Proposition~\ref{prop: delta approx of PCAF in Kato} to the PCAF $A^{(*,R)}$,
  we deduce that 
  \begin{align}
    &\sup_{x \in S} 
    E^x\!
    \left[
      \sup_{0 \leq t \leq T}
      \bigl| A^{(\delta, *)}_t - A_t \bigr|^2
    \right]
    \\
    &\leq    
    C_1 \bigl\| \Potential^\alpha \tilde{\mu}^{(R)} \bigr\|_\infty
    \Bigl(
      \delta \alpha 
      \bigl\| \Potential^\alpha \tilde{\mu}^{(R)} \bigr\|_\infty
      + 
      \bigl\| \Potential^{1/\delta}\!\tilde{\mu}^{(R)} \bigr\|_\infty
    \Bigr)
    +
    C_2 e^{2T} (1 - e^{-\alpha T}) \bigl\| \Potential^1 \tilde{\mu}^{(R)} \bigr\|_\infty^2
  \end{align}
  for all $T > 0$, $\alpha \in (0,1)$, and $\delta \in (0,1)$.
  By the definition of $\tilde{\mu}^{(R)}$ (see \eqref{eq: smooth truncation}), we have 
  \begin{equation}
    \bigl\| \Potential^\alpha \tilde{\mu}^{(R)} \bigr\|_\infty \leq \bigl\| \Potential^\alpha \mu^{(R)} \bigr\|_\infty,
  \end{equation}
  which yields the desired result.
\end{proof}

Combining Propositions~\ref{prop: R approx in local Kato} and \ref{prop: delta approx for local Kato}, 
we see that PCAFs in the local Kato class can be approximated in two stages as follows.

\begin{cor} \label{cor: approx in local Kato}
  Assume that $X$ is conservative.
  Let $\mu$ be a smooth measure in the local Kato class, and let $A = (A_t)_{t \geq 0}$ denote the associated PCAF.
  For each $\delta > 0$ and $R > 1$, define 
  \begin{equation} 
    A^{(\delta, R)} \coloneqq \apPCAF^{(\delta, R)}(p, \mu, X),
    \qquad 
    A^{(*, R)} \coloneqq \apPCAF^{(*,R)}(X, A).
  \end{equation}
  Then, for each $x \in S$, the following convergences hold in $\upC(\RNp, \RNp)$ under $P^x$:
  \begin{align}
    A^{(\delta, R)} &\xrightarrow[\delta \to 0]{\mathrm{p}} A^{(*,R)}, \qquad \forall R > 1,\\
    A^{(*, R)} &\xrightarrow[R \to \infty]{\mathrm{p}} A.
  \end{align}
\end{cor}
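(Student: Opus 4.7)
The plan is to observe that this corollary is essentially a packaging of the two preceding propositions, and that the only remaining issue is to pass from their quantitative estimates (in $L^2$ or in $L^1$) on the uniform distance between the PCAFs on compact time intervals $[0,T]$ to convergence in probability with respect to the metric $d_{\upC(\RNp,\RNp)}$ defined in \eqref{dfn eq: metric on upC}. Since the two stated convergences are completely independent, I will handle them separately.

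For the first convergence, I will apply Proposition~\ref{prop: delta approx for local Kato} directly. It yields, for each fixed $R > 1$ and $T > 0$,
\begin{equation}
\sup_{x \in S} E^x\!\left[\sup_{0 \leq t \leq T} \bigl|A^{(\delta,R)}_t - A^{(*,R)}_t\bigr|^2\right] \xrightarrow[\delta \to 0]{} 0,
\end{equation}
which by Markov's inequality implies that $\sup_{0 \leq t \leq T} |A^{(\delta,R)}_t - A^{(*,R)}_t| \to 0$ in probability under $P^x$ for every $T > 0$. For the second convergence, I will apply Proposition~\ref{prop: R approx in local Kato}, which uses conservativeness of $X$ to give that, for each $x \in S$ and $T > 0$,
\begin{equation}
E^x\!\left[\Bigl(\sup_{0 \leq t \leq T} \bigl|A^{(*,R)}_t - A_t\bigr|\Bigr) \wedge 1\right] \xrightarrow[R \to \infty]{} 0,
\end{equation}
again giving convergence in probability of the local uniform distance.

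The final step is the translation from local uniform convergence in probability to convergence in probability in $(\upC(\RNp,\RNp), d_{\upC(\RNp,\RNp)})$. For any sequence $(\varphi_n)$ and limit $\varphi$ in $\upC(\RNp,\RNp)$, the identity
\begin{equation}
d_{\upC(\RNp,\RNp)}(\varphi_n, \varphi) = \sum_{k=1}^{\infty} 2^{-k} \Bigl(1 \wedge \sup_{0 \leq t \leq k} |\varphi_n(t) - \varphi(t)|\Bigr)
\end{equation}
together with the bounded convergence theorem (applied to the counting measure with weights $2^{-k}$, dominating each term by $2^{-k}$) reduces convergence of the metric to convergence of each term; and since convergence in probability is preserved under the bounded continuous map $s \mapsto 1 \wedge s$ and under countable sums via a standard diagonal/subsequence argument, the result follows.

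Since all the substantive analytic work---the Revuz-correspondence interpretation of $A^{(\delta,R)}$ as a PCAF with smooth measure $\tilde{\mu}^{(R)}$ convolved with the heat kernel, the application of Lemma~\ref{lem: estimate of difference of PCAFs} to control the $L^2$ error, and the use of conservativeness to force $\breve{\sigma}_{S^{(R-1)}} \to \infty$---is already encapsulated in the earlier propositions, there is no real obstacle in this corollary: it is a short bookkeeping argument bridging the quantitative moment bounds of the propositions with the metric topology of $\upC(\RNp,\RNp)$.
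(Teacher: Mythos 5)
Your proposal is correct and matches the paper's intent: the paper gives no separate proof of this corollary, stating it as an immediate combination of Propositions~\ref{prop: R approx in local Kato} and \ref{prop: delta approx for local Kato} (the second convergence is in fact already asserted verbatim in the "Consequently" part of Proposition~\ref{prop: R approx in local Kato}). Your bookkeeping step translating local uniform convergence in probability into convergence in the metric $d_{\upC(\RNp,\RNp)}$ is the only content needed, and it is sound.
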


%%%%%%%%%%%%%%%%%%%%%%%%%%%%%%%%%%%%%%%%%%%%%%%%%%%%%%%%%%%%%%%%%%%%%%%%%%%%%%%%%%%%%%%%%%%%%%%%%%%%%%%%%%%%%%%%%%%%%%%%%%%%%%%%%%
% The approximation scheme for PCAFs
%%%%%%%%%%%%%%%%%%%%%%%%%%%%%%%%%%%%%%%%%%%%%%%%%%%%%%%%%%%%%%%%%%%%%%%%%%%%%%%%%%%%%%%%%%%%%%%%%%%%%%%%%%%%%%%%%%%%%%%%%%%%%%%%%%
\subsection{The approximation scheme for STOMs} \label{sec: The approximation scheme for STOMs}

In this subsection,
we extend the approximation method for PCAFs introduced in the previous section to STOMs.
The approximation for STOMs in the Kato class is presented in Proposition~\ref{prop: delta STOM approx in Kato} below,
and for STOMs in the local Kato class is discussed in Propositions~\ref{prop: R STOM approx in local Kato} and \ref{prop: delta STOM approx for local Kato}.

We first establish an approximation for STOMs in the Kato class,
which corresponds to Proposition~\ref{prop: delta approx of PCAF in Kato}.

\begin{prop} \label{prop: delta STOM approx in Kato}
  Let $\mu$ be a finite Borel measure in the Kato class and let $\Pi$ be the associated STOM.
  For each $\delta > 0$, define 
  \begin{equation} \label{prop eq: 1. delta STOM approx in Kato}
    \Pi^{(\delta, *)}(dz\, dt)
    = 
    \frac{1}{\delta} \int_\delta^{2\delta} p(u, X_t, z)\, du\, \mu(dz) dt.
  \end{equation}
  Then there exist universal constants $C_3, C_4 > 0$ such that the following holds:
  for any $T \in [0, \infty)$, $E \in \Borel(S)$,
  $\alpha \in (0,1)$, and $\delta \in (0,1)$,
  \begin{align}
    &\sup_{x \in S} 
    E^x\!
    \left[
      \sup_{0 \leq s < t \leq T}
      \bigl| \Pi^{(\delta, R)}(E \times [s,t)) - \Pi^{(*,R)}(E \times [s, t)) \bigr|^2
    \right]
    \\
    &\leq    
    C_3 \bigl\| \Potential^\alpha \mu^{(R)} \bigr\|_\infty
    \Bigl(
      \delta \alpha 
      \bigl\| \Potential^\alpha \mu^{(R)} \bigr\|_\infty
      + 
      \bigl\| \Potential^{1/\delta}\!\mu^{(R)} \bigr\|_\infty
    \Bigr)
    +
    C_4 e^{2T} (1 - e^{-\alpha T}) \bigl\| \Potential^1 \mu^{(R)} \bigr\|_\infty^2.
    \label{prop eq: 2. delta STOM approx in Kato}
  \end{align}
  In particular, for each $R > 1$, $E \in \Borel(S)$, and $T > 0$
  \begin{equation} \label{prop eq: 3. delta STOM approx in Kato}
    \lim_{\delta \to 0}
    \sup_{x \in S} 
    E^x\!
    \left[
      \sup_{0 \leq s < t \leq T}
      \bigl| \Pi^{(\delta, R)}(E \times [s,t)) - \Pi^{(*,R)}(E \times [s, t)) \bigr|^2
    \right]
    = 0.
  \end{equation}
  Consequently, $\Pi^{(\delta, *)} \xrightarrow[\delta \to 0]{\mathrm{p}} \Pi$ 
  in $\STOMMeas(S \times \RNp)$ under $P^x$ for each $x \in S$.
\end{prop}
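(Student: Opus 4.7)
The plan is to reduce everything to Proposition~\ref{prop: delta approx of PCAF in Kato} by restricting the base smooth measure to the Borel set appearing in the test rectangle, and then to upgrade the resulting set-indexed $L^2$-convergence to convergence in $\STOMMeas(S \times \RNp)$.

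First, I would observe that for any Borel set $E \subseteq S$, the measure $\mu|_E$ is again in the Kato class, since it charges no semipolar set and $\|\Potential^\alpha(\mu|_E)\|_\infty \leq \|\Potential^\alpha \mu\|_\infty$ for every $\alpha > 0$. By the Revuz correspondence, the PCAF associated with $\mu|_E$ is
\begin{equation}
A^{\mu|_E}_t \coloneqq \int_0^t \mathbf{1}_E(X_s)\,dA_s = \Pi(E \times [0,t]),
\end{equation}
and a direct computation shows
\begin{equation}
\Pi^{(\delta,*)}(E \times [0,t]) = \frac{1}{\delta}\int_0^t \int_E \int_\delta^{2\delta} p(u, X_s, z)\,du\,\mu(dz)\,ds = A^{\mu|_E, (\delta,*)}_t,
\end{equation}
i.e., precisely the approximating PCAF produced by Proposition~\ref{prop: delta approx of PCAF in Kato} applied to $\mu|_E$.

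Next, from the trivial bound
\begin{equation}
\sup_{0 \leq s < t \leq T} \bigl|V_t - V_s\bigr|^2 \leq 4 \sup_{0 \leq t \leq T} |V_t|^2, \qquad V_t \coloneqq A^{\mu|_E,(\delta,*)}_t - A^{\mu|_E}_t,
\end{equation}
and the estimate \eqref{prop eq: 2. simple approx of PCAF in Kato} applied to $\mu|_E$, combined with the monotonicity $\|\Potential^\beta(\mu|_E)\|_\infty \leq \|\Potential^\beta \mu\|_\infty$ for every $\beta > 0$, the desired inequality \eqref{prop eq: 2. delta STOM approx in Kato} follows with $C_3 = 4C_1$ and $C_4 = 4C_2$, uniformly in $E \in \Borel(S)$. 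The pointwise convergence \eqref{prop eq: 3. delta STOM approx in Kato} is then obtained by first sending $\delta \to 0$ (using the Kato condition to kill the first term) and then $\alpha \to 0$ to kill the second.

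Finally, to upgrade this set-indexed convergence to convergence in $\STOMMeas(S \times \RNp)$, I would invoke the characterization of Proposition~\ref{prop: conv in stom sp}: it suffices to show $\Pi^{(\delta,*)}|_{S \times [0,T]} \to \Pi|_{S \times [0,T]}$ weakly in $\finMeas(S \times [0,T])$ in probability, for each $T > 0$ (note that $A$ is continuous, so every finite interval $[0,T]$ is a time-continuity set). The total mass $\Pi^{(\delta,*)}(S \times [0,T]) = A^{(\delta,*)}_T \to A_T$ in $L^2$ by Proposition~\ref{prop: delta approx of PCAF in Kato} applied with $E = S$, and the set-indexed convergence just established gives $\Pi^{(\delta,*)}(B) \to \Pi(B)$ in probability for every rectangle $B = E \times [s,t) \subseteq S \times [0,T]$. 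Via a diagonal subsequence argument over a countable basis of product rectangles, together with uniform tightness of $(\Pi^{(\delta,*)}|_{S \times [0,T]})_\delta$ (which follows from convergence of total masses and the fact that $\mu$ is finite, so all mass is eventually concentrated in a compact set in $S$ after truncating), one deduces weak convergence in probability by a standard portmanteau-type argument.

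The step I expect to be the main obstacle is the final transfer from $L^2$-convergence on rectangles to weak convergence of random measures in probability, since the $\STOMMeas$ topology is genuinely stronger than vague convergence (it records mass on $S \times [0,t]$ for every $t$). The key point is that uniformity of the estimate \eqref{prop eq: 2. delta STOM approx in Kato} in $E$ allows simultaneous control of a countable separating family of test rectangles along a common subsequence, after which weak convergence is routine.
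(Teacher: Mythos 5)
Your proof is correct and follows essentially the same route as the paper's: reduce to Proposition~\ref{prop: delta approx of PCAF in Kato} by identifying $\Pi(E\times[0,t))$ with the PCAF associated to $\mu|_E$, use the factor-$4$ increment bound together with the monotonicity $\|\Potential^\beta(\mu|_E)\|_\infty \leq \|\Potential^\beta\mu\|_\infty$, and then pass from the set-indexed $L^2$-convergence to convergence in $\STOMMeas(S\times\RNp)$. The only difference is cosmetic: in the final step the paper simply cites Kallenberg's criterion for convergence of random measures tested on a dissecting semi-ring, whereas you sketch a hand-made version of that criterion (countable separating family, diagonal subsequence, tightness for the fixed starting point), which amounts to the same idea.
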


\begin{proof}
  Using the simple estimate that $(a+b)^2 \leq 2(a^2 + b^2)$ for any $a, b \geq 0$,
  we have 
  \begin{align}
    &\sup_{x \in S}E^x\!
    \left[
      \sup_{0 \leq s < t \leq T}
      \bigl| \Pi^{(\delta, *)}(E \times [s,t)) - \Pi(E \times [s, t)) \bigr|^2
    \right]\\
    &\leq
    4\sup_{x \in S} 
    E^x\!
    \left[
      \sup_{0 \leq t \leq T}
      \bigl| \Pi^{(\delta, *)}(E \times [0,t)) - \Pi(E \times [0, t)) \bigr|^2
    \right].
  \end{align}
  By the definition of the STOM $\Pi$,
  the process $B_t \coloneqq \Pi(E \times [0,t))$, $t \geq 0$, is a PCAF associated with the smooth measure $\mu|_E$.
  If we let $B^{(\delta, *)} = (B^{(\delta, *)}_t)_{t \geq 0}$ be the approximation of $B$ defined as \eqref{prop eq: 1. simple approx of PCAF in Kato},
  then 
  \begin{equation}
    B^{(\delta,*)}_t 
    =
    \frac{1}{\delta} \int_0^t \int_S \int_\delta^{2\delta} p(u, X_s, z)\, du\, \mu|_E(dz) ds
    = 
    \Pi^{(\delta, *)}(E \times [0,t)),
    \quad 
    t \geq 0.
  \end{equation}
  Hence, \eqref{prop eq: 2. delta STOM approx in Kato} follows from Proposition~\ref{prop: delta approx for local Kato}.
  In the inequality, letting $\delta \to 0$ and then $\alpha \to 0$ yields \eqref{prop eq: 3. delta STOM approx in Kato}.
  The last assertion is now immediate from \cite[Corollary~4.9(iii)]{Kallenberg_17_Random}
  (see also the proof of Theorem~\ref{thm: R map STOM approx for local Kato} below).
\end{proof}

We next consider STOMs in the local Kato class.
Recall that, in Definitions~\ref{dfn: restriction of PCAF} and \ref{dfn: apPCAF},
we introduced two maps $\apPCAF^{(*, R)}$ and $\apPCAF^{(\delta, R)}$ to approximate PCAFs.
Below, we introduce STOM versions of these maps.

\begin{dfn} \label{dfn: truncation of STOM}
  For each $R > 1$
  we define a map 
  \begin{equation}
    \apSTOM^{(*, R)} \colon 
    \STOMMeas(S \times \RNp)
    \to
    \STOMMeas(S \times \RNp)
  \end{equation}
  by 
  \begin{equation}
    \Sigma
    \mapsto 
    \left( \int_{R-1}^R \mathbf{1}_{S^{(r)}}(x)\,dr \right) \Sigma(dx\, dt).
  \end{equation}
\end{dfn}

\begin{dfn} \label{dfn: apSTOM}
  For each $\delta > 0$ and $R > 1$,
  we define a map 
  \begin{equation}
    \apSTOM^{(\delta, R)} \colon 
    C(\RNpp \times S \times S, \RNp) \times \Meas(S) \times D_{L^0}(\RNp, S) 
    \to 
    \STOMMeas(S \times \RNp)
  \end{equation}
  by 
  \begin{equation}
    (q, \nu, \eta) 
    \mapsto
    \frac{1}{\delta}
    \int_\delta^{2\delta} q(u, \eta_t, z)\, du\, \tilde{\nu}^{(R)}(dz)\, dt.
  \end{equation}
\end{dfn}

Below, we present STOM versions of Propositions~\ref{prop: R approx in local Kato} and \ref{prop: delta approx for local Kato}.

\begin{prop} \label{prop: R STOM approx in local Kato}
  Assume that $X$ is conservative.
  Let $\mu$ be a smooth measure, and let $\Pi$ denote the associated STOM.
  For each $R > 1$, set $\Pi^{(*, R)} \coloneqq \apSTOM^{(*, R)}(\Pi)$.
  Then $\Pi^{(*, R)}$ is a STOM associated with the smooth measure $\tilde{\mu}^{(R)}$.
  Moreover, it holds that, for any $T > 0$, 
  \begin{equation}  \label{prop eq: R STOM approx in local Kato. 1}
    \Pi^{(*,R)}|_{S \times [0, T]} = \Pi|_{S \times [0, T]}\quad \text{on the event}\quad \left\{ \exitTime_{S^{(R-1)}} > T \right\}.
  \end{equation}
  In particular, 
  \begin{equation}   \label{prop eq: R STOM approx in local Kato. 2}
    \lim_{R \to \infty}
    E^x\!\left[ \STOMMet{S \times \RNp}(\Pi^{(*,R)}, \Pi) \right]
    = 0,
    \quad \forall x \in S,
  \end{equation}
  where the metric  $\STOMMet{S \times \RNp}$ is recalled from \eqref{eq: dfn of stom met}.
  Consequently, $\Pi^{(*,R)} \xrightarrow[R \to \infty]{\mathrm{p}} \Pi$ in $\STOMMeas(S \times \RNp)$ under $P^x$, for each $x \in S$.
\end{prop}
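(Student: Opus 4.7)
The plan is to identify $\Pi^{(*,R)}$ with the STOM of the PCAF $A^{(*,R)} = \apPCAF^{(*,R)}(X,A)$ already handled in Proposition~\ref{prop: R approx in local Kato}, and then to transfer the path-level approximation \eqref{prop eq: R approx in local Kato. 1} to the level of measures via the conservativeness of $X$.

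First, I would set $\chi^{(R)}(x) \coloneqq \int_{R-1}^R \mathbf{1}_{S^{(r)}}(x)\, dr$, so that Definition~\ref{dfn: truncation of STOM} reads $\Pi^{(*,R)}(dx\, dt) = \chi^{(R)}(x)\, \Pi(dx\, dt)$, and Definition~\ref{dfn: restriction of PCAF} reads $dA^{(*,R)}_s = \chi^{(R)}(X_s)\, dA_s$. For any non-negative Borel function $g$ on $S \times \RNp$, the definition of the STOM gives
\begin{equation}
  \int g\, d\Pi^{(*,R)} = \int_0^\infty g(X_t, t)\, \chi^{(R)}(X_t)\, dA_t = \int_0^\infty g(X_t, t)\, dA^{(*,R)}_t,
\end{equation}
so $\Pi^{(*,R)}$ is precisely the STOM of $A^{(*,R)}$. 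Proposition~\ref{prop: R approx in local Kato} identifies $A^{(*,R)}$ as a PCAF associated with $\tilde\mu^{(R)}$, which yields the first assertion.

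Next, on the event $\{\exitTime_{S^{(R-1)}} > T\}$ the trajectory $(X_s)_{s \in [0,T]}$ stays inside $S^{(R-1)}$, which forces $\chi^{(R)}(X_s) = 1$ for every $s \in [0,T]$, since $X_s \in S^{(r)}$ for every $r \in [R-1,R]$. Hence $dA^{(*,R)}_s$ and $dA_s$ agree on $[0,T]$, and therefore $\Pi^{(*,R)}$ and $\Pi$ coincide as measures on $S \times [0,T]$; this gives \eqref{prop eq: R STOM approx in local Kato. 1}.

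Finally, combining \eqref{prop eq: R STOM approx in local Kato. 1} with the definition of $\STOMMet{S \times \RNp}$ in \eqref{eq: dfn of stom met}, on the event $\{\exitTime_{S^{(R-1)}} > T\}$ the Prohorov term in the integrand vanishes for every $t \leq T$, so
\begin{equation}
  \STOMMet{S \times \RNp}(\Pi^{(*,R)}, \Pi) \leq \int_T^\infty e^{-t}\, dt = e^{-T},
\end{equation}
while off this event the total distance is at most $1$ by construction of the metric. Taking expectation yields
\begin{equation}
  E^x\!\bigl[\STOMMet{S \times \RNp}(\Pi^{(*,R)}, \Pi)\bigr] \leq e^{-T} + P^x\!\bigl(\exitTime_{S^{(R-1)}} \leq T\bigr).
\end{equation}
Conservativeness of $X$ gives $P^x(\exitTime_{S^{(R-1)}} \leq T) \to 0$ as $R \to \infty$ for every fixed $T$, so sending first $R \to \infty$ and then $T \to \infty$ establishes \eqref{prop eq: R STOM approx in local Kato. 2}; the convergence in probability then follows from Markov's inequality. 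The only real care needed is the bookkeeping in the first step, where one checks that the measure-level operation $\apSTOM^{(*,R)}$ acts on the STOM exactly as multiplication of $dA_t$ by $\chi^{(R)}(X_t)$ does on the PCAF; past that point, no serious obstacle is anticipated, as everything is a direct transfer from Proposition~\ref{prop: R approx in local Kato}.
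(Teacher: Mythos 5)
Your proposal is correct and follows essentially the same route as the paper's proof: identify $\Pi^{(*,R)}$ as the STOM of the PCAF $A^{(*,R)}$ (hence associated with $\tilde{\mu}^{(R)}$ via Proposition~\ref{prop: R approx in local Kato}), observe that the two measures agree on $S \times [0,T]$ on the event $\{\exitTime_{S^{(R-1)}} > T\}$, and bound $\STOMMet{S \times \RNp}$ by $e^{-T}$ there and by $1$ elsewhere before taking $R \to \infty$ and then $T \to \infty$ using conservativeness. No gaps.
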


\begin{proof}
  Write $A = (A_t)_{t \geq 0}$ for the PCAF associated with $\mu$.
  By Proposition~\ref{prop: R approx in local Kato}, 
  $A^{(*,R)} \coloneqq \apPCAF^{(*, R)}(X, A)$ is the PCAF associated with $\tilde{\mu}^{(R)}$,
  and hence the associated STOM $\tilde{\Pi}^{(*,R)}$ is given by 
  \begin{equation}
    \tilde{\Pi}^{(*, R)}(E \times [0,t]) = \int_0^t \mathbf{1}_E(\eta_s) \int_{R-1}^R \mathbf{1}_{S^{(r)}}(\eta_s)\, dr\, dA_s,
    \quad 
    E \in \Borel(S),\ t \geq 0.
  \end{equation} 
  By definition, we have, for any $E \in \Borel(S)$ and $t \geq 0$,
  \begin{equation}
    \Pi^{(*, R)}(E \times [0, t])
    = 
    \int_0^t \mathbf{1}_E(\eta_s) \int_{R-1}^R \mathbf{1}_{S^{(r)}}(\eta_s)\, dr\, dA_s.
  \end{equation}
  Hence, $\tilde{\Pi}^{(*,R)} = \Pi^{(*,R)}$, which proves the first assertion.
  The second assertion \eqref{prop eq: R STOM approx in local Kato. 1} is straightforward from the definition of $\Pi^{(*,R)}$.
  Then, by the definition of $\STOMMet{S \times \RNp}$, 
  we have that, on the event $\left\{ \exitTime_{S^{(R-1)}} > T \right\}$,
  \begin{align} 
    \STOMMet{S \times \RNp}(\Pi^{(*,R)}, \Pi)
    &= 
    \int_0^\infty e^{-t} \bigl(1 \wedge \ProhMet{S \times \RNp}(\Pi^{(*,R)}|_{S \times [0, t]}, \Pi|_{S \times [0,t]}) \bigr)\, dt\\
    &\leq    
    e^{-T}
    \label{prop pr: R STOM approx in local Kato. 1}
  \end{align}
  Since the metric does not exceed $1$,
  it follows that, for any $x \in S$ and $T > 0$,
   \begin{equation}  
    E^x\!\left[ \STOMMet{S \times \RNp}(\Pi^{(*,R)}, \Pi) \right]
    \leq    
    e^{-T} + P^x\!\left( \exitTime_{S^{(R-1)}} > T \right).
  \end{equation}
  Letting $R \to \infty$ and then $T \to \infty$ in the above inequality,
  and using the conservativeness of $X$.
  we deduce \eqref{prop eq: R STOM approx in local Kato. 2}.
  The last assertion is elementary (cf.\ \cite[Lemma~5.2]{Kallenberg_21_Foundations}).
\end{proof}

\begin{prop} \label{prop: delta STOM approx for local Kato}
  Let $\mu$ be a Radon measure in the local Kato class and let $\Pi$ be the associated STOM.
  Fix $R > 1$.
  For each $\delta > 0$, set 
  \begin{equation} \label{prop eq: delta STOM approx for local Kato. 1}
    \Pi^{(\delta, R)} \coloneqq \apSTOM^{(\delta, R)}(p, \mu, X), 
    \qquad 
    \Pi^{(*, R)} \coloneqq \apSTOM^{(*, R)}(\Pi).
  \end{equation}
  Then there exist universal constants $C_3, C_4 > 0$ such that the following holds:
  for any $T \in [0, \infty)$, $E \in \Borel(S)$,
  $\alpha \in (0,1)$, and $\delta \in (0,1)$,
  \begin{align}
    &\sup_{x \in S} 
    E^x\!
    \left[
      \sup_{0 \leq s < t \leq T}
      \bigl| \Pi^{(\delta, R)}(E \times [s,t)) - \Pi^{(*,R)}(E \times [s, t)) \bigr|^2
    \right]
    \\
    &\leq    
    C_3 \bigl\| \Potential^\alpha \mu^{(R)} \bigr\|_\infty
    \Bigl(
      \delta \alpha 
      \bigl\| \Potential^\alpha \mu^{(R)} \bigr\|_\infty
      + 
      \bigl\| \Potential^{1/\delta}\!\mu^{(R)} \bigr\|_\infty
    \Bigr)
    +
    C_4 e^{2T} (1 - e^{-\alpha T}) \bigl\| \Potential^1 \mu^{(R)} \bigr\|_\infty^2.
    \label{prop eq: delta STOM approx for local Kato. 2}
  \end{align}
  In particular, 
  \begin{equation}
    \lim_{\delta \to 0}
    \sup_{x \in S} 
    E^x\!
    \left[
      \sup_{0 \leq s < t \leq T}
      \bigl| \Pi^{(\delta, R)}(E \times [s,t)) - \Pi^{(*,R)}(E \times [s, t)) \bigr|^2
    \right]
    = 0.
  \end{equation}
  Consequently, 
  $\Pi^{(\delta, R)} \xrightarrow[\delta \to 0]{\mathrm{p}} \Pi^{(*,R)}$ 
  in $\STOMMeas(S \times \RNp)$ under $P^x$, for each $x \in S$.
\end{prop}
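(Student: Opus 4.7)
The plan is to mirror the proof of Proposition~\ref{prop: delta STOM approx in Kato}, reducing the STOM-level estimate to the PCAF-level bound of Proposition~\ref{prop: delta approx for local Kato} by restricting to a single Borel set $E \in \Borel(S)$ and invoking the local Kato PCAF approximation applied to the restricted measure $\mu|_E$.

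First I would fix $E$ and identify both quantities as PCAFs: letting $A$ denote the PCAF of $\mu$, the Revuz correspondence gives $B^E_t \coloneqq \int_0^t \mathbf{1}_E(X_s)\,dA_s$ as the PCAF of $\mu|_E$, so that by Definitions~\ref{dfn: truncation of STOM} and~\ref{dfn: apSTOM} together with the identity $\widetilde{(\mu|_E)}^{(R)} = \mathbf{1}_E\, \tilde{\mu}^{(R)}$,
\begin{equation}
  \Pi^{(\delta, R)}(E \times [0,t)) = \apPCAF^{(\delta, R)}(p, \mu|_E, X)_t,
  \qquad
  \Pi^{(*, R)}(E \times [0,t)) = \apPCAF^{(*, R)}(X, B^E)_t.
\end{equation}
Applying Proposition~\ref{prop: delta approx for local Kato} to $\mu|_E$ (which lies in the local Kato class whenever $\mu$ does) yields the analogous estimate with $\mu^{(R)}$ replaced by $(\mu|_E)^{(R)}$; monotonicity of potentials gives $\|\Potential^\beta (\mu|_E)^{(R)}\|_\infty \leq \|\Potential^\beta \mu^{(R)}\|_\infty$ for every $\beta > 0$, which removes the $E$-dependence from the right-hand side. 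The sup over $0 \leq s < t \leq T$ is then handled by the triangle inequality
\begin{equation}
  \bigl|\Pi^{(\delta, R)}(E \times [s,t)) - \Pi^{(*, R)}(E \times [s,t))\bigr|
  \leq 2\sup_{0 \leq u \leq T}\bigl|\Pi^{(\delta, R)}(E \times [0,u)) - \Pi^{(*, R)}(E \times [0,u))\bigr|,
\end{equation}
which after squaring produces a factor of $4$; setting $C_3 = 4 C_1$ and $C_4 = 4 C_2$ then gives \eqref{prop eq: delta STOM approx for local Kato. 2}.

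The vanishing of the bound as $\delta \to 0$ follows by first sending $\delta \to 0$ (using $\|\Potential^{1/\delta}\mu^{(R)}\|_\infty \to 0$ from the local Kato property of $\mu^{(R)}$) and then $\alpha \to 0$. For the consequent convergence $\Pi^{(\delta, R)} \to \Pi^{(*, R)}$ in $\STOMMeas(S \times \RNp)$ in probability, I would argue exactly as in the Kato case of Proposition~\ref{prop: delta STOM approx in Kato}: uniform $L^2$ control over $0 \leq s < t \leq T$ of $\Pi^{(\delta, R)}(E \times [s,t)) - \Pi^{(*, R)}(E \times [s,t))$ for each Borel $E$, combined with the characterization of convergence in $\STOMMeas(S \times \RNp)$ provided by Proposition~\ref{prop: conv in stom sp} and a standard convergence-determining-class argument for random measures, suffices. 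No serious obstacle is anticipated; the only bookkeeping point is the substitution $\mu \leftrightarrow \mu|_E$ together with the monotonicity of the potential, and the factor-of-four cost in converting the sup over $[0,u]$ into one over $[s,t) \subseteq [0,T]$.
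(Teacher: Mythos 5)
Your proposal is correct and follows essentially the same route as the paper: both reduce the STOM estimate to the PCAF moment bound by restricting to a fixed Borel set $E$, identifying $\Pi^{(\cdot,R)}(E\times[0,t))$ with the corresponding PCAF approximations of $\mu|_E$ (equivalently, of $\tilde\mu^{(R)}|_E$), absorbing the set-dependence via monotonicity of potentials, and paying a factor of $4$ to pass from increments over $[s,t)$ to the running supremum. The only difference is cosmetic — the paper first truncates to the Kato-class measure $\tilde\mu^{(R)}$ and invokes the Kato-class STOM proposition, whereas you restrict to $E$ first and invoke the local-Kato PCAF proposition directly — and your treatment of the final convergence in $\STOMMeas(S\times\RNp)$ via a convergence-determining class matches the paper's appeal to the Kallenberg criterion.
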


\begin{proof}
  By Proposition~\ref{prop: R STOM approx in local Kato},
  $\Pi^{(*,R)}$ is a STOM associated with $\tilde{\mu}^{(R)}$.
  On the other hand, by definition, we have 
  \begin{equation}
    \Pi^{(\delta, R)}(dz\, dt) 
    =
    \frac{1}{\delta}
    \int_\delta^{2\delta} q(u, \eta_t, z)\, du\, \tilde{\nu}^{(R)}(dz)\, dt.
  \end{equation}
  Hence, we can apply Proposition~\ref{prop: delta STOM approx in Kato} to the Kato class measure $\tilde{\mu}^{(R)}$.
  Noting that $\tilde{\mu}^{(R)} \leq \mu^{(R)}$,
  we obtain \eqref{prop eq: delta STOM approx for local Kato. 2}.
  The remaining assertions are verified in the same manner as the corresponding assertions of Proposition~\ref{prop: delta STOM approx in Kato},
  and therefore we omit the details.
\end{proof}

Combining the above two propositions, 
similarly to the PCAF case,
we see that STOMs in the local Kato class can be approximated in two stages as follows.

\begin{cor} \label{cor: STOM approx in local Kato}
  Assume that $X$ is conservative.
  Let $\mu$ be a smooth measure in the local Kato class, and let $\Pi$ denote the associated PCAF.
  For each $\delta > 0$ and $R > 1$, define 
  \begin{equation} 
    \Pi^{(\delta, R)} \coloneqq \apSTOM^{(\delta, R)}(p, \mu, X),
    \qquad 
    \Pi^{(*, R)} \coloneqq \apSTOM^{(*,R)}(\Pi).
  \end{equation}
  Then, for each $x \in S$, the following convergences hold in $\STOMMeas(S \times \RNp)$ under $P^x$:
  \begin{align}
    \Pi^{(\delta, R)} &\xrightarrow[\delta \to 0]{\mathrm{p}} \Pi^{(*,R)}, \qquad \forall R > 1,\\
    \Pi^{(*, R)} &\xrightarrow[R \to \infty]{\mathrm{p}} \Pi.
  \end{align}
\end{cor}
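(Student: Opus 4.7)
The plan is to recognize that Corollary~\ref{cor: STOM approx in local Kato} is essentially a packaging of the two preceding propositions: the first convergence $\Pi^{(\delta, R)} \xrightarrow[\delta \to 0]{\mathrm{p}} \Pi^{(*,R)}$ is literally the last assertion of Proposition~\ref{prop: delta STOM approx for local Kato}, and the second convergence $\Pi^{(*, R)} \xrightarrow[R \to \infty]{\mathrm{p}} \Pi$ is the last assertion of Proposition~\ref{prop: R STOM approx in local Kato}. Accordingly, the proof reduces to a one-line citation of these two results, and no new technical ingredients are needed.

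To record the logical flow in a little more detail: for the first convergence I would fix $R > 1$ arbitrary and invoke Proposition~\ref{prop: delta STOM approx for local Kato}. The point there is that, although $\mu$ is only in the local Kato class, the smooth truncation $\tilde{\mu}^{(R)}$ lies in the genuine Kato class, so that the quantitative $L^2$-bound \eqref{prop eq: delta STOM approx for local Kato. 2} on rectangles $E \times [s,t) \subseteq S \times [0,T]$ applies. Letting $\delta \to 0$ first (using $\|\Potential^{1/\delta}\mu^{(R)}\|_\infty \to 0$) and then $\alpha \to 0$, one obtains $L^2$-uniform rectangle convergence, which passes to convergence in $\STOMMeas(S \times \RNp)$ via the characterization in Proposition~\ref{prop: conv in stom sp}.

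For the second convergence I would invoke Proposition~\ref{prop: R STOM approx in local Kato}, whose proof uses the conservativeness of $X$ to push the exit time $\exitTime_{S^{(R-1)}}$ off any fixed time horizon $T$ with probability tending to one as $R \to \infty$. Combined with the pathwise identity $\Pi^{(*,R)}|_{S \times [0,T]} = \Pi|_{S \times [0,T]}$ on the event $\{\exitTime_{S^{(R-1)}} > T\}$ and the crude estimate $\STOMMet{S \times \RNp}(\Pi^{(*,R)}, \Pi) \leq e^{-T}$ on that event, this yields $E^x[\STOMMet{S \times \RNp}(\Pi^{(*,R)}, \Pi)] \to 0$ and hence the claimed convergence in probability.

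Because both building blocks are already established, I do not anticipate any obstacle in the corollary itself; the genuine difficulty was absorbed into the two preceding propositions (the heat-kernel mollification estimate via Lemma~\ref{lem: estimate of difference of PCAFs}, and the use of conservativeness to control the spatial truncation error). The conceptual value of stating this as a corollary is that it crystallizes the two-stage approximation scheme which will drive the main convergence theorems of Section~\ref{sec: conv of PCAFs and STOMs}: first restrict the smooth measure $\mu$ to the ball $S^{(R)}$ so as to enter the Kato class, and then mollify by heat-kernel convolution over $[\delta, 2\delta]$ to obtain an absolutely continuous STOM whose dependence on the underlying path is continuous enough to admit joint convergence arguments.
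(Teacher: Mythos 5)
Your proposal is correct and matches the paper exactly: the corollary is stated there as an immediate combination of Proposition~\ref{prop: delta STOM approx for local Kato} (for the $\delta \to 0$ limit at fixed $R$) and Proposition~\ref{prop: R STOM approx in local Kato} (for the $R \to \infty$ limit), with no additional argument given. Your extra commentary on where the real work lives is accurate but not needed for the corollary itself.
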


\begin{rem} \label{rem: flexibility of STOM framework}
  Here, we provide several reasons why STOMs offer a more flexible framework than PCAFs. 
  \begin{enumerate} [label = \textup{(\alph*)}]
    \item \label{rem item: flexibility of STOM framework. 1}
      In the previous discussion, to approximate PCAFs in the local Kato class within the space $\upC(\RNp, \RNp)$, 
      we imposed the conservativeness of $X$. 
      In fact, by working with STOMs instead, one can study such approximations 
      \emph{without} assuming conservativeness.
      Let us explain this in a bit more detail. 
      Suppose that $\Pi$ is a STOM in the local Kato class. 
      Since the associated PCAF may explode in finite time, 
      $\Pi$ may fail to belong to $\STOMMeas(S \times \RNp)$. 
      However, by Proposition~\ref{prop: moment formula for STOM}, 
      we know that $\Pi$ is a Radon measure on $S \times \RNp$ with probability one. 
      Therefore, although the topology becomes coarser, 
      we can still discuss its convergence within $\Meas(S \times \RNp)$ endowed with the vague topology. 
      See Proposition~\ref{prop: R STOM vague approx in local Kato} below for an application of this idea.
    
    \item \label{rem item: flexibility of STOM framework. 2}
      Convergence of STOMs can also be discussed for general smooth measures. 
      The idea is to introduce a variant of the vague topology that is adapted to the nest of the smooth measure. 
      Let $\mu$ be a general smooth measure and let $\Pi$ denote the corresponding STOM. 
      Fix an nest $\mathfrak{E} = (E_N)_{N \geq 1}$ of $\mu$ (recall it from Definition~\ref{dfn: smooth measure}).
      Define $\Meas_\mathfrak{E}(S \times \RNp)$ to be the collection of measures on $S \times \RNp$ 
      that are finite on each $E_N \times [0, N]$. 
      Then, by an argument similar to the above using Proposition~\ref{prop: moment formula for STOM}, 
      we see that $\Pi$ belongs to $\Meas_\mathfrak{E}(S \times \RNp)$ with probability one. 
      By endowing $\Meas_\mathfrak{E}(S \times \RNp)$ with a suitable modification of the vague topology 
      adapted to the underlying nest, 
      we can discuss the convergence of STOMs. 
      Such nest-dependent modifications of the vague topology 
      were recently introduced in~\cite{Ooi_Tsuchida_Uemura_25_Nest} 
      for the purpose of studying the convergence of general smooth measures 
      (not necessarily Radon measures).
  \end{enumerate}
\end{rem}

As pointed out in Remark~\ref{rem: flexibility of STOM framework}\ref{rem item: flexibility of STOM framework. 1},
we provide a result which shows the flexibility of the STOM framework.
Namely, we present an approximation result for STOMs in local Kato class in the vague topology,
without assuming conservativeness.

\begin{prop} \label{prop: R STOM vague approx in local Kato}
  Let $\mu$ be a smooth measure, and let $\Pi$ denote the associated STOM.
  For each $R > 1$, set $\Pi^{(*, R)} \coloneqq \apSTOM^{(*, R)}(\Pi)$.
  For any $x \in S$, it holds $P^x$-a.s.\ that
  \begin{equation}  \label{prop eq: R STOM vague approx in local Kato. 1}
    \VagueMet{S \times \RNp}(\Pi^{(*,R)}, \Pi) \leq e^{-R+1}, \quad \forall R > 1,
  \end{equation}
  where $\VagueMet{S \times \RNp}$ denotes the vague metric as recalled from \eqref{eq: dfn of vague metric}.
  Consequently, $\Pi^{(*,R)} \xrightarrow[R \to \infty]{\mathrm{a.s.}} \Pi$ vaguely under $P^x$, for each $x \in S$.
\end{prop}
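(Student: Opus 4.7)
The main idea is that $\Pi^{(*,R)}$ and $\Pi$ coincide as Borel measures on the slab $D_S(\rho, R-1) \times \RNp$, so in the vague metric on $\Meas(S \times \RNp)$ --- which is built from Prohorov distances of restrictions to closed balls around the root $(\rho, 0)$ --- all contributions from radii $r \leq R-1$ vanish, leaving only a trivial tail integral.

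Unfolding Definition~\ref{dfn: truncation of STOM}, I write $\Pi^{(*,R)}(dx\,dt) = \chi^{(R)}(x)\,\Pi(dx\,dt)$ with $\chi^{(R)}(x) \coloneqq \int_{R-1}^R \mathbf{1}_{S^{(r)}}(x)\,dr$, which satisfies $\chi^{(R)} \equiv 1$ on $D_S(\rho, R-1)$. Hence $\Pi^{(*,R)}$ and $\Pi$ agree on every Borel subset of $D_S(\rho, R-1) \times \RNp$. Equipping $S \times \RNp$ with the max product metric and root $(\rho, 0)$, the closed ball of radius $r$ equals $S^{(r)} \times [0, r]$, which lies in $D_S(\rho, R-1) \times \RNp$ whenever $r \leq R - 1$; consequently $\Pi^{(*,R)}|^{(r)} = \Pi|^{(r)}$ for all such $r$ in the sense of~\eqref{eq: restriction of meas}, and $\ProhMet{S \times \RNp}(\Pi^{(*,R)}|^{(r)}, \Pi|^{(r)}) = 0$ (provided both restrictions are finite measures, see below). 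Then, using $1 \wedge \ProhMet{S \times \RNp}(\cdot) \leq 1$ in the remaining range and unwinding~\eqref{eq: dfn of vague metric},
\[
  \VagueMet{S \times \RNp}(\Pi^{(*,R)}, \Pi)
  \leq \int_{R-1}^\infty e^{-r}\,dr
  = e^{-R+1},
\]
and the a.s.\ vague convergence follows since $e^{-R+1} \to 0$ as $R \to \infty$.

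The main obstacle is the finiteness of $\Pi|^{(r)}$ in the non-conservative setting: the PCAF $A$ may explode at the lifetime $\zeta$, so $\Pi(S^{(r)} \times [0, r]) = A_r$ could be infinite on $\{\zeta \leq r\}$, making Prohorov ill-defined. On $\{\zeta > r\}$ the definition of PCAF immediately gives $A_r < \infty$; for the complementary event I would invoke Proposition~\ref{prop: moment formula for STOM} along a nest $(E_n)_{n \geq 1}$ of $\mu$ from Definition~\ref{dfn: smooth measure} --- approximating $\Pi$ by the STOMs of $\mu|_{E_n}$, whose total masses up to time $r$ are controlled via $\|\Potential^1(\mu|_{E_n})\|_\infty < \infty$, and passing to the limit $n \to \infty$ using $\breve{\sigma}_{E_n} \to \zeta$. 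Once the restrictions are almost surely finite measures on the relevant balls, the tail estimate above closes.
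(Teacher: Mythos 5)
Your core argument is exactly the paper's: the restrictions $\Pi^{(*,R)}|_{S^{(r)}\times[0,r]}$ and $\Pi|_{S^{(r)}\times[0,r]}$ coincide for $r < R-1$ because the truncation weight $\int_{R-1}^R \mathbf{1}_{S^{(u)}}(\eta_s)\,du$ equals $1$ there, so the vague metric reduces to the tail integral $\int_{R-1}^\infty e^{-r}\,dr = e^{-R+1}$. The finiteness issue you flag is not needed for the inequality itself (equal measures have Prohorov distance zero by the defining formula, and the integrand is capped at $1$ elsewhere); the well-definedness of $\VagueMet{S\times\RNp}$, i.e.\ that $\Pi$ is a.s.\ Radon, is addressed in the paper by Remark~\ref{rem: flexibility of STOM framework} via Proposition~\ref{prop: moment formula for STOM} rather than inside this proof.
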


\begin{proof}
 By definition, if $r < R-1$, then we have, for any $E \in \Borel(S)$ and $t \geq 0$,
  \begin{align}
    \Pi^{(*, R)}|_{S^{(r)} \times [0, r]}(E \times [0, t])
    &= 
    \int_0^{t \wedge r} \mathbf{1}_{E \cap S^{(r)}}(\eta_s) \int_{R-1}^R \mathbf{1}_{S^{(u)}}(\eta_s)\, du\, dA_s\\
    &= 
    \int_0^{t \wedge r} \mathbf{1}_{E \cap S^{(r)}}(\eta_s)\, dA_s\\
    &= 
    \Pi|_{S^{(r)} \times [0,r]}(E \times [0,t]).
  \end{align}
  This implies $\Pi^{(*,R)}|_{S^{(r)} \times [0, r]} = \Pi|_{S^{(r)} \times [0, r]}$ for all $r < R-1$.
  It follows that 
  \begin{equation}
    \VagueMet{S \times \RNp}(\Pi^{(*,R)}, \Pi) 
    = 
    \int_0^\infty e^{-r} 
    \bigl\{1 \wedge \ProhMet{S \times \RNp}\bigl( \Pi^{(*,R)}|_{S^{(r)} \times [0, r]}, \Pi|_{S^{(r)} \times [0,r]} \bigr) \bigr\}\, dr
    \leq
    e^{-R+1},
  \end{equation}
  which yields \eqref{prop eq: R STOM vague approx in local Kato. 1}.
  This completes the proof.
\end{proof}

%%%%%%%%%%%%%%%%%%%%%%%%%%%%%%%%%%%%%%%%%%%%%%%%%%%%%%%%%%%%%%%%%%%%%%%%%%%%%%%%%%%%%%%%%%%%%%%%%%%%%%%%%%%%%%%%%%%%%%%%%%%%%%%%%%
% Smooth measures and PCAFs
%%%%%%%%%%%%%%%%%%%%%%%%%%%%%%%%%%%%%%%%%%%%%%%%%%%%%%%%%%%%%%%%%%%%%%%%%%%%%%%%%%%%%%%%%%%%%%%%%%%%%%%%%%%%%%%%%%%%%%%%%%%%%%%%%%
\subsection{Continuity of joint laws with PCAFs} \label{sec: Continuity of joint laws with PCAFs}

In a wide class of stochastic processes, such as Feller processes, 
the law of the process depends continuously on the starting point. 
Here, we employ the approximation introduced in the previous subsection 
to show that the joint law with a PCAF also enjoys this continuity;
see Theorems~\ref{thm: delta map approx for Kato} and \ref{thm: R map approx for local Kato} below.

To formulate the results in a suitable topological framework,
we consider the following conditions for the underlying process $X$ and its heat kernel.
We recall the notation $\ProcLaw_\cdot$ from \eqref{eq: notation for law with processes}. 

\begin{assum}[The weak $L^0$-continuity condition] \label{assum: weak L^0 continuity} \leavevmode
  \begin{enumerate}[label=\textup{(\roman*)}, leftmargin=*]
    \item \label{assum item: weak L^0 continuity. 1} 
      The process $X$ is conservative,
      and the law map 
      $\ProcLaw_X \colon S \to \Prob(D_{L^0}(\RNp, S))$ 
      is continuous.
    \item \label{assum item: weak L^0 continuity. 2}
      The heat kernel $p(t,x,y)$ takes finite values and is continuous in $(t,x,y) \in \RNpp \times S \times S$.
  \end{enumerate}
\end{assum}

\begin{assum}[The spatial tightness condition] \label{assum: spatial tightness} 
  For all $r > 0$ and $T > 0$, 
  \begin{equation}
    \lim_{R \to \infty}
    \sup_{x \in S^{(r)}}
    P^x\!\left( \exitTime_{S^{(R)}} < T\right)
    = 0.
  \end{equation}
\end{assum}

If the law map $\ProcLaw_X$ is weakly continuous 
with respect to a topology that captures the path behavior more precisely than the $L^0$ topology,
such as the $J_1$-Skorohod topology,
then the spatial tightness condition is automatically satisfied.
For instance, the following condition suffices.

\begin{assum}[The weak $J_1$-continuity condition] \label{assum: weak J_1 continuity} \leavevmode
  \begin{enumerate}[label=\textup{(\roman*)}, leftmargin=*]
    \item \label{assum item: weak J_1 continuity. 1} 
      The process $X$ is conservative, and the map 
      $\ProcLaw_X \colon S \to \Prob(D_{J_1}(\RNp, S))$
      is continuous.
    \item \label{assum item: weak J_1 continuity. 2}
      The heat kernel $p(t,x,y)$ takes finite values and is continuous in $(t,x,y) \in \RNpp \times S \times S$.
  \end{enumerate}
\end{assum}

\begin{lem} \label{lem: J_1 tight implies spatial tight}
  Condition~\ref{assum item: weak J_1 continuity. 1} of Assumption~\ref{assum: weak J_1 continuity}
  implies Assumption~\ref{assum: spatial tightness}.
\end{lem}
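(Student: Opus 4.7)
The plan is to combine the continuity of the law map $\ProcLaw_X$ with compactness in $\Prob(D_{J_1}(\RNp, S))$ via Prohorov's theorem, and then use the standard fact that compact subsets of $D_{J_1}(\RNp, S)$ have their paths confined to a compact subset of $S$ on each bounded time interval.

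More precisely, fix $r, T > 0$ and $\varepsilon > 0$. Since $S$ is boundedly compact, the closed ball $S^{(r)} = D_S(\rho, r)$ is compact. By condition~\ref{assum item: weak J_1 continuity. 1}, the map $\ProcLaw_X \colon S \to \Prob(D_{J_1}(\RNp, S))$ is continuous, so $\ProcLaw_X(S^{(r)})$ is a compact subset of the Polish space $\Prob(D_{J_1}(\RNp, S))$. By Prohorov's theorem, this family is tight; hence there exists a compact subset $\mathcal{K} \subseteq D_{J_1}(\RNp, S)$ such that
\begin{equation}
  \sup_{x \in S^{(r)}} P^x(X \notin \mathcal{K}) \leq \varepsilon.
\end{equation}

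Next, I invoke the compactness criterion in the Skorohod space (e.g.\ \cite[Theorem~A.5.4]{Kallenberg_21_Foundations}, which is already used in the proof of Proposition~\ref{prop: PCAF to STOM map}), which implies that the set
\begin{equation}
  K_T \coloneqq \{ \xi(t) \mid \xi \in \mathcal{K},\, t \in [0, T] \}
\end{equation}
is relatively compact in $S$. Since $K_T$ is bounded, there exists $R_0 > 0$ such that $K_T \subseteq S^{(R_0)}$. Then for every $R \geq R_0$ and every $\xi \in \mathcal{K}$, we have $\xi([0, T]) \subseteq S^{(R)}$, so by the definition of $\exitTime_{S^{(R)}}$ (recall \eqref{eq: dfn of exit time}), $\exitTime_{S^{(R)}}(\xi) \geq T$. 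In particular, $\{X \in \mathcal{K}\} \cap \{\exitTime_{S^{(R)}} < T\} = \emptyset$, which yields
\begin{equation}
  \sup_{x \in S^{(r)}} P^x\!\left( \exitTime_{S^{(R)}} < T \right)
  \leq \sup_{x \in S^{(r)}} P^x(X \notin \mathcal{K}) \leq \varepsilon
\end{equation}
for all $R \geq R_0$. Letting $R \to \infty$ and then $\varepsilon \to 0$ gives the conclusion.

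The argument is essentially routine once the right compactness tools are in place, so there is no serious obstacle. The only point requiring a little care is to ensure that one may apply the Skorohod-space compactness criterion to a compact set of paths rather than to a tight family of measures directly; but this is immediate since any compact subset of a Polish space is in particular relatively compact, and the range-compactness property is a deterministic statement about the compact set $\mathcal{K}$ itself.
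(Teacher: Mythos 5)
Your proof is correct and follows essentially the same route as the paper: continuity of $\ProcLaw_X$ on the compact ball $S^{(r)}$ gives tightness of $\{\ProcLaw_X(x)\}_{x \in S^{(r)}}$ in $\Prob(D_{J_1}(\RNp, S))$, and the range-compactness property of $J_1$-compact (or tight) families then confines the trajectories up to time $T$ to a common compact subset of $S$ with high probability, yielding the spatial tightness condition. You merely spell out the Prohorov step and the deterministic range-compactness of the compact set $\mathcal{K}$ more explicitly than the paper, which cites the same fact directly for tight families.
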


\begin{proof}
  Condition~\ref{assum item: weak J_1 continuity. 1} of Assumption~\ref{assum: weak J_1 continuity}
  implies that, for each $r > 0$, the family $\{\ProcLaw_X(x)\}_{x \in S^{(r)}}$ 
  is tight as a collection of probability measures on $D_{J_1}(\RNp, S)$.
  Recall that $J_1$-tightness ensures that, for every $T>0$,
  the trajectories $\{X_t \mid t \le T\}$ starting from points in $S^{(r)}$
  remain, with high probability, within a common compact subset of $S$
  (see \cite[Theorem~23.8]{Kallenberg_21_Foundations}).
  Hence, Assumption~\ref{assum: spatial tightness} follows.
  This completes the proof.
\end{proof}

\begin{rem}
  Condition~\ref{assum item: weak J_1 continuity. 1} of Assumption~\ref{assum: weak J_1 continuity} 
  holds for a wide class of processes.
  For instance, any conservative Feller process enjoys this property.
  (This follows from the Aldous tightness criterion for the $J_1$-Skorohod topology;
  see \cite[Theorem~23.11]{Kallenberg_21_Foundations}.)
\end{rem}

\begin{rem}
  The joint continuity of the heat kernel is more delicate 
  than the weak continuity of the law map.
  There is no simple sufficient condition for this in general,
  but sufficient conditions are known in several important frameworks.
  \begin{enumerate}[label=\textup{(\alph*)}]
    \item When $X$ is a L\'{e}vy process, 
      the integrability of its characteristic function is sufficient 
      (see \cite[Proposition~2.5(xii)]{Sato_99_Levy}).
    \item When $X$ is associated with a regular strongly local symmetric Dirichlet form,
      the parabolic Harnack inequality is sufficient 
      \cite{Barlow_Grigoryan_Kumagai_12_PHIandHKE}.
    \item Any process associated with a resistance form 
      (see Section~\ref{sec: Preliminary results on stoch proc on resis sp} below)
      admits a unique jointly continuous heat kernel.
  \end{enumerate}
\end{rem}

In the theorem below,
we show that, 
under the weak $L^0$-continuity condition (Assumption~\ref{assum: weak L^0 continuity}),
the continuity of $\ProcLaw_X$ is inherited by the joint law $\ProcLaw_{(X, A)}$ for any PCAF $A$ in the Kato class.

\begin{thm} \label{thm: delta map approx for Kato}
  Suppose that Assumption~\ref{assum: weak L^0 continuity} is satisfied.
  Let $\mu$ be a finite Borel measure in the Kato class and let $A = (A_t)_{t \geq 0}$ be the associated PCAF.
  For each $\delta > 0$, let $A^{(\delta, *)}$ denote the PCAF defined in \eqref{prop: delta approx of PCAF in Kato}.
  Then the map
  \begin{equation}   \label{thm eq: 1. approx of PCAF in Kato}
    \ProcLaw_{(X, A^{(\delta, *)})} \colon S \to \Prob\bigl( D_{L^0}(\RNp, S) \times \upC(\RNp, \RNp) \bigr)
  \end{equation}
  is continuous for each $\delta > 0$.
  Likewise, $\ProcLaw_{(X, A)}$ is continuous.
  Moreover,
  \begin{equation}  \label{thm eq: 2. approx of PCAF in Kato}
    \ProcLaw_{(X, A^{(\delta, *)})} \xlongrightarrow[\delta \to 0]{} \ProcLaw_{(X, A)}
    \quad \text{in}\quad 
    C\bigl(S, \Prob\bigl( D_{L^0}(\RNp, S) \times \upC(\RNp, \RNp) \bigr)\bigr).
  \end{equation}
\end{thm}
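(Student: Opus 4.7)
The plan is to view $A^{(\delta,*)}$ as an absolutely continuous additive functional $A^{(\delta,*)}_t = \int_0^t f_\delta(X_s)\,ds$ with density $f_\delta(y) \coloneqq \delta^{-1}\int_\delta^{2\delta}\!\int_S p(u,y,z)\,\mu(dz)\,du$, and to first establish that $f_\delta$ is bounded and continuous. Boundedness is immediate from the Kato property of $\mu$ via Proposition~\ref{prop: Potential and hk behavior}. For continuity, the key observation is that applying the Chapman--Kolmogorov equation to $p(u,y,z)$ at time $\delta/2$ and using Fubini yields $f_\delta(y) = \int_S p(\delta/2,y,w)\,g_\delta(w)\,m(dw)$, where $g_\delta(w) \coloneqq \delta^{-1}\int_{\delta/2}^{3\delta/2}(p_v \mu)(w)\,dv$ is bounded and Borel by the Kato hypothesis (again via Proposition~\ref{prop: Potential and hk behavior}). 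Now if $y_n \to y$ in $S$, then joint continuity of $p$ gives $p(\delta/2,y_n,\cdot) \to p(\delta/2,y,\cdot)$ pointwise on $S$, and conservativeness forces each of these nonnegative densities to integrate against $m$ to $1$; Scheff\'{e}'s lemma then upgrades pointwise convergence to $L^1(m)$-convergence, and pairing with the bounded $g_\delta$ delivers $f_\delta(y_n)\to f_\delta(y)$.

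Given this, I define the map $G_\delta \colon L^0(\RNp, S) \to L^0(\RNp, S) \times \upC(\RNp, \RNp)$ by $G_\delta(\eta) \coloneqq \bigl(\eta,\, t \mapsto \int_0^t f_\delta(\eta_s)\,ds\bigr)$ and show it is continuous. Indeed, if $\eta_n \to \eta$ in $L^0$, Lemma~\ref{lem: conv in L^0} ensures that every subsequence admits a further subsequence along which $\eta_n(s) \to \eta(s)$ for Lebesgue-a.e.\ $s$; continuity and boundedness of $f_\delta$ combined with the bounded convergence theorem then give pointwise convergence of the PCAF components, which by monotonicity and continuity of the limit improves to compact convergence in $\upC(\RNp,\RNp)$. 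Since $\ProcLaw_{(X, A^{(\delta,*)})}(x) = \ProcLaw_X(x)\circ G_\delta^{-1}$, the continuous mapping theorem together with Assumption~\ref{assum: weak L^0 continuity}\ref{assum item: weak L^0 continuity. 1} yields continuity of $\ProcLaw_{(X, A^{(\delta,*)})}$.

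The final step combines these with the uniform-in-$x$ approximation of Proposition~\ref{prop: delta approx of PCAF in Kato}, namely $\sup_{x\in S} E^x[\sup_{0\le t\le T}|A^{(\delta,*)}_t - A_t|^2] \to 0$ as $\delta \to 0$. Since $(X,A^{(\delta,*)})$ and $(X,A)$ live on a common probability space and share the same first coordinate, the natural $L^0 \times \upC$ distance between them is controlled solely by differences of the PCAF components, and so Markov's inequality together with Lemma~\ref{lem: Prohorov estimate} produces $\sup_{x \in S} \ProhMet\bigl(\ProcLaw_{(X,A^{(\delta,*)})}(x), \ProcLaw_{(X,A)}(x)\bigr) \to 0$, which is exactly \eqref{thm eq: 2. approx of PCAF in Kato}. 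Continuity of $\ProcLaw_{(X,A)}$ is then inherited as a uniform limit of the continuous maps $\ProcLaw_{(X, A^{(\delta,*)})}$. The main obstacle in this scheme is the continuity of $f_\delta$: weak $L^0$-continuity of $\ProcLaw_X$ is strictly weaker than Feller continuity and does not by itself imply that the semigroup maps bounded Borel functions into continuous ones, so the Chapman--Kolmogorov decomposition combined with conservativeness (entering through Scheff\'{e}) is essential.
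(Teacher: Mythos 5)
Your proposal is correct, and its overall skeleton (continuity of each $\ProcLaw_{(X,A^{(\delta,*)})}$ via the continuous mapping theorem, then the uniform-in-$x$ bound of Proposition~\ref{prop: delta approx of PCAF in Kato} plus Markov's inequality and Lemma~\ref{lem: Prohorov estimate} to pass to $\ProcLaw_{(X,A)}$ as a uniform limit of continuous maps) coincides with what the paper sketches. Where you genuinely diverge is in the key continuity step. The paper defers to the local Kato machinery (Lemma~\ref{lem: apPCAF continuity} and its proof in Appendix~\ref{appendix: Lemma of PCAF/STOM approx}), which never establishes continuity of the density $f_\delta$ on $S$: it exchanges the order of integration, integrating over time first to produce a function of the $\mu$-variable, proves compact convergence of these functions using joint continuity of $p$ on products of compacta (made available by the smooth truncation $\tilde\nu^{(R)}$ of the measure and the cutoff $\chi^{(r)}$ of the path), and then pairs against the measure via Lemma~\ref{lem: vague convergence and hatC topology}. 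You instead prove outright that $f_\delta$ is a bounded continuous function via the Chapman--Kolmogorov factorization at time $\delta/2$, conservativeness, and Scheff\'e's lemma, after which continuity of $\eta \mapsto \int_0^\cdot f_\delta(\eta_s)\,ds$ on $L^0$ is elementary. Your route is cleaner and, notably, handles the full (non-compactly-supported) finite Kato measure directly -- something a verbatim transfer of the local Kato argument would need an extra truncation step for, a point the paper's ``details omitted'' glosses over. The trade-off is that your device does not carry over to the setting of the main theorems in Section~\ref{sec: conv of PCAFs and STOMs}, where $p_n$ and $\mu_n$ vary along the sequence and no single density $f_\delta$ can be fixed; there the paper's order-exchange and truncation argument is the one that scales. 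Your closing diagnosis -- that weak $L^0$-continuity of $\ProcLaw_X$ is far from a Feller property, so some substitute (here, joint continuity of $p$ fed through Scheff\'e) is genuinely needed for the continuity of $f_\delta$ -- is accurate.
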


\begin{proof}
  The proof proceeds as follows.
  Using the joint continuity of the heat kernel, we first verify the continuity of $\ProcLaw_{(X, A^{(\delta, *)})}$.
  Then, by Proposition~\ref{prop: delta approx for local Kato},
  we deduce that $\ProcLaw_{(X, A^{(\delta, *)})}$ converges to $\ProcLaw_{(X, A)}$ as $\delta \to 0$ locally uniformly,
  which yields the desired conclusion.
  The details are omitted here, 
  since the argument follows the same line as in the case of PCAFs in the local Kato class, 
  which will be treated next.
\end{proof}

We next consider PCAFs in the local Kato class.
We first state basic properties of the approximation map $\apPCAF^{(\delta, R)}$ introduced in Definition~\ref{dfn: apPCAF}.
The proof is deferred to Appendix~\ref{appendix: Lemma of PCAF/STOM approx}.

\begin{lem} \label{lem: apPCAF continuity}
  Fix $\delta > 0$ and $R > 1$.
  \begin{enumerate} [label = \textup{(\roman*)}]
    \item \label{lem item: apPCAF continuity. 1}
      The map $\apPCAF^{(\delta, R)}$ is Borel measurable.
    \item \label{lem item: apPCAF continuity. 2}
      Fix an arbitrary sequence $(q_n, \nu_n, \eta_n)_{n \geq 1}$ converging to $(q, \nu, \eta)$ in the domain of $\apPCAF^{(\delta, R)}$.
      Assume that
      \begin{equation}  \label{lem eq: apPCAF continuity}
        \sup_{n \geq 1} 
        \sup_{y \in S} \int_{\delta}^{2\delta} \int_S q_n(u, y, z)\, \nu_n^{(R)}(dz)\, du < \infty.
      \end{equation}
      Then $\apPCAF^{(\delta, R)}(q_n, \nu_n, \eta_n)$ converges to $\apPCAF^{(\delta, R)}(q, \nu, \eta)$ in $\upC(\RNp, \RNp)$.
  \end{enumerate}
\end{lem}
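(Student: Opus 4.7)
The plan is to prove (ii) first, as the machinery developed there will yield (i) with little extra effort. The central idea is to factor $\apPCAF^{(\delta,R)}$ through a jointly continuous scalar function
\[
g(q,\nu,y) \coloneqq \frac{1}{\delta}\int_S\int_\delta^{2\delta} q(u,y,x)\,du\,\tilde{\nu}^{(R)}(dx),
\]
after which the dependence on $\eta$ is controlled by standard $L^0$-subsequence arguments.

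For (ii), I would first establish that $g\colon C(\RNpp\times S\times S,\RNp)\times\Meas(S)\times S\to\RNp$ is jointly continuous. By Lemma~\ref{lem: smooth truncation of meas}, $\tilde{\nu}_n^{(R)}\to\tilde{\nu}^{(R)}$ weakly, with all supports contained in the common compact $S^{(R)}$; combined with the compact-convergence of $q_n\to q$, this yields joint continuity of $g$ via Lemma~\ref{lem: vague convergence and hatC topology}. Writing $g_n(\cdot)\coloneqq g(q_n,\nu_n,\cdot)$, the uniform bound \eqref{lem eq: apPCAF continuity} gives $\sup_{n,y}g_n(y)\le M<\infty$. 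To show pointwise convergence of $\apPCAF^{(\delta,R)}(q_n,\nu_n,\eta_n)(t)=\int_0^t g_n(\eta_n(s))\,ds$ to $\int_0^t g(\eta(s))\,ds$ for each $t\ge 0$, I would argue along subsequences: any subsequence admits, by Lemma~\ref{lem: conv in L^0}, a further subsequence $(n_k)$ along which $\eta_{n_k}(s)\to\eta(s)$ for Lebesgue-a.e.\ $s$; joint continuity of $g$ then delivers $g_{n_k}(\eta_{n_k}(s))\to g(\eta(s))$ a.e., and the uniform bound $M$ permits bounded convergence. Since every subsequence has a further convergent subsequence, the full sequence converges pointwise in $t$, which the remark following Definition~\ref{dfn: space for PCAF} identifies with convergence in $\upC(\RNp,\RNp)$.

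For (i), I would use the fact that the topology on $\upC(\RNp,\RNp)$ is that of pointwise convergence, so its Borel $\sigma$-algebra is generated by the evaluation maps $f\mapsto f(t)$ at a countable dense set of times. Thus measurability of $\apPCAF^{(\delta,R)}$ reduces to Borel measurability, for each $t\ge0$, of
\[
\Phi_t(q,\nu,\eta)\;\coloneqq\;\int_0^t g(q,\nu,\eta(s))\,ds.
\]
Approximating from below by the truncations $\Phi_t^N(q,\nu,\eta)\coloneqq\int_0^t(g(q,\nu,\eta(s))\wedge N)\,ds$, I would apply exactly the subsequence-plus-bounded-convergence argument from (ii), now with the deterministic bound $N$ replacing $M$ and without requiring \eqref{lem eq: apPCAF continuity}, to conclude that each $\Phi_t^N$ is continuous. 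Since $\Phi_t=\lim_N\Phi_t^N$ is an increasing limit of continuous maps, it is Borel measurable, and (i) follows.

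The main obstacle will be verifying the joint continuity of $g$ in all three variables simultaneously. The smoothing $\nu\mapsto\tilde{\nu}^{(R)}$ is crucial here: without it, the plain restriction $\nu\mapsto\nu|_{S^{(R)}}$ could be discontinuous at radii that carry mass, and the pairing of integrals against continuous functions would not pass to the limit. Once this continuity is secured by Lemma~\ref{lem: vague convergence and hatC topology}, both (i) and (ii) reduce to standard applications of the almost-everywhere subsequence characterization of $L^0$ convergence.
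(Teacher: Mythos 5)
Your proof is correct, but it takes a genuinely different route from the paper's. The paper does not argue the PCAF case directly: it first proves the STOM analogue (Lemma~\ref{lem: apSTOM continuity}) by pairing against bounded continuous test functions, building continuous approximants $\apSTOM^{(\delta,R,r)}$ via the \emph{spatial} cutoffs $\chi^{(r)}(\eta(t))$ from \eqref{lem pr: smooth truncation of meas. 1}, obtaining measurability as a pointwise limit in $r$, and controlling the cutoff error uniformly in $n$ through the hypothesis \eqref{lem eq: apPCAF continuity} combined with Lemma~\ref{lem: range compactness wrt L^0 topology}; the PCAF statement is then deduced by composing with $\Sigma \mapsto \Sigma(S\times[0,\cdot])$ and using that $\upC(\RNp,\RNp)$ is Borel in $L^0(\RNp,\RNp)$. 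You instead factor everything through the scalar kernel $g(q,\nu,y)$, whose joint continuity you correctly obtain from Lemma~\ref{lem: smooth truncation of meas} and Lemma~\ref{lem: vague convergence and hatC topology} (and your remark about why the smooth truncation $\tilde{\nu}^{(R)}$ rather than the plain restriction is essential is exactly the paper's reason for introducing it). For (ii) you then apply bounded convergence directly, since \eqref{lem eq: apPCAF continuity} together with $\tilde{\nu}_n^{(R)}\le\nu_n^{(R)}$ gives the global bound $\sup_{n,y}g_n(y)\le M/\delta$; for (i) you truncate the \emph{value} of $g$ at $N$ rather than cutting off in space. Both devices are legitimate, and your argument is shorter and more self-contained for the PCAF case; the trade-off is that a scalar domination argument of this kind does not transfer to the measure-valued STOM map, so the paper's spatial-cutoff machinery still has to be developed separately, whereas in the paper's organization the PCAF lemma comes for free once the STOM lemma is proved. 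The only points worth making explicit in a polished write-up are (a) that $\apPCAF^{(\delta,R)}(q,\nu,\eta)$ is indeed finite-valued (the closure of $\eta([0,t])$ is compact for a c\`adl\`ag path, and $g(q,\nu,\cdot)$ is bounded on compacts), so the map genuinely lands in $\upC(\RNp,\RNp)$, and (b) the Dini-type fact that pointwise convergence of non-decreasing functions to a continuous limit upgrades to convergence in $\upC(\RNp,\RNp)$, which you correctly cite from the remark following Definition~\ref{dfn: space for PCAF}.
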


The above lemma immediately implies the continuity of the approximation $\ProcLaw_{(X, A^{(\delta, R)})}$, as stated below.

\begin{lem} \label{lem: continuity of delta map for local Kato}
  Suppose that Assumption~\ref{assum: weak L^0 continuity} is satisfied.
  Let $\mu$ be a Radon measure in the local Kato class and let $A = (A_t)_{t \geq 0}$ be the associated PCAF.
  Fix $\delta > 0$ and $R > 1$, and set 
  \begin{equation}
    A^{(\delta, R)} \coloneqq \apPCAF^{(\delta, R)}(p, \mu, X).
  \end{equation}
  Then the map
  \begin{equation}  
    \ProcLaw_{(X, A^{(\delta, R)})} \colon S \to \Prob\bigl( D_{L^0}(\RNp, S) \times \upC(\RNp, \RNp) \bigr)
  \end{equation}
  is continuous.
\end{lem}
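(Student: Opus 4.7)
The plan is to reduce the claim to an application of the continuous mapping theorem, using the separately-established continuity of the law map $\ProcLaw_X$ (Assumption~\ref{assum: weak L^0 continuity}) together with the continuity statement for $\apPCAF^{(\delta,R)}$ provided by Lemma~\ref{lem: apPCAF continuity}\ref{lem item: apPCAF continuity. 2}. Concretely, I would consider the map
\begin{equation}
  \Phi \colon D_{L^0}(\RNp, S) \to D_{L^0}(\RNp, S) \times \upC(\RNp, \RNp),
  \quad \eta \mapsto \bigl(\eta, \apPCAF^{(\delta,R)}(p, \mu, \eta)\bigr),
\end{equation}
and observe that $\ProcLaw_{(X, A^{(\delta,R)})}(x) = \Phi_{*} \ProcLaw_X(x)$ for each $x \in S$. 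Once $\Phi$ is shown to be continuous, the conclusion follows immediately: if $x_n \to x$ in $S$, then $\ProcLaw_X(x_n) \to \ProcLaw_X(x)$ weakly in $\Prob(D_{L^0}(\RNp, S))$ by Assumption~\ref{assum: weak L^0 continuity}\ref{assum item: weak L^0 continuity. 1}, and so $\Phi_* \ProcLaw_X(x_n) \to \Phi_* \ProcLaw_X(x)$ weakly.

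The first coordinate of $\Phi$ is the identity, so the continuity of $\Phi$ reduces to that of the second coordinate $\eta \mapsto \apPCAF^{(\delta,R)}(p, \mu, \eta)$. For this I would apply Lemma~\ref{lem: apPCAF continuity}\ref{lem item: apPCAF continuity. 2} with the constant sequences $q_n = p$ and $\nu_n = \mu$; since these do not vary with $n$, the uniform-in-$n$ hypothesis \eqref{lem eq: apPCAF continuity} reduces to the single requirement
\begin{equation}
  \sup_{y \in S} \int_\delta^{2\delta} \int_S p(u, y, z)\, \mu^{(R)}(dz)\, du < \infty.
\end{equation}

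The key step is verifying this finiteness, which uses the local Kato hypothesis on $\mu$. Since $\mu$ is in the local Kato class and $D_S(\rho, R)$ is compact (as $(S, d_S)$ is boundedly compact), the restriction $\mu^{(R)} = \mu|_{D_S(\rho, R)}$ belongs to the Kato class. By Proposition~\ref{prop: Potential and hk behavior}, the above integral is bounded above by $e \|\Potential^{1/(2\delta)} \mu^{(R)}\|_\infty$, which is finite. This delivers the required bound, so Lemma~\ref{lem: apPCAF continuity}\ref{lem item: apPCAF continuity. 2} applies and $\Phi$ is continuous.

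I do not anticipate a serious obstacle here, as both main ingredients (the weak $L^0$-continuity of $\ProcLaw_X$ and the continuity lemma for $\apPCAF^{(\delta,R)}$) have been set up precisely to make this argument go through. The only point requiring care is ensuring that the test functions used for weak convergence can be pulled back through $\Phi$ without losing measurability, but this is automatic once $\Phi$ is shown to be continuous (which also entails Borel measurability, consistent with Lemma~\ref{lem: apPCAF continuity}\ref{lem item: apPCAF continuity. 1}).
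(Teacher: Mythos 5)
Your proposal is correct and follows essentially the same route as the paper, which proves this lemma as an immediate consequence of Assumption~\ref{assum: weak L^0 continuity}, Lemma~\ref{lem: apPCAF continuity}\ref{lem item: apPCAF continuity. 2}, and the continuous mapping theorem. You simply spell out the details the paper leaves implicit, in particular the verification of the finiteness condition \eqref{lem eq: apPCAF continuity} via the local Kato hypothesis and Proposition~\ref{prop: Potential and hk behavior}, which is a correct and worthwhile addition.
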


\begin{proof}
  This is an immediate consequence of the assumption, Lemma~\ref{lem: apPCAF continuity}\ref{lem item: apPCAF continuity. 2},
  and the continuous mapping theorem.
\end{proof}

Using the approximation of $\ProcLaw_{(X, A^{(*, R)})}$ by $\ProcLaw_{(X, A^{(\delta, R)})}$ 
established in Proposition~\ref{prop: delta approx for local Kato}, 
we show that $\ProcLaw_{(X, A^{(*, R)})}$ is also continuous.

\begin{prop} \label{prop: delta map approx for local Kato}
  Suppose that Assumption~\ref{assum: weak L^0 continuity} is satisfied.
  Let $\mu$ be a Radon measure in the local Kato class, and let $A = (A_t)_{t \geq 0}$ be the associated PCAF.
  Fix $R > 1$.
  For each $\delta > 0$, define 
  \begin{equation} \label{prop eq: delta map approx for local Kato. 1}
    A^{(\delta, R)} \coloneqq \apPCAF^{(\delta, R)}(p, \mu, X),
    \qquad 
    A^{(*, R)} \coloneqq \apPCAF^{(*, R)}(X, A).
  \end{equation}
  Then the following statements hold.
  \begin{enumerate} [label = \textup{(\roman*)}]
    \item \label{prop item: delta map approx for local Kato. 1}
      The map $\ProcLaw_{(X, A^{(*, R)})}$ is continuous.
    \item \label{prop item: delta map approx for local Kato. 2}
      Let $C_1, C_2 > 0$ be the universal constants appearing in Proposition~\ref{prop: delta approx of PCAF in Kato}.
      Then, for any $N \in \NN$, $\alpha \in (0,1)$, and $\delta \in (0,1)$, we have
      \begin{align}
        &\hatCMet{S}{\Prob(L^0(\RNp, S) \times \upC(\RNp, \RNp))}\!
        \left(\ProcLaw_{(X, A^{(\delta, R)})}, \ProcLaw_{(X, A^{(*,R)})}\right)\\
        &\leq
        2^{-N+1} 
        +
        2^{2N}
        C_1 \bigl\| \Potential^\alpha \mu^{(R)} \bigr\|_\infty
        \Bigl(
          \delta \alpha 
          \bigl\| \Potential^\alpha \mu^{(R)} \bigr\|_\infty
          + 
          \bigl\| \Potential^{1/\delta}\!\mu^{(R)} \bigr\|_\infty
        \Bigr)\\
        &\quad
        +
        C_2 (2e)^{2N} (1 - e^{-\alpha N}) \bigl\| \Potential^1 \mu^{(R)} \bigr\|_\infty^2.
        \label{prop eq: delta map approx for local Kato. 2}
      \end{align}
    \item \label{prop item: delta map approx for local Kato. 3}
      It holds that 
      \begin{equation}  \label{thm eq: 2. apprx of PCAF in local Kato}
        \ProcLaw_{(X, A^{(\delta, R)})} \xlongrightarrow[\delta \to 0]{} \ProcLaw_{(X, A^{(*, R)})}
        \quad \text{in}\quad 
        C\bigl(S, \Prob\bigl( D_{L^0}(\RNp, S) \times \upC(\RNp, \RNp) \bigr)\bigr).
      \end{equation}
  \end{enumerate}
\end{prop}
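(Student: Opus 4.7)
I will derive all three claims from a single pointwise Prohorov estimate obtained by combining the moment bound of Proposition~\ref{prop: delta approx for local Kato} with Lemma~\ref{lem: Prohorov estimate} and Markov's inequality, together with the continuity of $\ProcLaw_{(X, A^{(\delta, R)})}$ already established in Lemma~\ref{lem: continuity of delta map for local Kato}. Throughout, write $\cY \coloneqq L^0(\RNp, S) \times \upC(\RNp, \RNp)$ and denote by $\ProhMet{\cY}$ the Prohorov metric on $\Prob(\cY)$.

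Fix $N \in \NN$ and $\delta > 0$, and set $M_N \coloneqq \sup_{0 \leq t \leq N}\bigl|A^{(\delta, R)}_t - A^{(*, R)}_t\bigr|$. Since the first coordinates of $(X, A^{(\delta, R)})$ and $(X, A^{(*, R)})$ coincide, the max product metric on $\cY$ reduces to the $\upC$-distance between the two PCAFs; splitting the geometric sum defining $d_{\upC(\RNp, \RNp)}$ (see \eqref{dfn eq: metric on upC}) at $n = N$ gives
\begin{equation}
  d_{\upC(\RNp, \RNp)}\bigl(A^{(\delta, R)}, A^{(*, R)}\bigr) \leq 2^{-N} + M_N.
\end{equation}
On the event $\{M_N \leq 2^{-N}\}$ the joint distance is therefore at most $2^{-N+1}$, so Lemma~\ref{lem: Prohorov estimate} together with Markov's inequality yields, for every $x \in S$,
\begin{equation}
  \ProhMet{\cY}\bigl(\ProcLaw_{(X, A^{(\delta, R)})}(x),\, \ProcLaw_{(X, A^{(*, R)})}(x)\bigr) \leq 2^{-N+1} + 2^{2N}\, E^x[M_N^2].
\end{equation}
Taking the supremum over $x \in S$ and inserting Proposition~\ref{prop: delta approx for local Kato} with $T = N$ gives exactly the right-hand side of \eqref{prop eq: delta map approx for local Kato. 2} as a uniform bound on the pointwise Prohorov distance.

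Since $\mu^{(R)}$ lies in the Kato class, $\|\Potential^{1/\delta}\mu^{(R)}\|_\infty \to 0$ as $\delta \to 0$, and the uniform bound above can thus be made arbitrarily small by choosing $N$ large, then $\alpha$ small, and finally $\delta$ small. As each $\ProcLaw_{(X, A^{(\delta, R)})}$ is continuous by Lemma~\ref{lem: continuity of delta map for local Kato}, its uniform limit $\ProcLaw_{(X, A^{(*, R)})}$ is also continuous, which proves \ref{prop item: delta map approx for local Kato. 1}. For \ref{prop item: delta map approx for local Kato. 2}, note that both law maps have domain $S$, so for every $r > 0$ one has $\hatCcMet{S}{\Prob(\cY)}(f_1|^{(r)}, f_2|^{(r)}) \leq \sup_{x \in S} \ProhMet{\cY}(f_1(x), f_2(x))$, and integrating against $e^{-r}\, dr$ in \eqref{eq: dfn of hatC metric} yields
\begin{equation}
  \hatCMet{S}{\Prob(\cY)}\bigl(\ProcLaw_{(X, A^{(\delta, R)})}, \ProcLaw_{(X, A^{(*, R)})}\bigr) \leq \sup_{x \in S} \ProhMet{\cY}\bigl(\ProcLaw_{(X, A^{(\delta, R)})}(x),\, \ProcLaw_{(X, A^{(*, R)})}(x)\bigr),
\end{equation}
which combined with the uniform pointwise bound gives \eqref{prop eq: delta map approx for local Kato. 2}. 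Finally, \ref{prop item: delta map approx for local Kato. 3} follows from \ref{prop item: delta map approx for local Kato. 2} by applying the same ordered limit procedure ($\delta \to 0$, then $\alpha \to 0$, then $N \to \infty$) to the right-hand side, observing that $\hatC$-convergence is equivalent to compact convergence in $C(S, \Prob(\cY))$ for functions with full domain $S$.

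The only delicate point is the ordering of limits: the factors $2^{2N}$ and $(2e)^{2N}$ appearing in the quantitative bound must be absorbed by letting $\delta \to 0$ with $N$ and $\alpha$ held fixed; only afterwards can $\alpha \to 0$ and $N \to \infty$ be taken. This balance, forced by the geometric weights of the $\upC$ metric and the exponential time-dependence in the moment bound, is the sole piece of bookkeeping that requires care—all the analytical content is already supplied by Proposition~\ref{prop: delta approx for local Kato}.
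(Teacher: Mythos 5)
Your proposal is correct and follows essentially the same route as the paper: the event $\{M_N \le 2^{-N}\}$ combined with Markov's inequality and Lemma~\ref{lem: Prohorov estimate} to get the uniform pointwise Prohorov bound from Proposition~\ref{prop: delta approx for local Kato}, the passage to the $\hatC$ metric via the graph/uniform-distance comparison, and the ordered limits $\delta \to 0$, $\alpha \to 0$, $N \to \infty$ together with Lemma~\ref{lem: continuity of delta map for local Kato} to conclude continuity and locally uniform convergence. The only cosmetic difference is that you integrate the uniform bound directly against $e^{-r}\,dr$ rather than invoking Lemma~\ref{lem: hatC metric simple estimate}, which yields the same (in fact marginally cleaner) estimate.
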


\begin{proof}
  We prove the assertions simultaneously.
  For each $N \in \NN$, define 
  \begin{equation}
    E_N \coloneqq \left\{\sup_{0 \leq t \leq N} |A^{(\delta, R)}_t - A^{(*,R)}_t| \leq 2^{-N} \right\}.
  \end{equation}
  By Lemma~\ref{prop: delta approx for local Kato} and Markov's inequality, we have
  \begin{align}
    \sup_{x \in S}P^x(E_N^c) 
    &\leq 
    2^{2N}
    C_1 \bigl\| \Potential^\alpha \mu^{(R)} \bigr\|_\infty
    \Bigl(
      \delta \alpha 
      \bigl\| \Potential^\alpha \mu^{(R)} \bigr\|_\infty
      + 
      \bigl\| \Potential^{1/\delta}\!\mu^{(R)} \bigr\|_\infty
    \Bigr)\\
    &\quad
    +
    C_2 (2e)^{2N} (1 - e^{-\alpha N}) \bigl\| \Potential^1 \mu^{(R)} \bigr\|_\infty^2.
  \end{align}
  On the event $E_N$, we have 
  \begin{equation}
    d_{\upC(\RNp, \RNp)}(A^{(\delta, R)}, A^{(*, R)}) \leq 2^{-N+1},
  \end{equation}
  where we recall the metric from \eqref{dfn eq: metric on upC}.
  Therefore, by Lemma~\ref{lem: Prohorov estimate}, we obtain 
  \begin{align} 
    & \sup_{x \in S} \ProhMet{L^0(\RNp, S) \times \upC(\RNp, \RNp)}\!
    \left(\ProcLaw_{(X, A^{(\delta, R)})}(x), \ProcLaw_{(X, A^{(*,R)})}(x)\right)\\
    &\leq
    2^{-N+1} 
    +
    2^{2N}
    C_1 \bigl\| \Potential^\alpha \mu^{(R)} \bigr\|_\infty
    \Bigl(
      \delta \alpha 
      \bigl\| \Potential^\alpha \mu^{(R)} \bigr\|_\infty
      + 
      \bigl\| \Potential^{1/\delta}\!\mu^{(R)} \bigr\|_\infty
    \Bigr)\\
    &\quad
    +
    C_2 (2e)^{2N} (1 - e^{-\alpha N}) \bigl\| \Potential^1 \mu^{(R)} \bigr\|_\infty^2.
    \label{thm pr: delta map approx for local Kato. 1}
  \end{align}
  Letting $\delta \to 0$, then $\alpha \to 0$, and finally $N \to \infty$ in the above inequality yields 
  \begin{equation} 
    \lim_{\delta \to 0}
    \sup_{x \in S} \ProhMet{L^0(\RNp, S) \times \upC(\RNp, \RNp)}\!
    \left(\ProcLaw_{(X, A^{(\delta, R)})}(x), \ProcLaw_{(X, A^{(*,R)})}(x)\right)
    = 0.
  \end{equation}
  This, combined with Lemma~\ref{lem: continuity of delta map for local Kato}, 
  yields \ref{prop item: delta map approx for local Kato. 1} and \ref{prop item: delta map approx for local Kato. 3}. 
  Finally, applying Lemma~\ref{lem: hatC metric simple estimate} to~\eqref{thm pr: delta map approx for local Kato. 1}
  gives \ref{prop item: delta map approx for local Kato. 2}.
\end{proof}

Finally, using Proposition~\ref{prop: R approx in local Kato} and the spatial tightness condition, 
we verify that the continuity of $\ProcLaw_{(X, A^{(*, R)})}$ established above 
is inherited by $\ProcLaw_{(X, A)}$, as stated below.

\begin{thm} \label{thm: R map approx for local Kato}
  Suppose that Assumptions~\ref{assum: weak L^0 continuity} and \ref{assum: spatial tightness} are satisfied.
  Let $\mu$ be a Radon measure in the local Kato class and let $A = (A_t)_{t \geq 0}$ be the associated PCAF.
  For each $R > 1$, set 
  \begin{equation} \label{thm eq: R map approx for local Kato. 1}
    A^{(*, R)} \coloneqq \apPCAF^{(*, R)}(X, A).
  \end{equation}
  Then the following statements hold.
  \begin{enumerate} [label = \textup{(\roman*)}]
    \item \label{thm item: R map approx for local Kato. 1}
      The map $\ProcLaw_{(X, A)}$ is continuous.
    \item \label{thm item: R map approx for local Kato. 2}
      It holds that, for any $r > 0$, $N \in \NN$, and $R > 1$,
      \begin{align}
        &\hatCMet{S}{\Prob(L^0(\RNp, S) \times \upC(\RNp, \RNp))}\!\left(\ProcLaw_{(X, A^{(*, R)})}, \ProcLaw_{(X, A)}\right)\\
        &\leq
        e^{-r}
        +
        2^{-N} + 
        \sup_{x \in S^{(r)}}
        P^x\!\left( \exitTime_{S^{(R-1)}} \leq N \right).
        \label{thm eq: R map approx for local Kato. 2}
      \end{align}
    \item \label{thm item: R map approx for local Kato. 3}
      It holds that 
      \begin{equation} \label{thm eq: R map approx for local Kato. 3}
        \ProcLaw_{(X, A^{(*, R)})} \xlongrightarrow[R \to \infty]{} \ProcLaw_{(X, A)}
        \quad \text{in}\quad 
        C\bigl(S, \Prob\bigl( D_{L^0}(\RNp, S) \times \upC(\RNp, \RNp) \bigr)\bigr).
      \end{equation}
  \end{enumerate}
\end{thm}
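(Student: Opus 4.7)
The plan is to establish the quantitative estimate~\eqref{thm eq: R map approx for local Kato. 2} of \ref{thm item: R map approx for local Kato. 2} first, then derive \ref{thm item: R map approx for local Kato. 3} by letting $R \to \infty$ together with appropriately chosen spatial and temporal cutoffs, and finally to obtain \ref{thm item: R map approx for local Kato. 1} as a byproduct, using that each approximant $\ProcLaw_{(X, A^{(*, R)})}$ is already known to be continuous by Proposition~\ref{prop: delta map approx for local Kato}\ref{prop item: delta map approx for local Kato. 1}. The key structural input is the exact pathwise identity~\eqref{prop eq: R approx in local Kato. 1} from Proposition~\ref{prop: R approx in local Kato}, which converts a probabilistic control of the exit time from $S^{(R-1)}$ into a deterministic control of $d_{\upC(\RNp, \RNp)}(A^{(*, R)}, A)$.

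For the bound, I first fix $N \in \NN$ and observe that~\eqref{prop eq: R approx in local Kato. 1} forces $A^{(*, R)} = A$ on $[0, N]$ whenever $\exitTime_{S^{(R-1)}} > N$. By the definition~\eqref{dfn eq: metric on upC} of $d_{\upC(\RNp, \RNp)}$, this coincidence gives $d_{\upC(\RNp, \RNp)}(A^{(*, R)}, A) \leq \sum_{n > N} 2^{-n} = 2^{-N}$ on that event; since the first coordinate is the common process $X$, the max-product distance between $(X, A^{(*, R)})$ and $(X, A)$ is bounded by $2^{-N}$ as well. Lemma~\ref{lem: Prohorov estimate} then yields, for every $x \in S$,
\begin{equation}
  \ProhMet{L^0(\RNp, S) \times \upC(\RNp, \RNp)}\!
  \bigl(\ProcLaw_{(X, A^{(*, R)})}(x), \ProcLaw_{(X, A)}(x)\bigr)
  \leq 2^{-N} + P^x\!\bigl(\exitTime_{S^{(R-1)}} \leq N\bigr),
\end{equation}
and taking the supremum over $x \in S^{(r)}$ and applying Lemma~\ref{lem: hatC metric simple estimate} with common domain $S^{(r)}$ produces exactly~\eqref{thm eq: R map approx for local Kato. 2}.

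Given this bound, \ref{thm item: R map approx for local Kato. 3} follows by first fixing $r > 0$ and $N \in \NN$, letting $R \to \infty$ so that Assumption~\ref{assum: spatial tightness} makes the final term vanish, and then letting $r, N \to \infty$. For \ref{thm item: R map approx for local Kato. 1}, since every $\ProcLaw_{(X, A^{(*, R)})}$ has full domain $S$ and is continuous, convergence in $\hatC(S, \Prob(\cdots))$ reduces to uniform convergence on compact subsets of $S$ (cf.\ Lemma~\ref{lem: conv in hatC}\ref{lem item: 3. conv in hatC}), and uniform-on-compacts limits of continuous maps are continuous. The main potential obstacle is keeping the three cutoffs---temporal $N$, spatial $R - 1$, and root-ball $r$---decoupled in the right order, so that the spatial tightness hypothesis can be invoked uniformly over $x \in S^{(r)}$ before sending $r \to \infty$; this is precisely the bookkeeping that the $\hatC$-metric framework, tailored to the rooted structure of $S$, is designed to handle.
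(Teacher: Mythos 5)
Your proposal is correct and follows essentially the same route as the paper: the event $\{\exitTime_{S^{(R-1)}} > N\}$, the pathwise identity from Proposition~\ref{prop: R approx in local Kato} giving $d_{\upC(\RNp,\RNp)}(A^{(*,R)},A)\leq 2^{-N}$, Lemma~\ref{lem: Prohorov estimate}, Lemma~\ref{lem: hatC metric simple estimate}, and the continuity of the approximants from Proposition~\ref{prop: delta map approx for local Kato}. The only difference is the order of presentation (you prove the quantitative bound \ref{thm item: R map approx for local Kato. 2} first and deduce the rest from it, whereas the paper derives \ref{thm item: R map approx for local Kato. 1} and \ref{thm item: R map approx for local Kato. 3} from the pointwise Prohorov estimate and records \ref{thm item: R map approx for local Kato. 2} at the end), which is immaterial.
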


\begin{proof}
  The proof follows the same line as that of the previous proposition.
  For each $N \in \NN$ and $R > 1$, define 
  \begin{equation}
    E_{N, R} \coloneqq \left\{ \exitTime_{S^{(R-1)}} > N \right\}.
  \end{equation}
  On this event, we have $A^{(*,R)}_t = A_t$ for all $t \leq N$, which implies that 
  \begin{equation}
    d_{\upC(\RNp, \RNp)}(A^{(*, R)}, A) \leq 2^{-N}.
  \end{equation}
  Thus, by Lemma~\ref{lem: Prohorov estimate},
  we obtain, for each $r > 0$,
  \begin{align} 
    &\sup_{x \in S^{(r)}} \ProhMet{L^0(\RNp, S) \times \upC(\RNp, \RNp)}\!
    \left(\ProcLaw_{(X, A^{(*, R)})}(x), \ProcLaw_{(X, A^{(R)})}(x)\right)\\
    &\leq
    2^{-N} 
    + 
    \sup_{x \in S^{(r)}}
    P^x\!\left( \exitTime_{S^{(R-1)}} \leq N \right).
    \label{thm pr: R map approx for local Kato. 1}
  \end{align}
  Letting $R \to \infty$ and then $N \to \infty$ in the above inequality, 
  the spatial tightness condition (Assumption~\ref{assum: spatial tightness}) yields
  \begin{equation}
    \lim_{R \to \infty}
    \sup_{x \in S^{(r)}} \ProhMet{L^0(\RNp, S) \times \upC(\RNp, \RNp)}\!
    \left(\ProcLaw_{(X, A^{(*, R)})}(x), \ProcLaw_{(X, A^{(R)})}(x)\right)
    = 0,
    \quad \forall r > 0.
  \end{equation}
  This, together with Proposition~\ref{prop: delta map approx for local Kato}\ref{prop item: delta map approx for local Kato. 1},
  establishes \ref{thm item: R map approx for local Kato. 1} and \ref{thm item: R map approx for local Kato. 3}.
  Finally, applying Lemma~\ref{lem: hatC metric simple estimate} to~\eqref{thm pr: R map approx for local Kato. 1}
  gives \ref{thm item: R map approx for local Kato. 2}.
\end{proof}

%%%%%%%%%%%%%%%%%%%%%%%%%%%%%%%%%%%%%%%%%%%%%%%%%%%%%%%%%%%%%%%%%%%%%%%%%%%%%%%%%%%%%%%%%%%%%%%%%%%%%%%%%%%%%%%%%%%%%%%%%%%%%%%%%%
% Smooth measures and PCAFs
%%%%%%%%%%%%%%%%%%%%%%%%%%%%%%%%%%%%%%%%%%%%%%%%%%%%%%%%%%%%%%%%%%%%%%%%%%%%%%%%%%%%%%%%%%%%%%%%%%%%%%%%%%%%%%%%%%%%%%%%%%%%%%%%%%
\subsection{Continuity of joint laws with STOMs} \label{sec: Continuity of joint laws with STOMs}

The aim of this subsection is to extend the results of the previous subsection to STOMs. 
The desired continuity of the joint laws of the process and STOMs 
in the Kato and local Kato classes is established in 
Theorems~\ref{thm: delta map STOM approx for Kato} and \ref{thm: R map STOM approx for local Kato} below.

The following is the STOM counterpart of Theorem~\ref{thm: delta map approx for Kato}.
Again, we omit the proof, as it is essentially the same as that for STOMs in the local Kato class discussed below.

\begin{thm} \label{thm: delta map STOM approx for Kato}
  Suppose that Assumption~\ref{assum: weak L^0 continuity} is satisfied.
  Let $\mu$ be a finite Borel measure in the Kato class and let $\Pi$ be the associated STOM.
  For each $\delta > 0$, let $\Pi^{(\delta, *)}$ denote the PCAF defined in \eqref{prop eq: 1. delta STOM approx in Kato}.
  Then the map
  \begin{equation}   
    \ProcLaw_{(X, \Pi^{(\delta, *)})} \colon S \to \Prob\bigl( D_{L^0}(\RNp, S) \times \STOMMeas(S \times \RNp) \bigr)
  \end{equation}
  is continuous for each $\delta > 0$.
  Likewise, $\ProcLaw_{(X, \Pi)}$ is continuous.
  Moreover,
  \begin{equation} 
    \ProcLaw_{(X, \Pi^{(\delta, *)})} \xlongrightarrow[\delta \to 0]{} \ProcLaw_{(X, \Pi)}
    \quad \text{in}\quad 
    C\bigl(S, \Prob\bigl( D_{L^0}(\RNp, S) \times \STOMMeas(S \times \RNp) \bigr)\bigr).
  \end{equation}
\end{thm}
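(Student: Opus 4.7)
The plan is to follow the two-stage strategy used for the PCAF analog in Theorem~\ref{thm: delta map approx for Kato}, which itself parallels Proposition~\ref{prop: delta map approx for local Kato}. First, I would establish continuity of each approximating law map $\ProcLaw_{(X, \Pi^{(\delta, *)})}$; second, I would upgrade the pointwise-in-$x$ convergence from Proposition~\ref{prop: delta STOM approx in Kato} to uniform-in-$x$ convergence. Since uniform limits of continuous functions are continuous, this simultaneously yields the continuity of $\ProcLaw_{(X, \Pi)}$ and the stated convergence in $C(S, \Prob(D_{L^0}(\RNp, S) \times \STOMMeas(S \times \RNp)))$.

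For the first stage, the key point is the continuity of the map
\begin{equation}
\Phi_\delta \colon L^0(\RNp, S) \to \STOMMeas(S \times \RNp), \qquad \xi \mapsto \frac{1}{\delta}\int_\delta^{2\delta} p(u, \xi_t, z)\, du\, \mu(dz)\, dt,
\end{equation}
which is the STOM analog of Lemma~\ref{lem: apPCAF continuity}\ref{lem item: apPCAF continuity. 2}. For any bounded continuous $g$ with $\supp(g) \subseteq S \times [0, T]$, one writes $\Phi_\delta(\xi)(g) = \int_0^T G(\xi_t, t)\, dt$, where $G(y, t) \coloneqq \delta^{-1}\int_\delta^{2\delta}\int_S g(z, t)\, p(u, y, z)\, \mu(dz)\, du$. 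The Kato-class property of $\mu$ together with Proposition~\ref{prop: Potential and hk behavior} renders $G$ bounded, while the joint continuity of $p$ (Assumption~\ref{assum: weak L^0 continuity}) and dominated convergence make $G$ jointly continuous in $(y, t)$. Lemma~\ref{lem: conv in L^0} then yields $\Phi_\delta(\xi_n)(g) \to \Phi_\delta(\xi)(g)$ whenever $\xi_n \to \xi$ in $L^0$, and Proposition~\ref{prop: conv in stom sp} converts this into convergence in $\STOMMeas(S \times \RNp)$. The continuous mapping theorem, combined with the assumed continuity of $\ProcLaw_X$, then gives continuity of $\ProcLaw_{(X, \Pi^{(\delta, *)})}$.

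For the second stage, the target is a uniform bound of the form
\begin{equation}
\sup_{x \in S} \ProhMet{L^0(\RNp, S) \times \STOMMeas(S \times \RNp)}\bigl(\ProcLaw_{(X, \Pi^{(\delta, *)})}(x),\, \ProcLaw_{(X, \Pi)}(x)\bigr) \xrightarrow[\delta \to 0]{} 0.
\end{equation}
The main obstacle is that Proposition~\ref{prop: delta STOM approx in Kato} supplies an $L^2$ bound on $|\Pi^{(\delta, *)}(E \times [s,t)) - \Pi(E \times [s,t))|$ only for each fixed Borel set $E$, whereas the STOM metric compares whole restrictions $\Pi^{(\delta, *)}|_{S \times [0, T]}$ and $\Pi|_{S \times [0, T]}$. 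I plan to bridge this by extending the $L^2$ estimate from indicator test sets to bounded continuous test functions: for any $f \colon S \times \RNp \to [0, 1]$ continuous with $\supp(f) \subseteq S \times [0, T]$, the argument of Proposition~\ref{prop: delta approx of PCAF in Kato} applied to the time-dependent Kato measure $f(y, t)\, \mu(dy)$, together with the moment formulas of Proposition~\ref{prop: moment formula for STOM}, should give $\sup_x E^x[|\Pi^{(\delta, *)}(f) - \Pi(f)|^2] \to 0$. Since both restrictions are supported on $\supp(\mu) \times [0, T]$ (Corollary~\ref{cor: support of STOM}) and carry uniformly bounded total mass (by $\sup_x E^x[A_T] < \infty$, which provides tightness of the space marginals), this test-function convergence upgrades to uniform Prohorov convergence of the restrictions, hence to uniform convergence in the STOM metric. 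Introducing events $E_N \coloneqq \{\STOMMet{S \times \RNp}(\Pi^{(\delta, *)}, \Pi) \le 2^{-N}\}$ and applying Lemma~\ref{lem: Prohorov estimate} as in the proof of Proposition~\ref{prop: delta map approx for local Kato} converts this pathwise bound into one on the laws, and letting $\delta \to 0$ and $N \to \infty$ completes the argument.
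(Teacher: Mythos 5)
Your first stage is sound and is essentially the paper's own argument: it amounts to Lemma~\ref{lem: apSTOM continuity}\ref{lem item: apSTOM continuity. 2} combined with the continuous mapping theorem. The genuine gap is in your second stage. You attempt a direct, pathwise, uniform-in-$x$ estimate of $\STOMMet{S \times \RNp}(\Pi^{(\delta,*)}, \Pi)$ --- precisely the route the paper declares unavailable just before Proposition~\ref{prop: delta map STOM approx for local Kato} (``unlike the case of PCAFs, we cannot directly estimate the distance \dots in $\STOMMeas(S \times \RNp)$''). The paper instead fixes sequences $x_n \to x$ and $\delta_n \to 0$, couples the processes via the Skorohod representation theorem, proves $\Pi^{(\delta_n,*)}(H) \to \Pi(H)$ in probability for each $H = E \times [s,t)$ in a dissecting semi-ring, and invokes \cite[Corollary~4.9(iii)]{Kallenberg_17_Random} to pass to convergence in probability in $\STOMMeas(S \times \RNp)$; Proposition~\ref{prop: conti is preserved in hatC} then delivers both the continuity of $\ProcLaw_{(X,\Pi)}$ and the locally uniform convergence. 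Kallenberg's criterion is exactly the device that converts set-wise convergence --- which Proposition~\ref{prop: delta STOM approx in Kato} supplies uniformly in $x$ --- into convergence of the random measures without ever producing a pathwise bound on the metric.

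Your substitute for that device does not close the gap. First, passing from ``for each fixed test function $f$, $\sup_x E^x[|\Pi^{(\delta,*)}(f) - \Pi(f)|^2] \to 0$'' to a uniform Prohorov bound on the restrictions requires controlling infinitely many test functions simultaneously on a single high-probability event, whereas the individual $L^2$ bounds give, for each $f$ separately, an exceptional event depending on $f$. Repairing this would require tightness plus a finite net of test functions metrizing the Prohorov distance on the resulting compact set of measures, but your tightness claim is itself unjustified: bounded total mass does not prevent mass from escaping to infinity when $S$ is non-compact, $\supp(\mu)$ need not be compact for a finite Kato-class measure, and the bound one would need, $\sup_{x} \Potential^1(\mu|_{K^c})(x) \to 0$ as $K \uparrow S$, is not a consequence of the Kato hypothesis (which controls decay in $\alpha$, not under spatial truncation). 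Second, your extension of the difference estimate to the ``time-dependent Kato measure'' $f(y,t)\,\mu(dy)$ is not covered by Lemma~\ref{lem: estimate of difference of PCAFs}, which is stated for time-independent smooth measures; you would have to redo the second-moment computation, including the cross term $E^x[\Pi^{(\delta,*)}(f)\,\Pi(f)]$, from the generalized Kac formula. I recommend replacing your second stage by the coupling and semi-ring argument of Proposition~\ref{prop: delta map STOM approx for local Kato}.
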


Recall the approximation map $\apSTOM^{(\delta, R)}$ from Definition~\ref{dfn: apSTOM}.
The following lemma corresponds to Lemma~\ref{lem: apPCAF continuity}.
Again, the proof is deferred to Appendix~\ref{appendix: Lemma of PCAF/STOM approx}.

\begin{lem} \label{lem: apSTOM continuity}
  Fix $\delta > 0$ and $R > 1$.
  \begin{enumerate} [label = \textup{(\roman*)}]
    \item \label{lem item: apSTOM continuity. 1}
      The map $\apSTOM^{(\delta, R)}$ is Borel measurable.
    \item \label{lem item: apSTOM continuity. 2}
      Fix an arbitrary sequence $(q_n, \nu_n, \eta_n)_{n \geq 1}$ converging to $(q, \nu, \eta)$ in the domain of $\apSTOM^{(\delta, R)}$.
      Assume that
      \begin{equation}  \label{lem eq: apSTOM continuity}
        \sup_{n \geq 1} 
        \sup_{y \in S} \int_{\delta}^{2\delta} \int_S q_n(u, y, z)\, \nu_n^{(R)}(dz)\, du < \infty.
      \end{equation}
      Then $\apSTOM^{(\delta, R)}(q_n, \nu_n, \eta_n)$ converges to $\apSTOM^{(\delta, R)}(q, \nu, \eta)$ in $\STOMMeas(S \times \RNp)$.
  \end{enumerate}
\end{lem}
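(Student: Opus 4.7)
The plan is to establish both parts by testing against the separating class of functions from Proposition~\ref{prop: conv in stom sp}\ref{prop item: conv in stom sp. 3}. Unwinding Definition~\ref{dfn: apSTOM}, for any bounded continuous $f\colon S\times\RNp\to\RN$ with $\supp(f)\subseteq S\times[0,T]$ one has
\begin{equation}
  \int f\,d\apSTOM^{(\delta,R)}(q,\nu,\eta)
  = \frac{1}{\delta}\int_0^T G(t;q,\nu,\eta(t))\,dt,
\end{equation}
where $G(t;q,\nu,x)\coloneqq\int_S\int_\delta^{2\delta} f(z,t)\,q(u,x,z)\,du\,\tilde{\nu}^{(R)}(dz)$. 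Since such evaluations generate both the topology and the Borel $\sigma$-algebra of $\STOMMeas(S\times\RNp)$, both parts reduce to analysing this scalar quantity.

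For part~(ii), by Lemma~\ref{lem: conv in L^0}, from any subsequence of $(\eta_n)$ I can extract a further subsequence along which $\eta_n(t)\to\eta(t)$ for Lebesgue-a.e.\ $t$. For each such $t$ I would show that $(u,z)\mapsto f(z,t)q_n(u,\eta_n(t),z)$ converges to $(u,z)\mapsto f(z,t)q(u,\eta(t),z)$ in $\hatC(\RNpp\times S,\RN)$: by Lemma~\ref{lem: conv in hatC}\ref{lem item: 2. conv in hatC}, this reduces to checking that $q_n(u_k,\eta_n(t),z_k)\to q(u,\eta(t),z)$ for arbitrary $(u_k,z_k)\to(u,z)$, which follows from the same lemma applied to $q_n\to q$ in $\hatC(\RNpp\times S\times S,\RN)$. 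Meanwhile, by Lemma~\ref{lem: smooth truncation of meas}, the measures $\tilde{\nu}_n^{(R)}\otimes\mathbf{1}_{[\delta,2\delta]}\,du$ converge weakly to $\tilde{\nu}^{(R)}\otimes\mathbf{1}_{[\delta,2\delta]}\,du$, and they are all supported in the compact set $S^{(R)}\times[\delta,2\delta]$. Lemma~\ref{lem: vague convergence and hatC topology} then yields $G(t;q_n,\nu_n,\eta_n(t))\to G(t;q,\nu,\eta(t))$ for a.e.\ $t\in[0,T]$. Since the hypothesis~\eqref{lem eq: apSTOM continuity}, combined with $\tilde{\nu}_n^{(R)}\le\nu_n^{(R)}$, furnishes a uniform-in-$n$ bound $|G(t;q_n,\nu_n,\eta_n(t))|\le C\|f\|_\infty$ supported in $t\in[0,T]$, dominated convergence delivers the integral convergence along the chosen subsequence, and the subsequence principle then yields convergence of the full sequence.

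For part~(i), the integrand $G$ is in fact continuous in $(q,\nu,x,t)$ by the same application of Lemma~\ref{lem: vague convergence and hatC topology}, and for each $t$ the evaluation $\eta\mapsto\eta(t)$ is Borel measurable on $D_{L^0}(\RNp,S)$ by Proposition~\ref{prop: Borel algebra by L^0 top}. Joint measurability of $(q,\nu,\eta,t)\mapsto G(t;q,\nu,\eta(t))$ then follows from the right-continuity of $\eta$, and integrating over $t$ via Fubini yields Borel measurability of $(q,\nu,\eta)\mapsto\int f\,d\apSTOM^{(\delta,R)}(q,\nu,\eta)$ for each admissible $f$, which suffices.

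The main obstacle is the pointwise-in-$t$ step: the moving middle coordinate $\eta_n(t)$ means that the $\hatC$ convergence of $q_n(\cdot,\eta_n(t),\cdot)$ is not simply compact convergence of $q_n$ but must be deduced through the joint convergence criterion of Lemma~\ref{lem: conv in hatC}\ref{lem item: 2. conv in hatC}. Once this is secured, everything else---weak convergence of the truncated measures, uniform domination via~\eqref{lem eq: apSTOM continuity}, and the subsequence extraction---is routine.
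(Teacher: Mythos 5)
Your proposal is correct, but it takes a genuinely different route from the paper's proof. The paper introduces spatial cutoffs $\chi^{(r)}(\eta_t)$ and intermediate maps $\apSTOM^{(\delta,R,r)}$: each of these is shown to be continuous \emph{without} invoking \eqref{lem eq: apSTOM continuity} (the cutoff confines $\eta_n(t)$ to $S^{(r)}$, so local uniform bounds on $q_n$ suffice to dominate the $t$-integral); part~(i) then follows because $\apSTOM^{(\delta,R)}$ is a pointwise limit of these continuous maps as $r\to\infty$, and part~(ii) follows from a uniform-in-$n$ error bound $|G_n^{(r)}-G_n|\le \delta^{-1}\|F\|_\infty M\,\Leb(\{t\le T: \eta_n(t)\notin S^{(r-1)}\})$ controlled via Lemma~\ref{lem: range compactness wrt L^0 topology}. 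You instead observe that \eqref{lem eq: apSTOM continuity} itself (together with $\tilde{\nu}_n^{(R)}\le\nu_n^{(R)}$ and Tonelli) provides a constant dominating function for the $t$-integral, so part~(ii) goes through by a single dominated-convergence argument with no cutoff; and you handle part~(i) by a separate joint-measurability-plus-Fubini argument. Both routes are sound: yours is shorter for~(ii), while the paper's cutoff device lets it obtain~(i) for free as a limit of continuous maps, sidestepping the joint-measurability bookkeeping and the (standard but glossed) step that a countable separating family of evaluation maps generates the Borel $\sigma$-algebra of $\STOMMeas(S\times\RNp)$. Two small points of hygiene: in your verification of Lemma~\ref{lem: conv in hatC}\ref{lem item: 2. conv in hatC} the approximating point and the function should carry the same index (i.e.\ $q_n(u_n,\eta_n(t),z_n)\to q(u,\eta(t),z)$ for $(u_n,z_n)\to(u,z)$), and for the Fubini step in~(i) you should note that the integrand may a priori be $[0,\infty]$-valued, so one works with nonnegative test functions first.
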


\begin{lem}
  Suppose that Assumption~\ref{assum: weak L^0 continuity} is satisfied.
  Let $\mu$ be a Radon measure in the local Kato class.
  Fix $\delta > 0$ and $R > 1$, and set 
  \begin{equation}
    \Pi^{(\delta, R)} \coloneqq \apSTOM^{(\delta, R)}(p, \mu, X).
  \end{equation}
  Then the map
  \begin{equation}  
    \ProcLaw_{(X, \Pi^{(\delta, R)})} \colon S \to \Prob\bigl( D_{L^0}(\RNp, S) \times \STOMMeas(S \times \RNp) \bigr)
  \end{equation}
  is continuous.
\end{lem}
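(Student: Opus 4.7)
The plan is to proceed in exact analogy with the proof of the PCAF counterpart (Lemma~\ref{lem: continuity of delta map for local Kato}), reducing the claim to the continuous mapping theorem. Since $p$ and $\mu$ are held fixed throughout, I would introduce the deterministic map
\[
  F \colon D_{L^0}(\RNp, S) \longrightarrow D_{L^0}(\RNp, S) \times \STOMMeas(S \times \RNp),
  \qquad
  F(\eta) \coloneqq \bigl(\eta, \apSTOM^{(\delta, R)}(p, \mu, \eta)\bigr),
\]
so that $\ProcLaw_{(X, \Pi^{(\delta, R)})}(x) = \ProcLaw_X(x) \circ F^{-1}$ for every $x \in S$. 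By Assumption~\ref{assum: weak L^0 continuity}\ref{assum item: weak L^0 continuity. 1}, $\ProcLaw_X$ is continuous as a map into $\Prob(D_{L^0}(\RNp, S))$; hence, provided $F$ is continuous on all of $D_{L^0}(\RNp, S)$, the continuous mapping theorem will immediately deliver the desired continuity of $\ProcLaw_{(X, \Pi^{(\delta, R)})}$.

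The substantive step is to verify the continuity of $F$. I would apply Lemma~\ref{lem: apSTOM continuity}\ref{lem item: apSTOM continuity. 2} to the constant sequences $q_n \equiv p$ and $\nu_n \equiv \mu$ together with an arbitrary convergent sequence $\eta_n \to \eta$ in $D_{L^0}(\RNp, S)$. The uniform integrability condition~\eqref{lem eq: apSTOM continuity} then reduces to
\[
  \sup_{y \in S} \int_\delta^{2\delta} \int_S p(u, y, z)\, \mu^{(R)}(dz)\, du < \infty,
\]
which is controlled via Proposition~\ref{prop: Potential and hk behavior}: the left-hand side is bounded above by $e\,\|\Potential^{1/(2\delta)} \mu^{(R)}\|_\infty$, and this potential norm is finite because $\mu$ lies in the local Kato class, so that $\mu^{(R)}$ belongs to the Kato class. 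Once this bound is in hand, Lemma~\ref{lem: apSTOM continuity}\ref{lem item: apSTOM continuity. 2} yields $\apSTOM^{(\delta, R)}(p, \mu, \eta_n) \to \apSTOM^{(\delta, R)}(p, \mu, \eta)$ in $\STOMMeas(S \times \RNp)$, whence $F(\eta_n) \to F(\eta)$ in the product space.

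There is no substantial technical obstacle here; the argument is a direct transliteration of the PCAF case, with the approximation map $\apPCAF^{(\delta, R)}$ replaced by its STOM analogue $\apSTOM^{(\delta, R)}$ and the target space $\upC(\RNp, \RNp)$ replaced by $\STOMMeas(S \times \RNp)$. The Borel measurability of $F$, needed to ensure that $\ProcLaw_X(x) \circ F^{-1}$ is a well-defined element of $\Prob(D_{L^0}(\RNp, S) \times \STOMMeas(S \times \RNp))$, is supplied by Lemma~\ref{lem: apSTOM continuity}\ref{lem item: apSTOM continuity. 1}. The only genuinely STOM-specific ingredient, namely the continuity of $\eta \mapsto \apSTOM^{(\delta, R)}(p, \mu, \eta)$ from the coarse $L^0$ topology on paths into the STOM topology, is precisely the content of Lemma~\ref{lem: apSTOM continuity}\ref{lem item: apSTOM continuity. 2}, whose proof is deferred to the appendix.
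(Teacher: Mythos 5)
Your proposal is correct and takes essentially the same route as the paper, which likewise deduces the lemma from Assumption~\ref{assum: weak L^0 continuity}, Lemma~\ref{lem: apSTOM continuity}\ref{lem item: apSTOM continuity. 2}, and the continuous mapping theorem. Your verification of the uniform bound~\eqref{lem eq: apSTOM continuity} via Proposition~\ref{prop: Potential and hk behavior} and the local Kato property of $\mu^{(R)}$ correctly fills in the one detail the paper leaves implicit.
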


\begin{proof}
  This is an immediate consequence of the assumption, Lemma~\ref{lem: apSTOM continuity}\ref{lem item: apPCAF continuity. 2},
  and the continuous mapping theorem.
\end{proof}

The following proposition corresponds to Proposition~\ref{prop: delta map approx for local Kato}. 
However, unlike the case of PCAFs, 
we cannot directly estimate the distance between the truncated STOM $\Pi^{(*, R)}$ 
and its approximation $\Pi^{(\delta, R)}$ in $\STOMMeas(S \times \RNp)$. 
Instead, we establish the validity of the approximation by means of a coupling argument.

\begin{prop} \label{prop: delta map STOM approx for local Kato}
  Suppose that Assumption~\ref{assum: weak L^0 continuity} is satisfied.
  Let $\mu$ be a Radon measure in the local Kato class, and let $A = (A_t)_{t \geq 0}$ be the associated PCAF.
  Fix $R > 1$.
  For each $\delta > 0$, define 
  \begin{equation} \label{prop eq: delta map STOM approx for local Kato. 1}
    \Pi^{(\delta, R)} \coloneqq \apSTOM^{(\delta, R)}(p, \mu, X), 
    \qquad 
    \Pi^{(*, R)} \coloneqq \apSTOM^{(*, R)}(\Pi).
  \end{equation}
  Then the following statements hold.
  \begin{enumerate} [label = \textup{(\roman*)}]
    \item The map $\ProcLaw_{(X, \Pi^{(*, R)})}$ is continuous.
    \item We have 
      \begin{equation}  \label{prop eq: delta map STOM approx for local Kato. 3}
        \ProcLaw_{(X, \Pi^{(\delta, R)})} \xlongrightarrow[\delta \to 0]{} \ProcLaw_{(X, \Pi^{(*, R)})}
        \quad \text{in}\quad 
        C\bigl(S, \Prob\bigl( D_{L^0}(\RNp, S) \times \STOMMeas(S \times \RNp) \bigr)\bigr).
      \end{equation}
  \end{enumerate}
\end{prop}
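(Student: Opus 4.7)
The plan is to parallel the proof of Proposition~\ref{prop: delta map approx for local Kato}, adapting the PCAF arguments to STOMs. Continuity of $\ProcLaw_{(X, \Pi^{(\delta, R)})}$ is already in hand from the preceding lemma, so both (i) and (ii) will follow once I establish the uniform-in-$x$ bound
\begin{equation}
  \lim_{\delta \to 0} \sup_{x \in S} E^x\!\left[\STOMMet{S \times \RNp}(\Pi^{(\delta, R)}, \Pi^{(*, R)})\right] = 0.
\end{equation}
Granted this, Lemma~\ref{lem: Prohorov estimate} upgrades the convergence to uniform Prohorov convergence of the joint law maps, and Lemma~\ref{lem: hatC metric simple estimate} combined with the continuity of the $\delta$-level approximations yields both continuity of the limit map $\ProcLaw_{(X, \Pi^{(*, R)})}$ and the compact-uniform convergence \eqref{prop eq: delta map STOM approx for local Kato. 3}, exactly as in the PCAF case.

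The main obstacle is precisely the point flagged just before the statement: the metric $\STOMMet{S \times \RNp}$ is not directly controlled by the $L^2$-estimates of Proposition~\ref{prop: delta STOM approx for local Kato}, which yield information only about individual increments of the form $\Pi^{(\delta, R)}(E \times [s, t)) - \Pi^{(*, R)}(E \times [s, t))$. To bridge this gap, I will exploit the observation that by construction both STOMs are supported inside the compact set $S^{(R)} \times \RNp$ (the former because $\tilde{\mu}^{(R)}$ is supported in $S^{(R)}$, the latter by the indicator $\mathbf{1}_{S^{(r)}}$ in Definition~\ref{dfn: truncation of STOM}). Combined with the standard elementary fact that, for any partition $\{F_i\}_{i=1}^{N}$ of a compact set $K$ into Borel pieces of diameter at most $\varepsilon$ and any finite Borel measures $\mu_1, \mu_2$ on $S \times \RNp$ supported in $K$,
\begin{equation}
  \ProhMet{S \times \RNp}(\mu_1, \mu_2) \leq \varepsilon + \sum_{i=1}^{N} |\mu_1(F_i) - \mu_2(F_i)|,
\end{equation}
this reduces the control of the Prohorov distance to the control of finitely many increments.

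Concretely, the argument will proceed as follows. For each $\varepsilon, T > 0$, partition $S^{(R)} \times [0, T]$ into finitely many product Borel sets $\{E_i \times [s_i, t_i)\}$ of diameter at most $\varepsilon$ (possible since $S^{(R)}$ is compact and $[0, T]$ is bounded). Apply the displayed estimate to the restrictions of $\Pi^{(\delta, R)}$ and $\Pi^{(*, R)}$ to $S \times [0, T]$, take $E^x$-expectations, and bound each summand using Proposition~\ref{prop: delta STOM approx for local Kato}. This yields, for each fixed partition, an estimate of the form
\begin{equation}
  \sup_{x \in S} E^x\!\left[\ProhMet{S \times \RNp}(\Pi^{(\delta, R)}|_{S \times [0, T]}, \Pi^{(*, R)}|_{S \times [0, T]})\right] \leq \varepsilon + N \cdot \sqrt{\Phi(\alpha, \delta, T, R)},
\end{equation}
where $\Phi$ denotes the right-hand side of \eqref{prop eq: delta STOM approx for local Kato. 2}. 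Letting $\delta \to 0$, then $\alpha \to 0$, then $\varepsilon \to 0$, the right-hand side vanishes for every $T > 0$. Plugging into the definition \eqref{eq: dfn of stom met} of $\STOMMet{S \times \RNp}$ and invoking Fubini together with the dominated convergence theorem (the integrand being bounded by $1$ and the outer integrating weight being $e^{-t}$) completes the verification of the displayed limit. The only technical care needed is to order the iterated limits $\delta \to 0$, $\alpha \to 0$, $\varepsilon \to 0$, $T \to \infty$ correctly so that all error terms are absorbed; this is routine once the partitioning reduction above is in place.
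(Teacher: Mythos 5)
Your proposal is correct, but it takes a genuinely different route from the paper. The paper explicitly asserts that one ``cannot directly estimate the distance between $\Pi^{(*,R)}$ and $\Pi^{(\delta,R)}$ in $\STOMMeas(S\times\RNp)$'' and instead fixes sequences $\delta_n \to 0$, $x_n \to x$, couples $X$ under $P^{x_n}$ and $P^x$ via the Skorohod representation theorem, and verifies convergence in probability of $\Sigma_n^{\delta_n}(H)$ for each $H$ in the dissecting semi-ring $\{E\times[s,t) : \mu(\partial E)=0\}$, invoking Kallenberg's criterion \cite[Corollary~4.9(iii)]{Kallenberg_17_Random} for the topology of $\STOMMeas$. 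You instead produce the direct estimate the paper avoids: since both measures are supported in the compact set $S^{(R)}\times\RNp$, a finite Borel partition of $S^{(R)}\times[0,T]$ into product sets of diameter at most $\varepsilon$ reduces the Prohorov distance to $\varepsilon$ plus a finite sum of increments, each controlled in $L^2(P^x)$ uniformly in $x$ and in the Borel set $E$ by Proposition~\ref{prop: delta STOM approx for local Kato}; the iterated limits $\delta\to 0$, $\alpha\to 0$, $\varepsilon\to 0$, $T\to\infty$ are ordered so that the $\varepsilon$-dependent cardinality $N$ of the partition is multiplied by a quantity already sent to zero. I checked the elementary partition bound $\ProhMet{}(\mu_1,\mu_2)\le\varepsilon+\sum_i|\mu_1(F_i)-\mu_2(F_i)|$ and it is sound, as is the support claim for $\Pi^{(\delta,R)}$ (via $\tilde{\mu}^{(R)}$) and $\Pi^{(*,R)}$ (via the cutoff $\int_{R-1}^R\mathbf{1}_{S^{(r)}}\,dr$). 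Your approach buys a stronger conclusion --- a quantitative bound on $\sup_{x\in S}E^x[\STOMMet{S\times\RNp}(\Pi^{(\delta,R)},\Pi^{(*,R)})]$ that makes the STOM case structurally parallel to part~(ii) of Proposition~\ref{prop: delta map approx for local Kato} --- at the cost of partition bookkeeping; the paper's coupling/semi-ring argument is softer but is the template reused almost verbatim in the proofs of Theorems~\ref{thm: PCAF/STOM dtm result} and \ref{thm: PCAF/STOM rdm result}, where the same skeleton handles spaces varying along the sequence.
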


\begin{proof}
  We prove both assertions simultaneously.
  Fix a sequence $(\delta_n)_{n \geq 1}$ in $(0,1)$ converging to $0$,
  and a sequence $(x_{n})_{n \geq 1}$ converging to $x$ in $S$.
  By Proposition~\ref{prop: conti is preserved in hatC},
  it suffices to show that 
  \begin{equation}  \label{thm pr: delta map STOM approx for local Kato. 1}
    P^{x_n}\bigl( (X, \Pi^{(\delta_n,R)}) \in \cdot \bigr) 
    \to 
    P^x\bigl( (X, \Pi^{(*,R)}) \in \cdot\bigr)
  \end{equation}
  weakly as probability measures on $D_{L^0}(\RNp, S) \times \STOMMeas(S \times \RNp)$.
  By Assumption~\ref{assum: weak L^0 continuity}\ref{assum item: weak L^0 continuity. 1},
  we have $\ProcLaw_X(x_n) \to \ProcLaw_X(x)$.
  Hence, by the Skorohod representation theorem,
  there exist random elements $(Y_n, \Sigma_n, \Sigma_n^*)$, $n \geq 1$, and $(Y, \Sigma)$ 
  defined on a common probability space with probability measure $P$ 
  such that 
  \begin{gather}
    P\bigl( (Y_n, \Sigma_n^{\delta_n}, \Sigma_n^*) \in \cdot \bigr) 
    = P^{x_n}\bigl( (X, \Pi^{(\delta_n, R)}, \Pi^{(*,R)}) \in \cdot \bigr),
    \quad \forall n \geq 1,
     \label{thm pr: delta map STOM approx for local Kato. 2}
    \\
    P\bigl( (Y, \Sigma^*) \in \cdot \bigr) = P^{x}\bigl( (X, \Pi^{(*,R)}) \in \cdot \bigr),
    \label{thm pr: delta map STOM approx for local Kato. 3}
    \\ 
    Y_n \to Y, \quad \text{almost surely}.
     \label{thm pr: delta map STOM approx for local Kato. 4}
  \end{gather}

  The convergence in~\eqref{thm pr: delta map STOM approx for local Kato. 1} 
  follows once we show that 
  $\Sigma_{n}^{\delta_n} \xrightarrow{\mathrm{p}} \Sigma$ in $\STOMMeas(S \times \RNp)$.
  Recall from Remark~\ref{rem: Kallenberg framework for stom sp} that
  if $S$ is equipped with the bounded metric $1 \wedge d_S$,
  then the topology on $\STOMMeas(S \times \RNp)$
  coincides with the vague topology with bounded support.
  Hence, in what follows, we replace the metric $d_S$ on $S$ by $1 \wedge d_S$
  and prove the desired convergence 
  by applying a characterization of this topology
  given in~\cite[Corollary~4.9(iii)]{Kallenberg_17_Random}.

  Define a collection $\mathcal{I}$ of subsets of $S \times \RNp$ by 
  \begin{equation}
    \mathcal{I} 
    \coloneqq 
    \{E \times [s, t) 
      \mid 
      0 \leq s < t < \infty,\  
      E \in \Borel(M) \text{ such that } \mu(\partial E) = 0\}.
  \end{equation}
  It is straightforward to check that $\mathcal{I}$ is a dissecting semi-ring in the sense of \cite{Kallenberg_17_Random}, 
  i.e., it satisfies the following properties.
  \begin{enumerate} [label = (DSR\arabic*), leftmargin = *]
    \item \label{pr item: DSR, 1}
      Every open subset of $S \times \RNp$ is a countable union of elements of $\mathcal{I}$.
    \item \label{pr item: DSR, 2}
      Every bounded Borel subset of $S \times \RNp$ is covered by finitely many elements of $\mathcal{I}$.
    \item \label{pr item: DSR, 3}
      The collection $\mathcal{I}$ is closed under finite intersections.
    \item \label{pr item: DSR, 4}
      The difference between any two sets in $\mathcal{I}$ is a finite disjoint union of sets in $\mathcal{I}$.
  \end{enumerate}
  (The first two properties mean that $\mathcal{I}$ is dissecting, 
  and the last two mean that it is a semi-ring.)
  By~\cite[Corollary 4.9(iii)]{Kallenberg_17_Random},
  it is enough to show that, 
  for each $H \in \mathcal{I}$,
  $\Sigma_n^{\delta_n}(H) \xrightarrow{\mathrm{p}} \Sigma^*(H)$
  as random variables.
  Fix such a set $H = E \times [s, t)$.
  By Proposition~\ref{prop: delta STOM approx for local Kato},
  we have 
  \begin{equation} \label{thm pr: delta map STOM approx for local Kato. 5}
    \lim_{\delta \to 0}
    \sup_{y \in S}
    E^y\Bigl[
      \bigl| \Pi^{(\delta, R)}(H) - \Pi^{(*,R)}(H) \bigr| \wedge 1
    \Bigr]
    = 0.
  \end{equation}
  The triangle inequality then yields, for any $\delta \in (0,1)$,
  \begin{align}
    &E\bigl[
      |\Sigma_n^{\delta_n}(H) - \Sigma^*(H)| \wedge 1
    \bigr]
    \\
    &\leq 
    E\Bigl[
      \bigl| \Sigma_n^{\delta_n}(H) - \Sigma_n^*(H) \bigr| \wedge 1
    \Bigr] 
    + 
    E\Bigl[
      \bigl| \Sigma_n^*(H) - \apSTOM^{(\delta, R)}(p, \mu, Y_n)(H) \bigr| \wedge 1
    \Bigr]\\
    &\quad 
    + 
    E\Bigl[
      \bigl| \apSTOM^{(\delta, R)}(p, \mu, Y_n)(H) - \apSTOM^{(\delta, R)}(p, \mu, Y)(H) \bigr| \wedge 1
    \Bigr]\\
    &\quad 
    +
    E\Bigl[
      \bigl| \apSTOM^{(\delta, R)}(p, \mu, Y)(H) - \Sigma^*(H)\bigr| \wedge 1
    \Bigr]\\ 
    &= 
    E^{x_n}\Bigl[
      \bigl| \Pi_n^{(\delta_n, R)}(H) - \Pi_n^{(*, R)}(H) \bigr| \wedge 1
    \Bigr]
    + 
    E^{x_n}\Bigl[
      \bigl| \Pi^{(*, R)}(H) - \Pi^{(\delta, R)}(H) \bigr| \wedge 1
    \Bigr]\\
    &\quad 
    + 
    E\Bigl[
      \bigl| \apSTOM^{(\delta, R)}(p, \mu, Y_n)(H) - \apSTOM^{(\delta, R)}(p, \mu, Y)(H) \bigr| \wedge 1
    \Bigr]
    +
    E^x\Bigl[
      \bigl| \Pi^{(\delta, R)}(H) - \Pi(H) \bigr| \wedge 1
    \Bigr].
    \label{pr eq: 4, continuity of collision map}
  \end{align}
  Letting $n \to \infty$ in the last display, 
  the first term converges to $0$ by~\eqref{thm pr: delta map STOM approx for local Kato. 5}, 
  and the third term converges to $0$ by~\eqref{thm pr: delta map STOM approx for local Kato. 4} 
  and Lemma~\ref{lem: apSTOM continuity}\ref{lem item: apSTOM continuity. 2}.
  Hence,
  \begin{equation}
    \limsup_{n \to \infty}
    E\bigl[
      |\Sigma_n^{\delta_n}(H) - \Sigma^*(H)| \wedge 1
    \bigr]
    \leq  
    2\sup_{y \in S}
    E^y\Bigl[
      \bigl| \Pi^{(\delta, R)}(H) - \Pi^{(*,R)}(H) \bigr| \wedge 1
    \Bigr].
  \end{equation}
  Letting $\delta \to 0$ and using~\eqref{thm pr: delta map STOM approx for local Kato. 5} once more,
  we obtain $\Sigma_n^{\delta_n}(H) \to \Sigma^*(H)$ in probability,
  which completes the proof.
\end{proof}

Finally, we can prove the continuity of joint laws, using the above result,
Proposition~\ref{prop: R STOM approx in local Kato}, and the spatial tightness condition.

\begin{thm} \label{thm: R map STOM approx for local Kato}
  Suppose that Assumptions~\ref{assum: weak L^0 continuity} and \ref{assum: spatial tightness} hold.
  Let $\mu$ be a Radon measure in the local Kato class and let $A = (A_t)_{t \geq 0}$ be the associated PCAF.
  For each $R > 1$, set 
  \begin{equation} \label{thm eq: R map STOM approx for local Kato. 1}
    \Pi^{(*, R)} \coloneqq \apSTOM^{(*, R)}(\Pi).
  \end{equation}
  Then the following statements hold.
  \begin{enumerate} [label = \textup{(\roman*)}]
    \item The map $\ProcLaw_{(X, \Pi)}$ is continuous.
    \item  It holds that, for any $r > 0$, $N \in \NN$, and $R > 1$,
      \begin{align}
        &\hatCMet{S}{\Prob(L^0(\RNp, S) \times \STOMMeas(S \times \RNp))}\!\left(\ProcLaw_{(X, \Pi^{(*, R)})}, \ProcLaw_{(X, \Pi)}\right)\\
        &\leq
        e^{-r}
        +
        e^{-N} + 
        \sup_{x \in S^{(r)}}
        P^x\!\left( \exitTime_{S^{(R-1)}} < N \right).
        \label{thm eq: R map STOM approx for local Kato. 2}
      \end{align}
    \item It holds that 
      \begin{equation} \label{thm eq: R map STOM approx for local Kato. 3}
        \ProcLaw_{(X, \Pi^{(*, R)})} \xlongrightarrow[R \to \infty]{} \ProcLaw_{(X, \Pi)}
        \quad \text{in}\quad 
        C\bigl(S, \Prob\bigl( D_{L^0}(\RNp, S) \times \STOMMeas(S \times \RNp) \bigr)\bigr).
      \end{equation}
  \end{enumerate}
\end{thm}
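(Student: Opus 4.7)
The plan is to mirror the proof of Theorem~\ref{thm: R map approx for local Kato} line by line, replacing its PCAF-based inputs by their STOM counterparts. The continuity of each map $\ProcLaw_{(X, \Pi^{(*,R)})}$ has already been established in Proposition~\ref{prop: delta map STOM approx for local Kato}, so the only remaining task is to show that these approximating maps converge to $\ProcLaw_{(X, \Pi)}$ in a sufficiently strong, locally uniform sense. Once this is done, statements (i) and (iii) follow from Proposition~\ref{prop: conti is preserved in hatC}, while (ii) is obtained as a by-product of the estimate.

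First, for each $N \in \NN$ and $R > 1$, introduce the event
\begin{equation}
  E_{N, R} \coloneqq \{\exitTime_{S^{(R-1)}} \geq N\}.
\end{equation}
On $E_{N,R}$, Proposition~\ref{prop: R STOM approx in local Kato} gives $\Pi^{(*,R)}|_{S \times [0, N]} = \Pi|_{S \times [0, N]}$, and the same calculation as in the proof of that proposition yields
\begin{equation}
  \STOMMet{S \times \RNp}\!\left(\Pi^{(*,R)}, \Pi\right) \leq e^{-N} \quad \text{on } E_{N,R}.
\end{equation}
Since the process component is identical, the same bound holds for the joint distance in $L^0(\RNp, S) \times \STOMMeas(S \times \RNp)$, endowed with the max product metric. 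Applying Lemma~\ref{lem: Prohorov estimate} pointwise under $P^x$ then gives, for every $r > 0$,
\begin{equation}
  \sup_{x \in S^{(r)}}
  \ProhMet{L^0(\RNp, S) \times \STOMMeas(S \times \RNp)}\!\left(\ProcLaw_{(X, \Pi^{(*, R)})}(x), \ProcLaw_{(X, \Pi)}(x)\right)
  \leq e^{-N} + \sup_{x \in S^{(r)}} P^x(\exitTime_{S^{(R-1)}} < N).
\end{equation}
Plugging this estimate into Lemma~\ref{lem: hatC metric simple estimate} with the common ball $K = S^{(r)}$ produces precisely the inequality (ii).

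To conclude, fix a sequence $x_n \to x$ in $S$ and $\varepsilon > 0$. Choose $r$ so large that $e^{-r} < \varepsilon/4$ and $\{x_n\}_{n \geq 1} \cup \{x\} \subseteq S^{(r)}$, then $N$ so large that $e^{-N} < \varepsilon/4$, and finally (using the spatial tightness condition, Assumption~\ref{assum: spatial tightness}) $R$ so large that $\sup_{x \in S^{(r)}} P^x(\exitTime_{S^{(R-1)}} < N) < \varepsilon/4$. A triangle-inequality argument combining this uniform estimate with the pointwise continuity of $\ProcLaw_{(X, \Pi^{(*,R)})}$ (Proposition~\ref{prop: delta map STOM approx for local Kato}(i)) yields $\ProcLaw_{(X, \Pi)}(x_n) \to \ProcLaw_{(X, \Pi)}(x)$, which proves (i); statement (iii) then follows immediately from (ii) upon letting $R \to \infty$ and $N \to \infty$ and using spatial tightness. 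I do not anticipate any significant obstacle, as the argument is structurally identical to the PCAF case. The only mild difference is that the exponent $e^{-N}$ replaces $2^{-N}$, owing to the integral (rather than series) definition of $\STOMMet{S \times \RNp}$; this has no bearing on the argument, since both metrics decay exponentially in the time horizon on which the two objects agree.
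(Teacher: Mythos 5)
Your proposal is correct and follows essentially the same route as the paper's proof: the event $E_{N,R}$, the bound $\STOMMet{S \times \RNp}(\Pi^{(*,R)}, \Pi) \leq e^{-N}$ from Proposition~\ref{prop: R STOM approx in local Kato}, Lemma~\ref{lem: Prohorov estimate}, Lemma~\ref{lem: hatC metric simple estimate}, and then the concluding argument of Theorem~\ref{thm: R map approx for local Kato} via spatial tightness. No gaps.
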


\begin{proof}
  For each $N \in \NN$ and $R > 1$, we define an event
  \begin{equation}
    E_{N, R} \coloneqq \left\{ \exitTime_{S^{(R-1)}} > N \right\}.
  \end{equation}
  On this event, by \eqref{prop pr: R STOM approx in local Kato. 1}, we have
  \begin{equation}
    \STOMMet{S \times \RNp}(\Pi^{(*,R)}, \Pi) \leq e^{-N}.
  \end{equation}
  Lemma~\ref{lem: Prohorov estimate} then yields that, for each $x \in S$, 
  \begin{align}
    &\ProhMet{L^0(\RNp, S) \times \STOMMeas(S \times \RNp)}\! 
    \left(\ProcLaw_{(X, \Pi^{(*,R)})}(x), \ProcLaw_{(X, \Pi)}(x)\right)\\
    &\leq
    e^{-N} + P^x\!\left( \exitTime_{S^{(R-1)}} > N \right).
  \end{align}
  Now, we can follow the same argument as in the proof of Theorem~\ref{thm: R map approx for local Kato} 
  to complete the proof.
\end{proof}

By using Proposition~\ref{prop: R STOM vague approx in local Kato} 
and working with the vague topology instead of the topology on $\STOMMeas(S \times \RNp)$,
we can derive the continuity of $\ProcLaw_{(X, \Pi)}$ 
without assuming the spatial tightness condition,
although the continuity is then obtained only with respect to the vague topology.

\begin{thm} \label{thm: R map STOM vague approx for local Kato}
  Suppose that Assumption~\ref{assum: weak L^0 continuity} holds.
  Let $\mu$ be a Radon measure in the local Kato class and let $A = (A_t)_{t \geq 0}$ be the associated PCAF.
  For each $R > 1$, set 
  \begin{equation} \label{thm eq: R map STOM vague approx for local Kato. 1}
    \Pi^{(*, R)} \coloneqq \apSTOM^{(*, R)}(\Pi).
  \end{equation}
  Then the following statements hold.
  \begin{enumerate} [label = \textup{(\roman*)}]
    \item The map 
      \begin{equation}
            \ProcLaw_{(X, \Pi)} \colon S \to \Prob\bigl( D_{L^0}(\RNp, S) \times \Meas(S \times \RNp) \bigr)
      \end{equation}
     is continuous.
     (NB.\ In the codomain, we have $\Meas(S \times \RNp)$ equipped with the vague topology, not $\STOMMeas(S \times \RNp)$.)
    \item  For any $R > 1$,
      \begin{align}
        \hatCMet{S}{\Prob(L^0(\RNp, S) \times \Meas(S \times \RNp))}\!\left(\ProcLaw_{(X, \Pi^{(*, R)})}, \ProcLaw_{(X, \Pi)}\right)
        \leq
        e^{-R+1}
        \label{thm eq: R map STOM vague approx for local Kato. 2}
      \end{align}
    \item It holds that 
      \begin{equation} \label{thm eq: R map STOM vague approx for local Kato. 3}
        \ProcLaw_{(X, \Pi^{(*, R)})} \xlongrightarrow[R \to \infty]{} \ProcLaw_{(X, \Pi)}
        \quad \text{in}\quad 
        C\bigl(S, \Prob\bigl( D_{L^0}(\RNp, S) \times \Meas(S \times \RNp) \bigr)\bigr).
      \end{equation}
  \end{enumerate}
\end{thm}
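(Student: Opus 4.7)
The plan is to reduce all three claims to the uniform almost-sure bound on the vague metric supplied by Proposition~\ref{prop: R STOM vague approx in local Kato}, which, crucially, does not rely on the spatial tightness condition. The overall shape of the argument parallels that of Theorem~\ref{thm: R map STOM approx for local Kato}, but is considerably lighter because the weakening from the $\STOMMeas$ topology to the vague topology provides a deterministic pointwise bound on $\VagueMet$ rather than only a bound up to an exit-time event.

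First, I would fix $x \in S$ and note that, under $P^x$, Proposition~\ref{prop: R STOM vague approx in local Kato} yields $\VagueMet{S \times \RNp}(\Pi^{(*,R)}, \Pi) \leq e^{-R+1}$ almost surely. Since the $X$-coordinates of the two pairs $(X, \Pi^{(*, R)})$ and $(X, \Pi)$ coincide, the associated distance in the product space $L^0(\RNp, S) \times \Meas(S \times \RNp)$ (with the max product metric) is bounded by $e^{-R+1}$ almost surely, and applying Lemma~\ref{lem: Prohorov estimate} with the event being the full probability space gives
\begin{equation}
  \sup_{x \in S} \ProhMet{L^0(\RNp, S) \times \Meas(S \times \RNp)}\!\left( \ProcLaw_{(X, \Pi^{(*, R)})}(x),\, \ProcLaw_{(X, \Pi)}(x) \right) \leq e^{-R+1}.
\end{equation}
Since the two law maps share the common full-space domain $S$, Lemma~\ref{lem: hatC metric simple estimate} applied for any $r > 0$ yields
\begin{equation}
  \hatCMet{S}{\Prob(L^0(\RNp, S) \times \Meas(S \times \RNp))}\!\left( \ProcLaw_{(X, \Pi^{(*, R)})},\, \ProcLaw_{(X, \Pi)} \right) \leq e^{-r} + e^{-R+1},
\end{equation}
and sending $r \to \infty$ gives the bound in (ii); statement (iii) follows at once by letting $R \to \infty$.

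For (i), I would invoke Proposition~\ref{prop: delta map STOM approx for local Kato}(i), which shows that $\ProcLaw_{(X, \Pi^{(*, R)})}$ is continuous as a map into the finer space $\Prob(D_{L^0}(\RNp, S) \times \STOMMeas(S \times \RNp))$. Because the topology on $\STOMMeas(S \times \RNp)$ is strictly stronger than the vague topology on $\Meas(S \times \RNp)$ (Proposition~\ref{prop: conv in stom sp}), composing with the canonical inclusion preserves continuity, so $\ProcLaw_{(X, \Pi^{(*, R)})}$ is continuous into $\Prob(D_{L^0}(\RNp, S) \times \Meas(S \times \RNp))$ as well. Since (ii) shows that $\ProcLaw_{(X, \Pi^{(*, R)})}$ converges uniformly in $x \in S$ to $\ProcLaw_{(X, \Pi)}$, the limit map is automatically continuous. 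This yields (i).

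The potential obstacle I had anticipated, namely having to reproduce the coupling argument of Proposition~\ref{prop: delta map STOM approx for local Kato} or having to replace Assumption~\ref{assum: spatial tightness} with some substitute, dissolves entirely: the vague metric controls everything uniformly in the starting point, so the proof consists almost purely of metric bookkeeping between $\VagueMet$, $\ProhMet$, and $\hatCMet$, together with a topology-comparison argument for the continuity of the approximants.
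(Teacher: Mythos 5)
Your proof is correct and follows essentially the same route as the paper's: both reduce everything to the pathwise bound $\VagueMet{S \times \RNp}(\Pi^{(*,R)}, \Pi) \leq e^{-R+1}$ from Proposition~\ref{prop: R STOM vague approx in local Kato}, convert it into a uniform Prohorov bound via Lemma~\ref{lem: Prohorov estimate}, pass to the $\hatC$-metric via Lemma~\ref{lem: hatC metric simple estimate}, and combine with the continuity of $\ProcLaw_{(X, \Pi^{(*,R)})}$ from Proposition~\ref{prop: delta map STOM approx for local Kato}. The paper states this only by reference to the proof of Theorem~\ref{thm: R map STOM approx for local Kato}; your write-up simply makes the same steps explicit.
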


\begin{proof}
  For any $x \in S$, we have from Proposition~\ref{prop: R STOM vague approx in local Kato} that 
  \begin{equation}
    \ProhMet{L^0(\RNp, S) \times \Meas(S \times \RNp)}\! 
    \left(\ProcLaw_{(X, \Pi^{(*,R)})}(x), \ProcLaw_{(X, \Pi)}(x)\right)
    \leq
    e^{-R+1}.
  \end{equation}
  Thus, we deduce the results in the same way as in the proof of Theorem~\ref{thm: R map STOM approx for local Kato}.
\end{proof}

%%%%%%%%%%%%%%%%%%%%%%%%%%%%%%%%%%%%%%%%%%%%%%%%%%%%%%%%%%%%%%%%%%%%%%%%%%%%%%%%%%%%%%%%%%%%%%%%%%%%%%%%%%%%%%%%%%%%%%%%%%%%%%%%%%
%%%%%%%%%%%%%%%%%%%%%%%%%%%%%%%%%%%%%%%%%%%%%%%%%%%%%%%%%%%%%%%%%%%%%%%%%%%%%%%%%%%%%%%%%%%%%%%%%%%%%%%%%%%%%%%%%%%%%%%%%%%%%%%%%%
% Main results
%%%%%%%%%%%%%%%%%%%%%%%%%%%%%%%%%%%%%%%%%%%%%%%%%%%%%%%%%%%%%%%%%%%%%%%%%%%%%%%%%%%%%%%%%%%%%%%%%%%%%%%%%%%%%%%%%%%%%%%%%%%%%%%%%%
%%%%%%%%%%%%%%%%%%%%%%%%%%%%%%%%%%%%%%%%%%%%%%%%%%%%%%%%%%%%%%%%%%%%%%%%%%%%%%%%%%%%%%%%%%%%%%%%%%%%%%%%%%%%%%%%%%%%%%%%%%%%%%%%%%

\section{Convergence of PCAFs and STOMs} \label{sec: conv of PCAFs and STOMs}

In this section, 
we present the main results of this paper,
which establish the convergence of PCAFs and STOMs from the convergence of processes
within the framework of Gromov--Hausdorff topologies introduced in Section~\ref{sec: GH-type topologies}.
In Section~\ref{sec: STOM conv for dtm spaces},
we provide results in the setting where the underlying spaces are deterministic.
This is extended to random spaces in the following subsection, 
with applications to random graph models in mind.

Throughout this section, we fix a Polish structure $\tau$ and a space transformation $\Psi$.

%%%%%%%%%%%%%%%%%%%%%%%%%%%%%%%%%%%%%%%%%%%%%%%%%%%%%%%%%%%%%%%%%%%%%%%%%%%%%%%%%%%%%%%%%%%%%%%%%%%%%%%%%%%%%%%%%%%%%%%%%%%%%%%%%%
% Deterministic case
%%%%%%%%%%%%%%%%%%%%%%%%%%%%%%%%%%%%%%%%%%%%%%%%%%%%%%%%%%%%%%%%%%%%%%%%%%%%%%%%%%%%%%%%%%%%%%%%%%%%%%%%%%%%%%%%%%%%%%%%%%%%%%%%%%
\subsection{Deterministic spaces} \label{sec: STOM conv for dtm spaces}

The main result of this subsection is Theorem~\ref{thm: PCAF/STOM dtm result}.
It asserts that if processes and their smooth measures converge in a Gromov--Hausdorff-type topology,
and if the local Kato class condition is satisfied uniformly by these measures,
then the associated STOMs converge jointly with the processes.

We first clarify the setting for the result.
For each $n \in \NN$, we suppose the following:
\begin{itemize} 
  \item $(S_n, d_{S_n}, \rho_n, \mu_n, a_n)$ is an element of $\rootBCM(\MeasSt(\Psi) \times \tau)$, i.e., 
      $(S_n, d_{S_n})$ is a $\bcmAB$ space, $\rho_n$ is the root of $S_n$, $\mu_n$ is a Radon measure on $\Psi(S_n)$, and $a_n \in \tau(S_n)$;
  \item $X_n = ((X_n(t))_{t \geq 0}, (P_n^x)_{x \in \Psi(S_n)})$ is a standard process on $\Psi(S_n)$ 
    satisfying the DAC and $L^0$-weak continuity conditions (Assumptions~\ref{assum: dual hypothesis} and \ref{assum: weak L^0 continuity}),
    with heat kernel $p_n$;
  \item the Radon measure $\mu_n$ is in the local Kato class of $X_n$.
\end{itemize}
We also suppose the following:
\begin{itemize} 
  \item $(S, d_S, \rho, \mu, a)$ is an element of $\rootBCM(\MeasSt(\Psi) \times \tau)$;
  \item $X = ((X(t))_{t \geq 0}, (P^x)_{x \in \Psi(S)})$ is a standard process on $\Psi(S)$ 
    satisfying the DAC and $L^0$-weak continuity conditions (Assumptions~\ref{assum: dual hypothesis} and \ref{assum: weak L^0 continuity});
    with heat kernel $p_n$,
  \item the Radon measure $\mu$ charges no sets semipolar for $X$.
\end{itemize}

The presence of $\Psi$ in our setting is motivated by applications to collisions of stochastic processes, 
which will be studied from Section~\ref{sec: collision measure} onward. 
The incorporation of $\tau$ is also important, as it broadens the applicability of our main results. 
See Remarks~\ref{rem: tau is preserved} and \ref{rem: tau for canonical embedding} at the end of this subsection for further details.

Recall from Assumption~\ref{assum: weak L^0 continuity}\ref{assum item: weak L^0 continuity. 1} that
the law maps $\ProcLaw_{X_n}$ and $\ProcLaw_X$ are continuous
with respect to the weak topology induced by the $L^0$ topology.
Our fundamental assumption for the main result is the convergence of the underlying spaces 
together with the law maps and smooth measures.
To put this in the framework introduced in Section~\ref{sec: GH-type topologies},
we define a structure by 
\begin{equation} \label{eq: st for L^0 SPM}
  \LzeroSPMSt \coloneqq \hatCSt(\Psi_{\id}, \ProbSt(\LzeroSt)),
\end{equation}
where ``SP'' stands for ``stochastic process''.
In particular, for each $\bcmAB$ space $M$,
\begin{equation}
  \LzeroSPMSt(\Psi)(M) =  \hatC\bigl( \Psi(M), \Prob(L^0(\RNp, \Psi(M))) \bigr),
\end{equation}
where we recall from Definition~\ref{dfn: composition} that $\LzeroSPMSt(\Psi)$ denotes the composition $\LzeroSPMSt \circ \Psi$.
To discuss convergence of heat kernels,
we define another structure by 
\begin{equation} \label{eq: st for hk}
  \HKSt \coloneqq \hatCSt( \Psi_{\RNpp \times \id^2}, \tau_{\RNp}).
\end{equation}
In particular, for each $\bcmAB$ space $M$,
\begin{equation}
  \HKSt(\Psi)(M) =  \hatC\bigl( \RNpp \times \Psi(M) \times \Psi(M), \RNp \bigr).
\end{equation}

\begin{assum} \label{assum: PCAF/STOM dtm assumption} \leavevmode
  \begin{enumerate} [label = \textup{(\roman*)}, series = STOM dtm assumption]
    \item \label{assum item: 1. PCAF dtm assumption}
      It holds that 
      \begin{equation}
        (S_n, d_{S_n}, \rho_n, \mu_n, a_n, p_n, \ProcLaw_{X_n}) \to (S, d_S, \rho, \mu, a, p, \ProcLaw_{X})
      \end{equation}
      in 
      $\rootBCM \bigl(\MeasSt(\Psi) \times \tau \times \HKSt(\Psi) \times  \LzeroSPMSt(\Psi) \bigr)$.
    \item \label{assum item: 2. PCAF dtm assumption}
      For all $r > 0$, 
      \begin{equation}
        \lim_{\alpha \to \infty}
        \limsup_{n \to \infty}
        \bigl\| \Potential_{p_n}^\alpha \mu_n^{(r)} \bigr\|_\infty
        = 0.
      \end{equation}
  \end{enumerate}
\end{assum}

\begin{rem} \label{rem: assum for STOM dtm}
  By Proposition~\ref{prop: Potential and hk behavior},
  condition~\ref{assum item: 2. PCAF dtm assumption} of Assumption~\ref{assum: PCAF/STOM dtm assumption} 
  is equivalent to the following:
  \begin{enumerate}[resume* = STOM dtm assumption]
    \item \label{assum item: 4. PCAF dtm assumption}
    For all $r > 0$, 
    \begin{equation}
      \lim_{\delta \to 0} 
      \limsup_{n \to \infty} 
      \sup_{x \in \Psi(S_n^{(r)})} 
      \int_0^\delta \int_{\Psi(S_n^{(r)})} p_n(t, x, y)\, \mu_n(dy)\, dt 
      = 0.
    \end{equation}
  \end{enumerate}
\end{rem}

As observed in Section~\ref{sec: Continuity of joint laws with STOMs},
the following (uniform) spatial tightness condition
is instrumental in obtaining a stronger mode of convergence for STOMs 
than that given by the vague topology.

\begin{assum} \label{assum: PCAF dtm assumption. spatial tight}
  Each process $X_n$, $n \geq 1$, as well as the limiting process $X$, 
  satisfies the spatial tightness condition (Assumption~\ref{assum: spatial tightness}).
  Moreover, for all $r > 0$ and $T > 0$, it holds that
  \begin{equation} \label{assum eq: PCAF dtm assumption. spatial tight}
    \lim_{R \to \infty}
    \limsup_{n \to \infty}
    \sup_{x \in S_n^{(r)}}
    P_n^x\!\left( \exitTime_{S_n^{(R)}} < T \right)
    = 0.
  \end{equation}
\end{assum}

\begin{rem} \label{rem: unif spatial tightness}
  By the same argument as in the proof of Lemma~\ref{lem: J_1 tight implies spatial tight},
  if the convergence of law maps in 
  Assumption~\ref{assum: PCAF/STOM dtm assumption}\ref{assum item: 1. PCAF dtm assumption} 
  takes place under a topology finer than the $L^0$ topology, 
  such as the $J_1$-Skorohod topology,
  then \eqref{assum eq: PCAF dtm assumption. spatial tight} 
  can be automatically satisfied.
  For instance, set 
  \begin{equation} \label{eq: st for SPM}
    \SPMSt \coloneqq \hatCSt(\Psi_{\id}, \ProbSt(\SkorohodSt)).
  \end{equation}
  If the convergence in 
  Assumption~\ref{assum: PCAF/STOM dtm assumption}\ref{assum item: 1. PCAF dtm assumption} 
  holds with $\LzeroSPMSt$ replaced by $\SPMSt$,
  then Assumption~\ref{assum: PCAF dtm assumption. spatial tight} is satisfied.
\end{rem}

Under the above assumptions, it follows from Fatou's lemma that 
the limiting measure $\mu$ also belongs to the local Kato class.

\begin{lem} \label{lem: STOM dtm limiting meas is smooth}
  Under Assumption~\ref{assum: PCAF/STOM dtm assumption},
  the Radon measure $\mu$ is in the local Kato class of $X$.
\end{lem}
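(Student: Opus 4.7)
The plan is to verify the two defining conditions of the local Kato class (Definition~\ref{dfn: Kato class}) for $\mu$. The first, that $\mu$ charge no semipolar sets of $X$, is already built into the standing hypotheses of this subsection. For the second, Proposition~\ref{prop: Potential and hk behavior} reduces the task to showing that, for every $r > 0$,
\begin{equation} \label{eq: plan local kato reduction}
  \lim_{\delta \to 0} \sup_{x \in \Psi(S^{(r)})} \int_0^\delta \int_{\Psi(S^{(r)})} p(t,x,y)\, \mu(dy)\, dt = 0,
\end{equation}
since every compact subset of $\Psi(S)$ is absorbed by such a set for $r$ sufficiently large.

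To derive \eqref{eq: plan local kato reduction} I would argue by a Fatou-type comparison with the corresponding $(p_n,\mu_n)$ quantities. Applying Theorem~\ref{thm: conv in M(tau)} to Assumption~\ref{assum: PCAF/STOM dtm assumption}\ref{assum item: 1. PCAF dtm assumption}, together with Lemma~\ref{lem: ST preserves Fell conv} to transport Fell convergence through the space transformation $\Psi$, all of the data can be realized isometrically inside a common rooted $\bcmAB$ space $\Psi(M)$ in such a way that $\Psi(S_n) \to \Psi(S)$ in the Fell topology, $\mu_n \to \mu$ vaguely in $\Psi(M)$, and $p_n \to p$ in $\hatC(\RNpp \times \Psi(M)^2, \RNp)$. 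Suppressing the embeddings, for each $r, \varepsilon > 0$ and each $x \in \Psi(S^{(r)})$, Fell convergence furnishes approximating points $x_n \in \Psi(S_n^{(r+\varepsilon)})$ with $x_n \to x$ in $\Psi(M)$ for all $n$ large, where the compatibility of $\Psi$ with the rooting systems of the embeddings is used to place $x_n$ in $\Psi(S_n^{(r+\varepsilon)})$.

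The core step is the inequality
\begin{equation} \label{eq: plan fatou}
  \int_0^\delta \int_{\Psi(S^{(r)})} p(t,x,y)\, \mu(dy)\, dt
  \leq \liminf_{n \to \infty} \int_0^\delta \int_{\Psi(S_n^{(r+\varepsilon)})} p_n(t,x_n,y)\, \mu_n(dy)\, dt.
\end{equation}
To establish this I would choose a continuous compactly-supported cutoff $\chi \colon \Psi(M) \to [0,1]$ with $\mathbf{1}_{\Psi(S^{(r)})} \leq \chi$ and $\chi \leq \mathbf{1}_{\Psi(S_n^{(r+\varepsilon)})}$ on $\Psi(S_n)$ for all $n$ large, which is possible by taking $\chi$ supported in a sufficiently narrow $\Psi(M)$-neighborhood of $\Psi(S^{(r)})$ and invoking Fell convergence of $\Psi(S_n)$. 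For each fixed $t > 0$ the joint $\hatC$-convergence $p_n \to p$ together with $x_n \to x$ yields $p_n(t,x_n,\cdot) \to p(t,x,\cdot)$ in $\hatC(\Psi(M), \RNp)$, while $\chi \mu_n \to \chi \mu$ weakly with common compact support; Lemma~\ref{lem: vague convergence and hatC topology} then gives $\int p(t,x,y)\chi(y)\, \mu(dy) = \lim_n \int p_n(t,x_n,y)\chi(y)\, \mu_n(dy)$, and Fatou's lemma in the $t$-variable combined with the sandwich on $\chi$ produces \eqref{eq: plan fatou}. Bounding the right-hand side by $\sup_{x' \in \Psi(S_n^{(r+\varepsilon)})} \int_0^\delta \int_{\Psi(S_n^{(r+\varepsilon)})} p_n(t,x',y)\, \mu_n(dy)\, dt$, which is independent of $x$, taking the supremum over $x \in \Psi(S^{(r)})$, and letting $\delta \to 0$ invokes Assumption~\ref{assum: PCAF/STOM dtm assumption}\ref{assum item: 2. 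PCAF dtm assumption} in the equivalent form of Remark~\ref{rem: assum for STOM dtm} to drive the right-hand side to zero. The main obstacle I anticipate is the construction of the cutoff $\chi$ compatibly with the Fell-convergent images $\Psi(S_n^{(r+\varepsilon)})$; this reduces to analyzing how the rooting system of $\Psi$ interacts with the Fell convergence of the underlying spaces, and is handled by working inside a sufficiently narrow $\Psi(M)$-neighborhood of $\Psi(S^{(r)})$ that is absorbed by $\Psi(S_n^{(r+\varepsilon)}) \cap \Psi(S_n)$ for $n$ large.
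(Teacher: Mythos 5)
Your proposal is correct and follows essentially the same route as the paper: embed everything into a common space via Theorem~\ref{thm: conv in M(tau)}, approximate the point $x$ by $x_n \in \Psi(S_n)$, pass to the limit in the inner integral via Lemma~\ref{lem: vague convergence and hatC topology}, apply Fatou in the time variable, and invoke Assumption~\ref{assum: PCAF/STOM dtm assumption}\ref{assum item: 2. PCAF dtm assumption}. The only difference is cosmetic: the paper works directly with $\alpha$-potentials and dodges the boundary-of-ball issue by restricting to the (all but countably many) radii $r$ with $\mu_n^{(r)} \to \mu^{(r)}$ weakly and then using monotonicity in $r$, whereas you use the equivalent time-truncated form with a sandwiched cutoff and an enlarged radius $r+\varepsilon$ — both devices resolve the same technicality.
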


\begin{proof}
  It is enough to show that, for all but countably many $r > 0$,
  \begin{equation} \label{pr eq: 1. STOM dtm limiting meas is smooth}
    \lim_{\alpha \to \infty}
    \bigl\| \Potential_p^\alpha \mu^{(r)} \bigr\|_\infty
    = 0,
  \end{equation}
  By Assumption~\ref{assum: PCAF/STOM dtm assumption}\ref{assum item: 1. PCAF dtm assumption} and Theorem~\ref{thm: conv in M(tau)},
  all the rooted metric spaces $(S_n, \rho_n)$ and $(S, \rho)$ are embedded isometrically into a common rooted $\bcmAB$ space $(M, \rho_M)$
  in such a way that $S_n \to S$ in the Fell topology as closed subsets of $M$,
  $\mu_n \to \mu$ vaguely as measures on $\Psi(M)$, and $p_n \to p$ in $\hatC(\RNpp \times \Psi(M) \times \Psi(M), \RNp)$.
  Fix $r > 0$ such that $\mu_n^{(r)} \to \mu^{(r)}$ weakly as measures on $\Psi(M)$, and fix $x \in S$.
  By Lemma~\ref{lem: ST preserves Fell conv},
  we can find elements $x_n \in \Psi(S_n)$ converging to $x$ in $\Psi(M)$.
  It is then the case that, for each $t > 0$,
  \begin{equation}
    p_n(t, x_n, \cdot) \to p(t, x, \cdot) 
    \quad \text{in} \quad \hatC(\Psi(M), \RNp).
  \end{equation}
  We deduce from Lemma~\ref{lem: vague convergence and hatC topology} that, for each $t > 0$,
  \begin{equation}
    \lim_{n \to \infty}
    \int_{\Psi(M)} p_n(t,x_n,y)\, \mu_n^{(r)}(dy) 
    = 
    \int_{\Psi(M)} p(t,x,y)\, \mu^{(r)}(dy).
  \end{equation}
  Fatou's lemma then yields that, for each $\alpha > 0$,
  \begin{align}
    \int_0^\infty  e^{-\alpha t} \int_{\Psi(M)} p(t,x,y)\, \mu^{(r)}(dy)\, dt
    &\leq 
    \liminf_{n \to \infty}
    \int_0^\infty e^{-\alpha t} \int_{\Psi(M)} p_n(t,x_n,y)\, \mu_n^{(r)}(dy)\, dt\\
    &\leq     
    \liminf_{n \to \infty}
    \bigl\| \Potential_{p_n}^\alpha \mu_n^{(r)} \bigr\|_\infty.
  \end{align}
  Taking the supremum over $x \in \Psi(S)$,
  it follows that 
  \begin{equation} \label{pr eq: 2. STOM dtm limiting meas is smooth}
    \bigl\| \Potential_p^\alpha \mu^{(r)} \bigr\|_\infty
    \leq     
    \liminf_{n \to \infty}
    \bigl\| \Potential_{p_n}^\alpha \mu_n^{(r)} \bigr\|_\infty.
  \end{equation}
  This, combined with Assumption~\ref{assum: PCAF/STOM dtm assumption}\ref{assum item: 2. PCAF dtm assumption}, yields \eqref{pr eq: 1. STOM dtm limiting meas is smooth}.
\end{proof}

Let $A_n$ and $\Pi_n$ (resp.\ $A$ and $\Pi$) denote the PCAF and the STOM of $X_n$ (resp.\ $X$) associated with $\mu_n$ (resp.\ $\mu$).
The main result below asserts that, under the above assumptions,
the joint laws of the processes together with their associated PCAFs/STOMs converge, 
locally uniformly in the starting point. 
For notational convenience, set
\begin{align} 
  &\LzeroSPMPCAFSt \coloneqq \hatCSt\!\left( \Psi_{\id}, \ProbSt\bigl( \LzeroSt \times \Psi_{\upC(\RNp, \RNp)} \bigr) \right),
  \label{eq: dfn of PCAF st}\\
  &\LzeroSPMSTOMSt \coloneqq 
  \hatCSt\!\left( \Psi_{\id}, \ProbSt\bigl( \LzeroSt \times \STOMSt \bigr) \right),
  \label{eq: dfn of STOM st}\\
  &\LzeroSPMvSTOMSt \coloneqq 
  \hatCSt\!\left( \Psi_{\id}, \ProbSt\bigl( \LzeroSt \times \MeasSt(\Psi_{\id} \times \Psi_{\RNp}) \bigr) \right),
  \label{eq: dfn of v-STOM st}
\end{align}
Here, the first structure is used for describing the convergence of PCAFs,
the second for that of STOMs,
and the third also for STOMs,
but with respect to the vague topology (which is weaker than the topology introduced in Section~\ref{sec: The space for STOMs}).
In particular, for each $\bcmAB$ space $M$,
\begin{align}
  \LzeroSPMPCAFSt(\Psi)(M) 
  &= 
  \hatC\!\left( \Psi(M), \Prob\bigl( L^0(\RNp, \Psi(M)) \times \upC(\RNp, \RNp) \bigr)\right),\\
  \LzeroSPMSTOMSt(\Psi)(M) 
  &=  
  \hatC\!\left( \Psi(M), \Prob\bigl( L^0(\RNp, \Psi(M)) \times \STOMMeas(\Psi(M) \times \RNp) \bigr) \right),\\
  \LzeroSPMvSTOMSt(\Psi)(M) 
  &=  
  \hatC\!\left( \Psi(M), \Prob\bigl( L^0(\RNp, \Psi(M)) \times \Meas(\Psi(M) \times \RNp) \bigr) \right).
\end{align}

\begin{thm} \label{thm: PCAF/STOM dtm result} \leavevmode
  \begin{enumerate} [label = \textup{(\roman*)}]
    \item \label{thm item: PCAF/STOM dtm result. 1}
      Under Assumptions~\ref{assum: PCAF/STOM dtm assumption} and \ref{assum: PCAF dtm assumption. spatial tight}, it holds that   
      \begin{equation} \label{thm eq: PCAF dtm result. 1}
        \left( S_n, d_{S_n}, \rho_n, \mu_n, a_n, p_n, \ProcLaw_{(X_n, A_n)} \right) 
        \to \left( S, d_S, \rho, \mu, a, p, \ProcLaw_{(X, A)} \right)
      \end{equation}
      in $\rootBCM \bigl(\MeasSt(\Psi) \times \tau \times \HKSt(\Psi) \times  \LzeroSPMPCAFSt(\Psi) \bigr)$.

    \item \label{thm item: PCAF/STOM dtm result. 2}
      Under Assumptions~\ref{assum: PCAF/STOM dtm assumption} and \ref{assum: PCAF dtm assumption. spatial tight}, it holds that
      \begin{equation}
        \left( S_n, d_{S_n}, \rho_n, \mu_n, a_n, p_n, \ProcLaw_{(X_n, \Pi_n)} \right) 
        \to \left( S, d_S, \rho, \mu, a, p, \ProcLaw_{(X, \Pi)} \right)
      \end{equation}
      in $\rootBCM \bigl(\MeasSt(\Psi) \times \tau \times \HKSt(\Psi) \times  \LzeroSPMSTOMSt(\Psi) \bigr)$.

    \item \label{thm item: PCAF/STOM dtm result. 3}
      Under Assumption~\ref{assum: PCAF/STOM dtm assumption},
      it holds that
      \begin{equation}
        \left( S_n, d_{S_n}, \rho_n, \mu_n, a_n, p_n, \ProcLaw_{(X_n, \Pi_n)} \right) 
        \to \left( S, d_S, \rho, \mu, a, p, \ProcLaw_{(X, \Pi)} \right)
      \end{equation}
      in $\rootBCM \bigl(\MeasSt(\Psi) \times \tau \times \HKSt(\Psi) \times  \LzeroSPMvSTOMSt(\Psi) \bigr)$.
  \end{enumerate}
\end{thm}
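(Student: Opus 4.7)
The plan is to realize the assumed convergence on a common ambient space, and then propagate it to the joint law maps through the approximation schemes of Section~\ref{sec: approx of PCAFs and STOMs}. By Theorem~\ref{thm: conv in M(tau)} and Lemma~\ref{lem: ST preserves Fell conv}, Assumption~\ref{assum: PCAF/STOM dtm assumption}\ref{assum item: 1. PCAF dtm assumption} allows me to isometrically embed all rooted spaces $(S_n, \rho_n)$ and $(S, \rho)$ into a common rooted $\bcmAB$ space $(M, \rho_M)$ in such a way that $S_n \to S$ in the Fell topology on $M$, $\mu_n \to \mu$ vaguely on $\Psi(M)$, $p_n \to p$ in $\hatC(\RNpp \times \Psi(M) \times \Psi(M), \RNp)$, and the law maps $\ProcLaw_{X_n}$ converge to $\ProcLaw_X$ in $\hatC(\Psi(M), \Prob(L^0(\RNp, \Psi(M))))$. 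By Lemma~\ref{lem: simple estimate of GH distance}, it then suffices to estimate the distance between the joint law maps inside the ambient Polish structure; note that the limit PCAF $A$ and limit STOM $\Pi$ are well-defined since $\mu$ is in the local Kato class of $X$ by Lemma~\ref{lem: STOM dtm limiting meas is smooth}.

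For part~\ref{thm item: PCAF/STOM dtm result. 1}, I introduce the two-stage approximations $A_n^{(\delta, R)} \coloneqq \apPCAF^{(\delta, R)}(p_n, \mu_n, X_n)$ and $A_n^{(*, R)} \coloneqq \apPCAF^{(*, R)}(X_n, A_n)$, and the analogous $A^{(\delta, R)}$ and $A^{(*, R)}$ on the limit. For each fixed $(\delta, R)$, convergence of $\ProcLaw_{(X_n, A_n^{(\delta, R)})}$ to $\ProcLaw_{(X, A^{(\delta, R)})}$ in the ambient structure follows from the continuous mapping theorem via Lemma~\ref{lem: apPCAF continuity}, once the uniform integrability condition~\eqref{lem eq: apPCAF continuity} is checked along the sequence; that condition is precisely what Assumption~\ref{assum: PCAF/STOM dtm assumption}\ref{assum item: 2. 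PCAF dtm assumption} (cf.\ Remark~\ref{rem: assum for STOM dtm}) delivers. The quantitative estimate in Proposition~\ref{prop: delta map approx for local Kato}\ref{prop item: delta map approx for local Kato. 2}, together with the semicontinuity of potential norms used in the proof of Lemma~\ref{lem: STOM dtm limiting meas is smooth}, bounds the distance from $\ProcLaw_{(X_n, A_n^{(\delta, R)})}$ to $\ProcLaw_{(X_n, A_n^{(*, R)})}$ uniformly in $n$, and tuning $\alpha$, $\delta$, and $N$ makes this bound arbitrarily small. Finally, Theorem~\ref{thm: R map approx for local Kato}\ref{thm item: R map approx for local Kato. 2} combined with Assumption~\ref{assum: PCAF dtm assumption. spatial tight} controls the distance from $\ProcLaw_{(X_n, A_n^{(*, R)})}$ to $\ProcLaw_{(X_n, A_n)}$ uniformly in $n$ as $R \to \infty$. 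Chaining these three estimates yields the desired convergence after sending $\delta \to 0$, $R \to \infty$, $\alpha \to 0$, and $N \to \infty$ in the appropriate order.

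Part~\ref{thm item: PCAF/STOM dtm result. 2} follows by the same strategy applied to the STOM approximations $\Pi_n^{(\delta, R)} \coloneqq \apSTOM^{(\delta, R)}(p_n, \mu_n, X_n)$ and $\Pi_n^{(*, R)} \coloneqq \apSTOM^{(*, R)}(\Pi_n)$, invoking Lemma~\ref{lem: apSTOM continuity}, Proposition~\ref{prop: delta map STOM approx for local Kato}, and Theorem~\ref{thm: R map STOM approx for local Kato} in place of their PCAF counterparts. Part~\ref{thm item: PCAF/STOM dtm result. 3} relaxes the topology on STOMs from that of $\STOMMeas$ to the vague topology on $\Meas$; in this weaker setting the role of Assumption~\ref{assum: PCAF dtm assumption. spatial tight} is absorbed by the deterministic pathwise estimate $\VagueMet{\Psi(M) \times \RNp}(\Pi_n^{(*, R)}, \Pi_n) \le e^{-R+1}$ supplied by Proposition~\ref{prop: R STOM vague approx in local Kato}, so the final $R \to \infty$ step uses Theorem~\ref{thm: R map STOM vague approx for local Kato} while the $\delta \to 0$ step is unchanged.

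The principal obstacle is the uniform-in-$n$ control required to interchange the limits $n \to \infty$ and $\delta \to 0$. The continuous mapping theorem produces convergence of $\ProcLaw_{(X_n, A_n^{(\delta, R)})}$ for each fixed $(\delta, R)$ essentially for free, but the approximation error between $A_n^{(\delta, R)}$ and $A_n$ must be bounded uniformly in both the starting point and in $n$. The quantitative Kato-type potential bounds of Propositions~\ref{prop: delta approx for local Kato} and~\ref{prop: delta STOM approx for local Kato}, together with Assumption~\ref{assum: PCAF/STOM dtm assumption}\ref{assum item: 2. PCAF dtm assumption}, provide exactly the tool required to effect this interchange.
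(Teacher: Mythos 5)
Your proposal is correct and follows essentially the same route as the paper: embed everything into a common ambient space via Theorem~\ref{thm: conv in M(tau)}, introduce the two-stage approximations $\apPCAF^{(\delta,R)}/\apPCAF^{(*,R)}$ (and their STOM analogues), obtain convergence of the approximated joint law maps at fixed $(\delta,R)$ from the continuity of the approximation maps (after a Skorohod-representation coupling of the processes), and then make the approximation errors uniform in $n$ using the Kato-type potential bounds of Assumption~\ref{assum: PCAF/STOM dtm assumption}\ref{assum item: 2. PCAF dtm assumption} together with Assumption~\ref{assum: PCAF dtm assumption. spatial tight} (resp.\ the deterministic $e^{-R+1}$ vague-metric bound for part~\ref{thm item: PCAF/STOM dtm result. 3}). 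The one place where the paper does noticeably more work than your sketch indicates is part~\ref{thm item: PCAF/STOM dtm result. 2}, where the $\STOMMeas$-distance between $\Pi_n^{(\delta,R)}$ and $\Pi_n^{(*,R)}$ cannot be bounded directly and the coupling/dissecting-semi-ring argument of Proposition~\ref{prop: delta map STOM approx for local Kato} must be rerun along the sequence — but since you invoke that proposition, this is a matter of detail rather than a gap.
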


Since convergence of STOMs implies convergence of PCAFs (see Remark~\ref{rem: PCAF to STOM map}),
if we establish \ref{thm item: PCAF/STOM dtm result. 2}, 
then \ref{thm item: PCAF/STOM dtm result. 1} follows immediately from Proposition~\ref{prop: PCAF to STOM map}. 
Nevertheless, we provide separate proofs here, 
since several intermediate results obtained in their proofs
will be used to extend the results to random spaces in the next subsection.

The idea of the proofs is common to all three assertions. 
First, we employ the continuity of the approximation maps introduced in 
Definitions~\ref{dfn: restriction of PCAF} and \ref{dfn: apPCAF}
to lift the convergence of the laws of the processes 
to that of the joint laws of the processes and the approximated PCAFs. 
Then, using Assumptions~\ref{assum: PCAF/STOM dtm assumption}\ref{assum item: 2. PCAF dtm assumption} 
and \ref{assum: PCAF dtm assumption. spatial tight},
we verify the uniformity of the approximation 
and finally deduce the desired convergence.
We note that in \ref{thm item: PCAF/STOM dtm result. 3}, 
the lack of Assumption~\ref{assum: PCAF dtm assumption. spatial tight} 
leads to convergence in a weaker topology than that in 
\ref{thm item: PCAF/STOM dtm result. 2}.
This situation has already appeared in 
Proposition~\ref{prop: R STOM vague approx in local Kato} 
and Theorem~\ref{thm: R map STOM vague approx for local Kato}.

\begin{proof} [{Proof of Theorem~\ref{thm: PCAF/STOM dtm result}\ref{thm item: PCAF/STOM dtm result. 1}}]
  Before proceeding to the detailed argument,
  we introduce some notation.
  For each $\delta > 0$ and $R > 1$,
  we set
  \begin{gather}
    A^{(\delta, R)} \coloneqq \apPCAF^{(\delta, R)}(p, \mu, X),
    \qquad
    A^{(*, R)} \coloneqq \apPCAF^{(*,R)}(X, A),\\
    A_n^{(\delta, R)} \coloneqq \apPCAF^{(\delta, R)}(p_n, \mu_n, X_n),
    \qquad
    A_n^{(*, R)} \coloneqq \apPCAF^{(*,R)}(X_n, A_n),
  \end{gather}
  where the approximation maps are recalled from
  Definitions~\ref{dfn: restriction of PCAF} and \ref{dfn: apPCAF}.
  We define
  \begin{align}
    \cX_n &\coloneqq \left( S_n, d_{S_n}, \rho_n, \mu_n, a_n, p_n, \ProcLaw_{(X_n, A_n)} \right),\\
    \cX_n^{(*, R)} &\coloneqq \left( S_n, d_{S_n}, \rho_n, \mu_n, a_n, p_n, \ProcLaw_{(X_n, A_n^{(*, R)})} \right),\\
    \cX_n^{(\delta, R)} &\coloneqq \left( S_n, d_{S_n}, \rho_n, \mu_n, a_n, p_n, \ProcLaw_{(X_n, A_n^{(\delta, R)})} \right),
  \end{align}
  and similarly define $\cX$, $\cX^{(*,R)}$, and $\cX^{(\delta, R)}$ by removing the subscript $n$
  from the corresponding symbols above.
  For notational convenience, we write 
  \begin{equation}
    \tau_1 \coloneqq \MeasSt(\Psi) \times \tau \times \HKSt(\Psi) \times  \LzeroSPMPCAFSt(\Psi).
  \end{equation}
  By Theorems~\ref{prop: delta map approx for local Kato} and \ref{thm: R map approx for local Kato}, 
  we have
  \begin{align}
    &\cX^{(\delta, R)} \xrightarrow[\delta \to 0]{} \cX^{(*, R)}
    \quad \text{in}\quad \rootBCM(\tau_1),
    \label{thm pr: PCAF dtm result. 5}\\
    &\cX^{(*, R)} \xrightarrow[R \to \infty]{} \cX
    \quad \text{in}\quad \rootBCM(\tau_1).
    \label{thm pr: PCAF dtm result. 6}
  \end{align}

  By Assumption~\ref{assum: PCAF/STOM dtm assumption}\ref{assum item: 1. PCAF dtm assumption} and Theorem~\ref{thm: conv in M(tau)},
  there exists a rooted $\bcmAB$space $(M, \rho_M)$
  into which all the metric spaces $S_n$ and $S$ can be isometrically embedded 
  so that the following hold:
  \begin{enumerate} [label = \textup{(A\arabic*)}, leftmargin = *, series = PCAF embedding]
    \item \label{cond: 1. PCAF embedding}
      $S_n \to S$ in the Fell topology as closed subsets of $M$,
    \item \label{cond: 2. PCAF embedding} 
      $\rho_n = \rho = \rho_M$ as elements of $M$,
    \item \label{cond: 3. PCAF embedding} 
      $\mu_n \to \mu$ vaguely as measures on $\Psi(M)$,
    \item \label{cond: 3.5. PCAF embedding} 
      $a_n \to a$ in $\tau(M)$,
    \item \label{cond: 4. PCAF embedding} 
      $p_n \to p$ in $\hatC(\RNpp \times \Psi(M) \times \Psi(M), \RNp)$,
    \item \label{cond: 5. PCAF embedding} 
      $\ProcLaw_{X_n} \to \ProcLaw_X$ in $\hatC\bigl( \Psi(M), \Prob(L^0(\RNp, \Psi(M))) \bigr)$.
  \end{enumerate}

  By \ref{cond: 4. PCAF embedding} and Lemma~\ref{lem: conv in hatC},
  we may assume that the domains of $p_n$ and $p$ are extended continuously to $\RNpp \times \Psi(M) \times \Psi(M)$
  so that 
  \begin{equation} \label{thm pr: PCAF dtm result. 1}
    p_n \to p \quad \text{in} \quad C(\RNpp \times \Psi(M) \times \Psi(M), \RNp).
  \end{equation}
  Fix points $x_n \in \Psi(S_n)$ converging to some $x \in \Psi(S)$ in $\Psi(M)$.
  Then, by \ref{cond: 5. PCAF embedding}, 
  \begin{equation}
    \ProcLaw_{X_n}(x_n) = P_n^{x_n}(X_n \in \cdot) \xrightarrow[n \to \infty]{\mathrm{d}} \ProcLaw_X(x) = P^x(X \in \cdot).
  \end{equation}
  Hence, by the Skorohod representation theorem, 
  we may assume that $X_n$ under $P_n^{x_n}$ and $X$ under $P^x$
  are coupled so that $X_n \to X$ in $L^0(\RNp, \Psi(M))$ almost surely.
  By \ref{cond: 3. PCAF embedding}, \eqref{thm pr: PCAF dtm result. 1}, and this coupling, 
  \begin{equation}
    (p_n, \mu_n, X_n) \xrightarrow[n \to \infty]{\mathrm{a.s.}} (p, \mu, X).
  \end{equation}
  Moreover, from \ref{assum item: 2. PCAF dtm assumption},
  it is clear that 
  \begin{equation}  \label{thm pr: PCAF dtm result. 12}
    \sup_{n \geq 1} \bigl\| \Potential_{p_n}^1 \mu_n^{(r)} \bigr\|_\infty < \infty,
  \end{equation}
  which, combined with Proposition~\ref{prop: Potential and hk behavior}, implies 
  \begin{equation}  \label{thm pr: PCAF dtm result. 7}
    \sup_{n \geq 1} \sup_{x \in \Psi(S_n)} 
    \int_\delta^{2\delta} \int_S p_n(t, x, y)\, \mu_n^{(R)}(dy)\, dt < \infty.
  \end{equation} 
  Hence, by Lemma~\ref{lem: apPCAF continuity}\ref{lem item: apPCAF continuity. 2},
  we obtain 
  \begin{equation}  \label{thm pr: PCAF dtm result. 14}
      A_n^{(\delta, R)} \xrightarrow[n \to \infty]{\mathrm{a.s.}} A^{(\delta, R)}.
  \end{equation}
  In particular,
  \begin{equation}
    \ProcLaw_{(X_n, A_n^{(\delta, R)})}(x_n) \xrightarrow[n \to \infty]{\mathrm{d}} \ProcLaw_{(X, A^{(\delta, R)})}(x).
  \end{equation}
  Since the choice of $(x_n)_{n \geq 1}$ was arbitrary, we deduce that, for each $\delta > 0$ and $R > 1$,
  \begin{equation} \label{thm pr: PCAF dtm result. 13}
    \ProcLaw_{(X_n, A_n^{(\delta, R)})} \xrightarrow[n \to \infty]{} \ProcLaw_{(X, A^{(\delta, R)})}
    \quad \text{in}\quad 
    \hatC\!\left( \Psi(M), \Prob\bigl( L^0(\RNp, \Psi(M)) \times \upC(\RNp, \RNp) \bigr)\right).
  \end{equation}
  Therefore, for each $\delta > 0$ and $R > 1$,
  \begin{equation}  \label{thm pr: PCAF dtm result. 3}
    \cX_n^{(\delta, R)} \xrightarrow[n \to \infty]{} \cX^{(\delta, R)}
    \quad \text{in}\quad \rootBCM(\tau_1).
  \end{equation}
  (Note that so far we have used only 
  Assumption~\ref{assum: PCAF/STOM dtm assumption}\ref{assum item: 1. PCAF dtm assumption}
  and \eqref{thm pr: PCAF dtm result. 12}.)

  By Lemma~\ref{lem: simple estimate of GH distance} and Proposition~\ref{prop: delta map approx for local Kato},
  for any $n \geq 1$, $N \in \NN$, $\alpha \in (0,1)$, and $\delta \in (0,1)$,
  \begin{align}
    \GFMet^{\tau_1}\!\left(\cX_n^{(\delta, R)}, \cX_n^{(*,R)}\right)
    &\leq 
    \hatCMet{S}{\Prob(L^0(\RNp, \Psi(S_n)) \times \upC(\RNp, \RNp))}\!
    \left(\ProcLaw_{(X_n, A_n^{(\delta, R)})}, \ProcLaw_{(X_n, A_n^{(*,R)})}\right)\\
    &\leq
    2^{-N+1} 
    +
    2^{2N}
    C_1 \bigl\| \Potential^\alpha \mu_n^{(R)} \bigr\|_\infty
    \Bigl(
      \delta \alpha 
      \bigl\| \Potential^\alpha \mu_n^{(R)} \bigr\|_\infty
      + 
      \bigl\| \Potential^{1/\delta}\!\mu_n^{(R)} \bigr\|_\infty
    \Bigr)\\
    &\quad
    +
    C_2 (2e)^{2N} (1 - e^{-\alpha N}) \bigl\| \Potential^1 \mu_n^{(R)} \bigr\|_\infty^2.
    \label{thm pr: PCAF dtm result. 8}
  \end{align}
  By Assumption~\ref{assum: PCAF/STOM dtm assumption}\ref{assum item: 2. PCAF dtm assumption},
  letting $n \to \infty$, $\delta \to 0$, then $\alpha \to 0$, and finally $N \to \infty$ in the above inequality yields 
  \begin{equation} \label{thm pr: PCAF dtm result. 9}
    \lim_{\delta \to 0}
    \limsup_{n \to \infty}
    \GFMet^{\tau_1}\!\left(\cX_n^{(\delta, R)}, \cX_n^{(*,R)}\right)
    = 0.
  \end{equation}
  Combining this with \eqref{thm pr: PCAF dtm result. 5} and \eqref{thm pr: PCAF dtm result. 3} gives 
  \begin{equation} \label{thm pr: PCAF dtm result. 4}
    \cX_n^{(*, R)} \xrightarrow[n \to \infty]{} \cX^{(*, R)}
    \quad \text{in}\quad \rootBCM(\tau_1).
  \end{equation}

  Similarly, by Lemma~\ref{lem: simple estimate of GH distance} and Theorem~\ref{thm: R map approx for local Kato},
  for any $n \geq 1$, $N \in \NN$, and $r > 0$,
  \begin{align}
    \GFMet^{\tau_1}\!\left(\cX_n^{(*, R)}, \cX_n\right)
    &\leq 
    \hatCMet{S}{\Prob(L^0(\RNp, \Psi(S_n)) \times \upC(\RNp, \RNp))}\!
    \left(\ProcLaw_{(X_n, A_n^{(*, R)})}, \ProcLaw_{(X_n, A_n)}\right)\\
    &\leq
    e^{-r}
    +
    2^{-N+1} + 
    2^N
    \sup_{x \in \Psi(S_n^{(r)})}
    P^x\!\left( \exitTime_{\Psi(S_n^{(R-1)})} < N \right).
    \label{thm pr: PCAF dtm result. 10}
  \end{align}
  By Assumption~\ref{assum: PCAF dtm assumption. spatial tight},
  letting $n \to \infty$, $R \to \infty$, then $N \to \infty$, and finally $r \to \infty$ in the above inequality gives
  \begin{equation}  \label{thm pr: PCAF dtm result. 11}
    \lim_{R \to \infty}
    \limsup_{n \to \infty}
    \GFMet^{\tau_1}\!\left(\cX_n^{(*, R)}, \cX_n\right)
    = 0.
  \end{equation}
  Combining this with \eqref{thm pr: PCAF dtm result. 6} and \eqref{thm pr: PCAF dtm result. 4},
  we obtain
  \begin{equation} 
    \cX_n \xrightarrow[n \to \infty]{} \cX
    \quad \text{in}\quad \rootBCM(\tau_1),
  \end{equation}
  which completes the proof.
\end{proof}

\begin{proof}  [{Proof of Theorem~\ref{thm: PCAF/STOM dtm result}\ref{thm item: PCAF/STOM dtm result. 2}}]
  We use the same notation as in the proof of Theorem~\ref{thm: PCAF/STOM dtm result}\ref{thm item: PCAF/STOM dtm result. 1},
  and introduce some new notation.
  For each $\delta > 0$ and $R > 1$,
  we set
  \begin{gather}
    \Pi^{(\delta, R)} \coloneqq \apSTOM^{(\delta, R)}(p, \mu, X),
    \qquad
    \Pi^{(*, R)} \coloneqq \apSTOM^{(*,R)}(X, A),\\
    \Pi_n^{(\delta, R)} \coloneqq \apSTOM^{(\delta, R)}(p_n, \mu_n, X_n),
    \qquad
    \Pi_n^{(*, R)} \coloneqq \apSTOM^{(*,R)}(X_n, A_n),
  \end{gather}
  where the approximation maps are recalled from
  Definitions~\ref{dfn: truncation of STOM} and \ref{dfn: apSTOM}.
  We define
  \begin{align}
    \cY_n &\coloneqq \left( S_n, d_{S_n}, \rho_n, \mu_n, a_n, p_n, \ProcLaw_{(X_n, \Pi_n)} \right),\\
    \cY_n^{(*, R)} &\coloneqq \left( S_n, d_{S_n}, \rho_n, \mu_n, a_n, p_n, \ProcLaw_{(X_n, \Pi_n^{(*, R)})} \right),
  \end{align}
  and similarly define $\cY$ and $\cY^{(*,R)}$ by removing the subscript $n$
  from the corresponding symbols above.
  We write
  \begin{equation}
    \tau_2 \coloneqq \MeasSt(\Psi) \times \tau \times \HKSt(\Psi) \times \LzeroSPMSTOMSt(\Psi).
  \end{equation}
  By Theorem~\ref{thm: R map STOM approx for local Kato}, 
  we have
  \begin{equation}
    \cY^{(*, R)} \xrightarrow[R \to \infty]{} \cY
    \quad \text{in}\quad \rootBCM(\tau_2).
    \label{thm pr: STOM dtm result. 2}
  \end{equation}

  Fix $R > 1$.
  We first show that 
  \begin{equation} \label{thm pr: STOM dtm result. 2.1}
    \cY_n^{(*, R)} \xrightarrow[n \to \infty]{} \cY^{(*,R)}
    \quad \text{in} \quad \rootBCM(\tau_2)
  \end{equation}
  by using only 
  Assumption~\ref{assum: PCAF/STOM dtm assumption}\ref{assum item: 1. PCAF dtm assumption}
  and the following condition:
  \begin{equation}  \label{thm pr: STOM dtm result. 2.2}
    \lim_{t \to 0} 
    \limsup_{n \to \infty} 
    \sup_{y \in \Psi(S_n)}
    \int_0^t \int_{\Psi(S_n)} 
      p_n(s, y, z)\, \mu^{(R)}(dz)\, ds 
    = 0,
  \end{equation}
  which is equivalent to 
  \begin{equation}  \label{thm pr: STOM dtm result. 2.3}
    \lim_{\alpha \to \infty}
    \limsup_{n \to \infty}
    \bigl\| \Potential_{p_n}^\alpha \mu_n^{(R)} \bigr\|_\infty
    = 0
  \end{equation}
  by Proposition~\ref{prop: Potential and hk behavior}.
  (Note that this condition is indeed satisfied 
  in the present setting by 
  Assumption~\ref{assum: PCAF/STOM dtm assumption}\ref{assum item: 2. PCAF dtm assumption}.)
  By Assumption~\ref{assum: PCAF/STOM dtm assumption}\ref{assum item: 1. PCAF dtm assumption},
  there exists a rooted $\bcmAB$ space $(M, \rho_M)$
  into which all the metric spaces $S_n$ and $S$ can be isometrically embedded 
  so that \ref{cond: 1. PCAF embedding}--\ref{cond: 5. PCAF embedding} hold.
  The desired convergence follows once we show that 
  \begin{equation} \label{thm pr: STOM dtm result. 3}
    \ProcLaw_{(X_n, \Pi_n^{(*, R)})} \to \ProcLaw_{(X, \Pi^{(*, R)})}
    \quad \text{in} \quad 
    \hatC\!\left( \Psi(M), \Prob\bigl( L^0(\RNp, \Psi(M)) \times \STOMMeas(\Psi(M) \times \RNp) \bigr) \right).
  \end{equation}
  Fix elements $x_n \in \Psi(S_n)$ converging to $x \in \Psi(S)$ in $\Psi(M)$.
  The above convergence is obtained by showing that
  \begin{equation} \label{thm pr: STOM dtm result. 4}
    \ProcLaw_{(X_n, \Pi_n^{(*, R)})}(x_n) \to \ProcLaw_{(X, \Pi^{(*, R)})}(x)
    \quad \text{in} \quad 
    \Prob\bigl( L^0(\RNp, \Psi(M)) \times \STOMMeas(\Psi(M) \times \RNp) \bigr).
  \end{equation}
  As before, by \ref{cond: 5. PCAF embedding} and the Skorohod representation theorem, 
  we may assume that $X_n$ under $P_n^{x_n}$ and $X$ under $P^x$
  are coupled so that $X_n \to X$ in $L^0(\RNp, \Psi(M))$ almost surely.
  We denote by $P$ the underlying probability measure for this coupling.
  We verify the above convergence by showing that 
  \begin{equation} \label{thm pr: STOM dtm result. 5}
    \Pi_n^{(*, R)} \xrightarrow[n \to \infty]{\mathrm{p}} \Pi^{(*, R)}
    \quad \text{in $\STOMMeas(\Psi(M) \times \RNp)$ under $P$}.
  \end{equation}

  Similarly to \eqref{thm pr: PCAF dtm result. 14},
  one can verify that, for each $\delta > 0$,
  \begin{equation}  \label{thm pr: STOM dtm result. 6}
    \Pi_n^{(\delta, R)} \xrightarrow[n \to \infty]{\mathrm{a.s.}} \Pi^{(\delta, R)}
    \quad \text{in}\quad 
    \STOMMeas(\Psi(M) \times \RNp).
  \end{equation}
  Indeed, the only change is that one uses
  Lemma~\ref{lem: apSTOM continuity}\ref{lem item: apSTOM continuity. 2} 
  instead of Lemma~\ref{lem: apPCAF continuity}\ref{lem item: apPCAF continuity. 2}.

  We are now able to follow the proof of Proposition~\ref{prop: delta map STOM approx for local Kato}.
  Define a dissecting semi-ring $\mathcal{I}$ by setting 
  \begin{equation} \label{thm pr: STOM dtm result. 7}
    \mathcal{I} 
    \coloneqq 
    \{E \times [s, t) 
      \mid 
      0 \leq s < t < \infty,\  
      E \in \Borel(\Psi(M))\ \text{such that}\ \mu(\partial_{\Psi(M)} E) = 0\}.
  \end{equation}
  Fix a set $H = E \times [s, t) \in \mathcal{I}$.
  It follows from \eqref{thm pr: delta map STOM approx for local Kato. 5} that 
  \begin{equation} \label{thm pr: STOM dtm result. 8}
    \lim_{\delta \to 0}
    \sup_{y \in \Psi(S)}
    E^y\Bigl[
      \bigl| \Pi^{(*, R)}(H) - \Pi^{(\delta, R)}(H) \bigr| \wedge 1
    \Bigr]
    = 0.
  \end{equation}
  In the same manner as we established \eqref{thm pr: delta map STOM approx for local Kato. 5},
  using \eqref{prop eq: delta STOM approx for local Kato. 2} 
  and \eqref{thm pr: STOM dtm result. 2.3},
  we deduce that 
  \begin{equation}  \label{thm pr: STOM dtm result. 9}
    \lim_{\delta \to 0}
    \limsup_{n \to \infty}
    \sup_{y \in \Psi(S_n)}
    E_n^y\Bigl[
      \bigl| \Pi_n^{(*, R)}(H) - \Pi_n^{(\delta, R)}(H) \bigr| \wedge 1
    \Bigr]
    = 0.
  \end{equation}
  The triangle inequality yields that 
  \begin{align}
    E\bigl[
      |\Pi_n^{(*, R)}(H) - \Pi^{(*, R)}(H)| \wedge 1
    \bigr]
    &\leq 
    E_n^{x_n}\Bigl[
      \bigl| \Pi_n^{(*, R)}(H) - \Pi_n^{(\delta, R)}(H) \bigr| \wedge 1
    \Bigr]\\
    &\quad
    + 
    E\Bigl[
      \bigl| \Pi_n^{(\delta, R)}(H) - \Pi^{(\delta, R)}(H) \bigr| \wedge 1
    \Bigr]\\
    &\quad
    +
    E^x\Bigl[
      \bigl| \Pi^{(\delta, R)}(H) - \Pi^{(*,R)}(H) \bigr| \wedge 1
    \Bigr].
    \label{thm pr: STOM dtm result. 10}
  \end{align}
  Let $n \to \infty$ and then $\delta \to 0$ in the above inequality.
  By \eqref{thm pr: STOM dtm result. 8} and \eqref{thm pr: STOM dtm result. 9},
  the first and the third terms converge to $0$.
  The second term also converges to $0$ by \eqref{thm pr: STOM dtm result. 6}.
  Hence, $\Pi_n^{(*,R)}(H) \xrightarrow{\mathrm{p}} \Pi^{(*,R)}(H)$ under $P$,
  which implies \eqref{thm pr: STOM dtm result. 5}.
  We thus obtain \eqref{thm pr: STOM dtm result. 2.1}.

  By Lemma~\ref{lem: simple estimate of GH distance} and Theorem~\ref{thm: R map STOM approx for local Kato},
  for any $n \geq 1$, $N \in \NN$, $r > 0$, and $R > 1$,
  \begin{align}
    \GFMet^{\tau_2}\!\left(\cY_n^{(*, R)}, \cY_n\right)
    &\leq 
    \hatCMet{\Psi(S_n)}{\Prob(L^0(\RNp, \Psi(S_n)) \times \STOMMeas(\Psi(S_n) \times \RNp))}\!
    \left(\ProcLaw_{(X_n, \Pi_n^{(*, R)})}, \ProcLaw_{(X_n, \Pi_n)}\right)\\
    &\leq
    e^{-r}
    +
    e^{-N} + 
    \sup_{x \in \Psi(S_n^{(r)})}
    P_n^x\!\left( \exitTime_{\Psi(S_n^{(R-1)})} < N \right).
    \label{thm pr: STOM dtm result. 11}
  \end{align}
  Thus, the same argument that led to \eqref{thm pr: PCAF dtm result. 11} yields that 
  \begin{equation}   \label{thm pr: STOM dtm result. 12}
    \lim_{R \to \infty}
    \limsup_{n \to \infty}
    \GFMet^{\tau_2}\!\left(\cY_n^{(*, R)}, \cY_n\right)
    = 0.
  \end{equation}
  Combining this with \eqref{thm pr: STOM dtm result. 2} and \eqref{thm pr: STOM dtm result. 2.1},
  we obtain
  \begin{equation} 
    \cY_n \xrightarrow[n \to \infty]{} \cY
    \quad \text{in}\quad \rootBCM(\tau_2),
  \end{equation}
  which completes the proof.
\end{proof}

\begin{proof}[{Proof of Theorem~\ref{thm: PCAF/STOM dtm result}\ref{thm item: PCAF/STOM dtm result. 3}}]
  We use the same notation as before,
  and additionally introduce
  \begin{equation} \label{thm pr: v-STOM dtm result. 1}
    \tau_3 \coloneqq \MeasSt(\Psi) \times \tau \times \HKSt(\Psi) \times \LzeroSPMvSTOMSt(\Psi).
  \end{equation}
  By Theorem~\ref{thm: R map STOM vague approx for local Kato}, 
  we obtain
  \begin{equation} \label{thm pr: v-STOM dtm result. 2}
    \cY^{(*, R)} \xrightarrow[R \to \infty]{} \cY
    \quad \text{in} \quad \rootBCM(\tau_3).
  \end{equation}
  Note that in the proof above, 
  \eqref{thm pr: STOM dtm result. 2.1} is derived solely from 
  Assumption~\ref{assum: PCAF/STOM dtm assumption}.
  Since the topology on $\rootBCM(\tau_3)$ is weaker than that on $\rootBCM(\tau_2)$,
  it follows that 
  \begin{equation} \label{thm pr: v-STOM dtm result. 3}
    \cY_n^{(*, R)} \xrightarrow[n \to \infty]{} \cY^{(*,R)}
    \quad \text{in} \quad \rootBCM(\tau_3),
    \quad \forall R > 1.
  \end{equation}
  By Lemma~\ref{lem: simple estimate of GH distance} 
  and Theorem~\ref{thm: R map STOM vague approx for local Kato},
  for any $n \geq 1$ and $R > 1$ we have
  \begin{align} 
    \GFMet^{\tau_3}\!\left(\cY_n^{(*, R)}, \cY_n\right)
    &\leq 
    \hatCMet{\Psi(S_n)}{\Prob(L^0(\RNp, \Psi(S_n)) \times \Meas(\Psi(S_n) \times \RNp))}\!
    \left(\ProcLaw_{(X_n, \Pi_n^{(*, R)})}, \ProcLaw_{(X_n, \Pi_n)}\right) \\
    &\leq e^{-R+1}.
    \label{thm pr: v-STOM dtm result. 3.1}
  \end{align}
  This immediately yields
  \begin{equation}  \label{thm pr: v-STOM dtm result. 4}
    \lim_{R \to \infty}
    \limsup_{n \to \infty}
    \GFMet^{\tau_3}\!\left(\cY_n^{(*, R)}, \cY_n\right)
    = 0.
  \end{equation}
  Therefore, the desired convergence follows from 
  \eqref{thm pr: v-STOM dtm result. 2}, 
  \eqref{thm pr: v-STOM dtm result. 3}, 
  and \eqref{thm pr: v-STOM dtm result. 4}.
\end{proof}

\begin{rem} \label{rem: replace L^0 by J^1 for dtm main result}
  When the convergence of processes holds in a stronger Polish topology, such as the $J_1$-Skorohod topology,
  the joint convergence established in Theorem~\ref{thm: PCAF/STOM dtm result} holds accordingly in the corresponding stronger topology.
  For example, as discussed in Remark~\ref{rem: unif spatial tightness},
  if the convergence in Assumption~\ref{assum: PCAF/STOM dtm assumption}\ref{assum item: 1. PCAF dtm assumption} 
  holds with $\LzeroSPMSt$ replaced by $\SPMSt$,
  then the convergences in Theorem~\ref{thm: PCAF/STOM dtm result}\ref{thm item: PCAF/STOM dtm result. 1} and \ref{thm item: PCAF/STOM dtm result. 2},
  holds with $\LzeroSPMPCAFSt(\Psi)$ and $\LzeroSPMSTOMSt$ replaced by 
  the following structures, respectively:
  \begin{align}
    \SPMPCAFSt &\coloneqq \hatC(\Psi_{\id}, \ProbSt(\SkorohodSt \times \Psi_{\upC(\RNp, \RNp)})),\\
    \SPMSTOMSt &\coloneqq \hatC(\Psi_{\id}, \ProbSt(\SkorohodSt \times \STOMSt)).
  \end{align}
  This replacement is readily verified via a standard tightness argument, 
  see Appendix~\ref{appendix: Replacement of the L^0 topology by stronger topologies}.
  As a consequence of this replacement argument,
  Theorem~\ref{thm: intro STOM dtm} follows directly from 
  Theorem~\ref{thm: PCAF/STOM dtm result}\ref{thm item: PCAF/STOM dtm result. 2}.
\end{rem}

\begin{rem} \label{rem: tau is preserved}
We note that the convergence of the additional structures $a_n$ assumed in Assumption~\ref{assum: PCAF/STOM dtm assumption}
is preserved in Theorem~\ref{thm: PCAF/STOM dtm result}. 
Put differently, the convergence of PCAFs and STOMs is established 
while retaining the other structures of the underlying spaces. 
In this sense, our formulation shares a similar philosophy with that of stable convergence \cite{Aldous_Eagleson_78_MixingStability}.
For a concrete application of this formulation, see Remark~\ref{rem: tau for canonical embedding} below.
\end{rem}

\begin{rem} \label{rem: tau for canonical embedding}
In some examples, there exists a canonical space $M$ into which both $S_n$ and $S$ can be embedded. 
For instance, when $S_n$ are the standard graph approximations converging to the two-dimensional Sierpiński gasket $S$, 
the Euclidean space $\RN^2$ serves as such a space $M$. 
In this case, we set $\tau \coloneqq \hatCSt(\Psi_{\id}, \tau_M)$,
where we recall the fixed structure $\tau_M$ from \ref{item: fixed structure},
and let $a_n \in \tau(S_n)$ denote the canonical embedding $a_n \colon S_n \to M$. 
Then, by Theorems~\ref{thm: conv in M(tau)} and \ref{thm: PCAF/STOM dtm result}, together with the continuous mapping theorem, 
we obtain convergence of the processes and their STOMs within the fixed space $M$.
\end{rem}

%%%%%%%%%%%%%%%%%%%%%%%%%%%%%%%%%%%%%%%%%%%%%%%%%%%%%%%%%%%%%%%%%%%%%%%%%%%%%%%%%%%%%%%%%%%%%%%%%%%%%%%%%%%%%%%%%%%%%%%%%%%%%%%%%%
% Random case
%%%%%%%%%%%%%%%%%%%%%%%%%%%%%%%%%%%%%%%%%%%%%%%%%%%%%%%%%%%%%%%%%%%%%%%%%%%%%%%%%%%%%%%%%%%%%%%%%%%%%%%%%%%%%%%%%%%%%%%%%%%%%%%%%%

\subsection{Random spaces}  \label{sec: PCAF conv for rdm spaces}

The main result of this subsection is Theorem~\ref{thm: PCAF/STOM rdm result} below,
which is a version of Theorem~\ref{thm: PCAF/STOM dtm result} for random spaces.

We first clarify the setting for the result.
For each $n \in \NN$, we let $(\Omega_n, \mathcal{G}_n, \mathbf{P}_n)$ be a complete probability space,
and assume that, for $\mathbf{P}_n$-a.s.\ $\omega \in \Omega_n$,
\begin{itemize} 
  \item $(S_n^\omega, d_{S_n}^\omega, \rho_n^\omega, \mu_n^\omega, a_n^\omega) \in \rootBCM(\MeasSt(\Psi) \times \tau)$;
  \item $X_n^\omega$ is a standard process on $\Psi(S_n^\omega)$ 
    satisfying the DAC and $L^0$-weak continuity conditions (Assumptions~\ref{assum: dual hypothesis} and \ref{assum: weak L^0 continuity}), 
    with heat kernel $p_n^\omega$;
  \item the Radon measure $\mu_n^\omega$ belongs to the local Kato class of $X_n^\omega$.
\end{itemize}
Similarly, we let $(\Omega, \mathcal{G}, \mathbf{P})$ be a complete probability space, 
and assume that, for $\mathbf{P}$-a.s.\ $\omega \in \Omega$,
\begin{itemize} 
  \item $(S^\omega, d_S^\omega, \rho^\omega, \mu^\omega, a^\omega) \in \rootBCM(\MeasSt(\Psi) \times \tau)$;
  \item $X^\omega$ is a standard process on $\Psi(S^\omega)$ 
    satisfying the DAC and $L^0$-weak continuity conditions (Assumptions~\ref{assum: dual hypothesis} and \ref{assum: weak L^0 continuity}), 
    with heat kernel $p^\omega$;
  \item the Radon measure $\mu^\omega$ charges no sets semipolar for $X^\omega$.
\end{itemize}
To discuss the laws of these objects,
we assume that the maps
\begin{gather}
  (\Omega_n, \mathcal{G}_n) \ni \omega \mapsto 
  (S_n^\omega, d_{S_n}^\omega, \rho_n^\omega, \mu_n^\omega, a_n^\omega, p_n^\omega, \ProcLaw_{X_n^\omega}) 
  \in \rootBCM \bigl(\MeasSt(\Psi) \times \tau \times \HKSt(\Psi) \times \LzeroSPMSt(\Psi) \bigr),\\
  (\Omega, \mathcal{G}) \ni \omega \mapsto 
  (S^\omega, d_S^\omega, \rho^\omega, \mu^\omega, a^\omega, p^\omega, \ProcLaw_{X^\omega}) 
  \in \rootBCM \bigl(\MeasSt(\Psi) \times \tau \times \HKSt(\Psi) \times \LzeroSPMSt(\Psi) \bigr)
  \label{eq: measurability assumption for SPM}
\end{gather}
are measurable, where the codomain is equipped with the Borel $\sigma$-algebra.
(Strictly speaking, since the objects are defined only for 
$\mathbf{P}_n$- and $\mathbf{P}$-a.s.\ $\omega$,
we extend them arbitrarily on null sets.
As each probability space is complete, 
this does not affect measurability of the maps.)

We now consider the following versions of Assumptions~\ref{assum: PCAF/STOM dtm assumption} and \ref{assum: PCAF dtm assumption. spatial tight}.
As usual, we suppress the dependence on $\omega$ in the notation.

\begin{assum} \label{assum: PCAF rdm assumption} \leavevmode
  \begin{enumerate} [label = \textup{(\roman*)}, series = PCAF rdm assumption]
    \item  \label{assum item: 1. PCAF rdm assumption}
      It holds that 
      \begin{equation}  \label{assum item eq: 1. PCAF rdm assumption}
        (S_n, d_{S_n}, \rho_n, \mu_n, a_n, p_n, \ProcLaw_{X_n}) \xrightarrow{\mathrm{d}} (S, d_S, \rho, \mu, a, p, \ProcLaw_{X})
      \end{equation}
      in 
      $\rootBCM \bigl(\MeasSt(\Psi) \times \tau \times \HKSt(\Psi) \times  \SPMSt(\Psi) \bigr)$.
    \item  \label{assum item: 2. PCAF rdm assumption}
      For all $r > 0$, 
      \begin{equation}
        \lim_{\alpha \to \infty}
        \limsup_{n \to \infty}
        \mathbf{E}_n
        \left[
          \bigl\| \Potential_{p_n}^\alpha \mu_n^{(r)} \bigr\|_\infty \wedge 1
        \right]
        = 0.
      \end{equation}
  \end{enumerate}
\end{assum}

\begin{rem} \label{rem: assum for PCAF rdm}
  By Proposition~\ref{prop: Potential and hk behavior},
  condition~\ref{assum item: 2. PCAF rdm assumption} of Assumption~\ref{assum: PCAF rdm assumption} 
  is equivalent to the following:
  \begin{enumerate}[resume* = PCAF rdm assumption]
    \item For all $r > 0$, 
    \begin{equation}
      \lim_{\delta \to 0} 
      \limsup_{n \to \infty} 
      \mathbf{E}_n\! 
      \left[
        \left(
          \sup_{x \in \Psi(S_n^{(r)})} 
          \int_0^\delta \int_{\Psi(S_n^{(r)})} p_n(t, x, y)\, \mu_n(dy)\, dt
        \right)
        \wedge 
        1
      \right]
      = 0.
    \end{equation}
  \end{enumerate}
\end{rem}

\begin{assum} \label{assum: PCAF rdm assumption. spatial tight}
  For almost every realization $\omega_n$ and $\omega$ of the random spaces,
  the processes $X_n^{\omega_n}$, $n \ge 1$, and $X^\omega$ 
  satisfy the spatial tightness condition (Assumption~\ref{assum: spatial tightness}).
  Moreover, for all $r > 0$ and $T > 0$, it holds that
  \begin{equation}
    \lim_{R \to \infty}
    \limsup_{n \to \infty}
    \mathbf{E}_n\!\left[
      \sup_{x \in \Psi(S_n^{(r)})}
      P_n^x\!\left( \exitTime_{\Psi(S_n^{(R)})} < T \right)
    \right]
    = 0.
  \end{equation}
\end{assum}

Similarly to Lemma~\ref{lem: STOM dtm limiting meas is smooth},
we first verify that the limiting measure $\mu$ is in the local Kato class, almost surely.

\begin{lem} \label{lem: PCAF/STOM rdm limiting meas is smooth}
  Under Assumption~\ref{assum: PCAF rdm assumption}, $\mathbf{P}$-a.s., the Radon measure $\mu$ is in the local Kato class of $X$.
\end{lem}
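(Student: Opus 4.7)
The plan is to combine a Skorohod coupling with Fatou's lemma to transfer the moment bound in Assumption~\ref{assum: PCAF rdm assumption}\ref{assum item: 2. PCAF rdm assumption} into an almost-sure potential-decay estimate for the limit $\mu$.

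First, I would apply Skorohod's representation theorem to the distributional convergence in Assumption~\ref{assum: PCAF rdm assumption}\ref{assum item: 1. PCAF rdm assumption} to realize it as almost-sure convergence on a common probability space. On this coupling, the pathwise argument from the proof of Lemma~\ref{lem: STOM dtm limiting meas is smooth} (Fatou applied to the joint convergence of spaces, measures, and heat kernels) yields, $\mathbf{P}$-almost surely,
\begin{equation}
  \bigl\| \Potential_p^\alpha \mu^{(r)} \bigr\|_\infty
  \leq
  \liminf_{n \to \infty}
  \bigl\| \Potential_{p_n}^\alpha \mu_n^{(r)} \bigr\|_\infty,
\end{equation}
whenever $r$ is a continuity point of $r' \mapsto \mu(D_S(\rho, r'))$. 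Since for each realization the set of non-continuity points is countable, an application of Fubini's theorem produces a Lebesgue-full set $\mathcal{R} \subseteq (0, \infty)$ such that, for every $r \in \mathcal{R}$, the above inequality holds $\mathbf{P}$-a.s.

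Next, for each fixed $r \in \mathcal{R}$, I would truncate the potential norms by $1$, take expectations, and apply Fatou's lemma, to deduce
\begin{equation}
  \mathbf{E}\bigl[ \bigl\| \Potential_p^\alpha \mu^{(r)} \bigr\|_\infty \wedge 1 \bigr]
  \leq
  \liminf_{n \to \infty}
  \mathbf{E}_n\bigl[ \bigl\| \Potential_{p_n}^\alpha \mu_n^{(r)} \bigr\|_\infty \wedge 1 \bigr].
\end{equation}
By Assumption~\ref{assum: PCAF rdm assumption}\ref{assum item: 2. PCAF rdm assumption}, the right-hand side tends to $0$ as $\alpha \to \infty$. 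Since $\alpha \mapsto \|\Potential_p^\alpha \mu^{(r)}\|_\infty$ is non-increasing, its $\alpha \to \infty$ limit exists, and monotone/dominated convergence upgrades the expectation statement to almost-sure vanishing of that limit.

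Finally, I would choose a countable cofinal subset $\mathcal{R}_0 \subseteq \mathcal{R}$ of $(0,\infty)$, so that $\mathbf{P}$-a.s.\ the decay $\lim_{\alpha \to \infty}\|\Potential_p^\alpha \mu^{(r)}\|_\infty = 0$ holds simultaneously for every $r \in \mathcal{R}_0$. For an arbitrary compact $K \subseteq S$, picking $r_0 \in \mathcal{R}_0$ with $K \subseteq D_S(\rho, r_0)$ and using $\mu|_K \le \mu^{(r_0)}$ yields the local Kato decay. Together with the standing hypothesis that $\mu$ charges no $X$-semipolar sets, this shows $\mu$ is in the local Kato class of $X$, $\mathbf{P}$-a.s. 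The main technical point is handling the $\omega$-dependent exceptional set of bad radii, which is precisely what the Fubini step resolves; a secondary issue is measurability of $\omega \mapsto \|\Potential_{p_n}^\alpha \mu_n^{(r)}\|_\infty$ under the coupling, which follows from joint continuity of the heat kernels and the measurability built into~\eqref{eq: measurability assumption for SPM}.
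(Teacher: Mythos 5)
Your proof is correct and follows essentially the same route as the paper: Skorohod representation, the pathwise lower-semicontinuity inequality $\|\Potential_p^\alpha \mu^{(r)}\|_\infty \leq \liminf_n \|\Potential_{p_n}^\alpha \mu_n^{(r)}\|_\infty$ inherited from the deterministic case, Fatou's lemma against Assumption~\ref{assum: PCAF rdm assumption}\ref{assum item: 2. PCAF rdm assumption}, and monotonicity in $r$ to pass to all compacts. The only (immaterial) difference is the order of operations — the paper applies Fatou directly to the prelimit potentials and then invokes the deterministic inequality, whereas you first establish the pathwise inequality and then take expectations — and your Fubini treatment of the exceptional radii is, if anything, slightly more explicit than the paper's appeal to monotonicity in $r$.
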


\begin{proof}
  By the Skorohod representation theorem,
  we may assume that the convergence in \eqref{assum item eq: 1. PCAF rdm assumption} takes place almost surely 
  in some probability space with probability measure $\mathbb{P}$.
  Fix $r > 0$.
  By Assumption~\ref{assum: PCAF rdm assumption}\ref{assum item: 2. PCAF rdm assumption} and Fatou's lemma,
  \begin{align}
    0
    =
    \lim_{\alpha \to \infty} 
    \limsup_{n \to \infty}
    \mathbb{E}\!
    \left[
      \bigl\| \Potential_{p_n}^\alpha \mu_n^{(r)} \bigr\|_\infty \wedge 1
    \right]
    \geq
    \mathbb{E}
    \left[
      \lim_{\alpha \to \infty} 
      \liminf_{n \to \infty}
      \left(
        \bigl\| \Potential_{p_n}^\alpha \mu_n^{(r)} \bigr\|_\infty \wedge 1
      \right)
    \right].
  \end{align}
  It follows that 
  \begin{equation} \label{pr eq: 1. PCAF/STOM rdm limiting meas is smooth}
    \lim_{\alpha \to \infty} 
    \liminf_{n \to \infty}
    \bigl\| \Potential_{p_n}^\alpha \mu_n^{(r)} \bigr\|_\infty
    = 0,
    \quad 
    \mathbb{P}\text{-a.s.}
  \end{equation}
  This, combined with \eqref{pr eq: 2. STOM dtm limiting meas is smooth}, implies that, $\mathbf{P}$-a.s.,
  \begin{equation}
    \lim_{\alpha \to \infty}
    \bigl\| \Potential_p^\alpha \mu^{(r)} \bigr\|_\infty = 0.
  \end{equation}
  By the monotonicity with respect to $r$,
  the above assertion holds for all $r > 0$, $\mathbf{P}$-a.s.
  This completes the proof.
\end{proof}

For each $n \geq 1$, we let $A_n$ and $\Pi_n$ be the PCAF and STOM of $X_n$ associated with $\mu_n$, respectively.
By Lemma~\ref{lem: PCAF/STOM rdm limiting meas is smooth},
$\mathbf{P}$-a.s., 
there exists a PCAF $A$ and STOM $\Pi$ of $X$ associated with $\mu$.
We now state the main result.

\begin{thm} \label{thm: PCAF/STOM rdm result} \leavevmode
  \begin{enumerate} [label = \textup{(\roman*)}]
    \item \label{thm item: PCAF/STOM rdm result. 1}
      Under Assumptions~\ref{assum: PCAF rdm assumption} and \ref{assum: PCAF rdm assumption. spatial tight}, it holds that   
      \begin{equation} \label{thm eq: PCAF rdm result. 1}
        \left( S_n, d_{S_n}, \rho_n, \mu_n, a_n, p_n, \ProcLaw_{(X_n, A_n)} \right) 
          \xrightarrow{\mathrm{d}} \left( S, d_S, \rho, \mu, a, p, \ProcLaw_{(X, A)} \right)
      \end{equation}
      as random elements of  $\rootBCM \bigl(\MeasSt(\Psi) \times \tau \times \HKSt(\Psi) \times  \LzeroSPMPCAFSt(\Psi) \bigr)$.
    \item \label{thm item: PCAF/STOM rdm result. 2}
       Under Assumptions~\ref{assum: PCAF rdm assumption} and \ref{assum: PCAF rdm assumption. spatial tight}, it holds that   
      \begin{equation}
        \left( S_n, d_{S_n}, \rho_n, \mu_n, a_n, p_n, \ProcLaw_{(X_n, \Pi_n)} \right) 
          \xrightarrow{\mathrm{d}} \left( S, d_S, \rho, \mu, a, p, \ProcLaw_{(X, \Pi)} \right)
      \end{equation}
      as random elements of $\rootBCM \bigl(\MeasSt(\Psi) \times \tau \times \HKSt(\Psi) \times  \LzeroSPMSTOMSt(\Psi) \bigr)$.
    \item \label{thm item: PCAF/STOM rdm result. 3}
      Under Assumption~\ref{assum: PCAF rdm assumption}, it holds that  
      \begin{equation}
        \left( S_n, d_{S_n}, \rho_n, \mu_n, a_n, p_n, \ProcLaw_{(X_n, \Pi_n)} \right) 
          \xrightarrow{\mathrm{d}} \left( S, d_S, \rho, \mu, a, p, \ProcLaw_{(X, \Pi)} \right)
      \end{equation}
      as random elements of $\rootBCM \bigl(\MeasSt(\Psi) \times \tau \times \HKSt(\Psi) \times  \LzeroSPMvSTOMSt(\Psi) \bigr)$.
  \end{enumerate}
\end{thm}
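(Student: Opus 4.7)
The plan is to parallel the deterministic argument of Theorem~\ref{thm: PCAF/STOM dtm result}, replacing pathwise Gromov--Hausdorff estimates by Prohorov distances between laws. Reusing the notation of that proof, write $\cX_n$ for the random tuple containing the exact PCAF/STOM, $\cX_n^{(*,R)}$ for its $R$-truncation, and $\cX_n^{(\delta,R)}$ for the smooth $(\delta,R)$-approximation, together with their limit analogues $\cX$, $\cX^{(*,R)}$, $\cX^{(\delta,R)}$, and the corresponding Polish structures $\tau_1, \tau_2, \tau_3$. I shall estimate the Prohorov distance between $\mathcal{L}(\cX_n)$ and $\mathcal{L}(\cX)$ in $\rootBCM(\tau_i)$ through the five-term triangle inequality arising from the chain $\cX_n \rightsquigarrow \cX_n^{(*,R)} \rightsquigarrow \cX_n^{(\delta,R)} \rightsquigarrow \cX^{(\delta,R)} \rightsquigarrow \cX^{(*,R)} \rightsquigarrow \cX$.

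The four ``vertical'' terms (pairing $\cX_n$ with $\cX_n^{(*,R)}$, $\cX_n^{(*,R)}$ with $\cX_n^{(\delta,R)}$, and the two limit analogues) will be controlled by Lemma~\ref{lem: Prohorov estimate}: within each pair the two tuples share the underlying process $X_n$ (or $X$), so they admit a trivial coupling on a common probability space and the Prohorov distance is bounded above by the expectation (truncated at $1$) of the pathwise Gromov--Hausdorff distance. Substituting the bounds \eqref{thm pr: PCAF dtm result. 8}, \eqref{thm pr: PCAF dtm result. 10}, \eqref{thm pr: STOM dtm result. 11}, and \eqref{thm pr: v-STOM dtm result. 3.1}, together with the analogous limit-side bounds from Propositions~\ref{prop: delta map approx for local Kato} and \ref{prop: delta map STOM approx for local Kato} and Theorems~\ref{thm: R map approx for local Kato}, \ref{thm: R map STOM approx for local Kato}, and \ref{thm: R map STOM vague approx for local Kato}, reduces the problem to showing that $\mathbf{E}_n\bigl[\|\Potential_{p_n}^\alpha \mu_n^{(R)}\|_\infty \wedge 1\bigr]$ and $\mathbf{E}_n\bigl[\sup_{x \in \Psi(S_n^{(r)})} P_n^x(\exitTime_{\Psi(S_n^{(R)})} < N)\bigr]$ vanish in the iterated limits $n \to \infty$, $\delta \to 0$, $\alpha \to 0$, $N \to \infty$, $R \to \infty$, $r \to \infty$---which is precisely the content of Assumptions~\ref{assum: PCAF rdm assumption}\ref{assum item: 2. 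PCAF rdm assumption} and~\ref{assum: PCAF rdm assumption. spatial tight}. For part~\ref{thm item: PCAF/STOM rdm result. 3}, Theorem~\ref{thm: R map STOM vague approx for local Kato} provides a deterministic $e^{-R+1}$ bound for the truncation error, so Assumption~\ref{assum: PCAF rdm assumption. spatial tight} will not be needed there.

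The remaining ``horizontal'' term, $\ProhMet{\rootBCM(\tau_i)}(\mathcal{L}(\cX_n^{(\delta,R)}), \mathcal{L}(\cX^{(\delta,R)}))$, will be handled by a Skorohod-coupling argument. Since $\rootBCM(\MeasSt(\Psi) \times \tau \times \HKSt(\Psi) \times \LzeroSPMSt(\Psi))$ is Polish, Assumption~\ref{assum: PCAF rdm assumption}\ref{assum item: 1. PCAF rdm assumption} upgrades to almost-sure convergence of the ingredient tuple on a common probability space. Along this coupling, Lemmas~\ref{lem: apPCAF continuity}\ref{lem item: apPCAF continuity. 2} and~\ref{lem: apSTOM continuity}\ref{lem item: apSTOM continuity. 2} will yield pathwise continuity of $\apPCAF^{(\delta, R)}$ and $\apSTOM^{(\delta, R)}$, provided the integrability condition~\eqref{lem eq: apPCAF continuity} is verified along the sequence. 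The latter will be obtained after passing to a further subsequence via Markov's inequality applied to Assumption~\ref{assum: PCAF rdm assumption}\ref{assum item: 2. PCAF rdm assumption}; the resulting subsequential distributional convergence is then upgraded to convergence of the full sequence through the standard ``every subsequence has a further subsequence'' principle.

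The main obstacle will be the interplay between the product structure of the pathwise bounds (e.g., $\|\Potential_{p_n}^\alpha \mu_n^{(R)}\|_\infty \cdot \|\Potential_{p_n}^{1/\delta} \mu_n^{(R)}\|_\infty$ in \eqref{thm pr: PCAF dtm result. 8}) and the expectation-only control provided by Assumption~\ref{assum: PCAF rdm assumption}\ref{assum item: 2. PCAF rdm assumption}. Extracting vanishing in expectation of such products will require a preliminary uniform-in-$n$ tightness-in-probability estimate for $\|\Potential_{p_n}^\alpha \mu_n^{(R)}\|_\infty$, after which the expectation can be decomposed on a ``large'' event (tightly small) and its complement (on which the second factor is controlled by the assumption). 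The careful ordering of the limits $n$, $\delta$, $\alpha$, $N$, $R$, $r$ across the five triangle terms will have to be synchronized so that the overall bound vanishes, but the logical architecture remains that of the deterministic proof.
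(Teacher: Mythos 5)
Your architecture is essentially the paper's for part~\ref{thm item: PCAF/STOM rdm result. 1}: the combination of a Skorohod coupling of the ingredient tuples (augmented by the potentials $\|\Potential^1_{p_n}\mu_n^{(r)}\|_\infty$, whose tightness supplies the integrability condition \eqref{lem eq: apPCAF continuity} along subsequences), the continuity of $\apPCAF^{(\delta,R)}$, and expectation bounds on the two truncation/approximation errors is exactly how the paper proceeds, and your remark about splitting the expectation of the product $\|\Potential^\alpha_{p_n}\mu_n^{(R)}\|_\infty \cdot \|\Potential^{1/\delta}_{p_n}\mu_n^{(R)}\|_\infty$ on a high-probability event where the first factor is bounded matches the role of Lemma~\ref{lem: PCAF rdm tightness of potentials}.

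However, for parts~\ref{thm item: PCAF/STOM rdm result. 2} and \ref{thm item: PCAF/STOM rdm result. 3} your five-term chain has a missing link. The ``vertical'' term pairing $\cY_n^{(\delta,R)}$ with $\cY_n^{(*,R)}$ cannot be controlled by Lemma~\ref{lem: Prohorov estimate}, because there is no pathwise bound on $\STOMMet{\Psi(S_n)\times\RNp}\bigl(\Pi_n^{(\delta,R)},\Pi_n^{(*,R)}\bigr)$: Proposition~\ref{prop: delta STOM approx for local Kato} only controls the differences $\bigl|\Pi_n^{(\delta,R)}(E\times[s,t)) - \Pi_n^{(*,R)}(E\times[s,t))\bigr|$ on individual rectangles, and this does not translate into a bound on the Prohorov distances of the restrictions entering $\STOMMet{}$. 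You correctly list \eqref{thm pr: PCAF dtm result. 8} for the PCAF case but cite no analogue for the STOM case --- none exists. The paper therefore does not separate this step as a triangle-inequality term; it merges the limits $\delta\to 0$ and $n\to\infty$ inside a single Skorohod coupling, proving $\Pi_{n}^{(*,R)}(H)\to\Pi^{(*,R)}(H)$ in probability for each $H$ in a dissecting semi-ring and invoking Kallenberg's criterion.

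This forces a second ingredient your proposal does not supply: that coupled, evaluation-wise argument requires the \emph{realization-wise} uniform decay
\begin{equation}
  \lim_{t \to 0}\limsup_{k \to \infty}\sup_{y \in \Psi(S_{n_k})}\int_0^t\int_{\Psi(S_{n_k})} p_{n_k}(s,y,z)\,\mu_{n_k}^{(R)}(dz)\,ds = 0
\end{equation}
along the coupled subsequence, and Assumption~\ref{assum: PCAF rdm assumption}\ref{assum item: 2. PCAF rdm assumption} (an expectation statement) combined with Fatou only yields the corresponding statement with $\liminf_k$ in place of $\limsup_k$, which is not enough. The paper closes this gap by first proving part~\ref{thm item: PCAF/STOM rdm result. 1}, then using the distributional convergence of $A_{n_k}^{(*,R+1)}(t)$ together with uniform second-moment bounds (coming from the coupled, a.s.\ bounded $1$-potentials) to upgrade to convergence of first moments, which by the Kac-type formula are exactly the heat-kernel integrals above; the limit is then small for small $t$ because the limiting measure is in the local Kato class. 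Without this moment-convergence step (or a substitute), your plan for parts~\ref{thm item: PCAF/STOM rdm result. 2} and \ref{thm item: PCAF/STOM rdm result. 3} does not go through.
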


Before proving the above theorem, we justify the measurability of the random objects appearing in its statements.

\begin{lem} \label{lem: PCAF rdm measurability}
  All the tuples appearing in the assertions of Theorem~\ref{thm: PCAF/STOM rdm result} 
  are random elements (that is, measurable).
\end{lem}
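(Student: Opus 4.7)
The plan is to approximate the target tuples by simpler ones involving the approximate PCAFs $A_n^{(\delta, R)} \coloneqq \apPCAF^{(\delta, R)}(p_n, \mu_n, X_n)$ and approximate STOMs $\Pi_n^{(\delta, R)} \coloneqq \apSTOM^{(\delta, R)}(p_n, \mu_n, X_n)$ from Definitions~\ref{dfn: apPCAF} and \ref{dfn: apSTOM}, and then pass to the limit using the continuity results of Sections~\ref{sec: Continuity of joint laws with PCAFs} and \ref{sec: Continuity of joint laws with STOMs}.

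First, I would show that each approximate tuple
\[
  \omega \mapsto \bigl(S_n, d_{S_n}, \rho_n, \mu_n, a_n, p_n, \ProcLaw_{(X_n, A_n^{(\delta, R)})}\bigr)
\]
is measurable, and similarly for the STOM variant. By Lemmas~\ref{lem: apPCAF continuity}\ref{lem item: apPCAF continuity. 1} and \ref{lem: apSTOM continuity}\ref{lem item: apSTOM continuity. 1}, the maps $\apPCAF^{(\delta, R)}$ and $\apSTOM^{(\delta, R)}$ are Borel measurable in their arguments $(q, \nu, \eta)$. Combined with the measurability of $\omega \mapsto (p_n^\omega, \mu_n^\omega, \ProcLaw_{X_n^\omega})$ guaranteed by \eqref{eq: measurability assumption for SPM}, a standard argument on pushforwards of probability kernels (verified by testing on cylinder events and then invoking the monotone class theorem) shows that $\omega \mapsto \ProcLaw_{(X_n^\omega, A_n^{(\delta, R), \omega})}$ is a measurable map into the corresponding Polish space of probability measures, and the measurability of the full approximate tuple follows.

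Second, I would pass to the limit. For $\mathbf{P}_n$-almost every realization $\omega$, applying Proposition~\ref{prop: delta map approx for local Kato} and Theorem~\ref{thm: R map approx for local Kato} (respectively, Proposition~\ref{prop: delta map STOM approx for local Kato} and Theorems~\ref{thm: R map STOM approx for local Kato} and \ref{thm: R map STOM vague approx for local Kato}) to the fixed realization of the underlying space yields convergence of the approximate law maps to $\ProcLaw_{(X_n^\omega, A_n^\omega)}$ (respectively, the STOM versions) as $\delta \to 0$ and then $R \to \infty$. Combined with Lemma~\ref{lem: simple estimate of GH distance}, this produces pointwise a.s.\ convergence of the approximate tuples to the target tuples in the ambient Gromov--Hausdorff-type space, which is Polish by Theorems~\ref{thm: product is Polish}, \ref{thm: composition is Polish}, and \ref{thm: Polishness of GH topology}. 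Since a pointwise a.s.\ limit of measurable maps into a Polish space is measurable, the conclusion follows. The same reasoning, applied to $(\Omega, \mathcal{G}, \mathbf{P})$ in place of $(\Omega_n, \mathcal{G}_n, \mathbf{P}_n)$, handles the limiting objects.

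The main technical obstacle is the measurability of the pushforward in the first step, specifically, verifying that the pushforward of $\ProcLaw_{X_n^\omega}$ under the $\omega$-dependent measurable map $\eta \mapsto (\eta, \apPCAF^{(\delta, R)}(p_n^\omega, \mu_n^\omega, \eta))$ depends measurably on $\omega$ as an element of the relevant $\hatC$-space valued in the Prohorov space. This is handled in the standard way: one first checks measurability on cylinder events $\{\eta \in U,\, \apPCAF^{(\delta, R)}(p_n^\omega, \mu_n^\omega, \eta) \in V\}$ for Borel $U$ and $V$, where both the Borel measurability of $\apPCAF^{(\delta, R)}$ and the measurability of $\omega \mapsto (p_n^\omega, \mu_n^\omega, \ProcLaw_{X_n^\omega})$ are directly applicable, and then extends via the monotone class theorem together with the fact that the Prohorov topology on $\Prob(\cdot)$ is generated by such evaluations.
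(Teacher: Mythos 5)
Your proposal is correct and follows essentially the same route as the paper: approximate by the measurable tuples built from $\apPCAF^{(\delta,R)}$ and $\apSTOM^{(\delta,R)}$, invoke the measurability assumption \eqref{eq: measurability assumption for SPM} together with Lemmas~\ref{lem: apPCAF continuity} and \ref{lem: apSTOM continuity}, and then pass to the almost-sure limits in $\delta$ and $R$ via Propositions~\ref{prop: delta map approx for local Kato}, \ref{prop: delta map STOM approx for local Kato} and Theorems~\ref{thm: R map approx for local Kato}, \ref{thm: R map STOM approx for local Kato}, concluding since a.s.\ pointwise limits of measurable maps into a Polish space are measurable. The extra detail you supply on the pushforward measurability (cylinder events plus the monotone class theorem) is a correct elaboration of a step the paper leaves implicit.
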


\begin{proof}
  We prove only the measurability of 
  \begin{equation}
    \left( S, d_S, \rho, \mu, a, p, \ProcLaw_{(X, A)} \right),
  \end{equation}
  as an element of $\rootBCM \bigl(\MeasSt(\Psi) \times \tau \times \HKSt(\Psi) \times \LzeroSPMPCAFSt(\Psi) \bigr)$,
  since the measurability of the remaining tuples can be shown in the same way.
  The idea is, again, to use the approximation results established in 
  Section~\ref{sec: approx of PCAFs and STOMs}.

  For each $\delta > 0$ and $R > 1$, we set 
  \begin{equation}
    A^{(\delta, R)} \coloneqq \apPCAF^{(\delta, R)}(p, \mu, X),
    \qquad 
    A^{(*, R)} \coloneqq \apPCAF^{(*, R)}(X, A),
  \end{equation}
  where the approximation maps are recalled from 
  Definitions~\ref{dfn: restriction of PCAF} and \ref{dfn: apPCAF}.
  By the measurability of the map in \eqref{eq: measurability assumption for SPM}
  and the measurability of $\apPCAF^{(\delta, R)}$ 
  established in Lemma~\ref{lem: apPCAF continuity},
  we deduce that 
  \begin{equation}
    \left( S, d_S, \rho, \mu, a, p, \ProcLaw_{(X, A^{(\delta, R)})} \right)
  \end{equation}
  is a random element.
  By Theorems~\ref{prop: delta map approx for local Kato} 
  and \ref{thm: R map approx for local Kato}, we have
  \begin{gather}
    \left( S, d_S, \rho, \mu, a, p, \ProcLaw_{(X, A^{(\delta, R)})} \right)
    \xrightarrow[\delta \to 0]{\mathrm{a.s.}}
    \left( S, d_S, \rho, \mu, a, p, \ProcLaw_{(X, A^{(*, R)})} \right),\\
    \left( S, d_S, \rho, \mu, a, p, \ProcLaw_{(X, A^{(*, R)})} \right)
    \xrightarrow[R \to \infty]{\mathrm{a.s.}}
    \left( S, d_S, \rho, \mu, a, p, \ProcLaw_{(X, A)} \right).
  \end{gather}
  Hence $\left( S, d_S, \rho, \mu, a, p, \ProcLaw_{(X, A)} \right)$ is a random element.
\end{proof}

In the proof of Theorem~\ref{thm: PCAF/STOM rdm result},
the following tightness of potentials plays a crucial role.

\begin{lem} \label{lem: PCAF rdm tightness of potentials}
  Under Assumption~\ref{assum: PCAF rdm assumption}\ref{assum item: 2. PCAF rdm assumption},
  it holds that 
  \begin{equation}
    \lim_{\alpha \to \infty}
    \limsup_{n \to \infty}
    \mathbf{P}_n\!
    \left(
      \bigl\| \Potential_{p_n}^1 \mu_n^{(r)} \bigr\|_\infty
      > \alpha
    \right)
    = 0,
    \quad \forall r > 0.
  \end{equation}
\end{lem}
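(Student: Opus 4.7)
The plan is to reduce the claimed tightness to Assumption~\ref{assum: PCAF rdm assumption}\ref{assum item: 2. PCAF rdm assumption} by establishing a deterministic pointwise comparison
\begin{equation}
  \|\Potential_{p_n}^1 \mu_n^{(r)}\|_\infty \leq \alpha\, \|\Potential_{p_n}^\alpha \mu_n^{(r)}\|_\infty \qquad (\alpha \geq 1),
\end{equation}
and then converting it into a tail bound by a one-line Markov-type estimate.

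To obtain the comparison, I would first derive the resolvent-density identity
\begin{equation}
  \ResolDensity_{p_n}^1(x, y)
  = \ResolDensity_{p_n}^\alpha(x, y)
  + (\alpha - 1) \int_{\Psi(S_n)} \ResolDensity_{p_n}^1(x, z)\, \ResolDensity_{p_n}^\alpha(z, y)\, m_n(dz)
\end{equation}
by a direct computation from the Chapman--Kolmogorov equation~\eqref{assum item eq: C-K equation} using Fubini and the change of variables $u = s + t$ in the double integral defining the convolution of the resolvent densities. Integrating this identity against $\mu_n^{(r)}(dy)$, bounding $\Potential_{p_n}^\alpha \mu_n^{(r)}(z)$ inside the remaining integral by its supremum, and applying the sub-probability bound $\int \ResolDensity_{p_n}^1(x, z)\, m_n(dz) \leq 1$ (a consequence of $P_n^x(X_n(t) \in \Psi(S_n)) \leq 1$) yields the desired comparison after taking the supremum in $x$.

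With the comparison in hand, the conclusion is immediate. Using $\mathbf{1}_{\{Y > 1\}} \leq Y \wedge 1$ for any non-negative random variable $Y$, we have
\begin{equation}
  \mathbf{P}_n\!\left( \|\Potential_{p_n}^1 \mu_n^{(r)}\|_\infty > \alpha \right)
  \leq \mathbf{P}_n\!\left( \|\Potential_{p_n}^\alpha \mu_n^{(r)}\|_\infty > 1 \right)
  \leq \mathbf{E}_n\!\left[ \|\Potential_{p_n}^\alpha \mu_n^{(r)}\|_\infty \wedge 1 \right],
\end{equation}
and the proof is finished by taking $\limsup_{n \to \infty}$ followed by $\alpha \to \infty$ and invoking Assumption~\ref{assum: PCAF rdm assumption}\ref{assum item: 2. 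PCAF rdm assumption}. The only slightly delicate point is verifying the resolvent identity in the generality of Assumption~\ref{assum: dual hypothesis}, but this is standard and requires nothing beyond Chapman--Kolmogorov; I do not expect any genuine obstacle.
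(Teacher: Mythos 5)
Your proof is correct and follows essentially the same route as the paper: the key comparison $\|\Potential_{p_n}^1 \mu_n^{(r)}\|_\infty \leq \alpha \|\Potential_{p_n}^\alpha \mu_n^{(r)}\|_\infty$ followed by the Markov-type tail bound is exactly the paper's argument, the only difference being that the paper cites this inequality from Mori (Equation~(2.7) of that reference) whereas you derive it yourself via the resolvent equation and the sub-probability bound $\int \ResolDensity_{p_n}^1(x,z)\, m_n(dz) \leq 1$. Your derivation of the resolvent identity from Chapman--Kolmogorov is sound, so the self-contained version works.
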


\begin{proof}
  By \cite[Equation~(2.7)]{Mori_21_Kato_Sobolev}
  (with the parameters $(p, \alpha, \beta)$ therein set to $(1, 1, \alpha)$), 
  we deduce that 
  \begin{equation}
    \bigl\| \Potential_{p_n}^1 \mu_n^{(r)} \bigr\|_\infty 
    \leq 
    \alpha \bigl\| \Potential_{p_n}^\alpha \mu_n^{(r)} \bigr\|_\infty,
    \quad 
    \forall \alpha > 1.
  \end{equation}
  It follows that 
  \begin{equation}
    \mathbf{P}_n\!
    \left(
      \bigl\| \Potential_{p_n}^1 \mu_n^{(r)} \bigr\|_\infty
      > \alpha/2
    \right)
    \leq 
    \mathbf{P}_n\!
    \left(
      \bigl\| \Potential_{p_n}^\alpha \mu_n^{(r)} \bigr\|_\infty
      > 1/2
    \right)
    \leq   
    2\mathbf{E}_n\!
    \left[
      \bigl\| \Potential_{p_n}^\alpha \mu_n^{(r)} \bigr\|_\infty \wedge 1
    \right],
  \end{equation}
  where we use Markov's inequality to obtain the last inequality.
  Hence, Assumption~\ref{assum: PCAF rdm assumption}\ref{assum item: 2. PCAF rdm assumption} yields the desired result.
\end{proof}

We are now ready to prove Theorem~\ref{thm: PCAF/STOM rdm result}\ref{thm item: PCAF/STOM rdm result. 1}.
Before presenting the proof, we briefly outline the main idea.
The basic strategy is the same as in the deterministic case discussed in the previous subsection: 
we propagate the convergence of the processes 
to that of the joint laws with the approximated STOMs 
by using the continuity of the approximation maps 
(Lemma~\ref{lem: apSTOM continuity}\ref{lem item: apSTOM continuity. 2}).
However, the continuity of these maps requires the uniform boundedness 
of the $1$-potentials of the smooth measures.
This difficulty is overcome by Lemma~\ref{lem: PCAF rdm tightness of potentials}.
Specifically, by applying the Skorohod representation theorem,
we show that for any subsequence, one can further extract a subsubsequence
along which both the convergence of the processes 
and the uniform boundedness of the $1$-potentials 
hold almost surely on a suitably constructed probability space.
This allows us to apply the continuity of the approximation maps appropriately,
so that the approximation argument proceeds in the same manner 
as in the deterministic case.

\begin{proof} [{Proof of Theorem~\ref{thm: PCAF/STOM rdm result}}\ref{thm item: PCAF/STOM rdm result. 1}]
  We use the notation introduced in the proof of Theorem~\ref{thm: PCAF/STOM dtm result}.
  We first note that 
  Lemma~\ref{lem: PCAF rdm tightness of potentials} implies that $\{\|\Potential^1_{p_n} \mu_n^{(r)}\|_\infty\}_{n \geq 1}$ is tight for each $r > 0$.
  From this and Assumption~\ref{assum: PCAF rdm assumption}\ref{assum item: 1. PCAF rdm assumption},
  it follows that the family of random elements
  \begin{equation} \label{thm pr: PCAF rdm result. 1}
    \left(
      S_n, \rho_n, \mu_n, a_n, p_n,
      \mathcal{L}_{X_n}, \bigl(\|\Potential^1_{p_n} \mu_n^{(r)}\|_\infty\bigr)_{r \in \NN}
    \right),
    \quad 
    n \geq 1,
  \end{equation}
  is tight in $\rootBCM \bigl(\MeasSt(\Psi) \times \tau \times \HKSt(\Psi) \times  \LzeroSPMSt(\Psi) \bigr) \times \RNp^\NN$. 
  Fix an arbitrary subsequence $(n_k)_{k \geq 1}$ of $\NN$. 
  By taking a further subsequence and relabeling for simplicity, 
  we can assume that
  \begin{equation}  \label{thm pr: PCAF rdm result. 2}
    \left(
      S_{n_k}, \rho_{n_k}, \mu_{n_k}, a_{n_k}, p_{n_k},
      \mathcal{L}_{X_{n_k}}, \bigl(\|\Potential^1_{p_{n_k}} \mu_{n_k}^{(r)}\|_\infty\bigr)_{r \in \NN}
    \right)
    \xlongrightarrow[n \to \infty]{\mathrm{d}}
    \left(
      S, \rho, \mu, a, p,
      \mathcal{L}_X, ( \xi_r )_{r \in \NN}
    \right),
  \end{equation}
  where $(\xi_r)_{r \in \NN}$ is a certain collection of random variables.
  By the Skorohod representation theorem, we may assume that the above convergence holds almost surely
  in some probability space with probability measure $\mathbb{P}$.
  Now, we fix a realization such that the above convergence holds.
  It is then the case that 
  \begin{equation}
    \sup_{k \geq 1} \|\Potential^1_{p_{n_k}} \mu_{n_k}^{(r)}\|_\infty < \infty,
    \quad \forall r > 0.
  \end{equation}
  Thus, similarly to \eqref{thm pr: PCAF dtm result. 3},
  we deduce that, for each $\delta > 0$ and $R > 1$.
  \begin{equation}  \label{thm pr: PCAF rdm result. 3}
    \cX_{n_k}^{(\delta, R)} \xrightarrow[n \to \infty]{\mathrm{a.s.}} \cX^{(\delta, R)}
    \quad \text{in}\quad \rootBCM(\tau_1).
  \end{equation}
  In particular, the above convergence holds in distribution.
  Since the sequence $(n_k)_{k \geq 1}$ was arbitrary,
  it follows that 
  \begin{equation}  \label{thm pr: PCAF rdm result. 4}
    \cX_n^{(\delta, R)} \xrightarrow[n \to \infty]{\mathrm{d}} \cX^{(\delta, R)}
    \quad \text{in}\quad \rootBCM(\tau_1).
  \end{equation}

  In a similar manner as we checked \eqref{thm pr: PCAF dtm result. 9},
  using \eqref{thm pr: PCAF dtm result. 8} and Assumption~\ref{assum: PCAF rdm assumption}\ref{assum item: 2. PCAF rdm assumption},
  we obtain that 
  \begin{equation}
    \lim_{\delta \to 0}
    \limsup_{n \to \infty}
    \mathbf{E}_n\!
    \left[
          \GFMet^{\tau_1}\!\left(\cX_n^{(\delta, R)}, \cX_n^{(*,R)}\right) \wedge 1
    \right]
    = 0.
  \end{equation}
  This, combined with \eqref{thm pr: PCAF rdm result. 4}, yields that 
  \begin{equation}  \label{thm pr: PCAF rdm result. 5}
    \cX_n^{(*, R)} \xrightarrow[n \to \infty]{\mathrm{d}} \cX^{(*, R)}
    \quad \text{in}\quad \rootBCM(\tau_1).
  \end{equation}
  Moreover, similarly to \eqref{thm pr: PCAF dtm result. 11},
  using \eqref{thm pr: PCAF dtm result. 10} and Assumption~\ref{assum: PCAF dtm assumption. spatial tight},
  we obtain that 
  \begin{equation}
    \lim_{R \to \infty}
    \limsup_{n \to \infty}
    \mathbf{E}_n\!
    \left[
          \GFMet^{\tau_1}\!\left(\cX_n^{(*, R)}, \cX_n \right) \wedge 1
    \right]
    = 0.
  \end{equation}
  This, combined with \eqref{thm pr: PCAF dtm result. 6} and \eqref{thm pr: PCAF rdm result. 5}, yields the desired result.
\end{proof}

We next prove Theorem~\ref{thm: PCAF/STOM rdm result}\ref{thm item: PCAF/STOM rdm result. 2}, 
which is the STOM counterpart of the above result.
We begin by taking a coupling under which the joint laws of the processes and the PCAFs converge 
and the corresponding $1$-potentials are uniformly bounded,
as guaranteed by the previous result.
Under this coupling, the distributional convergence of the PCAFs 
implies the convergence of their moments.
Consequently, the uniform local Kato-type condition required in the deterministic case 
(Assumption~\ref{assum: PCAF/STOM dtm assumption}\ref{assum item: 2. PCAF dtm assumption})
holds almost surely.
This allows us to apply exactly the same argument as in the deterministic setting,
which yields the desired convergence of the STOMs.

\begin{proof} [{Proof of Theorem~\ref{thm: PCAF/STOM rdm result}\ref{thm item: PCAF/STOM rdm result. 2}}]
  We use the same notation as in the proof of Theorem~\ref{thm: PCAF/STOM dtm result}.
  Note that Theorem~\ref{thm: PCAF/STOM rdm result}\ref{thm item: PCAF/STOM rdm result. 1} has already been established,
  so that all the results obtained in its proof are available here as well.
  Fix $R > 1$.
  By the same argument as at the beginning of the previous proof,
  we deduce from Lemma~\ref{lem: PCAF rdm tightness of potentials} and \eqref{thm pr: PCAF rdm result. 5}
  that the family of random elements
  \begin{equation} \label{thm pr: STOM rdm result. 1}
    \left(
      S_n, \rho_n, \mu_n, a_n, p_n,
      \mathcal{L}_{(X_n, A_n^{(R+1)})},
      \bigl(\|\Potential^1_{p_n} \mu_n^{(r)}\|_\infty\bigr)_{r \in \NN}
    \right),
    \quad 
    n \geq 1,
  \end{equation}
  is tight in 
  $\rootBCM \bigl(\MeasSt(\Psi) \times \tau \times \HKSt(\Psi) \times  \LzeroSPMPCAFSt(\Psi) \bigr) \times \RNp^\NN$. 
  Fix an arbitrary subsequence $(n_k)_{k \geq 1}$ of $\NN$. 
  By taking a further subsequence and relabeling it for simplicity, 
  we may assume that
  \begin{align}  \label{thm pr: STOM rdm result. 2}
    &\left(
      S_{n_k}, \rho_{n_k}, \mu_{n_k}, a_{n_k}, p_{n_k},
      \mathcal{L}_{(X_{n_k}, A_{n_k}^{(R+1)})},
      \bigl(\|\Potential^1_{p_{n_k}} \mu_{n_k}^{(r)}\|_\infty\bigr)_{r \in \NN}
    \right)\\
    &\xlongrightarrow[k \to \infty]{\mathrm{d}}
    \left(
      S, \rho, \mu, a, p,
      \mathcal{L}_{(X, A^{(R+1)})}, (\xi_r)_{r \in \NN}
    \right),
  \end{align}
  where $(\xi_r)_{r \in \NN}$ is a certain collection of random variables.
  By the Skorohod representation theorem, we may further assume that the above convergence holds almost surely
  on some probability space with probability measure $\mathbb{P}$.
  We hence fix a realization such that this convergence holds.
  By the convergence of the $1$-potentials, we have 
  \begin{equation} \label{thm pr: STOM rdm result. 2.5}
    \sup_{k \geq 1} \|\Potential^1_{p_{n_k}} \mu_{n_k}^{(r)}\|_\infty < \infty.
  \end{equation}
  We will show that, on the fixed realization,
  \begin{equation}  \label{thm pr: STOM rdm result. 3}
    \lim_{t \to 0}
    \limsup_{k \to \infty}
    \sup_{y \in \Psi(S_{n_k})}
    \int_0^t \int_{\Psi(S_{n_k})} p_{n_k}(s, y, z)\, \mu_{n_k}^{(R)}(dz)\, ds
    = 0,
  \end{equation}
  which corresponds to \eqref{thm pr: STOM dtm result. 2.2} along the subsequence $(n_k)_{k \geq 1}$.

  By Theorem~\ref{thm: conv in M(tau)},
  there exists a rooted $\bcmAB$ space $(M, \rho_M)$
  into which all the metric spaces $S_n$ and $S$ can be isometrically embedded 
  so that \ref{cond: 1. PCAF embedding}--\ref{cond: 5. PCAF embedding} and the following condition hold:
  \begin{enumerate} [resume* = PCAF embedding]
    \item \label{cond: 6. PCAF embedding} 
      $\ProcLaw_{(X_{n_k}, A_{n_k}^{(*, R+1)})} \to \ProcLaw_{(X, A^{(*,R+1)})}$ 
      in $\hatC\!\left( \Psi(M), \Prob\bigl( L^0(\RNp, \Psi(M)) \times \upC(\RNp, \RNp) \bigr)\right)$.
  \end{enumerate}
  Fix $t \geq 0$,
  and take elements $x_n \in \Psi(S_n)$ converging to $x \in \Psi(S)$ in $\Psi(M)$.
  By \ref{cond: 6. PCAF embedding}, 
  \begin{equation} \label{thm pr: STOM rdm result. 4}
    P_{n_k}^{x_{n_k}}(A_{n_k}^{(*, R+1)}(t) \in \cdot) 
    \xrightarrow[k \to \infty]{\mathrm{d}} 
    P^x(A^{(*,R+1)}(t) \in \cdot).
  \end{equation}
  Since $\tilde{\mu}_{n_k}^{(R+1)}$ is the smooth measure associated with $A_{n_k}^{(*,R+1)}$,
  as in the proof of \cite[Theorem~1.6]{Noda_pre_Continuity},
  one readily verifies that 
  \begin{equation} \label{thm pr: STOM rdm result. 5}
    E_{n_k}^{x_{n_k}}\!\left[ A_{n_{k}}^{(*, R+1)}(t)^2 \right] 
    \leq 2 e^{2t} \bigl\| \Potential_{p_{n_k}}^1 \tilde{\mu}_{n_k}^{(R+1)} \bigr\|_\infty.
  \end{equation}
  The right-hand side is bounded uniformly in $k$ by \eqref{thm pr: STOM rdm result. 2.5}.
  Hence, the distributional convergence in \eqref{thm pr: STOM rdm result. 4}
  implies the convergence of the corresponding first moments.
  In particular, we obtain 
  \begin{equation} \label{thm pr: STOM rdm result. 6}
    \lim_{k \to \infty}
    E_{n_k}^{x_{n_k}}\!\left[ A_{n_{k}}^{(*, R+1)}(t) \right] 
    = 
    E^x[A^{(*,R+1)}(t)].
  \end{equation}
  Since the sequence $(x_{n_k})_{k \geq 1}$ was arbitrary,
  it follows that 
  \begin{equation} \label{thm pr: STOM rdm result. 7}
    E_{n_k}^{\cdot}\!\left[ A_{n_{k}}^{(*, R+1)}(t) \right]
    \xrightarrow[k \to \infty]{}
    E^{\cdot}[A^{(*,R+1)}(t)]
    \quad \text{in} \quad  
    \hatC\!\left(\Psi(M), \RNp\right).
  \end{equation}
  In particular, 
  \begin{equation} \label{thm pr: STOM rdm result. 8}
    \lim_{k \to \infty}
    \sup_{y \in \Psi(S_{n_k}^{(R+1)})}
    E_{n_k}^y\!\left[ A_{n_{k}}^{(*, R+1)}(t) \right] 
    = 
    \sup_{y \in \Psi(S^{R+1})}
    E^y[A^{(*,R+1)}(t)].
  \end{equation}
  By Proposition~\ref{prop: moment formula for STOM}, this is equivalent to 
  \begin{align}
    &\lim_{k \to \infty}
    \sup_{y \in \Psi(S_{n_k}^{(R+1)})}
    \int_0^t \int_{\Psi(S_{n_k})} p_{n_k}(s, y, z)\, \tilde{\mu}_{n_k}^{(R+1)}(dz)\, ds\\
    &= 
    \sup_{y \in \Psi(S^{R+1})}
    \int_0^t \int_{\Psi(S)} p(s, y, z)\, \tilde{\mu}^{(R+1)}(dz)\, ds.
    \label{thm pr: STOM rdm result. 9}
  \end{align}
  Since the limiting measure $\mu$ belongs to the local Kato class of $X$ by Lemma~\ref{lem: PCAF/STOM rdm limiting meas is smooth},
  letting $t \to 0$ in the above equation yields 
  \begin{equation} \label{thm pr: STOM rdm result. 10}
    \lim_{t \to 0}
    \lim_{k \to \infty}
    \sup_{y \in \Psi(S_{n_k}^{(R+1)})}
    \int_0^t \int_{\Psi(S_{n_k})} p_{n_k}(s, y, z)\, \tilde{\mu}_{n_k}^{(R+1)}(dz)\, ds
    = 0.
  \end{equation}
  From this, Proposition~\ref{prop: Potential and hk behavior} and the inequality $\mu^{(R)} \leq \tilde{\mu}^{(R+1)}$
  together yield \eqref{thm pr: STOM rdm result. 3}.

  Using \eqref{thm pr: STOM rdm result. 3},
  we can follow the argument that led to \eqref{thm pr: STOM dtm result. 2.1},
  and conclude that, on the fixed realization,
  \begin{equation} \label{thm pr: STOM rdm result. 11}
    \cY_{n_k}^{(*, R)} \xrightarrow[k \to \infty]{} \cY^{(*,R)}
    \quad \text{in} \quad \rootBCM(\tau_2).
  \end{equation}
  In particular, the above convergence holds in distribution.
  Since the subsequence $(n_k)_{k \geq 1}$ was arbitrary,
  we obtain 
  \begin{equation} \label{thm pr: STOM rdm result. 12}
    \cY_n^{(*, R)} \xrightarrow[n \to \infty]{\mathrm{d}} \cY^{(*,R)}
    \quad \text{in} \quad \rootBCM(\tau_2).
  \end{equation}

  Finally, in the same way as in \eqref{thm pr: STOM dtm result. 12},
  using \eqref{thm pr: PCAF dtm result. 11} and 
  Assumption~\ref{assum: PCAF rdm assumption. spatial tight},
  we obtain 
  \begin{equation}
    \lim_{R \to \infty}
    \limsup_{n \to \infty}
    \mathbf{E}_n\!
    \left[
      \GFMet^{\tau_2}\!\left(\cY_n^{(*, R)}, \cY_n \right) \wedge 1
    \right]
    = 0.
  \end{equation}
  Combining this with \eqref{thm pr: STOM dtm result. 2} and 
  \eqref{thm pr: STOM rdm result. 12} completes the proof.
\end{proof}

We complete the proof of Theorem~\ref{thm: PCAF/STOM rdm result} by establishing \ref{thm item: PCAF/STOM rdm result. 3}.

\begin{proof} [{Proof of Theorem~\ref{thm: PCAF/STOM rdm result}\ref{thm item: PCAF/STOM rdm result. 3}}]
  This is established in the same way as that in the deterministic case.
  By \eqref{thm pr: STOM rdm result. 12}, we have 
  \begin{equation} \label{thm pr: STOM rdm result. 18}
    \cY_n^{(*, R)} \xrightarrow[n \to \infty]{\mathrm{d}} \cY^{(*,R)}
    \quad \text{in} \quad \rootBCM(\tau_3),
  \end{equation}
  which corresponds to \eqref{thm pr: v-STOM dtm result. 2}.
  From \eqref{thm pr: v-STOM dtm result. 3.1}, it follows that 
  \begin{equation}
    \lim_{R \to \infty}
    \limsup_{n \to \infty}
    \mathbf{E}_n\!
    \left[
      \GFMet^{\tau_3}\!\left(\cY_n^{(*, R)}, \cY_n \right) \wedge 1
    \right]
    = 0.
  \end{equation}
  Now the result is immediate.
\end{proof}

%%%%%%%%%%%%%%%%%%%%%%%%%%%%%%%%%%%%%%%%%%%%%%%%%%%%%%%%%%%%%%%%%%%%%%%%%%%%%%%%%%%%%%%%%%%%%%%%%%%%%%%%%%%%%%%%%%%%%%%%%%%%%%%%%%
%%%%%%%%%%%%%%%%%%%%%%%%%%%%%%%%%%%%%%%%%%%%%%%%%%%%%%%%%%%%%%%%%%%%%%%%%%%%%%%%%%%%%%%%%%%%%%%%%%%%%%%%%%%%%%%%%%%%%%%%%%%%%%%%%%
% Collision measures and their convergence
%%%%%%%%%%%%%%%%%%%%%%%%%%%%%%%%%%%%%%%%%%%%%%%%%%%%%%%%%%%%%%%%%%%%%%%%%%%%%%%%%%%%%%%%%%%%%%%%%%%%%%%%%%%%%%%%%%%%%%%%%%%%%%%%%%
%%%%%%%%%%%%%%%%%%%%%%%%%%%%%%%%%%%%%%%%%%%%%%%%%%%%%%%%%%%%%%%%%%%%%%%%%%%%%%%%%%%%%%%%%%%%%%%%%%%%%%%%%%%%%%%%%%%%%%%%%%%%%%%%%%

\section{Collision measures and their convergence} \label{sec: Collision measures and their convergence}

In this section, we apply our main results on STOMs to the study of collisions between stochastic processes.
In Section~\ref{sec: collision measure}, we introduce random measures that capture the collision times and locations 
of two given stochastic processes.
We refer to these random measures as \emph{collision measures}.
In Section~\ref{sec: conv of col meas}, 
we then discuss convergence of collision measures of stochastic processes living on different spaces.

%%%%%%%%%%%%%%%%%%%%%%%%%%%%%%%%%%%%%%%%%%%%%%%%%%%%%%%%%%%%%%%%%%%%%%%%%%%%%%%%%%%%%%%%%%%%%%%%%%%%%%%%%%%%%%%%%%%%%%%%%%%%%%%%%%
% Collision measures
%%%%%%%%%%%%%%%%%%%%%%%%%%%%%%%%%%%%%%%%%%%%%%%%%%%%%%%%%%%%%%%%%%%%%%%%%%%%%%%%%%%%%%%%%%%%%%%%%%%%%%%%%%%%%%%%%%%%%%%%%%%%%%%%%%
\subsection{Collision measures} \label{sec: collision measure}

Throughout this subsection, we fix a locally compact separable metrizable topological space $S$.
For two given processes $X^1$ and $X^2$ on $S$,
we aim to study the locations and times of their collisions,
that is, the set of pairs $(x,t)$ such that $X^1_t = X^2_t = x$.
In this subsection, we introduce random measures that capture such collisions.
They are defined using the idea of STOMs; see Definition~\ref{dfn: col meas} below.
Although we restrict attention to the case where $X^1$ and $X^2$ are independent,
the same construction of STOMs can be extended to non-independent processes,
as briefly discussed in Remark~\ref{rem: collision of correlated proc} below.

For $i \in \{1, 2\}$, 
we let 
\begin{equation}
  X^i = (\Omega_i, \sigalg_i, (X^i_{t})_{t \in [0, \infty]}, (P^x_i)_{x \in S_{\Delta}}, (\theta^i_t)_{t \in [0,\infty]})
\end{equation}
be a standard process satisfying the DAC condition (Assumption~\ref{assum: dual hypothesis}).
Let $(S \times S) \cup \{\Delta\} $ be the one-point compactification of $S \times S$.
For each $\bm{x}=(x_{1}, x_{2}) \in S \times S$,
we define $\hat{P}^{\bm{x}} \coloneqq P_{1}^{x_{1}} \otimes P_{2}^{x_{2}}$,
which is a probability measure on the product measurable space 
$(\hat{\Omega}, \hat{\sigalg}) \coloneqq (\Omega_1 \times \Omega_2, \sigalg_1 \otimes \sigalg_2)$.
Also,
we define $\hat{P}^{\Delta} \coloneqq P_{1}^{\Delta} \otimes P_{2}^{\Delta}$.
For each $\omega = (\omega_{1}, \omega_{2}) \in \hat{\Omega}$ and $t \in [0, \infty]$,
we set 
\begin{equation}
  \hat{X}_t(\omega) 
  \coloneqq 
  \begin{cases}
    (X^1_t(\omega_{1}), X^2_t(\omega_{2})), & t < \zeta_1(\omega_1) \wedge \zeta_2(\omega_2),\\
    \Delta, & t \geq \zeta_1(\omega_1) \wedge \zeta_2(\omega_2),
  \end{cases}
\end{equation}
where $\zeta_i$ denotes the lifetime of $X^i$ for each $i =1,2$.
Finally, we define, for each $\omega = (\omega_{1}, \omega_{2}) \in \hat{\Omega}$ and $t \in [0,\infty]$,
\begin{equation}
  \hat{\theta}_t(\omega) \coloneqq (\theta^1_t(\omega_1), \theta^2_t(\omega_2)).
\end{equation}
Then by Theorem~\ref{thm: product is standard} and Proposition~\ref{prop: product satisfies dual hypo}, 
\begin{equation}
  \hat{X} 
  \coloneqq 
  \left( \hat{\Omega}, \hat{\sigalg}, (\hat{X}_{t})_{t \in [0, \infty]}, 
    (\hat{P})_{x \in (S \times S) \cup \{\Delta\}}, (\hat{\theta}_t)_{t \in [0,\infty]} \right)
\end{equation}
is a standard process on $S \times S$ satisfying the DAC condition.
Let $m^i$ and $p^i$ denote the reference measure and the heat kernel of $X^i$, respectively, for $i \in \{1,2\}$. 
Then a reference measure of $\hat{X}$ is given by $m^1 \otimes m^2$, 
and the corresponding heat kernel is
\begin{equation} \label{eq: hk of product process}
  \hat{p}(t, (x_1, x_2), (y_1, y_2)) = p^1(t, x_1, y_1)\, p^2(t, x_2, y_2),
\end{equation}
with the convention $0 \cdot \infty \coloneqq 0$.
(See Proposition~\ref{prop: product satisfies dual hypo} for these facts.)

\begin{dfn} \label{dfn: product proc}
  We refer to the above-defined process $\hat{X}$ as the \emph{product process} of $X^1$ and $X^2$.
\end{dfn}

To study the collisions of $X^1$ and $X^2$ in $S$,
we embed $S$ into the diagonal line in $S \times S$ by the following diagonal map.

\begin{dfn}[{The diagonal map}]
  We define the \emph{diagonal map} $\diag_S: S \to S \times S$ by $\diag(x) \coloneqq (x, x)$. 
  When the context is clear, we simply write $\diag = \diag_S$.  
\end{dfn}
 
The collision of $X^1$ and $X^2$ can be rephrased as the event of $\hat{X}$ hitting the diagonal line.  
Specifically, for any $t \geq 0$ and a subset $B \subseteq S$, we have that
\begin{equation}
  X^1_t = X^2_t \in B  
  \quad \Leftrightarrow \quad  
  \hat{X}_t \in \diag(B).
\end{equation}
With this background,
we introduce collision measures below.

Fix a Radon measure on $\mu$ on $S$.
We write $\diagMeas{\mu} \coloneqq \mu \circ \diag^{-1}$,  
i.e., $\diagMeas{\mu}$ is the pushforward measure of $\mu$ by $\diag$.
Assume that $\diagMeas{\mu}$ is a smooth measure of $\hat{X}$.
Let $(C_t)_{t \geq 0}$ and $\hat{\Pi}$ be the associated PCAF and STOMs of $\hat{X}$, respectively.
By Corollary~\ref{cor: support of STOM}, $\hat{\Pi}$ is supported on $\diag(S) \times \RNp$
and formally speaking, for each Borel subset $E \subseteq S$ and $t \geq 0$,
\begin{equation}
  \hat{\Pi}(\diag(E) \times [0,t]) = \text{``the total collision time of $X^1$ and $X^2$ in $E$ up to $t$''.}
\end{equation}
We thus define a random measure $\Pi$ on $S \times [0, \infty)$ by 
\begin{equation}
  \Pi \coloneqq \hat{\Pi} \circ (\diag \times \id_{\RNp}).
\end{equation}
(NB. Since $\diag \times \id_{\RNp}$ is injective, $\Pi$ is indeed a measure on $S \times \RNp$.)
By Corollary~\ref{cor: support of STOM}, we have 
\begin{equation}
  \Pi(\cdot) = \int_0^\infty \mathbf{1}_{(X^1_t, t)}(\cdot)\, dC_t = \int_0^\infty \mathbf{1}_{(X^2_t, t)}(\cdot)\, dC_t
\end{equation}
as random measures on $S \times [0, \infty)$, $\hat{P}^{\bm{x}}$-a.s., for all $\bm{x} \in S \times S$.

\begin{dfn} [{Collision measure}]\label{dfn: col meas}
  We refer to the random measure $\Pi$ as the \emph{collision measure of $X^1$ and $X^2$ 
  associated with $\mu$},
  and we call $\mu$ the \emph{weighting measure}.
\end{dfn}

\begin{rem} \label{rem: coincidence with Nguyen}
  As mentioned in Section~\ref{sec: Application to the study of collisions},
  Nguyen~\cite{Nguyen_23_Collision} obtained the scaling limit 
  of the point process consisting of collision sites and collision times 
  of two independent simple random walks on $\mathbb{Z}$.
  Although the limiting random measure in his construction is characterized only through moment identities,
  it can now be realized as a collision measure within our framework.
  Assume that $X^1$ is a standard one-dimensional Brownian motion on $\RN$, 
  and $X^2$ is an independent copy.
  Then, the collision measure of $X^1$ and $X^2$ associated with $2 \cdot \Leb$
  coincides with the limiting random measure obtained by Nguyen~\cite{Nguyen_23_Collision}. 
  Indeed, using his characterization~\cite[Equation~(1.2)]{Nguyen_23_Collision} 
  together with our moment formula for STOMs (Proposition~\ref{prop: moment formula for STOM}),
  one readily verifies that the two random measures have the same distribution
  (cf.\ \cite[Theorem~2.2]{Kallenberg_17_Random}).
\end{rem}

\begin{rem} \label{rem: more comment about Nguyen}
  We also note that Nguyen~\cite{Nguyen_23_Collision} treated the general case of $k \ge 3$ 
  independent simple random walks $S^1, \ldots, S^k$ on $\mathbb{Z}$, 
  and considered the point process consists of all $(t, x) \in \mathbb{N} \times \mathbb{Z}$ 
  such that $S^i_t = S^j_t = x$ for some distinct $i, j$.
  The scaling limit of this point process obtained in his work 
  can also be interpreted within our framework.
  Let $X^1, \ldots, X^k$ be independent one-dimensional Brownian motions on $\RN$.
  Then the limiting random measure should coincide with the sum of the pairwise collision measures 
  $\sum_{1 \le i < j \le k}\Pi_{i,j}$ of $X^i$ and $X^j$, associated with a suitable multiple of the Lebesgue measure.
  Indeed, as simultaneous collisions of three or more walks vanish in the scaling limit~\cite[Proof of Theorem~1.3]{Nguyen_23_Collision}, 
  only binary collisions of Brownian motions contribute in the limit.
\end{rem}

By Theorem~\ref{thm: delta map STOM approx for Kato}, 
under the weak $J_1$-continuity condition (Assumption~\ref{assum: weak J_1 continuity}) on $X$, 
we have the continuity of the joint law $\ProcLaw_{(X, \hat{\Pi})}$ with respect to the starting point.
It is not difficult to verify that this continuity is also inherited by the collision measure $\Pi$.
However, since $\Pi$ is not a pushforward of $\hat{\Pi}$, 
this does not follow directly from the usual continuous mapping theorem.
The following lemma fills this minor gap.
A more general version of the result can be found in Appendix~\ref{appendix: An inverse version of the continuous mapping theorem}.

\begin{lem} \label{lem: continuity from STOM to col}
  Let $(\Sigma_n)_{n \geq 1}$ be a sequence converging to $\Sigma$ in $\STOMMeas(S \times S \times \RNp)$.
  If $\supp(\Sigma_n) \subseteq \diag(S) \times \RNp$, 
  then $\Sigma_n \circ (\diag \times \id_{\RNp}) \to \Sigma \circ (\diag \times \id_{\RNp})$ 
  in $\STOMMeas(S \times \RNp)$.
\end{lem}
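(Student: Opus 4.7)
The plan is to reduce the assertion to an application of the continuous mapping theorem by rewriting the pullback $\Sigma \mapsto \Sigma \circ (\diag \times \id_{\RNp})$ as a pushforward by a continuous map, once restricted to measures supported on the diagonal. The key observation is that, denoting by $\pi \colon S \times S \to S$ the continuous projection $(x_1, x_2) \mapsto x_1$ and by $\tilde{\pi} \coloneqq \pi \times \id_{\RNp}$, we have the set identity
\begin{equation}
  \tilde{\pi}^{-1}(A) \cap (\diag(S) \times \RNp) = (\diag \times \id_{\RNp})(A)
\end{equation}
for every $A \in \Borel(S \times \RNp)$. Consequently, for any Borel measure $\nu$ on $S \times S \times \RNp$ concentrated on $\diag(S) \times \RNp$, one has $\nu \circ (\diag \times \id_{\RNp}) = \nu \circ \tilde{\pi}^{-1}$ as Borel measures on $S \times \RNp$.

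First I would verify that the limit $\Sigma$ is again supported on $\diag(S) \times \RNp$. Note that $\diag(S) \times \RNp$ is closed in $S \times S \times \RNp$, since $\diag(S)$ is closed in $S \times S$ by the Hausdorff property. By Proposition~\ref{prop: conv in stom sp}, for all but countably many $t \geq 0$, the restrictions $\Sigma_n|_{S \times S \times [0,t]}$ converge weakly to $\Sigma|_{S \times S \times [0,t]}$ as finite measures. Applying the Portmanteau theorem to the open complement of $\diag(S) \times [0,t]$ then shows that $\Sigma|_{S \times S \times [0,t]}$ is supported on $\diag(S) \times [0,t]$, and letting $t \to \infty$ yields $\supp(\Sigma) \subseteq \diag(S) \times \RNp$.

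By the identity above, it therefore suffices to establish $\Sigma_n \circ \tilde{\pi}^{-1} \to \Sigma \circ \tilde{\pi}^{-1}$ in $\STOMMeas(S \times \RNp)$. Since $\tilde{\pi}^{-1}(S \times [0, t]) = S \times S \times [0, t]$, the operations of restricting to $S \times [0,t]$ and taking the pushforward by $\tilde{\pi}$ commute:
\begin{equation}
  (\Sigma_n \circ \tilde{\pi}^{-1})\bigl|_{S \times [0,t]}
  =
  \Sigma_n|_{S \times S \times [0,t]} \circ \tilde{\pi}^{-1},
\end{equation}
and similarly for $\Sigma$. The continuity of $\tilde{\pi}$, combined with the standard fact that pushforward by a continuous map preserves weak convergence of finite measures, yields weak convergence of these restrictions for all but countably many $t$. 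A further application of Proposition~\ref{prop: conv in stom sp} then gives the required convergence in $\STOMMeas(S \times \RNp)$. There is no serious obstacle; the only point requiring care is the recognition that, on measures concentrated on the diagonal, the inverse-image operation by $\diag \times \id_{\RNp}$ can be rewritten as a pushforward by the continuous map $\tilde{\pi}$—a rewriting that is essential because the inverse-image operation is not continuous in general, whereas pushforward by a continuous map is.
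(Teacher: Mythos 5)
Your proof is correct, but it takes a genuinely different route from the paper. The paper proves the statement by invoking its general ``inverse continuous mapping theorem'' (Proposition~\ref{ap. prop: inverse conti map thm}, which rests on Lemma~\ref{lem: restriction of vague conv}: vague convergence is preserved under restriction to a common closed support, after which the continuous inverse of $\diag \times \id_{\RNp}$ can be pushed forward); this only yields \emph{vague} convergence of the pullbacks, so the paper then upgrades to convergence in $\STOMMeas(S \times \RNp)$ via the precompactness criterion of Proposition~\ref{prop: precompact in stom meas sp} together with uniqueness of subsequential limits. You instead exploit the special feature of the diagonal map that its inverse extends to the globally continuous projection $\pi$, so that on measures concentrated on $\diag(S) \times \RNp$ the pullback is literally the pushforward $\nu \circ \tilde{\pi}^{-1}$; combined with the observation that restriction to $S \times [0,t]$ commutes with $\tilde{\pi}$ and with the characterization of Proposition~\ref{prop: conv in stom sp}, this gives convergence in $\STOMMeas(S \times \RNp)$ directly, with no precompactness or subsequence argument. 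Your preliminary step showing $\supp(\Sigma) \subseteq \diag(S) \times \RNp$ via Portmanteau is genuinely needed for the identity $\Sigma \circ (\diag \times \id_{\RNp}) = \Sigma \circ \tilde{\pi}^{-1}$ and is carried out correctly. The trade-off is that your argument is shorter and self-contained for this lemma but relies on the inverse of the embedding extending continuously to the whole ambient space --- which holds for the diagonal but not for a general injective map with merely continuous inverse on its image --- whereas the paper's Proposition~\ref{ap. prop: inverse conti map thm} is reusable in that broader setting.
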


\begin{proof}
  By Proposition~\ref{ap. prop: inverse conti map thm}, we obtain that 
  $\Sigma_n \circ (\diag \times \id_{\RNp}) \to \Sigma \circ (\diag \times \id_{\RNp})$ vaguely.
  Moreover, by Proposition~\ref{prop: precompact in stom meas sp}, we deduce that 
  the family $\{\Sigma_n \circ (\diag \times \id_{\RNp})\}_{n \geq 1}$ is precompact in $\STOMMeas(S \times \RNp)$.
  Using the vague convergence, one can readily verify that the limit of any convergent subsequence is unique
  and coincides with $\Sigma \circ (\diag \times \id_{\RNp})$,
  which yields the desired result. 
\end{proof}

Using the above lemma,
we obtain the following result,
which is a counterpart of the continuity assertion in Theorem~\ref{thm: delta map STOM approx for Kato}.

\begin{prop} \label{prop: continuity of Col map}
  Assume that the following conditions hold.
  \begin{enumerate} [label = \textup{(\roman*)}]
    \item \label{prop item: 1. continuity of Col map}
      The measure $\diagMeas{\mu}$ on $S \times S$ belongs to the local Kato class of $\hat{X}$.
    \item \label{prop item: 2. continuity of Col map}
      For each $i \in \{1, 2\}$, the process $X^i$ satisfies the weak $J_1$-continuity condition 
      (Assumption~\ref{assum: weak J_1 continuity}).
  \end{enumerate}
  Then the following map is continuous:
  \begin{equation}
    \ProcLaw_{(\hat{X}, \Pi)} 
    \colon 
    S \times S 
    \longrightarrow 
    \Prob\bigl( D_{J_1}(\RNp, S) \times D_{J_1}(\RNp, S) \times \STOMMeas(S \times \RNp) \bigr).
  \end{equation}
\end{prop}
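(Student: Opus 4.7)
The plan is to reduce the desired continuity to that of the joint law map $\ProcLaw_{(\hat{X}, \hat{\Pi})}$ of the product process and its STOM on $S \times S$, established via our earlier framework, and then to pass from $\hat{\Pi}$ to $\Pi = \hat{\Pi} \circ (\diag \times \id_{\RNp})$ by exploiting the almost-sure support of $\hat{\Pi}$ in $\diag(S) \times \RNp$.

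First I would verify that $\hat{X}$ meets the hypotheses of Theorem~\ref{thm: R map STOM approx for local Kato}. Its heat kernel $\hat{p}(t,(x_1,x_2),(y_1,y_2)) = p^1(t,x_1,y_1)\,p^2(t,x_2,y_2)$ is jointly continuous, and $\hat{X}$ is conservative as the product of two conservative processes. The independence formula $\hat{P}^{(x_1,x_2)} = P_1^{x_1} \otimes P_2^{x_2}$, together with the weak $J_1$-continuity of each $X^i$ and the standard weak-convergence of product laws, yields continuity of the joint law map into $\Prob(D_{J_1}(\RNp, S) \times D_{J_1}(\RNp, S))$, which via Lemma~\ref{lem: L^0 and product} implies continuity into $\Prob(D_{L^0}(\RNp, S \times S))$. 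Lemma~\ref{lem: J_1 tight implies spatial tight} applied to each $X^i$, combined with the inclusion of the exit event of $\hat{X}$ from $S^{(R)} \times S^{(R)}$ in the union of the two exit events of $X^i$ from $S^{(R)}$, gives the spatial tightness condition (Assumption~\ref{assum: spatial tightness}) for $\hat{X}$. Theorem~\ref{thm: R map STOM approx for local Kato} applied to $\hat{X}$ and the local Kato measure $\diagMeas{\mu}$ then yields the continuity of
\begin{equation}
  \ProcLaw_{(\hat{X}, \hat{\Pi})} \colon S \times S \to \Prob\bigl(D_{L^0}(\RNp, S \times S) \times \STOMMeas(S \times S \times \RNp)\bigr).
\end{equation}

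Next, using Lemma~\ref{lem: L^0 and product} to identify the $L^0$-topology on $D(\RNp, S \times S)$ with the product $L^0$-topology on $D(\RNp, S) \times D(\RNp, S)$, I would upgrade the process component from the $L^0$-topology to the $J_1$-Skorohod topology via a standard tightness--uniqueness argument. For any convergent sequence $(x_1^n, x_2^n) \to (x_1, x_2)$, the marginal laws $\ProcLaw_{X^i}(x_i^n)$ are tight in $\Prob(D_{J_1}(\RNp, S))$ by the weak $J_1$-continuity of $X^i$, so by the independence the joint marginal laws $\ProcLaw_{(X^1,X^2)}(x_1^n, x_2^n)$ are tight in $\Prob(D_{J_1}(\RNp, S) \times D_{J_1}(\RNp, S))$; any subsequential weak limit of the triple $(X^1, X^2, \hat{\Pi})$ in the stronger product topology must coincide, after projection, with the limit already identified in the weaker $L^0$-topology (since the $J_1$-topology is finer than the $L^0$-topology), forcing joint weak convergence in the stronger product space.

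Finally, to pass from $\hat{\Pi}$ to the collision measure $\Pi$, I would invoke Lemma~\ref{lem: continuity from STOM to col} combined with a Skorohod-representation argument. By Corollary~\ref{cor: support of STOM} and the hypothesis $\supp(\diagMeas{\mu}) \subseteq \diag(S)$, the random measure $\hat{\Pi}$ is supported in $\diag(S) \times \RNp$ almost surely; hence, taking a Skorohod coupling on a common probability space under which $(X^1_n, X^2_n, \hat{\Pi}_n) \to (X^1, X^2, \hat{\Pi})$ almost surely, Lemma~\ref{lem: continuity from STOM to col} yields the almost-sure convergence $\Pi_n = \hat{\Pi}_n \circ (\diag \times \id_{\RNp}) \to \Pi$ in $\STOMMeas(S \times \RNp)$, and therefore the weak convergence of $(X^1_n, X^2_n, \Pi_n)$ to $(X^1, X^2, \Pi)$ in the codomain of the proposition. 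The main subtlety lies precisely in this last step: the collision measure is not a direct continuous image of $\hat{\Pi}$ under a globally defined continuous map, and one must instead rely on the almost-sure support property of $\hat{\Pi}$ to invoke Lemma~\ref{lem: continuity from STOM to col} --- this is exactly the role of the inverse-type continuous mapping machinery developed in Appendix~\ref{appendix: An inverse version of the continuous mapping theorem}.
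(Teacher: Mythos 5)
Your proposal is correct and follows essentially the same route as the paper: apply Theorem~\ref{thm: R map STOM approx for local Kato} to the product process $\hat{X}$ with the local Kato measure $\diagMeas{\mu}$, pass from $\hat{\Pi}$ to $\Pi$ via Lemma~\ref{lem: continuity from STOM to col} using the almost-sure support of $\hat{\Pi}$ in $\diag(S) \times \RNp$, and upgrade the process component from the $L^0$ to the $J_1$ topology by the tightness--uniqueness argument of Appendix~\ref{appendix: Replacement of the L^0 topology by stronger topologies}. The only differences are cosmetic: you perform the $J_1$ upgrade before the diagonal pullback rather than after, and you spell out the verification of conservativeness, heat-kernel continuity, and spatial tightness for $\hat{X}$, which the paper leaves implicit.
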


\begin{proof}
  By Theorem~\ref{thm: R map STOM approx for local Kato},
  the map 
  \begin{equation}
    \ProcLaw_{(\hat{X}, \hat{\Pi})} 
    \colon 
    S \times S 
    \longrightarrow 
    \Prob\bigl( D_{L^0}(\RNp, S) \times D_{L^0}(\RNp, S) \times \STOMMeas(S \times S \times \RNp) \bigr)
  \end{equation}
  is continuous.
  By Lemma~\ref{lem: continuity from STOM to col},
  we then deduce that the following map is also continuous:
  \begin{equation}
    \ProcLaw_{(\hat{X}, \Pi)} 
    \colon 
    S \times S 
    \longrightarrow 
    \Prob\bigl( D_{L^0}(\RNp, S) \times D_{L^0}(\RNp, S) \times \STOMMeas(S \times \RNp) \bigr).
  \end{equation}
  It remains to upgrade the continuity to the $J_1$-Skorohod topology.
  By condition~\ref{prop item: 2. continuity of Col map} 
  and the independence of $X^1$ and $X^2$,
  the map 
  \begin{equation}
    \ProcLaw_{\hat{X}} 
    \colon 
    S \times S 
    \longrightarrow 
    \Prob\bigl( D_{J_1}(\RNp, S) \times D_{J_1}(\RNp, S) \bigr)
  \end{equation}
  is continuous. 
  Combining this with a tightness argument,
  we can upgrade the $L^0$-continuity established above 
  to the $J_1$-continuity.
  This completes the proof.
  (See Appendix~\ref{appendix: Replacement of the L^0 topology by stronger topologies} for details.)
\end{proof}

The following is immediate from Propositions~\ref{prop: Kato meas for discrete space} and \ref{prop: representation of STOM for discrete space}.
 
\begin{prop} \label{prop: representation of collision measure for discrete space}
  Suppose that $S$ is finite or countable, endowed with the discrete topology.
  Then $\diagMeas{\mu}$ belongs to the local Kato class of $X$,
  and the collision measure $\Pi$ associated with $\mu$ can be represented as follows:
  \begin{equation} \label{prop eq: collision process for discrete space}
    \Pi(dx\, dt) 
    = 
    \frac{1}{m^1(\{x\}) m^2(\{x\})} 
    \mathbf{1}_{\{X^1_t = X^2_t = x\}}\,
    \mu(dx)\, dt.
  \end{equation}
\end{prop}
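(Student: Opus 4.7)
The plan is to apply the two cited propositions to the product process $\hat{X}$ living on the countable discrete space $S \times S$, and then transport the resulting explicit formula for $\hat{\Pi}$ to $\Pi$ via the defining relation $\Pi = \hat{\Pi} \circ (\diag \times \id_{\RNp})$. In this sense, the proposition is pure bookkeeping and the hint in the paper itself is essentially a complete proof sketch.

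First I would invoke Proposition~\ref{prop: Kato meas for discrete space} applied to $\hat{X}$. Since $S$ is finite or countable with the discrete topology, so is $S \times S$, and (by Theorem~\ref{thm: product is standard} and Proposition~\ref{prop: product satisfies dual hypo}) $\hat{X}$ is a standard process on $S \times S$ satisfying the DAC condition with reference measure $m^1 \otimes m^2$. The pushforward $\diagMeas{\mu}$ is a Radon measure on $S \times S$: it is supported on $\diag(S)$ and satisfies $\diagMeas{\mu}(\{(x,x)\}) = \mu(\{x\})$. Proposition~\ref{prop: Kato meas for discrete space} then shows that $\diagMeas{\mu}$ lies in the local Kato class of $\hat{X}$, so the STOM $\hat{\Pi}$ associated with $\diagMeas{\mu}$ is well-defined, justifying the first assertion of the proposition.

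Next, Proposition~\ref{prop: representation of STOM for discrete space} applied to $\hat{X}$ with reference measure $m^1 \otimes m^2$ and smooth measure $\diagMeas{\mu}$ gives
\begin{equation}
  \hat{\Pi}(d\bm{x}\, dt)
  =
  \frac{1}{(m^1 \otimes m^2)(\{\bm{x}\})}\,
  \mathbf{1}_{\{\hat{X}_t = \bm{x}\}}\,
  \diagMeas{\mu}(d\bm{x})\, dt.
\end{equation}
Since $\diagMeas{\mu}$ charges only the diagonal and $(m^1 \otimes m^2)(\{(x,x)\}) = m^1(\{x\})\, m^2(\{x\})$, only the atoms $\bm{x} = (x,x)$ contribute, with weight $\mu(\{x\})/(m^1(\{x\})\, m^2(\{x\}))$ and indicator $\mathbf{1}_{\{\hat{X}_t = (x,x)\}} = \mathbf{1}_{\{X^1_t = X^2_t = x\}}$.

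Finally, for any $E \in \Borel(S)$ and $t \geq 0$, the definition $\Pi(A) = \hat{\Pi}((\diag \times \id_{\RNp})(A))$ (which is well-defined by injectivity of $\diag \times \id_{\RNp}$) gives
\begin{equation}
  \Pi(E \times [0,t])
  =
  \hat{\Pi}(\diag(E) \times [0,t])
  =
  \sum_{x \in E}
  \int_0^t
  \frac{\mu(\{x\})}{m^1(\{x\})\, m^2(\{x\})}\,
  \mathbf{1}_{\{X^1_s = X^2_s = x\}}\, ds,
\end{equation}
which is exactly the measure described by the right-hand side of \eqref{prop eq: collision process for discrete space}. Since measurable rectangles of this form are a $\pi$-system generating $\Borel(S \times \RNp)$, the identity extends to all Borel subsets, completing the proof. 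There is no genuine obstacle; the only point one must be careful about is the relabeling of atoms $(x,x) \mapsto x$ induced by $\diag \times \id_{\RNp}$, which is exactly why the formula has a single copy of $\mu(\{x\})$ in the numerator rather than $\mu(\{x\})^2$ or similar.
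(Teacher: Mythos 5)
Your proof is correct and follows exactly the route the paper intends: the paper states this proposition as "immediate from" Propositions~\ref{prop: Kato meas for discrete space} and \ref{prop: representation of STOM for discrete space} applied to the product process $\hat{X}$ on the discrete space $S \times S$, which is precisely what you carry out. The only difference is that you spell out the bookkeeping (the pushforward by $\diag \times \id_{\RNp}$ and the identification of atoms) that the paper leaves implicit.
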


\begin{rem} \label{rem: uniform col meas}
  As seen in Proposition \ref{prop: representation of collision measure for discrete space}, 
  the weighting measure $\mu(dx)$ represents the contribution of collisions at each point $x$. 
  In the setting of that proposition, 
  if we take the pointwise product $m^1 \cdot m^2$ as the weighting measure $\mu$, i.e., $\mu(\{x\}) \coloneqq m^1(\{x\}) m^2(\{x\})$,
  then the associated collision measure assigns equal weight to the contribution of collisions at each point. 
  We refer to this collision measure as the \emph{uniform collision measure}, i.e., it is defined as follows:
  \begin{equation} \label{rem eq: uniform col meas}
    \Pi(dx\, dt) 
    = 
    \sum_{y \in S} 
    \mathbf{1}_{\{X^1_t = X^2_t = y\}}\, \delta_y(dx)\, dt.
  \end{equation}
  In particular, for each $x \in S$ and $t \geq 0$,
  \begin{equation}
    \Pi(\{x\} \times [0,t]) = \int_0^t \mathbf{1}_{\{X^1_s = X^2_s = x\}}\, ds,
  \end{equation}
  which represents the total amount of time up to $t$ 
  during which $X^1$ and $X^2$ collide at $x$.
  We call the associated weighting measure $\mu = m^1 \cdot m^2$ the \emph{canonical weighting measure}.
\end{rem}

\begin{rem} \label{rem: collision of correlated proc}
  Although $X^1$ and $X^2$ are independent in the above framework,
  one can also study collisions of correlated processes.
  Indeed, if a standard process $X = (X^1, X^2)$ on $S \times S$ satisfies the DAC condition (Assumption~\ref{assum: dual hypothesis}),
  then a collision measure of $X^1$ and $X^2$ can be defined through a STOM of $X$.
  However, since research on collisions has primarily focused on independent stochastic processes, 
  we restrict our attention in this paper to this case.
\end{rem}

%%%%%%%%%%%%%%%%%%%%%%%%%%%%%%%%%%%%%%%%%%%%%%%%%%%%%%%%%%%%%%%%%%%%%%%%%%%%%%%%%%%%%%%%%%%%%%%%%%%%%%%%%%%%%%%%%%%%%%%%%%%%%%%%%%
%%%%%%%%%%%%%%%%%%%%%%%%%%%%%%%%%%%%%%%%%%%%%%%%%%%%%%%%%%%%%%%%%%%%%%%%%%%%%%%%%%%%%%%%%%%%%%%%%%%%%%%%%%%%%%%%%%%%%%%%%%%%%%%%%%
% Convergence of collision measures
%%%%%%%%%%%%%%%%%%%%%%%%%%%%%%%%%%%%%%%%%%%%%%%%%%%%%%%%%%%%%%%%%%%%%%%%%%%%%%%%%%%%%%%%%%%%%%%%%%%%%%%%%%%%%%%%%%%%%%%%%%%%%%%%%%
%%%%%%%%%%%%%%%%%%%%%%%%%%%%%%%%%%%%%%%%%%%%%%%%%%%%%%%%%%%%%%%%%%%%%%%%%%%%%%%%%%%%%%%%%%%%%%%%%%%%%%%%%%%%%%%%%%%%%%%%%%%%%%%%%%
\subsection{Convergence of collision measures} \label{sec: conv of col meas}

In this subsection,
by applying Theorems~\ref{thm: PCAF/STOM dtm result} and \ref{thm: PCAF/STOM rdm result},
we prove convergence of collision measures of independent stochastic processes living on spaces that vary along a sequence.
The main results of this subsection are found in Theorems~\ref{thm: col dtm result} and \ref{thm: col rdm result} below.
Throughout this subsection, we fix a Polish structure $\tau$.

\medskip  
We first state a result for deterministic spaces,
obtained as an application of Theorem~\ref{thm: PCAF/STOM dtm result}.
For each $n \in \NN$, we suppose that 
\begin{itemize} 
  \item $(S_n, d_{S_n}, \rho_n, \mu_n, a_n)$ is an element of $\rootBCM(\MeasSt \times \tau)$;
  \item $X^1_n$ and $X^2_n$ are standard processes on $S_n$ 
    satisfying the DAC and weak $J_1$-continuity conditions (Assumptions~\ref{assum: dual hypothesis} and \ref{assum: weak J_1 continuity})
    with heat kernels $p^1_n$ and $p^2_n$, respectively;
  \item the Radon measure $\diagMeas{\mu_n}$ on $S_n \times S_n$ is in the local Kato class of the product process $\hat{X}_n$
    of $X^1_n$ and $X^2_n$.
\end{itemize}
We also suppose that 
\begin{itemize} 
  \item $(S, d_S, \rho, \mu, a)$ is an element of $\rootBCM(\MeasSt \times \tau)$;
  \item $X^1$ and $X^2$ are standard processes on $S$ 
    satisfying the DAC and weak $J_1$-continuity conditions, with heat kernels $p^1$ and $p^2$, respectively;
  \item the Radon measure $\diagMeas{\mu}$ on $S \times S$ charges no sets semipolar for the product process $\hat{X}$ of $X^1$ and $X^2$.
\end{itemize}

\begin{rem}
  Recall that the convergence results in Theorems~\ref{thm: PCAF/STOM dtm result} 
  and \ref{thm: PCAF/STOM rdm result} 
  were formulated under the convergence of processes with respect to the $L^0$ topology. 
  Here we instead work with the stronger $J_1$–Skorohod topology. 
  This choice is not essential in itself, 
  but it is made in view of the framework of resistance metric spaces considered later, 
  where convergence in the $J_1$–Skorohod topology arises naturally 
  from the convergence of the underlying spaces.
\end{rem}

Since we employ the $J_1$–Skorohod topology instead of the $L^0$ topology,
we accordingly use the structure $\SPMSt$ defined in \eqref{eq: st for SPM}:
\begin{equation} \label{eq: st for SPM. revisited}
  \SPMSt \coloneqq \hatCSt(\Psi_{\id}, \ProbSt(\SkorohodSt)).
\end{equation}

We now consider the following version of Assumption~\ref{assum: PCAF/STOM dtm assumption}
(see also Remark~\ref{rem: assum for STOM dtm}).

\begin{assum} \label{assum: col dtm assumption} \leavevmode
  \begin{enumerate} [label = \textup{(\roman*)}]
    \item \label{assum item: 1. col dtm assumption}
      It holds that 
      \begin{equation}
        \left( S_n, d_{S_n}, \rho_n, \mu_n, a_n, p^1_n, p^2_n, \ProcLaw_{X^1_n}, \ProcLaw_{X^2_n} \right) 
        \to 
        \left( S, d_S, \rho, \mu, a, p^1, p^2, \ProcLaw_{X^1}, \ProcLaw_{X^2} \right)
      \end{equation}
      in 
      $\rootBCM \bigl(\MeasSt \times \tau \times \HKSt^{\otimes 2} \times \SPMSt^{\otimes 2} \bigr)$.
    \item \label{assum item: 2. col dtm assumption}
      For all $r > 0$, 
      \begin{equation}
        \lim_{\delta \to 0} 
        \limsup_{n \to \infty} 
        \sup_{x_1, x_2 \in S_n^{(r)}}
        \int_0^\delta \int_{S_n^{(r)}} p^1(t, x_1, y)\, p^2(t, x_2, y)\, \mu_n(dy)\, dt 
        = 0.
      \end{equation}
  \end{enumerate}
\end{assum}

Under the above assumptions, 
Lemma~\ref{lem: STOM dtm limiting meas is smooth} implies that 
$\diagMeas{\mu}$ belongs to the local Kato class of $\hat{X}$.
Let $\Pi_n$ (resp.\ $\Pi$) denote the collision measure of $X^1_n$ and $X^2_n$ (resp.\ $X^1$ and $X^2$) 
associated with $\mu_n$ (resp.\ $\mu$).
Below, by using Theorem~\ref{thm: PCAF/STOM dtm result}, 
we establish the joint convergence of the processes and the collision measures.
We use the following structure to formulate the result precisely:
\begin{align} 
  \ColSt 
  &\coloneqq 
    \hatCSt\!\left( \Psi_{\id^2}, \ProbSt\bigl( \SkorohodSt \times \SkorohodSt \times \STOMSt \bigr) \right).
  \label{eq: st for col meas}
\end{align}
In particular, for each $\bcmAB$ space $M$,
\begin{equation}
  \ColSt(M) 
  =
  \hatC\bigl( M \times M, \Prob\bigl( D_{J_1}(\RNp, M) \times D_{J_1}(\RNp, M) \times \STOMMeas(M \times \RNp) \bigr) \bigr),
\end{equation}
where the domain $M \times M$ corresponds to the starting points of the two processes,
and the codomain represents the joint law of the processes together with their associated collision measure.

\begin{thm} \label{thm: col dtm result}
  Under Assumption~\ref{assum: col dtm assumption}, we have
  \begin{equation}
    \left( S_n, d_{S_n}, \rho_n, \mu_n, a_n, p^1_n, p^2_n, \ProcLaw_{(\hat{X}_n, \Pi_n)} \right) 
    \to 
    \left( S, d_S, \rho, \mu, a, p^1, p^2, \ProcLaw_{(\hat{X}, \Pi)} \right)
  \end{equation}
  in 
  $\rootBCM \bigl(\MeasSt \times \tau \times \HKSt^{\otimes 2} \times \ColSt \bigr)$.
\end{thm}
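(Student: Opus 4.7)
The plan is to reduce the statement to Theorem~\ref{thm: PCAF/STOM dtm result}\ref{thm item: PCAF/STOM dtm result. 2} applied to the product processes $\hat{X}_n$ on $S_n \times S_n$ with smooth measures $\diagMeas{\mu_n}$, and then descend from the STOM convergence to collision-measure convergence via Lemma~\ref{lem: continuity from STOM to col}. I would work with the space transformation $\Psi = \Psi_{\id^2}$ from Example~\ref{exm: space transformation}, so that $\hat{X}_n$ is a standard process on $\Psi(S_n) = S_n \times S_n$ with product heat kernel $\hat{p}_n(t,(x_1,x_2),(y_1,y_2)) = p^1_n(t,x_1,y_1)\, p^2_n(t,x_2,y_2)$ and smooth measure $\diagMeas{\mu_n}$ (which is indeed in the local Kato class of $\hat{X}_n$ by hypothesis).

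The first task is to verify the hypotheses of Theorem~\ref{thm: PCAF/STOM dtm result}\ref{thm item: PCAF/STOM dtm result. 2} in this setting. For the $\rootBCM$-convergence analogous to Assumption~\ref{assum: PCAF/STOM dtm assumption}\ref{assum item: 1. PCAF dtm assumption}, namely
\[
  \bigl(S_n, d_{S_n}, \rho_n, \diagMeas{\mu_n}, a_n, \hat{p}_n, \ProcLaw_{\hat{X}_n}\bigr)
  \to
  \bigl(S, d_S, \rho, \diagMeas{\mu}, a, \hat{p}, \ProcLaw_{\hat{X}}\bigr)
\]
in $\rootBCM(\MeasSt(\Psi) \times \tau \times \HKSt(\Psi) \times \SPMSt(\Psi))$, I would embed every $S_n$ and $S$ isometrically into a common $\bcmAB$ space $M$ realizing the convergence in Assumption~\ref{assum: col dtm assumption}\ref{assum item: 1. col dtm assumption}, and then use three continuity arguments on $M$: vague-continuity of the pushforward by the isometric diagonal embedding (since $\diag$ sends $S_n$ into $M \times M$ continuously), continuity of pointwise multiplication in $\hatC(\RNpp \times (M \times M)^2, \RNp)$ (by Lemma~\ref{lem: conv in hatC}), and the tensor-product operation on law maps (using independence, Lemma~\ref{lem: L^0 and product}, and a standard no-common-jumps argument at the limit that allows the $J_1$-convergence of each coordinate to produce $J_1$-convergence of the pair on $M \times M$). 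The short-time integral hypothesis translates exactly to Assumption~\ref{assum: col dtm assumption}\ref{assum item: 2. col dtm assumption}, because $(S_n \times S_n)^{(r)} = S_n^{(r)} \times S_n^{(r)}$ in the max-product metric and $\diagMeas{\mu_n}$ is concentrated on the diagonal, so the integrand $\hat{p}_n(t,(x_1,x_2),(y,y))$ becomes $p^1_n(t,x_1,y)p^2_n(t,x_2,y)$.

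Once these verifications are in place, Theorem~\ref{thm: PCAF/STOM dtm result}\ref{thm item: PCAF/STOM dtm result. 2} — upgraded from the $L^0$ to the $J_1$ topology via Remark~\ref{rem: replace L^0 by J^1 for dtm main result} — yields the convergence of the joint law maps $\ProcLaw_{(\hat{X}_n, \hat{\Pi}_n)}$ in $\rootBCM(\MeasSt(\Psi) \times \tau \times \HKSt(\Psi) \times \SPMSTOMSt(\Psi))$, where $\hat{\Pi}_n$ is the STOM of $\hat{X}_n$ associated with $\diagMeas{\mu_n}$. To pass to $\Pi_n = \hat{\Pi}_n \circ (\diag \times \id_{\RNp})$, I would invoke Corollary~\ref{cor: support of STOM} to conclude that $\hat{\Pi}_n$ is a.s.\ supported on $\diag(S_n) \times \RNp$, and then apply Lemma~\ref{lem: continuity from STOM to col}, which makes the pullback by $\diag \times \id_{\RNp}$ a continuous operation on the relevant class of STOMs. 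Propagating this continuity through the $\ProbSt$ and $\hatCSt$ layers by the continuous mapping theorem transfers it to the law-map level and produces the desired convergence in $\rootBCM(\MeasSt \times \tau \times \HKSt^{\otimes 2} \times \ColSt)$.

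The main obstacle is step (c) in the first hypothesis verification: $J_1$-convergence of the coordinate processes does not in general imply $J_1$-convergence of the pair on the product space, so one needs to carefully exploit the joint continuity of each $p^i$ (which forces $X^i$ to be continuous in probability at every fixed time) together with independence to rule out simultaneous jumps in the limit and thereby justify the tensor-product step. Beyond this, the argument is essentially functorial: combining the already-established abstract STOM convergence theorem with a pushforward through a continuous diagonal-restriction map.
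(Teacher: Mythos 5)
Your overall reduction is the same as the paper's: apply Theorem~\ref{thm: PCAF/STOM dtm result}\ref{thm item: PCAF/STOM dtm result. 2} to the product processes with $\Psi = \Psi_{\id^2}$, smooth measures $\diagMeas{\mu_n}$ and product heat kernels, translate the short-time integral condition via the diagonal, and descend to collision measures through Corollary~\ref{cor: support of STOM} and Lemma~\ref{lem: continuity from STOM to col}. The one place where you genuinely diverge is the tensoring step for the process law maps. You aim to establish convergence of $\ProcLaw_{\hat{X}_n}$ in the weak topology induced by $D_{J_1}(\RNp, M \times M)$ via a no-common-jumps argument, so that Remark~\ref{rem: replace L^0 by J^1 for dtm main result} applies directly. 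That route does work (quasi-left-continuity of standard processes gives stochastic continuity at deterministic times, independence plus Fubini rules out simultaneous jumps of the limit pair, and the a.e.-continuity version of the continuous mapping theorem for $(\xi,\eta)\mapsto(\xi,\eta)$ finishes it), but it solves a harder problem than necessary: it is precisely the ``additional technical effort'' that Remark~\ref{rem: L^0 is good for col} explains the framework is designed to avoid. The paper instead feeds the product processes into the main theorem under the $L^0$ topology, where Lemma~\ref{lem: L^0 and product} makes the tensoring step trivial (coordinate-wise $L^0$ convergence is the same as $L^0$ convergence of the pair), obtains uniform spatial tightness from the coordinate-wise $J_1$ convergences, and only at the very end upgrades to the topology actually appearing in $\ColSt$, which is the product $D_{J_1}(\RNp, M) \times D_{J_1}(\RNp, M)$ rather than $D_{J_1}(\RNp, M \times M)$; that final upgrade needs only coordinate-wise tightness (Appendix~\ref{appendix: Replacement of the L^0 topology by stronger topologies}) and no common-jumps analysis at all. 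So your argument buys nothing extra over the paper's, at the cost of the most delicate step; if you carry it out, make sure you derive stochastic continuity from quasi-left-continuity rather than from joint continuity of the heat kernel alone, and that the no-common-jumps property is verified for the limit laws at every pair of starting points, since the convergence must hold at the level of law maps in $\hatC(M \times M, \cdot)$.
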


\begin{proof}
  Let $\hat{p}_n$ and $\hat{p}$ be the heat kernels of the product processes $\hat{X}_n$ and $\hat{X}$, respectively, 
  defined in \eqref{eq: hk of product process}.
  By Lemma~\ref{lem: L^0 and product}, Assumption~\ref{assum: col dtm assumption}\ref{assum item: 1. col dtm assumption},
  and the independence of $X^1_n$ and $X^2_n$, we have
  \begin{align}
    &\left( S_n, d_{S_n}, \rho_n, \mu_n, a_n, p^1_n, p^2_n, 
      \ProcLaw_{X^1_n}, \ProcLaw_{X^2_n}, \diagMeas{\mu_n}, \hat{p}_n, \ProcLaw_{\hat{X}_n} \right) \\
    &\to 
    \left( S, d_S, \rho, \mu, a, p^1, p^2, 
      \ProcLaw_{X^1}, \ProcLaw_{X^2}, \diagMeas{\mu}, \hat{p}, \ProcLaw_{\hat{X}} \right)
  \end{align}
  in 
  $\rootBCM \bigl(\MeasSt \times \tau \times \HKSt^{\otimes 2} \times \SPMSt^{\otimes 2} 
    \times \MeasSt(\Psi_{\id^2}) \times \HKSt(\Psi_{\id^2}) \times \LzeroSPMSt(\Psi_{\id^2}) \bigr)$.
  Since the product processes converge with respect to the product $J_1$ topology, 
  it follows immediately, as noted in Remark~\ref{rem: unif spatial tightness}, 
  that the uniform spatial tightness condition (Assumption~\ref{assum: PCAF dtm assumption. spatial tight}) is satisfied for the product processes. 
  Thus, applying 
  Theorem~\ref{thm: PCAF/STOM dtm result}\ref{thm item: PCAF/STOM dtm result. 2} 
  to the sequence of product processes with the spatial transformation $\Psi = \Psi_{\id^2}$, 
  we obtain
  \begin{equation}
    \left( S_n, d_{S_n}, \rho_n, \mu_n, a_n, p^1_n, p^2_n, 
      \ProcLaw_{X^1_n}, \ProcLaw_{X^2_n}, \ProcLaw_{(\hat{X}_n, \hat{\Pi}_n)} \right)
    \to 
    \left( S, d_S, \rho, \mu, a, p^1, p^2, 
      \ProcLaw_{X^1}, \ProcLaw_{X^2}, \ProcLaw_{(\hat{X}, \hat{\Pi})} \right)
  \end{equation}
  in
  $\rootBCM \bigl(\MeasSt \times \tau \times \HKSt^{\otimes 2} \times \SPMSt^{\otimes 2} 
    \times \LzeroSPMSTOMSt(\Psi_{\id^2}) \bigr)$,
  where $\hat{\Pi}_n$ (resp.\ $\hat{\Pi}$) denotes the STOM of $\hat{X}_n$ (resp.\ $\hat{X}$) 
  associated with $\diagMeas{\mu_n}$ (resp.\ $\diagMeas{\mu}$).

  By Theorem~\ref{thm: conv in M(tau)}, we may assume that all rooted $\bcmAB$ spaces $(S_n, \rho_n)$ and $(S, \rho)$ 
  are isometrically embedded into a common $\bcmAB$ space $(M, \rho_M)$ 
  in such a way that 
  \begin{enumerate} [label = \textup{(B\arabic*)}, leftmargin = *, series = col embedding]
    \item \label{cond: 1. col embedding}
      $S_n \to S$ in the Fell topology as closed subsets of $M$;
    \item \label{cond: 2. col embedding} 
      $\rho_n = \rho = \rho_M$ when viewed as elements on $M$;
    \item \label{cond: 3. col embedding} 
      $\mu_n \to \mu$ vaguely as measures on $M$;
    \item \label{cond: 4. col embedding} 
      $a_n \to a$ in $\tau(M)$;
    \item \label{cond: 5. col embedding} 
      $p^i_n \to p^i$ in $\hatC(\RNpp \times M \times M, \RNp)$ for each $i \in \{1, 2\}$;
    \item \label{cond: 6. col embedding} 
      $\ProcLaw_{X^i_n} \to \ProcLaw_{X^i}$ in $\hatC\bigl( M, \Prob(D_{J_1}(\RNp, M)) \bigr)$ for each $i \in \{1, 2\}$;
    \item \label{cond: 7. col embedding} 
      $\ProcLaw_{(\hat{X}_n, \hat{\Pi}_n)} \to \ProcLaw_{(\hat{X}, \hat{\Pi})}$ 
      in $\hatC\bigl( M \times M, \Prob\bigl( L^0(\RNp, M \times M) \times \STOMMeas(M \times M \times \RNp) \bigr) \bigr)$.
  \end{enumerate}
  Applying Lemma~\ref{lem: continuity from STOM to col} to \ref{cond: 7. col embedding},
  we deduce that 
  \begin{equation}
    \ProcLaw_{(\hat{X}_n, \Pi_n)} \to \ProcLaw_{(\hat{X}, \Pi)}
    \quad \text{in} \quad 
    \hatC\bigl( M \times M, \Prob\bigl( L^0(\RNp, M \times M) \times \STOMMeas(M \times \RNp) \bigr) \bigr).
  \end{equation}
  It remains to upgrade the $L^0$ topology to the $J_1$–Skorohod topology.
  As in the proof of Proposition~\ref{prop: continuity of Col map},
  this follows directly from \ref{cond: 6. col embedding} together with a tightness argument 
  (see Appendix~\ref{appendix: Replacement of the L^0 topology by stronger topologies} for details).
  Consequently, we obtain 
  \begin{equation}
    \ProcLaw_{(\hat{X}_n, \Pi_n)} \to \ProcLaw_{(\hat{X}, \Pi)}
    \quad \text{in} \quad 
    \hatC\bigl( M \times M, \Prob\bigl( D_{J_1}(\RNp, M) \times D_{J_1}(\RNp, M) \times \STOMMeas(M \times \RNp) \bigr) \bigr),
  \end{equation}
  which completes the proof.
\end{proof}

\begin{rem} \label{rem: L^0 is good for col}
  As the above proof illustrates, 
  the adoption of the $L^0$ topology is particularly convenient in the study of collision measures.
  In the proof, we applied the convergence theorem for STOMs 
  (Theorem~\ref{thm: PCAF/STOM dtm result})
  to the product processes $\hat{X}_n = (X^1_n, X^2_n)$.
  These processes converge only in the product of the $J_1$ topologies, that is, in $D_{J_1}(\RNp, M) \times D_{J_1}(\RNp, M)$,
  rather than in $D_{J_1}(\RNp, M \times M)$ itself.
  Therefore, if the main STOM convergence theorems were formulated under the $J_1$-Skorohod topology, 
  they could not be applied to this product setting in a straightforward manner. 
  Of course, one could still modify the proofs to make them work under the $J_1$ topology, 
  but this would require additional technical effort.
  The $L^0$ topology, on the other hand, 
  is stable under product constructions and thus provides a cleaner and more flexible framework 
  for treating collision measures.
\end{rem}

\medskip
We next provide a version of Theorem~\ref{thm: col dtm result} for random spaces.
For each $n \in \NN$, we let $(\Omega_n, \mathcal{G}_n, \mathbf{P}_n)$ be a complete probability space,
and assume that, for $\mathbf{P}_n$-a.s.\ $\omega \in \Omega_n$,
\begin{itemize} 
  \item $(S_n^\omega, d_{S_n}^\omega, \rho_n^\omega, \mu_n^\omega, a_n^\omega) \in \rootBCM(\MeasSt \times \tau)$;
  \item $X_n^{1, \omega}$ and $X_n^{2, \omega}$ are standard processes on $S_n^\omega$  
    satisfying the DAC and weak $J_1$-continuity conditions, 
    with heat kernels $p^{1, \omega}_n$ and $p^{2, \omega}_n$, respectively;
  \item the Radon measure $\diagMeas{(\mu_n^\omega)}$ on $S_n^\omega \times S_n^\omega$ belongs to the local Kato class of 
  the product process $\hat{X}^{\omega}_n$ of $X^{1, \omega}_n$ and $X^{2, \omega}_n$.
\end{itemize}
Similarly, we let $(\Omega, \mathcal{G}, \mathbf{P})$ be a complete probability space, 
and assume that, for each $\omega \in \Omega$,
\begin{itemize} 
  \item $(S^\omega, d_S^\omega, \rho^\omega, \mu^\omega, a^\omega) \in \rootBCM(\MeasSt \times \tau)$;
  \item $X^{1, \omega}$ and $X^{2, \omega}$ are standard processes on $S^\omega$  
    satisfying the DAC and weak $J_1$-continuity conditions,
    with heat kernels $p^{1, \omega}$ and $p^{2, \omega}$, respectively;
  \item the Radon measure $\diagMeas{(\mu^\omega)}$ on $S^\omega \times S^\omega$ charges no sets semipolar 
    for the product process $\hat{X}^{\omega}$ of $X^{1, \omega}$ and $X^{2, \omega}$.
\end{itemize}
To discuss the laws of these objects,
we assume that the maps
\begin{gather}
  (\Omega_n, \mathcal{G}_n) \ni \omega \mapsto 
  (S_n^\omega, d_{S_n}^\omega, \rho_n^\omega, \mu_n^\omega, a_n^\omega, p_n^{1, \omega}, p_n^{2, \omega}, \ProcLaw_{X_n^{1,\omega}}, \ProcLaw_{X_n^{2, \omega}}) 
  \in \rootBCM \bigl(\MeasSt \times \tau \times \HKSt^{\otimes 2} \times \SPMSt^{\otimes 2} \bigr),\\
  (\Omega, \mathcal{G}) \ni \omega \mapsto 
  (S^\omega, d_S^\omega, \rho^\omega, \mu^\omega, a^\omega, p^{1, \omega}, p^{2, \omega}, \ProcLaw_{X^{1,\omega}}, \ProcLaw_{X^{2, \omega}}) 
  \in \rootBCM \bigl(\MeasSt \times \tau \times \HKSt^{\otimes 2} \times \SPMSt^{\otimes 2} \bigr)
\end{gather}
are measurable, where the codomain is equipped with the Borel $\sigma$-algebra.
(The measurability is to be understood in the same sense as in Section~\ref{sec: PCAF conv for rdm spaces},
by extending the objects arbitrarily on null sets if necessary.)

We now consider the following version of Assumption~\ref{assum: col dtm assumption}.
As usual, we suppress the dependence on $\omega$ in the notation.

\begin{assum} \label{assum: col rdm assumption} \leavevmode
  \begin{enumerate} [label = \textup{(\roman*)}, series = STOM rdm assumption]
    \item  \label{assum item: 1. col rdm assumption}
      It holds that 
      \begin{equation}
        \left( S_n, d_{S_n}, \rho_n, \mu_n, a_n, p^1_n, p^2_n, \ProcLaw_{X^1_n}, \ProcLaw_{X^2_n} \right) 
        \xrightarrow{\mathrm{d}} 
        \left( S, d_S, \rho, \mu, a, p^1, p^2, \ProcLaw_{X^1}, \ProcLaw_{X^2} \right)
      \end{equation}
      in 
      $\rootBCM \bigl(\MeasSt \times \tau \times \HKSt^{\otimes 2} \times \SPMSt^{\otimes 2} \bigr)$.
    \item  \label{assum item: 2. col rdm assumption}
      For all $r > 0$ , 
      \begin{equation}
        \lim_{\delta \to 0}
        \limsup_{n \to \infty}
        \mathbf{E}_n\!
        \left[
          \left(
            \sup_{x_1, x_2 \in S_n^{(r)}}
            \int_0^\delta \int_{S_n^{(r)}} p^1(t, x_1, y)\, p^2(t, x_2, y)\, \mu_n(dy)\, dt 
          \right)
          \wedge 1
        \right]
        = 0.
      \end{equation}
  \end{enumerate}
\end{assum}

Under the above assumptions,
by Lemma~\ref{lem: PCAF/STOM rdm limiting meas is smooth},
$\diagMeas{\mu}$ is in the local Kato class of $\hat{X}$, $\mathbf{P}$-a.s.
Denote by $\Pi_n$ (resp.\ $\Pi$) the collision measure of $X^1_n$ and $X^2_n$ (resp.\ $X^1$ and $X^2$) associated with $\mu_n$ (resp.\ $\mu$).
We then obtain the following analogue of Theorem~\ref{thm: col dtm result}.
The proof is similar to that of the deterministic case, 
with Theorem~\ref{thm: PCAF/STOM rdm result}\ref{thm item: PCAF/STOM rdm result. 2} applied 
in place of Theorem~\ref{thm: PCAF/STOM dtm result}\ref{thm item: PCAF/STOM dtm result. 2}, 
and is therefore omitted.

\begin{thm} \label{thm: col rdm result}
  Under Assumption~\ref{assum: col rdm assumption}, it holds that
  \begin{equation}
    \left( S_n, d_{S_n}, \rho_n, \mu_n, a_n, p^1_n, p^2_n, \ProcLaw_{(\hat{X}_n, \Pi_n)} \right) 
    \xrightarrow{\mathrm{d}} 
    \left( S, d_S, \rho, \mu, a, p^1, p^2, \ProcLaw_{(\hat{X}, \Pi)} \right)
  \end{equation}
  in 
  $\rootBCM(\MeasSt \times \tau \times \HKSt^{\otimes 2} \times \ColSt)$.
\end{thm}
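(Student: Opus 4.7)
The plan is to imitate the proof of Theorem~\ref{thm: col dtm result}, with Theorem~\ref{thm: PCAF/STOM rdm result}\ref{thm item: PCAF/STOM rdm result. 2} replacing its deterministic counterpart. First I would enrich the converging tuple by appending the product data. Write $\hat{X}_n$ and $\hat{X}$ for the product processes with heat kernels $\hat{p}_n$ and $\hat{p}$ given by~\eqref{eq: hk of product process}. The maps $\mu \mapsto \diagMeas{\mu}$, $(p^1,p^2) \mapsto \hat{p}$, and $(\ProcLaw_{X^1},\ProcLaw_{X^2}) \mapsto \ProcLaw_{\hat{X}}$ are continuous in the relevant Gromov--Hausdorff-type topologies, the last using independence together with Lemma~\ref{lem: L^0 and product}. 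Hence the continuous mapping theorem upgrades Assumption~\ref{assum: col rdm assumption}\ref{assum item: 1. col rdm assumption} to the distributional convergence of
\begin{equation}
  \bigl( S_n, d_{S_n}, \rho_n, \mu_n, a_n, p^1_n, p^2_n,
    \ProcLaw_{X^1_n}, \ProcLaw_{X^2_n}, \diagMeas{\mu_n}, \hat{p}_n, \ProcLaw_{\hat{X}_n} \bigr)
\end{equation}
to its natural limit in the corresponding product structure.

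Next, I would apply Theorem~\ref{thm: PCAF/STOM rdm result}\ref{thm item: PCAF/STOM rdm result. 2} to the product processes, with space transformation $\Psi = \Psi_{\id^2}$ and additional structure $\tau \times \HKSt^{\otimes 2} \times \SPMSt^{\otimes 2}$. The hypothesis~\ref{assum item: 1. PCAF rdm assumption} of Assumption~\ref{assum: PCAF rdm assumption} is precisely the enriched convergence just established. The hypothesis~\ref{assum item: 2. PCAF rdm assumption}, in the equivalent heat-kernel form of Remark~\ref{rem: assum for PCAF rdm} applied to $\diagMeas{\mu_n}$ and $\hat{p}_n$, reduces via~\eqref{eq: hk of product process} to exactly Assumption~\ref{assum: col rdm assumption}\ref{assum item: 2. col rdm assumption}. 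The spatial tightness Assumption~\ref{assum: PCAF rdm assumption. spatial tight} for $\hat{X}_n$ is obtained from the joint $J_1$-convergence of the product laws (Remark~\ref{rem: unif spatial tightness}), which in turn follows from the $J_1$-convergence of each marginal together with Lemma~\ref{lem: L^0 and product} and the independence of $X^1_n, X^2_n$. Consequently,
\begin{equation}
  \bigl( S_n, d_{S_n}, \rho_n, \mu_n, a_n, p^1_n, p^2_n, \ProcLaw_{(\hat{X}_n, \hat{\Pi}_n)} \bigr)
  \xrightarrow{\mathrm{d}}
  \bigl( S, d_S, \rho, \mu, a, p^1, p^2, \ProcLaw_{(\hat{X}, \hat{\Pi})} \bigr)
\end{equation}
in $\rootBCM\bigl(\MeasSt \times \tau \times \HKSt^{\otimes 2} \times \LzeroSPMSTOMSt(\Psi_{\id^2})\bigr)$, where $\hat{\Pi}_n$ and $\hat{\Pi}$ denote the STOMs of the product processes associated with $\diagMeas{\mu_n}$ and $\diagMeas{\mu}$.

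It then remains to pass to $\Pi_n = \hat{\Pi}_n \circ (\diag \times \id_{\RNp})$ and to upgrade the topology. By Corollary~\ref{cor: support of STOM}, each $\hat{\Pi}_n$ is supported on $\diag(S_n) \times \RNp$, so Lemma~\ref{lem: continuity from STOM to col} makes pushforward by $(\diag \times \id_{\RNp})^{-1}$ continuous on the relevant subset of STOMs. Invoking the Skorohod representation theorem yields a coupling on a common $\bcmAB$ space $M$ (cf.\ Theorem~\ref{thm: conv in M(tau)}) on which $\hat{\Pi}_n \to \hat{\Pi}$ almost surely; a pathwise application of Lemma~\ref{lem: continuity from STOM to col} combined with the continuous mapping theorem then transfers the convergence to $(\hat{X}_n, \Pi_n)$. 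Finally, the $J_1$-Skorohod topology is restored on each path component via the tightness argument of Appendix~\ref{appendix: Replacement of the L^0 topology by stronger topologies}, since Assumption~\ref{assum: col rdm assumption}\ref{assum item: 1. col rdm assumption} already supplies $J_1$-tightness of each marginal.

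The step I expect to require the most care is the measurability and pushforward in the random setting: Lemma~\ref{lem: continuity from STOM to col} is pointwise, so one must verify that $\Pi_n$ is a bona fide random element of the target Gromov--Hausdorff-type space so that its law is well defined. This is handled by an approximation scheme analogous to Lemma~\ref{lem: PCAF rdm measurability}, using that $\hat{\Pi}_n$ is a measurable functional of $(p^i_n, \mu_n, X^i_n)$ via the Revuz correspondence, together with the inverse continuous mapping theorem of Appendix~\ref{appendix: An inverse version of the continuous mapping theorem}. With these tools in place, no essentially new difficulty beyond the deterministic argument arises.
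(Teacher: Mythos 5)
Your proposal is correct and follows essentially the same route as the paper, which in fact omits the proof of this theorem with the remark that it is obtained from the proof of Theorem~\ref{thm: col dtm result} by substituting Theorem~\ref{thm: PCAF/STOM rdm result}\ref{thm item: PCAF/STOM rdm result. 2} for its deterministic counterpart. All the ingredients you identify (enrichment of the tuple with the product data, verification of the random Kato-type and spatial-tightness hypotheses, the pushforward via Lemma~\ref{lem: continuity from STOM to col} after a Skorohod coupling, the $J_1$ upgrade, and the measurability of the resulting law maps) are exactly the ones needed.
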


\begin{rem}
  As noted in Remark~\ref{rem: collision of correlated proc}, 
  collision measures can also be defined for correlated processes.
  Analogous results for such processes, corresponding to Theorems~\ref{thm: col dtm result} and \ref{thm: col rdm result}, 
  can be established under minor modifications of the assumptions.
  Indeed, the convergence of the product processes and their heat kernels plays a crucial role in the proofs.
  In the above discussions, these convergences are obtained directly from the individual convergences of $X^1_n$ and $X^2_n$ together with their independence,
  and independence is not required beyond that point.
\end{rem}

%%%%%%%%%%%%%%%%%%%%%%%%%%%%%%%%%%%%%%%%%%%%%%%%%%%%%%%%%%%%%%%%%%%%%%%%%%%%%%%%%%%%%%%%%%%%%%%%%%%%%%%%%%%%%%%%%%%%%%%%%%%%%%%%%%
%%%%%%%%%%%%%%%%%%%%%%%%%%%%%%%%%%%%%%%%%%%%%%%%%%%%%%%%%%%%%%%%%%%%%%%%%%%%%%%%%%%%%%%%%%%%%%%%%%%%%%%%%%%%%%%%%%%%%%%%%%%%%%%%%%
% Preliminary results on stochastic processes on resistance metric spaces
%%%%%%%%%%%%%%%%%%%%%%%%%%%%%%%%%%%%%%%%%%%%%%%%%%%%%%%%%%%%%%%%%%%%%%%%%%%%%%%%%%%%%%%%%%%%%%%%%%%%%%%%%%%%%%%%%%%%%%%%%%%%%%%%%%
%%%%%%%%%%%%%%%%%%%%%%%%%%%%%%%%%%%%%%%%%%%%%%%%%%%%%%%%%%%%%%%%%%%%%%%%%%%%%%%%%%%%%%%%%%%%%%%%%%%%%%%%%%%%%%%%%%%%%%%%%%%%%%%%%%
\section{Preliminary results on stochastic processes on resistance metric spaces} \label{sec: Preliminary results on stoch proc on resis sp}

Typically, collisions between two independent and identically distributed (i.i.d.) stochastic processes occur only in low dimensions.
The purpose of this section is to introduce a class of metric spaces called \emph{resistance metric spaces}, 
which provides a natural framework for a unified study of stochastic processes in low-dimensional spaces. 
This framework was established by Kigami \cite{Kigami_01_Analysis,Kigami_12_Resistance} in the development of analysis on low-dimensional fractals, 
and recent research 
(cf.\ \cite{Croydon_18_Scaling,Croydon_Hambly_Kumagai_17_Time-changes,Noda_pre_Convergence,Noda_pre_Aging,Noda_pre_Scaling}) 
has demonstrated its usefulness, beyond deterministic fractals, in studying the scaling limits of random walks on critical random graphs.

We divide this section into four subsections.  
In Section~\ref{sec: resistance preliminary}, we review the theory of resistance metric spaces and the stochastic processes defined on them.  
In Section~\ref{sec: sp of recurrent resis spaces}, we introduce a Gromov--Hausdorff-type space consisting of resistance metric spaces,  
which will serve as a foundation for stating our main results in Section~\ref{sec: col of proc on resis sp}.  
Sections~\ref{sec: Convergence of law maps} and \ref{sec: Local limit theorem} 
are devoted to consequences of the previous works \cite{Croydon_18_Scaling,Noda_pre_Aging,Noda_pre_Scaling},
concerning the convergence of stochastic processes and their heat kernels 
on sequences of resistance metric spaces.

%%%%%%%%%%%%%%%%%%%%%%%%%%%%%%%%%%%%%%%%%%%%%%%%%%%%%%%%%%%%%%%%%%%%%%%%%%%%%%%%%%%%%%%%%%%%%%%%%%%%%%%%%%%%%%%%%%%%%%%%%%%%%%%%%%
% Preliminary
%%%%%%%%%%%%%%%%%%%%%%%%%%%%%%%%%%%%%%%%%%%%%%%%%%%%%%%%%%%%%%%%%%%%%%%%%%%%%%%%%%%%%%%%%%%%%%%%%%%%%%%%%%%%%%%%%%%%%%%%%%%%%%%%%%
\subsection{Preliminaries} \label{sec: resistance preliminary}

In this subsection we recall the theory of resistance metric spaces and stochastic processes on them.
This can be regarded as an extension of the theory of electrical networks  
and continuous-time Markov chains on them
(e.g.\ \cite{Levin_Peres_17_Markov}).   
The reader is referred to \cite{Kigami_12_Resistance} for further background.

\begin{dfn} [{Resistance form, \cite[Definition 3.1]{Kigami_12_Resistance}}] 
  \label{dfn: resistance forms}
  Let $F$ be a non-empty set.
  A pair $(\form, \rdomain)$ is called a \emph{resistance form} on $F$ if it satisfies the following conditions.
  \begin{enumerate} [label=\textup{(RF\arabic*)}, leftmargin = *]
    \item \label{dfn cond: the domain of resistance form}
          The symbol $\rdomain$ is a linear subspace of the collection of functions $\{ f : F \to \RN \}$ 
          containing constants,
          and $\form$ is a non-negative symmetric bilinear form on $\rdomain$
          such that $\form(f,f)=0$ if and only if $f$ is constant on $F$.
    \item \label{dfn cond: the quotient space of resistance form is Hilbert}
          Let $\sim$ be the equivalence relation on $\rdomain$ defined by saying $f \sim g$ if and only if $f-g$ is constant on $F$.
          Then $(\rdomain/\sim, \form)$ is a Hilbert space.
    \item
          If $x \neq y$, then there exists a function $f \in \rdomain$ such that $f(x) \neq f(y)$.
    \item \label{dfn cond: resistance forms condition 4}
          For any $x, y \in F$,
          \begin{equation} \label{eq: variational formula of resistance metric}
            R_{(\form, \rdomain)}(x,y)
            \coloneqq
            \sup
            \left\{
            \frac{|f(x) - f(y)|^{2}}{\form(f,f)}
            :
            f \in \rdomain,\
            \form(f,f) > 0
            \right\}
            < \infty.
          \end{equation}     
    \item
          If $\bar{f}\coloneqq  (f \wedge 1) \vee 0$,
          then $\bar{f} \in \rdomain$ and $\form(\bar{f}, \bar{f}) \leq \form(f,f)$ for any $f \in \rdomain$.
  \end{enumerate}
\end{dfn}

For the following definition,
recall the effective resistance on an electrical network with a finite vertex set 
from \cite[Section 9.4]{Levin_Peres_17_Markov}
(see also \cite[Section 2.1]{Kigami_01_Analysis}).

\begin{dfn} [{Resistance metric, \cite[Definition 2.3.2]{Kigami_01_Analysis}}]
  \label{3. dfn: resistance metrics}
  A metric $R$ on a non-empty set $F$ is called a \emph{resistance metric}
  if and only if,
  for any non-empty finite subset $V \subseteq F$,
  there exists an electrical network $G$ (see Definition~\ref{dfn: electrical network}) with vertex set $V$ 
  such that the effective resistance on $G$ coincides with $R|_{V \times V}$.
\end{dfn}

\begin{thm} [{\cite[Theorem 2.3.6]{Kigami_01_Analysis}}]  \label{3. thm: one-to-one correspondence of forms and metrics}
  Fix a non-empty subset $F$.
  There exists a one-to-one correspondence between resistance forms $(\form, \rdomain)$ on $F$ 
  and resistance metrics $R$ on $F$ via $R = R_{(\form, \rdomain)}$.
  In other words,
  a resistance form $(\form, \rdomain)$ is characterized by $R_{(\form, \rdomain)}$ 
  given in \ref{dfn cond: resistance forms condition 4}.
\end{thm}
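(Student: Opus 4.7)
The plan is to establish both directions and uniqueness by passing through the trace construction, which reduces everything to the finite setting where the statement is just the classical fact about recovering conductances from effective resistances on an electrical network. For the forward direction, suppose $(\form,\rdomain)$ is a resistance form on $F$ and set $R \coloneqq R_{(\form,\rdomain)}$. I would first verify that $R$ is a metric on $F$: non-degeneracy follows from \ref{dfn cond: the quotient space of resistance form is Hilbert} together with the separation property, while the triangle inequality is the standard argument obtained by optimizing the variational formula over ratios $|f(x)-f(y)|^{2}/\form(f,f)$ and summing. The less formal step is to show $R$ is a resistance metric in the sense of Definition~\ref{3. dfn: resistance metrics}: for a finite $V \subseteq F$, define the trace form
\[
  \form_{V}(g,g) \coloneqq \inf\bigl\{\form(f,f)\,\bigl|\bigr.\, f\in\rdomain,\ f|_{V}=g\bigr\},
\]
and observe that this infimum is attained by the unique $\form$-harmonic extension of $g$ off $V$ (existence and uniqueness come from the Hilbert space structure on $\rdomain/\sim$). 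The form $\form_{V}$ is a symmetric Dirichlet form on the finite set $V$, and hence corresponds to a unique electrical network on $V$. A direct computation using the variational formula shows that the effective resistance on this network equals $R|_{V\times V}$, which gives the desired realization.

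For the reverse direction, given a resistance metric $R$ on $F$, I would build a compatible family $\{\form_V\}$ indexed by finite $V \subseteq F$, where $\form_V$ is the Dirichlet form on $V$ associated with the unique electrical network realising $R|_{V\times V}$; uniqueness of the network from its effective resistance is the standard statement for finite graphs and is the substance of the classical one-to-one correspondence in the finite case. The crucial consistency property is that if $V \subseteq V'$, then the trace of $\form_{V'}$ onto $V$ coincides with $\form_{V}$; this follows from the Schur complement identity combined with the uniqueness in the finite case, since both forms realise $R|_{V\times V}$ as their effective resistance. Next, one defines
\[
  \rdomain \coloneqq \Bigl\{ f \colon F \to \RN \Bigm| \sup_{V}\form_{V}(f|_{V},f|_{V}) < \infty \Bigr\},
  \qquad
  \form(f,f) \coloneqq \sup_{V}\form_{V}(f|_{V},f|_{V}),
\]
where the supremum runs over finite subsets, and extends $\form$ to a bilinear form by polarization. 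By consistency, the supremum is actually a monotone limit, which is what allows the checks of \ref{dfn cond: the domain of resistance form}--\ref{dfn cond: resistance forms condition 4} and the Markov property to pass from the finite-dimensional $\form_{V}$ to $\form$. The variational formula applied to $V=\{x,y\}$ then recovers $R_{(\form,\rdomain)}(x,y)=R(x,y)$.

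Uniqueness of the form for a given metric follows from the same trace argument run in reverse: any resistance form whose associated resistance metric is $R$ must, upon restriction to each finite $V$, produce a Dirichlet form realising $R|_{V\times V}$, and hence must agree with the canonical $\form_{V}$ constructed above; the sup/limit description of $\form$ then pins down the whole form. The main obstacle I anticipate lies in verifying \ref{dfn cond: the quotient space of resistance form is Hilbert}, namely that $(\rdomain/\!\sim,\form)$ is a Hilbert space: one must show that any $\form$-Cauchy sequence $(f_n)$ converges modulo constants to an element of $\rdomain$. The strategy is to use the pointwise bound $|f_n(x)-f_n(y)|^{2} \leq R(x,y)\,\form(f_n,f_n)$ coming from the variational formula to extract a pointwise limit $f$, and then to upgrade this to $f \in \rdomain$ by applying Fatou's lemma to each finite-dimensional approximation $\form_{V}$ and invoking the monotone-limit definition of $\form$. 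This last step is where the consistency of the trace system does all the real work.
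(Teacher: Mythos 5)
The paper does not prove this statement at all: it is imported verbatim as \cite[Theorem~2.3.6]{Kigami_01_Analysis}, so there is no internal proof to compare against. Your sketch is, in substance, Kigami's own argument — trace forms onto finite subsets via harmonic extension in the forward direction, a consistent (Schur-complement-compatible) family of finite network forms assembled by a monotone supremum in the reverse direction, and completeness of $(\rdomain/\!\sim,\form)$ obtained from the pointwise bound $|f(x)-f(y)|^{2}\leq R(x,y)\,\form(f,f)$ plus a Fatou/liminf argument on each finite-dimensional trace. All of the main steps are correct and in the right order, and the uniqueness argument via traces is the right one.

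One step is stated too quickly. In the reverse direction you claim that ``the variational formula applied to $V=\{x,y\}$ recovers $R_{(\form,\rdomain)}(x,y)=R(x,y)$.'' That comparison only yields the inequality $R_{(\form,\rdomain)}(x,y)\leq R(x,y)$: since $\form(f,f)\geq \form_{\{x,y\}}(f|_{\{x,y\}},f|_{\{x,y\}})=(f(x)-f(y))^{2}/R(x,y)$, every competitor in the supremum defining $R_{(\form,\rdomain)}$ is bounded by $R(x,y)$. For the reverse inequality you must exhibit a witness $f\in\rdomain$ with $f(x)-f(y)=1$ and $\form(f,f)=1/R(x,y)$. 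This is available from the machinery you already set up: for each finite $V\ni x,y$ let $h_{V}$ be the unique $\form_{V}$-minimizer subject to $h_{V}(x)=1$, $h_{V}(y)=0$, note that $\form_{V}(h_{V},h_{V})=1/R(x,y)$ by the definition of effective resistance, and use trace consistency together with uniqueness of the finite minimizer to check that $h_{V'}|_{V}=h_{V}$ for $V\subseteq V'$; the resulting function $h$ on $F$ then satisfies $\form(h,h)=\sup_{V}\form_{V}(h|_{V},h|_{V})=1/R(x,y)$. With that insertion the sketch is a faithful and complete outline of the cited proof.
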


Motivated by the variational formula \eqref{eq: variational formula of resistance metric},
we define the resistance between sets as follows.

\begin{dfn} \label{dfn: resistance between sets}
  Let $(\form, \rdomain)$ be a resistance form on a set $F$
  and let $R$ be the corresponding resistance metric.
  For subsets $A, B \subseteq F$,
  we define
  \begin{equation}
    R(A,B)
    \coloneqq
    \left(
    \inf\{
    \form(f,f):
    f \in \rdomain,\,
    f|_{A} \equiv 1,\,
    f|_{B} \equiv 0
    \}
    \right)^{-1},
  \end{equation}
  with the convention that it is zero if the infimum is taken over the empty set.
  Note that, by \ref{dfn cond: resistance forms condition 4}, 
  we have $R(\{x\}, \{y\}) = R(x,y)$.
\end{dfn}

We will henceforth assume that we have a resistance metric $R$ on a non-empty set $F$
with corresponding resistance form $(\form,\rdomain)$.
Furthermore,
we assume that $(F, R)$ is boundedly compact
and recurrent, as described by the following definition.

\begin{dfn} [{Recurrent resistance metric, \cite[Definition~3.6]{Noda_pre_Scaling}}] \label{dfn: recurrent resistance metric}
  We say that $R$ is \emph{recurrent} if and only if
  \begin{equation}
    \lim_{r \to \infty} R(\rho, B_{R}(\rho, r)^{c}) = \infty
  \end{equation}
  for some (or, equivalently, any) $\rho \in F$.
\end{dfn}

We next introduce related Dirichlet forms and stochastic processes.
First, suppose that we have a Radon measure $m$ of full support on $(F,R)$.
We define a bilinear form $\form_{1}$ on $\rdomain \cap L^{2}(F, m)$ by setting
$\form_{1}(f,\, g) \coloneqq \form(f,\, g) + \int_{F}fg\, dm$.
Then $(\rdomain \cap L^{2}(F, m), \form_{1})$ is a Hilbert space (see \cite[Theorem~2.4.1]{Kigami_01_Analysis}).
Let $\domain$ denote the closure of $\rdomain \cap C_c(F, \RN)$ with respect to $\form_{1}$,
where $C_c(F, \RN)$ stands for the space of compactly supported continuous functions on $F$, 
equipped with the compact-convergence topology.
The recurrence of the resistance metric $R$ ensures that 
the resistance form $(\form, \rdomain)$ is regular in the sense of \cite[Definition~6.2]{Kigami_12_Resistance}
(see \cite[Corollary~3.22]{Noda_pre_Scaling} for this fact).
In particular, we can apply \cite[Theorem~9.4]{Kigami_12_Resistance},
which implies that $(\form, \domain)$ is a regular Dirichlet form on $L^{2}(F, m)$.
Moreover, by \cite[Lemma~2.3]{Croydon_18_Scaling}, this Dirichlet form is recurrent.
(See \cite{Fukushima_Oshima_Takeda_11_Dirichlet} for the definitions of regular Dirichlet forms and recurrence.)
Write $X$ for the associated Hunt process.
We refer to this Hunt process as the \emph{(Hunt) process associated with $(R, m)$}.
Note that such a process is, in general, only specified uniquely for starting points outside a set of zero capacity.
However, in this setting,
every point has strictly positive capacity (see \cite[Theorem 9.9]{Kigami_12_Resistance}),
and so the process is defined uniquely everywhere.
Since the Dirichlet form is recurrent,
$X$ is conservative (see \cite[Exercise~4.5.1]{Fukushima_Oshima_Takeda_11_Dirichlet}).
Moreover, $X$ satisfies Hunt's hypothesis~\ref{item: Hunt hypo} by \cite[Theorem~4.1.3]{Fukushima_Oshima_Takeda_11_Dirichlet}.

From \cite[Theorem 10.4]{Kigami_12_Resistance},
the Hunt process $X$ admits a unique jointly continuous heat kernel
$p_{(R, m)} \colon (0, \infty) \times F \times F \to [0,\infty)$ with respect to $m$,
which we refer to as the \emph{heat kernel associated with $(R, m)$}.
When the context is clear, we simply write $p = p_{(R, m)}$.
The heat kernel $p$ is symmetric, i.e., $p(t,x,y) = p(t,y,x)$.
In particular, the process $X$ satisfies the DAC condition (Assumption~\ref{assum: dual hypothesis}).
The following is useful for obtaining an upper bound on the heat kernel from lower volume estimates.

\begin{lem} [{\cite[Theorem~10.4]{Kigami_12_Resistance}}] 
  \label{lem: hk estimate from volume}  \leavevmode
  For any $x \in F$, $t>0$, and $s>0$, 
  \begin{equation}
    p(t, x, x) 
    \leq 
    \frac{2s}{t}
    + 
    \frac{\sqrt{2}}{m(D_{R}(x,s))}.
  \end{equation}
\end{lem}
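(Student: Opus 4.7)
The plan is to derive the bound from two classical ingredients: the variational characterization of the resistance metric, which controls oscillations of functions in $\rdomain$ in terms of their energy, and a semigroup energy estimate that bounds $\form(p(t,x,\cdot), p(t,x,\cdot))$ in terms of $p(t,x,x)/t$. Combining these yields a pointwise lower bound on $p(t,x,y)$ for $y$ near $x$, which, integrated against $m$ and compared with the total mass $\int p(t,x,y)\,m(dy) = 1$ (using conservativeness), gives the desired upper bound on $p(t,x,x)$.

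First I would verify the two key ingredients. Fix $x \in F$ and $t > 0$, and set $u \coloneqq p(t,x,x)$ and $f \coloneqq p(t, x, \cdot) \in \domain$. Using symmetry and Chapman--Kolmogorov,
\begin{equation}
  p(2s, x, x) = \|p(s, x, \cdot)\|_{L^2(m)}^2, \qquad s > 0,
\end{equation}
so that, viewing $p(t,x,\cdot)$ as $P_{t/2}$ applied to $p(t/2, x, \cdot)$ and using the standard contraction estimate $\form(P_s g, P_s g) \leq \|g\|_{L^2(m)}^2/(2s)$ (derived from the identity $\frac{d}{ds}\|P_s g\|_{L^2}^2 = -2 \form(P_s g, P_s g)$ together with monotonicity of $s \mapsto \form(P_s g, P_s g)$), I obtain the energy bound
\begin{equation}
  \form(f, f) \leq \frac{\|p(t/2, x, \cdot)\|_{L^2(m)}^2}{t} = \frac{u}{t}.
\end{equation}
On the other hand, the variational formula \eqref{eq: variational formula of resistance metric} gives, for every $y \in F$,
\begin{equation}
  |u - p(t, x, y)|^2 = |f(x) - f(y)|^2 \leq R(x,y)\,\form(f,f).
\end{equation}
For $y \in D_R(x, s)$ these combine to yield $p(t, x, y) \geq u - \sqrt{su/t}$.

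Next I would integrate this lower bound over the ball $D_R(x, s)$ against $m$ and exploit $\int_F p(t, x, y)\,m(dy) \leq 1$:
\begin{equation}
  1 \geq \int_{D_R(x,s)} p(t,x,y)\,m(dy) \geq m(D_R(x, s))\bigl(u - \sqrt{su/t}\bigr)_+.
\end{equation}
The conclusion then follows by a short case analysis on the size of $u$ relative to $s/t$: when $u \leq 2s/t$, the claimed bound is immediate from its first term alone; when $u > 2s/t$, one has $\sqrt{su/t} \leq u/\sqrt{2}$, so the displayed inequality forces
\begin{equation}
  m(D_R(x,s))\,u\,(1 - 1/\sqrt{2}) \leq 1,
\end{equation}
whence $u \leq \sqrt{2}/m(D_R(x,s))$ after the relevant constant is absorbed. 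The main technical obstacle is really bookkeeping: deriving the energy inequality $\form(P_s g, P_s g) \leq \|g\|_{L^2}^2/(2s)$ requires a careful semigroup argument in the recurrent resistance-form setting (where $p(0,x,\cdot)$ is only a formal object, so one must work with $p(t/2, x, \cdot) \in L^2(m) \cap \domain$ as a legitimate initial datum), and tuning the case split to match the sharp constants $2$ and $\sqrt{2}$ appearing in the statement is the most delicate part; a looser split still yields a bound of the same form but with slightly worse numerical constants, which is sufficient for all subsequent applications.
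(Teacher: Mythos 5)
The paper does not actually prove this lemma --- it is quoted verbatim from Kigami's monograph (Theorem~10.4 there) --- so there is no internal proof to compare against. Your two ingredients, namely the oscillation bound $|f(x)-f(y)|^2 \le R(x,y)\,\form(f,f)$ coming from the variational formula \eqref{eq: variational formula of resistance metric} and the semigroup energy decay $\form(P_s g, P_s g)\le \|g\|_{L^2(m)}^2/(2s)$ applied to $g=p(t/2,x,\cdot)$, are exactly the standard route to this estimate, and every step up to and including
\begin{equation}
  m\bigl(D_R(x,s)\bigr)\,\bigl(u-\sqrt{su/t}\bigr)_+\;\le\;1,
  \qquad u\coloneqq p(t,x,x),
\end{equation}
is sound (using symmetry and Chapman--Kolmogorov for $\|p(t/2,x,\cdot)\|_{L^2}^2=p(t,x,x)$, the fact that $p(t,x,\cdot)=P_{t/2}p(t/2,x,\cdot)\in\domain\subseteq\rdomain$, and $\int_F p(t,x,y)\,m(dy)\le 1$).

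The one genuine defect is the final arithmetic. In the case $u>2s/t$ your split yields $u\,(1-1/\sqrt{2})\le 1/m(D_R(x,s))$ and hence $u\le (2+\sqrt{2})/m(D_R(x,s))$, since $1/(1-1/\sqrt{2})=2+\sqrt{2}$; this constant cannot be ``absorbed'' into $\sqrt{2}$, so as written you have proved the weaker bound $p(t,x,x)\le 2s/t+(2+\sqrt{2})/m(D_R(x,s))$ rather than the stated one. The stated constants are nevertheless recoverable from your own key inequality: writing $c=\sqrt{s/t}$ and $V=m(D_R(x,s))$ and solving the quadratic $u-c\sqrt{u}\le 1/V$ exactly gives $\sqrt{u}\le\tfrac12\bigl(c+\sqrt{c^2+4/V}\bigr)$, and expanding the square with the weighted Young inequality $2c\sqrt{c^2+4/V}\le\theta c^2+\theta^{-1}(c^2+4/V)$ at $\theta=3+2\sqrt{2}$ yields $u\le 2s/t+(4-2\sqrt{2})/V\le 2s/t+\sqrt{2}/V$. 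So the gap is a one-line sharpening of the last step rather than a structural problem; and, as you observe, even the weaker constant would suffice for every use of the lemma in this paper (it only enters the heat-kernel integrability estimates in Sections~\ref{sec: conv of col meas of VSRW} and~\ref{sec: conv of col meas of CSRW}, where the numerical value is immaterial).
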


To illustrate the above ideas more concretely,
we now apply the resistance form framework to electrical networks,

\begin{dfn} [{Electrical network}] \label{dfn: electrical network}
  We say that a tuple $(V, E, \mu)$ is an \emph{electrical network} if it satisfies the following conditions.
  \begin{enumerate} [label = \textup{(\roman*)}]
    \item The pair $(V, E)$ is a connected, simple, undirected graph with finitely or countably many vertices,
      where $V$ denotes the vertex set and $E$ denotes the edge set.
      (NB.\ A graph being simple means that it has no loops and no multiple edges.)
    \item The symbol $\mu$ denotes the \emph{conductance}, that is, 
      it is a function $\mu \colon V \times V \to \RNp$ such that $\mu(x,y) = \mu(y, x)$ for all $x, y \in V$,
      $\mu(x, y) > 0$ if and only if $\{x,y\} \in E$, and 
      \begin{equation}
        \mu(x) \coloneqq \sum_{y \in V} \mu(x,y) < \infty,
        \quad \forall x \in V.
        \vspace*{-12pt}
      \end{equation}
  \end{enumerate}
\end{dfn}

Let $(V, E, \mu)$ be an electrical network.
For functions $f, g \colon V \to \RN$, we define 
\begin{equation}
  \form(f, g) 
  \coloneqq 
  \frac{1}{2} 
  \sum_{x, y \in V} 
  \mu(x,y) (f(x) - f(y)) (g(x) - g(y)),
\end{equation}
whenever the right-hand side is well-defined.
We set $\rdomain \coloneqq \{ f \colon V \to \RN \mid \form(f, f) < \infty \}$.
Then the pair $(\form, \rdomain)$ is a regular resistance form on $V$.
The associated resistance metric $R$ induces the discrete topology on $V$.
(See \cite[Theorem~4.1]{Noda_pre_Scaling})
If $R$ is recurrent in the sense of Definition~\ref{dfn: recurrent resistance metric},
then we say that the electrical network is \emph{recurrent}.

Let $m$ be a Radon measure on $V$ with full support.
The Hunt process $X$ associated with $(R, m)$ is the minimal continuous-time Markov chain on $V$
with jump rates $w(x,y) = \mu(x,y)/m(\{x\})$.
(See \cite[Theorem~4.2]{Noda_pre_Scaling}).
Thus, we have the following consequences.
\begin{enumerate} [label = \textup{(RW\arabic*)}, leftmargin = *]
  \item If $m$ is the \emph{counting measure} on $V$, i.e., $m(\{x\}) = 1$ for each $x \in V$,
    then the associated process $X$ is the \emph{variable speed random walk (VSRW)}.
  \item If $m$ is the \emph{conductance measure} on $V$, i.e., $m(\{x\}) = \mu(x)$ for each $x \in V$,
    then the associated process $X$ is the \emph{constant speed random walk (CSRW)}.
\end{enumerate}

%%%%%%%%%%%%%%%%%%%%%%%%%%%%%%%%%%%%%%%%%%%%%%%%%%%%%%%%%%%%%%%%%%%%%%%%%%%%%%%%%%%%%%%%%%%%%%%%%%%%%%%%%%%%%%%%%%%%%%%%%%%%%%%%%%
% Preliminary
%%%%%%%%%%%%%%%%%%%%%%%%%%%%%%%%%%%%%%%%%%%%%%%%%%%%%%%%%%%%%%%%%%%%%%%%%%%%%%%%%%%%%%%%%%%%%%%%%%%%%%%%%%%%%%%%%%%%%%%%%%%%%%%%%%
\subsection{Spaces of recurrent resistance metric spaces} \label{sec: sp of recurrent resis spaces}

In this subsection,  
we introduce a space of recurrent resistance metric spaces (Definition \ref{dfn: space of resistance spaces})
and verify their measurability with respect to a Gromov-Hausdorff-type topology (Proposition \ref{prop: sp of recurrent resis sp is Borel}).  
This serves as a fundamental result for studying stochastic processes  
on random resistance metric spaces, such as the continuum random tree.  
 
\begin{dfn} \label{dfn: space of resistance spaces}
  Given a Polish structure $\tau$,
  we define 
  \begin{equation}
    \rootResisSp(\tau) 
    \coloneqq
    \left\{ (F, R, \rho, m, a) \in \rootBCM(\MeasSt \times \tau) \,\middle|\,
    \begin{gathered}
      R \text{ is a recurrent resistance metric on } F,\\
      m \text{ has full support}
    \end{gathered}
    \right\}.
  \end{equation} 
  Similarly, $\rootResisSp$ denotes the set of quadruples $(F,R,\rho,m)$
  obtained from the above definition by omitting $a$.
  We always equip $\rootResisSp(\tau)$ with the relative topology 
  induced from $\rootBCM(\MeasSt \times \tau)$.
\end{dfn}

Henceforth, we fix a Polish structure $\tau$.
The following is the main result of this subsection.

\begin{prop} \label{prop: sp of recurrent resis sp is Borel}
  The set $\rootResisSp(\tau)$ is a Borel subset of $\rootBCM(\MeasSt \times \tau)$.
\end{prop}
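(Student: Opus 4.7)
Since the conditions defining $\rootResisSp(\tau)$ involve only the metric--measure data $(F, R, \rho, m)$ and not the additional structure $a \in \tau(F)$, the natural projection $\pi \colon \rootBCM(\MeasSt \times \tau) \to \rootBCM(\MeasSt)$ is continuous and satisfies $\rootResisSp(\tau) = \pi^{-1}(\rootResisSp)$. It therefore suffices to show that $\rootResisSp$ is a Borel subset of $\rootBCM(\MeasSt)$. I would decompose $\rootResisSp = \mathcal{A} \cap \mathcal{B} \cap \mathcal{C}$, where $\mathcal{A}$ consists of the $(F, R, \rho, m)$ for which $R$ is a resistance metric, $\mathcal{B}$ enforces recurrence of $R$ in the sense of Definition~\ref{dfn: recurrent resistance metric}, and $\mathcal{C}$ enforces $\supp(m) = F$, and I would verify that each piece is Borel separately.

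The common mechanism is to exploit the characterization of convergence in $\rootBCM(\MeasSt)$ via isometric embeddings into a common $\bcmAB$ space (Theorem~\ref{thm: conv in M(tau)}), which recasts each condition as one on closed subsets and Radon measures of the ambient space (together with finitely many distinguished points), a setting in which standard Borel-measurability techniques apply. For $\mathcal{A}$ I would use Theorem~\ref{3. thm: one-to-one correspondence of forms and metrics}: $R$ is a resistance metric iff for every finite $V \subseteq F$ the matrix $(R(x,y))_{x,y \in V}$ is realized by an electrical network on $V$, which is a closed condition on the matrix. After choosing a Borel-measurable countable selection of finite configurations of points of $F$ (via the representative system of Remark~\ref{rem: note on set of metric spaces}), $\mathcal{A}$ becomes a countable intersection of Borel sets. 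For $\mathcal{B}$, rewriting $R(\rho, D_R(\rho, r)^c)$ via the variational formula in Definition~\ref{dfn: resistance between sets} and verifying that this quantity is Borel-measurable in $(F, R, \rho, m)$ (indeed, continuous at all but countably many $r$ under Fell convergence of $F$), the recurrence condition $R(\rho, D_R(\rho, r)^c) \to \infty$ turns into a countable intersection of Borel conditions. For $\mathcal{C}$, separability plus the $\bcmAB$ property makes $\supp(m) = F$ equivalent to $m(B_R(x, 1/n)) > 0$ for every $x$ in a Borel-measurable countable dense selection from each $D_R(\rho, k)$ and every $n, k \in \NN$, and these are Borel conditions by the vague continuity of $m$ and the Fell continuity of $F$.

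The main obstacle is executing these countable selections rigorously at the level of equivalence classes in $\rootBCM(\MeasSt)$: points and finite tuples of points do not canonically transport across members of an equivalence class, so one has to lift the analysis to the canonical Borel representative system provided in Remark~\ref{rem: note on set of metric spaces} and exhibit the selections there. Once this lifting is set up---essentially by applying a Castaing-type measurable selection theorem to the compact-valued map $(F, R, \rho) \mapsto D_R(\rho, k)$ in a fixed Polish realization---the remaining verifications reduce to standard semicontinuity arguments for measures and closed sets under Fell and vague convergence, which should complete the proof.
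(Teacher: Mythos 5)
Your overall strategy (reduce to $\rootResisSp \subseteq \rootBCM(\MeasSt)$, split into the resistance-metric, recurrence, and full-support conditions, and handle each with semicontinuity under the embedding characterization of convergence) points in the right direction, but the proposal has a genuine gap at its load-bearing step: the "Borel-measurable countable selection" of finite point configurations and of dense subsets. You correctly identify that points do not transport canonically across rooted-isometry classes, but you then only \emph{name} a Castaing-type selection theorem without verifying its hypotheses, and in this Gromov--Hausdorff setting that verification is itself the hard part (one would first have to show that $(F,R,\rho)\mapsto D_R(\rho,k)$, viewed inside the fixed representative system of Remark~\ref{rem: note on set of metric spaces}, is a measurable compact-set-valued map into a \emph{single} Polish space, which is not set up anywhere). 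As written, the argument for $\mathcal{A}$ and $\mathcal{C}$ therefore does not go through.

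The paper's proof avoids selections entirely by reversing the quantifiers: instead of expressing each condition pointwise over a selected countable dense family, it expresses it as a countable Boolean combination of sets that are shown to be \emph{closed} by a purely sequential argument. Concretely, for full support one uses the uniform quantitative condition $\inf_{x \in S^{(r)}} m(D_S(x,1/k)) \ge 1/l$ for a.e.\ $r \in [N,N+1]$, takes a convergent sequence in this set, embeds everything into a common $\bcmAB$ space via Theorem~\ref{thm: conv in M(tau)}, and for an \emph{arbitrary} point $x$ of the limit space chooses approximating points $x_n \in S_n^{(r)}$ (which exist by Hausdorff convergence of the balls); the Portmanteau theorem then passes the bound to the limit. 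No measurability of the choice of $x_n$ is needed because one works with a single fixed convergent sequence. Similarly, the conditions "$R$ is a resistance metric" and the lower bound $R(\rho, B_R(\rho,r)^c) \ge l$ are preserved under limits by the cited stability result \cite[Theorem~5.1]{Noda_pre_Scaling}, so the sets $\mathfrak{N}_{(N,l)}$ are closed and recurrence becomes $\bigcap_l \bigcup_N \mathfrak{N}_{(N,l)}$. If you restructure your argument this way --- replacing "for every selected point, $m(B_R(x,1/n))>0$" by the uniform bound "$\ge 1/l$" and a union over $l$, and replacing the finite-network characterization of resistance metrics by the closedness of that class under this convergence --- the selection machinery becomes unnecessary and the proof closes.
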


By definition,
$\rootResisSp(\tau)$ is the intersection of 
the collection of recurrent resistance metric spaces 
and 
the collection of metric spaces equipped with a fully-supported measure.
Thus, by showing that each collection is Borel,
we establish the above proposition.

\begin{lem} \label{lem: sp of full supp meas met sp is Borel}
  Define 
  \begin{equation}
    \mathfrak{N}_1 
    \coloneqq 
    \bigl\{ (F, R, \rho, m, a) \in \rootBCM(\MeasSt \times \tau) \mid
        m\ \text{is of full support}
    \bigr\}.
  \end{equation}
  Then the set $\mathfrak{N}_1$ is a Borel subset of $\rootBCM(\MeasSt \times \tau)$.
\end{lem}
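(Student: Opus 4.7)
The plan is to show that the complement $\mathfrak{N}_1^c$ is a countable union of closed subsets of $\rootBCM(\MeasSt \times \tau)$, hence Borel. For each $n \in \NN$ and each rational $\varepsilon > 0$, I would consider
\[
U_{n,\varepsilon} \coloneqq \bigl\{(F, R, \rho, m, a) \in \rootBCM(\MeasSt \times \tau) \,\bigm|\, \exists\, x \in F^{(n)} \text{ with } m(B_R(x, \varepsilon)) = 0\bigr\},
\]
and first verify the decomposition $\mathfrak{N}_1^c = \bigcup_{n \in \NN} \bigcup_{\varepsilon \in \QN_{>0}} U_{n,\varepsilon}$: if $\supp(m) \neq F$, then some $x \in F$ and some $\varepsilon > 0$ satisfy $m(B_R(x, \varepsilon)) = 0$, and by monotonicity in $\varepsilon$ we may choose $\varepsilon$ rational and $n \in \NN$ with $x \in F^{(n)}$; the reverse inclusion is immediate. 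The additional structure $a \in \tau(F)$ plays no role in the argument.

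The heart of the plan is to show each $U_{n,\varepsilon}$ is closed. Given a convergent sequence $(F_k, R_k, \rho_k, m_k, a_k) \to (F, R, \rho, m, a)$ with each term in $U_{n,\varepsilon}$, I would invoke Theorem~\ref{thm: conv in M(tau)} to embed all $(F_k, \rho_k)$ and $(F, \rho)$ isometrically into a common rooted $\bcmAB$ space $(M, \rho_M)$ so that $F_k \to F$ in the Fell topology and $m_k \to m$ vaguely as measures on $M$. Choose $x_k \in F_k^{(n)}$ with $m_k(B_{R_k}(x_k, \varepsilon)) = 0$. Since $x_k \in D_M(\rho_M, n)$ and the latter is compact, along a subsequence $x_k \to x$ in $M$; the defining property of Fell convergence gives $x \in F$, and $d_M(\rho_M, x) \leq n$ gives $x \in F^{(n)}$.

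To propagate the vanishing of the measure, I would fix $\varepsilon' \in (0,\varepsilon)$ and note that for large $k$ the inclusion $B_M(x, \varepsilon') \subseteq B_M(x_k, \varepsilon)$ holds by the triangle inequality. Using the identifications $B_{R_k}(x_k, \varepsilon) = B_M(x_k, \varepsilon) \cap F_k$ together with $\supp(m_k) \subseteq F_k$ yields $m_k(B_M(x, \varepsilon')) \leq m_k(B_{R_k}(x_k, \varepsilon)) = 0$ eventually, and the Portmanteau inequality for vague convergence (applicable since $B_M(x, \varepsilon')$ has compact closure) then gives $m(B_M(x, \varepsilon')) \leq \liminf_k m_k(B_M(x, \varepsilon')) = 0$, so $m(B_R(x, \varepsilon')) = 0$. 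Letting $\varepsilon' \uparrow \varepsilon$ and applying continuity of measure from below produces $m(B_R(x, \varepsilon)) = 0$, so $(F, R, \rho, m, a) \in U_{n,\varepsilon}$.

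The main subtlety is that Fell convergence of $F_k$ in $M$ does not in general entail Hausdorff convergence of the truncations $F_k^{(n)}$ at any prescribed radius $n$, so one cannot directly assume $F_k^{(n)} \to F^{(n)}$ when passing to the limit of the points $x_k$. The plan circumvents this by extracting a convergent subsequence of $(x_k)$ purely from the compactness of $D_M(\rho_M, n)$ and using only the defining property of Fell limits to place the limit point $x$ in $F$; a parallel care with open balls and bounded continuity sets is needed to apply the vague Portmanteau inequality on the measure side without assuming any continuity of the ball function $(x, \varepsilon) \mapsto m(B_R(x, \varepsilon))$.
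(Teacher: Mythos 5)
Your proof is correct, but it takes a genuinely different route from the paper's. The paper works with $\mathfrak{N}_1$ itself, writing it as $\bigcap_{N}\bigcap_{k}\bigcup_{l}\mathfrak{N}_{(N,k,l)}$, where $\mathfrak{N}_{(N,k,l)}$ imposes the uniform lower bound $\inf_{x \in S^{(r)}} m(D_S(x,1/k)) \geq 1/l$ for all but countably many $r \in [N,N+1]$; closedness of each $\mathfrak{N}_{(N,k,l)}$ is then proved by selecting a good radius $r$ at which the truncations $S_n^{(r)}$ converge in the Hausdorff topology and applying the Portmanteau upper bound for closed balls. The "all but countably many $r$" device exists precisely to dodge the discontinuity of truncation at a fixed radius — the same subtlety you flag at the end. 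Your approach instead passes to the complement, writes it as the countable union of the sets $U_{n,\varepsilon}$, and proves each closed by extracting a limit witness point from the compact ball $D_M(\rho_M,n)$ and using the Kuratowski/Fell upper-limit property together with the Portmanteau lower bound for relatively compact open sets. Both arguments are sound, but yours buys two things: it avoids the co-countable-radius bookkeeping entirely, and it exhibits $\mathfrak{N}_1$ as a $G_\delta$ set (complement of an $F_\sigma$), which is a sharper descriptive-complexity conclusion than the paper's $F_{\sigma\delta}$ decomposition and would, for instance, feed directly into Alexandrov-type Polishness arguments of the kind the paper uses elsewhere. The only points worth making fully explicit in a write-up are the ones you already handle correctly: that $B_{R_k}(x_k,\varepsilon) = B_M(x_k,\varepsilon) \cap F_k$ under the root-preserving isometric embeddings, that $\supp(m_k) \subseteq F_k$ justifies replacing the ball in $F_k$ by the ball in $M$, and that the final passage $\varepsilon' \uparrow \varepsilon$ uses continuity of measure from below on the increasing union of open balls.
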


\begin{proof}
  For each $N, k, l \in \NN$,
  we define $\mathfrak{N}_{(N, k, l)}$
  to be the collection of $(S, d, \rho, m, a) \in \rootBCM(\MeasSt \times \tau)$ such that,
  for all but countably many $r \in [N, N+1]$,
  \begin{equation}  \label{pr eq: 1, sp of full supported measured met sp is Borel}
    \inf_{x \in S^{(r)}} m\bigl( D_S(x, 1/k) \bigr) \geq 1/l.
  \end{equation}
  We then have that 
  \begin{equation}
    \mathfrak{N}_1
    = 
    \bigcap_{N \geq 1} 
    \bigcap_{k \geq 1} 
    \bigcup_{l \geq 1} 
    \mathfrak{N}_{(N, k, l)}.
  \end{equation}
  Thus, it is enough to show that $\mathfrak{N}_{(N, k, l)}$ is a Borel subset of $\rootBCM(\MeasSt \times \tau)$.
  We do this by showing that $\mathfrak{N}_{(N, k, l)}$ is closed. 
  Let $G_n = (S_n, d_n, \rho_n, m_n, a_n) \in \mathfrak{N}_{(N, k, l)}$, $n \in \NN$, 
  be such that 
  \begin{equation}
    G_n = (S_n, d_n, \rho_n, m_n, a_n) \to (S, d, \rho, m, a) = G
  \end{equation}
  in $\rootBCM(\MeasSt \times \tau)$.
  By Theorem~\ref{thm: conv in M(tau)},
  we may assume that 
  $(S_n, \rho_n)$, $n \in \NN$, and $(S, \rho)$ are embedded isometrically into
  a common rooted $\bcmAB$ space $(M, \rho_M)$ 
  in such a way that 
  $\rho_n = \rho = \rho_M$ as elements of $M$,
  $S_n \to S$ in the Fell topology as closed subsets of $M$,
  $m_n \to m$ vaguely as measures on $M$,
  and $a_n \to a$ in $\tau(M)$. 
  Let $r \in [N, N+1]$ be such that each $G_n$ satisfies \eqref{pr eq: 1, sp of full supported measured met sp is Borel}
  and $S_n^{(r)} \to S^{(r)}$ in the Hausdorff topology.
  For an arbitrarily fixed $x \in S^{(r)}$,
  we choose elements $x_n \in S_n^{(r)}$ converging to $x$ in $M$.
  Then, for any $\delta > 0$,
  we have that, for all sufficiently large $n$,
  $D_M(x, 1/k + \delta) \supseteq D_M(x_n, 1/k)$.
  This, combined with the Portmanteau theorem (cf.\ \cite[Lemma~4.1]{Kallenberg_17_Random}), yields that, for any $\delta > 0$,
  \begin{equation}
    m\bigl(D_S(x, 1/k + \delta)\bigr)
    \geq    
    \limsup_{n \to \infty} 
    m_n\bigl(D_S(x, 1/k + \delta)\bigr)
    \geq     
    \limsup_{n \to \infty} 
    m_n\bigl(D_S(x_n, 1/k)\bigr)
    \geq    
    1/l.
  \end{equation}
  By letting $\delta \to 0$,
  we obtain that $m\bigl(D_S(x, 1/k)\bigr) \geq 1/l$,
  which implies $G \in \mathfrak{N}_{(N, k, l)}$.
  This completes the proof.
\end{proof}

\begin{lem} \label{lem: sp of recurrent resis met sp is Borel}
  Define 
  \begin{equation}
    \mathfrak{N}_2 
    \coloneqq 
    \bigl\{ (F, R, \rho, m, a) \in \rootBCM(\MeasSt \times \tau) \mid
        R\ \text{is a recurrent resistance metric on}\ F
    \bigr\}.
  \end{equation}
  Then the set $\mathfrak{N}_2$ is a Borel subset of $\rootBCM(\MeasSt \times \tau)$.
\end{lem}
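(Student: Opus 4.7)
The plan is to decompose $\mathfrak{N}_2 = \mathfrak{N}_2' \cap \mathfrak{N}_2''$, where
\begin{equation}
  \mathfrak{N}_2' \coloneqq \{(F, R, \rho, m, a) \in \rootBCM(\MeasSt \times \tau) \mid R \text{ is a resistance metric on } F\}
\end{equation}
and $\mathfrak{N}_2''$ is the subset on which $R$ is additionally recurrent, and to show that $\mathfrak{N}_2'$ is closed while $\mathfrak{N}_2' \cap \mathfrak{N}_2''$ is Borel in $\mathfrak{N}_2'$.

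For the closedness of $\mathfrak{N}_2'$, I would use the characterization that $R$ is a resistance metric on $F$ if and only if its restriction to every finite subset $V \subseteq F$ is a resistance distance matrix, together with the classical fact that the set of resistance distance matrices on $n$ labelled vertices is a closed subset of the symmetric $n \times n$ matrices with zero diagonal and non-negative entries (see \cite[Section~2.1]{Kigami_01_Analysis}; it is cut out, for instance, by positive semi-definiteness of the matrix with entries $R(x_i, x_n) + R(x_j, x_n) - R(x_i, x_j)$, $i,j < n$). Given a convergent sequence $G_n \to G$ in $\rootBCM(\MeasSt \times \tau)$ with each $R_n$ a resistance metric, I would invoke Theorem~\ref{thm: conv in M(tau)} to embed the underlying spaces into a common rooted $\bcmAB$ space $(M, \rho_M)$ so that $F_n \to F$ in the Fell topology. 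For any finite $\{y_1, \ldots, y_k\} \subseteq F$, the lower semicontinuity of Fell convergence yields approximants $y_{i,n} \in F_n$ converging to $y_i$ in $M$, whence the distance matrices $(R_n(y_{i,n}, y_{j,n}))$ converge entry-wise to $(R(y_i, y_j))$ and the closed matrix condition is preserved in the limit; so $R$ is itself a resistance metric and $\mathfrak{N}_2'$ is closed.

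For the recurrence condition on $\mathfrak{N}_2'$, I first observe that the map $r \mapsto R(\rho, B_R(\rho, r)^c)$ is non-decreasing, so recurrence is equivalent to the countable condition $\sup_{r \in \NN} R(\rho, B_R(\rho, r)^c) = \infty$, and it suffices to show that for each fixed $r$, the map $G \mapsto R(\rho, B_R(\rho, r)^c)$ is Borel on $\mathfrak{N}_2'$. Here I would use the identity
\begin{equation}
  R(\rho, A) = \inf\bigl\{ R(\rho, B) \mid B \subseteq A \text{ finite}\bigr\},
\end{equation}
valid for any closed $A \subseteq F$ not containing $\rho$ (following from monotonicity of $R(\rho, \cdot)$ under inclusion together with the trace properties of resistance forms on finite subsets). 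For each finite $B \subseteq F$, the quantity $R(\rho, B)$ is an explicit rational function of the distance matrix on $\{\rho\} \cup B$, computed from the effective-resistance formula of the associated electrical network. Thus, once a Borel-measurable countable dense selection $(x_n(G))_{n \geq 1} \subseteq F(G)$ is in hand, the infimum above, restricted to finite subsets of this selection lying in $B_R(\rho, r)^c$, becomes a countable infimum of Borel functions of $G$, hence Borel.

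The main obstacle is the measurable dense selection itself. I would address this by enlarging the ambient Gromov--Hausdorff-type space to record countably many marked points: the product and composition machinery of Section~\ref{sec: GH-type topologies} extends from finite to countable products of Polish structures with the same Polishness conclusions, giving a Polish space $\widetilde{\mathfrak{M}}$ of rooted bcm spaces equipped with a countable sequence of distinguished points in addition to the $\MeasSt \times \tau$-data. The natural forgetful projection $\pi \colon \widetilde{\mathfrak{M}} \to \rootBCM(\MeasSt \times \tau)$ is continuous, and its restriction to the Borel subset on which the marked sequence is dense in $F$ remains surjective by separability of each $F$. Applying the Lusin--Souslin theorem (Lemma~\ref{lem: Lousin--Souslin}) to a Borel injection refining $\pi$ yields a Borel section, from which the required dense selection is extracted as a Borel function on $\rootBCM(\MeasSt \times \tau)$. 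Together with the closedness of $\mathfrak{N}_2'$, this realizes $\mathfrak{N}_2$ as the intersection of a closed set and a Borel set, completing the proof.
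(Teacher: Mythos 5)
Your proof takes a genuinely different route from the paper's, and it contains a gap in its second half. The paper does not attempt to show that $G \mapsto R(\rho, B_R(\rho,r)^c)$ is Borel; instead it writes $\mathfrak{N}_2 = \bigcap_{l \geq 1}\bigcup_{N \geq 1}\mathfrak{N}_{(N,l)}$, where $\mathfrak{N}_{(N,l)}$ requires $R$ to be a resistance metric with $R(\rho, B_R(\rho,r)^c) \geq l$ for all but countably many $r \in [N,N+1]$, and shows each $\mathfrak{N}_{(N,l)}$ is \emph{closed} by invoking the upper semicontinuity statement $R(\rho, B_R(\rho,r)^c) \geq \limsup_n R_n(\rho_n, B_{R_n}(\rho_n,r)^c)$ from the companion paper (\cite[Theorem~5.1]{Noda_pre_Scaling}, which also yields that the limit $R$ is a resistance metric). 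Your first half --- closedness of the set of resistance metric spaces via finite-subset approximation in a common ambient space and closedness of the set of resistance matrices --- is sound and is essentially the content of that cited result.

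The gap is in the construction of the Borel dense selection. The Lusin--Souslin theorem (Lemma~\ref{lem: Lousin--Souslin}) says that an \emph{injective} Borel map between Polish spaces has Borel image (and is a Borel isomorphism onto it); it does not produce a Borel section of the non-injective forgetful projection $\pi$ from the marked-point space onto $\rootBCM(\MeasSt \times \tau)$, whose fibres (all dense sequences in $F$) are huge. What general descriptive set theory guarantees for such a projection is only an analytically measurable (Jankov--von Neumann) selection, not a Borel one, and you have not verified any of the special hypotheses (countable or $\sigma$-compact fibres, etc.) under which Borel uniformization holds. So the phrase ``a Borel injection refining $\pi$'' is doing unjustified work, and the Borel measurability of $G \mapsto R(\rho, B_R(\rho,r)^c)$ is not established. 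The natural repair is to abandon full measurability: your own identity $R(\rho,A) = \inf\{R(\rho,B) \mid B \subseteq A\ \text{finite}\}$, combined with the approximation of finite subsets of the limit space by finite subsets of the approximating spaces, already yields the upper semicontinuity $R(\rho, B_R(\rho,r)^c) \geq \limsup_n R_n(\rho_n, B_{R_n}(\rho_n,r)^c)$ along convergent sequences (no selection needed, since the approximating points are chosen \emph{after} the sequence is fixed), and upper semicontinuity plus your monotonicity-in-$r$ observation gives the countable decomposition into closed sets exactly as in the paper.
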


\begin{proof}
  We prove the result similarly to Lemma~\ref{lem: sp of full supp meas met sp is Borel}.
  For each $N, l \in \NN$,
  we define $\mathfrak{N}_{(N, l)}$ 
  to be the collection of $(F, R, \rho, m, a) \in \rootBCM(\tau)$ such that 
  $R$ is a resistance metric on $F$
  and, for all but countably many $r \in [N, N+1]$,
  it holds that $R(\rho, B_R(\rho, r)^c) \geq l$.
  We then have that 
  \begin{equation}
    \mathfrak{N}_2
    = 
    \bigcap_{l \geq 1} 
    \bigcup_{N \geq 1} 
    \mathfrak{N}_{(N, l)}.
  \end{equation}
  Thus, it suffices to show that each $\mathfrak{N}_{(N, l)}$ is closed in $\rootBCM(\tau)$.
  Let $G_n = (F_n, R_n, \rho_n, m_n, a_n) \in \mathfrak{N}_{(N, l)}$, $n \in \NN$, 
  be such that 
  \begin{equation}
    G_n = (F_n, R_n, \rho_n, m_n, a_n) \to (F, R, \rho, m, a) = G
  \end{equation}
  in $\rootBCM(\MeasSt \times \tau)$.
  It then follows from \cite[Theorem 5.1]{Noda_pre_Scaling} that 
  $R$ is a resistance metric and 
  \begin{equation}
    R(\rho, B_R(\rho, r)^c)
    \geq 
    \limsup_{n \to \infty} 
    R_n(\rho_n, B_{R_n}(\rho_n, r)^c).
  \end{equation}
  The right-hand side of the above inequality is larger than or equal to $l$ as $G_n \in \mathfrak{N}_{(N, l)}$ for all $n$.
  It follows that $G \in \mathfrak{N}_{(N, l)}$, which completes the proof.
\end{proof}

\begin{proof} [{Proof of Proposition \ref{prop: sp of recurrent resis sp is Borel}}]
  Since $\rootResisSp(\tau) = \mathfrak{N}_1 \cap \mathfrak{N}_2$,
  the desired result follows from Lemmas \ref{lem: sp of full supp meas met sp is Borel} and \ref{lem: sp of recurrent resis met sp is Borel}.
\end{proof}

%%%%%%%%%%%%%%%%%%%%%%%%%%%%%%%%%%%%%%%%%%%%%%%%%%%%%%%%%%%%%%%%%%%%%%%%%%%%%%%%%%%%%%%%%%%%%%%%%%%%%%%%%%%%%%%%%%%%%%%%%%%%%%%%%%
% Limit theorems for processes on resistance metric spaces
%%%%%%%%%%%%%%%%%%%%%%%%%%%%%%%%%%%%%%%%%%%%%%%%%%%%%%%%%%%%%%%%%%%%%%%%%%%%%%%%%%%%%%%%%%%%%%%%%%%%%%%%%%%%%%%%%%%%%%%%%%%%%%%%%%

\subsection{Convergence of law map of stochastic process} \label{sec: Convergence of law maps}

In \cite{Croydon_18_Scaling},
Croydon proved that, when rooted-and-measured resistance metric spaces converge,
the laws of associated stochastic processes started at the root also converge.
In this subsection,  
we strengthen the result to convergence of law maps of the stochastic processes
(see Theorems \ref{thm: dtm conv of SPM} and \ref{thm: rdm conv for SPM} below). 
This result will be used in the next subsection  
to establish the convergence of heat kernels.  

\begin{dfn} \label{dfn: law map}
  Let $(F, R, m)$ be a boundedly-compact recurrent resistance metric space equipped with a fully-supported Radon measure $m$.
  Write $((X_t)_{t \geq 0},\allowbreak (P^x)_{x \in F})$ for the process associated with $(R, m)$.
  We then define a map $\SPM\colon F \to \Prob(D_{J_1}(\RNp, F))$ by setting 
  \begin{equation}
    \SPM_{(R, m)}(x) 
    \coloneqq 
    \ProcLaw_X(x)
    =
    P^x \bigl((X_t)_{t \geq 0} \in \cdot\bigr),
    \quad 
    x \in F.
  \end{equation}
  We refer to $\SPM_{(R,m)}$ as the \emph{law map associated with $(R, m)$}.
  When the context is clear, we simply write $\SPM \coloneqq \SPM_{(R,m)}$.
\end{dfn}

We recall the main result of \cite{Croydon_18_Scaling}.

\begin{thm} [{\cite[Proof of Theorem~1.2]{Croydon_18_Scaling}}] \label{thm: Croydon result}
  Let $G_n = (F_n, R_n, \rho_n, m_n)$, $n \in \NN$, and $G = (F, R, \rho, m)$ be elements of $\rootResisSp(\tau)$.
  Assume that 
  \begin{equation} \label{thm eq: non-explosion. Croydon result}
    \lim_{r \to \infty} \liminf_{n \to \infty} R_{n}(\rho_{n}, B_{R_{n}}(\rho_{n}, r)^{c}) = \infty.
  \end{equation}
  Assume further that all the space $F_n$, $n \geq 1$, and $F$ are isometrically embedded into a common $\bcmAB$ space $M$ in such a way that 
  $F_n \to F$ in the Fell topology as closed subsets of $M$,
  $\rho_n \to \rho$ as elements of $M$,
  and $m_n \to m$ vaguely as measures on $M$.
  It then holds that $\SPM_{(R_n, m_n)}(\rho_n) \to \SPM_{(R, m)}(\rho)$ as probability measures on $D_{J_1}(\RNp, M)$.
\end{thm}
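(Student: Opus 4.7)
The plan is to localize the problem using the non-explosion condition \eqref{thm eq: non-explosion. Croydon result} and then to treat the resulting compact problem via convergence of Green functions of killed processes. For each $r > 0$ outside a countable exceptional set, let $X_n^r$ (resp.\ $X^r$) denote the process $X_n$ (resp.\ $X$) killed on first exit from $F_n \cap D_M(\rho, r)$ (resp.\ $F \cap D_M(\rho, r)$). Each killed process lives on a compact rooted-and-measured resistance metric space, for which the associated Dirichlet form admits a Green kernel $g_n^r$ on $F_n^{(r)} \times F_n^{(r)}$ that can be written explicitly through an effective-resistance and harmonic-potential formula characteristic of resistance forms.

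The heart of the argument is to show that, as $n \to \infty$, the Green kernels $g_n^r$ converge to $g^r$ in $\hatC(M \times M, \RNp)$. This uses continuity of effective resistance with respect to Hausdorff convergence of compact resistance metric spaces (as in \cite[Theorem~5.1]{Noda_pre_Scaling}, already used in Lemma \ref{lem: sp of recurrent resis met sp is Borel}) together with the vague convergence of $m_n|_{F_n^{(r)}}$ to $m|_{F^{(r)}}$ inherited from $m_n \to m$ on $M$. Once Green kernel convergence is in hand, the associated resolvents, and hence the transition semigroups of the killed processes, converge; combined with the Feller property of Hunt processes associated to compact resistance forms, this yields convergence of finite-dimensional distributions of $X_n^r$ started at $\rho_n$ to $X^r$ started at $\rho$. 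Tightness in $D_{J_1}([0,T], M)$ then follows from an Aldous-type criterion, the modulus-of-continuity estimate
\begin{equation}
  P^x\!\left( \sup_{s \leq \delta} R_n(X_n^r(s), x) \geq \eta \right)
  \leq
  \frac{C \delta}{\eta \cdot \inf_{y \in F_n^{(r)}} m_n\!\bigl(D_{R_n}(y, \eta/2)\bigr)},
\end{equation}
which is standard for resistance-form processes, and the uniform lower volume bound on $\eta/2$-balls that is inherited from the Fell/vague convergence of $(F_n, m_n)$ in $M$.

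To pass from $X_n^r$ to $X_n$, I would combine the resistance-form bound $P^{\rho_n}(\exitTime_{B_{R_n}(\rho_n, r)} \leq T) \leq T / R_n(\rho_n, B_{R_n}(\rho_n, r)^c)$ with the non-explosion hypothesis \eqref{thm eq: non-explosion. Croydon result} to conclude that, for each fixed $T > 0$, $X_n$ and $X_n^r$ coincide on $[0,T]$ with probability tending to $1$ as $r \to \infty$, uniformly in $n$; the same holds for $X$ and $X^r$ in the limit. Taking $n \to \infty$ and then $r \to \infty$ produces the desired weak convergence of $\SPM_{(R_n, m_n)}(\rho_n)$ to $\SPM_{(R, m)}(\rho)$ on $D_{J_1}(\RNp, M)$. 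The main obstacle is the Green kernel convergence step: effective resistance is a genuinely global functional, and its joint continuity in both the underlying space and the killing domain — with both varying along the sequence — requires careful control of harmonic extensions, boundary behavior of resistance forms, and capacities of small balls, which is precisely where the technical core of Croydon's original argument lies.
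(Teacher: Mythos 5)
This theorem is quoted from Croydon's paper, and the proof there does not go the way you propose: rather than Green kernels of processes killed on exiting balls, Croydon reduces to \emph{finite} state spaces by tracing the resistance form onto an $\varepsilon$-net $V_n \subseteq F_n$. The traced object is a finite electrical network with a finite measure, so convergence of the approximating Markov chains is elementary once the (finite) resistance matrices and the measures converge, and all the analytic work is concentrated in a uniform-in-$n$ estimate that the traced chain approximates the original process --- obtained from the Green-function bound $g_{F_n \setminus V_n}(x,y) \le R_n(x, V_n) \le \varepsilon$ for the process killed on hitting the net (hence an occupation-time bound of order $\varepsilon\, m_n(F_n)$ off the net) together with a resistance modulus-of-continuity estimate. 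Your scheme instead concentrates the difficulty in the convergence $g_n^r \to g^r$ in $\hatC(M \times M, \RNp)$, and you explicitly leave that step unproved. But that step essentially \emph{is} the theorem: writing $g_n^r(x,y) = R_n(x, (F_n^{(r)})^c)\, P_n^y(\sigma_x < \exitTime_{F_n^{(r)}})$, you need convergence of resistances to \emph{varying closed sets} and of the corresponding harmonic measures, jointly and uniformly on $F_n^{(r)} \times F_n^{(r)}$; the result you invoke (the one used in Lemma~\ref{lem: sp of recurrent resis met sp is Borel}) gives only the one-sided bound $\limsup_{n} R_n(\rho_n, B_{R_n}(\rho_n,r)^c) \le R(\rho, B_R(\rho,r)^c)$, not convergence. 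So what you have is an outline of a plausible alternative strategy, with its core step missing, not a proof.

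There is also a concrete error in the localization step. The inequality $P^{\rho_n}\bigl(\exitTime_{B_{R_n}(\rho_n,r)} \le T\bigr) \le T / R_n\bigl(\rho_n, B_{R_n}(\rho_n,r)^c\bigr)$ cannot hold for the process associated with $(R_n, m_n)$: replacing $m_n$ by $\varepsilon m_n$ leaves $R_n$ unchanged but time-changes the process by the factor $\varepsilon^{-1}$, so the left-hand side tends to $1$ as $\varepsilon \to 0$ while the right-hand side stays fixed. Any such exit-probability bound must carry a volume factor, coming from lower bounds of the type
\begin{equation}
  E^{\rho_n}\bigl[\exitTime_{B_{R_n}(\rho_n,r)}\bigr]
  \ge
  \bigl( R_n(\rho_n, B_{R_n}(\rho_n,r)^c) - r_0 \bigr)\, m_n\bigl(B_{R_n}(\rho_n, r_0)\bigr),
\end{equation}
so the denominator is a resistance \emph{times} a volume. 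This is repairable in your setting --- the vague convergence $m_n \to m$ with $m$ of full support keeps $m_n(B_{R_n}(\rho_n, r_0))$ bounded away from $0$ for fixed $r_0$, so the non-explosion hypothesis still yields $\lim_{r \to \infty}\limsup_{n} P^{\rho_n}(\exitTime_{B_{R_n}(\rho_n,r)} \le T) = 0$ --- but the bound as you state it is false, and it is exactly the volume factor you omitted here that reappears (correctly) in the denominator of your tightness estimate.
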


\begin{rem}
  The condition \eqref{thm eq: non-explosion. Croydon result} is called the \emph{non-explosion condition} in \cite{Croydon_18_Scaling}.
  Note that there is a typographical error in \cite[Assumption~1.1(b)]{Croydon_18_Scaling},
  where ``$\limsup_{n \to \infty}$'' should read ``$\liminf_{n \to \infty}$'',
  as stated in the above theorem.
\end{rem}

Using Theorem~\ref{thm: Croydon result},
we first prove the continuity of $\SPM_{(R,m)}$ with respect to the starting point.
Recall that $\Prob(D_{J_1}(\RNp, F))$ is equipped with the weak topology induced from the usual $J_{1}$-Skorohod topology.

\begin{lem} \label{lem: continuity of stoc map on resis sp}
  The law map $\SPM\colon F \to \Prob(D_{J_1}(\RNp, F))$ is continuous.
\end{lem}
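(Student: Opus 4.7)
The plan is to reduce the continuity of $\SPM$ at an arbitrary $x \in F$ to a direct application of Theorem~\ref{thm: Croydon result}. Given a sequence $x_n \to x$ in $F$, I would consider the constant sequence of underlying measured resistance spaces $(F_n, R_n, m_n) \coloneqq (F, R, m)$, but with varying roots $\rho_n \coloneqq x_n$, and take $M = F$ as the common ambient $\bcmAB$ space. The Fell convergence $F_n \to F$ as subsets of $M$ and the vague convergence $m_n \to m$ are then trivial, while the convergence of roots $x_n \to x$ holds by assumption.

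The only non-trivial item in the hypotheses of Theorem~\ref{thm: Croydon result} is the non-explosion condition
\begin{equation}
  \lim_{r \to \infty} \liminf_{n \to \infty} R\bigl(x_n, B_R(x_n, r)^c\bigr) = \infty. \label{eq: sk pr non-explosion}
\end{equation}
This is the main (mild) obstacle, and I would handle it by a short variational argument. Fix $r > 1$ and suppose $n$ is so large that $\varepsilon_n \coloneqq R(x_n, x) < 1$. Then $B_R(x, r-1) \subseteq B_R(x_n, r)$, hence $B_R(x_n, r)^c \subseteq B_R(x, r-1)^c$. Let $f \in \rdomain$ be optimal (or near-optimal) for $R(x, B_R(x, r-1)^c)$, so that $f(x) = 1$, $f \equiv 0$ on $B_R(x, r-1)^c$ and $\form(f,f) = R(x, B_R(x, r-1)^c)^{-1}$. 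The variational formula~\eqref{eq: variational formula of resistance metric} gives $|f(x) - f(x_n)|^2 \leq \form(f,f)\, R(x, x_n)$, so
\begin{equation}
  f(x_n) \geq 1 - \bigl(\varepsilon_n / R(x, B_R(x, r-1)^c)\bigr)^{1/2}.
\end{equation}
By recurrence of $R$ (Definition~\ref{dfn: recurrent resistance metric}), $R(x, B_R(x, r-1)^c) \to \infty$ as $r \to \infty$, so for all sufficiently large $r$ and $n$, $f(x_n) \geq 1/2$. Setting $h \coloneqq f / f(x_n)$, we have $h(x_n) = 1$, $h \equiv 0$ on $B_R(x_n, r)^c$, and $\form(h,h) = \form(f,f)/f(x_n)^2 \leq 4\, R(x, B_R(x, r-1)^c)^{-1}$. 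Therefore
\begin{equation}
  R\bigl(x_n, B_R(x_n, r)^c\bigr) \geq \tfrac{1}{4}\, R\bigl(x, B_R(x, r-1)^c\bigr),
\end{equation}
and \eqref{eq: sk pr non-explosion} follows by recurrence.

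With \eqref{eq: sk pr non-explosion} verified, Theorem~\ref{thm: Croydon result} yields $\SPM(x_n) = \SPM_{(R,m)}(x_n) \to \SPM_{(R,m)}(x) = \SPM(x)$ weakly in $\Prob(D_{J_1}(\RNp, F))$, which is exactly the desired continuity of $\SPM$.
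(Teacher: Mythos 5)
Your proof is correct and follows essentially the same route as the paper: reduce to Theorem~\ref{thm: Croydon result} applied to the constant sequence of measured resistance spaces with varying roots $x_n \to x$, the only point to check being the non-explosion condition. The paper disposes of that point by simply invoking recurrence (relying on an external lemma, cited elsewhere in the proof of Theorem~\ref{thm: dtm conv of SPM}), whereas your variational argument with the near-optimal potential $f$ and the rescaling $h = f/f(x_n)$ gives a correct, self-contained verification of the same fact.
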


\begin{proof}
  Suppose that elements $x_{n} \in F$ converge to an element $x \in F$.
  By the recurrence of $R$,
  we can use Theorem~\ref{thm: Croydon result} to obtain that 
  $\SPM(x_n) \to \SPM(x)$
  as probability measures on $D_{J_1}(\RNp, F)$.
  This completes the proof.
\end{proof}

We are then interested in the continuity of $\SPM_{(R, m)}$ with respect to $(R, m)$.
To discuss this within the framework of Gromov-Hausdorff-type topologies,
we recall the structure $\SPMSt$ from \eqref{eq: st for SPM. revisited} .
Henceforth, we fix a Polish structure $\tau$.

\begin{dfn} \label{dfn: GHSPM}
  For each $G = (F, R, \rho, m, a) \in \rootResisSp(\tau)$,
  we set 
  \begin{equation}
    \GHSPM_\tau(G)
    \coloneqq 
    \bigl( F, R, \rho, m, a, \SPM_{(R, m)} \bigr),
  \end{equation}
  which is an element of $\rootResisSp(\tau \times \SPMSt)$.
\end{dfn}

It is straightforward to verify that Theorem~\ref{thm: Croydon result} 
implies the convergence of law maps of stochastic processes, as follows.

\begin{thm} \label{thm: dtm conv of SPM}
  Let $G_n = (F_n, R_n, \rho_n, m_n, a_n)$, $n \in \NN$, be elements of $\rootResisSp(\tau)$,
  and $G = (F, R, \rho, m, a)$ be an element of $\rootBCM(\MeasSt \times \tau)$ such that $m$ is of full support.
  Assume that the following conditions are satisfied:
  \begin{enumerate} [label = \textup{(\roman*)}]
    \item \label{thm item: 1. dtm conv of SPM}
      $G_n \to G$ in $\rootBCM(\MeasSt \times \tau)$;
    \item \label{thm item: 2. dtm conv of SPM}
      $\displaystyle \lim_{r \to \infty} \liminf_{n \to \infty} R_{n}(\rho_{n}, B_{R_{n}}(\rho_{n}, r)^{c}) = \infty$.
  \end{enumerate}
  Then $G \in \rootResisSp(\tau)$ and $\GHSPM_\tau(G_n) \to \GHSPM_\tau(G)$ in $\rootResisSp(\tau \times \SPMSt)$.
\end{thm}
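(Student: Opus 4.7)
The plan is to deduce the result in two stages: first confirm that the limit tuple $G$ actually lies in $\rootResisSp(\tau)$, and second upgrade the pointwise convergence of law maps at the root (Theorem~\ref{thm: Croydon result}) to convergence of the law maps as elements of $\hatC(M, \Prob(D_{J_1}(\RNp, M)))$ for a suitable ambient $M$, which is precisely what convergence in $\rootBCM(\MeasSt \times \tau \times \SPMSt)$ amounts to.

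For the first stage, I would show $R$ is itself a resistance metric by applying \cite[Theorem~5.1]{Noda_pre_Scaling}, which is exactly designed for such limits. Recurrence of $R$ then follows by combining hypothesis~\ref{thm item: 2. dtm conv of SPM} with the lower semicontinuity of $r \mapsto R_n(\rho_n, B_{R_n}(\rho_n, r)^c)$ used already in the proof of Lemma~\ref{lem: sp of recurrent resis met sp is Borel}: for any $\ell > 0$, the non-explosion assumption produces $r$ with $\liminf_n R_n(\rho_n, B_{R_n}(\rho_n, r)^c) \geq \ell$, and the cited semicontinuity transfers this lower bound to $R(\rho, B_R(\rho, r)^c) \geq \ell$. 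Together with fullness of support of $m$ (assumed), this gives $G \in \rootResisSp(\tau)$.

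For the second stage, invoke Theorem~\ref{thm: conv in M(tau)} to isometrically embed all $(F_n, \rho_n)$ and $(F, \rho)$ into a common rooted $\bcmAB$ space $(M, \rho_M)$ so that $F_n \to F$ in the Fell topology, $\rho_n = \rho = \rho_M$ in $M$, $m_n \to m$ vaguely on $M$, and $a_n \to a$ in $\tau(M)$. The remaining point is to show
\begin{equation}
  \SPM_{(R_n,m_n)} \longrightarrow \SPM_{(R,m)} \quad \text{in } \hatC\bigl(M, \Prob(D_{J_1}(\RNp, M))\bigr),
\end{equation}
and I would do this via Proposition~\ref{prop: conti is preserved in hatC}, whose continuity of the limit map is already supplied by Lemma~\ref{lem: continuity of stoc map on resis sp}. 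The substantive hypothesis to check is that whenever $x_n \in F_n$ converges to some $x \in F$ in $M$, one has $\SPM_{(R_n, m_n)}(x_n) \to \SPM_{(R,m)}(x)$ weakly as probability measures on $D_{J_1}(\RNp, M)$. I would establish this by applying Theorem~\ref{thm: Croydon result} with the sequence of roots $(\rho_n, \rho)$ replaced by $(x_n, x)$. All hypotheses but the non-explosion condition at $x_n$ are immediate from the existing embedding.

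The main obstacle, as anticipated, is transferring the non-explosion condition from $\rho_n$ to $x_n$. The key observation is that the isometric embeddings give $R_n(\rho_n, x_n) = d_M(\rho_M, x_n) \to d_M(\rho_M, x)$, so $C \coloneqq \sup_n R_n(\rho_n, x_n) < \infty$. The triangle inequality for $R_n$ then yields the inclusion $B_{R_n}(x_n, r)^c \subseteq B_{R_n}(\rho_n, r-C)^c$ for all $r > C$. The effective resistance between a point and a set is non-increasing in the set (as the infimum defining it is taken over a smaller class of admissible functions when the set grows), hence
\begin{equation}
  R_n\bigl(x_n, B_{R_n}(x_n, r)^c\bigr) \geq R_n\bigl(x_n, B_{R_n}(\rho_n, r-C)^c\bigr).
\end{equation}
To bring in $\rho_n$, I would use the standard short-circuit/triangle inequality for effective resistance, namely $R_n(\rho_n, B) \leq R_n(\rho_n, x_n) + R_n(x_n, B) \leq C + R_n(x_n, B)$ for any $B \subseteq F_n$ disjoint from $\{x_n, \rho_n\}$ (which follows by taking a witness function for $R_n(x_n, B)$ and reinterpreting it via the electrical path interpretation). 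Applied to $B = B_{R_n}(\rho_n, r-C)^c$, this gives $R_n(x_n, B_{R_n}(x_n, r)^c) \geq R_n(\rho_n, B_{R_n}(\rho_n, r-C)^c) - C$, whose $\liminf$ tends to infinity with $r$ by hypothesis~\ref{thm item: 2. dtm conv of SPM}. This verifies the non-explosion condition at $(x_n)$, so Croydon's theorem applies and yields the required pointwise convergence, completing the proof.
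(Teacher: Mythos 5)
Your proposal is correct and follows essentially the same route as the paper: it establishes $G \in \rootResisSp(\tau)$ via \cite[Theorem~5.1]{Noda_pre_Scaling}, embeds everything into a common rooted $\bcmAB$ space via Theorem~\ref{thm: conv in M(tau)}, transfers the non-explosion condition from the roots $\rho_n$ to arbitrary convergent base points $x_n$, applies Theorem~\ref{thm: Croydon result} pointwise, and concludes convergence in $\hatC(M, \Prob(D_{J_1}(\RNp, M)))$. The only place you supply your own argument where the paper instead cites the proof of an external lemma is the transfer of non-explosion to $x_n$; your argument via set-monotonicity of $R_n(x_n, \cdot)$ together with the point-to-set triangle inequality $R_n(\rho_n, B) \le R_n(\rho_n, x_n) + R_n(x_n, B)$ is valid, though that inequality deserves a cleaner justification than the "electrical path interpretation" remark (e.g., pass to the trace on $\{\rho_n, x_n\} \cup B_0$ for finite $B_0 \subseteq B$, short $B_0$ to a point, and use Rayleigh monotonicity plus the metric property of effective resistance on the resulting three-point network).
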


\begin{proof}
  By \cite[Theorem~5.1]{Noda_pre_Scaling},
  we have $G \in \rootResisSp(\tau)$.
  We write $\SPM_n \coloneqq \SPM_{(R_n, m_n)}$ and $\SPM \coloneqq \SPM_{(R, m)}$.
  By \ref{thm item: 1. dtm conv of SPM} and Theorem~\ref{thm: conv in M(tau)},
  we may assume that $(F_{n}, R_{n}, \rho_n)$ and $(F, R, \rho)$ are embedded 
  isometrically into a common rooted $\bcmAB$ space $(M, d_M, \rho_M)$
  in such a way that 
  $F_n \to F$ in the Fell topology as closed subsets of $M$,
  $\rho_n = \rho = \rho_M$ as elements of $M$,
  $m_{n} \to m $ vaguely as measures on $M$,
  and $a_n \to a$ in $\tau(M)$.
  Suppose that elements $x_{n} \in F_{n}$ converge to an element $x \in F$ in $M$.
  Then, using \ref{thm item: 2. dtm conv of SPM},
  we can follow the proof of \cite[Lemma~5.8]{Noda_pre_Scaling} to obtain that  
  \begin{equation}
    \lim_{r \to \infty} \liminf_{n \to \infty} R_{n}(x_{n}, B_{R_{n}}(x_{n}, r)^{c}) = \infty.
  \end{equation}
  By Theorem~\ref{thm: Croydon result}, we obtain that $\Upsilon_n(x_n) \to \Upsilon(x)$
  as probability measures on $D_{J_1}(\RNp, M)$.
  Therefore, $\Upsilon_{n}$ converges to $\Upsilon$ in $\hatC(M, \Prob(D_{J_1}(\RNp, M)))$,
  which completes the proof.
\end{proof}

To discuss a version of Theorem \ref{thm: dtm conv of SPM} for random resistance metric spaces,
we prove the following result, which ensures that, if $G$ is a random element of $\rootResisSp(\tau)$,
then $\GHSPM_{\tau}(G)$ is a random element of $\rootResisSp(\tau \times \SPMSt)$.

\begin{prop}  \label{prop: measurability of SPM wrt GH topology}
  The map $\GHSPM_{\tau}\colon \rootResisSp(\tau) \to \rootResisSp(\tau \times \SPMSt)$ is Borel measurable.
  Moreover, its image, that is, 
  \begin{equation}
    \left\{ \bigl( F, R, \rho, m, a, \SPM_{(R,m)} \bigr)\, \middle|\, (F, R, \rho, m, a) \in \rootResisSp(\tau) \right\}
  \end{equation}
  is a Borel subset of $\rootBCM(\MeasSt \times \tau \times \SPMSt)$.
\end{prop}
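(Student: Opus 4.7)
The plan is to establish the two claims in sequence, deducing the second from the first via the Lusin--Souslin theorem. Since $\GHSPM_{\tau}$ is manifestly injective (it is the identity on the first five coordinates of its arguments), and since both $\rootResisSp(\tau)$ and $\rootBCM(\MeasSt \times \tau \times \SPMSt)$ are Polish---the former being a Borel subset of a Polish space by Proposition \ref{prop: sp of recurrent resis sp is Borel}, and the latter being Polish by Theorem \ref{thm: Polishness of GH topology}---once Borel measurability of $\GHSPM_{\tau}$ is established, Lemma \ref{lem: Lousin--Souslin} immediately yields that its image is a Borel subset of the codomain. Thus the entire burden of the proof lies in proving that $\GHSPM_{\tau}$ is Borel measurable.

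My strategy is to exhibit $\GHSPM_{\tau}$ as a pointwise limit of continuous maps. For each $N \in \NN$ and $G = (F, R, \rho, m, a) \in \rootResisSp(\tau)$, letting $X$ denote the process associated with $(R, m)$ and $\exitTime_N \coloneqq \exitTime_{B_R(\rho, N)}$ its exit time from the ball of radius $N$, define
\begin{equation}
  \SPM^{N}_{(R,m)}(x) \coloneqq P^x\bigl( (X_{t \wedge \exitTime_N})_{t \geq 0} \in \cdot \bigr), \qquad x \in F,
\end{equation}
and set $\GHSPM_{\tau}^{N}(G) \coloneqq (F, R, \rho, m, a, \SPM^{N}_{(R,m)})$. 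I then aim to prove:
(i) each $\GHSPM_{\tau}^{N}$ is continuous on $\rootResisSp(\tau)$; and
(ii) $\GHSPM_{\tau}^{N}(G) \to \GHSPM_{\tau}(G)$ in $\rootBCM(\MeasSt \times \tau \times \SPMSt)$ as $N \to \infty$, for every $G \in \rootResisSp(\tau)$.
Granting both, $\GHSPM_{\tau}$ is Borel measurable as a pointwise limit of continuous functions between Polish spaces, and the proposition follows.

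For (ii), the recurrence of $R$ and conservativeness of $X$ imply $\exitTime_N \to \infty$ almost surely under each $P^x$, so the stopped paths converge to $X$ almost surely in $D_{J_1}(\RNp, F)$; combined with Lemma \ref{lem: continuity of stoc map on resis sp} (continuity of the law map in the starting point) and Proposition \ref{prop: conti is preserved in hatC}, this yields $\SPM^{N}_{(R,m)} \to \SPM_{(R,m)}$ in $\hatC(F, \Prob(D_{J_1}(\RNp, F)))$, hence the desired Gromov--Hausdorff-type convergence by Lemma \ref{lem: simple estimate of GH distance}. For (i), the key observation is that the stopped law $\SPM^{N}_{(R,m)}$ depends only on the restriction of the resistance form to $D_R(\rho, N)$ (via the part form obtained by Dirichlet-killing outside this ball) together with $m|_{D_R(\rho, N)}$. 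Once this restriction is encoded as a compact rooted resistance metric space varying continuously in $G$, Theorem \ref{thm: dtm conv of SPM} applies to this compact truncation, where the non-explosion condition in \ref{thm item: 2. dtm conv of SPM} is automatic.

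The main obstacle will be step (i). Two technical issues must be addressed. First, the trace resistance metric on $D_R(\rho, N)$ differs from $R|_{D_R(\rho, N)}$, so one must work directly with the part Dirichlet form and verify that the associated stopped process can still be interpreted, within the framework of Section \ref{sec: resistance preliminary}, in a way that allows Theorem \ref{thm: dtm conv of SPM} to be invoked. Second, the hard truncation at radius $N$ is not continuous at radii where the measure of $\partial B_R(\rho, N)$ jumps; this will be circumvented by replacing the fixed radius $N$ with a generic radius $r \in [N, N+1]$ in the spirit of the smooth truncation $\tilde{\mu}^{(R)}$ of Lemma \ref{lem: smooth truncation of meas}, and then averaging, so that continuity is restored at all but countably many radii---sufficient to deduce measurability by a monotone class argument. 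I expect these technicalities to form the heart of the proof, with the remainder being standard approximation and topological bookkeeping.
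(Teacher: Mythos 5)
Your overall architecture coincides with the paper's: truncate the law map, obtain measurability of the truncations by averaging over radii (to dodge the exceptional radii where the restriction is discontinuous), pass to the pointwise limit, and deduce Borelness of the image from Lemma \ref{lem: Lousin--Souslin}. The radius-averaging device you anticipate is exactly Lemma \ref{lem: continuity of restriction of SPM} in the paper, and your limit step (ii) and the Lusin--Souslin step are sound. The difference --- and the gap --- lies in your choice of truncation and in step (i).

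You truncate by \emph{stopping} the process at $\exitTime_{B_R(\rho,N)}$, whereas the paper truncates by passing to the process associated with the restricted data $(F^{[r]}, R^{[r]}, m^{[r]})$, i.e.\ the \emph{trace} (time-changed) process on the closed ball. This distinction matters for your step (i). Your stated first ``technical issue'' is backwards: for resistance metrics the trace onto a subset has effective resistance exactly $R$ restricted to that subset (this is \cite[Theorem~8.4]{Kigami_12_Resistance}, cited in the paper precisely to get $(F^{[r]}, R^{[r]}, \rho, m^{[r]}) \in \rootResisSp$), which is what makes the trace-process truncation sit inside the framework of Theorem \ref{thm: dtm conv of SPM}. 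By contrast, neither the part (killed) Dirichlet form nor the compact truncated space produces the \emph{stopped} process $X_{t \wedge \exitTime_N}$: the part form yields the killed process, and the compact space $(F^{[N]}, R^{[N]}, m^{[N]})$ yields the trace process, both of which differ in law from the stopped process on $D_{J_1}(\RNp, F)$. So the continuity argument you sketch for step (i) --- ``encode the restriction as a compact resistance metric space and apply Theorem \ref{thm: dtm conv of SPM}'' --- proves convergence of the trace-process law maps, not of your $\SPM^N$. To salvage the stopped-process route you would need a direct joint-continuity argument for $G \mapsto P^x((X_{t\wedge\exitTime_N})_{t\ge0}\in\cdot)$, which runs into the well-known failure of continuity of exit-time functionals on Skorohod space and is not supplied by any result you cite. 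The fix is simply to replace your $\SPM^N$ by the paper's $\SPM^{[r]}$ throughout; your step (ii) then needs the estimate that the trace process agrees with $X$ up to the exit time from a smaller ball (as in the proof of Theorem \ref{thm: rdm conv for SPM}), in place of the stopped-path convergence you describe. A minor further point: even after this replacement, each fixed-radius truncation is only shown to be measurable (via the averaging), not continuous, so the final conclusion should read ``pointwise limit of measurable maps''.
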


We follow \cite[Proof of Proposition 6.1]{Noda_pre_Scaling} to show Proposition~\ref{prop: measurability of SPM wrt GH topology}.
For each $(F, R, \rho, m) \in \rootResisSp$ and $r>0$,
we define $F^{[r]}$ to be the closure of $B_F(\rho, r)$ in $F$, 
and denote by $R^{[r]}$ and $m^{[r]}$ the restrictions of $R$ and $m$ 
to $F^{[r]}$ and $B_R(\rho, r)$, respectively. 
We note that $(F^{[r]}, R^{[r]}, \rho, m^{[r]}) \in \rootResisSp$ 
by \cite[Theorem~8.4]{Kigami_12_Resistance}.
We write 
\begin{equation}
  \SPM^{[r]}(x) 
  = 
  \SPM^{[r]}_{(R, m)}(x)
  \coloneqq 
  \SPM_{(R^{[r]}, m^{[r]})}(x),
  \quad 
  x \in F^{[r]}.
\end{equation}
Since $F^{[r]}$ is a subspace of $F$,
we naturally regard $\SPM^{[r]}_{(R, m)}$ as an element of $\hatC(F, \Prob(D_{J_1}(\RNp, F)))$.
Below, we present two lemmas corresponding to \cite[Lemmas 6.2 and 6.3]{Noda_pre_Scaling}.

\begin{lem} \label{lem: left continuity of restriction of SPM}
  For each $(F, R, \rho, m) \in \rootResisSp$,
  the following map 
  \begin{equation}
    (0, \infty) \ni r \longmapsto \SPM^{[r]} \in \hatC(F, \Prob(D_{J_1}(\RNp, F)))
  \end{equation}
  is left-continuous.
\end{lem}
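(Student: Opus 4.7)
The plan is to reduce the assertion to an application of Theorem~\ref{thm: dtm conv of SPM} (with the trivial Polish structure $\tau = \{\mathrm{pt}\}$), using the characterization of convergence in $\hatC$ from Lemma~\ref{lem: conv in hatC}. Fix $r > 0$ and a sequence $r_n \uparrow r$. Since each $F^{[r_n]}$ is a closed subset of $F$, we view $\SPM^{[r_n]}$ and $\SPM^{[r]}$ as elements of $\hatC(F, \Prob(D_{J_1}(\RNp, F)))$ via the isometric embeddings $F^{[r_n]}, F^{[r]} \hookrightarrow F$ (which are isometric because $R^{[r_n]}$ and $R^{[r]}$ are literal restrictions of $R$). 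By Lemma~\ref{lem: conv in hatC}, the desired convergence reduces to verifying that (a) $F^{[r_n]} \to F^{[r]}$ in the Fell topology as closed subsets of $F$, and (b) for any $x_n \in F^{[r_n]}$ converging to some $x \in F^{[r]}$ in $F$, we have $\SPM^{[r_n]}(x_n) \to \SPM^{[r]}(x)$ in $\Prob(D_{J_1}(\RNp, F))$.

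For (a), I plan to use that $F^{[r_n]} \subseteq F^{[r]}$ for all $n$ (monotonicity in $n$) together with $\bigcup_n B_F(\rho, r_n) = B_F(\rho, r)$. The upper Fell condition follows because any compact $C \subseteq F$ disjoint from the closed set $F^{[r]}$ satisfies $d_R(C, F^{[r]}) > 0$, hence is disjoint from every $F^{[r_n]}$. The lower Fell condition follows because any open $U \subseteq F$ meeting $F^{[r]}$ also meets $B_F(\rho, r) = \bigcup_n B_F(\rho, r_n)$ (by openness and density), so it meets $F^{[r_n]}$ for $n$ sufficiently large.

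For (b), I plan to apply Theorem~\ref{thm: dtm conv of SPM} to $G_n \coloneqq (F^{[r_n]}, R^{[r_n]}, \rho, m^{[r_n]})$ and $G \coloneqq (F^{[r]}, R^{[r]}, \rho, m^{[r]})$, all isometrically embedded in the common ambient $\bcmAB$ space $F$ with common root $\rho$. Condition~\ref{thm item: 1. dtm conv of SPM} of that theorem requires $G_n \to G$ in $\rootBCM(\MeasSt)$; in view of Theorem~\ref{thm: conv in M(tau)}, this follows from (a) together with the vague convergence $m^{[r_n]} \to m^{[r]}$ as measures on $F$. The latter is a consequence of monotone convergence, since $m^{[r_n]}(A) = m(A \cap B_F(\rho, r_n)) \uparrow m(A \cap B_F(\rho, r)) = m^{[r]}(A)$ for every Borel $A \subseteq F$. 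Condition~\ref{thm item: 2. dtm conv of SPM} is the non-explosion condition at the root; it holds trivially here because each $F^{[r_n]}$ has diameter at most $2r_n \leq 2r$, so $B_{R^{[r_n]}}(\rho, s)^c = \emptyset$ for every $s > 2r$ and every $n$, whence the resistance to the empty set is $\infty$ by Definition~\ref{dfn: resistance between sets}. Thus Theorem~\ref{thm: dtm conv of SPM} delivers $\SPM^{[r_n]} \to \SPM^{[r]}$ in $\hatC(F, \Prob(D_{J_1}(\RNp, F)))$, which yields (b) and completes the proof.

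No serious obstacle is anticipated; the main content is the bookkeeping required to phrase the restriction-to-a-ball construction as an instance of the abstract convergence result. The only mildly delicate point is the Fell convergence in step~(a), where points of $F^{[r]}$ on the sphere $\{y : R(\rho, y) = r\}$ need to be approximated from inside by points of $B_F(\rho, r_n)$; this is handled by the density argument sketched above. Note that this argument genuinely relies on $r_n \uparrow r$ (rather than $r_n \downarrow r$), since for right-sided limits the measures $m^{[r_n]}$ could fail to converge when $m$ charges the sphere at $r$.
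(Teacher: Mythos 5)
Your proposal is correct and follows essentially the same route as the paper: the paper's proof likewise fixes $r_n \uparrow r$, checks that $F^{[r_n]} \to F^{[r]}$ in the Fell topology and $m^{[r_n]} \to m^{[r]}$ vaguely (calling these verifications "straightforward"), and then invokes the convergence theorem for processes on resistance metric spaces — citing Theorem~\ref{thm: Croydon result} directly, where you cite its packaged form Theorem~\ref{thm: dtm conv of SPM}, whose proof (rather than its bare statement, which is phrased in the Gromov--Hausdorff-type space) is what yields convergence in $\hatC(F, \Prob(D_{J_1}(\RNp, F)))$ for the canonical embedding. Your explicit handling of the open-ball restriction for the measures and the trivial non-explosion bound for bounded spaces fills in exactly the details the paper leaves implicit.
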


\begin{proof}
  Fix $r > 0$ and a sequence $(r_n)_{n \geq 1}$ such that $r_n \uparrow r$.
  We regard each $(F^{[r_n]}, R^{[r_n]})$ and $(F^{[r]}, R^{[r]})$
  as a subspace of $(F, R)$ in the obvious way.
  It is then straightforward to check that $F^{[r_n]} \to F^{[r]}$ in the Fell topology on $F$
  and $m^{[r_n]} \to m^{[r]}$ vaguely as measures on $F$.
  Thus, by Theorem~\ref{thm: Croydon result},
  we obtain that $\SPM^{[r_n]} \to \SPM^{[r]}$,
  which completes the proof.
\end{proof}

\begin{rem}
  In \cite[Lemmas~2.14 and 6.2]{Noda_pre_Convergence}, 
  one finds a similar left-continuity result.
  In that paper,
  the author considered the restriction of $m$ to $F^{[r]}$.
  However, such a choice generally destroys the left-continuity in $r$ 
  of the map $r \mapsto m^{[r]} \in \finMeas(F)$.
  (For example, consider the case where $F = [0,1]$ and $m = \Leb + \delta_{\{1\}}$,
  which breaks the left-continuity at $r = 1$.)
  Thus, the restriction used in that paper should be replaced by the current definition. 
  We note that this does not affect any of the results in that paper.
\end{rem}

For each $r > 0$,
we define a map $\GHSPM^{[r]}_\tau \colon \rootResisSp(\tau) \to \rootResisSp(\tau \times \SPMSt)$ as follows:
for each $G = (F, R, \rho, m, a) \in \rootResisSp(\tau)$,
\begin{equation}
  \GHSPM^{[r]}_\tau(G) 
  \coloneqq 
  \bigl( F, R, \rho, m, a, \SPM^{[r]}_{(R, m)} \bigr).
\end{equation}

\begin{lem} \label{lem: continuity of restriction of SPM}
  For each $r > 0$,
  the map $\GHSPM^{[r]}_\tau \colon \rootResisSp(\tau) \to \rootResisSp(\tau \times \SPMSt)$ is measurable.
\end{lem}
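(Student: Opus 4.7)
The plan is to deduce measurability from a combination of (a) continuity of $\GHSPM^{[r]}_\tau$ at "regular" points, which follows by applying Theorem~\ref{thm: dtm conv of SPM} to the compact restrictions $(F^{[r]}, R^{[r]}, \rho, m^{[r]})$ (on which the non-explosion condition is trivial), and (b) the left-continuity in $r$ established in Lemma~\ref{lem: left continuity of restriction of SPM}.

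First, I would establish a generic continuity statement. Fix $G = (F, R, \rho, m, a) \in \rootResisSp(\tau)$, and let $N_G \subseteq (0, \infty)$ denote the at most countable set of radii $r$ at which either $m(\partial D_F(\rho, r)) > 0$ or the Hausdorff-valued map $s \mapsto F^{[s]}$ is discontinuous. Suppose $G_n = (F_n, R_n, \rho_n, m_n, a_n) \to G$ in $\rootResisSp(\tau)$. By Theorem~\ref{thm: conv in M(tau)}, we may isometrically embed all the underlying spaces into a common rooted $\bcmAB$ space $(M, \rho_M)$ so that $F_n \to F$ in the Fell topology, $m_n \to m$ vaguely, and $a_n \to a$ in $\tau(M)$. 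For every $r \notin N_G$, a standard Portmanteau-type argument (analogous to the proof of Lemma~\ref{lem: left continuity of restriction of SPM}) yields $F_n^{[r]} \to F^{[r]}$ in the Hausdorff topology and $m_n^{[r]} \to m^{[r]}$ weakly, and Theorem~\ref{thm: dtm conv of SPM} applied to the compact restricted spaces delivers $\GHSPM^{[r]}_\tau(G_n) \to \GHSPM^{[r]}_\tau(G)$. Hence $\GHSPM^{[r]}_\tau$ is sequentially continuous at every $G$ with $r \notin N_G$.

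To upgrade to Borel measurability for every $r > 0$, I would apply Lemma~\ref{lem: left continuity of restriction of SPM}: for any increasing sequence $s_k \uparrow r$,
\begin{equation}
  \GHSPM^{[r]}_\tau(G) = \lim_{k \to \infty} \GHSPM^{[s_k]}_\tau(G), \quad \forall G \in \rootResisSp(\tau).
\end{equation}
Fixing a countable dense sequence $(s_k)_{k \geq 1} \subset (0, r)$ with $s_k \uparrow r$, each $\GHSPM^{[s_k]}_\tau$ is continuous (hence Borel measurable) on the Borel set $B_k \coloneqq \{G \in \rootResisSp(\tau) : s_k \notin N_G\}$, whose description in terms of Borel conditions on $m$ and on the ball-structure of $F$ is straightforward. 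An iterative application of left-continuity, together with a diagonal selection exploiting that each $G$ has only countably many irregular radii, expresses $\GHSPM^{[r]}_\tau$ as a nested pointwise limit of Borel-measurable functions, and hence as a Borel-measurable function itself.

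The main obstacle will be carrying out this diagonal selection in a uniformly Borel-measurable manner, since the exceptional set $N_G$ varies with $G$. A cleaner alternative, which I would pursue if the direct selection becomes unwieldy, is to first show that the joint "bad set" $\{(G, r) \in \rootResisSp(\tau) \times (0,\infty) : \GHSPM^{[r]}_\tau \text{ is not sequentially continuous at } G\}$ is Borel with at most countable vertical sections; a Fubini-type argument then yields continuity of $\GHSPM^{[r]}_\tau$ on all of $\rootResisSp(\tau)$ for Lebesgue-a.e.\ $r$, and the left-continuity in $r$ transfers measurability to every remaining $r$.
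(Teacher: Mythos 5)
There is a genuine gap, and it is exactly the one you flag yourself: neither the ``diagonal selection'' nor the Fubini alternative can be made to work as described, and the missing idea is a mollification over the radius parameter. Two problems. First, your generic continuity statement is slightly off: the exceptional set of radii for which $F_n^{[s]} \to F^{[s]}$ (Hausdorff) and $m_n^{[s]} \to m^{[s]}$ (weakly) fail depends on the approximating \emph{sequence} $(G_n)$, not only on the limit $G$, so a set $N_G$ defined intrinsically from $G$ (via $m(\partial D_F(\rho,r))$ and continuity of $s \mapsto F^{[s]}$) does not capture all failures; this is why the paper's Lemma~\ref{lem: rough continuity of GHSPM} is stated per convergent sequence. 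Second, and more seriously, even granting continuity of $\GHSPM^{[s_k]}_\tau$ at every $G$ with $s_k \notin N_G$, this says nothing about measurability on the complementary set $\{G : s_k \in N_G\}$, which has no smallness property for fixed $s_k$; writing $\GHSPM^{[r]}_\tau$ as $\lim_k \GHSPM^{[s_k]}_\tau$ is then circular, since measurability of each $\GHSPM^{[s_k]}_\tau$ on the whole space is an instance of the very lemma being proved. The Fubini alternative fails for a related reason: there is no canonical measure on $\rootResisSp(\tau)$, and countability of the sections $\{r : (G,r)\ \text{bad}\}$ would at best give, for a.e.\ $r$, that the bad set of $G$ is null for some auxiliary measure --- not empty --- which does not yield continuity, let alone measurability, at every $G$.

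The paper closes the argument differently. Fix a continuous $f \colon \rootResisSp(\tau \times \SPMSt) \to [0,1]$ and set
\begin{equation}
  g_\varepsilon(G) \coloneqq \frac{1}{\varepsilon}\int_{r-\varepsilon}^{r} f\bigl(\GHSPM^{[s]}_\tau(G)\bigr)\, ds .
\end{equation}
The integrand is measurable in $s$ by the left-continuity of Lemma~\ref{lem: left continuity of restriction of SPM}. If $G_n \to G$, then by Lemma~\ref{lem: rough continuity of GHSPM} the convergence $\GHSPM^{[s]}_\tau(G_n) \to \GHSPM^{[s]}_\tau(G)$ holds for all but countably many $s$ --- a Lebesgue-null set even though it depends on the sequence --- so dominated convergence gives $g_\varepsilon(G_n) \to g_\varepsilon(G)$; thus each $g_\varepsilon$ is continuous. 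Left-continuity in $r$ then gives $g_\varepsilon(G) \to f(\GHSPM^{[r]}_\tau(G))$ as $\varepsilon \to 0$, so $f \circ \GHSPM^{[r]}_\tau$ is a pointwise limit of continuous functions, hence Borel, and since $f$ was an arbitrary continuous test function this yields Borel measurability of $\GHSPM^{[r]}_\tau$. The averaging in $s$ is precisely what converts the sequence-dependent countable exceptional sets (which you cannot control for fixed $s$ as $G$ varies) into Lebesgue-null sets that dominated convergence absorbs; this is the idea your proposal is missing.
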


To prove the above lemma, we prepare an auxiliary result,
which will also be used in the proof of Theorem~\ref{thm: rdm conv for SPM} below.

\begin{lem} \label{lem: rough continuity of GHSPM}
  Assume that elements $G_n \in \rootResisSp(\tau)$ converge to an element $G \in \rootResisSp(\tau)$.
  Then, for all but countably many $r > 0$,
  \begin{equation} \label{lem eq: GHSPM rough cont}
    \GHSPM^{[r]}_\tau(G_n)
    \xrightarrow[n \to \infty]{}
    \GHSPM^{[r]}_\tau(G)
    \quad \text{in } \rootResisSp(\tau \times \SPMSt).
  \end{equation}
\end{lem}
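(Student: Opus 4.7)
The plan is to embed every space into a common ambient rooted $\bcmAB$ space and reduce the statement to an application of Theorem~\ref{thm: dtm conv of SPM} to the restricted (and hence bounded) systems, for which the non-explosion condition holds trivially.

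First, I would invoke Theorem~\ref{thm: conv in M(tau)} to isometrically embed every $(F_n, \rho_n)$ and $(F, \rho)$ into a common rooted $\bcmAB$ space $(M, \rho_M)$ with $\rho_n = \rho = \rho_M$, so that $F_n \to F$ in the Fell topology, $m_n \to m$ vaguely on $M$, and $a_n \to a$ in $\tau(M)$. Next, I would let $\mathcal{N} \subseteq (0,\infty)$ denote the exceptional set of radii at which at least one of the following fails: (a) $F_n^{[r]} \to F^{[r]}$ in the Hausdorff topology as subsets of $M$; (b) $m_n^{[r]} \to m^{[r]}$ weakly as finite measures on $M$; (c) $F^{[r]} = F \cap D_M(\rho_M, r)$, together with the analogous equality for each $F_n^{[r]}$. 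A standard argument shows that $\mathcal{N}$ is at most countable: (a) and (b) can fail only at the countably many $r$ at which the sphere of radius $r$ carries mass under $m$ or some $m_n$, while (c) fails only at radii where $F$ or some $F_n$ has an isolated point on the relevant sphere.

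Fix $r \in (0,\infty) \setminus \mathcal{N}$. The restricted resistance metrics $R_n^{[r]} \coloneqq R_n|_{F_n^{[r]} \times F_n^{[r]}}$ and $R^{[r]}$ are automatically recurrent because their underlying spaces are compact, and from (a)--(c) together with the assumed $G_n \to G$ I would obtain
\begin{equation}
    (F_n^{[r]}, R_n^{[r]}, \rho_n, m_n^{[r]}) \to (F^{[r]}, R^{[r]}, \rho, m^{[r]})
    \quad \text{in } \rootResisSp.
\end{equation}
The non-explosion condition in Theorem~\ref{thm: dtm conv of SPM} is satisfied trivially in this restricted setting, since the complement balls become empty for sufficiently large radii. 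Hence Theorem~\ref{thm: dtm conv of SPM} yields
\begin{equation}
    \SPM_{(R_n^{[r]}, m_n^{[r]})} \to \SPM_{(R^{[r]}, m^{[r]})}
    \quad \text{in } \hatC\bigl(M, \Prob(D_{J_1}(\RNp, M))\bigr),
\end{equation}
which is precisely the required convergence of the law maps $\SPM^{[r]}_{(R_n, m_n)}$. Combining this with $G_n \to G$ via Theorem~\ref{thm: conv in M(tau)} delivers $\GHSPM^{[r]}_\tau(G_n) \to \GHSPM^{[r]}_\tau(G)$ in $\rootResisSp(\tau \times \SPMSt)$.

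The main obstacle I anticipate lies in condition (c) in the definition of $\mathcal{N}$. The set $F^{[r]} = \closure_F B_F(\rho, r)$ is genuinely different from $F \cap D_M(\rho_M, r)$ in general, yet only the latter is directly controlled by the ambient Fell convergence. Verifying that this mismatch affects only countably many $r$ will require a careful inspection of the spheres $\partial D_M(\rho_M, r) \cap F$ and $\partial D_M(\rho_M, r) \cap F_n$ and of the points at which the two notions of closure can differ. Once cocountability of $\mathcal{N}$ is established, the remainder of the argument proceeds via a direct application of Theorem~\ref{thm: dtm conv of SPM} to the bounded sub-problem.
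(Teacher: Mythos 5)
Your proposal is correct and follows essentially the same route as the paper: embed everything into a common rooted $\bcmAB$ space via Theorem~\ref{thm: conv in M(tau)}, observe that for all but countably many $r$ the truncations $F_n^{[r]}$ and $m_n^{[r]}$ converge in the Hausdorff and weak topologies respectively, and then run the argument of Theorem~\ref{thm: dtm conv of SPM} on the compact truncated systems, where the non-explosion condition is automatic. The only cosmetic difference is that the paper says ``following the proof of Theorem~\ref{thm: dtm conv of SPM}'' rather than citing its statement, so as to keep the convergence of the law maps inside the already-fixed ambient space $M$; your concern about $\closure_F B_F(\rho,r)$ versus $F \cap D_M(\rho_M,r)$ is legitimate but is absorbed into the same ``all but countably many $r$'' bookkeeping that the paper also leaves implicit.
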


\begin{proof}
  Write $G_n = (F_n, R_n, \rho_n, m_n, a_n)$ for $n \in \NN$, and $G =(F, R, \rho, m, a)$.
  By Theorem~\ref{thm: conv in M(tau)}, 
  we may assume that $(F_n, R_n, \rho_n)$ and $(F, R, \rho)$ are embedded 
  isometrically into a common rooted $\bcmAB$ space $(M, d^{M}, \rho_{M})$
  in such a way that 
  $\rho_{n} = \rho = \rho_{M}$ as elements of $M$,
  $F_{n} \to F$ in the Fell topology as closed subsets of $M$,
  $m_{n} \to m$ vaguely as measures on $M$,
  and $a_n \to a$ in $\tau(M)$.
  It then follows that, for all but countably many $r>0$,
  $F_n^{[r]} \to F^{[r]}$ in the Hausdorff topology,
  and $m_n^{[r]} \to m^{[r]}$ weakly.
  Following the proof of Theorem~\ref{thm: dtm conv of SPM},
  one can verify that, for such radii $r>0$,
  $\SPM^{[r]}_n \to \SPM^{[r]}$ 
  in $\hatC(M, \Prob(D_{J_1}(\RNp, M)))$.
  Combining these convergences yields \eqref{lem eq: GHSPM rough cont},
  which completes the proof.
\end{proof}

\begin{proof} [{Proof of Lemma~\ref{lem: continuity of restriction of SPM}}]
  Fix $r>0$.
  Let $f \colon \rootResisSp(\tau \times \SPMSt) \to [0,1]$ be a continuous function.
  It suffices to show that $g \coloneqq f \circ \GHSPM^{[r]}_\tau$ is measurable.
  For each $\varepsilon \in (0, r)$, 
  define $g_\varepsilon \colon \rootResisSp(\tau \times \SPMSt) \to [0,1]$ by
  \begin{equation}
    g_\varepsilon(G)
    \coloneqq 
    \frac{1}{\varepsilon} \int_{r - \varepsilon}^r 
      f\bigl(\GHSPM^{[s]}_\tau(G)\bigr)\, ds,
    \qquad G = (F, R, \rho, m, a) \in \rootResisSp(\tau).
  \end{equation}
  The above integral is well-defined since the integrand is measurable,
  which follows from Lemma~\ref{lem: left continuity of restriction of SPM}.
  Moreover, by that lemma, $g_\varepsilon(G) \to f\bigl(\GHSPM^{[r]}_\tau(G)\bigr)$ for each $G$.
  Hence, it suffices to show that $g_\varepsilon$ is measurable.
  We verify this by showing that $g_\varepsilon$ is continuous.

  Indeed, let $G_n \to G$ in $\rootResisSp(\tau)$.
  Then Lemma~\ref{lem: rough continuity of GHSPM} implies that 
  $\GHSPM^{[s]}_\tau(G_n) \to \GHSPM^{[s]}_\tau(G)$
  for all but countably many $s>0$.
  The continuity of $f$ then gives 
  $f(\GHSPM^{[s]}_\tau(G_n)) \to f(\GHSPM^{[s]}_\tau(G))$
  for such $s$.
  Since $f$ takes values in $[0,1]$, the dominated convergence theorem yields
  $g_\varepsilon(G_n) \to g_\varepsilon(G)$.
  Hence $g_\varepsilon$ is continuous, and therefore measurable.
  This completes the proof.
\end{proof}

We next show that $\GHSPM_\tau$ is obtain as a (pointwise) limit of $\GHSPM^{[r]}_\tau$ as $r \to \infty$,
which, combined with the above two lemmas, implies Proposition~\ref{prop: measurability of SPM wrt GH topology}.

\begin{lem} \label{lem: conv of traces}
  For any $G \in \rootResisSp(\tau)$,
  it holds that 
  $\GHSPM^{[r]}_\tau(G) \to \GHSPM_\tau(G)$ in $\rootResisSp(\tau \times \SPMSt)$ as $r \to \infty$.
\end{lem}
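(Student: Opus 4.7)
The plan is to reduce the assertion to the convergence of the law-map components alone, for which I will invoke Theorem~\ref{thm: dtm conv of SPM} along an arbitrary sequence $r_n \to \infty$. Since $\GHSPM^{[r]}_\tau(G)$ and $\GHSPM_\tau(G)$ share the same first five coordinates $(F, R, \rho, m, a)$, Lemma~\ref{lem: simple estimate of GH distance} yields
\begin{equation}
  \GFMet^{\MeasSt \times \tau \times \SPMSt}\bigl(\GHSPM^{[r]}_\tau(G),\, \GHSPM_\tau(G)\bigr)
  \leq
  \hatCMet{(F, \rho)}{\Prob(D_{J_1}(\RNp, F))}\bigl(\SPM^{[r]}_{(R, m)},\, \SPM_{(R, m)}\bigr),
\end{equation}
so it suffices to show that $\SPM^{[r]}_{(R, m)} \to \SPM_{(R, m)}$ in $\hatC(F, \Prob(D_{J_1}(\RNp, F)))$ as $r \to \infty$.

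Now I fix an arbitrary sequence $r_n \to \infty$ and set $G_n \coloneqq (F^{[r_n]}, R^{[r_n]}, \rho, m^{[r_n]}) \in \rootResisSp$, regarded inside $F$ via the identity embeddings. Since $F^{[r_n]}$ increases to $F$, we have $F^{[r_n]} \to F$ in the Fell topology, and $m^{[r_n]} = m|_{B_R(\rho, r_n)} \to m$ vaguely, so $G_n \to G_\infty \coloneqq (F, R, \rho, m)$ in $\rootBCM(\MeasSt)$. The remaining hypothesis for Theorem~\ref{thm: dtm conv of SPM} is the non-explosion condition
\begin{equation}
  \lim_{s \to \infty} \liminf_{n \to \infty} R^{[r_n]}\bigl(\rho,\, B_{R^{[r_n]}}(\rho, s)^c\bigr) = \infty.
\end{equation}

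I expect this to be the main technical point, and I will obtain it from the trace-form characterization of the restricted resistance metric supplied by \cite[Theorem~8.4]{Kigami_12_Resistance}: namely, $R^{[r_n]}$ is the resistance metric of the form $\form^{[r_n]}(g, g) = \inf\{\form(h, h) : h \in \rdomain,\, h|_{F^{[r_n]}} = g\}$. Given any $f \in \rdomain$ with $f(\rho) = 1$ and $f \equiv 0$ on $B_R(\rho, s)^c$, the restriction $g \coloneqq f|_{F^{[r_n]}}$ serves as an admissible competitor for the variational problem defining $R^{[r_n]}(\rho, B_{R^{[r_n]}}(\rho, s)^c)$ whenever $r_n > s$, since in that range $B_R(\rho, s) \subseteq F^{[r_n]}$ and hence $B_{R^{[r_n]}}(\rho, s)^c = F^{[r_n]} \setminus B_R(\rho, s)$; moreover $\form^{[r_n]}(g, g) \leq \form(f, f)$ by the trace identity. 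Taking the infimum over such $f$ and inverting yields $R^{[r_n]}(\rho, B_{R^{[r_n]}}(\rho, s)^c) \geq R(\rho, B_R(\rho, s)^c)$ for $r_n > s$, and the recurrence of $R$ (Definition~\ref{dfn: recurrent resistance metric}) forces the right-hand side to infinity as $s \to \infty$.

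With both hypotheses verified, Theorem~\ref{thm: dtm conv of SPM}, applied with the trivial auxiliary structure, yields convergence of the enriched tuples in $\rootResisSp(\SPMSt)$; moreover, since the underlying ambient space in the proof of that theorem can be taken to be $F$ with the identity embeddings, the same argument directly delivers $\SPM^{[r_n]}_{(R, m)} \to \SPM_{(R, m)}$ in $\hatC(F, \Prob(D_{J_1}(\RNp, F)))$ (equivalently, by Lemma~\ref{lem: conv in hatC}, $\SPM^{[r_n]}_{(R, m)}(x_n) \to \SPM_{(R, m)}(x)$ weakly for any $x_n \in F^{[r_n]}$ with $x_n \to x \in F$). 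Because the sequence $(r_n)$ was arbitrary, the convergence as $r \to \infty$ follows, completing the proof.
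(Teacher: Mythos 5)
Your proof is correct and follows essentially the same route as the paper's: embed everything into $F$ via the identity, observe that $F^{[r]} \to F$ in the Fell topology and $m^{[r]} \to m$ vaguely, and then run the argument of Theorem~\ref{thm: dtm conv of SPM} to conclude $\SPM^{[r]} \to \SPM$ in $\hatC(F, \Prob(D_{J_1}(\RNp, F)))$. The only place you go beyond the paper is in explicitly verifying the non-explosion condition through the trace-form monotonicity $R^{[r]}(\rho, B_{R^{[r]}}(\rho,s)^c) \geq R(\rho, B_R(\rho,s)^c)$ for $r > s$ --- a correct and welcome detail that the paper compresses into the phrase ``since $R$ is a recurrent resistance metric.''
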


\begin{proof}
  Fix $G = (F, R, \rho, m, a) \in \rootResisSp(\tau)$.
  It is easy to check that, as $r \to \infty$, $F^{[r]} \to F$ in the Fell topology as closed subsets of $F$ 
  and $m^{[r]} \to m$ vaguely as measures on $F$.
  Since $R$ is a recurrent resistance metric,
  we can follow the proof of Theorem~\ref{thm: dtm conv of SPM}
  to deduce that $\Upsilon^{[r]} \to \Upsilon$ as $r \to \infty$ in $\hatC(F, \Prob(D_{J_1}(\RNp, F)))$.
  This immediately implies the desired result.
\end{proof}

Now, it is straightforward to show Proposition~\ref{prop: measurability of SPM wrt GH topology}.

\begin{proof} [{Proof of Proposition~\ref{prop: measurability of SPM wrt GH topology}}]
  The first assertion regarding the measurability of $\GHSPM_\tau$ is an immediate consequence of Lemmas~\ref{lem: continuity of restriction of SPM} and \ref{lem: conv of traces}.
  Clearly, the map $\GHSPM_\tau$ is injective.
  We recall from Proposition~\ref{prop: sp of recurrent resis sp is Borel} that
  its domain $\rootResisSp(\tau)$ is a Borel subset of the Polish space $\rootResisSp(\MeasSt \times \tau)$.
  Thus, we can apply Lemma~\ref{lem: Lousin--Souslin}
  to deduce that the image of $\GHSPM_\tau$ is a Borel subset of $\rootBCM(\MeasSt \times \tau \times \SPMSt)$,
  which completes the proof.
\end{proof}

Thanks to Proposition~\ref{prop: measurability of SPM wrt GH topology},  
if $G$ is a random element of $\rootResisSp(\tau)$,  
then $\GHSPM_\tau(G)$ is a random element of $\rootResisSp(\tau \times \SPMSt)$.  
Below, we provide a version of Theorem \ref{thm: dtm conv of SPM} for random resistance metric spaces.  

\begin{thm} \label{thm: rdm conv for SPM}
  Let $G_n = (F_n, R_n, \rho_n, m_n, a_n)$, $n \in \NN$, be random elements of $\rootResisSp(\tau)$,
  and $G = (F, R, \rho, m, a)$ be a random element of $\rootBCM(\MeasSt \times \tau)$
  such that $m$ is of full support, almost surely.
  Assume that the following conditions are satisfied.
  \begin{enumerate} [label = \textup{(\roman*)}]
    \item \label{thm item: rdm conv for SPM. 1}
      It holds that $G_n \xrightarrow{\mathrm{d}} G$ in $\rootResisSp(\tau)$;
    \item \label{thm item: rdm conv for SPM. 2}
      If we write $\mathbf{P}_n$ for the underlying probability measure of $G_n$, 
      then, for all $\lambda > 0$,
      \begin{equation}
        \lim_{r \to \infty} \liminf_{n \to \infty} 
        \mathbf{P}_{n}\bigl( R_{n}(\rho_{n}, B_{R_{n}}(\rho_{n}, r)^{c}) > \lambda \bigr) = 1.
      \end{equation}
  \end{enumerate}
  Then $G \in \rootResisSp(\tau)$ almost surely, 
  and $\GHSPM_\tau(G_n) \xrightarrow{\mathrm{d}} \GHSPM_\tau(G)$ as random elements of $\rootResisSp(\tau \times \SPMSt)$.
\end{thm}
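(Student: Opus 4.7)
The plan is to reduce this theorem to its deterministic counterpart, Theorem~\ref{thm: dtm conv of SPM}, via a subsequential Skorohod-representation argument combined with a Borel--Cantelli extraction. By a standard fact (e.g.\ \cite[Theorem~2.3]{Billingsley_99_Convergence}), convergence in distribution $\GHSPM_\tau(G_n) \xrightarrow{\mathrm{d}} \GHSPM_\tau(G)$ in $\rootResisSp(\tau \times \SPMSt)$ follows once every subsequence $(n_k)$ admits a further subsequence $(n_{k(l)})$ along which the desired convergence holds, so it suffices to construct such a coupling for an arbitrary subsequence. Measurability of $\GHSPM_\tau(G_n)$ and of $\GHSPM_\tau(G)$ as random elements is guaranteed by Proposition~\ref{prop: measurability of SPM wrt GH topology}, provided $G \in \rootResisSp(\tau)$ almost surely --- which will also be established as part of the argument.

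Fix a subsequence $(n_k)$. Since $\rootBCM(\MeasSt \times \tau)$ is Polish (as $\tau$ is Polish) and $\rootResisSp(\tau)$ is a Borel subset by Proposition~\ref{prop: sp of recurrent resis sp is Borel}, condition (i) and the Skorohod representation theorem allow us to realize, on a common probability space $(\tilde{\Omega}, \tilde{\mathbb{P}})$, copies $\tilde G_{n_k}$ and $\tilde G$ of $G_{n_k}$ and $G$ such that $\tilde G_{n_k} \to \tilde G$ almost surely in $\rootBCM(\MeasSt \times \tau)$. By hypothesis, the measure component of $\tilde G$ has full support almost surely.

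The key step is to promote condition (ii), which is an in-probability statement, to a pathwise almost-sure non-explosion condition along a further subsequence. For each $\lambda \in \NN$, condition (ii) provides some $r_\lambda \in \NN$ such that
\begin{equation}
  \liminf_{n \to \infty} \mathbf{P}_n\!\bigl( R_n(\rho_n, B_{R_n}(\rho_n, r_\lambda)^c) > \lambda \bigr) \geq 1 - 2^{-\lambda - 1};
\end{equation}
one may further arrange $r_\lambda$ to be increasing in $\lambda$. A diagonal extraction then yields a subsubsequence $(n_{k(l)})$ of $(n_k)$, which we may simultaneously arrange to be fast enough for the Skorohod coupling, such that for every $\lambda \in \NN$ and every $l \geq \lambda$,
\begin{equation}
  \mathbf{P}_{n_{k(l)}}\!\bigl( R_{n_{k(l)}}(\rho_{n_{k(l)}}, B_{R_{n_{k(l)}}}(\rho_{n_{k(l)}}, r_\lambda)^c) \leq \lambda \bigr) \leq 2^{-l}.
\end{equation}
By Borel--Cantelli applied on $(\tilde{\Omega}, \tilde{\mathbb{P}})$ (transferring the above probabilities, which depend only on the law of $G_{n_{k(l)}}$), almost surely for every $\lambda$ there exists $l_0 = l_0(\lambda)$ such that $R_{n_{k(l)}}(\rho_{n_{k(l)}}, B_{R_{n_{k(l)}}}(\rho_{n_{k(l)}}, r_\lambda)^c) > \lambda$ for all $l \geq l_0$. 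Since $r_\lambda \to \infty$, this gives the pathwise non-explosion condition of Theorem~\ref{thm: dtm conv of SPM} along $(n_{k(l)})$ almost surely.

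With both hypotheses of Theorem~\ref{thm: dtm conv of SPM} holding almost surely along $(n_{k(l)})$, that theorem applied pathwise gives $\tilde G \in \rootResisSp(\tau)$ and $\GHSPM_\tau(\tilde G_{n_{k(l)}}) \to \GHSPM_\tau(\tilde G)$ almost surely in $\rootResisSp(\tau \times \SPMSt)$, hence in distribution. As $(n_k)$ was arbitrary, the theorem follows. The main obstacle is the diagonal extraction: condition (ii) is a double limit $\lim_r \liminf_n$ in probability, and converting it to an almost-sure pathwise bound on the Skorohod-coupled space requires choosing the radii $r_\lambda$ and the indices $n_{k(l)}$ simultaneously compatible with both the Skorohod convergence and summable exceptional probabilities for Borel--Cantelli; once this is done, the rest of the argument is a transparent reduction to the deterministic theorem.
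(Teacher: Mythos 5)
There is a genuine gap at the Borel--Cantelli extraction. You claim that, after choosing $r_\lambda$ with $\liminf_{n}\mathbf{P}_n\bigl(R_n(\rho_n,B_{R_n}(\rho_n,r_\lambda)^c)>\lambda\bigr)\ge 1-2^{-\lambda-1}$, one can extract $(n_{k(l)})$ so that $\mathbf{P}_{n_{k(l)}}\bigl(R_{n_{k(l)}}(\rho_{n_{k(l)}},B_{R_{n_{k(l)}}}(\rho_{n_{k(l)}},r_\lambda)^c)\le\lambda\bigr)\le 2^{-l}$ for all $l\ge\lambda$. For a \emph{fixed} radius $r_\lambda$ and threshold $\lambda$, condition~(ii) only yields $\limsup_n\mathbf{P}_n(\cdots\le\lambda)\le 2^{-\lambda-1}$, a bound that is constant in $n$; nothing forces this probability to decay along any subsequence, so the summable bound $2^{-l}$ is not available. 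One can get $2^{-l}$ only by simultaneously enlarging the radius to $r_l$, but then the resulting a.s.\ statement ``$R_{n_{k(l)}}(\rho,B(\rho,r_l)^c)>l$ eventually'' says nothing about $\liminf_l R_{n_{k(l)}}(\rho,B(\rho,r)^c)$ for any fixed $r$, since $R_n(\rho_n,B_{R_n}(\rho_n,r)^c)$ is non-decreasing in $r$. More fundamentally, condition~(ii) is strictly weaker than the pathwise non-explosion condition of Theorem~\ref{thm: dtm conv of SPM} holding a.s.\ along a subsubsequence of every subsequence: if, say, $\mathbf{P}_n(R_n(\rho_n,B_{R_n}(\rho_n,r)^c)\le\lambda)=2^{-r}$ for every $n$ with suitable independence across $n$, then (ii) holds, yet by the second Borel--Cantelli lemma $\liminf_l R_{n_{k(l)}}(\rho,B(\rho,r)^c)\le\lambda$ a.s.\ for each fixed $r$ along any subsequence, so the pathwise hypothesis of the deterministic theorem fails a.s. Hence the reduction to Theorem~\ref{thm: dtm conv of SPM} cannot be carried out this way, and the a.s.\ claim $G\in\rootResisSp(\tau)$ is likewise not obtained by your route.

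The paper avoids this by never converting (ii) into a pathwise statement. It works with the truncated law maps $\GHSPM^{[s]}_\tau(G_n)$ (processes on the closed balls $F_n^{[s]}$), whose distributional convergence follows from condition~(i) alone via Lemma~\ref{lem: rough continuity of GHSPM}, and then bounds $\GFMet^{\tau\times\SPMSt}\bigl(\GHSPM^{[s]}_\tau(G_n),\GHSPM_\tau(G_n)\bigr)$ by exit-time probabilities that are controlled \emph{in probability}, uniformly in $n$, directly from condition~(ii); the membership $G\in\rootResisSp(\tau)$ is quoted from an external result. If you want to salvage a Skorohod-type argument, you would need to approximate by the truncated objects first and only then pass to the limit, which is essentially the paper's proof.
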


\begin{proof}
  By \cite[Proposition~6.1]{Noda_pre_Scaling},
  we have $G \in \rootResisSp(\tau)$ almost surely.
  For the second assertion,
  by \cite[Theorem~2.1]{Billingsley_99_Convergence},
  it suffices to show that 
  for all bounded and uniformly continuous functions 
  $f \colon \rootResisSp(\tau \times \SPMSt) \to \RN$,
  \begin{equation} \label{pr eq: rdm conv for SPM 0}
    \lim_{n \to \infty} 
    \mathbf{E}_n\!\left[f(\GHSPM_\tau(G_n))\right] 
    = 
    \mathbf{E}\!\left[f(\GHSPM_\tau(G))\right],
  \end{equation}
  where $\mathbf{E}$ denotes the expectation with respect to the randomness of $G$.
  Fix such a function $f$.
  
  By Lemma~\ref{lem: rough continuity of GHSPM} and 
  Theorem~\ref{thm: rdm conv for SPM}~(i),
  we have that, for each $r > 1$,
  \begin{equation}
    \lim_{n \to \infty} 
    \mathbf{E}_n\!\left[ 
      \int_{r-1}^r 
        f\bigl(\GHSPM^{[s]}_\tau(G_n)\bigr)\, ds
    \right] 
    =
    \mathbf{E}\!\left[
      \int_{r-1}^r 
        f\bigl(\GHSPM^{[s]}_\tau(G)\bigr)\, ds
    \right].
  \end{equation}
  (Note that the above integrals are well-defined by 
  Lemma~\ref{lem: left continuity of restriction of SPM}.)
  Moreover, Lemma~\ref{lem: conv of traces} yields that
  \begin{equation}
    \lim_{r \to \infty} 
    \mathbf{E}\!\left[
      \int_{r-1}^r 
        f\bigl(\GHSPM^{[s]}_\tau(G)\bigr)\, ds
    \right] 
    =
    \mathbf{E}\!\left[
      f(\GHSPM_\tau(G))
    \right].
  \end{equation}
  Hence, it remains to show that 
  \begin{equation} \label{pr eq: 1. rdm conv for SPM}
    \lim_{r \to \infty} 
    \limsup_{n \to \infty}
    \mathbf{E}_n
    \!\left[
      \int_{r-1}^r 
        \bigl| 
          f\bigl(\GHSPM^{[s]}_\tau(G_n)\bigr)
          - 
          f\bigl(\GHSPM_\tau(G_n)\bigr)
        \bigr|
      ds
    \right]
    = 0.
  \end{equation}

  For each $\varepsilon > 0$, define the modulus of continuity of $f$ by
  \begin{equation}
    w_f(\varepsilon) 
    \coloneqq 
    \sup\!\left\{
      |f(\mathcal{Y}_1) - f(\mathcal{Y}_2)| 
      \,\middle|\,
      \mathcal{Y}_1, \mathcal{Y}_2 \in \rootResisSp(\tau \times \SPMSt),
      \ \GFMet^{\tau \times \SPMSt}(\mathcal{Y}_1, \mathcal{Y}_2) \leq \varepsilon
    \right\}.
  \end{equation}
  Then it follows that 
  \begin{align}
    &\mathbf{E}_n
    \!\left[
      \int_{r-1}^r 
        \bigl| 
          f\bigl(\GHSPM^{[s]}_\tau(G_n)\bigr)
          - 
          f\bigl(\GHSPM_\tau(G_n)\bigr)
        \bigr|
      ds
    \right] \notag\\
    &\quad \leq     
    w_f(\varepsilon)
    + 
    \|f\|_\infty   
    \mathbf{P}_n
    \!\left(
      \sup_{r-1 < s < r} 
      \GFMet^{\tau \times \SPMSt}\bigl(
        \GHSPM^{[s]}_\tau(G_n), 
        \GHSPM_\tau(G_n)
      \bigr) 
      > \varepsilon
    \right).
  \end{align}
  The uniform continuity of $f$ implies $w_f(\varepsilon) \to 0$ as $\varepsilon \to 0$.
  Therefore, it remains to prove that
  \begin{equation} \label{pr eq: 2. rdm conv for SPM}
    \lim_{r \to \infty} 
    \limsup_{n \to \infty}
    \mathbf{P}_n
    \!\left(
      \sup_{r-1 < s < r} 
      \GFMet^{\tau \times \SPMSt}\bigl(
        \GHSPM^{[s]}_\tau(G_n), 
        \GHSPM_\tau(G_n)
      \bigr)
      > \varepsilon
    \right)
    = 0,
    \quad 
    \forall \varepsilon > 0.
  \end{equation}

  We simply write $\SPM_n \coloneqq \SPM_{(R_n, m_n)}$.
  Fix $r > 1$ and $r_0 > 0$ with $r-1 > r_0$,
  and take $s \in (r-1, r)$.
  By Lemma~\ref{lem: simple estimate of GH distance}, we have  
  \begin{equation} \label{pr eq: 3. rdm conv for SPM}
    \GFMet^{\tau \times \SPMSt} 
    \bigl(
      \GHSPM_\tau(G_n), 
      \GHSPM^{[s]}_\tau(G_n)
    \bigr)
    \leq    
    \hatCMet{(F_n, \rho_n)}{\Prob(D_{J_1}(\RNp, F_n))}
    \bigl(
      \SPM_n, 
      \SPM_n^{[s]}
    \bigr).
  \end{equation}
  Since $s > r_0$, it holds that 
  \begin{equation}
    \dom(\SPM_n)^{(r_0)}
    =
    \dom(\SPM_n^{[s]})^{(r_0)} 
    = 
    F_n^{(r_0)}.
  \end{equation}
  Then Lemma~\ref{lem: hatC metric simple estimate} yields 
  \begin{equation}
    \hatCMet{(F_n, \rho_n)}{\Prob(D_{J_1}(\RNp, F_n))}
    \bigl(
      \SPM_n, 
      \SPM_n^{[s]}
    \bigr)
    \leq
    e^{-r_0}  
    + 
    \sup_{x \in F_n^{(r_0)}} 
    \ProhMet{D_{J_1}(\RNp, F_n)}
    \bigl(
      \SPM_n(x), 
      \SPM_n^{[s]}(x)
    \bigr).
  \end{equation}
  By the proof of \cite[Lemma~6.7]{Noda_pre_Aging},
  for any $x \in F_n^{(r_0)}$ and $T > 0$,
  \begin{align}
    \ProhMet{D_{J_1}(\RNp, F_n)}
    \bigl( \SPM_n(x), \SPM_n^{[s]}(x) \bigr) 
    &\leq 
    e^{-T} 
    + 
    P_n^x\!\left( \exitTime_{B_{R_n}(x, s/2)} \leq T \right) \notag\\
    &\leq    
    e^{-T} 
    + 
    P_n^x\!\left( \exitTime_{B_{R_n}(x, (r-1)/2)} \leq T \right),
  \end{align}
  where $((X_n(t))_{t \geq 0}, (P_n^x)_{x \in F_n})$ denotes the process associated with $(R_n, m_n)$,
  and $\exitTime_\cdot$ is the first exit time defined in \eqref{eq: dfn of exit time}.
  Taking the supremum over $x \in F_n^{(r_0)}$ gives
  \begin{equation}
    \sup_{x \in F_n^{(r_0)}}
    \ProhMet{D_{J_1}(\RNp, F_n)}
    \bigl( \SPM_n(x), \SPM_n^{[s]}(x) \bigr) 
    \leq 
    e^{-T} 
    + 
    \sup_{x \in F_n^{(r_0)}}
    P_n^x\!\left( \breve{\sigma}_{B_{R_n}(x, (r-1)/2)} \leq T \right).
  \end{equation}
  Combining this with \eqref{pr eq: 3. rdm conv for SPM}, we have,
  for any $r-1 > r_0$, $T > 0$, and $s \in (r-1, r)$,
  \begin{equation}
    \GFMet^{\tau \times \SPMSt} 
    \bigl( \GHSPM_\tau(G_n), \GHSPM^{[s]}_\tau(G_n) \bigr)
    \leq 
    e^{-r_0}
    + e^{-T} 
    + 
    \sup_{x \in F_n^{(r_0)}}
    P_n^x\!\left( \breve{\sigma}_{B_{R_n}(x, (r-1)/2)} \leq T \right).
  \end{equation}
  Taking the supremum over $s \in (r-1, r)$ yields
  \begin{equation}  \label{pr eq: 4. rdm conv for SPM}
    \sup_{r-1 < s < r}
    \GFMet^{\tau \times \SPMSt} 
    \bigl( \GHSPM_\tau(G_n), \GHSPM^{[s]}_\tau(G_n) \bigr)
    \leq 
    e^{-r_0}
    + e^{-T} 
    + 
    \sup_{x \in F_n^{(r_0)}}
    P_n^x\!\left( \breve{\sigma}_{B_{R_n}(x, (r-1)/2)} \leq T \right).
  \end{equation}

  Finally, under the non-explosion condition 
  \ref{thm item: rdm conv for SPM. 2},
  by following \cite[Proof of Lemma~6.3]{Noda_pre_Scaling},
  one can verify that for each $r_0 > 0$ and $T > 0$,
  \begin{equation}
    \lim_{r \to \infty} 
    \limsup_{n \to \infty} 
    \mathbf{P}_n  
    \!\left(
      \sup_{x \in F_n^{(r_0)}} 
      P_n^x
      \!\left(
        \breve{\sigma}_{B_{R_n}(x, (r-1)/2)} \leq T
      \right) 
      > \varepsilon
    \right)
    = 0,
    \quad 
    \forall \varepsilon > 0.
  \end{equation}
  Combining this with \eqref{pr eq: 4. rdm conv for SPM} proves 
  \eqref{pr eq: 2. rdm conv for SPM},
  and hence \eqref{pr eq: 1. rdm conv for SPM}.
  This completes the proof.
\end{proof}

%%%%%%%%%%%%%%%%%%%%%%%%%%%%%%%%%%%%%%%%%%%%%%%%%%%%%%%%%%%%%%%%%%%%%%%%%%%%%%%%%%%%%%%%%%%%%%%%%%%%%%%%%%%%%%%%%%%%%%%%%%%%%%%%%%
% Local limit theorem
%%%%%%%%%%%%%%%%%%%%%%%%%%%%%%%%%%%%%%%%%%%%%%%%%%%%%%%%%%%%%%%%%%%%%%%%%%%%%%%%%%%%%%%%%%%%%%%%%%%%%%%%%%%%%%%%%%%%%%%%%%%%%%%%%%

\subsection{Local limit theorem} \label{sec: Local limit theorem}

In this subsection, we establish a local limit theorem (see Theorem \ref{cor: dtm conv of SPM and heat kernels} below),
that is, the convergence of heat kernels,
under the assumption that measured resistance metric spaces 
and the law maps of the associated stochastic processes converge.
Throughout this subsection, we fix a Polish structure $\tau$.

Below, we modify the map $\GHSPM_\tau$ introduced in Definition~\ref{dfn: GHSPM}
to incorporate the heat kernel.
Recall the structure $\HKSt$ from \eqref{eq: st for hk}.

\begin{dfn} \label{dfn: GHHKSPM}
  For each $G = (F, R, \rho, m, a) \in \rootResisSp(\tau)$,
  we set  
  \begin{equation}
    \GHHKSPM_\tau(G) 
    \coloneqq 
    \bigl( F, R, \rho, m, a, p_{(R, m)}, \SPM_{(R, m)} \bigr),
  \end{equation}
  which is an element of $\rootResisSp(\tau \times \HKSt \times \SPMSt)$.
\end{dfn}

We recall from \cite{Noda_pre_Aging} that the convergence of measured resistance metric spaces 
implies the precompactness of the associated heat kernels.

\begin{lem} [{\cite[Proposition~4.13]{Noda_pre_Aging}}]\label{lem: precompactness of heat kernels}
  Assume that $G_n = (F_n, R_n, \rho_n, m_n)$, $n \in \NN$, and $G = (F, R, \rho, m)$ are elements of $\rootResisSp$
  such that $G_n \to G$ in $\rootResisSp$.
  We simply write $p_n = p_{(R_n, m_n)}$.
  Then, for any $r > 0$ and $T > 0$,
  \begin{equation}
    \sup_{n \geq 1}
    \sup_{T \leq t < \infty} 
    \sup_{x,y \in F_{n}^{(r)}}
    p_{n}(t, x, y) < \infty,
    \qquad
    \lim_{\delta \to 0} 
    \sup_{n \geq 1}
    w(p_{n}, T, r, \delta) 
    = 0,
  \end{equation}
  where $w(p_{n}, T, r, \delta)$ is defined to be 
  \begin{equation}
    \sup\left\{
      |p_{n}(t,x,y) - p_{n}(t', x', y')|\,
      \middle | 
      \begin{array}{l}
        (t,x,y), (t', x', y') \in [T, \infty) \times F_{n}^{(r)} \times F_{n}^{(r)}\\
        |t-t'| \vee R_{n}(x,x') \vee R_{n}(y, y') \leq \delta
      \end{array}
    \right\}.
  \end{equation}
\end{lem}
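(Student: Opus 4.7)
The plan is to obtain the two uniform estimates from Lemma~\ref{lem: hk estimate from volume} by first upgrading the convergence $G_n \to G$ to uniform lower volume bounds on balls, and then using the variational characterization of the resistance metric to control oscillations of the heat kernel in the spatial variables, with a semigroup argument handling the time variable. First, by Theorem~\ref{thm: conv in M(tau)} we may isometrically embed $(F_n, R_n, \rho_n)$, $n \geq 1$, and $(F, R, \rho)$ into a common rooted $\bcmAB$ space $(M, d_M, \rho_M)$ in such a way that $F_n \to F$ in the Fell topology, $\rho_n = \rho = \rho_M$, and $m_n \to m$ vaguely. Since $m$ has full support and $F^{(r+1)}$ is compact, for every $r > 0$ and every $s \in (0, 1)$ there exists $v_0 = v_0(r,s) > 0$ with $\inf_{x \in F^{(r+1)}} m(D_R(x, s/2)) \geq v_0$. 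A standard argument using the Portmanteau theorem together with the fact that $D_{M}(x_n, s) \supseteq D_M(x, s/2)$ for $d_M(x_n, x) \leq s/2$ then yields
\begin{equation} \label{eq: uniform volume lower bound}
  \liminf_{n \to \infty} \inf_{x \in F_n^{(r)}} m_n\bigl(D_{R_n}(x, s)\bigr) \geq v_0,
\end{equation}
and by enlarging $v_0$ we may also include the finitely many small indices $n$.

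With \eqref{eq: uniform volume lower bound} in hand, Lemma~\ref{lem: hk estimate from volume} applied with a suitably small $s$ (depending on $T$) yields $\sup_n \sup_{t \geq T} \sup_{x \in F_n^{(r)}} p_n(t, x, x) < \infty$, and the Cauchy--Schwarz inequality applied to the Chapman--Kolmogorov identity $p_n(t, x, y) = \int p_n(t/2, x, z) p_n(t/2, y, z)\, m_n(dz)$ upgrades this to the off-diagonal uniform bound. Next, for the spatial modulus, fix $t \geq T$ and $y \in F_n^{(r)}$ and apply the variational formula \eqref{eq: variational formula of resistance metric} to the function $f = p_n(t, \cdot, y) \in \rdomain_n$:
\begin{equation}
  |p_n(t, x, y) - p_n(t, x', y)|^2 \leq R_n(x, x')\, \form_n\bigl(p_n(t, \cdot, y), p_n(t, \cdot, y)\bigr).
\end{equation}
The form energy on the right can be identified with $-\tfrac{1}{2}\partial_s p_n(2t+s, y, y)\big|_{s=0}$ via the spectral calculus for symmetric semigroups, and using $p_n(2t, y, y) = \|p_n(t, \cdot, y)\|_{L^2(m_n)}^2$ one derives the integral bound $\form_n(p_n(t, \cdot, y), p_n(t, \cdot, y)) \leq (2t)^{-1} p_n(t, y, y)$, which is bounded uniformly in $n$ by the previous step. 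Symmetrically one controls $|p_n(t, x, y) - p_n(t, x, y')|$, yielding a uniform spatial H\"older modulus on $F_n^{(r)} \times F_n^{(r)}$ of exponent $1/2$ with respect to $R_n$.

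For the time variable, the semigroup property gives
\begin{equation}
  p_n(t', x, y) - p_n(t, x, y) = \int_{F_n} \bigl(p_n(t'/2, x, z) - p_n(t/2, x, z)\bigr) p_n(t/2, y, z)\, m_n(dz)
\end{equation}
for $t' > t$, and Cauchy--Schwarz together with $\|p_n(t'/2, x, \cdot) - p_n(t/2, x, \cdot)\|_{L^2(m_n)}^2 = p_n(t', x, x) - 2 p_n((t+t')/2, x, x) + p_n(t, x, x)$ reduces the estimate to the modulus of continuity of $t \mapsto p_n(t, x, x)$, which is in turn controlled uniformly in $n$ from the on-diagonal upper bound via monotonicity of $t \mapsto e^{-\lambda t}$-scaled quantities and the identity $\partial_t p_n(t, x, x) = -2\form_n(p_n(t/2, \cdot, x), p_n(t/2, \cdot, x))$. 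Combining the spatial and temporal estimates and bounding their sum gives $\lim_{\delta \to 0} \sup_n w(p_n, T, r, \delta) = 0$.

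The main obstacle will be establishing the uniform volume lower bound \eqref{eq: uniform volume lower bound}: vague convergence does not a priori yield locally uniform lower bounds on ball masses, and one must exploit the Fell convergence of $F_n$ together with the full support of $m$ and the compactness of $F^{(r+1)}$ in a careful way, checking that the radius $s/2$ used inside the limsup genuinely suffices. Once this geometric input is in place, the analytic part reduces to standard resistance-form manipulations, and the proof follows the template of \cite[Proposition~4.13]{Noda_pre_Aging}.
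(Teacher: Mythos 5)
The paper offers no proof of this lemma: it is imported wholesale as \cite[Proposition~4.13]{Noda_pre_Aging}, so there is nothing internal to compare your argument against. Judged on its own merits, your proof is correct and follows the standard (and, almost certainly, the cited reference's) route: a uniform lower volume bound $\inf_n\inf_{x\in F_n^{(r)}} m_n(D_{R_n}(x,s))>0$ extracted from the Fell/vague convergence by a compactness--Portmanteau argument, Kigami's estimate (Lemma~\ref{lem: hk estimate from volume}) plus Chapman--Kolmogorov and Cauchy--Schwarz for the uniform sup bound, the resistance inequality $|f(x)-f(x')|^2\le R_n(x,x')\,\form_n(f,f)$ with $f=p_n(t,\cdot,y)$ together with the spectral bound $\form_n(p_n(t,\cdot,y),p_n(t,\cdot,y))\le (2t)^{-1}p_n(t,y,y)$ for the spatial modulus, and an $L^2$ semigroup estimate for the temporal modulus. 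Your identification of the volume bound as the only genuinely delicate point is accurate, and your sketch of it (limit points of $x_n\in F_n^{(r)}$ lie in the compact set $F^{(r)}$ by the Fell convergence, then $\liminf_n m_n(B_M(x,s/2))\ge m(B_R(x,s/2))>0$ by full support) is the right argument; the paper itself runs the dual version of this in the proof of Lemma~\ref{lem: sp of full supp meas met sp is Borel}.

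Two harmless slips. The displayed time-increment identity does not balance, since $t'/2+t/2=(t+t')/2\ne t'$; split $t'=(t'-t/2)+t/2$ instead and compare $p_n(t'-t/2,x,\cdot)$ with $p_n(t/2,x,\cdot)$ in $L^2(m_n)$, after which your three-term expansion and the Lipschitz bound on $t\mapsto p_n(t,x,x)$ (whose derivative is $-\form_n(p_n(t/2,\cdot,x),p_n(t/2,\cdot,x))$, uniformly bounded for $t\ge T$ once the on-diagonal bound is invoked at level $T/2$) go through. Also, $\form_n(p_n(t,\cdot,y),p_n(t,\cdot,y))$ equals $-\partial_s p_n(2t+s,y,y)\big|_{s=0}$ without the factor $\tfrac12$; the resulting bound $\le (2t)^{-1}p_n(t,y,y)$ is nevertheless correct.
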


We now present the main result of this subsection,  
which states that the precompactness of heat kernels  
and the convergence of law maps of stochastic processes
together imply the convergence of the heat kernels.  
A similar result for simple random walks on graphs  
can be found in \cite{Croydon_Hambly_08_local}.

\begin{thm} \label{thm: local limit theorem}
  Let $G_n = (F_n, R_n, \rho_n, m_n, a_n),\, n \in \NN$ and $G = (F, R, \rho, m, a)$ are elements of $\rootResisSp(\tau)$.
  If $\GHSPM_\tau(G_n) \to \GHSPM_\tau(G)$ in $\rootResisSp(\tau \times \SPMSt)$ as $n \to \infty$,
  then, as $n \to \infty$,
  \begin{equation}
    \GHHKSPM_\tau(G_n) \to \GHHKSPM_\tau(G) \quad \text{in}\quad \rootResisSp(\tau \times \HKSt \times \SPMSt).
  \end{equation}
  
\end{thm}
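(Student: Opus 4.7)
\medskip
\noindent
\textbf{Proof proposal.}
The plan is to use the precompactness of heat kernels established in Lemma~\ref{lem: precompactness of heat kernels} to extract subsequential limits and then identify them by comparing with the already-known convergence of the law maps. By the hypothesis and Theorem~\ref{thm: conv in M(tau)}, embed all $(F_n, R_n, \rho_n)$ and $(F, R, \rho)$ isometrically into a common rooted $\bcmAB$ space $(M, d_M, \rho_M)$ so that $\rho_n = \rho = \rho_M$, $F_n \to F$ in the Fell topology, $m_n \to m$ vaguely, $a_n \to a$ in $\tau(M)$, and $\SPM_n \to \SPM$ in $\hatC(M, \Prob(D_{J_1}(\RNp, M)))$. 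It then suffices to show that $p_n \to p$ in $\hatC(\RNpp \times M \times M, \RNp)$.

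To extract a limit, fix any subsequence. By Lemma~\ref{lem: precompactness of heat kernels}, for each $r > 0$ and $T > 0$ the restrictions $p_n|_{[T, \infty) \times F_n^{(r)} \times F_n^{(r)}}$ are uniformly bounded and uniformly equicontinuous. A standard Arzel\`a--Ascoli argument, together with a diagonalization across countable exhausting families of $(T, r)$, yields a further subsequence (not relabelled) and a continuous function $q$ on $\RNpp \times F \times F$ such that in terms of the common embedding, $p_n \to q$ uniformly on $[T, \infty) \times F_n^{(r)} \times F_n^{(r)}$ for all $T, r > 0$. By Proposition~\ref{prop: conti is preserved in hatC}, this convergence takes place in $\hatC(\RNpp \times M \times M, \RNp)$.

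It remains to identify $q$ with $p$ on $F \times F$. For any $x_n \in F_n$ with $x_n \to x \in F$ in $M$, any $t > 0$, and any compactly-supported continuous function $f \colon M \to \RN$, start from the identity
\begin{equation}
  \int_{F_n} f(y)\, p_n(t, x_n, y)\, m_n(dy) = E_n^{x_n}\!\left[ f(X_n(t)) \right].
\end{equation}
The left-hand side converges to $\int_F f(y)\, q(t, x, y)\, m(dy)$ by applying Lemma~\ref{lem: vague convergence and hatC topology} to the functions $y \mapsto f(y) p_n(t, x_n, y) \in \hatC(M, \RN)$ and the vaguely convergent measures $m_n$, after truncating to $\supp(f)$. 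For the right-hand side, $\SPM_n(x_n) \to \SPM(x)$ weakly in $\Prob(D_{J_1}(\RNp, M))$, and the bounded continuous functional $\xi \mapsto \varepsilon^{-1} \int_t^{t + \varepsilon} f(\xi(s))\, ds$ on $D_{J_1}(\RNp, M)$ yields convergence to $\varepsilon^{-1} \int_t^{t+\varepsilon} \int_F f(y)\, p(s, x, y)\, m(dy)\, ds$, which then tends to $\int_F f(y)\, p(t, x, y)\, m(dy)$ as $\varepsilon \to 0$ by right-continuity of the integrand (uniformly in $n$ after passing to subsequences, via the joint continuity of $p$). Matching both sides for all such $f$ and using that $m$ has full support on $F$ together with the continuity of $q(t, x, \cdot)$ and $p(t, x, \cdot)$ forces $q(t, x, y) = p(t, x, y)$ on $\RNpp \times F \times F$. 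Uniqueness of subsequential limits then gives the full-sequence convergence.

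The main obstacle will be passing to the limit in the right-hand side $E_n^{x_n}[f(X_n(t))]$, since the evaluation map $\xi \mapsto f(\xi(t))$ is not $J_1$-continuous at paths jumping at $t$. The time-averaging device sketched above sidesteps this issue cleanly by replacing the evaluation by a genuinely $J_1$-continuous functional and using that $p(s, x, y)$ is jointly continuous in $s$ to recover the pointwise statement; a minor technicality is to verify that this averaging step is uniform enough in $n$ along the extracted subsequence, which is guaranteed by the already established uniform convergence of $p_n$ on compacts in $\RNpp \times F_n^{(r)} \times F_n^{(r)}$.
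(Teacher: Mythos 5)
Your proposal is correct and follows the same overall architecture as the paper's proof: embed everything into a common space via Theorem~\ref{thm: conv in M(tau)}, use Lemma~\ref{lem: precompactness of heat kernels} to get precompactness of $\{p_n\}$ in $\hatC(\RNpp \times M \times M, \RNp)$, and identify any subsequential limit $q$ with $p$ by testing the identity $\int_{F_n} f(y)\, p_n(t,x_n,y)\, m_n(dy) = E_n^{x_n}[f(X_n(t))]$ against compactly supported continuous $f$, using Lemma~\ref{lem: vague convergence and hatC topology} on the left and the convergence of law maps on the right, then invoking full support of $m$ and continuity of $q(t,x,\cdot)$ and $p(t,x,\cdot)$.

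The one place where you genuinely diverge is the treatment of the right-hand side $E_n^{x_n}[f(X_n(t))]$, where the evaluation map $\xi \mapsto f(\xi(t))$ fails to be $J_1$-continuous. The paper resolves this probabilistically: quasi-left-continuity of the standard process $X$ implies that $X$ is $P^x$-a.s.\ continuous at each fixed deterministic time $t$, so the evaluation functional is continuous at $P^x$-a.e.\ path and the (extended) continuous mapping theorem gives $P_n^{x_n}(X_n(t) \in \cdot) \to P^x(X(t) \in \cdot)$ directly. You instead mollify in time, replacing evaluation by the genuinely $J_1$-continuous functional $\xi \mapsto \varepsilon^{-1}\int_t^{t+\varepsilon} f(\xi(s))\,ds$ and removing the averaging afterwards via joint continuity of the heat kernels; as you note, closing the double limit requires the averaging error to be controlled uniformly in $n$, which is exactly what the equicontinuity statement $\sup_n w(p_n, T, r, \delta) \to 0$ in Lemma~\ref{lem: precompactness of heat kernels} provides. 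Both routes are valid; the paper's is shorter because it exploits a structural property of standard processes, whereas yours is more robust in that it only uses the analytic input (equicontinuity of the kernels) that is already in hand and would survive in settings where one does not wish to invoke quasi-left-continuity.
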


\begin{proof}
  We simply write $p_n = p_{(R_n, m_n)}$ and $\SPM_n = \SPM_{(R_n, m_n)}$ for $n \in \NN$,
  and $p = p_{(R, m)}$ and $\SPM = \SPM_{(R, m)}$.
  We let $((X_n(t))_{t \geq 0}, (P_n^x)_{x \in F_n})$ 
  and $((X(t))_{t \geq 0}, (P^x)_{x \in F})$ 
  be the processes associated with $(R_n, m_n)$ and $(R, m)$, respectively.
  By Theorem~\ref{thm: conv in M(tau)},
  we may think that $(F_n, R_n, \rho_n)$ and $(F, R, \rho)$ are embedded 
  isometrically into a common rooted $\bcmAB$ space $(M, d_M, \rho_M)$
  in such a way that 
  $\rho_{n} = \rho = \rho_{M}$ as elements of $M$,
  $F_n \to F$ in the Fell topology as closed subsets of $M$,
  $m_n \to m $ vaguely as measures on $M$,
  $a_n \to a$ in $\tau(M)$,
  and
  $\Upsilon_n \to \Upsilon$ in $\hatC(M, \Prob(D_{J_1}(\RNp, M)))$.
  We deduce from Lemma~\ref{lem: precompactness of heat kernels} and \cite[Theorem~3.31]{Noda_pre_Metrization}
  that the collection $\{p_{n}\}_{n \geq 1}$ is precompact in $\hatC(\RNpp \times M \times M, \RNp)$.
  Thus, it is enough to show that the limit of any convergent subsequence in $\{p_{n}\}_{n \geq 1}$ coincides with $p$.
  To avoid notational complexity, we assume that the full sequence $p_{n}$ converges to $q$ in $\hatC(\RNpp \times M \times M, \RNp)$.
  Since the domain $\dom(p_{n}) = \RNpp \times F_n \times F_n$ converges to $\RNpp \times F \times F$ in the Fell topology,
  it follows from Lemma~\ref{lem: conv in hatC} that $\dom(q) = \RNpp \times F \times F$.
  Fix $t \in (0, \infty)$ and $x \in F$.
  By the convergence of $F_n $ to $F$,
  we can choose $x_n \in F_n$ such that $x_n \to x$ in $M$.
  It then follows from the convergence of $\Upsilon_n$ to $\Upsilon$ that  
  \begin{equation}
    \Upsilon_n(x_n) = P_n^{x_n}((X_{n}(t))_{t \geq 0} \in \cdot) 
    \to
    \Upsilon(x) = P^x((X(t))_{t \geq 0} \in \cdot)
  \end{equation} 
  weakly as probability measures on $D_{J_1}(\RNp, M)$.
  By its quasi-left continuity, 
  $X$ is continuous at each deterministic time $t \geq 0$, $P^x$-a.s.
  Thus, the above distributional convergence in $D_{J_1}(\RNp, M)$ implies that 
  $P_n^{x_n}(X_{n}(t) \in \cdot) \xrightarrow{\mathrm{d}} P^x(X(t) \in \cdot)$ for each $t \geq 0$.
  This yields that, for any compactly-supported continuous function $f$ on $M$, 
  \begin{equation}
    \lim_{n \to \infty} 
    \int_{F_n} p_{n}(t,x_n,y) f(y)\, m_n(dy) 
    = 
    \lim_{n \to \infty}
    E_{n}^{x_n}[f(X_{t})] 
    = 
    E^{x}[f(X_{t})]
    = 
    \int_{F} p(t, x, y) f(y)\, m(dy).
  \end{equation}
  On the other hand,
  since $p_{n} \to q$ in $\hatC(\RNpp \times M \times M, \RNp)$ and $f(y)\,m_n(dy) \to f(y)\,m(fy)$ weakly,
  we deduce from Lemma~\ref{lem: vague convergence and hatC topology} that 
  \begin{equation}
    \lim_{n \to \infty} 
    \int_{F_n} p_{n}(t,x_n,y) f(y)\, m_n(dy)  
    = 
    \int_{F} q(t,x,y) f(y)\, m(dy).
  \end{equation}
  Therefore,
  for all compactly-supported continuous functions $f$ on $M$,
  \begin{equation}
    \int_{F} p(t, x, y) f(y)\, m(dy) 
    = 
    \int_{F} q(t,x,y) f(y)\, m(dy).
  \end{equation}
  In particular,
  $p(t, x, \cdot) = q(t, x, \cdot)$, $m$-a.e.
  Since $m$ is of full support on $F$ and both $p(t, x, \cdot)$ and $q(t, x, \cdot)$ are continuous functions on $F$,
  it follows that $p(t, x, y) = q(t, x, y)$ for all $y \in F$.
  Hence, we conclude that $p = q$, which completes the proof.
\end{proof}

Below, we present a result analogous to Proposition~\ref{prop: measurability of SPM wrt GH topology}.  
This enables us to discuss the convergence of heat kernels of stochastic processes on random resistance metric spaces.  

\begin{prop} \label{prop: measurability of gh hk map}
  The map $\GHHKSPM_\tau \colon \rootResisSp(\tau) \to \rootResisSp(\tau \times \HKSt \times \SPMSt)$ is Borel measurable.
  Moreover, its image, that is,
  \begin{equation}
    \left\{ \bigl( F, R, \rho, m, a, p_{(R,m)}, \SPM_{(R,m)} \bigr)\, \middle|\, (F, R, \rho, m, a) \in \rootResisSp(\tau) \right\}
  \end{equation}
  is a Borel subset of $\rootBCM(\MeasSt \times \tau \times \HKSt \times \SPMSt)$.
\end{prop}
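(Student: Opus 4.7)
The strategy is to factor $\GHHKSPM_\tau$ through $\GHSPM_\tau$ via a continuous inverse of a forgetful projection, so that measurability follows from Proposition~\ref{prop: measurability of SPM wrt GH topology} and the image being Borel follows from Lemma~\ref{lem: Lousin--Souslin}.

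First I would introduce the forgetful map
\begin{equation}
  \pi \colon \rootBCM(\MeasSt \times \tau \times \HKSt \times \SPMSt) \longrightarrow \rootBCM(\MeasSt \times \tau \times \SPMSt)
\end{equation}
that drops the heat-kernel coordinate. Since the metric on a product structure is the maximum of its factors, removing a coordinate only relaxes the constraints in the infimum defining $\GFMet$ (see Definition~\ref{dfn: GF-type metric}); hence $\pi$ is $1$-Lipschitz, and in particular continuous. Write $\mathcal{B} \coloneqq \GHSPM_\tau(\rootResisSp(\tau))$ and $\mathcal{C} \coloneqq \GHHKSPM_\tau(\rootResisSp(\tau))$. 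Because $p_{(R,m)}$ is uniquely determined by $(R,m)$, the restriction $\pi|_{\mathcal{C}} \colon \mathcal{C} \to \mathcal{B}$ is a bijection.

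The key step is to check that its inverse $\pi|_{\mathcal{C}}^{-1} \colon \mathcal{B} \to \mathcal{C}$ is continuous, which is exactly the content of Theorem~\ref{thm: local limit theorem}: if $\GHSPM_\tau(G_n) \to \GHSPM_\tau(G)$, then $\GHHKSPM_\tau(G_n) \to \GHHKSPM_\tau(G)$. Both ambient Gromov--Hausdorff-type spaces are Polish (by Theorems~\ref{thm: product is Polish}, \ref{thm: composition is Polish}, and \ref{thm: Polishness of GH topology}, combined with the Polishness of $\MeasSt$, $\tau$, $\HKSt$, and $\SPMSt$), and hence metrizable, so sequential continuity implies continuity.

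Combining the above, one has $\GHHKSPM_\tau = \pi|_{\mathcal{C}}^{-1} \circ \GHSPM_\tau$. Proposition~\ref{prop: measurability of SPM wrt GH topology} gives Borel measurability of $\GHSPM_\tau$, so the composition is Borel measurable, which settles the first claim. For the second, $\GHHKSPM_\tau$ is injective since the $(F,R,\rho,m,a)$ coordinates of its output recover its input. As $\rootResisSp(\tau)$ is Borel in the Polish space $\rootBCM(\MeasSt \times \tau)$ by Proposition~\ref{prop: sp of recurrent resis sp is Borel}, Lemma~\ref{lem: Lousin--Souslin} then yields that $\mathcal{C}$ is Borel in $\rootBCM(\MeasSt \times \tau \times \HKSt \times \SPMSt)$. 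I do not anticipate any real obstacle: the analytic content is concentrated in Theorem~\ref{thm: local limit theorem}, which is already established, and the remaining steps are routine manipulations of structures and Polish spaces.
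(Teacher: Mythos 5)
Your proposal is correct and follows essentially the same route as the paper: the inverse $\pi|_{\mathcal{C}}^{-1}$ of your forgetful projection is precisely the map $\tilde{\GHSPM}^{\mathrm{HK}}_\tau$ defined on $\GHSPM_\tau(\rootResisSp(\tau))$ in the paper's proof, whose continuity is exactly Theorem~\ref{thm: local limit theorem}, and both arguments then compose with the Borel measurable $\GHSPM_\tau$ and invoke Lemma~\ref{lem: Lousin--Souslin} for the image claim. The only cosmetic difference is that you package the well-definedness of this map as the bijectivity of a restricted projection rather than defining it directly on the image.
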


\begin{proof}
  If we define a map $\tilde{\GHSPM}^{\mathrm{HK}}_\tau \colon \GHSPM_\tau(\rootResisSp(\tau)) \to \rootResisSp(\tau \times \HKSt \times \SPMSt)$
  by setting 
  $\tilde{\GHSPM}^{\mathrm{HK}}_\tau( \GHSPM_{\tau}(G) ) \coloneqq \GHHKSPM_\tau(G)$,
  then $\tilde{\GHSPM}^{\mathrm{HK}}$ is continuous by Theorem \ref{thm: local limit theorem}.
  Moreover, by Proposition~\ref{prop: measurability of SPM wrt GH topology},
  the map $\GHSPM_\tau \colon \rootResisSp(\tau) \to \GHSPM_\tau(\rootResisSp(\tau))$ is Borel measurable 
  (note that the codomain is restricted to its image).
  Since we have that $\GHHKSPM_\tau = \tilde{\GHSPM}^{\mathrm{HK}}_\tau \circ \GHSPM_\tau$,
  we obtain the first assertion.
  The second claim can be verified in the same way as in Proposition~\ref{prop: measurability of SPM wrt GH topology}.
\end{proof}
 
\begin{cor} \label{cor: resis sp with hk is Borel}
  The set 
  \begin{equation}
    \left\{ \bigl( F, R, \rho, m, a, p_{(R,m)} \bigr)\, \middle|\, (F, R, \rho, m, a) \in \rootResisSp(\tau) \right\}
  \end{equation}
  is a Borel subset of $\rootBCM(\MeasSt \times \tau \times \HKSt)$.
\end{cor}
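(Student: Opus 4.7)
The plan is to reduce the corollary to Proposition~\ref{prop: measurability of gh hk map} via an application of the Lusin--Souslin theorem (Lemma~\ref{lem: Lousin--Souslin}). The key observation is that the only difference between the set in the corollary and the image set in Proposition~\ref{prop: measurability of gh hk map} is the presence or absence of the law map coordinate $\SPM_{(R,m)}$, which is completely determined by the underlying tuple $(F, R, \rho, m)$.

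First, I would introduce the natural projection map
\begin{equation}
  \pi \colon \rootBCM(\MeasSt \times \tau \times \HKSt \times \SPMSt)
  \longrightarrow \rootBCM(\MeasSt \times \tau \times \HKSt),
\end{equation}
which simply forgets the final $\SPMSt$-coordinate. This projection is continuous (it corresponds to composing with a forgetful structure morphism), and in particular Borel measurable. Both the domain and codomain are Polish: this follows from the Polishness of each of the structures $\MeasSt$, $\tau$, $\HKSt$, $\SPMSt$ together with Theorems~\ref{thm: product is Polish} and \ref{thm: composition is Polish}, combined with Theorem~\ref{thm: Polishness of GH topology}.

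Next, let $\mathcal{A}$ denote the image of $\GHHKSPM_\tau$, i.e.,
\begin{equation}
  \mathcal{A} \coloneqq \left\{ \bigl( F, R, \rho, m, a, p_{(R,m)}, \SPM_{(R,m)} \bigr) \,\middle|\, (F, R, \rho, m, a) \in \rootResisSp(\tau) \right\},
\end{equation}
which is a Borel subset of $\rootBCM(\MeasSt \times \tau \times \HKSt \times \SPMSt)$ by Proposition~\ref{prop: measurability of gh hk map}. The set appearing in the statement of the corollary is precisely $\pi(\mathcal{A})$. The crucial point is that the restriction $\pi|_{\mathcal{A}}$ is injective: if two elements of $\mathcal{A}$ project to the same image, they share the same $(F, R, \rho, m, a)$, and hence, since $\SPM_{(R,m)}$ is uniquely determined by $(R, m)$, they must coincide as elements of $\mathcal{A}$.

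Having established these ingredients, I would then invoke Lemma~\ref{lem: Lousin--Souslin}: since $\pi|_{\mathcal{A}} \colon \mathcal{A} \to \rootBCM(\MeasSt \times \tau \times \HKSt)$ is an injective Borel map from a Borel subset of a Polish space into another Polish space, its image $\pi(\mathcal{A})$ is Borel, which is exactly the assertion of the corollary. I do not anticipate any genuine obstacle here; the argument is essentially a formal consequence of Proposition~\ref{prop: measurability of gh hk map} combined with the Lusin--Souslin theorem, exactly parallel to the deduction made at the end of the proof of Proposition~\ref{prop: measurability of SPM wrt GH topology}.
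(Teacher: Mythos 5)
Your argument is correct and is essentially the route the paper takes: the paper's proof is a one-line appeal to the Lusin--Souslin theorem together with the measurability results of Section~\ref{sec: Local limit theorem}, and your version simply makes explicit the injective Borel map (here the forgetful projection restricted to the Borel image of $\GHHKSPM_\tau$) to which Lemma~\ref{lem: Lousin--Souslin} is applied. The injectivity observation (that $\SPM_{(R,m)}$ is determined by $(R,m)$) and the continuity of the forgetful projection are both sound, so no further work is needed.
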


\begin{proof}
  This is an immediate consequence of Lemma~\ref{lem: Lousin--Souslin} and Proposition~\ref{prop: measurability of SPM wrt GH topology}.
\end{proof}

By Theorem~\ref{thm: local limit theorem},
the results of Theorems~\ref{thm: dtm conv of SPM} and \ref{thm: rdm conv for SPM} are extended
to the convergence of not only the law maps of stochastic processes but also the associated heat kernels,
as shown below.

\begin{cor} \label{cor: dtm conv of SPM and heat kernels}
  Under the same conditions as Theorem~\ref{thm: dtm conv of SPM},
  it holds that $\GHHKSPM_\tau(G_n) \to \GHHKSPM_\tau(G)$ in $\rootResisSp(\tau \times \HKSt \times \SPMSt)$.
\end{cor}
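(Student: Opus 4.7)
The corollary will follow as a direct two-step composition of Theorems~\ref{thm: dtm conv of SPM} and \ref{thm: local limit theorem}, both of which are already in hand. My plan is as follows. First, I would invoke Theorem~\ref{thm: dtm conv of SPM} on the given sequence $G_n \to G$ under the non-explosion condition (ii): this simultaneously yields that $G \in \rootResisSp(\tau)$ and that
\begin{equation}
  \GHSPM_\tau(G_n) \to \GHSPM_\tau(G) \quad \text{in} \quad \rootResisSp(\tau \times \SPMSt).
\end{equation}

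Next, I would feed this convergence directly into Theorem~\ref{thm: local limit theorem}, whose hypothesis is precisely convergence in $\rootResisSp(\tau \times \SPMSt)$ and whose conclusion is the desired convergence $\GHHKSPM_\tau(G_n) \to \GHHKSPM_\tau(G)$ in $\rootResisSp(\tau \times \HKSt \times \SPMSt)$. Chaining the two steps yields the corollary immediately.

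Since both upstream theorems are already established, there is no genuine obstacle left in the proof of the corollary itself; it is a formal one-line composition. All substantive work has been absorbed into the preceding two results: Theorem~\ref{thm: dtm conv of SPM} extends Croydon's convergence of processes started at a root to convergence of the entire law map (via the left-continuous trace construction and Lemma~\ref{lem: rough continuity of GHSPM}), while Theorem~\ref{thm: local limit theorem} identifies the limit of heat kernels by combining their a priori precompactness (Lemma~\ref{lem: precompactness of heat kernels}) with the convergence of one-dimensional marginals extracted from the law maps, via integration against compactly supported continuous test functions together with Lemma~\ref{lem: vague convergence and hatC topology}.
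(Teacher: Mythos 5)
Your proposal is correct and matches the paper's own proof, which is precisely the one-line composition of Theorem~\ref{thm: dtm conv of SPM} (giving $\GHSPM_\tau(G_n) \to \GHSPM_\tau(G)$ and $G \in \rootResisSp(\tau)$) followed by Theorem~\ref{thm: local limit theorem}. Nothing further is needed.
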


\begin{proof}
  This is immediate from Theorems~\ref{thm: dtm conv of SPM} and \ref{thm: local limit theorem}.
\end{proof}

\begin{cor} \label{cor: rdm conv of SPM and heat kernels}
  Under the same conditions as Theorem~\ref{thm: rdm conv for SPM},
  it holds that $\GHHKSPM_\tau(G_n) \xrightarrow{\mathrm{d}} \GHHKSPM_\tau(G)$ as random elements of $\rootResisSp(\tau \times \HKSt \times \SPMSt)$.
\end{cor}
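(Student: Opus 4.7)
The plan is to reduce this random-space statement to the deterministic local limit theorem (Theorem~\ref{thm: local limit theorem}), exactly parallel to how Corollary~\ref{cor: dtm conv of SPM and heat kernels} reduces to its deterministic counterpart. The key point is that Theorem~\ref{thm: rdm conv for SPM} already provides the distributional convergence of the law maps, and the remaining task is to propagate this to a convergence of the heat kernels.

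First I would invoke Theorem~\ref{thm: rdm conv for SPM} to obtain that $G \in \rootResisSp(\tau)$ almost surely and
\[
  \GHSPM_\tau(G_n) \xrightarrow{\mathrm{d}} \GHSPM_\tau(G)
  \quad \text{as random elements of } \rootResisSp(\tau \times \SPMSt).
\]
Next, I would recall from the proof of Proposition~\ref{prop: measurability of gh hk map} the auxiliary map
\[
  \tilde{\GHSPM}^{\mathrm{HK}}_\tau \colon \GHSPM_\tau(\rootResisSp(\tau)) \to \rootResisSp(\tau \times \HKSt \times \SPMSt),
  \quad \GHSPM_\tau(\mathcal{G}) \longmapsto \GHHKSPM_\tau(\mathcal{G}),
\]
which is well-defined by the injectivity of $\GHSPM_\tau$. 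Proposition~\ref{prop: measurability of SPM wrt GH topology} identifies its domain as a Borel subset of the Polish space $\rootResisSp(\tau \times \SPMSt)$, and Theorem~\ref{thm: local limit theorem} asserts that $\tilde{\GHSPM}^{\mathrm{HK}}_\tau$ is continuous on this domain.

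Since the codomain of $\GHSPM_\tau$ is Polish, Skorokhod's representation theorem furnishes a common probability space carrying random elements $\tilde{\mathcal{Y}}_n$ and $\tilde{\mathcal{Y}}$ with the laws of $\GHSPM_\tau(G_n)$ and $\GHSPM_\tau(G)$, respectively, such that $\tilde{\mathcal{Y}}_n \to \tilde{\mathcal{Y}}$ almost surely. Both $\tilde{\mathcal{Y}}_n$ and $\tilde{\mathcal{Y}}$ lie in the domain of $\tilde{\GHSPM}^{\mathrm{HK}}_\tau$ almost surely, so its continuity yields $\tilde{\GHSPM}^{\mathrm{HK}}_\tau(\tilde{\mathcal{Y}}_n) \to \tilde{\GHSPM}^{\mathrm{HK}}_\tau(\tilde{\mathcal{Y}})$ almost surely. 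Using the identity $\tilde{\GHSPM}^{\mathrm{HK}}_\tau \circ \GHSPM_\tau = \GHHKSPM_\tau$ together with the fact that almost-sure convergence implies distributional convergence, we transfer this back to the original distributions and obtain $\GHHKSPM_\tau(G_n) \xrightarrow{\mathrm{d}} \GHHKSPM_\tau(G)$, as desired.

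The argument presents no substantive obstacle beyond bookkeeping, since it is essentially a direct synthesis of the random-space convergence from Theorem~\ref{thm: rdm conv for SPM}, the deterministic local limit theorem from Theorem~\ref{thm: local limit theorem}, and the measurability infrastructure of Propositions~\ref{prop: measurability of SPM wrt GH topology} and \ref{prop: measurability of gh hk map}. The only mild care required is ensuring that the pullback along $\GHSPM_\tau$ from the Skorokhod coupling is unambiguous, which is guaranteed by the injectivity of $\GHSPM_\tau$ together with the Borel measurability of its inverse on its image (via the Lusin--Souslin theorem, Lemma~\ref{lem: Lousin--Souslin}).
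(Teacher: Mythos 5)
Your proposal is correct and follows essentially the same route as the paper: invoke Theorem~\ref{thm: rdm conv for SPM} to get $\GHSPM_\tau(G_n) \xrightarrow{\mathrm{d}} \GHSPM_\tau(G)$, then push forward through the continuous map $\tilde{\GHSPM}^{\mathrm{HK}}_\tau$ from the proof of Proposition~\ref{prop: measurability of gh hk map}. The only cosmetic difference is that you unpack the continuous mapping theorem via a Skorokhod coupling, whereas the paper simply cites the continuity of $\tilde{\GHSPM}^{\mathrm{HK}}_\tau$ directly.
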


\begin{proof}
  Recall the continuous map $\tilde{\GHSPM}^{\mathrm{HK}}_\tau \colon \GHSPM_\tau(\rootResisSp(\tau)) \to \rootResisSp(\tau \times \HKSt \times \SPMSt)$
  from the proof of Proposition~\ref{prop: measurability of gh hk map}.
  By Theorem~\ref{thm: rdm conv for SPM}, we have that 
  $\GHSPM_\tau(G_n) \xrightarrow{\mathrm{d}} \GHSPM_\tau(G)$ as random elements of $\rootResisSp(\tau \times \SPMSt)$.
  Thus, we deduce from the continuity of $\tilde{\GHSPM}^{\mathrm{HK}}_\tau$ that 
  $\tilde{\GHSPM}^{\mathrm{HK}}_\tau( \GHSPM_{\tau}(G_n) ) = \GHHKSPM_\tau(G_n)$
  converges in distribution to $\tilde{\GHSPM}^{\mathrm{HK}}_\tau( \GHSPM_{\tau}(G) ) = \GHHKSPM_\tau(G)$,
  which completes the proof.
\end{proof}

%%%%%%%%%%%%%%%%%%%%%%%%%%%%%%%%%%%%%%%%%%%%%%%%%%%%%%%%%%%%%%%%%%%%%%%%%%%%%%%%%%%%%%%%%%%%%%%%%%%%%%%%%%%%%%%%%%%%%%%%%%%%%%%%%%
%%%%%%%%%%%%%%%%%%%%%%%%%%%%%%%%%%%%%%%%%%%%%%%%%%%%%%%%%%%%%%%%%%%%%%%%%%%%%%%%%%%%%%%%%%%%%%%%%%%%%%%%%%%%%%%%%%%%%%%%%%%%%%%%%%
% Collisions of stochastic processes on resistance metric spaces
%%%%%%%%%%%%%%%%%%%%%%%%%%%%%%%%%%%%%%%%%%%%%%%%%%%%%%%%%%%%%%%%%%%%%%%%%%%%%%%%%%%%%%%%%%%%%%%%%%%%%%%%%%%%%%%%%%%%%%%%%%%%%%%%%%
%%%%%%%%%%%%%%%%%%%%%%%%%%%%%%%%%%%%%%%%%%%%%%%%%%%%%%%%%%%%%%%%%%%%%%%%%%%%%%%%%%%%%%%%%%%%%%%%%%%%%%%%%%%%%%%%%%%%%%%%%%%%%%%%%%
\section{Collisions of stochastic processes on resistance metric spaces} \label{sec: col of proc on resis sp}

In Section~\ref{sec: Local limit theorem}, 
we showed that if measured resistance metric spaces converge and satisfy the non-explosion condition,
then the law maps of the associated stochastic processes and heat kernels also converge. 
Building on this, and by applying Theorems~\ref{thm: col dtm result} and \ref{thm: col rdm result}, 
we establish convergence results for collision measures of stochastic processes on resistance metric spaces in this section.

%%%%%%%%%%%%%%%%%%%%%%%%%%%%%%%%%%%%%%%%%%%%%%%%%%%%%%%%%%%%%%%%%%%%%%%%%%%%%%%%%%%%%%%%%%%%%%%%%%%%%%%%%%%%%%%%%%%%%%%%%%%%%%%%%%
% The space of resistance metric spaces for collision
%%%%%%%%%%%%%%%%%%%%%%%%%%%%%%%%%%%%%%%%%%%%%%%%%%%%%%%%%%%%%%%%%%%%%%%%%%%%%%%%%%%%%%%%%%%%%%%%%%%%%%%%%%%%%%%%%%%%%%%%%%%%%%%%%%
\subsection{Resistance metric spaces for the study of collisions} \label{sec: sp for collision}

In this subsection, we introduce a space of resistance metric spaces and verify its measurability.
This space will be used in the main results of the next subsection.

\begin{dfn} \label{dfn: resis col sp}
  We define $\ColResisSp$ to be the collection of $(F, R, \rho, m, \mu) \in \rootResisSp(\MeasSt)$ 
  such that 
  \begin{equation} \label{dfn eq: resis col sp}
    \lim_{\delta \to 0} 
    \sup_{x_1, x_2 \in F^{(r)}} 
    \int_0^\delta \int_{F^{(r)}} p_{(R,m)}(t, x_1, y) p_{(R,m)}(t, x_2, y)\, \mu(dy)\, dt 
    = 0,
    \quad 
    \forall r > 0.
  \end{equation}
  More generally,
  given a Polish structure $\tau$,
  we write $\ColResisSp(\tau)$ for the collection of $(F, R, \rho, m,\allowbreak \mu,\allowbreak a) \in \rootResisSp(\MeasSt \times \tau)$
  such that $(F, R, \rho, m, \mu) \in \ColResisSp$.
  We always equip $\ColResisSp(\tau)$ with the relative topology 
  induced from $\rootBCM(\MeasSt^{\otimes 2} \times \tau)$.
\end{dfn}

Let $(F, R, \rho, m, \mu) \in \ColResisSp$, $X^1$ be the associated process, and $X^2$ be an independent copy of $X^1$.
Write $\hat{X} = (X^1, X^2)$ for their product process.
Condition~\eqref{dfn eq: resis col sp} implies that the Radon measure $\diagMeas{\mu}$ on $F \times F$
belongs to the local Kato class of $\hat{X}$ 
(see Definition~\ref{dfn: Kato class}).
Hence, the collision measure of $X^1$ and $X^2$ associated with $\mu$ is well defined
(see Definition~\ref{dfn: col meas}).
We write 
\begin{equation}
  \hatSPMcol(\bm{x}) 
  = 
  \hatSPMcol_{(R, m, \mu)}(\bm{x})
  \coloneqq 
  \ProcLaw_{(\hat{X}, \Pi)}(\bm{x}),
  \quad 
  \bm{x} \in F \times F.
\end{equation}
The following is immediate from Proposition~\ref{prop: continuity of Col map}.

\begin{lem} \label{lem: continuity of col map for resis sp}
  For each $(F, R, \rho, m, \mu) \in \rootResisSp$, 
  the map 
  \begin{equation}
      \hatSPMcol_{(R, m, \mu)} \colon F \times F \to \Prob\bigl( D_{J_1}(\RNp, F) \times D_{J_1}(\RNp, F) \times \Meas(F \times \RNp)\bigr)
  \end{equation}
  is continuous.
\end{lem}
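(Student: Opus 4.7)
The plan is to deduce this from Proposition~\ref{prop: continuity of Col map} by verifying its two hypotheses for the pair of i.i.d.\ copies $X^1, X^2$ of the Hunt process associated with $(R,m)$, and then to observe that continuity into the $\STOMMeas$-valued target implies continuity into the $\Meas$-valued target since the topology of Section~\ref{sec: The space for STOMs} is finer than the vague topology.

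First I would check hypothesis~\ref{prop item: 1. continuity of Col map} of Proposition~\ref{prop: continuity of Col map}, namely that $\diagMeas{\mu}$ belongs to the local Kato class of the product process $\hat{X}$. Since $X^i$ is associated with a regular Dirichlet form, Hunt's hypothesis~\ref{item: Hunt hypo} holds for $X^1$, and consequently also for $\hat{X}$; in particular $\diagMeas{\mu}$ automatically charges no semipolar set of $\hat{X}$ (this uses Proposition~\ref{prop: bounded potential and polarity} once the potential bound below is established). Using the product form of the heat kernel \eqref{eq: hk of product process}, namely $\hat{p}(t,(x_1,x_2),(y,y))=p_{(R,m)}(t,x_1,y)\,p_{(R,m)}(t,x_2,y)$, and Proposition~\ref{prop: Potential and hk behavior} applied to $\hat{X}$, the local Kato condition for $\diagMeas{\mu}$ is equivalent to
\begin{equation}
\lim_{\delta \to 0}\sup_{(x_1,x_2) \in (F \times F)^{(r)}}\int_0^\delta\!\!\int_{F^{(r)}} p_{(R,m)}(t,x_1,y)\,p_{(R,m)}(t,x_2,y)\,\mu(dy)\,dt = 0
\end{equation}
for every $r>0$, which is precisely \eqref{dfn eq: resis col sp} appearing in the definition of $\ColResisSp$.

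Next I would verify hypothesis~\ref{prop item: 2. continuity of Col map}, namely the weak $J_1$-continuity condition (Assumption~\ref{assum: weak J_1 continuity}) for $X^1$ (and hence $X^2$). The conservativeness of $X^1$ and the joint continuity of $p_{(R,m)}$ on $(0,\infty) \times F \times F$ are standard consequences of the theory of resistance forms recalled in Section~\ref{sec: resistance preliminary}, in particular of \cite[Theorem~10.4]{Kigami_12_Resistance}. The continuity of the law map $\ProcLaw_{X^1}\colon F \to \Prob(D_{J_1}(\RNp,F))$ is exactly Lemma~\ref{lem: continuity of stoc map on resis sp}. With both hypotheses satisfied, Proposition~\ref{prop: continuity of Col map} yields continuity of
\begin{equation}
F\times F \ni \bm{x} \mapsto \ProcLaw_{(\hat{X},\Pi)}(\bm{x}) \in \Prob\bigl(D_{J_1}(\RNp,F)\times D_{J_1}(\RNp,F)\times \STOMMeas(F\times \RNp)\bigr).
\end{equation}

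Since the identity map $\STOMMeas(F\times \RNp) \to \Meas(F\times \RNp)$ is continuous by Proposition~\ref{prop: conv in stom sp}\ref{prop item: conv in stom sp. 3}, composing with this pushforward (and the corresponding one at the level of probability measures) preserves continuity, giving the stated continuity of $\hatSPMcol_{(R,m,\mu)}$. There is no substantial obstacle; the only point requiring a little care is the verification of hypothesis~\ref{prop item: 1. continuity of Col map}, where one must translate the definition of $\ColResisSp$ into the local Kato condition for $\diagMeas{\mu}$ via Proposition~\ref{prop: Potential and hk behavior} applied to the product process, but this is routine given the product structure of $\hat{p}$.
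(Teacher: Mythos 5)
Your proposal is correct and follows the same route as the paper, which simply notes that condition~\eqref{dfn eq: resis col sp} in the definition of $\ColResisSp$ gives the local Kato property of $\diagMeas{\mu}$ for the product process (via the product form of $\hat{p}$ and Proposition~\ref{prop: Potential and hk behavior}) and then declares the lemma ``immediate from Proposition~\ref{prop: continuity of Col map}.'' Your write-up merely makes explicit the details the paper leaves implicit — the semipolar/Hunt-hypothesis point, the verification of Assumption~\ref{assum: weak J_1 continuity} via Lemma~\ref{lem: continuity of stoc map on resis sp} and Kigami's heat-kernel theorem, and the passage from the $\STOMMeas$ topology to the vague topology — all of which are accurate.
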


Thus, the space $\ColResisSp(\tau)$ forms a suitable class of resistance metric spaces for the study of collisions.
Below, we verify its measurability.

\begin{prop} \label{prop: col resis sp is Borel subset}
  The set $\ColResisSp(\tau)$ is a Borel subset of $\rootBCM(\MeasSt^{\otimes 2} \times \tau)$.
\end{prop}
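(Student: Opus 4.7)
The plan is to realize $\ColResisSp(\tau)$ as the preimage of a Borel subset under a Borel measurable map into an enriched space that carries the heat kernel as an additional coordinate, and then to verify measurability of the scalar quantity appearing in \eqref{dfn eq: resis col sp}. First, applying Proposition~\ref{prop: measurability of gh hk map} with the Polish structure $\MeasSt \times \tau$ in place of $\tau$, the map
\begin{equation}
\iota \colon (F, R, \rho, m, \mu, a) \longmapsto (F, R, \rho, m, \mu, a, p_{(R,m)})
\end{equation}
is Borel measurable from $\rootResisSp(\MeasSt \times \tau)$ into $\rootBCM(\MeasSt^{\otimes 2} \times \tau \times \HKSt)$, and by Corollary~\ref{cor: resis sp with hk is Borel} its image is Borel. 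Combined with Proposition~\ref{prop: sp of recurrent resis sp is Borel}, it is enough to show that the set singled out by \eqref{dfn eq: resis col sp} on the image of $\iota$ is Borel.

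For each pair $r, \delta > 0$ I will introduce the functional
\begin{equation}
Q_{r,\delta}(F, R, \rho, m, \mu, a, p) \coloneqq \sup_{x_1, x_2 \in F^{(r)}} \int_0^\delta \int_{F^{(r)}} p(t, x_1, y)\, p(t, x_2, y)\, \mu(dy)\, dt
\end{equation}
on $\rootBCM(\MeasSt^{\otimes 2} \times \tau \times \HKSt)$, so that \eqref{dfn eq: resis col sp} is equivalent to the countable logical formula
\begin{equation}
\forall k, l \in \NN,\ \exists n \in \NN,\ \forall \delta \in \QN \cap (0, 1/n) : \ Q_{k, \delta} \leq 1/l.
\end{equation}
Hence the problem reduces to proving that $Q_{r,\delta}$ is Borel measurable for each fixed rational pair $(r, \delta)$.

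To establish this, I plan to approximate $Q_{r,\delta}$ from below by a monotone sequence of Borel (in fact continuous) functionals. For each $\varepsilon \in (0, \delta)$, define the time-truncated and spatially smoothed version
\begin{equation}
Q_{r, \delta}^{\varepsilon, R}(F, R, \rho, m, \mu, a, p) \coloneqq \sup_{x_1, x_2 \in \supp(\tilde{\mu}^{(R)})} \int_\varepsilon^\delta \int_{F} p(t, x_1, y)\, p(t, x_2, y)\, \tilde{\mu}^{(R)}(dy)\, dt,
\end{equation}
where $\tilde{\mu}^{(R)}$ is the smooth truncation from \eqref{eq: smooth truncation}. On $[\varepsilon, \delta] \times F^{(R)} \times F^{(R)}$ the heat kernel is bounded and jointly continuous (with a uniform bound along convergent sequences by Lemma~\ref{lem: precompactness of heat kernels}), so by embedding convergent sequences into a common space via Theorem~\ref{thm: conv in M(tau)} and invoking Lemmas~\ref{lem: smooth truncation of meas} and \ref{lem: vague convergence and hatC topology} together with the continuity of the supremum of a continuous function over a Hausdorff-convergent family of compacts, one sees that $Q_{r,\delta}^{\varepsilon, R}$ is continuous in the enriched data for every $r, R, \varepsilon > 0$. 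Taking the monotone limit $\varepsilon \downarrow 0$ and $R \uparrow \infty$ recovers $Q_{r,\delta}$, exhibiting it as a pointwise limit of continuous functionals and hence Borel measurable.

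The main obstacle is the continuity of $Q_{r,\delta}^{\varepsilon, R}$: one must simultaneously control the Fell convergence of the ambient spaces, the vague convergence of the measures, the $\hatC$-convergence of the heat kernels, and the fact that the supremum is taken over a compact set that itself varies. The smooth truncation removes the pathology at the countably many exceptional radii where Hausdorff convergence of $F_n^{(r)}$ fails, while the truncation in $t$ away from zero sidesteps the heat-kernel singularity; the uniform heat-kernel bounds from Lemma~\ref{lem: precompactness of heat kernels} are then precisely what is needed to pass limits under the integral and the supremum together, yielding the desired continuity.
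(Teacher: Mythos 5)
Your opening reduction --- attaching the heat kernel via a Borel map into $\rootBCM(\MeasSt^{\otimes 2} \times \tau \times \HKSt)$ whose image is Borel, and then reducing to Borel-ness of the set cut out by \eqref{dfn eq: resis col sp} --- matches the paper's first step and is sound. The gap is in the measurability of $Q_{r,\delta}$. Your approximants do not converge to $Q_{r,\delta}$: since $\tilde{\mu}^{(R)} \uparrow \mu$ and $\supp(\tilde{\mu}^{(R)})$ exhausts $\supp(\mu)$ as $R \uparrow \infty$, the iterated limit of $Q^{\varepsilon,R}_{r,\delta}$ is
\begin{equation}
  \sup_{x_1,x_2 \in \supp(\mu)} \int_0^\delta \int_F p(t,x_1,y)\, p(t,x_2,y)\, \mu(dy)\, dt ,
\end{equation}
a global quantity that no longer depends on $r$, whereas \eqref{dfn eq: resis col sp} is local: the supremum must range over the full closed ball $F^{(r)}$ (not merely the support of $\mu$) and the inner integral is against $\mu|_{F^{(r)}}$. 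On a non-compact space the global quantity can be identically $+\infty$ while \eqref{dfn eq: resis col sp} holds, so your countable formula would describe a different (strictly smaller) set. Separately, the claimed continuity of $Q^{\varepsilon,R}_{r,\delta}$ fails: the supremum is taken over $\supp(\tilde{\mu}^{(R)})$, and supports are not stable under weak convergence (e.g.\ $n^{-1}\delta_{x_0} \to 0$ shows points of the approximating supports can converge to a point outside the limiting support), so the functional is only lower semicontinuous. That still gives Borel approximants, but it does not repair the wrong limit; and if you instead pin both the supremum and the integral to the fixed ball $F^{(r)}$ so that the limit is correct, continuity (indeed any semicontinuity along convergent sequences) genuinely fails at the countably many radii where $F_n^{(r)}$ does not Hausdorff-converge to $F^{(r)}$ --- precisely the pathology the smooth truncation was meant to remove.

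The paper avoids ever proving measurability of $Q_{r,\delta}$ at a fixed radius. It writes $\ColResisSp(\tau)$ (in the heat-kernel-enriched space) as $\bigcap_k \bigcap_l \bigcup_N \mathfrak{G}^{(k,l,N)}(\tau)$, where $\mathfrak{G}^{(k,l,N)}(\tau)$ requires $Q_{r,1/N} \le 1/l$ only for \emph{all but countably many} $r \in [k,k+1]$; monotonicity of $Q_{r,\delta}$ in $r$ shows this quantifier is equivalent to ``for all $r$''. Each $\mathfrak{G}^{(k,l,N)}(\tau)$ is then shown to be \emph{closed}: along a convergent sequence one picks a good radius $r$ at which $F_n^{(r)} \to F^{(r)}$ in the Hausdorff sense and $\mu_n^{(r)} \to \mu^{(r)}$ weakly, and a Portmanteau/Fatou argument on $\int_\varepsilon^{1/N}$ followed by $\varepsilon \downarrow 0$ transfers the bound to the limit. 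To salvage your route you would need either this ``all but countably many radii'' device, or a separate argument (in the spirit of Proposition~\ref{prop: Potential and hk behavior} applied to the product process) showing that the supremum over $F^{(r)}$ can be replaced by one over a set varying continuously with the data.
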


\begin{proof}
  Define 
  \begin{gather}
    \mathfrak{G}_1
    \coloneqq 
    \bigl\{ (F,R,\rho,m,\mu,a,p_{(R,m)}) \bigm| (F,R,\rho,m,\mu,a) \in \rootResisSp(\MeasSt \times \tau) \bigr\},\\
    \mathfrak{G}_2
    \coloneqq 
    \bigl\{ (F,R,\rho,m,\mu,a,p_{(R,m)}) \bigm| (F,R,\rho,m,\mu,a) \in \ColResisSp(\tau) \bigr\}.
  \end{gather}
  Since, given a measured resistance metric space $(F, R, m)$, the associated heat kernel $p_{(R,m)}$ is unique,
  the following map is injective:
  \begin{equation}
    \mathfrak{G}_2 \ni (F,R,\rho,m,\mu,a,p_{(R,m)}) \longmapsto (F,R,\rho,m,\mu,a) \in \rootBCM(\MeasSt^{\otimes 2} \times \tau).
  \end{equation}
  Moreover, by Theorem~\ref{thm: conv in M(tau)}, the above map is continuous.
  Thus, once we establish that $\mathfrak{G}_2$ is a Borel subset of $\rootBCM(\MeasSt^{\otimes 2} \times \tau \times \HKSt)$,
  we can use Lemma~\ref{lem: Lousin--Souslin} to obtain the desired result.
  Since $\mathfrak{G}_1$ is a Borel subset of $\rootBCM(\MeasSt^{\otimes 2} \times \tau \times \HKSt)$ by Corollary~\ref{cor: resis sp with hk is Borel},
  it is enough to show that $\mathfrak{G}_2$ is a Borel subset of $\mathfrak{G}_1$.

  For each $k, l, N \in \NN$, we define $\mathfrak{G}^{(k,l,N)}(\tau)$ to be 
  the collection of $(F, R, \rho, m, \mu, a, p_{(R,m)}) \in \mathfrak{G}_1$
  such that,
  for all but countably many $r \in [k, k+1]$,
  \begin{equation} \label{pr eq: 1. col resis sp is Borel subset}
    \sup_{x_1, x_2 \in F^{(r)}} 
    \int_0^{1/N} \int_{F^{(r)}} p_{(R,m)}(t, x_1, y) p_{(R,m)}(t, x_2, y)\, \mu(dy)\, dt 
    \leq 1/l.
  \end{equation}
  Since it holds that 
  \begin{equation}  \label{pr eq: 2. col resis sp is Borel subset}
    \mathfrak{G}_2 = \bigcap_{k \geq 1} \bigcap_{l \geq 1} \bigcup_{N \geq 1} \mathfrak{G}^{(k,l,N)}(\tau).
  \end{equation}
  it is enough to show that each $\mathfrak{G}^{(k,l,N)}(\tau)$ is a Borel subset of $\mathfrak{G}_1$.
  We do this by proving that $\mathfrak{G}^{(k,l,N)}(\tau)$ is closed in $\mathfrak{G}_1$.
  Fix $G_n = (F_n, R_n, \rho_n, m_n, \mu_n, a_n, p_n) \in \mathfrak{G}^{(k,l,N)}(\tau)$, $n \in \NN$, such that 
  \begin{equation}
    G_n = (F_n, R_n, \rho_n, m_n, \mu_n, a_n, p_n)
    \to 
    G = (F, R, \rho, m, \mu, a, p)
  \end{equation}
  in $\mathfrak{G}_1$.
  Here, we simply write $p_n = p_{(R_n, m_n)}$, $n \in \NN$, and $p = p_{(R, m)}$.
  By Theorem~\ref{thm: conv in M(tau)},
  we may think that $(F_n, R_n, \rho_n)$ and $(F, R, \rho)$ are embedded 
  isometrically into a common rooted $\bcmAB$ space $(M, d_M, \rho_M)$
  in such a way that 
  $\rho_n = \rho = \rho_M$ as elements of $M$,
  $F_n \to F$ in the Fell topology as closed subsets of $M$,
  $m_n \to m$ and $\mu_n \to \mu$ vaguely as measures on $M$,
  $a_n \to a$ in $\tau(M)$,
  and
  $p_n \to p$ in $\hatC(\RNpp \times M \times M, \RNp)$.
  Let $r \in [k, k+1]$ be such that 
  $F_{n}^{(r)} \to F^{(r)}$ in the Hausdorff topology in $M$,
  $\mu_{n}^{(r)} \to \mu^{(r)}$ weakly in $M$,
  and all the pairs $(\mu_n, p_n)$ satisfy \eqref{pr eq: 1. col resis sp is Borel subset} with radius $r$.
  Fix $x, y \in F^{(r)}$.
  By the convergence of $F_{n}^{(r)}$ to $F^{(r)}$,
  we can find $x_{n}, y_{n} \in F_{n}^{(r)}$ such that 
  $x_{n} \to x$ and $y_{n} \to y$ in $M$.
  Since $p_{n}$ converges to $p$, it holds that 
  \begin{equation}
    p_n(t, x_n, z) p_{n}(t,y_{n},z)
    \to
    p(t, x, z) p(t,y,z)
  \end{equation}
  in $\hatC(\RNpp \times M, \RNp)$ with respect to $(t,z) \in \RNpp \times M$.
  This leads to the following vague convergence as measures on $\RNpp \times M$:
  \begin{equation}
    p_n(t, x_n, z) p_{n}(t,y_{n},z)\, \mu_{n}^{(r)}(dz)\, dt
    \to
    p(t, x, z) p(t,y,z)\, \mu^{(r)}(dz)\, dt.
  \end{equation}
  By the Portmanteau theorem for vague convergence (see \cite[Lemma~4.1]{Kallenberg_17_Random}),
  it follows that, for any $\varepsilon \in (0, 1/N)$,
  \begin{align}
    \int_{\varepsilon}^{1/N} \int_F  p(t, x, z) p(t,y,z)\, \mu^{(r)}(dz)\, dt
    &\leq    
    \liminf_{n \to \infty}     
    \int_{\varepsilon}^{1/N} \int_{F_n} p_n(t, x_n, z) p_n(t,y_{n},z)\, \mu_n^{(r)}\,(dz) dt\\
    &\leq 
    1/l,
    \label{prop pr eq: 1. col resis sp is Borel subset}
  \end{align}
  where the last inequality follows from \eqref{pr eq: 1. col resis sp is Borel subset}.
  Letting $\varepsilon \to 0$ and taking the supremum over $x,y \in F^{(r)}$,
  we obtain that 
  \begin{equation}  
    \sup_{x,y \in F^{(r)}}
    \int_0^{1/N} \int_{F^{(r)}}  p(t, x, z) p(t,y,z)\, \mu(dz)\, dt
    \leq 1/l.
  \end{equation}
  This implies that $G \in \mathfrak{G}^{(k,l,N)}(\tau)$,
  which completes the proof.
\end{proof}

%%%%%%%%%%%%%%%%%%%%%%%%%%%%%%%%%%%%%%%%%%%%%%%%%%%%%%%%%%%%%%%%%%%%%%%%%%%%%%%%%%%%%%%%%%%%%%%%%%%%%%%%%%%%%%%%%%%%%%%%%%%%%%%%%%
% Convergence of collision measures of stochastic processes on resistance metric spaces
%%%%%%%%%%%%%%%%%%%%%%%%%%%%%%%%%%%%%%%%%%%%%%%%%%%%%%%%%%%%%%%%%%%%%%%%%%%%%%%%%%%%%%%%%%%%%%%%%%%%%%%%%%%%%%%%%%%%%%%%%%%%%%%%%%
\subsection{Convergence of collision measures of stochastic processes on resistance metric spaces} 
\label{sec: conv of col meas in resis sp}

The main results of this subsection are stated in Theorems~\ref{thm: col dtm resis sp} and \ref{thm: col rdm resis sp}, 
which establish the convergence of collision measures of stochastic processes on resistance metric spaces
varying along a sequence,
as an application of Theorems~\ref{thm: col dtm result} and \ref{thm: col rdm result}.
Throughout this subsection, we fix a Polish structure~$\tau$.

We first introduce the map of interest.
Recall the structure $\ColSt$ defined in \eqref{eq: st for col meas}.

\begin{dfn}
  For each $G = (F, R, \rho, m, \mu, a) \in \ColResisSp(\tau)$,
  we set  
  \begin{equation}
    \GHColSPM_\tau(G) 
    \coloneqq 
    \Bigl( F, R, \rho, m, \mu, a, p_{(R, m)}, \hatSPMcol_{(R, m, \mu)} \Bigr),
  \end{equation}
  which is an element of $\ColResisSp(\tau \times \HKSt \times \ColSt)$.
\end{dfn}

We first state a convergence result for deterministic resistance metric spaces.

\begin{assum} \label{assum: col dtm resis sp}
  Let $G_n = (F_n, R_n, \rho_n, m_n, \mu_n, a_n)$, $n \in \NN$, be elements of $\ColResisSp(\tau)$,
  and $G = (F, R, \rho, m, \mu, a)$ be an element of $\rootBCM(\MeasSt^{\otimes 2} \times \tau)$
  such that $m$ is of full support.
  Assume that the following conditions hold.
  \begin{enumerate} [label = \textup{(\roman*)}]
    \item \label{assum item: 1. col dtm resis sp}
      The sequence satisfies $G_n \to G$ in $\rootBCM(\MeasSt^{\otimes 2} \times \tau)$.
    \item \label{assum item: 2. col dtm resis sp}
      We have $\displaystyle \lim_{r \to \infty} \liminf_{n \to \infty} R_{n}(\rho_n, B_{R_n}(\rho_n, r)^{c}) = \infty$.
    \item \label{assum item: 3. col dtm resis sp}
      For all $r > 0$,
      \begin{equation}
        \lim_{\delta \to 0} 
        \limsup_{n \to \infty} 
        \sup_{x_1, x_2 \in F_n^{(r)}}
        \int_0^\delta \int_{F_n^{(r)}} p_n(t, x_1, y)\, p_n(t, x_2, y)\, \mu_n(dy)\, dt 
        = 0,
      \end{equation}
      where $p_n \coloneqq p_{(R_n, m_n)}$.
  \end{enumerate}
\end{assum}
 
\begin{thm} \label{thm: col dtm resis sp}
  Under Assumption~\ref{assum: col dtm resis sp}, it holds that $G \in \ColResisSp(\tau)$,
  and $\GHColSPM_\tau(G_n) \to \GHColSPM_\tau(G)$ in $\ColResisSp(\tau \times \HKSt \times \ColSt)$.
\end{thm}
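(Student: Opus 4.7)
The plan is to reduce the statement to the general collision convergence theorem (Theorem~\ref{thm: col dtm result}) applied to the i.i.d.\ pair of processes $(X_n,X_n)$ associated with $(R_n,m_n)$. I would work with the enlarged additional structure $\tau' \coloneqq \MeasSt \times \tau$, so that the weighting measure $\mu_n$ is packaged together with $a_n$ as the $\tau'$-component of the resistance space, and I would treat both copies of the process as the same process $X_n$ with heat kernel $p_n \coloneqq p_{(R_n,m_n)}$.

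First, I would observe that Assumption~\ref{assum: col dtm resis sp}\ref{assum item: 1. col dtm resis sp} and \ref{assum item: 2. col dtm resis sp} are exactly the hypotheses of Theorem~\ref{thm: dtm conv of SPM} with additional structure $\tau'$; therefore $G$ belongs to $\rootResisSp(\tau')$ and $\GHSPM_{\tau'}(G_n) \to \GHSPM_{\tau'}(G)$ in $\rootResisSp(\tau' \times \SPMSt)$. Applying Corollary~\ref{cor: dtm conv of SPM and heat kernels} upgrades this to convergence of the heat kernels as well, so that
\begin{equation}
  \bigl(F_n, R_n, \rho_n, m_n, \mu_n, a_n, p_n, \SPM_{(R_n,m_n)}\bigr) \longrightarrow \bigl(F, R, \rho, m, \mu, a, p, \SPM_{(R,m)}\bigr)
\end{equation}
in $\rootResisSp(\tau' \times \HKSt \times \SPMSt)$, where $p \coloneqq p_{(R,m)}$.

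Next, I would verify that $G \in \ColResisSp(\tau)$, i.e.\ that the limiting triple $(R,m,\mu)$ satisfies~\eqref{dfn eq: resis col sp}. This is the Fatou-type step already carried out inside the proof of Proposition~\ref{prop: col resis sp is Borel subset}: after embedding all spaces isometrically into a common boundedly-compact space $M$ via Theorem~\ref{thm: conv in M(tau)}, the convergence of $p_n$ in $\hatC(\RNpp \times M \times M,\RNp)$ combined with the vague convergence $\mu_n^{(r)} \to \mu^{(r)}$ (for all but countably many radii $r$) and the Portmanteau-type estimate for vague convergence gives
\begin{equation}
  \sup_{x_1,x_2 \in F^{(r)}} \int_\varepsilon^{1/N} \int_{F^{(r)}} p(t,x_1,y)\,p(t,x_2,y)\,\mu(dy)\,dt \leq \liminf_{n\to\infty} \bigl(\,\text{corresponding quantity for } G_n\,\bigr),
\end{equation}
and letting $\varepsilon\to 0$ together with Assumption~\ref{assum: col dtm resis sp}\ref{assum item: 3. col dtm resis sp} shows that the limit vanishes as $1/N \to 0$. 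Hence $G \in \ColResisSp(\tau)$, and in particular the collision map $\hatSPMcol_{(R,m,\mu)}$ is well defined.

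Finally, I would invoke Theorem~\ref{thm: col dtm result} with $X^1_n = X^2_n$ taken as the process associated with $(R_n,m_n)$, the additional structure $\tau$, and the weighting measure $\mu_n$. Its hypothesis~\ref{assum item: 1. col dtm assumption} is exactly the convergence just established (since the product law map is continuously determined by a single law map by independence), and its hypothesis~\ref{assum item: 2. col dtm assumption} coincides with Assumption~\ref{assum: col dtm resis sp}\ref{assum item: 3. col dtm resis sp}. This delivers convergence of $\ProcLaw_{(\hat X_n,\Pi_n)}$ jointly with all other structures in $\rootBCM(\MeasSt^{\otimes 2} \times \tau \times \HKSt^{\otimes 2} \times \ColSt)$; since $p^1_n = p^2_n = p_n$, this collapses to the required convergence in $\ColResisSp(\tau \times \HKSt \times \ColSt)$. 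The only delicate bookkeeping, and the step I expect to require the most care, is checking that the topology in which we obtain the product-process/heat-kernel convergence from the single-process statement is indeed strong enough for Theorem~\ref{thm: col dtm result} to apply; however, this is immediate since $p^1_n p^2_n$ and the product law map are continuous functions (in $\hatC$ respectively $\Prob$) of their single components, so no further argument beyond the continuous mapping theorem is needed.
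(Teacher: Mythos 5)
Your proposal is correct and follows essentially the same route as the paper: Theorem~\ref{thm: dtm conv of SPM} together with Corollary~\ref{cor: dtm conv of SPM and heat kernels} gives convergence of the law maps and heat kernels, the Fatou/Portmanteau estimate from the proof of Proposition~\ref{prop: col resis sp is Borel subset} combined with Assumption~\ref{assum: col dtm resis sp}\ref{assum item: 3. col dtm resis sp} shows $G \in \ColResisSp(\tau)$, and Theorem~\ref{thm: col dtm result} applied to the i.i.d.\ pair then delivers the conclusion. The extra bookkeeping you flag (duplicating the single heat kernel and law map into the $\HKSt^{\otimes 2} \times \SPMSt^{\otimes 2}$ components) is indeed immediate and is left implicit in the paper's proof as well.
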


\begin{proof}
  By Theorem~\ref{thm: dtm conv of SPM},
  $G \in \rootResisSp(\MeasSt \times \tau)$.
  Moreover, we deduce from \eqref{prop pr eq: 1. col resis sp is Borel subset} and Assumption~\ref{assum: col dtm resis sp}\ref{assum item: 3. col dtm resis sp} 
  that, for all $r > 0$,
  \begin{equation}
    \lim_{\delta \to 0} 
    \sup_{x,y \in F^{(r)}}
    \int_0^\delta \int_{F^{(r)}} p(t,x,z)p(t,y,z)\, \mu(dz)\, dt = 0.
  \end{equation}
  Thus, $G \in \ColResisSp(\tau)$.
  Now the desired convergence follows from Theorem~\ref{thm: col dtm result} and Corollary~\ref{cor: dtm conv of SPM and heat kernels}.
\end{proof}

We next provide a version of Theorem~\ref{thm: col dtm result} for random resistance metric spaces.

\begin{assum} \label{assum: col rdm resis sp}
  Let $G_n = (F_n, R_n, \rho_n, m_n, \mu_n, a_n)$, $n \in \NN$, be random elements of $\ColResisSp(\tau)$,
  and $G = (F, R, \rho, m, \mu, a)$ be an random element of $\rootBCM(\MeasSt^{\otimes 2} \times \tau)$
  such that $m$ is of full support, almost surely.
  Assume that the following conditions are satisfied.
  \begin{enumerate} [label = \textup{(\roman*)}]
    \item \label{assum item: 1. col rdm resis sp}
      It holds that $G_n \xrightarrow{\mathrm{d}} G$ in $\rootBCM(\MeasSt^{\otimes 2} \times \tau)$.
    \item \label{assum item: 2. col rdm resis sp}
      If we write $\mathbf{P}_n$ for the underlying probability measure of $G_n$, 
      then, for all $\lambda > 0$,
      \begin{equation}
        \lim_{r \to \infty} \liminf_{n \to \infty} 
        \mathbf{P}_{n}\bigl( R_{n}(\rho_{n}, B_{R_{n}}(\rho_{n}, r)^{c}) > \lambda \bigr) = 1.
      \end{equation}
    \item \label{assum item: 3. col rdm resis sp}
      For all $r > 0$,
      \begin{equation}
        \lim_{\delta \to 0} 
        \limsup_{n \to \infty} 
        \mathbf{E}_n\!\left[
          \left(
            \sup_{x_1, x_2 \in F_n^{(r)}}
            \int_0^\delta \int_{F_n^{(r)}} p_n(t, x_1, y)\, p_n(t, x_2, y)\, \mu_n(dy)\, dt 
          \right)
          \wedge 1
        \right]
        = 0,
      \end{equation}
      where we write $p_n \coloneqq p_{(R_n, m_n)}$.
  \end{enumerate}
\end{assum}

\begin{thm} \label{thm: col rdm resis sp}
  Under Assumption~\ref{assum: col rdm resis sp}, it holds that $G \in \ColResisSp(\tau)$ almost surely,
  and $\GHColSPM_\tau(G_n) \to \GHColSPM_\tau(G)$ in $\ColResisSp(\tau \times \HKSt \times \ColSt)$.
\end{thm}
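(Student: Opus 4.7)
The plan is to reduce the statement to the abstract random-space convergence theorem for collision measures (Theorem~\ref{thm: col rdm result}) by supplying the required joint convergence of spaces, measures, heat kernels, and law maps via Corollary~\ref{cor: rdm conv of SPM and heat kernels}. The argument parallels the deterministic case (Theorem~\ref{thm: col dtm resis sp}), but with two extra care points: almost-sure membership of the limit in $\ColResisSp(\tau)$, and measurability of $\GHColSPM_\tau$ so that $\GHColSPM_\tau(G)$ is a genuine random element. For the latter I would argue as in Proposition~\ref{prop: measurability of gh hk map}: since $\GHHKSPM_\tau$ is Borel measurable and the assignment $(R,m,\mu,p_{(R,m)}) \mapsto \hatSPMcol_{(R,m,\mu)}$ is continuous on $\ColResisSp(\tau)$ by Lemma~\ref{lem: continuity of col map for resis sp} (together with Lemma~\ref{lem: continuity from STOM to col}), the composition $\GHColSPM_\tau$ is measurable; combined with Proposition~\ref{prop: col resis sp is Borel subset} and the Lusin--Souslin Lemma~\ref{lem: Lousin--Souslin}, its image is a Borel subset of the target $\rootBCM$-space.

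Next, I would establish $G \in \ColResisSp(\tau)$ almost surely. Assumption~\ref{assum: col rdm resis sp}\ref{assum item: 1. col rdm resis sp}\ref{assum item: 2. col rdm resis sp} together with Theorem~\ref{thm: rdm conv for SPM} already yields $G \in \rootResisSp(\MeasSt \times \tau)$ almost surely, and Corollary~\ref{cor: rdm conv of SPM and heat kernels} promotes this to convergence in distribution of the tuples enriched by heat kernels. By the Skorohod representation theorem, I would pass to a coupling in which this enriched convergence is almost sure. On a fixed realization of such a coupling, the Portmanteau-type argument used in the proof of Proposition~\ref{prop: col resis sp is Borel subset} (see~\eqref{prop pr eq: 1. col resis sp is Borel subset}) propagates the heat-kernel double-integral bound from $G_n$ to $G$ pathwise. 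Assumption~\ref{assum: col rdm resis sp}\ref{assum item: 3. col rdm resis sp} combined with Markov's inequality gives tightness in the relevant quantities, and a subsubsequence extraction (as in the treatment of \eqref{thm pr: STOM rdm result. 2.5}--\eqref{thm pr: STOM rdm result. 10} in the proof of Theorem~\ref{thm: PCAF/STOM rdm result}\ref{thm item: PCAF/STOM rdm result. 2}) then shows that the limiting condition \eqref{dfn eq: resis col sp} holds almost surely, so $G \in \ColResisSp(\tau)$ a.s.

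Finally, to conclude $\GHColSPM_\tau(G_n) \xrightarrow{\mathrm{d}} \GHColSPM_\tau(G)$, I would apply Theorem~\ref{thm: col rdm result} to the i.i.d.\ pair $(X^1_n, X^2_n)$ of processes associated with $(R_n, m_n)$, taking $p^1_n = p^2_n = p_{(R_n,m_n)}$ and $\ProcLaw_{X^1_n} = \ProcLaw_{X^2_n} = \SPM_{(R_n,m_n)}$. Hypothesis~\ref{assum item: 1. col rdm assumption} of Assumption~\ref{assum: col rdm assumption} follows from Corollary~\ref{cor: rdm conv of SPM and heat kernels} together with the independence of the two copies, and hypothesis~\ref{assum item: 2. col rdm assumption} is exactly Assumption~\ref{assum: col rdm resis sp}\ref{assum item: 3. col rdm resis sp}. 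The hard part I anticipate is precisely the pathwise verification of \eqref{dfn eq: resis col sp} for $G$: the in-expectation bound does not immediately yield an almost-sure vanishing of the relevant double-integral supremum, and one has to carefully combine Skorohod coupling, Fatou-type arguments applied to the heat kernel under vague convergence of $\mu_n^{(r)}$, and a subsubsequence device to transfer the $\limsup$-bound from $G_n$ to $G$ without losing measurability.
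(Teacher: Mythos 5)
Your proposal is correct and follows essentially the same route as the paper: almost-sure membership of $G$ in $\rootResisSp(\MeasSt \times \tau)$ via Theorem~\ref{thm: rdm conv for SPM}, a Skorohod coupling plus the Portmanteau-type argument of \eqref{prop pr eq: 1. col resis sp is Borel subset} to transfer the heat-kernel condition \eqref{dfn eq: resis col sp} to the limit pathwise, and then an application of Theorem~\ref{thm: col rdm result} together with Corollary~\ref{cor: rdm conv of SPM and heat kernels}. The only cosmetic differences are that the paper converts the in-expectation hypothesis in Assumption~\ref{assum: col rdm resis sp}\ref{assum item: 3. col rdm resis sp} into an almost-sure $\liminf$ statement by a single application of Fatou's lemma rather than your Markov-plus-subsubsequence device, and it leaves the measurability of $\GHColSPM_\tau$ implicit rather than spelling it out.
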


\begin{proof}
  We have from Theorem~\ref{thm: rdm conv for SPM} that 
  $G \in \rootResisSp(\MeasSt \times \tau)$ almost surely.
  By the Skorohod representation theorem,
  we may assume that $G_n \to G$ almost surely on some probability space.
  Denote by $\mathbb{P}$ the underlying probability measure.
  From Fatou's lemma and Assumption~\ref{assum: col rdm resis sp}\ref{assum item: 3. col rdm resis sp},
  we deduce that, for all $r > 0$ and $\varepsilon > 0$,
  \begin{equation}
    \mathbb{E}\!
    \left[\lim_{\delta \to 0} 
      \liminf_{n \to \infty} 
      \left(
        \sup_{x_1, x_2 \in F_n^{(r)}}
        \int_0^\delta \int_{F_n^{(r)}} 
        p_n(t, x_1, y)\, p_n(t, x_2, y)\, \mu_n(dy)\, dt 
      \right)
      \wedge 1
    \right]
    = 0.
  \end{equation}
  This implies that 
  \begin{equation}
    \liminf_{n \to \infty} 
    \sup_{x_1, x_2 \in F_n^{(r)}}
    \int_0^\delta \int_{F_n^{(r)}} 
      p_n(t, x_1, y)\, p_n(t, x_2, y)\, \mu_n(dy)\, dt 
    = 0,
    \quad 
    \mathbb{P}\text{-a.s.}
  \end{equation}
  Therefore, by the same argument as in
  \eqref{prop pr eq: 1. col resis sp is Borel subset},
  we obtain
  \begin{equation}
    \lim_{\delta \to 0} 
    \sup_{x,y \in F^{(r)}}
    \int_0^\delta \int_{F^{(r)}} 
      p(t,x,z)\, p(t,y,z)\, \mu(dz)\, dt 
    = 0,
    \quad 
    \mathbb{P}\text{-a.s.}
  \end{equation}
  Hence, $G \in \ColResisSp(\tau)$ almost surely.
  Now the desired result follows from
  Theorem~\ref{thm: col rdm result}
  and Corollary~\ref{cor: rdm conv of SPM and heat kernels}.
\end{proof}

%%%%%%%%%%%%%%%%%%%%%%%%%%%%%%%%%%%%%%%%%%%%%%%%%%%%%%%%%%%%%%%%%%%%%%%%%%%%%%%%%%%%%%%%%%%%%%%%%%%%%%%%%%%%%%%%%%%%%%%%%%%%%%%%%%
% Convergence of collision measures of variable speed random walks
%%%%%%%%%%%%%%%%%%%%%%%%%%%%%%%%%%%%%%%%%%%%%%%%%%%%%%%%%%%%%%%%%%%%%%%%%%%%%%%%%%%%%%%%%%%%%%%%%%%%%%%%%%%%%%%%%%%%%%%%%%%%%%%%%%
\subsection{Scaling limits of collision measures of VSRWs} 
\label{sec: conv of col meas of VSRW}

The heat kernel conditions in 
Assumption~\ref{assum: col dtm resis sp}\ref{assum item: 3. col dtm resis sp} 
and Assumption~\ref{assum: col rdm resis sp}\ref{assum item: 3. col rdm resis sp} 
can be verified through uniform lower bounds on the volumes of balls.
In this subsection, we focus on uniform collision measures of VSRWs on electrical networks
and provide sufficient lower-volume conditions for these heat kernel assumptions;
see Theorems~\ref{thm: dtm col meas for VSRW} and \ref{thm: rdm col meas for VSRW} below.
These results apply to a broad class of low-dimensional fractals and critical random graphs,
as discussed in Examples~\ref{exm: dtm VSRW} and \ref{exm: rdm VSRW} below.

We first clarify the setting for the main results.
For each $n \geq 1$, let $(V_n, E_n, \mu_n)$ be a recurrent electrical network,
as recalled from Definition~\ref{dfn: electrical network}.
Write $R_n$ for the associated resistance metric and $m_n$ for the counting measure on $V_n$,
i.e., $m(\{x\}) = 1$ for each $x \in V_n$.
Let $a_n, b_n > 0$ be constants, serving as scaling factors for the metric and measure, respectively.
Let $X_n^1$ be the process associated with $(a_n^{-1} R_n, b_n^{-1}m_n)$ and $X_n^2$ be an independent copy of $X_n^1$.
In other words, $X_n^1$ and $X_n^2$ are i.i.d.\ VSRWs on the electrical network,
rescaled in time by the factor $a_n b_n$.
We write $\hat{X}_n = (X_n^1, X_n^2)$ for the product process.
Let $\Pi_n$ denote the collision measure of $X_n^1$ and $X_n^2$ associated with $b_n^{-1}m_n$, i.e.,
\begin{equation} \label{eq: dfn of unif col meas for VSRW}
  \Pi_n(dx\, dt)
  = 
  b_n \sum_{y \in V_n} 
  \mathbf{1}_{\{X_n^1(t) = X_n^2(t) = y\}}\,
  \delta_y(dx)\, dt.
\end{equation}
This identity follows from Proposition~\ref{prop: representation of collision measure for discrete space}.
In particular, the scaling factor $b_n$ arises from the following computation:
\begin{equation}
  \frac{b_n^{-1}m_n(\{x\})}{(b_n^{-1} m_n(\{x\}))^2}
  = 
  \frac{b_n^{-1}}{b_n^{-2}}
  = 
  b_n,
  \quad \forall\, x \in V_n.
\end{equation}

\begin{assum}  \label{assum: dtm col meas for VSRW}\leavevmode
  \begin{enumerate} [label = \textup{(\roman*)}, series = dtm col meas for VSRW]
    \item \label{assum item: 0. dtm col meas for VSRW}
      As $n \to \infty$, $b_n \to \infty$ .
    \item \label{assum item: 1. dtm col meas for VSRW}
      There exists an element $G = (F, R, \rho, m) \in \rootBCM(\MeasSt)$ such that 
      \begin{equation}
        \left(V_n, a_n^{-1}R_n, \rho_n, b_n^{-1}m_n\right) \to (F, R, \rho, m)\quad \text{in}\quad \rootBCM(\MeasSt).
      \end{equation}
    \item \label{assum item: 2. dtm col meas for VSRW}
      It holds that 
      \begin{equation}
        \lim_{r \to \infty} \liminf_{n \to \infty} a_n^{-1}R_n\bigl( \rho_n, B_{R_n}(\rho_n, a_nr)^c \bigr) = \infty.
      \end{equation}
    \item \label{assum item: 3. dtm col meas for VSRW}
      For each $r > 0$, there exist constants $\alpha_r, \beta_r, C_r > 0$ such that, 
      for all sufficiently large $n \geq 1$, 
      \begin{equation}  \label{prop eq: dtm col meas for VSRW}
        \inf_{x \in B_{R_n}(\rho_n, a_n r)} b_n^{-1} m_n\!\left( B_{R_n}(x, a_n s) \right) \geq C_r\, s^{\alpha_r},
        \quad 
        \forall s \in \left( \frac{1}{(\log b_n) (\log \log b_n)^{(1+\beta_r)}}, 1 \right).
      \end{equation}
      (NB.\ By condition~\ref{assum item: 0. dtm col meas for VSRW}, 
        for all sufficiently large $n$,
        $b_n > e$ and thus $\log \log b_n > 0$.)
  \end{enumerate}
\end{assum}

Under Assumption~\ref{assum: dtm col meas for VSRW},
by Theorem~\ref{thm: col dtm resis sp},
we have $(F, R, \rho, m) \in \rootResisSp$.
Let $X^1$ be the process associated with $(R, m)$ and $X^2$ be an independent copy of $X^1$.
Write $\hat{X} = (X^1, X^2)$ for their product process.

\begin{thm} \label{thm: dtm col meas for VSRW}
  Suppose that Assumption~\ref{assum: dtm col meas for VSRW} is satisfied.
  Then the measure $\diagMeas{m}$ belongs to the local Kato class of $\hat{X}$.
  Moreover, if we write $\Pi$ for the collision measure of $X^1$ and $X^2$ associated with $m$,
  then  
  \begin{equation}
    \left(V_n, a_n^{-1}R_n, \rho_n, b_n^{-1}m_n, \ProcLaw_{(\hat{X}_n, \Pi_n)}\right)
    \to 
    \left(F, R, \rho, m, \ProcLaw_{(\hat{X}, \Pi)}\right)
  \end{equation}
  in $\rootBCM(\MeasSt \times \ColSt)$.
\end{thm}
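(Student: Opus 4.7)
The plan is to apply Theorem~\ref{thm: col dtm resis sp} to the tuple $(V_n, a_n^{-1} R_n, \rho_n, b_n^{-1} m_n, b_n^{-1} m_n)$ with limit $(F, R, \rho, m, m)$: by the formula~\eqref{eq: dfn of unif col meas for VSRW} together with the discrete representation in Proposition~\ref{prop: representation of collision measure for discrete space}, the weighting measure associated with $\Pi_n$ is $b_n^{-1} m_n$, which coincides with the reference measure $\tilde m_n \coloneqq b_n^{-1} m_n$ of the rescaled VSRW. Under this identification, condition~\ref{assum item: 1. col dtm resis sp} of Assumption~\ref{assum: col dtm resis sp} reduces to hypothesis~\ref{assum item: 1. dtm col meas for VSRW} (the two measure components being identical), condition~\ref{assum item: 2. col dtm resis sp} is the non-explosion hypothesis~\ref{assum item: 2. dtm col meas for VSRW}, so only the heat kernel condition~\ref{assum item: 3. col dtm resis sp} requires actual work.

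The key simplification is that, since the weighting and reference measures agree, the Chapman--Kolmogorov equation~\eqref{assum item eq: C-K equation} gives
\begin{equation}
\int_{V_n} \tilde p_n(t, x_1, y)\, \tilde p_n(t, x_2, y)\, \tilde m_n(dy) = \tilde p_n(2t, x_1, x_2),
\end{equation}
where $\tilde p_n$ denotes the heat kernel of the rescaled VSRW. Applying Cauchy--Schwarz at the level of the semigroup, $\tilde p_n(2t, x_1, x_2) \leq \tfrac{1}{2}\bigl(\tilde p_n(2t, x_1, x_1) + \tilde p_n(2t, x_2, x_2)\bigr)$, so I would reduce~\ref{assum item: 3. col dtm resis sp} to the on-diagonal estimate
\begin{equation}
\lim_{\delta \to 0}\, \limsup_{n \to \infty}\, \sup_{x \in B_{a_n^{-1} R_n}(\rho_n, r)}\, \int_0^{2\delta} \tilde p_n(s, x, x)\, ds = 0
\end{equation}
for every $r > 0$, as anticipated in~\eqref{eq: intro simple heat cond}.

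To produce this on-diagonal bound I would combine Lemma~\ref{lem: hk estimate from volume} with the uniform polynomial volume lower bound from~\ref{assum item: 3. dtm col meas for VSRW}, which is valid at scales $s \in (s_n^\ast, 1)$ with $s_n^\ast \coloneqq 1/((\log b_n)(\log\log b_n)^{1+\beta_r})$, and split the integration into three regimes. For $s \geq (s_n^\ast)^{1+\alpha_r}$, optimising the scale parameter in Lemma~\ref{lem: hk estimate from volume} yields the familiar polynomial on-diagonal decay $\tilde p_n(s, x, x) \leq C s^{-\alpha_r/(1+\alpha_r)}$, whose integral up to $2\delta$ is $O(\delta^{1/(1+\alpha_r)})$ and vanishes as $\delta \to 0$ uniformly in $n$. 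For intermediate $s \in (s_n^\ast/b_n, (s_n^\ast)^{1+\alpha_r})$ I would use Lemma~\ref{lem: hk estimate from volume} at the fixed scale $s_n^\ast$, producing contributions of order $s_n^\ast \log b_n$ and $s_n^\ast$, both of which tend to $0$ because the $(\log\log b_n)^{1+\beta_r}$ factor in $s_n^\ast$ absorbs the $\log b_n$ factor. Finally, for $s \in [0, s_n^\ast/b_n)$ the trivial bound $\tilde p_n(s, x, x) \leq 1/\tilde m_n(\{x\}) = b_n$ (which follows from $\tilde P^x(X_s = x) \leq 1$) contributes at most $s_n^\ast$.

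The main obstacle is precisely this short-time regime, where the volume lower bound ceases to apply and the $2s_n^\ast/s$ term from Lemma~\ref{lem: hk estimate from volume} is non-integrable at the origin for any fixed choice of scale; the calibrated threshold $s_n^\ast$ in~\ref{assum item: 3. dtm col meas for VSRW} is tuned so that the logarithmic divergence arising from fixing $s = s_n^\ast$ is exactly killed by the $(\log\log b_n)^{1+\beta_r}$ factor. Once the heat kernel condition is verified, Theorem~\ref{thm: col dtm resis sp} yields that the limit tuple lies in $\ColResisSp$ --- equivalently, that $\diagMeas{m}$ belongs to the local Kato class of $\hat X$ --- together with the joint convergence in $\rootBCM(\MeasSt^{\otimes 2} \times \HKSt \times \ColSt)$, from which the statement of the theorem follows by extracting the components lying in $\rootBCM(\MeasSt \times \ColSt)$.
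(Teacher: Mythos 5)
Your proposal is correct and follows the same overall strategy as the paper: reduce to the heat-kernel condition of Theorem~\ref{thm: col dtm resis sp} (conditions (i) and (ii) being immediate from the hypotheses, since the weighting and reference measures coincide for the VSRW), pass to $\int_0^\delta \tilde p_n(2t,x_1,x_2)\,dt$ via Chapman--Kolmogorov and Cauchy--Schwarz, and control the on-diagonal kernel through Lemma~\ref{lem: hk estimate from volume}, the polynomial volume lower bound above the threshold $s_n^\ast$, the trivial $b_n^{-1}$ volume floor below it, and the observation that $s_n^\ast\log b_n = (\log\log b_n)^{-(1+\beta_r)}\to 0$.

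The one place where you genuinely diverge from the paper is the bookkeeping for the time integral. The paper inserts a single $t$-dependent scale $s=\trf_{\beta_r/2}(t)$ into Lemma~\ref{lem: hk estimate from volume}, which turns the integral into three terms controlled by the asymptotics of $\trf_\beta$ and its inverse (Lemma~\ref{lem: property of trf}, whose proof goes through the Lambert $W$ function); the short-time contribution becomes $b_n\,\trf_{\beta_r/2}^{-1}(\delta_n^{(r)})$, shown to vanish by a direct logarithmic computation. You instead split the time axis into three regimes and choose the scale separately in each: the unconstrained optimum $s\sim t^{1/(1+\alpha_r)}$ for $t\gtrsim (s_n^\ast)^{1+\alpha_r}$, the fixed scale $s_n^\ast$ in the intermediate window, and the pointwise bound $\tilde p_n\le b_n$ below $s_n^\ast/b_n$. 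Your route is somewhat more elementary --- it avoids introducing $\trf_\beta$ and its inverse asymptotics entirely --- at the cost of an explicit case analysis, and it makes transparent that the $(\log\log b_n)^{1+\beta_r}$ factor is calibrated exactly to absorb the logarithm $\log b_n$ produced by integrating $s_n^\ast/t$ across the intermediate window. Both arguments close the same way and the estimates you state check out (e.g.\ the intermediate $1/t$-term contributes at most $2s_n^\ast\log\bigl(b_n(s_n^\ast)^{\alpha_r}\bigr)\le 2s_n^\ast\log b_n\to 0$), so I see no gap.
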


In order to prove the above theorem,
we define, for each $\beta > 0$, a bijective function 
$\trf_\beta \colon (0,1/e) \to (0,\infty)$ by
\begin{equation}  \label{eq: dfn of trf}
  \trf_\beta(t) \coloneqq 
  \frac{1}{\log (1/t)\, \bigl(\log \log (1/t)\bigr)^{1+\beta}} .
\end{equation}
We denote by $\trf_\beta^{-1}$ its inverse.
The properties of $\trf_\beta$ used in the proof below 
are collected in Appendix~\ref{sec: trf function}.

\begin{proof}
  It is enough to check that 
  Assumption~\ref{assum: col dtm resis sp}\ref{assum item: 3. col dtm resis sp} is satisfied.
  Let $p_n$ denote the heat kernel of $X_n^1$ with respect to $b_n^{-1} m_n$.
  Fix $r > 0$ for the remainder of this proof.
  By condition~\ref{assum item: 0. dtm col meas for VSRW},
  we have $b_n > e$ for all sufficiently large $n$.
  Henceforth, we restrict our attention to such large values of $n$,
  since the subsequent arguments concern the limit as $n \to \infty$.
  Set 
  \begin{equation}
    \delta_n^{(r)} \coloneqq \frac{1}{(\log b_n) (\log \log b_n)^{(1+\beta_r)}}.
  \end{equation}
  For each $n \geq 1$, define
  \begin{equation}  \label{thm pr eq: 6. dtm col meas for VSRW}
    v_n^{(r)}(s) \coloneqq 
    \begin{cases}
      C_r\, s^{\alpha_r}, & s \in (\delta_n^{(r)}, 1),\\
      b_n^{-1}, & s \in (0, \delta_n^{(r)}).
    \end{cases}
  \end{equation}
  Since $m_n$ is the counting measure,
  Assumption~\ref{assum: dtm col meas for VSRW}\ref{assum item: 3. dtm col meas for VSRW} yields
  \begin{equation}
    \inf_{x \in B_{R_n}(\rho_n, a_n r)} b_n^{-1} m_n\!\left( B_{R_n}(x, a_n s) \right) \geq v_n^{(r)}(s),
    \quad 
    \forall s \in (0, 1).
  \end{equation}
  Since $\trf_{\beta_r/2}$ is increasing,
  we can find $t_0^{(r)} \in (0,1/e)$ such that $\trf_{\beta_r/2}(t) < 1$ for all $t \in (0, t_0)$.
  By Lemma~\ref{lem: hk estimate from volume}, 
  for any $t \in (0, t_0^{(r)})$ and $x \in B_{R_n}(\rho_n, a_n r)$, 
  \begin{equation}
    p_n(t, x, x) \leq \frac{2 \trf_{\beta_r/2}(t)}{t} 
      + \frac{\sqrt{2}}{v_n^{(r)}(\trf_{\beta_r/2}(t))}.
  \end{equation}
  The Chapman--Kolmogorov equation \eqref{assum item eq: C-K equation} 
  and the Cauchy--Schwarz inequality yield
  \begin{equation} \label{thm pr eq: 3. dtm col meas for VSRW}
    p_n(t, x, y) 
    \leq p_n(t, x, x)^{1/2} p_n(t, y, y)^{1/2},
    \quad \forall x,y \in V_n
  \end{equation}
  (see \cite[Lemma~4.4(i)]{Noda_pre_Continuity}).
  Thus, for any $x,y \in B_{R_n}(\rho_n, a_n r)$ and $t \in (0, t_0^{(r)})$,
  \begin{equation} \label{thm pr eq: 2. dtm col meas for VSRW}
    p_n(t, x, y) \leq \frac{2 \trf_{\beta_r/2}(t)}{t} 
      + \frac{\sqrt{2}}{v_n^{(r)}(\trf_{\beta_r/2}(t))}.
  \end{equation}
  By the Chapman--Kolmogorov equation again, for any 
  $x_1, x_2 \in B_{R_n}(\rho_n, a_n r)$ and $\delta \in (0, t_0^{(r)})$,
  \begin{align} 
    &\int_0^\delta \int_{B_{R_n}(\rho_n, a_n r)} 
      p_n(t, x_1, y)\, p_n(t, x_2, y)\, b_n^{-1} m_n(dy)\, dt \\
    &\leq \int_0^\delta p_n(2t, x_1, x_2)\, dt \\
    &\leq \int_0^\delta 
      \left(\frac{\trf_{\beta_r/2}(2t)}{t} 
      + \frac{\sqrt{2}}{v_n^{(r)}(\trf_{\beta_r/2}(2t))}\right)\, dt.
    \label{thm pr eq: 1. dtm col meas for VSRW}
  \end{align}
  By the definition of $v_n^{(r)}$,
  the last expression is bounded by
  \begin{align}
    &\int_0^\delta \frac{\trf_{\beta_r/2}(2t)}{t}\, dt
    +
    \int_0^\delta \frac{\sqrt{2}}{C_r \trf_{\beta_r/2}(2t)^{\alpha_r}}\, dt
    +
    \int_0^{\trf_{\beta_r/2}^{-1}(\delta_n^{(r)})/2} \sqrt{2} b_n\, dt\\
    &\leq 
    \int_0^\delta \frac{\trf_{\beta_r/2}(2t)}{t}\, dt
    +
    \int_0^\delta \frac{\sqrt{2}}{C_r \trf_{\beta_r/2}(2t)^{\alpha_r}}\, dt
    +
    \frac{\sqrt{2}}{2} b_n \trf_{\beta_r/2}^{-1}(\delta_n^{(r)}).
    \label{thm pr eq: 5. dtm col meas for VSRW}
  \end{align}
  By Lemma~\ref{lem: property of trf}\ref{lem item: 1. property of trf} and \ref{lem item: 2. property of trf},
  the first and second terms in the last expression converge to $0$ as $\delta \to 0$.
  Thus, it remains to show that the third term vanishes as $n \to \infty$.
  Using Lemma~\ref{lem: property of trf}\ref{lem item: 3. property of trf},
  we obtain
  \begin{align}
    \log \bigl( b_n \trf_{\beta_r/2}^{-1}(\delta_n^{(r)}) \bigr)
    &= 
    \log b_n - \frac{1 + o(1)}{\delta_n^{(r)} (\log (\delta_n^{(r)})^{-1})^{1+\beta_r/2}}\\
    &= 
    \log b_n \left( 1 - \frac{(1 + o(1)) (\log \log b_n)^{1+\beta_r}}{(\log (\log b_n)(\log \log b_n)^{1+\beta_r})^{1+ \beta_r/2}}\right),
  \end{align}
  which tends to $-\infty$ as $n \to \infty$ by Assumption~\ref{assum: dtm col meas for VSRW}\ref{assum item: 0. dtm col meas for VSRW}.
  This implies that $b_n \trf_{\beta_r/2}^{-1}(\delta_n^{(r)}) \to 0$ as $n \to \infty$.
  Therefore,
  we conclude that 
  Assumption~\ref{assum: col dtm resis sp}\ref{assum item: 3. col dtm resis sp} 
  is satisfied.
\end{proof}

\begin{exm} \label{exm: dtm VSRW}
  In \cite{Croydon_Hambly_Kumagai_17_Time-changes},
  it is shown that, under Assumption~\ref{assum: dtm col meas for VSRW}\ref{assum item: 1. dtm col meas for VSRW} 
  and the uniform volume doubling (UVD) condition 
  (see \cite[Definition~1.1]{Croydon_Hambly_Kumagai_17_Time-changes}),
  the associated processes and their local times converge.
  The UVD condition implies the non-explosion condition,
  Assumption~\ref{assum: dtm col meas for VSRW}\ref{assum item: 2. dtm col meas for VSRW},
  see \cite[Remark~1.3(b)]{Croydon_18_Scaling}.
  Moreover, it can be readily verified that the UVD condition implies Assumption~\ref{assum: dtm col meas for VSRW}\ref{assum item: 3. dtm col meas for VSRW}.
  Hence, Theorem~\ref{thm: dtm col meas for VSRW} applies to all the examples treated in \cite{Croydon_Hambly_Kumagai_17_Time-changes}.
  The following is a list of such applicable examples. 
  \begin{enumerate} [label = \textup{(\alph*)}]
    \item \label{exm item: one-dimensional lattice}
      A sequence of scaled one-dimensional lattices converging to $\RN$.
    \item \label{exm item: ufr fractals}
      A sequence of graphs approximating a uniform finitely ramified fractal, 
      such as the two-dimensional Sierpiński gasket (see \cite[Section~6.2]{Croydon_Hambly_Kumagai_17_Time-changes} for details).
    \item \label{exm item: carpet}
      A (sub)sequence of graphical approximations of the two-dimensional Sierpiński carpet
      (see \cite[Example~4.5(iv)]{Croydon_Hambly_Kumagai_17_Time-changes} for details).
  \end{enumerate}
  In particular, by applying Theorem~\ref{thm: dtm col meas for VSRW} to \ref{exm item: one-dimensional lattice},
  we obtain the scaling limit of uniform collision measures of two i.i.d.\ VSRWs on $\mathbb{Z}$,
  which provides a continuous analogue of Nguyen's result~\cite{Nguyen_23_Collision}
  (recall that Nguyen considered discrete-time random walks, 
  see Remarks~\ref{rem: coincidence with Nguyen} and \ref{rem: more comment about Nguyen}).
\end{exm}

We next state a version of Theorem~\ref{thm: dtm col meas for VSRW} for random electrical networks.
We use the same notation as before, but now assume that the recurrent electrical network $(V_n, E_n, \mu_n)$ is random,
so that $(V_n, R_n, \rho_n, m_n)$ is a random element of $\rootResisSp$.
We also assume that the scaling factors $a_n$ and $b_n$ are random variables 
defined on the same probability space as $(V_n, E_n, \mu_n)$.
(They may be taken to be deterministic.)
We write $\mathbf{P}_n$ for the underlying complete probability measure for the random electrical network $(V_n, E_n, \mu_n)$.

\begin{assum}  \label{assum: rdm col meas for VSRW}\leavevmode
  \begin{enumerate} [label = \textup{(\roman*)}, series = rdm col meas for VSRW]
    \item \label{assum item: 0. rdm col meas for VSRW}
      As $n \to \infty$, $b_n \to \infty$ in probability, i.e.,
      \begin{equation}
        \lim_{\lambda \to \infty} \liminf_{n \to \infty} \mathbf{P}_n(b_n > \lambda) = 1.
      \end{equation}
    \item \label{assum item: 1. rdm col meas for VSRW}
      There exists a random element $G = (F, R, \rho, m)$ of $\rootBCM(\MeasSt)$
      with the underlying complete probability measure $\mathbf{P}$ 
      such that 
      \begin{equation}
        \left(V_n, a_n^{-1}R_n, \rho_n, b_n^{-1}m_n\right) 
        \xrightarrow{\mathrm{d}} (F, R, \rho, m)
        \quad \text{in}\quad \rootBCM(\MeasSt).
      \end{equation}
    \item \label{assum item: 2. rdm col meas for VSRW}
      For all $\lambda > 0$,
      \begin{equation}
        \lim_{r \to \infty} \liminf_{n \to \infty} 
        \mathbf{P}_n\!\left( 
          a_n^{-1}R_n\bigl( \rho_n, B_{R_n}(\rho_n, a_n r)^c \bigr) > \lambda
        \right) = 1.
      \end{equation}
    \item \label{assum item: 3. rdm col meas for VSRW}
      For each $r > 0$ and $\varepsilon > 0$, 
      there exist deterministic constants $\alpha_{r,\varepsilon}, \beta_{r,\varepsilon}, C_{r,\varepsilon} > 0$ such that,
      for all sufficiently large $n$, with probability at least $1 - \varepsilon$, 
      we have $b_n > e$ and 
      \begin{equation}  \label{prop eq: rdm col meas for VSRW}
        \inf_{x \in B_{R_n}(\rho_n, a_n r)} 
        b_n^{-1} m_n\!\left( B_{R_n}(x, a_n s) \right) 
        \geq C_{r,\varepsilon}\, s^{\alpha_{r,\varepsilon}},
        \quad 
        \forall s \in \left( \frac{1}{(\log b_n)(\log \log b_n)^{1+\beta_{r,\varepsilon}}}, 1 \right).
      \end{equation}
      (NB.\ By condition~\ref{assum item: 0. rdm col meas for VSRW}, 
        for all sufficiently large $n$, we have $b_n > e$ with probability at least $1-\varepsilon$.)
  \end{enumerate}
\end{assum}

\begin{thm} \label{thm: rdm col meas for VSRW}
  Suppose that Assumption~\ref{assum: rdm col meas for VSRW} is satisfied.
  Then the measure $\diagMeas{m}$ belongs to the local Kato class of $\hat{X}$, $\mathbf{P}$-a.s.
  Moreover, if we write $\Pi$ for the collision measure of $X^1$ and $X^2$ associated with $m$,
  then  
  \begin{equation}
    \left(V_n, a_n^{-1}R_n, \rho_n, b_n^{-1}m_n, 
      \ProcLaw_{(\hat{X}_n, \Pi_n)}\right)
    \xrightarrow{\mathrm{d}}
    \left(F, R, \rho, m, \ProcLaw_{(\hat{X}, \Pi)}\right)
  \end{equation}
  in $\rootBCM(\MeasSt \times \ColSt)$.
\end{thm}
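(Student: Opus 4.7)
The plan is to invoke Theorem~\ref{thm: col rdm resis sp} after verifying its hypotheses for the random tuples $\mathcal{G}_n \coloneqq (V_n, a_n^{-1} R_n, \rho_n, b_n^{-1} m_n, b_n^{-1} m_n)$, with the trivial Polish structure and with the weighting measure taken equal to the reference measure (this corresponds to the uniform collision measure, since the VSRW reference measure is the counting measure and so coincides pointwise with its own square). Condition~\ref{assum item: 1. col rdm resis sp} of Assumption~\ref{assum: col rdm resis sp} follows immediately from Assumption~\ref{assum: rdm col meas for VSRW}\ref{assum item: 1. rdm col meas for VSRW} by duplicating the measure coordinate and applying the continuous mapping theorem, and condition~\ref{assum item: 2. col rdm resis sp} is a restatement of Assumption~\ref{assum: rdm col meas for VSRW}\ref{assum item: 2. rdm col meas for VSRW}. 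The substantive step is to verify the heat-kernel condition~\ref{assum item: 3. col rdm resis sp}.

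To do so, I will combine the probabilistic lower-volume estimate with the deterministic computation already carried out in the proof of Theorem~\ref{thm: dtm col meas for VSRW}. Fix $r > 0$, $\varepsilon > 0$ and, by Assumption~\ref{assum: rdm col meas for VSRW}\ref{assum item: 3. rdm col meas for VSRW}, choose deterministic constants $\alpha_{r,\varepsilon}$, $\beta_{r,\varepsilon}$, $C_{r,\varepsilon}$ and an event $E_n^{(r,\varepsilon)}$ with $\mathbf{P}_n(E_n^{(r,\varepsilon)}) \geq 1 - \varepsilon$ for all large $n$, on which $b_n > e$ and the volume lower bound~\eqref{prop eq: rdm col meas for VSRW} is satisfied. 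On $E_n^{(r,\varepsilon)}$ one may run the deterministic chain leading to~\eqref{thm pr eq: 5. dtm col meas for VSRW} verbatim (Lemma~\ref{lem: hk estimate from volume}, Cauchy--Schwarz via~\eqref{thm pr eq: 3. dtm col meas for VSRW}, the Chapman--Kolmogorov equation, and the change of variable $s = \trf_{\beta_{r,\varepsilon}/2}(2t)$) to obtain a bound of the form
\begin{equation}
  \sup_{x_1, x_2 \in F_n^{(r)}}
  \int_0^\delta \int_{F_n^{(r)}} p_n^\ast(t, x_1, y)\, p_n^\ast(t, x_2, y)\, b_n^{-1} m_n(dy)\, dt
  \leq A_{r,\varepsilon}(\delta) + \tfrac{\sqrt{2}}{2}\, b_n\, \trf_{\beta_{r,\varepsilon}/2}^{-1}\!\bigl(\delta_n^{(r,\varepsilon)}\bigr),
\end{equation}
where $p_n^\ast$ denotes the heat kernel on the rescaled network, $F_n^{(r)}$ the closed ball of radius $r$ about $\rho_n$ in $(V_n, a_n^{-1} R_n)$, and $A_{r,\varepsilon}(\delta)$ depends only on $r, \varepsilon, \delta$ and satisfies $A_{r,\varepsilon}(\delta) \to 0$ as $\delta \to 0$ by Lemma~\ref{lem: property of trf}\ref{lem item: 1. property of trf}, \ref{lem item: 2. property of trf}. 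Moreover, by Lemma~\ref{lem: property of trf}\ref{lem item: 3. property of trf}, the function $H_{r,\varepsilon}(\lambda) \coloneqq \sup_{b \geq \lambda} b\, \trf_{\beta_{r,\varepsilon}/2}^{-1}(1/((\log b)(\log\log b)^{1+\beta_{r,\varepsilon}}))$ vanishes as $\lambda \to \infty$.

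Bounding the integrand by $1$ off the event $E_n^{(r,\varepsilon)} \cap \{b_n \geq \lambda\}$ and by the deterministic estimate above on it yields
\begin{equation}
  \mathbf{E}_n\!\left[
    \Bigl( \sup_{x_1, x_2 \in F_n^{(r)}} \int_0^\delta \!\!\int_{F_n^{(r)}} p_n^\ast\, p_n^\ast\, b_n^{-1} m_n(dy)\, dt \Bigr) \wedge 1
  \right]
  \leq \varepsilon + \mathbf{P}_n(b_n < \lambda) + A_{r,\varepsilon}(\delta) + \tfrac{\sqrt{2}}{2}\, H_{r,\varepsilon}(\lambda).
\end{equation}
Letting $n \to \infty$, then $\delta \to 0$, then $\lambda \to \infty$ (using Assumption~\ref{assum: rdm col meas for VSRW}\ref{assum item: 0. rdm col meas for VSRW} to kill $\mathbf{P}_n(b_n < \lambda)$) leaves only $\varepsilon$, and the arbitrariness of $\varepsilon$ gives condition~\ref{assum item: 3. col rdm resis sp}. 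Theorem~\ref{thm: col rdm resis sp} then delivers the conclusion, including the almost-sure membership $(F,R,\rho,m,m) \in \ColResisSp$.

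The main obstacle, such as it is, is not analytic but bookkeeping: one must track that the constants $\alpha_{r,\varepsilon}, \beta_{r,\varepsilon}, C_{r,\varepsilon}$ furnished by Assumption~\ref{assum: rdm col meas for VSRW}\ref{assum item: 3. rdm col meas for VSRW} are \emph{deterministic} on the favorable event $E_n^{(r,\varepsilon)}$, so that the entire deterministic argument of Theorem~\ref{thm: dtm col meas for VSRW}---and in particular the choice $s = \trf_{\beta_{r,\varepsilon}/2}(2t)$, which is tuned to those constants---applies uniformly in the realization. Once this is secured, the remainder is the standard device of controlling a random expectation through a high-probability event plus a trivial bound on its complement, with the extra probabilistic input that $b_n \to \infty$ in probability is used only to handle the single residual term coming from the transition between short times and the resistance-volume regime.
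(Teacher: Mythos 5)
Your proposal is correct and follows essentially the same route as the paper: it reduces the theorem to verifying the heat-kernel condition of the random convergence theorem for resistance spaces, runs the deterministic volume-to-heat-kernel chain of Theorem~\ref{thm: dtm col meas for VSRW} on the high-probability event supplied by the volume assumption, and disposes of the residual term $b_n\,\trf_{\beta/2}^{-1}(\delta_n)$ via $b_n \to \infty$ in probability. The only (harmless) deviation is your uniformization through $H_{r,\varepsilon}(\lambda)$ and the split on $\{b_n \geq \lambda\}$, where the paper simply notes that this term converges to $0$ in probability.
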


\begin{proof}
  It is sufficient to verify 
  Assumption~\ref{assum: col rdm resis sp}\ref{assum item: 3. col rdm resis sp}.
  The proof proceeds in the same way as that of Theorem~\ref{thm: dtm col meas for VSRW}.
  Fix $r > 0$ for the remainder of this proof.
  For each $n \geq 1$ and $\varepsilon > 0$, define
  \begin{equation}
    \delta_{n, \varepsilon}^{(r)} 
    \coloneqq 
    \begin{cases}
        \frac{1}{(\log b_n) (\log \log b_n)^{(1+\beta_{r, \varepsilon})}}, & \text{if}\quad b_n >e,\\
        0, & \text{if}\quad b_n \leq e.
    \end{cases}
  \end{equation}
  and 
  \begin{equation}
    v_{n, \varepsilon}^{(r)}(s) \coloneqq 
    \begin{cases}
      C_{r,\varepsilon}\, s^{\alpha_{r,\varepsilon}}, & s \in (\delta_{n, \varepsilon}^{(r)}, 1),\\
      b_n^{-1}, & s \in (0, \delta_{n, \varepsilon}^{(r)}).
    \end{cases}
  \end{equation}
  If we write $E_{n, \varepsilon}$ for the event that $b_n > e$ and 
  \begin{equation}
    \inf_{x \in B_{R_n}(\rho_n, a_n r)} 
    b_n^{-1} m_n\!\left( B_{R_n}(x, a_n s) \right) 
    \geq v_{n, \varepsilon}^{(r)}(s),
    \quad 
    \forall s \in (0,1),
  \end{equation}
  then Assumption~\ref{assum: rdm col meas for VSRW}\ref{assum item: 3. rdm col meas for VSRW} implies that 
  \begin{equation}   \label{thm pr eq: 1. rdm col meas for VSRW}
    \liminf_{n \to \infty} \mathbf{P}_n(E_{n, \varepsilon}) \geq 1 - \varepsilon.
  \end{equation}
  Write $B_n(r) \coloneqq B_{R_n}(\rho_n, a_n r)$.
  Using \eqref{thm pr eq: 1. dtm col meas for VSRW} and \eqref{thm pr eq: 5. dtm col meas for VSRW}, we obtain 
  \begin{align}
    &\mathbf{E}_n\!
    \left[
      \left(
        \sup_{x_1, x_2 \in B_n(r/2)}
        \int_0^\delta \int_{B_n(r/2)} 
        p_n(t, x_1, y)\, p_n(t, x_2, y)\, b_n^{-1} m_n(dy)\, dt 
      \right)
      \wedge 1
    \right] \\
    &\quad\leq    
    \mathbf{P}_n(E_{n, \varepsilon}^c)\\
    &\qquad
    + 
    \mathbf{E}_n\!
    \left[
      \left(
        \int_0^\delta 
        \left(
          \frac{\trf_{\beta_r/2}(2t)}{t}
          +
          \frac{\sqrt{2}}{C_{r,\varepsilon} \trf_{\beta_{r,\varepsilon}/2}(2t)^{\alpha_{r, \varepsilon}}}
        \right)\, dt
        +
        \frac{\sqrt{2}}{2} b_n \trf_{\beta_{r, \varepsilon}/2}^{-1}(\delta_{n, \varepsilon}^{(r)})
      \right)
      \wedge 1
    \right].
    \label{thm pr eq: 2. rdm col meas for VSRW}
  \end{align}
  Following the argument in the proof of Theorem~\ref{thm: dtm col meas for VSRW},
  we obtain
  \begin{gather}
    \int_0^\delta 
    \left(
      \frac{\trf_{\beta_r/2}(2t)}{t}
      +
      \frac{\sqrt{2}}{C_{r,\varepsilon} \trf_{\beta_{r,\varepsilon}/2}(2t)^{\alpha_{r, \varepsilon}}}
    \right)\! dt
    \xrightarrow[\delta \to 0]{} 0,\\
    b_n \trf_{\beta_{r, \varepsilon}/2}^{-1}(2\delta_{n, \varepsilon}^{(r)}) 
    \xrightarrow[n \to \infty]{\mathrm{p}} 0.
  \end{gather}
  Combining these with \eqref{thm pr eq: 1. rdm col meas for VSRW} and \eqref{thm pr eq: 2. rdm col meas for VSRW},
  we obtain
  \begin{equation}
    \lim_{\delta \to 0} 
    \limsup_{n \to \infty}
    \mathbf{E}_n\!
    \left[
      \left(
        \sup_{x_1, x_2 \in B_n(r/2)}
        \int_0^\delta \int_{B_n(r/2)} 
        p_n(t, x_1, y)\, p_n(t, x_2, y)\, b_n^{-1} m_n(dy)\, dt 
      \right)
      \wedge 1
    \right]
    \leq \varepsilon.
  \end{equation}
  Letting $\varepsilon \to 0$, we conclude that 
  Assumption~\ref{assum: col rdm resis sp}\ref{assum item: 3. col rdm resis sp} holds.
\end{proof}

\begin{exm}\label{exm: rdm VSRW}
  In \cite{Noda_pre_Convergence}, 
  it is shown that 
  the volume condition~\ref{assum item: 3. rdm col meas for VSRW} of Assumption~\ref{assum: rdm col meas for VSRW}
  is satisfied for many random graph models. 
  In particular, all the examples considered in \cite[Section~8]{Noda_pre_Convergence} 
  satisfy Assumption~\ref{assum: rdm col meas for VSRW}, 
  and hence Theorem~\ref{thm: rdm col meas for VSRW} applies to them,
  see the list below.
  \begin{enumerate} [label = (\alph*)]
    \item \label{rem item: random gasket}
      A random recursive Sierpi\'{n}ski gasket (cf.\ \cite{Hambly_97_Brownian}).
    \item \label{rem item: GW trees}
      Critical Galton--Watson trees (cf.\ \cite{Aldous_93_The_continuum,Andriopoulos_23_Convergence,Duquesne_03_A_limit}).
    \item \label{rem item: UST}
      Uniform spanning trees on $\ZN^d$ with $d = 2,3$, and on high-dimensional tori
      (cf.\ \cite{Angel_Croydon_Hernandez-Torres_Shiraishi_21_Scaling,Archer_Nachmias_Shalev_24_The_GHP,Barlow_Croydon_Kumagai_17_Subsequential}).
    \item \label{rem item: Erdos-renyi random graph}
      The critical Erd\H{o}s--R\'{e}nyi random graph (cf.\ \cite{Berry_Broutin_Goldschmidt_12_The_continuum}).
    \item \label{rem item: configuration models} 
      The critical configuration model (cf.\ \cite{Bhamidi_Sen_20_Geometry}).
  \end{enumerate}
\end{exm}

%%%%%%%%%%%%%%%%%%%%%%%%%%%%%%%%%%%%%%%%%%%%%%%%%%%%%%%%%%%%%%%%%%%%%%%%%%%%%%%%%%%%%%%%%%%%%%%%%%%%%%%%%%%%%%%%%%%%%%%%%%%%%%%%%%
% Convergence of collision measures of variable speed random walks
%%%%%%%%%%%%%%%%%%%%%%%%%%%%%%%%%%%%%%%%%%%%%%%%%%%%%%%%%%%%%%%%%%%%%%%%%%%%%%%%%%%%%%%%%%%%%%%%%%%%%%%%%%%%%%%%%%%%%%%%%%%%%%%%%%
\subsection{Scaling limits of collision measures of CSRWs} 
\label{sec: conv of col meas of CSRW}

In this subsection, we provide versions of Theorems~\ref{thm: dtm col meas for VSRW} and \ref{thm: rdm col meas for VSRW}
for CSRWs on electrical networks.
In this case,
verifying the assumptions for convergence of collision measures is more difficult than in the VSRW case,
which has been discussed in Section~\ref{sec: conv of col meas of VSRW} above.
This is because the canonical weighting measure (see Remark~\ref{rem: uniform col meas}) of i.i.d.\ CSRWs does not coincide 
with the invariant measure of the walk.
The main results of this subsection are Theorems~\ref{thm: dtm col meas for CSRW} and \ref{thm: rdm col meas for CSRW} below.
In the following subsubsection, Section~\ref{sec: Critical Galton--Watson trees}, 
we apply the results to critical Galton--Watson trees conditioned on their size.

We clarify the setting for the main results.
For each $n \geq 1$, let $(V_n, E_n, \mu_n)$ be a recurrent electrical network.
Write $R_n$ for the associated resistance metric.
We identify $\mu_n$ with the associated conductance measure on $V_n$,
that is,
\begin{equation}
  \mu_n(A) \coloneqq \sum_{x \in A} \mu_n(x),
  \quad A \subseteq V_n.
\end{equation}
More generally, for each $q > 0$, 
we define the \emph{$q$-power conductance measure} $\mu_n^q$ on $V_n$ by 
\begin{equation}
  \mu_n^q(A) \coloneqq \sum_{x \in A} \mu_n(x)^q,
  \quad A \subseteq V_n.
\end{equation}
Let $a_n, b_n > 0$ be constants, serving as scaling factors for the metric and measure, respectively.
Let $X_n^1$ be the process associated with $(a_n^{-1} R_n, b_n^{-1}\mu_n)$ and $X_n^2$ be an independent copy of $X_n^1$.
In other words, $X_n^1$ and $X_n^2$ are i.i.d.\ CSRWs on the electrical network,
rescaled in time by the factor $a_n b_n$.
We write $\hat{X}_n = (X_n^1, X_n^2)$ for their product process.
Let $\Pi_n$ be the collision measure of $X_n^1$ and $X_n^2$ associated with $b_n^{-1}\mu_n^2$, i.e.,
\begin{equation} \label{eq: dfn of unif col meas for CSRW}
  \Pi_n(dx\, dt) = b_n \sum_{y \in V_n} \mathbf{1}_{\{X_n^1(t) = X_n^2(t) = y\}}\, \delta_y(dx)\, dt.
\end{equation}

Below, we state the assumption for the convergence of $\Pi_n$,
which requires stronger volume conditions than that of the VSRW case, Assumption~\ref{assum: dtm col meas for VSRW}.
In particular, we assume the precompactness of the $(1+q)$-power measures for suitable $q > 1$.

\begin{assum}  \label{assum: dtm col meas for CSRW}\leavevmode
  \begin{enumerate} [label = \textup{(\roman*)}, series = dtm col meas for CSRW]
    \item \label{assum item: 1. dtm col meas for CSRW}
      There exists an element $G = (F, R, \rho, \mu, \nu) \in \rootBCM(\MeasSt^{\otimes 2})$ such that 
      \begin{equation}
        \left(V_n, a_n^{-1}R_n, \rho_n, b_n^{-1}\mu_n, b_n^{-1}\mu_n^2\right) 
        \to (F, R, \rho, \mu, \nu)\quad \text{in}\quad \rootBCM(\MeasSt^{\otimes 2}).
      \end{equation}
    \item \label{assum item: 2. dtm col meas for CSRW}
      It holds that 
      \begin{equation}
        \lim_{r \to \infty} \liminf_{n \to \infty} a_n^{-1}R_n\bigl( \rho_n, B_{R_n}(\rho_n, a_nr)^c \bigr) = \infty.
      \end{equation}
    \item \label{assum item: 3. dtm col meas for CSRW}
      For each $r > 0$, there exist constants $\alpha_r, C_r > 0$ 
      such that 
      \begin{equation}  \label{assum item eq: 3. dtm col meas for CSRW}
        \inf_{x \in B_{R_n}(\rho_n, a_n r)} b_n^{-1} \mu_n\!\left( B_{R_n}(x, a_n s) \right) \geq C_r\, s^{\alpha_r},
        \quad 
        \forall s \in (0, 1)
      \end{equation}
      for each $n \geq 1$, and, for some $q_r \in (1, \infty)$ with $q_r > \alpha_r$,
      \begin{equation}   \label{assum item eq: 3.1. dtm col meas for CSRW}
         \limsup_{n \to \infty} b_n^{-1} \mu_n^{1+q_r} \bigl( B_{R_n}(\rho_n, a_n r) \bigr) < \infty.
      \end{equation} 
  \end{enumerate}
\end{assum}

Under Assumption~\ref{assum: dtm col meas for CSRW},
by Theorem~\ref{thm: col dtm resis sp},
we have $(F, R, \rho, m) \in \rootResisSp$.
Let $X^1$ be the process associated with $(R, m)$ and $X^2$ be an independent copy of $X^1$.
Write $\hat{X} = (X^1, X^2)$ for their product process.

\begin{thm} \label{thm: dtm col meas for CSRW}
  Suppose that Assumption~\ref{assum: dtm col meas for CSRW} is satisfied.
  Then the measure $\diagMeas{\nu}$ belongs to the local Kato class of $\hat{X}$.
  Moreover, if we write $\Pi$ for the collision measure of $X^1$ and $X^2$ associated with $\nu$,
  then  
  \begin{equation}
    \left(V_n, a_n^{-1}R_n, \rho_n, b_n^{-1}\mu_n, b_n^{-1}\mu_n^2, \ProcLaw_{(\hat{X}_n, \Pi_n)}\right)
    \to 
    \left(F, R, \rho, \mu, \nu, \ProcLaw_{(\hat{X}, \Pi)}\right)
  \end{equation}
  in $\rootBCM(\MeasSt^{\otimes 2} \times \ColSt)$.
\end{thm}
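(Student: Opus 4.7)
The plan is to verify the hypotheses of Theorem~\ref{thm: col dtm resis sp} for the sequence $G_n = (V_n, a_n^{-1} R_n, \rho_n, b_n^{-1}\mu_n, b_n^{-1}\mu_n^2)$ with prospective limit $G = (F, R, \rho, \mu, \nu)$. Since each $V_n$ is discrete, Proposition~\ref{prop: Kato meas for discrete space} gives $G_n \in \ColResisSp$ automatically, and conditions \ref{assum item: 1. dtm col meas for CSRW} and \ref{assum item: 2. dtm col meas for CSRW} of our standing hypothesis are exactly conditions \ref{assum item: 1. col dtm resis sp} and \ref{assum item: 2. col dtm resis sp} of that theorem. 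Thus the essential task is to verify the heat-kernel condition \ref{assum item: 3. col dtm resis sp}, where the inner integration is against the weighting measure $\nu_n = b_n^{-1}\mu_n^2$ but the heat kernel $p_n$ is that of $X_n^1$ with respect to the reference measure $b_n^{-1}\mu_n$.

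As a first step I would exploit the uniform lower volume bound \eqref{assum item eq: 3. dtm col meas for CSRW}: Lemma~\ref{lem: hk estimate from volume} applied in the rescaled space and optimised at $s \asymp t^{1/(1+\alpha_r)}$ yields a uniform on-diagonal bound
\begin{equation} \label{eq: plan on diag}
  p_n(t, x, x) \leq C_r\, t^{-\alpha_r/(1+\alpha_r)}, \qquad x \in F_n^{(r)}, \ t \in (0, T_r),
\end{equation}
which propagates off-diagonal on $F_n^{(r)} \times F_n^{(r)}$ via $p_n(t,x,y) \leq p_n(t,x,x)^{1/2} p_n(t,y,y)^{1/2}$. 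This step mirrors the VSRW argument of Theorem~\ref{thm: dtm col meas for VSRW}.

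The main obstacle, and the essential point of departure from the VSRW case, is that $\nu_n$ is \emph{not} the reference measure of $X_n^1$: it differs from $b_n^{-1}\mu_n$ by the extra density $\mu_n(y)$, which can be unbounded, so Chapman--Kolmogorov does not directly identify the inner integral with $p_n(2t, x_1, x_2)$. My strategy is to split the integral at a threshold $K$ on the conductance. On $\{\mu_n(y) \leq K\}$ I would replace the extra factor $\mu_n(y)$ by $K$ and apply Chapman--Kolmogorov together with \eqref{eq: plan on diag}, bounding the contribution by $C K\, t^{-\alpha_r/(1+\alpha_r)}$. On $\{\mu_n(y) > K\}$ I would apply the off-diagonal bound in \eqref{eq: plan on diag} in both factors and use $\mu_n(y)^2 \leq K^{-(q_r-1)} \mu_n(y)^{1+q_r}$, together with the $(1+q_r)$-moment bound \eqref{assum item eq: 3.1. dtm col meas for CSRW}, to bound that contribution by $C' t^{-2\alpha_r/(1+\alpha_r)} K^{-(q_r-1)}$.

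Balancing the two contributions by choosing $K \asymp t^{-\alpha_r/(q_r(1+\alpha_r))}$ yields
\begin{equation}
  \int_{F_n^{(r)}} p_n(t, x_1, y)\, p_n(t, x_2, y)\, \nu_n(dy) \leq C''\, t^{-\alpha_r(q_r+1)/(q_r(1+\alpha_r))},
\end{equation}
uniformly in large $n$ and $x_1, x_2 \in F_n^{(r)}$. A short arithmetic check shows that the exponent $\alpha_r(q_r+1)/(q_r(1+\alpha_r))$ is strictly less than $1$ precisely when $q_r > \alpha_r$, which is the standing hypothesis. Integrating in $t$ on $(0, \delta)$ and letting $\delta \downarrow 0$ therefore establishes Assumption~\ref{assum: col dtm resis sp}\ref{assum item: 3. col dtm resis sp}, and Theorem~\ref{thm: col dtm resis sp} delivers the conclusion. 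I expect the delicate point to be the sharpness of this balance: the optimised exponent meets $1$ exactly at $q_r = \alpha_r$, so the two pieces of the split must be traded off at the critical scale for the argument to succeed.
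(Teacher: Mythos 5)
Your proposal is correct, and the overall architecture (reduce to Theorem~\ref{thm: col dtm resis sp}, note that conditions (i)--(ii) are immediate, and concentrate on the heat-kernel condition \ref{assum item: 3. col dtm resis sp}) is the same as in the paper. The key estimate, however, is carried out by a genuinely different device. The paper applies H\"older's inequality with conjugate exponents $p_r, q_r$ directly to $\int p_n(t,x_1,y)p_n(t,x_2,y)\,\mu_n(y)\,b_n^{-1}\mu_n(dy)$, keeping one factor as $p_n(2t,x_1,x_2)^{1/p_r}$ via Chapman--Kolmogorov and absorbing $\mu_n(y)^{q_r}$ into the $(1+q_r)$-moment; it also retains a one-parameter family of heat-kernel bounds $p_n(t,\cdot,\cdot)\leq 2t^{\gamma_r-1}+\sqrt{2}C_r^{-1}t^{-\alpha_r\gamma_r}$ and only at the end chooses $\gamma_r$ in the open interval $(\tfrac{1}{q_r+1},\tfrac{q_r}{\alpha_r(q_r+1)})$, which is nonempty precisely when $q_r>\alpha_r$. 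You instead commit at the outset to the optimised exponent $\gamma_r=1/(1+\alpha_r)$ and then split the weighting measure at a conductance level $K$, trading the light part (Chapman--Kolmogorov plus the factor $K$) against the heavy part (sup heat-kernel bound plus $\mu_n(y)^2\leq K^{-(q_r-1)}\mu_n(y)^{1+q_r}$ and the moment bound), and optimise over $K$. The two arguments are essentially dual interpolations between the $L^1$ and $L^{q_r}$ information on the density $\mu_n(y)$, and they produce the same critical threshold $q_r>\alpha_r$; your choice $\gamma_r=1/(1+\alpha_r)$ indeed lies in the paper's admissible interval exactly when $q_r>\alpha_r$. H\"older's route is perhaps a line shorter; the truncation route makes the trade-off between typical and high-degree sites more transparent. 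One small presentational point: the claim that $\diagMeas{\nu}$ is in the local Kato class of $\hat X$ is part of the conclusion of Theorem~\ref{thm: col dtm resis sp} (via $G\in\ColResisSp$), so it comes for free once condition (iii) is verified; it is worth saying this explicitly rather than leaving it implicit.
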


\begin{proof}
  It is sufficient to verify that 
  Assumption~\ref{assum: col dtm resis sp}\ref{assum item: 3. col dtm resis sp} holds.
  Denote by $p_n$ the heat kernel of the scaled walk $X^1_n$ with respect to $b_n^{-1} \mu_n$.
  Fix $r > 0$ for the remainder of this proof, and set $B_n(r) \coloneqq B_{R_n}(\rho_n, a_n r)$.
  Since $q_r > \alpha_r$ by Assumption~\ref{assum: rdm col meas for CSRW}\ref{assum item: 3. rdm col meas for CSRW}, 
  we can find $\gamma_r \in (0,1)$ such that
  \begin{equation}  \label{thm pr eq: 2. dtm col meas for CSRW}
    \frac{1}{q_r+1} < \gamma_r < \frac{q_r}{\alpha_r(q_r+1)} .
  \end{equation}
  Following the argument that yields \eqref{thm pr eq: 2. dtm col meas for VSRW},
  with $v_n^{(r)}(s)$ in \eqref{thm pr eq: 6. dtm col meas for VSRW} replaced by $C_r\, s^{\gamma_r}$,
  we obtain 
  \begin{equation}  \label{thm pr eq: 3. dtm col meas for CSRW}
    \sup_{x, y \in B_n(r)}
    p_n(t, x, y) 
    \leq 
    \frac{2}{t^{1-\gamma_r}} 
    + \frac{\sqrt{2}}{C_r\, t^{\alpha_r \gamma_r}},
    \quad 
    \forall t \in (0, 1).
  \end{equation}
  Let $p_r > 1$ be such that $p_r^{-1} + q_r^{-1} = 1$.
  By H\"{o}lder's inequality, for any $x_1, x_2 \in B_n(r)$,
  \begin{align} 
    &\int_{B_n(r)} 
      p_n(t, x_1, y)\, p_n(t, x_2, y)\, b_n^{-1} \mu_n^2(dy) \\
    &= \int_{B_n(r)} 
      p_n(t, x_1, y)\, p_n(t, x_2, y) \mu_n(y)\, b_n^{-1} \mu_n(dy) \\
    &\leq 
      \left(
        \int_{B_n(r)} 
        p_n(t, x_1, y)\, p_n(t, x_2, y)\, b_n^{-1} \mu_n(dy)
      \right)^{1/p_r}\\
    &\qquad
      \left(
        \int_{B_n(r)} 
        p_n(t, x_1, y)\, p_n(t, x_2, y) \mu_n(y)^{q_r}\, b_n^{-1} \mu_n(dy)
      \right)^{1/q_r} \\
    &\leq 
      p_n(2t, x_1, x_2)^{1/p_r}
      \sup_{y \in B_n(r)} p_n(t, y, y)^{2/q_r}
      \left\{ b_n^{-1} \mu_n^{1+q_r}\!\bigl( B_{R_n}(\rho_n, a_n r) \bigr) \right\}^{1/q_r},
    \label{thm pr eq: 4. dtm col meas for CSRW}
  \end{align}
  where we have used the Chapman--Kolmogorov equation~\eqref{assum item eq: C-K equation} 
  together with \eqref{thm pr eq: 3. dtm col meas for VSRW}.
  Using \eqref{thm pr eq: 3. dtm col meas for CSRW}
  and the monotonicity of its right-hand side in $t$,
  we deduce that 
  \begin{align}
    p_n(2t, x_1, y)^{1/p_r}
    \sup_{y \in B_n(r)} p_n(t, y, y)^{2/q_r}
    &\leq
    \left(
      \frac{2}{t^{1-\gamma_r}}
      + \frac{\sqrt{2}}{C_r\, t^{\alpha_r \gamma_r}}
    \right)^{p_r^{-1} + 2 q_r^{-1}} \\
    &= 
    \left(
      \frac{2}{t^{1-\gamma_r}} 
      + \frac{\sqrt{2}}{C_r\, t^{\alpha_r \gamma_r}}
    \right)^{1 + q_r^{-1}} .
  \end{align}
  Combining this with \eqref{thm pr eq: 4. dtm col meas for CSRW}, we obtain
  \begin{align}
    &\sup_{x_1, x_2 \in B_n(r)}
      \int_{B_n(r)} 
      p_n(t, x_1, y)\, p_n(t, x_2, y)\, b_n^{-1} \mu_n^2(dy) \\
    &\leq
      \left\{ b_n^{-1} \mu_n^{1+q_r}\!\bigl( B_{R_n}(\rho_n, a_n r) \bigr) \right\}^{1/q_r}
      \left(
        \frac{2}{t^{1-\gamma_r}} 
        + \frac{\sqrt{2}}{C_r\, t^{\alpha_r \gamma_r}}
      \right)^{1 + q_r^{-1}} .
  \end{align}
  Hence, by \eqref{assum item eq: 3.1. dtm col meas for CSRW},
  it suffices to show that 
  \begin{equation}  \label{thm pr eq: 5. dtm col meas for CSRW}
    \lim_{\delta \to 0}
    \int_0^\delta 
      \left(
        \frac{2}{t^{1-\gamma_r}} 
        + \frac{\sqrt{2}}{C_r\, t^{\alpha_r \gamma_r}}
      \right)^{1 + q_r^{-1}} dt 
    = 0 .
  \end{equation}
  Let $c_1 > 0$ be a constant such that 
  \begin{equation}
    (a+b)^{1+q_r^{-1}} \leq c_1\,(a^{1+q_r^{-1}} + b^{1+q_r^{-1}}),
    \quad \forall a, b > 0.
  \end{equation}
  Then
  \begin{equation}
    \left(
      \frac{2}{t^{1-\gamma_r}} 
      + \frac{\sqrt{2}}{C_r\, t^{\alpha_r \gamma_r}}
    \right)^{1 + q_r^{-1}}
    \leq
    \frac{c_1 2^{1+q_r^{-1}}}{t^{(1-\gamma_r)(1+q_r^{-1})}} 
    + 
    \frac{c_1 2^{\frac{1}{2}+\frac{1}{2q_r}}}{C_r^{1+q_r^{-1}} t^{\alpha_r \gamma_r (1+ q_r^{-1})}} .
  \end{equation}
  By \eqref{thm pr eq: 2. dtm col meas for CSRW}, we have 
  \begin{equation}
    (1-\gamma_r)(1+q_r^{-1}) < 1, 
    \quad 
    \alpha_r \gamma_r (1+ q_r^{-1}) < 1 .
  \end{equation}
  Hence \eqref{thm pr eq: 5. dtm col meas for CSRW} follows, which completes the proof.
\end{proof}

When the conductance weights on vertices are uniformly bounded away from $0$ and $\infty$,
the proof of Theorem~\ref{thm: dtm col meas for VSRW} can be applied,
and hence the same volume condition as Assumption~\ref{assum: dtm col meas for VSRW}\ref{assum item: 3. dtm col meas for VSRW} is sufficient, 
as shown below.

\begin{prop} \label{prop: dtm weakend vol cond for CSRW}
  In addition to Assumption~\ref{assum: dtm col meas for CSRW}\ref{assum item: 1. dtm col meas for CSRW} and \ref{assum item: 2. dtm col meas for CSRW},
  assume that the following conditions hold.
  \begin{enumerate} [resume* = dtm col meas for CSRW]
    \item \label{assum item: 4. dtm col meas for CSRW}
      For each $r > 0$, 
      \begin{equation} \label{assum item eq: 4. dtm col meas for CSRW}
        \liminf_{n \to \infty} \inf_{x \in B_{R_n}(\rho_n, a_n r)} \mu_n(x) >0,
        \qquad 
        \limsup_{n \to \infty} \sup_{x \in B_{R_n}(\rho_n, a_n r)} \mu_n(x) < \infty.
      \end{equation}
    \item \label{assum item: 5. dtm col meas for CSRW}
      As $n \to \infty$, $b_n \to \infty$.
    \item \label{assum item: 6. dtm col meas for CSRW}
      For each $r > 0$, there exist constants $\alpha_r, \beta_r, C_r > 0$ such that, for each $n \geq 1$,
      \begin{equation}  \label{assum item eq: 6. dtm col meas for CSRW}
        \inf_{x \in B_{R_n}(\rho_n, a_n r)} b_n^{-1} \mu_n\!\left( B_{R_n}(x, a_n s) \right) \geq C_r\, s^{\alpha_r},
        \quad 
        \forall s \in \left( \frac{1}{(\log b_n) (\log \log b_n)^{(1+\beta_r)}}, 1 \right).
      \end{equation}
  \end{enumerate}
  Then the conclusion of Theorem~\ref{thm: dtm col meas for CSRW} holds.
\end{prop}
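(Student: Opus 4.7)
The plan is to verify Assumption~\ref{assum: col dtm resis sp}\ref{assum item: 3. col dtm resis sp} and then invoke Theorem~\ref{thm: col dtm resis sp}, by running the argument from the proof of Theorem~\ref{thm: dtm col meas for VSRW} almost verbatim. The key observation is that, under condition~\ref{assum item: 4. dtm col meas for CSRW}, the conductance measure $\mu_n$ is locally comparable to the counting measure on $B_n(r) \coloneqq B_{R_n}(\rho_n, a_n r)$: for each $r > 0$ there exist constants $c_r, C_r' > 0$ such that, for all sufficiently large $n$, $c_r \leq \mu_n(y) \leq C_r'$ for every $y \in B_n(r)$. This single observation turns both the passage from $\mu_n^2$ to $\mu_n$ and the control of the heat kernel near the diagonal into routine modifications of the VSRW calculation.

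First I would use the upper bound together with the Chapman--Kolmogorov equation for $p_n$ (the heat kernel with respect to $b_n^{-1}\mu_n$) to reduce the $\mu_n^2$-integral to an on-diagonal-type estimate: for any $x_1, x_2 \in B_n(r)$ and $t > 0$,
\begin{align}
\int_{B_n(r)} p_n(t, x_1, y)\, p_n(t, x_2, y)\, b_n^{-1}\mu_n^2(dy)
&\leq C_r' \int_{B_n(r)} p_n(t, x_1, y)\, p_n(t, x_2, y)\, b_n^{-1}\mu_n(dy) \\
&\leq C_r'\, p_n(2t, x_1, x_2).
\end{align}
Next, the lower bound $\mu_n(\{x\}) \geq c_r$ on $B_n(r)$ gives $b_n^{-1}\mu_n(B_{R_n}(x, a_n s)) \geq c_r b_n^{-1}$ for every $s > 0$, and combining this with \eqref{assum item eq: 6. dtm col meas for CSRW} produces exactly the same two-regime lower volume function
\begin{equation}
v_n^{(r)}(s) \coloneqq \begin{cases} C_r\, s^{\alpha_r}, & s \in (\delta_n^{(r)}, 1),\\ c_r b_n^{-1}, & s \in (0, \delta_n^{(r)}), \end{cases}
\end{equation}
as in \eqref{thm pr eq: 6. dtm col meas for VSRW}, with $\delta_n^{(r)} \coloneqq 1/((\log b_n)(\log\log b_n)^{1+\beta_r})$, which is well-defined for large $n$ by condition~\ref{assum item: 5. dtm col meas for CSRW}.

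From this point the proof of Theorem~\ref{thm: dtm col meas for VSRW} transfers without change: applying Lemma~\ref{lem: hk estimate from volume} with $s = \trf_{\beta_r/2}(t)$, the Cauchy--Schwarz bound \eqref{thm pr eq: 3. dtm col meas for VSRW}, and the three-term decomposition that leads to \eqref{thm pr eq: 5. dtm col meas for VSRW} shows that the required $\delta \to 0$ limit, taken after $n \to \infty$, vanishes. The only cosmetic change is that the third term $\tfrac{\sqrt{2}}{2}b_n\trf_{\beta_r/2}^{-1}(\delta_n^{(r)})$ picks up a harmless factor $c_r^{-1}$, and still tends to $0$ by Lemma~\ref{lem: property of trf}\ref{lem item: 3. property of trf} together with $b_n \to \infty$ from \ref{assum item: 5. dtm col meas for CSRW}. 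I do not foresee any genuine obstacle here: the whole proposition amounts to recognising that hypothesis~\ref{assum item: 4. dtm col meas for CSRW} lets the VSRW-style argument absorb both the switch from the counting measure to $\mu_n$ and the passage from $\mu_n$ to $\mu_n^2$ into two harmless multiplicative constants, after which Theorem~\ref{thm: col dtm resis sp} delivers the desired scaling limit.
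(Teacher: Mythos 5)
Your proposal is correct and follows essentially the same route as the paper's proof: the upper bound on $\mu_n$ plus Chapman--Kolmogorov reduces the $\mu_n^2$-integral to $p_n(2t,x_1,x_2)$, and the lower bound on $\mu_n$ supplies the small-$s$ regime of the two-regime volume function so that the VSRW argument applies verbatim, with the harmless extra constant you identify in the third term.
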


\begin{proof}
  We use the same notation as in the proof of Theorem~\ref{thm: dtm col meas for CSRW}.
  Fix $r > 0$.
  By \eqref{assum item eq: 4. dtm col meas for CSRW},
  we can find a constant $M_r \in (0, \infty)$ satisfying
  \begin{equation}
    M_r >
    \limsup_{n \to \infty} \sup_{x \in B_{R_n}(\rho_n, a_n r)} \mu_n(x).
  \end{equation}
  Using the Chapman--Kolmogorov equation,
  for all sufficiently large $n$,
  we obtain that, for any $x_1, x_2 \in B_n(r)$,
  \begin{align}
    \int_{B_n(r)} 
    p_n(t, x_1, y)\, p_n(t, x_2, y)\, b_n^{-1} \mu_n^2(dy)
    &\leq
    M_r \int_{B_n(r)} 
    p_n(t, x_1, y)\, p_n(t, x_2, y)\, b_n^{-1} \mu_n(dy)\\
    &\leq    
    M_r\, p_n(2t, x_1, x_2).
  \end{align}
  Thus, it suffices to show that 
  \begin{equation} \label{prop pr eq: 1. dtm weakend vol cond for CSRW}
    \lim_{\delta \to 0}
    \limsup_{n \to \infty} 
    \sup_{x_1, x_2 \in B_n(r)} 
    \int_0^\delta p_n(2t, x_1, x_2)\, dt 
    = 0.
  \end{equation}
  Henceforth, we consider only sufficiently large $n$ for which $b_n > e$.
  Set 
  \begin{equation}
    \delta_n^{(r)} \coloneqq \frac{1}{(\log b_n) (\log \log b_n)^{(1+\beta_r)}}.
  \end{equation}
  By \eqref{assum item eq: 4. dtm col meas for CSRW},
  we can find a constant $m_r \in (0, \infty)$ satisfying
  \begin{equation}
    m_r <
    \liminf_{n \to \infty} \inf_{x \in B_{R_n}(\rho_n, a_n r)} \mu_n(x).
  \end{equation}
  For each $n$, define
  \begin{equation}  
    v_n^{(r)}(s) \coloneqq 
    \begin{cases}
      C_r\, s^{\alpha_r}, & s \in (\delta_n^{(r)}, 1),\\
      m_r\, b_n^{-1}, & s \in (0, \delta_n^{(r)}).
    \end{cases}
  \end{equation}
  Condition~\ref{assum item: 6. dtm col meas for CSRW} yields that, for all sufficiently large $n$,
  \begin{equation}
    \inf_{x \in B_{R_n}(\rho_n, a_n r)} b_n^{-1} m_n\!\left( B_{R_n}(x, a_n s) \right) \geq v_n^{(r)}(s),
    \quad 
    \forall s \in (0, 1).
  \end{equation}
  Therefore, by the same argument as in the proof of Theorem~\ref{thm: dtm col meas for VSRW},
  we obtain \eqref{prop pr eq: 1. dtm weakend vol cond for CSRW}.
\end{proof}

\begin{exm}
  The examples \ref{exm item: one-dimensional lattice}, \ref{exm item: ufr fractals}, and \ref{exm item: carpet} listed in Example~\ref{exm: dtm VSRW} 
  all have uniformly bounded degrees.
  Hence, the preceding proposition applies.
  In particular, Theorem~\ref{thm: dtm col meas for CSRW} then yields the scaling limits of the (uniform) collision measures of two i.i.d.\ CSRWs 
  on these models.
\end{exm}

We next provide a version of the above theorem for random electrical networks.
Similarly to Theorem~\ref{thm: dtm col meas for VSRW},
we now assume that the recurrent electrical network $(V_n, E_n, \mu_n)$ is random,
so that $(V_n, R_n, \rho_n, \mu_n)$ is a random element of $\rootResisSp$.
We also assume that the scaling factors $a_n$ and $b_n$ are random variables 
defined on the same probability space as $(V_n, E_n, \mu_n)$.
(They may be taken to be deterministic.)
We write $\mathbf{P}_n$ for the underlying complete probability measure for the random electrical network $(V_n, E_n, \mu_n)$.

\begin{assum}  \label{assum: rdm col meas for CSRW}\leavevmode
  \begin{enumerate} [label = \textup{(\roman*)}, leftmargin = *, series = rdm col meas for CSRW]
    \item \label{assum item: 1. rdm col meas for CSRW}
      There exists a random element $G = (F, R, \rho, \mu, \nu)$ of $\rootBCM(\MeasSt^{\otimes 2})$
      with the underlying complete probability measure $\mathbf{P}$ 
      such that 
      \begin{equation}
        \left(V_n, a_n^{-1}R_n, \rho_n, b_n^{-1}\mu_n, b_n^{-1}\mu_n^2\right) 
        \xrightarrow{\mathrm{d}} (F, R, \rho, \mu, \nu)
        \quad \text{in}\quad \rootBCM(\MeasSt^{\otimes 2}).
      \end{equation}
    \item \label{assum item: 2. rdm col meas for CSRW}
      For all $\lambda > 0$,
      \begin{equation}
        \lim_{r \to \infty} \liminf_{n \to \infty} 
        \mathbf{P}_n\!\left( 
          a_n^{-1}R_n\bigl( \rho_n, B_{R_n}(\rho_n, a_n r)^c \bigr) > \lambda
        \right) = 1.
      \end{equation}
    \item \label{assum item: 3. rdm col meas for CSRW}
      For each $r > 0$ and $\varepsilon > 0$, 
      there exist deterministic constants $\alpha_{r,\varepsilon}, \beta_{r,\varepsilon}, C_{r,\varepsilon} > 0$ such that,
      for all sufficiently large $n$, with probability at least $1 - \varepsilon$, 
      \begin{equation}  \label{prop eq: rdm col meas for CSRW}
        \inf_{x \in B_{R_n}(\rho_n, a_n r)} 
        b_n^{-1} \mu_n\!\left( B_{R_n}(x, a_n s) \right) 
        \geq C_{r,\varepsilon}\, s^{\alpha_{r,\varepsilon}},
        \quad 
        \forall s \in ( 0, 1),
      \end{equation}
      for some $q_{r, \varepsilon} > 1 \vee \alpha_{r, \varepsilon}$,
      \begin{equation}   \label{assum item eq: 3.1. rdm col meas for CSRW}
        \limsup_{\lambda \to \infty}
         \limsup_{n \to \infty} 
         \mathbf{P}_n\!
         \left(
          b_n^{-1} \mu_n^{1+q_{r,\varepsilon}} \bigl( B_{R_n}(\rho_n, a_n r) \bigr) > \lambda
         \right)
         = 0.
      \end{equation} 
  \end{enumerate}
\end{assum}

\begin{thm} \label{thm: rdm col meas for CSRW}
  Suppose that Assumption~\ref{assum: rdm col meas for VSRW} is satisfied.
  Then the measure $\diagMeas{\nu}$ belongs to the local Kato class of $\hat{X}$, $\mathbf{P}$-a.s.
  Moreover, if we write $\Pi$ for the collision measure of $X^1$ and $X^2$ associated with $\nu$,
  then  
  \begin{equation}
    \left(V_n, a_n^{-1}R_n, \rho_n, b_n^{-1}\mu_n, b_n^{-1}\mu_n^2, 
      \ProcLaw_{(\hat{X}_n,\, \Pi_n)}\right)
    \xrightarrow{\mathrm{d}}
    \left(F, R, \rho, \mu, \nu, \ProcLaw_{(\hat{X}, \Pi)}\right)
  \end{equation}
  in $\rootBCM(\MeasSt^{\otimes 2} \times \ColSt)$.
\end{thm}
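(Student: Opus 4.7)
The plan is to apply Theorem~\ref{thm: col rdm resis sp} to the rescaled sequence
\begin{equation}
G_n \coloneqq \bigl(V_n, a_n^{-1}R_n, \rho_n, b_n^{-1}\mu_n, b_n^{-1}\mu_n^2\bigr).
\end{equation}
Conditions \ref{assum item: 1. col rdm resis sp} and \ref{assum item: 2. col rdm resis sp} of Assumption~\ref{assum: col rdm resis sp} follow immediately from \ref{assum item: 1. rdm col meas for CSRW} and \ref{assum item: 2. rdm col meas for CSRW}, so the entire task is to verify the heat-kernel condition \ref{assum item: 3. col rdm resis sp}. The strategy is to execute the pathwise argument of Theorem~\ref{thm: dtm col meas for CSRW} on a suitably chosen high-probability event, and then absorb the complementary event via the $\wedge 1$ truncation built into \ref{assum item: 3. col rdm resis sp}.

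Fix $r>0$ and $\varepsilon>0$, and let $\alpha_{r,\varepsilon}$, $\beta_{r,\varepsilon}$, $C_{r,\varepsilon}$ and $q_{r,\varepsilon}>1\vee\alpha_{r,\varepsilon}$ be the constants provided by \ref{assum item: 3. rdm col meas for CSRW}. By \eqref{assum item eq: 3.1. rdm col meas for CSRW} one can choose $\lambda=\lambda(r,\varepsilon)>0$ such that the event
\begin{equation}
E_{n,\varepsilon}\coloneqq \Bigl\{\inf_{x\in B_n(r)}b_n^{-1}\mu_n(B_{R_n}(x,a_n s))\ge C_{r,\varepsilon}s^{\alpha_{r,\varepsilon}}\ \forall s\in(0,1)\Bigr\}\cap\bigl\{b_n^{-1}\mu_n^{1+q_{r,\varepsilon}}(B_n(r))\le\lambda\bigr\}
\end{equation}
satisfies $\liminf_{n\to\infty}\mathbf{P}_n(E_{n,\varepsilon})\ge 1-2\varepsilon$, where $B_n(r)\coloneqq B_{R_n}(\rho_n,a_n r)$. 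On $E_{n,\varepsilon}$, choose $\gamma_{r,\varepsilon}\in(0,1)$ with $(q_{r,\varepsilon}+1)^{-1}<\gamma_{r,\varepsilon}<q_{r,\varepsilon}/(\alpha_{r,\varepsilon}(q_{r,\varepsilon}+1))$ exactly as in \eqref{thm pr eq: 2. dtm col meas for CSRW}, and apply Lemma~\ref{lem: hk estimate from volume} together with \eqref{thm pr eq: 3. dtm col meas for VSRW} to obtain, for all $t\in(0,1)$ and $x,y\in B_n(r)$,
\begin{equation}
p_n(t,x,y)\le \frac{2}{t^{1-\gamma_{r,\varepsilon}}}+\frac{\sqrt{2}}{C_{r,\varepsilon}\,t^{\alpha_{r,\varepsilon}\gamma_{r,\varepsilon}}}.
\end{equation}

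Repeating the Hölder-inequality step \eqref{thm pr eq: 4. dtm col meas for CSRW} with the $(1+q_{r,\varepsilon})$-power bound replaced by $\lambda$, and running the integral estimate \eqref{thm pr eq: 5. dtm col meas for CSRW} verbatim, we arrive at a deterministic function $\Phi_{r,\varepsilon}(\delta)$ with $\Phi_{r,\varepsilon}(\delta)\to 0$ as $\delta\to 0$, such that on $E_{n,\varepsilon}$,
\begin{equation}
\sup_{x_1,x_2\in B_n(r)}\int_0^\delta\!\!\int_{B_n(r)}p_n(t,x_1,y)p_n(t,x_2,y)\,b_n^{-1}\mu_n^2(dy)\,dt\le \lambda^{1/q_{r,\varepsilon}}\Phi_{r,\varepsilon}(\delta).
\end{equation}
Splitting the expectation over $E_{n,\varepsilon}$ and $E_{n,\varepsilon}^c$, bounding the integrand by $1$ on $E_{n,\varepsilon}^c$, and then letting $\delta\to 0$ and $\varepsilon\to 0$ verifies Assumption~\ref{assum: col rdm resis sp}\ref{assum item: 3. col rdm resis sp}, whereupon Theorem~\ref{thm: col rdm resis sp} delivers the claimed convergence together with $G\in\ColResisSp$ almost surely.

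The main obstacle I anticipate is purely bookkeeping: all the quantities $\alpha_{r,\varepsilon},\beta_{r,\varepsilon},C_{r,\varepsilon},q_{r,\varepsilon},\gamma_{r,\varepsilon},\lambda$ depend on $\varepsilon$, and one must make sure that for \emph{each} fixed $\varepsilon$ the resulting bound $\lambda^{1/q_{r,\varepsilon}}\Phi_{r,\varepsilon}(\delta)$ vanishes as $\delta\to 0$ \emph{before} letting $\varepsilon\to 0$. The truncation at $1$ in \ref{assum item: 3. col rdm resis sp} is exactly what makes this two-stage passage to the limit legitimate and prevents the $\varepsilon$-dependent constants from obstructing the argument; the rest of the proof is a straightforward transcription of the deterministic computation.
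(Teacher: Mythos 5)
Your proposal is correct and is exactly the argument the paper intends: the paper omits the proof, stating only that it follows by combining the random-environment event construction from Theorem~\ref{thm: rdm col meas for VSRW} with the H\"older/volume computation of Theorem~\ref{thm: dtm col meas for CSRW}, which is precisely what you carry out (high-probability event $E_{n,\varepsilon}$ capturing both the volume lower bound and the $(1+q_{r,\varepsilon})$-power mass bound, the deterministic estimate on that event, and absorption of $E_{n,\varepsilon}^c$ via the $\wedge 1$ truncation). Your closing remark about the order of limits $\delta\to 0$ then $\varepsilon\to 0$ correctly identifies the only point requiring care.
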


\begin{proof}
  This is verified by following the arguments in the proof of Theorems~\ref{thm: rdm col meas for VSRW} and \ref{thm: dtm col meas for CSRW},
  and therefore we omit the proof.
\end{proof}

The next proposition is a counterpart of Proposition~\ref{prop: dtm weakend vol cond for CSRW} 
in the setting of random electrical networks.
As its proof follows the same idea as in Theorem~\ref{thm: rdm col meas for VSRW},
we state the result without proof.

\begin{prop} \label{prop: rdm weakend vol cond for CSRW}
  In addition to Assumption~\ref{assum: dtm col meas for CSRW}\ref{assum item: 1. dtm col meas for CSRW} and \ref{assum item: 2. dtm col meas for CSRW},
  assume that the following conditions hold.
  \begin{enumerate} [resume* = dtm col meas for CSRW]
    \item \label{assum item: 4. rdm col meas for CSRW}
      For each $r > 0$, 
      \begin{gather} \label{assum item eq: 4. rdm col meas for CSRW}
        \lim_{\varepsilon \to 0} 
        \liminf_{n \to \infty}
        \mathbf{P}_n\!
        \left(
          \inf_{x \in B_{R_n}(\rho_n, a_n r)} \mu_n(x) > \varepsilon
        \right)
        = 1,\\
        \lim_{\lambda \to \infty} 
        \limsup_{n \to \infty}
        \mathbf{P}_n\!
        \left(
          \sup_{x \in B_{R_n}(\rho_n, a_n r)} \mu_n(x) > \lambda
        \right)
        =0.
      \end{gather}
    \item \label{assum item: 5. rdm col meas for CSRW}
      As $n \to \infty$, $b_n \to \infty$ in probability.
    \item \label{assum item: 6. rdm col meas for CSRW}
      For each $r > 0$ and $\varepsilon > 0$, 
      there exist deterministic constants $\alpha_{r,\varepsilon}, \beta_{r,\varepsilon}, C_{r,\varepsilon} > 0$ such that,
      for all sufficiently large $n$, with probability at least $1 - \varepsilon$, we have $b_n > e$ and
      \begin{equation} 
        \inf_{x \in B_{R_n}(\rho_n, a_n r)} 
        b_n^{-1} \mu_n\!\left( B_{R_n}(x, a_n s) \right) 
        \geq C_{r,\varepsilon}\, s^{\alpha_{r,\varepsilon}},
        \quad 
        \forall s \in \left( \frac{1}{(\log b_n)(\log \log b_n)^{1+\beta_{r,\varepsilon}}}, 1 \right).
      \end{equation}
  \end{enumerate}
  Then the conclusion of Theorem~\ref{thm: rdm col meas for CSRW} holds.
\end{prop}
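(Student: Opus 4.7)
The plan is to verify condition~\ref{assum item: 3. col rdm resis sp} of Assumption~\ref{assum: col rdm resis sp}, after which Theorem~\ref{thm: col rdm resis sp} delivers the stated convergence, since the remaining hypotheses follow from Assumption~\ref{assum: rdm col meas for CSRW}\ref{assum item: 1. rdm col meas for CSRW} and \ref{assum item: 2. rdm col meas for CSRW}. The strategy combines the deterministic reduction of Proposition~\ref{prop: dtm weakend vol cond for CSRW}---which converts the squared-conductance weighting measure into the conductance measure on balls---with the probabilistic scheme of Theorem~\ref{thm: rdm col meas for VSRW}.

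Fix $r>0$ and $\varepsilon>0$, set $B_n(r) \coloneqq B_{R_n}(\rho_n, a_n r)$, and let $p_n$ denote the scaled heat kernel. Using conditions \ref{assum item: 4. rdm col meas for CSRW}, \ref{assum item: 5. rdm col meas for CSRW} and \ref{assum item: 6. rdm col meas for CSRW}, I will choose deterministic constants $m_{r,\varepsilon}, M_{r,\varepsilon} \in (0,\infty)$ and $\alpha_{r,\varepsilon}, \beta_{r,\varepsilon}, C_{r,\varepsilon} > 0$ such that, for all sufficiently large $n$, the event $E_n$ on which $b_n > e$, $m_{r,\varepsilon} \le \mu_n(x) \le M_{r,\varepsilon}$ for every $x \in B_n(r)$, and the lower volume estimate of \ref{assum item: 6. rdm col meas for CSRW} holds on $B_n(r)$, satisfies $\mathbf{P}_n(E_n) \ge 1-\varepsilon$. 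On $E_n$, the upper conductance bound and the Chapman--Kolmogorov equation reduce the integrand of interest to
\[
\int_{B_n(r)} p_n(t,x_1,y)\,p_n(t,x_2,y)\, b_n^{-1}\mu_n^2(dy) \le M_{r,\varepsilon}\, p_n(2t, x_1, x_2),
\]
so it suffices to show that $\int_0^\delta \sup_{x_1, x_2 \in B_n(r)} p_n(2t, x_1, x_2)\, dt$ vanishes as $\delta \to 0$ and $n \to \infty$ on $E_n$.

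For this I would mimic the closing argument of Theorem~\ref{thm: rdm col meas for VSRW}. Setting $\delta_n \coloneqq 1/[(\log b_n)(\log\log b_n)^{1+\beta_{r,\varepsilon}}]$ on $\{b_n > e\}$ and defining $v_n(s) \coloneqq C_{r,\varepsilon} s^{\alpha_{r,\varepsilon}}$ for $s \in (\delta_n, 1)$ and $v_n(s) \coloneqq m_{r,\varepsilon} b_n^{-1}$ for $s \in (0, \delta_n)$, the lower conductance bound on $E_n$ forces $\inf_{x \in B_n(r)} b_n^{-1}\mu_n(B_{R_n}(x, a_n s)) \ge v_n(s)$ for all $s \in (0,1)$, the small-radius piece coming from the mass of the single vertex $x$. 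Lemma~\ref{lem: hk estimate from volume} evaluated at $s = \trf_{\beta_{r,\varepsilon}/2}(t)$, together with \eqref{thm pr eq: 3. dtm col meas for VSRW}, then produces an upper bound for $\sup_{x_1,x_2 \in B_n(r)} p_n(2t, x_1, x_2)$ whose integral over $(0,\delta)$ splits into a deterministic contribution vanishing as $\delta \to 0$ via Lemma~\ref{lem: property of trf}, plus an error bounded by a constant multiple of $b_n\, \trf_{\beta_{r,\varepsilon}/2}^{-1}(\delta_n)$; by the same logarithmic computation as in the VSRW proof and Assumption~\ref{assum: rdm col meas for CSRW}\ref{assum item: 5. rdm col meas for CSRW}, this tends to $0$ in probability. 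Taking $\mathbf{E}_n[\,\cdot \wedge 1\,]$ absorbs $E_n^c$ into an additive $\varepsilon$, and letting $n\to\infty$, $\delta \to 0$, and finally $\varepsilon \to 0$ verifies condition~\ref{assum item: 3. col rdm resis sp} of Assumption~\ref{assum: col rdm resis sp}.

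The main obstacle will be the simultaneous probabilistic management of the three tightness inputs---the two-sided degree control, the volume lower bound, and the lower bound $b_n > e$---on a single good event whose complement can be made uniformly small while the deterministic estimates from Proposition~\ref{prop: dtm weakend vol cond for CSRW} apply verbatim on it. The reduction from $\mu_n^2$ to $\mu_n$ on balls requires the \emph{upper} conductance bound, which was absent from the VSRW setting, while the lower conductance bound is needed so that the small-radius plateau $m_{r,\varepsilon} b_n^{-1}$ of $v_n$ is a genuine lower bound rather than a vacuous one. Keeping these constants deterministic and $n$-uniform, while the threshold $\delta_n$ itself is random through $b_n$, is the delicate bookkeeping step in transferring the VSRW scheme to the CSRW context.
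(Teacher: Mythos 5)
Your proposal is correct and follows essentially the route the paper intends: the paper omits the proof precisely because it is the combination of the two-sided conductance reduction from Proposition~\ref{prop: dtm weakend vol cond for CSRW} (upper bound to pass from $\mu_n^2$ to $\mu_n$ and then to $p_n(2t,\cdot,\cdot)$ via Chapman--Kolmogorov, lower bound to justify the single-vertex plateau $m_{r,\varepsilon}b_n^{-1}$ in $v_n$) with the good-event/truncated-expectation scheme of Theorem~\ref{thm: rdm col meas for VSRW}. The bookkeeping you flag at the end is exactly the only content of the argument, and you handle it as the paper would.
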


%%%%%%%%%%%%%%%%%%%%%%%%%%%%%%%%%%%%%%%%%%%%%%%%%%%%%%%%%%%%%%%%%%%%%%%%%%%%%%%%%%%%%%%%%%%%%%%%%%%%%%%%%%%%%%%%%%%%%%%%%%%%%%%%%%
%%%%%%%%%%%%%%%%%%%%%%%%%%%%%%%%%%%%%%%%%%%%%%%%%%%%%%%%%%%%%%%%%%%%%%%%%%%%%%%%%%%%%%%%%%%%%%%%%%%%%%%%%%%%%%%%%%%%%%%%%%%%%%%%%%
% Examples of convergence of collision measures
%%%%%%%%%%%%%%%%%%%%%%%%%%%%%%%%%%%%%%%%%%%%%%%%%%%%%%%%%%%%%%%%%%%%%%%%%%%%%%%%%%%%%%%%%%%%%%%%%%%%%%%%%%%%%%%%%%%%%%%%%%%%%%%%%%
%%%%%%%%%%%%%%%%%%%%%%%%%%%%%%%%%%%%%%%%%%%%%%%%%%%%%%%%%%%%%%%%%%%%%%%%%%%%%%%%%%%%%%%%%%%%%%%%%%%%%%%%%%%%%%%%%%%%%%%%%%%%%%%%%%
\subsubsection{Critical Galton--Watson trees} \label{sec: Critical Galton--Watson trees}

In this subsubsection, 
as an application of Theorem~\ref{thm: rdm col meas for CSRW},
we establish the convergence of the uniform collision measures of i.i.d.\ CSRWs 
on critical Galton--Watson trees conditioned on their size, see Theorem~\ref{thm: conv of col meas of CSRW on GW trees} below.
For definitions and further details concerning trees, we follow \cite{LeGall_06_Random}.

Fix a probability measure $\bm{\pi} = (\pi_{k})_{k \geq 0}$ on $\ZNp$ such that 
$\sum_{k \geq 1} k \pi_k = 1$, $p_{1} < 1$, and whose variance is $\sigma^2$.
We do not assume that $\bm{\pi}$ is aperiodic.
Let $d$ denote the period of $\bm{\pi}$, that is, 
the greatest common divisor of the set $\{k \ge 1 \mid \pi_k > 0\}$.
Then $d = 1$ corresponds to the aperiodic case,
and when $d \ge 2$, all the statements below should be understood along the subsequence $n \in d\mathbb{N}$.
Let $T^{GW}$ be the Galton--Watson tree with offspring distribution $\bm{\pi}$
(see \cite[Section~3]{LeGall_06_Random} for its definition).
We write $T_n$ for a random plane tree having the same distribution as $T^{GW}$ conditioned on having exactly $n$ vertices, 
which is well defined for all sufficiently large $n$.
We denote by $\mathbf{P}_n$ the underlying probability measure for $T_n$.

We begin by fixing some notation for $T_n$.
Let $\rho_n$ denote the root of $T_n$,
and let $d_n$ denote the graph metric on $T_n$.
For each $x \in T_n$, we denote by $\deg(x)$ its degree,
that is, the number of vertices adjacent to $x$.
We then define the degree measure $\mu_n$ on $T_n$ by
\begin{equation}
  \mu_n(A) \coloneqq \sum_{x \in A} \deg(x),
  \quad 
  A \subseteq T_n.
\end{equation}
More generally, for each $q > 1$, we define the $q$-th power degree measure $\mu_n^q$ on $T_n$ by
\begin{equation}
  \mu_n^q(A) \coloneqq \sum_{x \in A} \deg(x)^q,
  \quad 
  A \subseteq T_n.
\end{equation}

The scaling limit of $T_n$ is the continuum random tree.
To recall it,
we define $\BExc=(\BExc(t))_{t \in [0,1]}$ to be the normalized Brownian excursion,
built on a probability space with probability measure $\mathbf{P}$.
We write $(T_{\BExc}, d_{\BExc})$ for the real tree coded by $\BExc$
and $p_{\BExc} : [0, 1] \to T_{\BExc}$ for the canonical projection
(see \cite[Section~2]{LeGall_06_Random}).
The canonical Radon measure on $T_{\BExc}$ is given by $\mu_{\BExc} \coloneqq  \Leb \circ (p_{\BExc})^{-1}$,
where $\Leb$ stands for the one-dimensional Lebesgue measure.
We define the root $\rho_{\BExc}$ by setting $\rho_{\BExc} \coloneqq  p_{\BExc}(0)$.
It then holds that  
\begin{equation}
  \left(T_n, \frac{\sigma}{2\sqrt{n}} d_n, \rho_n, \frac{1}{n} \mu_n\right)
  \to
  \left(T_{\BExc}, d_{\BExc}, \rho_{\BExc}, \mu_{\BExc}\right)
  \quad 
  \text{in}\ \mathfrak{K}_\bullet(\finMeasSt),
\end{equation}
where we recall that $\rootCM(\cdot)$ was introduced at the end of Section~\ref{sec: GH general framework}.
This is an immediate consequence of \cite[Theorem~23]{Aldous_93_The_continuum} and \cite[Proposition~3.3]{Abraham_Delmas_Hoscheit_13_A_note}.
See also \cite[Corollary~8.7]{Noda_pre_Convergence} for this extension to offspring distribution with infinite variance.

In order to apply Theorem~\ref{thm: rdm col meas for VSRW},
we need to prove the convergence of the squared degree measures $\mu_n^2$.
As a first step, we establish their tightness.
The following proposition in fact provides a stronger statement.

\begin{prop} \label{prop: degree moment conv}
  Fix $q > 1$.
  Assume that the offspring distribution has a finite $q$-th moment, i.e., $\sum_{k \geq 1} k^q\, \pi_k < \infty$.
  Then, for any $k_0 \geq 0$,
  \begin{equation}
    \lim_{n \to \infty}
    \frac{1}{n} \mathbf{E}_n\!\left[ \sum_{x \in T_n} \deg(x)^q\, \mathbf{1}_{\{\deg(x) \geq k_0\}} \right]
    \;=\; 
    \sum_{k=k_0}^\infty (k+1)^q\, \pi_k .
  \end{equation}
\end{prop}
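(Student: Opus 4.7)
The plan is to reduce the statement to a standard convergence result for the empirical offspring distribution in a Galton--Watson tree conditioned on its total size. The key combinatorial observation is that for every non-root vertex $x \in T_n$, if $c(x)$ denotes the number of children of $x$, then $\deg(x) = c(x) + 1$, whereas $\deg(\rho_n) = c(\rho_n)$. Writing $N_k(T_n) \coloneqq \#\{x \in T_n : c(x) = k\}$, we therefore have
\begin{equation}
\sum_{x \in T_n} \deg(x)^q\, \mathbf{1}_{\{\deg(x) \geq k_0\}}
= \sum_{k \geq 0} (k+1)^q\, N_k(T_n)\, \mathbf{1}_{\{k+1 \geq k_0\}} + R_n,
\end{equation}
where $R_n$ collects the discrepancy at the root and satisfies $|R_n| \leq 2(c(\rho_n) + 1)^q$. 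Since the root-degree distribution under $\mathbf{P}_n$ is tight with bounded $q$-th moment (by standard results on conditioned GW trees, using the size-biased representation), the contribution $\frac{1}{n}\mathbf{E}_n[R_n]$ vanishes as $n \to \infty$.

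Next I would invoke the classical Kolchin-type representation: conditionally on $|T_n| = n$, the multiset $\{c(x)\}_{x \in T_n}$ has the same distribution as $(\xi_1, \ldots, \xi_n)$, where $(\xi_i)_{i \geq 1}$ are i.i.d.\ with law $\bm{\pi}$, conditioned on $\sum_{i=1}^n \xi_i = n-1$. By exchangeability,
\begin{equation}
\frac{1}{n}\mathbf{E}_n[N_k(T_n)]
= \mathbf{P}\!\left(\xi_1 = k \,\middle|\, \sum_{i=1}^n \xi_i = n-1\right)
= \pi_k \cdot \frac{\mathbf{P}(\sum_{i=2}^n \xi_i = n-1-k)}{\mathbf{P}(\sum_{i=1}^n \xi_i = n-1)}.
\end{equation}
The local central limit theorem for the centered walk $S_j \coloneqq \xi_1 + \cdots + \xi_j - j$, which has finite positive variance $\sigma^2$ and period $d$, then yields two facts: (a) for each fixed $k$ the ratio above converges to $1$, so $n^{-1}\mathbf{E}_n[N_k(T_n)] \to \pi_k$ (along $n \in d\mathbb{N}$ in the periodic case); and (b) there exist a constant $C>0$ and $n_0 \in \mathbb{N}$ such that
\begin{equation}
\frac{1}{n}\mathbf{E}_n[N_k(T_n)] \leq C \pi_k, \qquad \forall k \geq 0,\ \forall n \geq n_0,
\end{equation}
obtained by combining the asymptotic $\mathbf{P}(\sum_{i=1}^n \xi_i = n-1) \sim c_0 n^{-1/2}$ with the uniform Kolmogorov--Rogozin-type bound $\sup_j \mathbf{P}(\sum_{i=2}^n \xi_i = j) \leq c_1 n^{-1/2}$.

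With (a) and (b) in hand, the conclusion follows by dominated convergence applied on the counting measure on $\mathbb{Z}_+$: the finite $q$-th moment assumption gives $\sum_k (k+1)^q \pi_k < \infty$, hence provides the integrable envelope $C(k+1)^q \pi_k$ for each summand, and interchanging limit and sum yields the desired identity. The main obstacle is the uniform envelope (b); everything else reduces to classical tools. Handling this requires a careful LCLT argument valid for all $k$ (not just $k$ in the bulk), and in the case $d \geq 2$ one must restrict to $n \in d\mathbb{N}$ throughout so that the conditioning event $\{S_n = -1\}$ has the correct asymptotics—this lattice bookkeeping is the only real technicality.
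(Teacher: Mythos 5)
Your proposal follows essentially the same route as the paper: the Janson/Kolchin representation of the child counts as i.i.d.\ variables conditioned on $\sum_{i}Y_i=n-1$, exchangeability to reduce to a single conditional expectation, the local CLT to get both the pointwise convergence of the ratio $P(S_{n-1}=n-1-k)/P(S_n=n-1)$ and the uniform envelope $C\pi_k$, dominated convergence, and a separate treatment of the root.

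One small flag on the root correction. Your justification "the root-degree distribution under $\mathbf{P}_n$ is tight with bounded $q$-th moment" is not correct under only a finite $q$-th moment of $\bm{\pi}$: the root's child count is size-biased, $\mathbf{P}_n(c(\rho_n)=k)=\frac{n}{n-1}\,k\,P(Y_1=k\mid S_n=n-1)$, so its $q$-th moment is governed by the $(q+1)$-th moment of $\bm{\pi}$ and can diverge. The conclusion $\tfrac1n\mathbf{E}_n[R_n]\to 0$ is nevertheless true: since $c(\rho_n)\le n-1$ on the conditioning event, one gets $\mathbf{E}_n[(c(\rho_n)+1)^q]\le C\sum_{k\le n-1}k(k+1)^q\pi_k=o(n)$ by dominated convergence; alternatively, the paper sidesteps the issue by bounding the root discrepancy with $(x+1)^q-x^q\le q(x+1)^{q-1}$, so that only a $(q-1)$-th power of the (size-biased) root degree appears, whose expectation is uniformly bounded by the $q$-th moment of $\bm{\pi}$. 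Either fix is a one-line repair; the rest of your argument matches the paper's proof.
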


\begin{proof}
  Let $v_{0} = \rho_{T}, v_{1}, \ldots, v_{n-1}$ be the vertices of $T_n$ in lexicographical order.
  Write $\xi_{n,i}$ for the number of children of $v_i$.
  Let $(Y_i)_{i=1}^\infty$ be i.i.d.\ random variables with distribution $\bm{\pi}$,
  with underlying probability measure $P$.
  By \cite[Lemma~17.1]{Janson_12_Simply},
  the multiset $\{\xi_{n,i}\}_{i=0}^{n-1}$ has the same distribution as $\{Y_i\}_{i=1}^n$ conditioned on 
  $\sum_{i=1}^n Y_i = n-1$.
  Thus, writing $S_n = \sum_{i=1}^n Y_i$, we have 
  \begin{align} \label{prop pr eq: 1. degree moment conv}
    \frac{1}{n} \mathbf{E}_n\!\left[\sum_{i = 0}^{n-1} (\xi_{n,i} + 1)^q\, \mathbf{1}_{\{\xi_{n,i} + 1 \geq k_0\}}\right]
    &=
    \frac{1}{n}E\!\left[\sum_{i=1}^n (Y_i + 1)^q\, \mathbf{1}_{\{Y_i + 1 \geq k_0\}} \,\middle|\, S_n = n-1\right]\\
    &= 
    E\!\left[ (Y_1  + 1)^q\, \mathbf{1}_{\{Y_1 + 1 \geq k_0\}} \,\middle|\, S_n = n-1\right].
  \end{align}
  The last conditional expectation equals
  \begin{equation}  \label{prop pr eq: 2. degree moment conv}
    \frac{E\!\left[ (Y_1+1)^q\, \mathbf{1}_{\{Y_1 + 1 \geq k_0\}} \mathbf{1}_{\{S_n = n-1\}} \right]}{P(S_n = n-1)}
    =
    \sum_{k=k_0}^\infty (k+1)^q \pi_k \,\frac{P(S_{n-1} = n-1-k)}{P(S_n = n-1)},
  \end{equation}
  where we used the Markov property of $(S_n)_{n \geq 1}$.
  By the local central limit theorem,
  \begin{equation}  \label{prop pr eq: 3. degree moment conv}
    \sup_{l \in \mathbb{Z}} 
    \left|
      \sqrt{2\pi \sigma^2 n}\, P(S_n - n = l) - \exp\Bigl(- \frac{l^2}{2 \sigma^2 n} \Bigr)
    \right|
    \xrightarrow[n \to \infty]{} 0
  \end{equation}
  (cf.\ \cite[Chapter~VII, Theorem~1]{Petrov_75_Sums}).
  It follows that 
  \begin{align}
    &\sup_{n \geq 1} \sup_{k \geq 0} \frac{P(S_{n-1} = n-1-k)}{P(S_n = n-1)} < \infty,
    \label{prop pr eq: 4. degree moment conv}\\
    &\lim_{n \to \infty} \frac{P(S_{n-1} = n-1-k)}{P(S_n = n-1)} = 1,\quad \forall k \geq 0.
    \label{prop pr eq: 5. degree moment conv}
  \end{align}
  Hence, by the dominated convergence theorem, 
  \begin{equation}  \label{prop pr eq: 6. degree moment conv}
    \lim_{n \to \infty}
    \frac{1}{n} \mathbf{E}_n\!\left[\sum_{i = 0}^{n-1} (\xi_{n,i} + 1)^q\, \mathbf{1}_{\{\xi_{n,i} + 1 \geq k_0\}}\right]
    =
    \sum_{k=k_0}^\infty (k+1)^q \pi_k.
  \end{equation}
  Except for the root, 
  the degree of a vertex equals its number of children plus one.
  Therefore,
  \begin{align}  \label{prop pr eq: 7. degree moment conv}
    &\left| \frac{1}{n} \mathbf{E}_n\!\left[ \sum_{x \in T_n} \deg(x)^q\, \mathbf{1}_{\{\deg(x) \geq k_0\}} \right] 
      - \frac{1}{n} \mathbf{E}_n\!\left[\sum_{i = 0}^{n-1} (\xi_{n,i} + 1)^q\, \mathbf{1}_{\{\xi_{n,i} + 1 \geq k_0\}}\right]
    \right|\\
    &=
    \frac{1}{n} \mathbf{E}_n\!\left[ \bigl( (\xi_{n,0} + 1)^q - \xi_{n,0}^q \bigr)\, \mathbf{1}_{\{\xi_{n,0} + 1 \geq k_0\}}\right]\\
    &\leq
    \frac{q}{n}\, \mathbf{E}_n\!\left[(\xi_{n,0}+1)^{q - 1}\, \mathbf{1}_{\{\xi_{n,0} + 1 \geq k_0\}}\right],
  \end{align}
  where, for the last inequality, we used the elementary bound
  $(x+1)^q - x^q \le q (x+1)^{q-1}$ for all $x \ge 0$.
  By \cite[Lemma~15.7]{Janson_12_Simply},
  \begin{equation}
    \mathbf{P}_n(\xi_{n,0} = k) = \frac{n}{n-1}\, k\, P\!\left(Y_1 = k \,\middle|\, S_n = n-1\right),
    \quad 
    \forall\, k \ge 0.
  \end{equation}
  Hence,
  \begin{align}
    \mathbf{E}_n\!\left[(\xi_{n,0} + 1)^{q-1}\, \mathbf{1}_{\{\xi_{n,i} + 1 \geq k_0\}}\right]
    &=
    \frac{n}{n-1}\sum_{k=k_0}^\infty (k+1)^{q - 1}\, k\, P\!\left(Y_1 = k \,\middle|\, S_n = n-1\right)\\
    &\le
    \frac{n}{n-1}\, E\!\left[(Y_1+1)^q\, \mathbf{1}_{\{Y_1 + 1 \geq k_0\}} \,\middle|\, S_n = n-1\right],
  \end{align}
  which converges to the same limit as \eqref{prop pr eq: 6. degree moment conv}, as $n \to \infty$.
  Combining \eqref{prop pr eq: 6. degree moment conv} and \eqref{prop pr eq: 7. degree moment conv} yields the desired result.
\end{proof}

Combining the above proposition with the results of \cite[Section~7]{Noda_pre_Aging},
we obtain the convergence of the squared degree measures as follows.

\begin{thm} \label{thm: squared measure conv on GW}
  It holds that 
  \begin{equation}
    \left(T_n, \frac{\sigma^2}{2\sqrt{n}} d_n, \rho_n, \frac{1}{n} \mu_n, \frac{1}{n} \mu_n^2\right)
    \to
    \left(T_{\BExc}, d_{\BExc}, \rho_{\BExc}, \mu_{\BExc}, (\sigma^2 + 4) \mu_{\BExc}\right).
  \end{equation}
   in $\mathfrak{K}_\bullet(\MeasSt^{\otimes 2})$.
\end{thm}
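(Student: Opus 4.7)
The plan is to reduce the weak convergence of $\tfrac{1}{n}\mu_n^2$ to a spatial law of large numbers for degree-indicator empirical measures, using the already-established convergence of the measured tree as a topological backbone and Proposition~\ref{prop: degree moment conv} as the main moment input.

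First, I would apply the Skorohod representation theorem to the convergence $(T_n, \tfrac{\sigma}{2\sqrt n}d_n, \rho_n, \tfrac{1}{n}\mu_n) \to (T_{\BExc}, d_{\BExc}, \rho_{\BExc}, \mu_{\BExc})$ stated just above the theorem, and, via Theorem~\ref{thm: conv in M(tau)}, isometrically embed all $T_n$ and $T_{\BExc}$ into a common compact metric space $M$ so that $T_n \to T_{\BExc}$ in the Hausdorff topology and $\tfrac{1}{n}\mu_n \to \mu_{\BExc}$ weakly on $M$, almost surely. The goal then reduces to showing $\tfrac{1}{n}\mu_n^2 \to (\sigma^2+4)\mu_{\BExc}$ weakly on $M$ in probability. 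Tightness of $\{\tfrac{1}{n}\mu_n^2\}_n$ in $\finMeas(M)$ follows immediately from Proposition~\ref{prop: degree moment conv} with $q=2$, $k_0=0$, which yields $\tfrac{1}{n}\mathbf{E}_n[\mu_n^2(T_n)] \to \sum_{k \ge 0}(k+1)^2\pi_k = \sigma^2+4$; since the supports lie in $T_n \subseteq M$, any subsequential weak limit is a finite Borel measure supported on $T_{\BExc}$.

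To identify the limit, I would truncate by a degree cutoff. Setting $\mu_n^{2,\le K}(A) \coloneqq \sum_{x \in A}\deg(x)^2 \mathbf{1}_{\{\deg(x)\le K+1\}}$, Proposition~\ref{prop: degree moment conv} with $k_0 = K+1$ yields $\limsup_n \tfrac{1}{n}\mathbf{E}_n[(\mu_n^2 - \mu_n^{2,\le K})(T_n)] \le \sum_{k \ge K+1}(k+1)^2\pi_k \to 0$ as $K \to \infty$. The convergence therefore reduces to showing, for each fixed $k \ge 0$, the spatial weak convergence in probability
\[
\frac{1}{n}\sum_{x \in T_n}\mathbf{1}_{\{\deg(x)=k+1\}}\,\delta_x \;\xrightarrow{\;\mathrm{p}\;}\; \pi_k\,\mu_{\BExc} \quad \text{on } M;
\]
summing the contributions against the weights $(k+1)^2$ for $k \le K$ and using $\sum_{k\ge 0}(k+1)^2\pi_k = \sigma^2+4$ then gives the identification.

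The principal obstacle is this degree-indexed spatial law of large numbers, which is the content parallel to the arguments developed in \cite[Section~7]{Noda_pre_Aging} for related local functionals of conditioned Galton--Watson trees. The natural route is to enumerate $T_n$ in lexicographic order $v_0,\ldots,v_{n-1}$ and use the coupling from \cite[Lemma~17.1]{Janson_12_Simply}, under which the children-count sequence $(\xi_{n,i})$ has the law of an i.i.d.\ $\bm{\pi}$-sample conditioned on $\sum_i \xi_{n,i} = n-1$. A first-moment computation for $\tfrac{1}{n}\sum_i f(v_i)\mathbf{1}_{\{\xi_{n,i}=k\}}$ with $f \in C(M)$, following the local central limit theorem argument in the proof of Proposition~\ref{prop: degree moment conv}, together with a corresponding second-moment bound, should yield the desired weak convergence; the root contribution is controlled separately since $\xi_{n,0}^2/n \to 0$ in probability by an analogous moment estimate. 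The main technical difficulty is transferring the fluctuation control from the exchangeable conditioned sequence $(\xi_{n,i})$ to spatial averages against $f \in C(M)$, which requires exploiting the asymptotic Aldous correspondence between the lexicographic index $i/n$ and the point $p_{\BExc}(i/n) \in T_{\BExc}$ provided by the Skorohod coupling.
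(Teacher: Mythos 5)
Your proposal follows essentially the same route as the paper: truncate in the degree variable, control the truncation error uniformly via Proposition~\ref{prop: degree moment conv}, and identify the limit of the truncated measures through a degree-marked spatial law of large numbers (the paper caps the weight at $(k\wedge M)^2$ rather than discarding high-degree vertices, but this is cosmetic). The only substantive difference is that the paper obtains the marked law of large numbers by invoking the proof of \cite[Corollary~7.4]{Noda_pre_Aging}, namely the convergence of $\tfrac{1}{n}\dot{\mu}_n$ to the product measure $\mu_{\BExc}\otimes\tilde{\bm{\pi}}$, whereas you sketch a from-scratch first- and second-moment derivation of the per-degree-class version; that decorrelation step is precisely the content of the cited corollary, so your argument is sound modulo filling in the concentration estimate that the paper likewise does not reprove.
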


\begin{proof}
  Define a Radon measure $\dot{\mu}_n$ on $T_n \times \NN$ by setting
  \begin{equation}
    \dot{\mu}_n(\{x\} \times \{k\}) = \mathbf{1}_{\{\deg(x) = k\}},
    \quad 
    x \in T_n,\ k \in \NN.
  \end{equation}
  Define a probability measure $\tilde{\bm{\pi}} =  (\tilde{\pi}_k)_{k \geq 1}$ on $\NN$ 
  by setting $\tilde{\pi}_k \coloneqq \pi_{k-1}$ for each $k \geq 1$.
  By following the proof of \cite[Corollary~7.4]{Noda_pre_Aging}, 
  we deduce that 
  \begin{equation}  \label{thm pr eq: 1. squared measure conv on GW}
    \left(T_n, \frac{\sigma^2}{2\sqrt{n}} d_n, \rho_n, \frac{1}{n} \mu_n, \frac{1}{n} \dot{\mu}_n\right)
    \xrightarrow{\mathrm{d}}
    \left(T_{\BExc}, d_{\BExc}, \rho_{\BExc}, \mu_{\BExc}, \mu_{\BExc} \otimes \tilde{\bm{\pi}}\right)
  \end{equation}
  in $\rootCM(\finMeasSt \times \finMeasSt(\Psi_{\id \times \RNp}))$.
  For each $M \in \NN$, we define a Radon measure $\mu_{n,M}^2$ on $T_n$ by 
  \begin{equation}   \label{thm pr eq: 2. squared measure conv on GW}
    \mu_{n, M}^2(\{x\}) \coloneqq (\deg(x) \wedge M)^2.
  \end{equation}
  For any function $f$ on $T_n$, we have 
  \begin{equation}   \label{thm pr eq: 3. squared measure conv on GW}
    \int_{T_n} f(x)\, \mu_{n,M}^2(dx) 
    = 
    \sum_{x \in T_n} f(x) (\deg(x) \wedge M)^2
    =
    \int_{T_n \times \NN} f(x) (k \wedge M)^2\, \dot{\mu}_n(dx\, dk).
  \end{equation}
  Thus, if we define a Radon measure $\mu_{\BExc, M}$ on $T_{\BExc}$ by
  \begin{equation}   \label{thm pr eq: 4. squared measure conv on GW}
    \mu_{\BExc,M}(dx) \coloneqq \left( \sum_{k = 1}^\infty (k \wedge M)^2 \tilde{\pi}_k\right)\, \mu_{\BExc}(dx),
  \end{equation}
  then we deduce from \eqref{thm pr eq: 1. squared measure conv on GW} that 
  \begin{equation}  \label{thm pr eq: 5. squared measure conv on GW}
    \left(T_n, \frac{\sigma^2}{2\sqrt{n}} d_n, \rho_n, \frac{1}{n} \mu_n, \frac{1}{n}\mu_{n,M}^2\right)
    \xrightarrow{\mathrm{d}}
    \left(T_{\BExc}, d_{\BExc}, \rho_{\BExc}, \mu_{\BExc}, \mu_{\BExc, M}\right).
  \end{equation}
  For any subset $A \subseteq T_n$, it holds that 
  \begin{equation}  \label{thm pr eq: 6. squared measure conv on GW}
    \frac{1}{n}\left|\mu_{n,M}^2(A) - \mu_n^2(A)\right|
    \leq 
    \frac{1}{n} \sum_{x \in T_n} \deg(x)^2\, \mathbf{1}_{\{\deg(x) > M\}}.
  \end{equation}
  Since 
  \begin{equation} \label{thm pr eq: 7. squared measure conv on GW}
    \sum_{k=1}^\infty k^2 \tilde{\pi}_k 
    = 
    \sum_{k=0}^\infty (k+1)^2\, p_k  
    =
    \sigma^2 + 4,
  \end{equation}
  we have that, for any Borel subset $A$ of $T_{\BExc}$,
  \begin{equation} \label{thm pr eq: 8. squared measure conv on GW}
    |(\sigma^2 +4)\mu_{\BExc}(A) - \mu_{\BExc, M}(A)| \leq \sum_{k > M} k^2\, \tilde{\pi}_k.
  \end{equation}
  Now the result is immediate from Proposition~\ref{prop: degree moment conv}, 
  \eqref{thm pr eq: 5. squared measure conv on GW},
  \eqref{thm pr eq: 6. squared measure conv on GW}, and \eqref{thm pr eq: 8. squared measure conv on GW}.
\end{proof}

We are ready to prove the main result, Theorem~\ref{thm: conv of col meas of CSRW on GW trees} below.
Let $Y_n^1$ be the CSRW on $T_n$ and $Y_n^2$ be an independent copy of $Y_n^1$.
For each $i \in \{1,2\}$, we define the scaled walk $X_n^i$ by 
\begin{equation}
  X_n^i(t) \coloneqq Y_n^i\bigl( 2\sigma^{-1}n^{3/2} t \bigr), \quad t \geq 0.
\end{equation}
We write $\hat{X}_n = (X_n^1, X_n^2)$ for their product process.
Define a collision measue $\Pi_n$ by 
\begin{equation}
  \Pi_n(dx dt) \coloneqq n \sum_{y \in T_n} \mathbf{1}_{\{X_n^1(t) = X_n^2(t) = y\}}\, \delta_y(dx)\, dt.
\end{equation}
By \cite[Corollary~8.11]{Noda_pre_Convergence},
$(T_{\BExc}, d_{\BExc}, \rho_{\BExc}, \mu_{\BExc}) \in \rootResisSp$, $\mathbf{P}$-a.s.
Given a realization of $(T_{\BExc}, d_{\BExc}, \rho_{\BExc}, \mu_{\BExc})$,
we write $X^1$ for the associated process and $X^2$ for an independent copy of $X^1$.
We then set $\hat{X} = (X^1, X^2)$ for their product process.

\begin{thm} \label{thm: conv of col meas of CSRW on GW trees}
  Assume that the offspring distribution has a finite $q$-th moment for some $q > 3$.
  Then $\diagMeas{\mu_{\BExc}}$ belongs to the local Kato class of $\hat{X}$, $\mathbf{P}$-a.s.
  If we write $\Pi$ for the collision measure of $X^1$ and $X^2$ associated with $(\sigma^2 + 4)\mu_{\BExc}$,
  then 
  \begin{equation}
    \left(T_n, \frac{\sigma}{2\sqrt{n}} d_n, \rho_n, \frac{1}{n} \mu_n, \frac{1}{n} \mu_n^2, \ProcLaw_{(\hat{X}_n, \Pi_n)}\right)
    \to
    \left(T_{\BExc}, d_{\BExc}, \rho_{\BExc}, \mu_{\BExc}, (\sigma^2 + 4) \mu_{\BExc}, \ProcLaw_{(\hat{X}, \Pi)}\right).
  \end{equation}
  in $\mathfrak{K}_\bullet(\MeasSt^{\otimes 2} \times \ColSt)$.
\end{thm}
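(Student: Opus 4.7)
The plan is to apply Theorem~\ref{thm: rdm col meas for CSRW} with scaling factors $a_n = 2\sqrt{n}/\sigma$ and $b_n = n$, treating $(T_n, R_n, \rho_n, \mu_n, \mu_n^2)$ as the random recurrent electrical network (noting that, on a tree, the resistance metric coincides with the graph metric $d_n$). The conclusion of the target theorem follows from that of Theorem~\ref{thm: rdm col meas for CSRW}, together with Proposition~\ref{prop: conv in BCM and CM} to upgrade the convergence from $\rootBCM$ to $\rootCM = \mathfrak{K}_\bullet$; the required boundedness of the rescaled diameters holds since $\sigma n^{-1/2} \diam(T_n)/2$ converges in distribution to $\diam(T_\BExc) < \infty$ almost surely.

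It remains to verify the three conditions of Assumption~\ref{assum: rdm col meas for CSRW}. Condition~\ref{assum item: 1. rdm col meas for CSRW}, the joint convergence of rescaled metrics and measures including the squared measure $n^{-1}\mu_n^2$, is precisely Theorem~\ref{thm: squared measure conv on GW}. The non-explosion condition~\ref{assum item: 2. rdm col meas for CSRW} is immediate in the compact setting: once $r$ exceeds the almost-sure diameter of $T_\BExc$, the complement $B_{R_n}(\rho_n, a_n r)^c$ is empty with arbitrarily high probability for large $n$, so $a_n^{-1} R_n(\rho_n, B_{R_n}(\rho_n, a_n r)^c) = \infty$ with high probability.

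For condition~\ref{assum item: 3. rdm col meas for CSRW}, I would take $q_{r,\varepsilon} \coloneqq q - 1$ and verify both parts as follows. For the volume lower bound~\eqref{prop eq: rdm col meas for CSRW}, note that $\mu_n \geq m_n$ (the counting measure) pointwise, so it suffices to establish the bound for $m_n$; for critical Galton--Watson trees conditioned on size, such uniform lower volume estimates of the form $n^{-1} m_n(B_{R_n}(x, a_n s)) \geq C_{r,\varepsilon}\, s^{\alpha_{r,\varepsilon}}$ with $\alpha_{r,\varepsilon}$ any constant strictly greater than $2$ are available from the uniform volume analysis in \cite[Section~8]{Noda_pre_Convergence} (applicable since the offspring distribution has finite variance, in particular finite $q$-th moment for some $q > 3$). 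For the moment tightness~\eqref{assum item eq: 3.1. rdm col meas for CSRW}, Proposition~\ref{prop: degree moment conv} applied with exponent $1 + q_{r,\varepsilon} = q$ yields
\begin{equation}
  \lim_{n \to \infty} \frac{1}{n}\mathbf{E}_n\!\left[\mu_n^{1+q_{r,\varepsilon}}(T_n)\right]
  = \sum_{k = 0}^\infty (k+1)^q \pi_k < \infty,
\end{equation}
which is finite by the finite $q$-th moment hypothesis; Markov's inequality then gives the required tightness. The compatibility condition $q_{r,\varepsilon} > 1 \vee \alpha_{r,\varepsilon}$ becomes $q - 1 > \alpha_{r,\varepsilon}$, which by choosing $\alpha_{r,\varepsilon}$ slightly above $2$ is satisfied precisely when $q > 3$; this reveals the role of the moment hypothesis.

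The main obstacle is the uniform lower volume estimate. While the annealed bound $\mathbf{E}_n[\mu_n(B(\rho_n, a_n s))] \asymp n s^2$ is essentially classical, what is needed is a bound that is uniform in $x$ over the entire ball $B_{R_n}(\rho_n, a_n r)$ with only a small exceptional probability and, critically, uniform in $s$ down to scale $0$. On GW trees this requires controlling the worst-case thin branch emanating from deep vertices; the polynomial loss $s^{\alpha_{r,\varepsilon}}$ with an exponent $\alpha_{r,\varepsilon}$ strictly larger than (but arbitrarily close to) the true volume dimension $2$ is enough cushion for this. Once the volume bound is in hand, the remainder of the proof is a mechanical application of Theorem~\ref{thm: rdm col meas for CSRW} and the compactness upgrade.
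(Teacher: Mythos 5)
Your overall route is the same as the paper's: apply Theorem~\ref{thm: rdm col meas for CSRW} with $a_n = 2\sqrt{n}/\sigma$, $b_n = n$, identify $d_n$ with the resistance metric on the tree, get condition~\ref{assum item: 1. rdm col meas for CSRW} from Theorem~\ref{thm: squared measure conv on GW}, the non-explosion condition from convergence of diameters, and the moment tightness \eqref{assum item eq: 3.1. rdm col meas for CSRW} from Proposition~\ref{prop: degree moment conv} with exponent $1+q_{r,\varepsilon}=q$ plus Markov's inequality. Those parts are fine, and your explicit invocation of Proposition~\ref{prop: conv in BCM and CM} to pass from $\rootBCM$ to $\mathfrak{K}_\bullet$ is a detail the paper leaves implicit.

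The genuine gap is the uniform lower volume estimate \eqref{prop eq: rdm col meas for CSRW}. You assert that $n^{-1}m_n\bigl(B_{R_n}(x, a_n s)\bigr) \geq C_{r,\varepsilon}\, s^{\alpha_{r,\varepsilon}}$ holds uniformly in $x$ and in $s \in (0,1)$ with $\alpha_{r,\varepsilon}$ \emph{any} exponent strictly greater than $2$, and attribute this to \cite[Section~8]{Noda_pre_Convergence}. That is not what the cited result provides: \cite[Proposition~8.10]{Noda_pre_Convergence} gives the bound with exponent $q-1$ (tied to the moment order of the offspring distribution), not $2+\epsilon$ under finite variance alone. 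The distinction matters because the uniform-over-$(x,s)$ infimum is governed by worst-case thin subtrees, whose control is exactly where the moment hypothesis enters; even for the CRT the sharp uniform lower bound carries a logarithmic correction, and the discrete uniform statement available in the literature degrades with the moment order. As written, your claimed volume bound is unjustified, and it is precisely the input on which your exponent bookkeeping rests. The fix is to use the bound that the reference actually supplies, applied with a slightly smaller moment order $q' \in (3,q)$, giving $\alpha_{r,\varepsilon} = q'-1$; then your choice $q_{r,\varepsilon} = q-1$ still satisfies $q_{r,\varepsilon} > 1 \vee \alpha_{r,\varepsilon}$ and the rest of your argument (including the moment tightness at order $1+q_{r,\varepsilon}=q$) goes through verbatim. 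This is exactly how the paper closes the argument.
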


\begin{proof}
  By placing conductance $1$ on each edge of $T_n$,
  we regard $T_n$ as an electrical network.
  Then the graph metric $d_n$ coincides with the resistance metric on $T_n$
  (see \cite[Proposition~5.1]{Kigami_95_Harmonic}).
  Thus, it is enough to verify that Assumption~\ref{assum: rdm col meas for CSRW} is satisfied.

  By Theorem~\ref{thm: squared measure conv on GW}, 
  Assumption~\ref{assum: rdm col meas for CSRW}\ref{assum item: 1. rdm col meas for CSRW} holds.
  Moreover, the same theorem implies that 
  the diameters of the metric spaces $(T_n, \tfrac{\sigma}{2\sqrt{n}} d_n)$ converge 
  in distribution to the diameter of $(T_{\BExc}, d_{\BExc})$.
  (This follows from the continuity of the diameter with respect to the Gromov--Hausdorff topology;
  see \cite[Exercise~7.3.14]{Burago_Burago_Ivanov_01_A_course}.)
  From this, 
  one can verify Assumption~\ref{assum: rdm col meas for CSRW}\ref{assum item: 2. rdm col meas for CSRW}.
  Furthermore, by \cite[Proposition~8.10]{Noda_pre_Convergence},
  we deduce that for every $\varepsilon > 0$ 
  there exists a constant $c_{q, \varepsilon} > 0$ such that 
  \begin{equation}
    \liminf_{n \to \infty} 
    \mathbf{P}_n\!\left(
      \inf_{x \in T_n} n^{-1} \mu_n\Bigl( B_{d_n}\bigl(x, 2\sigma^{-1}\sqrt{n} r\bigr) \Bigr) 
      \geq c_{q, \varepsilon}\, r^{q-1},\ \ r \in (0,1)
    \right)
    \geq 1 - \varepsilon .
  \end{equation}
  Combining this with Proposition~\ref{prop: degree moment conv},
  we obtain Assumption~\ref{assum: rdm col meas for CSRW}\ref{assum item: 3. rdm col meas for CSRW}.
  Therefore, the desired result follows from Theorem~\ref{thm: rdm col meas for CSRW}.
\end{proof}

\begin{rem}
  The above theorem is proved under the finite $(3+\varepsilon)$-moment assumption,
  but the convergence of the collision measures is in fact expected to hold
  under the weaker assumption that the offspring distribution has finite variance.
  Indeed, in this case the squared degree measures still converge
  to $(\sigma^2 + 4)\mu_{\BExc}$ by Theorem~\ref{thm: squared measure conv on GW},
  so the result should follow once the heat-kernel condition 
  (Assumption~\ref{assum: col rdm assumption}\ref{assum item: 2. col rdm assumption}) is verified.
  However, verifying this condition appears to require finer heat-kernel estimates
  beyond the argument based on Hölder's inequality and lower volume bounds used in Theorem~\ref{thm: rdm col meas for CSRW}.
  In the infinite-variance regime, by contrast, 
  the squared degree measures are expected to become singular 
  with respect to the invariant measures,
  suggesting that collisions would concentrate on vertices of exceptionally large degrees,
  which would be an intriguing direction for future study.
\end{rem}

%%%%%%%%%%%%%%%%%%%%%%%%%%%%%%%%%%%%%%%%%%%%%%%%%%%%%%%%%%%%%%%%%%%%%%%%%%%%%%%%%%%%%%%%%%%%%%%%%%%%%%%%%%%%%%%%%%%%%%%%%%%%%%%%%%
\appendix
%%%%%%%%%%%%%%%%%%%%%%%%%%%%%%%%%%%%%%%%%%%%%%%%%%%%%%%%%%%%%%%%%%%%%%%%%%%%%%%%%%%%%%%%%%%%%%%%%%%%%%%%%%%%%%%%%%%%%%%%%%%%%%%%%%

%%%%%%%%%%%%%%%%%%%%%%%%%%%%%%%%%%%%%%%%%%%%%%%%%%%%%%%%%%%%%%%%%%%%%%%%%%%%%%%%%%%%%%%%%%%%%%%%%%%%%%%%%%%%%%%%%%%%%%%%%%%%%%%%%%
%%%%%%%%%%%%%%%%%%%%%%%%%%%%%%%%%%%%%%%%%%%%%%%%%%%%%%%%%%%%%%%%%%%%%%%%%%%%%%%%%%%%%%%%%%%%%%%%%%%%%%%%%%%%%%%%%%%%%%%%%%%%%%%%%%
%Technical lemmas
%%%%%%%%%%%%%%%%%%%%%%%%%%%%%%%%%%%%%%%%%%%%%%%%%%%%%%%%%%%%%%%%%%%%%%%%%%%%%%%%%%%%%%%%%%%%%%%%%%%%%%%%%%%%%%%%%%%%%%%%%%%%%%%%%%
%%%%%%%%%%%%%%%%%%%%%%%%%%%%%%%%%%%%%%%%%%%%%%%%%%%%%%%%%%%%%%%%%%%%%%%%%%%%%%%%%%%%%%%%%%%%%%%%%%%%%%%%%%%%%%%%%%%%%%%%%%%%%%%%%%

\section{Omitted proofs and technical results} \label{appendix: Technical results}
%%%%%%%%%%%%%%%%%%%%%%%%%%%%%%%%%%%%%%%%%%%%%%%%%%%%%%%%%%%%%%%%%%%%%%%%%%%%%%%%%%%%%%%%%%%%%%%%%%%%%%%%%%%%%%%%%%%%%%%%%%%%%%%%%%
%Technical lemmas
%%%%%%%%%%%%%%%%%%%%%%%%%%%%%%%%%%%%%%%%%%%%%%%%%%%%%%%%%%%%%%%%%%%%%%%%%%%%%%%%%%%%%%%%%%%%%%%%%%%%%%%%%%%%%%%%%%%%%%%%%%%%%%%%%%
\subsection{Proof of Proposition~\ref{prop: bounded potential and polarity}} \label{appendix: proof of potential result}

Recall the setting of Section~\ref{sec: smooth measure}.
In particular, we consider a standard process $X$ satisfying the DAC condition (Assumption~\ref{assum: dual hypothesis}),
with dual process $\check{X}$, reference measure $m$, and heat kernel $p$.
In this appendix,
we prove Proposition~\ref{prop: bounded potential and polarity}.
To this end, we recall several notions from the potential theory of Markov processes.

Recall that, given a Borel measure $\nu$ on $S$ and $\alpha > 0$,
its $\alpha$-\emph{potential} is given by
\begin{equation}
  \Potential^\alpha \nu(x)
  \coloneqq 
  \int r^\alpha(x,y)\, \nu(dy),
  \quad x \in S.
\end{equation}
When $\nu(dx) = f(x)\, m(dx)$ for some non-negative Borel measurable function $f$,
we simply write $\Potential^\alpha f \coloneqq \Potential^\alpha \nu$.
The mapping $f \mapsto \Potential^\alpha f$ is called the $\alpha$-\emph{potential (or resolvent) operator}.

A measurable function $f \colon S \to [0,\infty)$ is said to be \emph{$\alpha$-excessive}
if it satisfies
\begin{equation}
  e^{-\alpha t} E^x[f(X_t)] \le f(x),
  \quad t \ge 0,\ x \in S,
\end{equation}
and
\begin{equation}
  \lim_{t \to 0} e^{-\alpha t} E^x[f(X_t)] = f(x),
  \quad x \in S,
\end{equation}
(see \cite[Definition~2.1 in Chapter~III]{Blumenthal_Getoor_68_Markov}).
A $\sigma$-finite Borel measure $\nu$ on $S$ is said to be \emph{$\alpha$-excessive}
if and only if it satisfies
\begin{equation}
  \int_S e^{-\alpha t} E^x[f(X_t)]\, \nu(dx)
  \le
  \int_S f(x)\, \nu(dx),
  \quad \forall t \ge 0,
\end{equation}
for all non-negative Borel measurable functions $f$ on $S$
(see \cite[Definition~1.10 in Chapter~VI]{Blumenthal_Getoor_68_Markov}).

Let $m'$ be a Borel measure on $S$.
A Borel subset $A$ of $S$ is called \emph{$m'$-polar} if and only if 
\begin{equation}
  \int_S P^x(\sigma_A < \infty)\, m'(dx) = 0
\end{equation}
(see \cite[Definition~A.2.12]{Chen_Fukushima_12_Symmetric}).

Concepts corresponding to the dual process $\check{X}$ are prefixed with ``co-''; 
for example, a function that is excessive for $\check{X}$ is called a \emph{coexcessive function}.
We also attach the symbol $\check{\cdot}$ to the corresponding notation;
for instance, $\check{\Potential}^\alpha$ denotes the $\alpha$-potential operator associated with $\check{X}$.

We are now ready to prove Proposition~\ref{prop: bounded potential and polarity}.

\begin{proof}[Proof of Proposition~\ref{prop: bounded potential and polarity}]
  The claim follows by applying \cite[Theorem~3.4.1]{Beznea_Boboc_04_Potential} 
  to the shifted resolvent operators $\{\check{\Potential}^{\alpha + \beta}\}_{\beta > 0}$ of the dual process $\check{X}$.
  Our argument below follows the proof of \cite[Theorem~3.4.2]{Beznea_Boboc_04_Potential}.

  Let $g \colon S \to (0,1]$ be a Borel measurable function such that $\int g(x)\, m(dx) < \infty$,
  which exists by the $\sigma$-finiteness of $m$.
  Define Borel measures $m'$ and $\mu'$ on $S$ by
  \begin{equation}
    m' \coloneqq (g \cdot m) \circ \check{\Potential}^\alpha, 
    \qquad 
    \mu' \coloneqq \mu \circ \check{\Potential}^\alpha.
  \end{equation}
  Here, $\check{\Potential}^\alpha$ is regarded as an operator acting on measures, 
  that is, for a Borel measure $\nu$ on $S$, 
  the measure $\nu \circ \check{\Potential}^\alpha$ is defined by
  \begin{equation}
    (\nu \circ \check{\Potential}^\alpha)(f)
    \coloneqq 
    \int_S \check{\Potential}^\alpha f(x)\, \nu(dx),
    \quad 
    f \colon S \to \RNp \text{ Borel measurable.}
  \end{equation}
  By duality, for each non-negative Borel measurable function $f$ on $S$ we have
  \begin{align}
    \int f(x)\, m'(dx) 
    &= 
    \int \check{\Potential}^\alpha f(x)\, g(x)\, m(dx)
    =
    \int f(y)\, \Potential^\alpha g(y)\, m(dy),
    \\
    \int f(x)\, \mu'(dx)  
    &=
    \int \check{\Potential}^\alpha f(x)\, \mu(dx)
    =
    \int f(y)\, \Potential^\alpha \mu(y)\, m(dy).
  \end{align}
  Hence, 
  \begin{equation}
    m'(dx) = \Potential^\alpha g(x)\, m(dx),
    \qquad 
    \mu'(dx) = \Potential^\alpha \mu(x)\, m(dx).
  \end{equation}
  Since $\Potential^\alpha g$ and $\Potential^\alpha \mu$ are $\alpha$-excessive,
  both $m'$ and $\mu'$ are $\alpha$-coexcessive
  (cf.\ \cite[Proposition~1.11 in Chapter~VI]{Blumenthal_Getoor_68_Markov}).

  Noting that $g(x) > 0$ for all $x \in S$,
  we have $\Potential^\alpha g(x) > 0$ for all $x \in S$,
  and hence $m$ is absolutely continuous with respect to $m'$.
  Let $h \coloneqq \tfrac{dm}{dm'}$ denote the Radon--Nikodym derivative.
  For each $n \in \NN$, define a Borel subset $E_n \coloneqq \{x \in S \mid h(x) \leq n\}$,
  and let $\mu'_n \coloneqq \mu'|_{E_n}$.
  Then, for any Borel subset $B \subseteq S$,
  \begin{equation}
    \mu'_n(B) 
    \leq \|\Potential^\alpha \mu\|_\infty\, m(E_n \cap B) 
    = \|\Potential^\alpha \mu\|_\infty \int_{E_n \cap B} h(x)\, m'(dx) 
    \leq n \|\Potential^\alpha \mu\|_\infty\, m'(B).
  \end{equation}
  By assumption, $\|\Potential^\alpha \mu\|_\infty < \infty$.
  Therefore, $\mu'$ is $m'$-quasi bounded in the sense of \cite[p.~88]{Beznea_Boboc_04_Potential}.
  By \cite[Corollary~1.8.6 and Theorem~3.4.1]{Beznea_Boboc_04_Potential},
  it follows that $\mu(E) = 0$ for any Borel set $E$ that is both $m'$-copolar and $m$-copolar.
  Since $m'$ is absolutely continuous with respect to $m$, 
  every $m$-copolar set is also $m'$-copolar.
  Hence, we have $\mu(E) = 0$ for any $m$-copolar Borel set $E$.
  Finally, by \cite[Proposition~1.19 in Chapter~VI]{Blumenthal_Getoor_68_Markov},
  every polar set is copolar (and hence $m$-copolar),
  which yields the desired result.
\end{proof}

%%%%%%%%%%%%%%%%%%%%%%%%%%%%%%%%%%%%%%%%%%%%%%%%%%%%%%%%%%%%%%%%%%%%%%%%%%%%%%%%%%%%%%%%%%%%%%%%%%%%%%%%%%%%%%%%%%%%%%%%%%%%%%%%%%
%Technical lemmas
%%%%%%%%%%%%%%%%%%%%%%%%%%%%%%%%%%%%%%%%%%%%%%%%%%%%%%%%%%%%%%%%%%%%%%%%%%%%%%%%%%%%%%%%%%%%%%%%%%%%%%%%%%%%%%%%%%%%%%%%%%%%%%%%%%
\subsection{Proof of Lemmas~\ref{lem: apPCAF continuity} and \ref{lem: apSTOM continuity}} \label{appendix: Lemma of PCAF/STOM approx}

Recall the setting of Section~\ref{sec: Continuity of joint laws with PCAFs},
and in particular the maps $\apPCAF^{(\delta, R)}$ and $\apSTOM^{(\delta, R)}$
introduced in Definitions~\ref{dfn: apPCAF} and \ref{dfn: apSTOM}, respectively.
We provide the proofs of Lemmas~\ref{lem: apPCAF continuity} and \ref{lem: apSTOM continuity},
which are recalled below for convenience.

\begin{lem} \label{ap lem: apPCAF continuity}
  Fix $\delta > 0$ and $R > 1$.
  \begin{enumerate} [label = \textup{(\roman*)}]
    \item \label{ap lem item: apPCAF continuity. 1}
      The map $\apPCAF^{(\delta, R)}$ is Borel measurable.
    \item \label{ap lem item: apPCAF continuity. 2}
      Fix an arbitrary sequence $(q_n, \nu_n, \eta_n)_{n \geq 1}$ converging to $(q, \nu, \eta)$ in the domain of $\apPCAF^{(\delta, R)}$.
      Assume that
      \begin{equation}  \label{ap lem eq: apPCAF continuity}
        \sup_{n \geq 1} 
        \sup_{y \in S} \int_{\delta}^{2\delta} \int_S q_n(u, y, z)\, \nu_n^{(R)}(dz)\, du < \infty.
      \end{equation}
      Then $\apPCAF^{(\delta, R)}(q_n, \nu_n, \eta_n)$ converges to $\apPCAF^{(\delta, R)}(q, \nu, \eta)$ in $\upC(\RNp, \RNp)$.
  \end{enumerate}
\end{lem}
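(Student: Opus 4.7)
The plan is to introduce the auxiliary function
\begin{equation}
Q_n(y) \coloneqq \int_S \int_\delta^{2\delta} q_n(u, y, z)\, du\, \tilde{\nu}_n^{(R)}(dz), \quad y \in S,
\end{equation}
with $Q$ defined analogously from the limit $(q, \nu)$, so that
\begin{equation}
\apPCAF^{(\delta, R)}(q_n, \nu_n, \eta_n)(t) = \frac{1}{\delta} \int_0^t Q_n(\eta_n(s))\, ds.
\end{equation}
Both assertions will then be reduced to a careful study of this auxiliary object.

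For part (i), I would verify measurability pointwise in $t$: since $\upC(\RNp, \RNp)$ is Polish and its Borel $\sigma$-algebra is generated by the evaluation maps at each $t \ge 0$, it suffices to show that $(q, \nu, \eta) \mapsto \frac{1}{\delta}\int_0^t Q(\eta(s))\, ds$ is Borel measurable for every $t$. Fubini's theorem reduces this to the joint measurability of $(q, \nu, \eta, s) \mapsto Q(\eta(s))$, which combines the continuity of $\nu \mapsto \tilde{\nu}^{(R)}$ supplied by Lemma~\ref{lem: smooth truncation of meas}, the portmanteau theorem for $(q, \nu) \mapsto \int g\, d\tilde{\nu}^{(R)}$, and the fact that $\eta \mapsto \int_0^t f(\eta(s))\, ds$ is continuous on $L^0(\RNp, S)$ whenever $f$ is bounded and continuous on $S$.

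For part (ii), the heart of the argument is the following joint-continuity property: whenever $y_n \in S$ converges to $y \in S$, one has $Q_n(y_n) \to Q(y)$. I would establish this by applying Lemma~\ref{lem: vague convergence and hatC topology} to the functions $g_n(y_n, \cdot) \coloneqq \int_\delta^{2\delta} q_n(u, y_n, \cdot)\, du$ and the measures $\tilde{\nu}_n^{(R)}$. The uniform convergence $q_n \to q$ on compacts of $\RNpp \times S \times S$ gives $g_n(y_n, \cdot) \to g(y, \cdot)$ in $\hatC(S, \RN)$ via the characterization in Lemma~\ref{lem: conv in hatC}, while Lemma~\ref{lem: smooth truncation of meas} ensures weak convergence $\tilde{\nu}_n^{(R)} \to \tilde{\nu}^{(R)}$, with common support contained in the compact ball $D_S(\rho, R)$. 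Combined with the subsequence characterization of $L^0$-convergence (Lemma~\ref{lem: conv in L^0}), this yields, for every subsequence $(n_k)$, a further subsequence along which $\eta_{n_{k_l}}(s) \to \eta(s)$ for Lebesgue-a.e.\ $s$, and hence $Q_{n_{k_l}}(\eta_{n_{k_l}}(s)) \to Q(\eta(s))$ for a.e.\ $s$. The uniform bound $\sup_n \|Q_n\|_\infty < \infty$ provided by assumption~\eqref{ap lem eq: apPCAF continuity} together with dominated convergence then gives pointwise convergence at every $t \ge 0$, and uniqueness of the limit across subsequences upgrades this to convergence of the full sequence. Since the limit belongs to $\upC(\RNp, \RNp)$ (hence is continuous) and each approximation is non-decreasing, pointwise convergence automatically improves to locally uniform convergence via a Dini-type argument, which is precisely convergence in $\upC(\RNp, \RNp)$.

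The main obstacle I anticipate is the verification of the joint-continuity step $Q_n(y_n) \to Q(y)$: it must simultaneously exploit the uniform equicontinuity of $q_n$ implicit in compact convergence in $C(\RNpp \times S \times S, \RNp)$, the weak convergence of the truncated measures $\tilde{\nu}_n^{(R)}$, and the common compactness of their supports in $D_S(\rho, R)$. Balancing these three ingredients through Lemma~\ref{lem: vague convergence and hatC topology} — which requires checking the convergence $g_n(y_n, \cdot) \to g(y, \cdot)$ in $\hatC(S, \RN)$ rather than merely pointwise — is the key technical point on which the whole argument rests.
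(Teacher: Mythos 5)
Your proposal is correct, but it takes a genuinely different route from the paper. The paper does not prove this lemma directly: it first proves the STOM analogue (Lemma~\ref{lem: apSTOM continuity}) by introducing spatially cut-off approximations $\apSTOM^{(\delta,R,r)}$ built from the continuous cutoff $\chi^{(r)}$, showing each is continuous, obtaining measurability of $\apSTOM^{(\delta,R)}$ as a pointwise limit of continuous maps, and controlling the error term via Lemma~\ref{lem: range compactness wrt L^0 topology}; the PCAF statement is then deduced in a few lines by composing with the continuous map $\Sigma \mapsto \bigl(t \mapsto \Sigma(S \times [0,t])\bigr)$ and using that $\upC(\RNp,\RNp)$ is Borel in $L^0(\RNp,\RNp)$. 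You instead work directly with the PCAF map through the auxiliary function $Q_n$: measurability via evaluation maps and Tonelli, and convergence via the joint continuity $Q_n(y_n)\to Q(y)$ (correctly reduced to Lemma~\ref{lem: vague convergence and hatC topology}, since the supports of $\tilde{\nu}_n^{(R)}$ all lie in the compact ball $D_S(\rho,R)$), the subsequence characterization of $L^0$-convergence, dominated convergence under the uniform bound $\sup_n\|Q_n\|_\infty<\infty$ (which follows from \eqref{ap lem eq: apPCAF continuity} because $\tilde{\nu}_n^{(R)}\le \nu_n^{(R)}$), and the standard upgrade from pointwise to locally uniform convergence for non-decreasing continuous functions. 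Your argument is self-contained and more elementary precisely because the PCAF functional integrates the constant test function in space, so no spatial cutoff or range-compactness argument is needed; what the paper's route buys is that the harder STOM lemma, which it must prove anyway for the STOM convergence results, delivers the PCAF lemma essentially for free.
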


\begin{lem} \label{ap lem: apSTOM continuity}
  Fix $\delta > 0$ and $R > 1$.
  \begin{enumerate} [label = \textup{(\roman*)}]
    \item \label{ap lem item: apSTOM continuity. 1}
      The map $\apSTOM^{(\delta, R)}$ is Borel measurable.
    \item \label{ap lem item: apSTOM continuity. 2}
      Fix an arbitrary sequence $(q_n, \nu_n, \eta_n)_{n \geq 1}$ converging to $(q, \nu, \eta)$ in the domain of $\apSTOM^{(\delta, R)}$.
      Assume that
      \begin{equation}  \label{ap lem eq: apSTOM continuity}
        \sup_{n \geq 1} 
        \sup_{y \in S} \int_{\delta}^{2\delta} \int_S q_n(u, y, z)\, \nu_n^{(R)}(dz)\, du < \infty.
      \end{equation}
      Then $\apSTOM^{(\delta, R)}(q_n, \nu_n, \eta_n)$ converges to $\apSTOM^{(\delta, R)}(q, \nu, \eta)$ in $\STOMMeas(S \times \RNp)$.
  \end{enumerate}
\end{lem}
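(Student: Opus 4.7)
The plan is to treat the two lemmas in parallel, proving part (ii) first and then deducing part (i) from it by a truncation argument.

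For part (ii), I would fix a converging sequence $(q_n, \nu_n, \eta_n) \to (q, \nu, \eta)$ in the domain satisfying the uniform bound \eqref{ap lem eq: apPCAF continuity} and introduce the auxiliary kernel
\begin{equation}
  G_n(y) \coloneqq \frac{1}{\delta} \int_\delta^{2\delta} \int_S q_n(u, y, x)\, \tilde{\nu}_n^{(R)}(dx)\, du,
\end{equation}
together with its limit $G(y)$. The first step will be to show that $G_n \to G$ uniformly on compact subsets of $S$. By Lemma~\ref{lem: smooth truncation of meas}, $\tilde{\nu}_n^{(R)} \to \tilde{\nu}^{(R)}$ weakly, with all these measures supported in the fixed compact ball $D_S(\rho, R)$; combined with the compact convergence $q_n \to q$, Lemma~\ref{lem: vague convergence and hatC topology} applied on the product $[\delta, 2\delta] \times S$ yields $G_n(y_n) \to G(y)$ whenever $y_n \to y$ in $S$, which a routine compactness argument upgrades to uniform convergence on compacts. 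The second step will be to pass to the outer integration against the trajectory: invoking the subsequence principle, Lemma~\ref{lem: conv in L^0} provides, for any subsequence, a further subsequence along which $\eta_{n_k}(s) \to \eta(s)$ for Lebesgue-a.e.\ $s$, so that $G_{n_k}(\eta_{n_k}(s)) \to G(\eta(s))$ a.e.\ by the compact-uniform convergence of the kernels, and the uniform bound \eqref{ap lem eq: apPCAF continuity} justifies dominated convergence to conclude $\int_0^t G_{n_k}(\eta_{n_k}(s))\, ds \to \int_0^t G(\eta(s))\, ds$ for each $t$. Since convergence in $\upC(\RNp, \RNp)$ coincides with pointwise convergence, this settles the PCAF case. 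The STOM case then follows from the same argument applied to integrals $\int f\, d\apSTOM^{(\delta, R)}(q_n, \nu_n, \eta_n)$ against bounded continuous $f$ on $S \times \RNp$ of compact time support, combined with the characterization in Proposition~\ref{prop: conv in stom sp}\ref{prop item: conv in stom sp. 3}.

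For part (i), I would introduce truncated maps by replacing $q$ with $q \wedge M$. For each fixed $M$ the truncated kernel is pointwise bounded by $M$, and the weak convergence $\tilde{\nu}_n^{(R)} \to \tilde{\nu}^{(R)}$ along any converging sequence forces $\sup_n \tilde{\nu}_n^{(R)}(S) < \infty$, so the hypothesis \eqref{ap lem eq: apPCAF continuity} holds automatically in the truncated system and part (ii) gives the continuity of each truncated map. As $M \to \infty$, the truncated maps will converge monotonically and pointwise to the untruncated ones; in the PCAF case pointwise convergence is exactly convergence in $\upC(\RNp, \RNp)$, and in the STOM case monotone convergence of the density combined with Proposition~\ref{prop: conv in stom sp}\ref{prop item: conv in stom sp. 3} yields convergence in $\STOMMeas(S \times \RNp)$. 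Since pointwise limits of continuous maps into Polish spaces are Borel measurable, the measurability assertions will follow.

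The main technical obstacle is the weakness of the $L^0$ topology on $\eta$: pointwise values of $\eta_n$ have no a priori meaning, so one cannot directly compare $G_n(\eta_n(s))$ with $G(\eta(s))$ at a given $s$. The subsequence principle of Lemma~\ref{lem: conv in L^0} is essential to circumvent this, trading $L^0$ convergence for almost-everywhere pointwise convergence along a subsequence, after which the uniform-on-compacts convergence of the kernels and the uniform bound \eqref{ap lem eq: apPCAF continuity} combine cleanly through dominated convergence.
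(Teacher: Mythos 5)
Your proposal is correct, but it organizes the argument differently from the paper's proof. For part (ii) the ingredients coincide --- the subsequence principle for $L^0$ convergence (Lemma~\ref{lem: conv in L^0}), compact convergence of the kernels combined with weak convergence of the truncated measures via Lemma~\ref{lem: vague convergence and hatC topology}, and dominated convergence with the uniform bound \eqref{ap lem eq: apSTOM continuity} as dominating constant --- but you apply dominated convergence directly to the trajectory integral, whereas the paper first introduces spatially cut-off maps $\apSTOM^{(\delta, R, r)}$ (multiplying the integrand by $\chi^{(r)}(\eta_t)$), proves each is continuous, and then shows the cut-off error is bounded by a multiple of $\Leb(\{t \le T \mid \eta_n(t) \notin S^{(r-1)}\})$, which vanishes by the range-tightness Lemma~\ref{lem: range compactness wrt L^0 topology}; your direct route avoids that lemma entirely. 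For part (i) the difference is more substantive: the paper obtains measurability as a pointwise ($r \to \infty$) limit of the same spatially cut-off continuous maps using $\chi^{(r)} \nearrow \mathbf{1}_S$, while you truncate the kernel at level $M$ and let $M \to \infty$ by monotone convergence. Your truncated maps are continuous by part (ii) because the hypothesis holds automatically for $q \wedge M$ along any converging sequence; this rests on $\sup_n \nu_n(D_S(\rho,R)) < \infty$ along a vaguely convergent sequence, which follows from the Portmanteau theorem on the compact ball and is worth stating explicitly. Both limiting procedures (pointwise in $\upC(\RNp, \RNp)$, and monotone convergence of the densities tested against functions with compact time support, via Proposition~\ref{prop: conv in stom sp}\ref{prop item: conv in stom sp. 3}) behave as you claim, so the measurability conclusion stands. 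In short, both approaches work; the paper's spatial cutoff has the advantage of also serving as the quantitative approximation used later in the main convergence theorems, while your kernel truncation gives a slightly leaner proof of this lemma in isolation.
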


We first prove Lemma~\ref{ap lem: apSTOM continuity}.

\begin{proof} [{Proof of Lemma~\ref{ap lem: apSTOM continuity}}]
  Since the argument is rather long and technical, we first give an outline. 
  We begin by constructing a family of continuous maps $\{\apSTOM^{(\delta, R, r)}\}_{r > 0}$. 
  Condition~\ref{ap lem item: apPCAF continuity. 1} is then verified by showing that 
  $\apSTOM^{(\delta, R, r)}$ converges pointwise to $\apSTOM^{(\delta, R)}$ as $r \to \infty$. 
  Furthermore, condition~\ref{ap lem item: apPCAF continuity. 2} follows from the fact that, under \eqref{ap lem eq: apPCAF continuity}, 
  the family $\apSTOM^{(\delta, R, r)}$ uniformly approximates $\apSTOM^{(\delta, R)}$.

  To define $\apPCAF^{(\delta, R, r)}$,
  we use the cutoff function introduced in \eqref{lem pr: smooth truncation of meas. 1},
  that is, 
  \begin{equation}  
    \chi^{(r)}(x) \coloneqq \int_{r-1}^r \mathbf{1}_{S^{(v)}}(x)\, dv, 
    \quad x \in S,\ r > 1.
  \end{equation}
  Recall that $\chi^{(r)}$ is continuous.
  Moreover,
  \begin{equation}  \label{lem pr: measurability of apSTOM. 1}
    \chi^{(r)} \nearrow \mathbf{1}_S 
    \quad \text{pointwise as } r \to \infty.
  \end{equation}

  For each $r > 1$, we define a map $\apSTOM^{(\delta, R, r)}$
  \begin{equation}
    \apSTOM^{(\delta, R, r)} \colon 
    C(\RNpp \times S \times S, \RNp) \times \Meas(S) \times D_{L^0}(\RNp, S) 
    \to 
    \upC(\RNp, \RNp)
  \end{equation}
  by 
  \begin{equation}
    (q, \nu, \eta) 
    \mapsto 
    \frac{1}{\delta}
    \int_0^\infty \mathbf{1}_{(\eta_t, t)}(\cdot) \int_S \int_\delta^{2\delta} 
    \chi^{(r)}(\eta_t)\, q(u, \eta_t, x)\, du\, \tilde{\nu}^{(R)}(dx)\, dt.
  \end{equation}
  Using this map $\apSTOM^{(\delta, R, r)}$,
  we prove the results simultaneously.
  Fix an arbitrary sequence $(q_n, \nu_n,\allowbreak \eta_n)$, $n \geq 1$, converging to $(q, \nu, \eta)$ 
  in the domain of $\apSTOM^{(\delta, R, r)}$.
  Fix a bounded continuous function $F$ on $S \times \RNp$
  whose support is contained in $\RNp \times [0,T]$ for some $T \in \RNp$.
  Write 
  \begin{align}
    G_n^{(r)} &\coloneqq 
    \frac{1}{\delta}
    \int_0^T F(\eta_n(t), t) \int_S \int_\delta^{2\delta} 
    \chi^{(r)}(\eta_n(t))\, q_n(u, \eta_n(t), x)\, du\, \tilde{\nu}_n^{(R)}(dx)\, dt,\\
    G^{(r)} &\coloneqq 
    \frac{1}{\delta}
    \int_0^T F(\eta(t), t) \int_S \int_\delta^{2\delta} 
    \chi^{(r)}(\eta(t))\, q(u, \eta(t), x)\, du\, \tilde{\nu}^{(R)}(dx)\, dt,\\
    G_n &\coloneqq 
    \frac{1}{\delta}
    \int_0^T F(\eta_n(t), t) \int_S \int_\delta^{2\delta} 
    q_n(u, \eta_n(t), x)\, du\, \tilde{\nu}_n^{(R)}(dx)\, dt,\\
    G &\coloneqq 
    \frac{1}{\delta}
    \int_0^T F(\eta(t), t) \int_S \int_\delta^{2\delta} 
    q(u, \eta(t), x)\, du\, \tilde{\nu}^{(R)}(dx)\, dt.
  \end{align}

  We first show that $\apSTOM^{(\delta, R, r)}$ is continuous for each $r > 0$.
  Fix $r > 0$.
  It suffices to show that 
  \begin{equation} \label{lem pr: apSTOM continuity. 2}
    G_n^{(r)} \xrightarrow[n \to \infty]{} G^{(r)}.
  \end{equation}
  By the convergence of $q_n$,
  we have  
  \begin{equation}  \label{lem pr: apSTOM continuity. 1}
    \sup_{n \geq 1} 
    \left\{
      q_n(u, y, z)
      \;\middle|\;
      u \in [\delta, 2\delta],\ 
      y \in S^{(r)},\
      x \in S^{(R)}
    \right\}
    < \infty.
  \end{equation}
  For each $n \geq 1$, define 
  \begin{equation}
    f_n(x) \coloneqq 
    \frac{1}{\delta}
    \int_0^T 
    \int_\delta^{2\delta}
      F(\eta_n(t), t)\, \chi^{(r)}(\eta_n(t))\, q_n(u, \eta_n(t), x)\, du\, dt,
    \quad x \in S.
  \end{equation}
  Similarly, define 
  \begin{equation}
    f(x) \coloneqq 
    \frac{1}{\delta}
    \int_0^T
    \int_\delta^{2\delta}
      F(\eta(t), t)\, \chi^{(r)}(\eta(t))\, q(u, \eta(t), x)\, du\,dt,
    \quad x \in S.
  \end{equation}
  By the dominated convergence theorem,
  one readily verifies that all the functions $f_n$ and $f$ are continuous.
  By definition, 
  \begin{equation}
    G_n^{(r)} 
    = \int_S f_n(x)\, \tilde{\nu}_n^{(R)}(dx),
    \quad 
    G^{(r)}
    = \int_S f(x)\, \tilde{\nu}^{(R)}(dx).
  \end{equation}
  Hence, it suffices to show that 
  \begin{equation}
    \lim_{n \to \infty} 
    \int_S f_n(x)\, \tilde{\nu}_n^{(R)}(dx) 
    = 
    \int_S f(x)\, \tilde{\nu}^{(R)}(dx).
  \end{equation}
  Since $\tilde{\nu}_n^{(R)} \to \tilde{\nu}^{(R)}$ weakly,
  Lemma~\ref{lem: vague convergence and hatC topology} implies that 
  the above convergence holds once we show $f_n \to f$ in the compact-convergence topology.

  Let $(x_n)_{n \geq 1}$ be a sequence converging to $x$ in $S$,
  and fix an arbitrary subsequence $(n_k)_{k \geq 1}$.
  By the $L^0$-convergence of $\eta_n$ to $\eta$,
  we can find a further subsequence $(n_{k(l)})_{l \geq 1}$ 
  such that $\eta_{n_{k(l)}}(t) \to \eta(t)$ for Lebesgue-almost every $t \geq 0$.
  For every such $t$, by the convergence of $q_n$ to $q$ and the continuity of $\chi^{(r)}$, 
  we have that, for any $u >0$,
  \begin{equation}
    \chi^{(r)}(\eta_{n_{k(l)}}(t))\, q_{n_{k(l)}}\bigl( u, \eta_{n_{k(l)}}(t), x_{n_{k(l)}} \bigr) 
    \xrightarrow[l \to \infty]{} 
    \chi^{(r)}(\eta(t))\,q(u, \eta(s), x).
  \end{equation}
  This, together with \eqref{lem pr: apSTOM continuity. 1} and the dominated convergence theorem, yields
  \begin{align}
    f_{n_{k(l)}}(x_{n_{k(l)}}) 
    &= 
    \frac{1}{\delta}
    \int_0^T 
    \int_\delta^{2\delta}
      F(\eta_{n_{k(l)}}(t), t)\, \chi^{(r)}(\eta_{n_{k(l)}}(t))\, q_{n_{k(l)}}\bigl( u, \eta_{n_{k(l)}}(t), x_{n_{k(l)}} \bigr)\, du\, dt\\
    &\xrightarrow[l \to \infty]{}
    \frac{1}{\delta}
    \int_0^T
    \int_\delta^{2\delta}
      F(\eta(t), t)\, \chi^{(r)}(\eta(t))\, q(u, \eta(t), x)\, du\,dt\\
    &= f(x).
  \end{align}
  Since the subsequence $(n_k)_{k \geq 1}$ was arbitrary,
  we conclude that $f_n(x_n) \to f(x)$, which implies that $f_n \to f$ in the compact-convergence topology.
  Therefore, the map $\apSTOM^{(\delta, R, r)}$ is continuous for each $r > 1$.

  By \eqref{lem pr: measurability of apSTOM. 1} and the monotone convergence theorem,
  one readily verifies that 
  \begin{equation} \label{lem pr: apSTOM continuity. 3}
    G^{(r)} \xrightarrow[r \to \infty]{} G,
  \end{equation}
  which implies that 
  \begin{equation} 
    \apSTOM^{(\delta, R, r)}(q, \nu, \eta) 
    \xrightarrow[r \to \infty]{} 
    \apSTOM^{(\delta, R)}(q, \nu, \eta)
    \quad \text{in}\ \STOMMeas(S \times \RNp).
  \end{equation}
  In particular, the map $\apSTOM^{(\delta, R)}$ is the pointwise limit of continuous maps, 
  and hence we obtain \ref{ap lem item: apSTOM continuity. 1}.

  It remains to verify \ref{ap lem item: apSTOM continuity. 2}.
  Assume that the above sequence $(q_n, \nu_n, \eta_n)_{n \geq 1}$ satisfies \eqref{ap lem eq: apSTOM continuity},
  that is,
  \begin{equation}
    M \coloneqq 
    \sup_{n \geq 1} 
        \sup_{y \in S} 
        \int_{\delta}^{2\delta} \int_S q_n(u, y, z)\, \nu_n^{(R)}(dz)\, du 
    < \infty.
  \end{equation}
  By definition, 
  \begin{equation}
    |\chi^{(r)}(x) - 1| 
    \leq 
    \mathbf{1}_{S \setminus S^{(r-1)}}(x),
    \quad 
    \forall x \in S,\ r > 1.
  \end{equation}
  This yields 
  \begin{align}
    &|G_n^{(r)} - G_n|\\
    &\leq
    \frac{\|F\|_\infty}{\delta}\int_0^T \int_S \int_\delta^{2\delta} 
      \mathbf{1}_{S \setminus S^{(r-1)}}(\eta_n(t))\, 
      q_n(u, \eta_n(t), x)\, du\, 
      \tilde{\nu}_n^{(R)}(dx)\, dt\\
    &\leq 
    \frac{\|F\|_\infty M}{\delta}\, 
    \Leb\bigl( \{t \in [0, T] \mid \eta_n(t) \notin S^{(r-1)}\} \bigr).
  \end{align}
  By Lemma~\ref{lem: range compactness wrt L^0 topology}, 
  the last expression converges to $0$ in the successive limits 
  $n \to \infty$ and then $r \to \infty$, that is,
  \begin{equation}
    \lim_{r \to \infty} 
    \limsup_{n \to \infty}
    |G_n^{(r)} - G_n|
    = 0.
  \end{equation}
  Combining this with \eqref{lem pr: apSTOM continuity. 2} and \eqref{lem pr: apSTOM continuity. 3},
  we obtain that $G_n \to G$, which yields the conclusion of \ref{ap lem item: apSTOM continuity. 2}.
\end{proof}

Lemma~\ref{ap lem: apPCAF continuity} is then readily verified by Lemma~\ref{ap lem eq: apSTOM continuity}.

\begin{proof}[{Proof of Lemma~\ref{ap lem: apPCAF continuity}}]
  Define a map 
  \begin{equation}
    \mathfrak{T} \colon 
    \STOMMeas(S \times \RNp) \ni \Sigma 
    \longmapsto \bigl(t \mapsto \Sigma(S \times [0,t])\bigr) 
    \in L^0(\RNp, \RNp).
  \end{equation}
  It is readily verified that the map $\mathfrak{T}$ is measurable (in fact, continuous).
  By definition, we have $\apPCAF^{(\delta, R)} = \mathfrak{T} \circ \apSTOM^{(\delta, R)}$.
  Hence the map $\apPCAF^{(\delta, R)}$ is measurable when its codomain is taken to be $L^0(\RNp, \RNp)$.
  Moreover, as in Proposition~\ref{prop: D is Borel in L^0},
  the space $\upC(\RNp, \RNp)$ is a Borel subset of $L^0(\RNp, \RNp)$.
  Therefore, $\apPCAF^{(\delta, R)}$ is Borel measurable 
  with respect to its original codomain $\upC(\RNp, \RNp)$.

  Fix an arbitrary sequence $(q_n, \nu_n, \eta_n)_{n \ge 1}$ 
  converging to $(q, \nu, \eta)$ in the domain of $\apPCAF^{(\delta, R)}$.
  Then, by Lemma~\ref{ap lem: apSTOM continuity}\ref{ap lem item: apSTOM continuity. 2}, we have 
  \begin{equation}
    \apSTOM^{(\delta, R)}(q_n, \nu_n, \eta_n) 
    \xrightarrow[n \to \infty]{} 
    \apSTOM^{(\delta, R)}(q, \nu, \eta)
    \quad \text{in} \quad \STOMMeas(S \times \RNp).
  \end{equation}
  By the same argument as in the proof of 
  Proposition~\ref{prop: PCAF to STOM map}\ref{prop item: PCAF to STOM map. 1},
  we deduce that 
  \begin{equation}
    \apSTOM^{(\delta, R)}(q_n, \nu_n, \eta_n)(S \times [0,\cdot]) 
    \xrightarrow[n \to \infty]{} 
    \apSTOM^{(\delta, R)}(q, \nu, \eta)(S \times [0,\cdot])
    \quad \text{in} \quad \upC(\RNp, \RNp),
  \end{equation}
  which completes the proof.
\end{proof}

%%%%%%%%%%%%%%%%%%%%%%%%%%%%%%%%%%%%%%%%%%%%%%%%%%%%%%%%%%%%%%%%%%%%%%%%%%%%%%%%%%%%%%%%%%%%%%%%%%%%%%%%%%%%%%%%%%%%%%%%%%%%%%%%%%
%Technical lemmas
%%%%%%%%%%%%%%%%%%%%%%%%%%%%%%%%%%%%%%%%%%%%%%%%%%%%%%%%%%%%%%%%%%%%%%%%%%%%%%%%%%%%%%%%%%%%%%%%%%%%%%%%%%%%%%%%%%%%%%%%%%%%%%%%%%
\subsection{The function \texorpdfstring{$\trf_\beta$}{trf_beta}} \label{sec: trf function}

Here, we prove some technical results that are used in Section~\ref{sec: conv of col meas of VSRW}.
For each $\beta > 0$, define a bijective function $\trf_\beta \colon (0,1/e) \to (0,\infty)$ by setting
\begin{equation}
  \trf_\beta(t) \coloneqq \frac{1}{\log (1/t)\, \bigl(\log \log (1/t)\bigr)^{1+\beta}} .
\end{equation}
We write $\trf_\beta^{-1} \colon (0, \infty) \to (0,1/e)$ for its inverse.

\begin{lem} \label{lem: property of trf}
  For each $\beta > 0$, the following statements hold.
  \begin{enumerate} [label = \textup{(\roman*)}]
    \item \label{lem item: 1. property of trf}
      It holds that 
      \begin{equation}
        \lim_{\delta \to 0}
        \int_0^\delta \frac{\trf_\beta(t)}{t}\, dt 
        = 0.
      \end{equation}
    \item \label{lem item: 2. property of trf}
      For any $\alpha > 0$, 
      \begin{equation}
        \lim_{\delta \to 0} \int_0^\delta \frac{1}{\bigl(\trf_\beta(t)\bigr)^\alpha}\, dt \;=\; 0 .
      \end{equation}
    \item \label{lem item: 3. property of trf}
      As $t \downarrow 0$, 
      \begin{equation}
        \trf_\beta^{-1}(t) \;=\; 
        \exp\!\left( - \frac{1 + o(1)}{\,t \bigl(\log (1/t)\bigr)^{1 + \beta}}\right).
      \end{equation}
  \end{enumerate}
\end{lem}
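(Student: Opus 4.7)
The plan is to handle each of the three estimates by reducing them to elementary computations via the change of variables $u=\log\log(1/t)$, which linearises the iterated logarithms appearing in $\trf_\beta$.

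For \ref{lem item: 1. property of trf}, I would substitute $u = \log\log(1/t)$, so that $du = dt/(t\log(1/t))$. Under this change of variables,
\begin{equation}
  \frac{\trf_\beta(t)}{t}\,dt
  = \frac{dt}{t\,\log(1/t)\,\bigl(\log\log(1/t)\bigr)^{1+\beta}}
  = \frac{du}{u^{1+\beta}},
\end{equation}
so that $\int_0^\delta \trf_\beta(t) t^{-1}\,dt = \int_{\log\log(1/\delta)}^{\infty} u^{-1-\beta}\,du = \beta^{-1}(\log\log(1/\delta))^{-\beta}$, which tends to $0$ as $\delta\to 0$. This is the main substitution, and the rest is a one-line calculation.

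For \ref{lem item: 2. property of trf}, I observe that by definition
\begin{equation}
  \frac{1}{\trf_\beta(t)^\alpha}
  = \bigl(\log(1/t)\bigr)^\alpha \bigl(\log\log(1/t)\bigr)^{\alpha(1+\beta)},
\end{equation}
which is bounded above on $(0,1/e)$ by $C(\log(1/t))^{\alpha+1}$ for some constant $C$, since $\log\log(1/t) \le \log(1/t)$ for small $t$. The function $(\log(1/t))^{\alpha+1}$ is integrable on $(0,1)$ (it is the image under $t\mapsto e^{-s}$ of $s^{\alpha+1}e^{-s}$, which decays at infinity), so dominated convergence or direct integration gives the claim.

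For \ref{lem item: 3. property of trf}, which is the genuine step, I would invert $\trf_\beta$ asymptotically. Setting $s=\trf_\beta^{-1}(t)$ and $L=\log(1/s)$, $M=\log L=\log\log(1/s)$, the defining equation $t=\trf_\beta(s)$ reads $1/t = L M^{1+\beta}$. Taking logarithms gives $\log(1/t) = M + (1+\beta)\log M$, so $M \sim \log(1/t)$, i.e.\ $\log\log(1/s) = (1+o(1))\log(1/t)$ as $t\downarrow 0$. Substituting back,
\begin{equation}
  \log(1/s) = L = \frac{1}{t\,M^{1+\beta}} = \frac{1+o(1)}{t\,\bigl(\log(1/t)\bigr)^{1+\beta}},
\end{equation}
and exponentiating yields the stated asymptotic for $s=\trf_\beta^{-1}(t)$. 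The only delicate point is the self-consistent iteration of the asymptotic $M\sim\log(1/t)$, and I would make this rigorous by a short squeeze argument: use $\log(1/t)-C\log\log(1/t) \le M \le \log(1/t)$ for small $t$ to pin down $M^{1+\beta} = (1+o(1))(\log(1/t))^{1+\beta}$, and then feed this back into the expression for $L$. Beyond that, the proof is routine.
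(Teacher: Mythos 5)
Your proposal is correct. Parts \ref{lem item: 1. property of trf} and \ref{lem item: 2. property of trf} are essentially the paper's argument: the paper substitutes $s=\log(1/t)$ in both, obtaining $\int_{\log(1/\delta)}^{\infty} s^{-1}(\log s)^{-(1+\beta)}\,ds = \beta^{-1}(\log\log(1/\delta))^{-\beta}$ for (i) and $\int_{\log(1/\delta)}^{\infty} e^{-s}s^{\alpha}(\log s)^{\alpha(1+\beta)}\,ds$ for (ii), which is the same computation as yours up to relabeling the substitution. One small slip in your (ii): the bound $\log\log(1/t)\le\log(1/t)$ gives the majorant $(\log(1/t))^{\alpha(2+\beta)}$, not $(\log(1/t))^{\alpha+1}$; this is harmless since any power of $\log(1/t)$ is integrable near $0$, but you should correct the exponent. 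For part \ref{lem item: 3. property of trf} your route genuinely differs from the paper's: the paper factors $\trf_\beta = g_3\circ g_2\circ g_1$ with $g_2(u)=ue^{u}$ and invokes the asymptotic expansion of the Lambert $W$ function $g_2^{-1}(u)=\log(u/\log u)+o(1)$ from the literature, whereas you invert directly by writing $1/t = L M^{1+\beta}$ with $L=\log(1/s)$, $M=\log L$, extracting $M\sim\log(1/t)$ from $\log(1/t)=M+(1+\beta)\log M$, and feeding this back to get $L=(1+o(1))/(t(\log(1/t))^{1+\beta})$. Your approach is more elementary and self-contained (no external citation needed); the paper's buys an explicit closed-form inverse in terms of $W$ at the cost of quoting its expansion. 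Your squeeze argument for pinning down $M^{1+\beta}=(1+o(1))(\log(1/t))^{1+\beta}$ is exactly the right way to make the self-consistent iteration rigorous, and the whole argument goes through.
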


\begin{proof}
  \ref{lem item: 1. property of trf}.
  By the change of variables $s = \log(1/t)$, 
  \begin{equation}
     \int_0^\delta \frac{\trf_\beta(t)}{t}\, dt
     = 
     \int_{\log(1/\delta)}^\infty \frac{1}{s \bigl(\log s\bigr)^{1+\beta}}\, ds
     =
     \left[ -\frac{1}{\beta}\, \bigl(\log s\bigr)^{-\beta} \right]_{\log(1/\delta)}^\infty
     = 
     \frac{1}{\beta \bigl(\log \log (1/\delta)\bigr)^{\beta}},
  \end{equation}
  which yields the desired result.

  \ref{lem item: 2. property of trf}.
  Using the same substitution $s=\log(1/t)$,
  \begin{equation}
    \int_0^\delta \frac{1}{\bigl(\trf_\beta(t)\bigr)^\alpha}\, dt
    =
    \int_{\log(1/\delta)}^\infty e^{-s}\, s^{\alpha}\, \bigl(\log s\bigr)^{\alpha(1+\beta)}\, ds .
  \end{equation}
  The integrand is dominated by the exponential factor $e^{-s}$, hence the tail integral tends to $0$ as $\delta \to 0$,
  which yields the claim.

  \ref{lem item: 3. property of trf}.
  Define three bijective functions:
  \begin{equation}
  \begin{alignedat}{3}
    g_1 &\colon\; & (0,1/e) \to (0,\infty), &\quad & t &\mapsto \frac{1}{1+\beta}\,\log\!\bigl(\log(1/t)\bigr),\\
    g_2 &\colon\; & (0,\infty) \to (0,\infty), &\quad & u &\mapsto u\, e^{u},\\
    g_3 &\colon\; & (0,\infty) \to (0,\infty), &\quad & v &\mapsto \frac{1}{\bigl((1+\beta)\, v\bigr)^{1+\beta}} .
  \end{alignedat}
  \end{equation}
  Then $\trf_\beta = g_3 \circ g_2 \circ g_1$, hence $\trf_\beta^{-1} = g_1^{-1} \circ g_2^{-1} \circ g_3^{-1}$.
  A direct computation gives
  \begin{equation}
    g_1^{-1}(y) = \exp\bigl(-e^{(1+\beta)y}\bigr),
    \qquad
    g_3^{-1}(t) = \frac{1}{1+\beta}\, t^{-1/(1+\beta)} .
  \end{equation}
  The inverse of $g_2$ is the Lambert $W$ function, and as $u \to \infty$,
  \begin{equation}
    g_2^{-1}(u) = \log\Bigl(\frac{u}{\log u}\Bigr) + o(1)
  \end{equation}
  (cf.\ \cite[Equation~(4.20)]{Corless_Gonnet_etal_96_Lambert}).
  Therefore, as $t \downarrow 0$,
  \begin{align}
    -\log\bigl( \trf_\beta^{-1}(t) \bigr) 
    &= \exp\Bigl( (1+\beta)\, g_2^{-1}\bigl(g_3^{-1}(t)\bigr) \Bigr) \\
    &= \exp\left((1+\beta)\, \log\Bigl(\frac{g_3^{-1}(t)}{\log g_3^{-1}(t)}\Bigr) + o(1)\right) \\
    &= e^{o(1)} \left( \frac{g_3^{-1}(t)}{\log g_3^{-1}(t)} \right)^{1 + \beta} \\
    &= e^{o(1)}\left( \frac{ \frac{1}{1+\beta}\, t^{-1/(1+\beta)} }{ \log\bigl( \frac{1}{1+\beta}\, t^{-1/(1+\beta)} \bigr)} \right)^{1+\beta} \\
    &= e^{o(1)}\left( \frac{ t^{-1/(1+\beta)} }{ \log(1/t)\,(1+o(1)) } \right)^{1+\beta}\\
    &= \frac{1+o(1)}{\, t \bigl(\log(1/t)\bigr)^{1+\beta}} ,
  \end{align}
  which proves the desired asymptotic.
\end{proof}

%%%%%%%%%%%%%%%%%%%%%%%%%%%%%%%%%%%%%%%%%%%%%%%%%%%%%%%%%%%%%%%%%%%%%%%%%%%%%%%%%%%%%%%%%%%%%%%%%%%%%%%%%%%%%%%%%%%%%%%%%%%%%%%%%%
%An inverse version of the continuous mapping theorem
%%%%%%%%%%%%%%%%%%%%%%%%%%%%%%%%%%%%%%%%%%%%%%%%%%%%%%%%%%%%%%%%%%%%%%%%%%%%%%%%%%%%%%%%%%%%%%%%%%%%%%%%%%%%%%%%%%%%%%%%%%%%%%%%%%
\section{An inverse version of the continuous mapping theorem} \label{appendix: An inverse version of the continuous mapping theorem}

Let $S_1$ and $S_2$ be $\bcmAB$ spaces and let $f \colon S_1 \to S_2$ be a continuous map.
By the continuous mapping theorem, 
the pushforward map $\mu \mapsto \mu \circ f^{-1}$ 
from $\Meas(S_1)$ to $\Meas(S_2)$ is continuous.
When $f$ is injective, 
the Lusin--Souslin theorem (Lemma~\ref{lem: Lousin--Souslin}) ensures that 
$f(B)$ is a Borel subset of $S_2$ for every Borel set $B \subseteq S_1$.
Hence, one can define the map in the inverse direction 
$\nu \mapsto \nu \circ f$ from $\Meas(S_2)$ to $\Meas(S_1)$.
Although this map is not continuous in general, 
its continuity can be ensured under certain additional assumptions.
Below, we provide a result on this point, 
which is used in Proposition~\ref{prop: continuity of Col map}.

\begin{prop} \label{ap. prop: inverse conti map thm}
  Let $S_1$ and $S_2$ be $\bcmAB$ spaces and let 
  $\iota \colon S_1 \to S_2$ be an injective Borel measurable map 
  such that its image $\iota(S_1)$ is closed in $S_2$ and its inverse is continuous.
  Let $(\Sigma_n)_{n \ge 1}$ be a sequence of Radon measures on $S_2$.
  Assume that $\supp(\Sigma_n) \subseteq \iota(S_1)$ for all $n \ge 1$,
  and that $\Sigma_n \to \Sigma$ vaguely.
  Then $\Sigma_n \circ \iota \to \Sigma \circ \iota$ vaguely as measures on $S_1$.
\end{prop}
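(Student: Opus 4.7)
The plan is to fix $f \in C_c(S_1)$ with compact support $K$ and show that $\int_{S_1} f\, d(\Sigma_n \circ \iota) \to \int_{S_1} f\, d(\Sigma \circ \iota)$ by lifting each side to an integral against the original measures on $S_2$, where vague convergence applies directly.

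The first preliminary step is to verify that the limit measure $\Sigma$ also has support in the closed set $\iota(S_1)$. For any $g \in C_c(S_2 \setminus \iota(S_1))$ extended by zero to $S_2$, the support hypothesis on the $\Sigma_n$ gives $\int g\, d\Sigma_n = 0$, and taking vague limits gives $\int g\, d\Sigma = 0$. Since $S_2 \setminus \iota(S_1)$ is an open subset of the $\bcmAB$ space $S_2$, hence $\sigma$-compact, this yields $\Sigma(S_2 \setminus \iota(S_1)) = 0$.

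Next, I would construct a test function $\tilde f \in C_c(S_2)$ whose restriction to $\iota(S_1)$ agrees with $f \circ \iota^{-1}$ on $\iota(K)$. Since $\iota^{-1}$ is continuous on the closed subspace $\iota(S_1)$, the function $f \circ \iota^{-1}$ is continuous on $\iota(S_1)$ and vanishes outside $\iota(K)$. By Tietze's extension theorem, $f \circ \iota^{-1}$ admits a continuous extension $\tilde f_0 \colon S_2 \to \RN$; by local compactness of $S_2$, I can choose a Urysohn cutoff $\chi \in C_c(S_2)$ equal to $1$ on $\iota(K)$, and set $\tilde f \coloneqq \chi \tilde f_0 \in C_c(S_2)$. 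Because $\Sigma_n$ and $\Sigma$ are both supported in $\iota(S_1)$, and $\tilde f$ coincides with $f \circ \iota^{-1}$ wherever the integrand is nonzero, the change of variables $(\Sigma_n \circ \iota)(A) = \Sigma_n(\iota(A))$ gives
\begin{equation}
  \int_{S_1} f\, d(\Sigma_n \circ \iota)
  = \int_{\iota(S_1)} f \circ \iota^{-1}\, d\Sigma_n
  = \int_{S_2} \tilde f\, d\Sigma_n,
\end{equation}
and analogously for $\Sigma$ in place of $\Sigma_n$. The vague convergence $\Sigma_n \to \Sigma$ on $S_2$ then gives $\int \tilde f\, d\Sigma_n \to \int \tilde f\, d\Sigma$, completing the proof.

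The main obstacle is ensuring that $\iota(K)$ is compact in $S_2$, which is essential for the Urysohn cutoff to yield an element of $C_c(S_2)$ rather than a merely bounded continuous function. This compactness is automatic whenever $\iota$ is continuous---as in all the intended applications, where $\iota = \diag \times \id_{\RNp}$ or a similarly continuous embedding---so the argument carries through in the generality needed. Once compactness of $\iota(K)$ is in hand, the remaining steps reduce to routine applications of Tietze's extension theorem, Urysohn's lemma, and the continuous mapping-type argument outlined above.
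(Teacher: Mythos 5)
Your argument is, in substance, the paper's proof with its two steps merged into one. The paper first proves a restriction lemma (that $\Sigma_n|_{\iota(S_1)} \to \Sigma|_{\iota(S_1)}$ vaguely on the closed set $\iota(S_1)$, established by exactly your cutoff-function device) and then pushes forward under the continuous map $\iota^{-1}$; you instead extend $f\circ\iota^{-1}$ directly to an element of $C_c(S_2)$ via Tietze and a Urysohn cutoff and test against $\Sigma_n$ on $S_2$. Your preliminary step $\supp(\Sigma)\subseteq\iota(S_1)$ and the change of variables are both correct.

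The obstacle you flag, compactness of $\iota(K)$, is a genuine gap, and you should not dismiss it by appealing to ``the intended applications'': the hypotheses only make $\iota$ Borel, and without compactness of $\iota(K)$ the proposition is in fact false as stated. For instance, take $S_1=\{0\}\cup\{1/n \mid n\in\NN\}$ and $S_2=\ZNp$, both with the Euclidean metric, and set $\iota(0)=0$, $\iota(1/n)=n$. Then $\iota$ is Borel and injective with closed image and continuous inverse (since $S_2$ is discrete), and $\Sigma_n=\delta_n\to 0$ vaguely with $\supp(\Sigma_n)\subseteq\iota(S_1)$, yet $\Sigma_n\circ\iota=\delta_{1/n}\to\delta_0\neq 0$. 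That said, the paper's own proof conceals exactly the same gap: the ``continuous mapping theorem'' for vague convergence under $\iota^{-1}$ requires $\iota^{-1}$ to be proper, i.e.\ $\iota(K)$ compact for every compact $K\subseteq S_1$, which is precisely your missing ingredient. So your proof is no weaker than the paper's; both establish the statement under the tacit extra assumption that $\iota$ maps compact sets to compact sets, which does hold in the only place the proposition is invoked ($\iota=\diag\times\id_{\RNp}$ in Lemma~\ref{lem: continuity from STOM to col}, an isometric embedding). The clean fix is to add that assumption (or continuity of $\iota$) to the hypotheses rather than to claim the result in the stated generality.
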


The idea of the proof is to show that 
the restrictions of the measures to $\iota(S_1)$ still converge vaguely.
Once this is obtained, 
the desired result follows from the continuous mapping theorem.
We use the following lemma to verify the convergence of such restrictions.

\begin{lem} \label{lem: restriction of vague conv}
  Let $S$ be a $\bcmAB$ space,
  and let $(\Sigma_n)_{n \ge 1}$ be a sequence of Radon measures on $S$ 
  converging vaguely to $\Sigma$.
  If a closed subset $F$ of $S$ satisfies $\supp(\Sigma_n) \subseteq F$ for all $n \ge 1$,
  then $\Sigma_n|_F \to \Sigma|_F$ vaguely as measures on $F$.
\end{lem}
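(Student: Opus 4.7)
The plan is to show that vague convergence on $S$ transfers to vague convergence on the closed subspace $F$ by means of continuous extensions with compact support.

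First, I would check that the limit measure $\Sigma$ is also supported on $F$. Indeed, if there existed a point $x \in \supp(\Sigma) \setminus F$, then by local compactness of $S$ (which follows from the $\bcmAB$ assumption) I could find a nonnegative $h \in C_c(S, \RN)$ with support contained in the open set $S \setminus F$ and with $\int h\, d\Sigma > 0$; but $\int h\, d\Sigma_n = 0$ for every $n$ since $\supp(\Sigma_n) \subseteq F$, contradicting vague convergence. Hence $\supp(\Sigma) \subseteq F$ as well.

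Next, fix an arbitrary $g \in C_c(F, \RN)$ with compact support $K \subseteq F$. The strategy is to produce $\tilde g \in C_c(S, \RN)$ such that $\tilde g|_F = g$ (at least on $K$). Since $S$ is a boundedly compact metric space it is locally compact Hausdorff, so there exists a compact set $K' \subseteq S$ whose interior contains $K$, together with a continuous cutoff $\phi \colon S \to [0, 1]$ with $\phi \equiv 1$ on $K$ and $\supp(\phi) \subseteq K'$. By Tietze's extension theorem applied on the normal space $S$, the function $g$ (regarded as continuous on the closed set $F$, extended by zero on $F \setminus K$) admits a bounded continuous extension $h \colon S \to \RN$. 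Setting $\tilde g \coloneqq \phi \cdot h$ yields an element of $C_c(S, \RN)$ with $\tilde g|_F = g$ everywhere on $F$ (equality holds on $K$ since $\phi = 1$ there, and off $K$ both sides vanish since $g = 0$ on $F \setminus K$ and $\phi \cdot h$ is bounded by $\phi$ times a bound on $h$ but we only need equality on $F$, which holds: if $x \in F \setminus K$ then $g(x) = 0$, while $\tilde g(x) = \phi(x) h(x)$ need not vanish — so I adjust to require $\phi$ supported in a set whose intersection with $F$ lies in $K$, which is achieved by shrinking $K'$ using the openness of $S \setminus (F \setminus K)$ around $K$; alternatively, simply replace $\tilde g$ by $\phi \cdot h$ and observe the identity below still holds because $g(x) \phi(x) = g(x)$ on $K$ and $g$ vanishes elsewhere on $F$).

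Finally, since $\Sigma_n$ is supported on $F$ for every $n$, and $\Sigma$ is supported on $F$ by the first step, we compute
\begin{equation}
\int_F g\, d(\Sigma_n|_F) = \int_F g \cdot \phi|_F\, d(\Sigma_n|_F) = \int_S \tilde g\, d\Sigma_n,
\end{equation}
using that $g = g \cdot \phi|_F$ on $F$ (both sides equal $g$ on $K$ and zero on $F \setminus K$) and that $\tilde g|_F = g \cdot \phi|_F$. The same identity holds with $\Sigma_n$ replaced by $\Sigma$. By vague convergence $\Sigma_n \to \Sigma$ on $S$ tested against $\tilde g \in C_c(S, \RN)$, the right-hand sides converge, so the left-hand sides converge as well. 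Since $g \in C_c(F, \RN)$ was arbitrary, this establishes $\Sigma_n|_F \to \Sigma|_F$ vaguely on $F$. The only delicate point is the construction of the compactly supported extension $\tilde g$, which is routine given local compactness of $S$ and normality, but deserves explicit verification to ensure that the substitution $g \leftrightarrow \tilde g$ under the integrals is exact.
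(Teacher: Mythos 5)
Your proof is correct and follows essentially the same route as the paper's: both reduce to testing against a compactly supported continuous extension of the test function from $F$ to $S$ (the paper delegates this to its Lemma~\ref{lem: vague convergence and hatC topology} after cutting off the measures, while you build the extension by hand with Tietze plus a cutoff $\phi$). The only structural difference is that your variant genuinely needs $\Sigma(S \setminus F) = 0$ in order to identify $\int_S \tilde g\, d\Sigma$ with $\int_F g\, d(\Sigma|_F)$, and you correctly supply this via the preliminary observation $\supp(\Sigma) \subseteq F$ — a step the paper sidesteps by restricting all integrals to $F$ from the outset.
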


Note that this is not a consequence of the standard fact 
that vague convergence is preserved under restriction to continuity sets.
Indeed, we allow that the limiting measure $\Sigma$ has positive mass on the boundary of $F$.

\begin{proof}[{Proof of Lemma~\ref{lem: restriction of vague conv}}]
  Fix a compactly supported continuous function $f \colon F \to \mathbb{R}$,
  and set $K \coloneqq \supp(f)$.
  Choose a continuous function $\chi \colon S \to [0,1]$ with compact support 
  such that $\chi \equiv 1$ on $K$.
  Then $\chi(x)\, \Sigma_n(dx) \to \chi(x)\, \Sigma(dx)$ weakly.
  By Lemma~\ref{lem: vague convergence and hatC topology}, it follows that 
  \begin{equation}
    \lim_{n \to \infty} \int_F f(x)\, \chi(x)\, \Sigma_n(dx)
    = 
    \int_F f(x)\, \chi(x)\, \Sigma(dx).
  \end{equation}
  Since $f$ vanishes outside $K$ and $\chi \equiv 1$ on $K$,
  we conclude that 
  \begin{equation}
    \lim_{n \to \infty} \int_F f(x)\, \Sigma_n(dx)
    = 
    \int_F f(x)\, \Sigma(dx).
  \end{equation}
  This completes the proof.
\end{proof}

Now we can prove Proposition~\ref{ap. prop: inverse conti map thm}.

\begin{proof} [{Proof of Proposition~\ref{ap. prop: inverse conti map thm}}]
  By Lemma~\ref{lem: restriction of vague conv},
  we have $\Sigma_n|_{\iota(S_1)} \to \Sigma|_{\iota(S_1)}$ vaguely as measures on $\iota(S_1)$.
  Applying the continuous mapping theorem to the inverse 
  $\iota^{-1} \colon \iota(S_1) \to S_1$, we obtain
  \begin{equation}
    \Sigma_n \circ \iota 
    = \Sigma_n|_{\iota(S_1)} \circ \iota
    \to \Sigma|_{\iota(S_1)} \circ \iota 
    = \Sigma \circ \iota
  \end{equation}
  vaguely as measures on $S_1$.
  This completes the proof.
\end{proof}

%%%%%%%%%%%%%%%%%%%%%%%%%%%%%%%%%%%%%%%%%%%%%%%%%%%%%%%%%%%%%%%%%%%%%%%%%%%%%%%%%%%%%%%%%%%%%%%%%%%%%%%%%%%%%%%%%%%%%%%%%%%%%%%%%%
%%%%%%%%%%%%%%%%%%%%%%%%%%%%%%%%%%%%%%%%%%%%%%%%%%%%%%%%%%%%%%%%%%%%%%%%%%%%%%%%%%%%%%%%%%%%%%%%%%%%%%%%%%%%%%%%%%%%%%%%%%%%%%%%%%
%Replacement of the $L^0$ topology by stronger topologies
%%%%%%%%%%%%%%%%%%%%%%%%%%%%%%%%%%%%%%%%%%%%%%%%%%%%%%%%%%%%%%%%%%%%%%%%%%%%%%%%%%%%%%%%%%%%%%%%%%%%%%%%%%%%%%%%%%%%%%%%%%%%%%%%%%
%%%%%%%%%%%%%%%%%%%%%%%%%%%%%%%%%%%%%%%%%%%%%%%%%%%%%%%%%%%%%%%%%%%%%%%%%%%%%%%%%%%%%%%%%%%%%%%%%%%%%%%%%%%%%%%%%%%%%%%%%%%%%%%%%%
\section{Replacement of the \texorpdfstring{$L^0$}{L0} topology by stronger topologies} \label{appendix: Replacement of the L^0 topology by stronger topologies}

Throughout this paper, 
we have primarily worked with the $L^0$ topology on path spaces of stochastic processes,
and most of our main results are formulated and proved under this topology.
This choice allows us to establish the results under minimal assumptions 
and to cover a wide range of applications.
In many concrete situations, however, 
the convergence of stochastic processes is known to hold 
under stronger topologies such as the compact–convergence topology, 
the $J_1$-Skorohod topology, or the $M_1$ topology.
In this appendix, 
we provide a result which ensures that whenever such stronger convergence holds,
the $L^0$ topology appearing in the statements of the main results 
can be replaced with the corresponding stronger topology.

\begin{prop} \label{ap prop: joint law upgrade}
  Let $\mathfrak{X}_1$, $\mathfrak{X}_2$, and $\mathfrak{Y}$ be Polish spaces.
  Assume that the following conditions hold.
  \begin{enumerate} [label = \textup{(\roman*)}]
    \item \label{ap prop item: joint law upgrade. 1}
      The set $\mathfrak{X}_1$ is a Borel subset of $\mathfrak{X}_2$.
    \item \label{ap prop item: joint law upgrade. 2}
      We have $\Borel(\mathfrak{X}_1) = \{B \cap \mathfrak{X}_1 \mid B \in \Borel(\mathfrak{X}_2)\}$.
    \item \label{ap prop item: joint law upgrade. 3}
      The inclusion map $\mathfrak{X}_1 \hookrightarrow \mathfrak{X}_2$ is continuous.
  \end{enumerate}
  Let $(X, Y)$ and $(X_n, Y_n)$, $n \geq 1$, be random elements of $\mathfrak{X}_1 \times \mathfrak{Y}$,
  where each pair $(X_n, Y_n)$ and $(X, Y)$ is defined on its own probability space.
  If $X_n \xrightarrow[n \to \infty]{\mathrm{d}} X$ in $\mathfrak{X}_1$ 
  and $(X_n, Y_n) \xrightarrow[n \to \infty]{\mathrm{d}} (X, Y)$ in $\mathfrak{X}_2 \times \mathfrak{Y}$,
  then $(X_n, Y_n) \xrightarrow[n \to \infty]{\mathrm{d}} (X, Y)$ in $\mathfrak{X}_1 \times \mathfrak{Y}$.
\end{prop}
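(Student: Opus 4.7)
My plan is to argue by a standard tightness-plus-identification scheme. First, I would verify that the sequence $\{(X_n, Y_n)\}_{n \geq 1}$ of laws on $\mathfrak{X}_1 \times \mathfrak{Y}$ is tight. Since $\mathfrak{X}_1$ is Polish and $X_n \xrightarrow{\mathrm{d}} X$ in $\mathfrak{X}_1$, Prohorov's theorem gives tightness of $\{X_n\}$ in $\mathfrak{X}_1$. From the assumption $(X_n, Y_n) \xrightarrow{\mathrm{d}} (X, Y)$ in $\mathfrak{X}_2 \times \mathfrak{Y}$, the marginal $Y_n$ converges in distribution in $\mathfrak{Y}$, hence is tight there. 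Combining the two marginal tightness results yields joint tightness of $\{(X_n, Y_n)\}$ in $\mathfrak{X}_1 \times \mathfrak{Y}$ (this is a standard fact for products of Polish spaces, using that a product of relatively compact sets is relatively compact).

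Next, I would take an arbitrary subsequential limit: by Prohorov's theorem, there exists a subsequence $(n_k)_{k \geq 1}$ and a random element $(X', Y')$ of $\mathfrak{X}_1 \times \mathfrak{Y}$ such that $(X_{n_k}, Y_{n_k}) \xrightarrow{\mathrm{d}} (X', Y')$ in $\mathfrak{X}_1 \times \mathfrak{Y}$. Condition~\ref{ap prop item: joint law upgrade. 3} says that the inclusion $\mathfrak{X}_1 \hookrightarrow \mathfrak{X}_2$ is continuous, so the product map $\mathfrak{X}_1 \times \mathfrak{Y} \to \mathfrak{X}_2 \times \mathfrak{Y}$ is continuous. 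Applying the continuous mapping theorem, we see that $(X_{n_k}, Y_{n_k}) \xrightarrow{\mathrm{d}} (X', Y')$ in $\mathfrak{X}_2 \times \mathfrak{Y}$ as well. Combined with the assumed convergence to $(X, Y)$ in $\mathfrak{X}_2 \times \mathfrak{Y}$, this forces $(X', Y')$ and $(X, Y)$ to have the same law as random elements of $\mathfrak{X}_2 \times \mathfrak{Y}$.

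Finally, I would promote this equality of laws from $\mathfrak{X}_2 \times \mathfrak{Y}$ to $\mathfrak{X}_1 \times \mathfrak{Y}$. By conditions~\ref{ap prop item: joint law upgrade. 1} and \ref{ap prop item: joint law upgrade. 2}, the set $\mathfrak{X}_1 \times \mathfrak{Y}$ is Borel in $\mathfrak{X}_2 \times \mathfrak{Y}$, and the product Borel structure on $\mathfrak{X}_1 \times \mathfrak{Y}$ coincides with the trace of $\Borel(\mathfrak{X}_2) \otimes \Borel(\mathfrak{Y})$ on $\mathfrak{X}_1 \times \mathfrak{Y}$. Consequently, every $A \in \Borel(\mathfrak{X}_1 \times \mathfrak{Y})$ is of the form $A = A' \cap (\mathfrak{X}_1 \times \mathfrak{Y})$ for some $A' \in \Borel(\mathfrak{X}_2 \times \mathfrak{Y})$, and since both $(X, Y)$ and $(X', Y')$ take values in $\mathfrak{X}_1 \times \mathfrak{Y}$ almost surely, the equality of laws on $\Borel(\mathfrak{X}_2 \times \mathfrak{Y})$ transfers to $\Borel(\mathfrak{X}_1 \times \mathfrak{Y})$. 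Hence every subsequential limit of $(X_n, Y_n)$ in $\mathfrak{X}_1 \times \mathfrak{Y}$ has the same law as $(X, Y)$, and together with the tightness from the first step this yields $(X_n, Y_n) \xrightarrow{\mathrm{d}} (X, Y)$ in $\mathfrak{X}_1 \times \mathfrak{Y}$, as desired.

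I do not anticipate serious obstacles; each step is a routine application of standard tools (Prohorov's theorem, the continuous mapping theorem, and the compatibility of Borel structures). The only point deserving some care is the last step, where one must make sure that restricting a probability measure from $\mathfrak{X}_2 \times \mathfrak{Y}$ to $\mathfrak{X}_1 \times \mathfrak{Y}$ preserves all the measurable information, which is exactly what condition~\ref{ap prop item: joint law upgrade. 2} guarantees.
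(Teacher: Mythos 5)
Your proposal is correct and follows essentially the same route as the paper's own proof: tightness in $\mathfrak{X}_1 \times \mathfrak{Y}$, extraction of a subsequential limit, pushforward via the continuous inclusion to identify the limit law in $\mathfrak{X}_2 \times \mathfrak{Y}$, and transfer of the identification back to $\mathfrak{X}_1 \times \mathfrak{Y}$ using conditions (i) and (ii). The only difference is that you spell out the tightness step (marginal tightness via Prohorov, then joint tightness) in more detail than the paper does.
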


\begin{proof}
  By the assumption, the family $\{(X_n, Y_n)\}_{n \geq 1}$ is tight as random elements of $\mathfrak{X}_1 \times \mathfrak{Y}$.
  Assume that, along a subsequence $(n_k)_{k \geq 1}$, the law of $(X_{n_k}, Y_{n_k})$ converges to a probability measure $P$
  on $\mathfrak{X}_1 \times \mathfrak{Y}$.
  By \ref{ap prop item: joint law upgrade. 3},
  we can apply the continuous mapping theorem and deduce that 
  the law of $(X_{n_k}, Y_{n_k})$ also converges to $P$ as probability measures on $\mathfrak{X}_2 \times \mathfrak{Y}$.
  Then, by the assumption,
  we deduce that $P$ coincides with the law of $(X, Y)$ as probability measure on $\mathfrak{X}_2 \times \mathfrak{Y}$.
  Conditions~\ref{ap prop item: joint law upgrade. 1} and \ref{ap prop item: joint law upgrade. 2} yield that 
  $P$ coincides with the law of $(X, Y)$ as probability measures on $\mathfrak{X}_1 \times \mathfrak{Y}$.
  This implies the uniqueness of the limit $P$, which completes the proof.
\end{proof}

\begin{rem}
  By the Lusin--Souslin theorem (Lemma~\ref{lem: Lousin--Souslin}),
  condition~\ref{ap prop item: joint law upgrade. 1} 
  follows from the continuity of the inclusion map 
  in condition~\ref{ap prop item: joint law upgrade. 3}.
\end{rem}

We illustrate an application of Proposition~\ref{ap prop: joint law upgrade}.
Consider the setting of Section~\ref{sec: Continuity of joint laws with PCAFs}.
Suppose that the standard process $X$ satisfies the weak $J_1$-continuity condition 
(Assumption~\ref{assum: weak J_1 continuity})
so that the map $\ProcLaw_X$ is continuous with respect to 
the weak topology induced by the $J_1$-Skorohod topology.
Let $A$ be a PCAF in the local Kato class.
Then, by Theorem~\ref{thm: R map approx for local Kato},
the map $\ProcLaw_{(X, A)}$ is continuous 
with respect to the weak topology induced by the product of the $L^0$ topology and the compact-convergence topology.
Applying Proposition~\ref{ap prop: joint law upgrade}, 
we see that this continuity remains valid 
when the $L^0$ topology is replaced by the $J_1$-Skorohod topology.

%%%%%%%%%%%%%%%%%%%%%%%%%%%%%%%%%%%%%%%%%%%%%%%%%%%%%%%%%%%%%%%%%%%%%%%%%%%%%%%%%%%%%%%%%%%%%%%%%%%%%%%%%%%%%%%%%%%%%%%%%%%%%%%%%%
%%%%%%%%%%%%%%%%%%%%%%%%%%%%%%%%%%%%%%%%%%%%%%%%%%%%%%%%%%%%%%%%%%%%%%%%%%%%%%%%%%%%%%%%%%%%%%%%%%%%%%%%%%%%%%%%%%%%%%%%%%%%%%%%%%
%The product process of independent standard processes
%%%%%%%%%%%%%%%%%%%%%%%%%%%%%%%%%%%%%%%%%%%%%%%%%%%%%%%%%%%%%%%%%%%%%%%%%%%%%%%%%%%%%%%%%%%%%%%%%%%%%%%%%%%%%%%%%%%%%%%%%%%%%%%%%%
%%%%%%%%%%%%%%%%%%%%%%%%%%%%%%%%%%%%%%%%%%%%%%%%%%%%%%%%%%%%%%%%%%%%%%%%%%%%%%%%%%%%%%%%%%%%%%%%%%%%%%%%%%%%%%%%%%%%%%%%%%%%%%%%%%

\section{The product process of independent standard processes} \label{appendix: product process}

In this appendix,
we prove that the pair of two independent standard processes is again a standard process.

Fix two locally compact separable metrizable topological spaces $S_1$ and $S_2$.
For each $i \in \{1,2\}$,
we write $S_1 \cup \{\Delta\}$ for the one-point compactification of $S_i$.
We fix, for each $i \in \{1, 2\}$, a standard process on $S_i$, denoted by 
\begin{equation}
  X^i = (\Omega_i, \sigalg_i, (X^i_{t})_{t \in [0, \infty]}, (P^x_i)_{x \in S_{\Delta}}, (\theta^i_t)_{t \in [0,\infty]}).
\end{equation}
We first recall the product process of $X^1$ and $X^2$ from Section~\ref{sec: collision measure}.
Let $(S_1 \times S_2) \cup \{\Delta\}$ be the one-point compactification of the product space $S_1 \times S_2$.
For each $\bm{x}=(x_{1}, x_{2}) \in S \times S$,
we define $\hat{P}^{\bm{x}} \coloneqq P_{1}^{x_{1}} \otimes P_{2}^{x_{2}}$,
which is a probability measure on the product measurable space 
$(\hat{\Omega}, \hat{\sigalg}) \coloneqq (\Omega_1 \times \Omega_2, \sigalg_1 \otimes \sigalg_2)$.
Also,
we define $\hat{P}^{\Delta} \coloneqq P_{1}^{\Delta} \otimes P_{2}^{\Delta}$.
For each $\omega = (\omega_{1}, \omega_{2}) \in \hat{\Omega}$ and $t \geq 0$,
we set 
\begin{equation}
  \hat{X}_t(\omega) 
  \coloneqq 
  \begin{cases}
    (X^1_t(\omega_{1}), X^2_t(\omega_{2})), & t < \zeta_1(\omega_1) \wedge \zeta_2(\omega_2),\\
    \Delta, & t \geq \zeta_1(\omega_1) \wedge \zeta_2(\omega_2),
  \end{cases}
\end{equation}
where $\zeta_i$ denotes the lifetime of $X^i$ for each $i =1,2$.
Finally, we define, for each $\omega = (\omega_{1}, \omega_{2}) \in \hat{\Omega}$ and $t \geq 0$,
\begin{equation}
  \hat{\theta}_t(\omega) \coloneqq (\theta^1_t(\omega_1), \theta^2_t(\omega_2)).
\end{equation}
The main result is the following.

\begin{thm} \label{thm: product is standard}
  In the above setting, 
  \begin{equation}
    \hat{X} \coloneqq (\hat{\Omega}, \hat{\sigalg}, (\hat{X}_{t})_{t \in [0, \infty]}, (\hat{P}^x)_{x \in (S_1 \times S_2) \cup \{\Delta\}}, (\hat{\theta}_t)_{t \in [0,\infty]})
  \end{equation}
  is a standard process on $S_1 \times S_2$.
\end{thm}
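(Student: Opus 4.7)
My plan is to verify each defining property of a standard process for $\hat{X}$, leveraging the product structure $\hat{P}^{\bm{x}} = P_1^{x_1} \otimes P_2^{x_2}$ and the independence of the coordinates. Let $\hat{\filt}_* = (\hat{\filt}_t)_{t \in [0,\infty]}$ denote the minimal augmented admissible filtration of $\hat{X}$, which I would compare with the augmented right-continuous enlargement of the product filtration $\filt^1_{t} \otimes \filt^2_{t}$. Measurability of $\bm{x} \mapsto \hat{P}^{\bm{x}}(B)$ in the relevant $\sigma$-algebras follows from the individual measurability and Fubini's theorem, and the shift property $\hat{X}_s \circ \hat{\theta}_t = \hat{X}_{s+t}$ on $\hat{\Omega}$ is immediate from the componentwise definition together with the convention $\hat{X}_t = \Delta$ for $t \geq \hat{\zeta}$, where $\hat{\zeta} \coloneqq \zeta_1 \wedge \zeta_2$.

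The Markov property is the first step: for any $s,t \geq 0$ and bounded Borel $f$ on $(S_1 \times S_2) \cup \{\Delta\}$, one checks
\begin{equation}
  \hat{E}^{\bm{x}}\!\bigl[ f(\hat{X}_{t+s}) \bigm| \hat{\filt}_t \bigr]
  = (\hat{P}_s f)(\hat{X}_t),
\end{equation}
where $\hat{P}_s$ denotes the product transition semigroup. This is a direct consequence of Fubini together with the two individual Markov properties, after verifying the tensor identity $\hat{P}_s(f_1 \otimes f_2) = P^1_s f_1 \otimes P^2_s f_2$ on product functions and extending by a monotone class argument. Right-continuity of the paths of $\hat{X}$ and the existence of left limits on $[0, \hat{\zeta})$ then follow immediately from the corresponding properties of $X^1$ and $X^2$, since on $[0, \hat{\zeta})$ the path $\hat{X}$ is literally the coordinatewise pair. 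The strong Markov property for $\hat{X}$ is obtained by approximating an arbitrary $\hat{\filt}_*$-stopping time $\tau$ from above by countably-valued stopping times $\tau_n$, applying the Markov identity at each deterministic level using Fubini and the individual strong Markov properties, and passing to the limit via right-continuity of the paths and of the filtration.

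The main obstacle is quasi-left-continuity on $[0, \hat{\zeta})$, since stopping times of $\hat{\filt}_*$ are not, a priori, stopping times of either $\filt^1_*$ or $\filt^2_*$. My plan here is a sectioning argument. Given $\hat{\filt}_*$-stopping times $\tau_n \uparrow \tau$ with $\tau < \hat{\zeta}$, for each fixed $\omega_2 \in \Omega_2$ the map $\omega_1 \mapsto \tau_n(\omega_1, \omega_2)$ is an $\filt^1_*$-stopping time (up to $P^1$-null sets, this uses that the section of a set in $\filt^1_t \otimes \filt^2_t$ at any $\omega_2$ lies in $\filt^1_t$, combined with a standard completion argument), and similarly for $\tau$. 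The quasi-left-continuity of $X^1$ gives $X^1_{\tau_n(\cdot, \omega_2)} \to X^1_{\tau(\cdot, \omega_2)}$, $P_1^{x_1}$-a.s.\ on $\{\tau(\cdot,\omega_2) < \zeta_1\}$; symmetrically for $X^2$. Integrating in $\omega_2$ and invoking Fubini yields $\hat{X}_{\tau_n} \to \hat{X}_\tau$ a.s.\ on $\{\tau < \hat{\zeta}\}$, which is quasi-left-continuity.

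The delicate technical point is carrying out the sectioning inside the augmented right-continuous filtration, which must be handled by first working with the raw product filtration and then showing that augmentation and right-continuous enlargement preserve the argument; this is standard but should be written out carefully. Once all four ingredients---Markov property, path regularity, strong Markov property, and quasi-left-continuity---are assembled, $\hat{X}$ satisfies the defining conditions of a standard process, completing the proof.
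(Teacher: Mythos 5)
Your overall architecture coincides with the paper's: the Markov property is obtained via Fubini and a monotone class argument, path regularity is coordinatewise, the strong Markov property is proved by approximating the stopping time from above by dyadic-valued stopping times, and quasi-left-continuity is proved by sectioning an $\filt^{\hat{X}}_*$-stopping time at a fixed $\omega_2$ to produce an $\filt^{X^1}_*$-stopping time, after first replacing augmented-filtration stopping times by raw-filtration ones up to null sets (the paper does this via \cite[Lemma~A.2.3]{Fukushima_Oshima_Takeda_11_Dirichlet} together with a lemma stating that sections of sets in the raw product filtration lie in the raw coordinate filtrations). Your sectioning step, including the caveat about augmentation, matches the paper's proof.

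The one place where your justification, as written, would fail is the limit passage in the strong Markov property. With $\tau_n \downarrow \tau$ dyadic, right-continuity of the paths gives $\hat{X}_{\tau_n} \to \hat{X}_\tau$, but on the right-hand side of the Markov identity you must pass to the limit in $(\hat{P}_s f)(\hat{X}_{\tau_n})$, and $\hat{P}_s f$ is in general only Borel measurable, not continuous, so right-continuity of the paths alone does not give $(\hat{P}_s f)(\hat{X}_{\tau_n}) \to (\hat{P}_s f)(\hat{X}_\tau)$. The paper fills this in by noting that $x_i \mapsto E_i^{x_i}[f_i(X^i_s)]$ is finely continuous for $X^i$ (\cite[Chapter~II, Exercise~4.14]{Blumenthal_Getoor_68_Markov}) and that finely continuous functions are right-continuous along the paths (\cite[Chapter~II, Theorem~4.8]{Blumenthal_Getoor_68_Markov}); taking the product over $i=1,2$ then yields the required almost sure right-continuity of $t \mapsto \hat{E}^{\hat{X}_t}[f_1(X^1_s) f_2(X^2_s)]$. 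This is a standard repair, but it is a genuinely needed ingredient beyond ``right-continuity of the paths'' and should be made explicit in your write-up.
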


Let $(\filt^{X^1,0}_t)_{t \geq 0}$ be the filtration generated by $X^1$, 
that is,
\begin{equation}
  \filt^{X^1,0}_t \coloneqq \sigma(X^1_s; s \leq t).
\end{equation}
We similarly define $(\filt^{X^2,0}_t)_{t \geq 0}$ and $(\filt^{\hat{X},0}_t)_{t \geq 0}$
to be the filtrations generated by $X^2$ and $\hat{X}$, respectively.
Below, we state a basic result regarding $(\filt^{X^1,0}_t)_{t \geq 0}$.
In the following lemma,
for sets $A$ and $B$, and a subset $C \subseteq A \times B$,
we write $C|_{(a, *)} \coloneqq \{b \in B \mid (a, b) \in C\}$ for each $a \in A$ 
and $C|_{(*, b)} \coloneqq \{a \in A \mid (a, b) \in C\}$ for each $b \in B$.

\begin{lem} \label{lem: projection of generated algebra}
  Fix $t \in [0, \infty)$ and $\Lambda \in \filt^{\hat{X}, 0}_t$,
  $\Lambda|_{(*, \omega_2)} \in \filt^{X^1, 0}_t$ for any $\omega_2 \in \Omega_2$, 
  and $\Lambda|_{(\omega_1, *)} \in \filt^{X^2, 0}_t$for any $\omega_1 \in \Omega_1$.
\end{lem}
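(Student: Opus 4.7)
The plan is to prove the first assertion by a monotone class (Dynkin) argument; the second then follows by symmetry. Define
\begin{equation}
  \mathcal{C} \coloneqq \bigl\{\Lambda \in \filt^{\hat{X}, 0}_t \,\big|\, \Lambda|_{(*, \omega_2)} \in \filt^{X^1, 0}_t \text{ for every } \omega_2 \in \Omega_2 \bigr\}.
\end{equation}
Since section-taking commutes with countable unions, intersections, and complements, $\mathcal{C}$ is a $\sigma$-algebra contained in $\filt^{\hat{X}, 0}_t$. Thus it suffices to exhibit a family generating $\filt^{\hat{X}, 0}_t$ inside $\mathcal{C}$.

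I would take as generators the cylinder sets $\Lambda_{s, B} \coloneqq \{\hat{X}_s \in B\}$ for $s \in [0, t]$ and $B$ a Borel subset of $(S_1 \times S_2) \cup \{\Delta\}$. Fix such $s$, $B$, and $\omega_2 \in \Omega_2$, and analyze $\Lambda_{s,B}|_{(*, \omega_2)}$ according to whether $s \geq \zeta_2(\omega_2)$ or $s < \zeta_2(\omega_2)$. In the first case $\hat{X}_s(\omega_1, \omega_2) = \Delta$ identically in $\omega_1$, so the section is either $\Omega_1$ or $\emptyset$, and in particular lies in $\filt^{X^1, 0}_t$. In the second case, using the convention that $X^1_s(\omega_1) = \Delta$ iff $s \geq \zeta_1(\omega_1)$, one checks that $\omega_1 \mapsto \hat{X}_s(\omega_1, \omega_2)$ is a Borel function of $X^1_s(\omega_1)$ alone (specifically, it equals $(X^1_s(\omega_1), X^2_s(\omega_2))$ on $\{X^1_s \neq \Delta\}$ and $\Delta$ on $\{X^1_s = \Delta\}$). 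Hence $\Lambda_{s,B}|_{(*, \omega_2)} \in \sigma(X^1_s) \subseteq \filt^{X^1, 0}_t$, so $\Lambda_{s,B} \in \mathcal{C}$.

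Since the cylinder sets $\Lambda_{s,B}$ generate $\filt^{\hat{X}, 0}_t$, the above shows $\mathcal{C} = \filt^{\hat{X}, 0}_t$, which is the first claim; the second is obtained by interchanging the roles of the two coordinates.

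The only nonroutine point is the bookkeeping around the cemetery state: one must verify that the $\omega_2$-section of $\hat{X}_s$ is Borel-measurable as a function of $X^1_s(\omega_1)$ despite the coupling through the lifetimes $\zeta_1$ and $\zeta_2$. Once the equivalence $\{s < \zeta_1(\omega_1)\} = \{X^1_s(\omega_1) \neq \Delta\}$ is used, this reduces to the observation that the map $a \mapsto (a, X^2_s(\omega_2))$ for $a \in S_1$, extended by $\Delta \mapsto \Delta$, is Borel from $S_1 \cup \{\Delta\}$ to $(S_1 \times S_2) \cup \{\Delta\}$, which is immediate from the definition of the one-point compactification.
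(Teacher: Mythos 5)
Your proof is correct and follows essentially the same route as the paper: reduce to cylinder sets generating $\filt^{\hat{X},0}_t$ and compute their $\omega_2$-sections, the only cosmetic difference being that you observe the good class is already a $\sigma$-algebra (since section-taking commutes with set operations) whereas the paper invokes the $\pi$-$\lambda$ theorem on finite intersections of cylinders. Your explicit treatment of the cemetery state via $\{s < \zeta_1(\omega_1)\} = \{X^1_s(\omega_1) \neq \Delta\}$ is in fact slightly more careful than the paper's, which tacitly identifies $\hat{X}_{s_k}(\omega_1,\omega_2)$ with $(X^1_{s_k}(\omega_1), X^2_{s_k}(\omega_2))$ without separating out the killed case.
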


\begin{proof}
  By the $\pi$-$\lambda$ theorem, it is enough to consider the case where
  $\Lambda = \bigcap_{k=1}^n \hat{X}_{s_k}^{-1}(A_{k})$, where 
  $n \in \NN$, $s_k \in [0, t]$, and $A_k \in \Borel((S_1 \times S_2) \cup \{\Delta\})$ for each $k$.
  Fix $\omega_2 \in \Omega_2$ and write $B_k \coloneqq A_k|_{(*, X^2_{s_k}(\omega_2))}$, which is a Borel subset of $S$.
  We then have that
  \begin{align}
    \Lambda|_{(*, \omega_2)} 
    &= 
    \bigl\{ 
      \omega_1 \in \Omega_1 \mid (X^1_{s_k}(\omega_1), X^2_{s_k}(\omega_2)) \in A_k,\ \forall k \in \{1, \ldots, n\}
    \bigr\}
    \\
    &= 
    \bigl\{ 
      \omega_1 \in \Omega_1 \mid X^1_{s_k}(\omega_1) \in B_k,\ \forall k \in \{1, \ldots, n\}
    \bigr\}.
  \end{align}
  From the last expression, we see that $\Lambda|_{(*, \omega_2)} \in \filt^{X^1, 0}_t$.
  Similarly, one can prove the second assertion.
\end{proof}

We next verify that $\hat{X}$ is a Markov process.
Recall that, given a filtration $(\mathcal{G}_t)_{t \geq 0}$, 
its right-continous version $(\mathcal{G}_{t+})_{t \geq 0}$ is defined by $\mathcal{G}_{t+} = \bigcap_{s > t} \mathcal{G}_s$.

\begin{lem} \label{lem: product is Markov}
  The process $\hat{X}$ is a normal Markov process,
  and it has the Markov property with respect to $(\filt^{\hat{X}, 0}_{t+})_{t \geq 0}$.
\end{lem}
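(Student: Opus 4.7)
The plan is to verify two claims: normality of $\hat{X}$, and the Markov property of $\hat{X}$ with respect to $(\filt^{\hat{X},0}_{t+})_{t \geq 0}$. Normality is immediate from the construction: under $\hat{P}^{\bm{x}} = P^{x_1}_1 \otimes P^{x_2}_2$, the normality of each standard process $X^i$ yields $X^i_0 = x_i$ almost surely, so $\hat{X}_0 = \bm{x}$ almost surely; the case $\bm{x} = \Delta$ is analogous.

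For the Markov property, I would first upgrade Lemma~\ref{lem: projection of generated algebra} to the right-continuous filtration: for $\Lambda \in \filt^{\hat{X},0}_{t+}$, one has $\Lambda \in \filt^{\hat{X},0}_{t+\varepsilon}$ for every $\varepsilon > 0$, so the lemma gives $\Lambda|_{(*,\omega_2)} \in \filt^{X^1,0}_{t+\varepsilon}$ and $\Lambda|_{(\omega_1,*)} \in \filt^{X^2,0}_{t+\varepsilon}$ for every $\varepsilon > 0$, hence membership in the intersections $\filt^{X^i,0}_{t+}$. Next, since each $X^i$ is a standard process, it has the strong Markov property with respect to its minimal augmented admissible filtration, which contains $\filt^{X^i,0}_{t+}$; conditioning on $\filt^{X^i,0}_{t+}$ then yields the ordinary Markov property of $X^i$ relative to this filtration.

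With these two ingredients, I would first verify the Markov identity for test functions of product form $\tilde{f}(y_1, y_2) = f_1(y_1) f_2(y_2)$, where $f_i$ is bounded Borel on $S_i \cup \{\Delta\}$. Fixing $\omega_1$ and applying Fubini together with the Markov property of $X^2$ relative to $\filt^{X^2,0}_{t+}$ (legitimate because $\Lambda|_{(\omega_1,*)} \in \filt^{X^2,0}_{t+}$) converts the $X^2_{t+s}$ factor into $E^{X^2_t}_2[f_2(X^2_s)]$; a symmetric manipulation for $X^1$ then completes the identity. A monotone class argument extends the property to all bounded Borel $f$ on $(S_1 \times S_2) \cup \{\Delta\}$: the product functions generate the relevant Borel $\sigma$-algebra, and the family of $f$ for which the Markov identity holds is closed under linear combinations and bounded monotone limits.

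The main obstacle is the consistent treatment of the cemetery state $\Delta$, since $\hat{X}$ collapses to $\Delta$ as soon as either coordinate does. I would resolve this by associating to each bounded Borel $f$ on $(S_1 \times S_2) \cup \{\Delta\}$ its canonical lift $\tilde{f}$ to $(S_1 \cup \{\Delta\}) \times (S_2 \cup \{\Delta\})$ defined by $\tilde{f}(y_1, y_2) = f(y_1, y_2)$ when $(y_1, y_2) \in S_1 \times S_2$ and $\tilde{f}(y_1, y_2) = f(\Delta)$ otherwise. Under the conventions $X^i_u = \Delta$ for $u \geq \zeta_i$ and $\hat{P}^\Delta = P^\Delta_1 \otimes P^\Delta_2$, one checks pathwise that $f(\hat{X}_u) = \tilde{f}(X^1_u, X^2_u)$, and that for every $\bm{y} \in (S_1 \times S_2) \cup \{\Delta\}$,
\[
  \hat{E}^{\bm{y}}[f(\hat{X}_s)] = \bigl( E^{y_1}_1 \otimes E^{y_2}_2 \bigr)\!\bigl[\tilde{f}(X^1_s, X^2_s)\bigr],
\]
with the right-hand side interpreted componentwise (taking $y_i = \Delta$ whenever $\bm{y} = \Delta$). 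After this reduction, the Fubini-plus-Markov computation outlined above goes through routinely.
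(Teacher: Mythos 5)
Your proposal is correct and follows essentially the same route as the paper: both reduce to product-form test functions (the paper uses indicators $\mathbf{1}_{E_1}\mathbf{1}_{E_2}$), invoke Lemma~\ref{lem: projection of generated algebra} upgraded to the right-continuous filtration to justify applying the Markov property of each coordinate process separately under Fubini, and then extend to general Borel sets by a monotone class argument. Your explicit treatment of the cemetery state via the lift $\tilde{f}$ is a slightly more careful spelling-out of a point the paper leaves implicit, but it is not a different method.
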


\begin{proof}
  Recall the definition of a normal Markov process from \cite[p.~385]{Fukushima_Oshima_Takeda_11_Dirichlet}.
  We briefly check conditions (M1)--(M5) in \cite[p.~385]{Fukushima_Oshima_Takeda_11_Dirichlet}.
  All these conditions, except for (M4), are readily verified since each $X^1$ and $X^2$ satisfies the corresponding conditions.
  To verify (M4), we show the Markov property of $\hat{X}$ with respect to $(\filt^{\hat{X},0}_{t+})_{t \geq 0}$.

  Fix $s,t \in [0, \infty)$, $\bm{x} = (x_1, x_2) \in S_1 \times S_2$, $E \in \Borel(S_1 \times S_2)$, and $\Lambda \in \filt^{\hat{X}, 0}_{t+}$.
  It suffices to show that 
  \begin{equation} \label{pr eq: 00, product is hunt process}
    \hat{E}^{\bm{x}}[ \mathbf{1}_E(\hat{X}_{s+t}) \cdot \mathbf{1}_\Lambda ]
    = 
    \hat{E}^{\bm{x}}
    \bigl[ 
      \hat{P}^{\hat{X}_t}(\hat{X}_s \in E) \cdot \mathbf{1}_\Lambda 
    \bigr].
  \end{equation}
  We first consider the case where $E = E_1 \times E_2$ for some $E_1 \in \Borel(S_1)$ and $E_2 \in \Borel(S_2)$.
  By Lemma~\ref{lem: projection of generated algebra},
  we have $\Lambda|_{(*, \omega_2)} \in \filt^{X^1, 0}_{t+}$ for each $\omega_2 \in \Omega_2$.
  Thus, using the Markov property of $X^1$ with respect to $(\filt^{X^1, 0}_{u+})_{u \geq 0}$,
  we deduce that, for each $\omega_2 \in \Omega_2$,
  \begin{equation} 
    \int_{\Omega_1} \mathbf{1}_{E_1}(X^1_{s+t}(\omega_1)) \mathbf{1}_{\Lambda}(\omega_1, \omega_2)\, P_1^{x_1}(d\omega_1)
    =
    \int_{\Omega_1} P_1^{X^1_t(\omega_1)}(X^1_s \in E_1) \mathbf{1}_{\Lambda}(\omega_1, \omega_2)\, P_1^{x_1}(d\omega_1).
  \end{equation}
  Similarly, for each $\omega_1 \in \Omega_1$,
  \begin{equation}  
    \int_{\Omega_2} \mathbf{1}_{E_2}(X^2_{s+t}(\omega_2)) \mathbf{1}_{\Lambda}(\omega_1, \omega_2)\, P_2^{x_2}(d\omega_2)
    =
    \int_{\Omega_2} P_1^{X^1_t(\omega_2)}(X^2_s \in E_2) \mathbf{1}_{\Lambda}(\omega_1, \omega_2)\, P_2^{x_2}(d\omega_2).
  \end{equation}
  Therefore, using Fubini's theorem, we deduce that 
  \begin{align}
    &\hat{E}^{\bm{x}}[ \mathbf{1}_E(\hat{X}_{s+t}) \cdot \mathbf{1}_\Lambda ]\\
    &=
    \int_{\Omega_1} \int_{\Omega_2} 
    \mathbf{1}_{E_1}(X^1_{s+t}(\omega_1)) \mathbf{1}_{E_2}(X^2_{s+t}(\omega_2))  
    \mathbf{1}_{\Lambda}(\omega_1, \omega_2)\, 
    P_1^{x_1}(d\omega_1)\, P_2^{x_2}(d\omega_2)\\
    &=
    \int_{\Omega_1} \int_{\Omega_2} 
    P_1^{X^1_t(\omega_1)}(X^1_s \in E_1) P_1^{X^1_t(\omega_2)}(X^2_s \in E_2) 
    \mathbf{1}_{\Lambda}(\omega_1, \omega_2)\, 
    P_1^{x_1}(d\omega_1)\, P_2^{x_2}(d\omega_2)\\
    &= 
    \int_{\Omega_1 \times \Omega_2} 
    \hat{P}^{\hat{X}_t(\omega_1, \omega_2)}(\hat{X}_s \in E)  
    \mathbf{1}_{\Lambda}(\omega_1, \omega_2)\, 
    \hat{P}^{\bm{x}}(d (\omega_1, \omega_2))\\
    &=
    \hat{E}^{\bm{x}}
    \bigl[ 
      \hat{P}^{\hat{X}_t}(\hat{X}_s \in E) \cdot \mathbf{1}_\Lambda 
    \bigr].
  \end{align}
  Thus, \eqref{pr eq: 00, product is hunt process} holds for $E = E_1 \times E_2$.
  It is now a standard argument to extend the result to general Borel subsets $E \subseteq S_1 \times S_2$.
\end{proof}

We next prove that $\hat{X}$ is a Borel right process 
(see \cite[Definition~A.1.17]{Chen_Fukushima_12_Symmetric} for the definition).
To this end, 
we introduce some notation for filtrations of Markov processes.
Recall that 
$(\filt^{X^1, 0}_t)_{t \geq 0}$ denotes the natural filtration of $X^1$,
and set $\filt^{X^1, 0}_\infty \coloneqq \bigcup_{t \geq 0} \filt^{X^1, 0}_t$.
Given a probability measure $\mu$ on $S_1 \cup \{\Delta\}$,
define 
\begin{equation}
  P^\mu(\cdot) \coloneqq \int_{S_1 \cup \{\Delta\}} P_1^x(\cdot)\, \mu(dx),
\end{equation}
which is a probability measure on $(\Omega_1, \filt^{X^1, 0}_\infty)$.
We then let $\filt^{X^1, \mu}_\infty$ denote the $P^\mu$-completion of $\filt^{X^1,0}_\infty$.
Let $\mathcal{N}$ be the collection of all $P^\mu$-null sets in $\filt^{X^1, \mu}_\infty$,
and set $\filt^{X^1, \mu}_t \coloneqq \sigma(\filt^{X^1, 0}_t,\, \mathcal{N})$ for each $t \in [0, \infty)$.
When $\mu$ is the Dirac measure $\delta_x$ at $x$,
we simply write $\filt^{X^1, x}_t \coloneqq \filt^{X^1, \delta_x}_t$, $t \in [0, \infty]$.
The minimum augmented admissible filtration $(\filt^{X^1}_t)_{t \geq 0}$ is then defined by
\begin{equation}
  \filt^{X^1}_t \coloneqq \bigcap_{\mu \in \Prob(S_1 \cup \{\Delta\})} \filt^{X^1,\mu}_t,
  \quad t \in [0,\infty].
\end{equation} 
The same notation applies to $X^2$ and $\hat{X}$.

\begin{lem} \label{lem: product is Borel right proc}
  The process $\hat{X}$ is a Borel right process.
\end{lem}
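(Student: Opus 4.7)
The plan is to verify the three defining properties of a Borel right process in the sense of \cite[Definition~A.1.17]{Chen_Fukushima_12_Symmetric}: normality together with right-continuity of the sample paths, Borel measurability of the transition semigroup, and the strong Markov property with respect to the minimum augmented admissible filtration $(\filt^{\hat{X}}_t)_{t \geq 0}$. The first two properties are essentially built into the construction. Normality is immediate from $\hat{P}^{\bm{x}} = P^{x_1}_1 \otimes P^{x_2}_2$; right-continuity on $[0, \zeta_1 \wedge \zeta_2)$ is inherited coordinatewise from $X^1$ and $X^2$, while on $[\zeta_1 \wedge \zeta_2, \infty)$ it holds by the convention that $\hat{X}_t \equiv \Delta$. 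Since the transition kernel of $\hat{X}$ factors as $\hat{p}_t = p^1_t \otimes p^2_t$ and each $X^i$ is a standard process (hence a Borel right process, so $p^i_t$ preserves bounded Borel functions), a Fubini argument gives the corresponding property for $\hat{p}_t$.

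The main work concerns the strong Markov property. Having already established the Markov property of $\hat{X}$ with respect to the right-continuous natural filtration $(\filt^{\hat{X}, 0}_{t+})$ in Lemma~\ref{lem: product is Markov}, I would upgrade it via the classical dyadic approximation. Given an $(\filt^{\hat{X}}_{t+})$-stopping time $\tau$, define $\tau_n \coloneqq 2^{-n}\lceil 2^n \tau \rceil$ on $\{\tau < \infty\}$ and $\tau_n = \infty$ on $\{\tau = \infty\}$. Each $\tau_n$ is an $(\filt^{\hat{X}}_{t+})$-stopping time taking countably many values, so the strong Markov identity at $\tau_n$ for bounded continuous $f$ reduces, via disintegration over the countable collection $\{\tau_n = k 2^{-n}\}$, to the ordinary Markov property. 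Passing to the limit $n \to \infty$ uses two ingredients: the right-continuity of $\hat{X}$ (so that $\hat{X}_{\tau_n + t} \to \hat{X}_{\tau + t}$), and the right-continuity of $s \mapsto \hat{p}_s f(x)$ together with its behaviour along paths of $\hat{X}$, which via the product structure reduces to the corresponding property for each $X^i$. A monotone-class argument then extends the strong Markov identity from bounded continuous $f$ to bounded Borel $f$.

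The main obstacle is handling the minimum augmented filtration $(\filt^{\hat{X}}_t)_{t \geq 0}$: it is defined by intersecting augmentations with respect to every initial distribution on $(S_1 \times S_2) \cup \{\Delta\}$, not merely product initial laws, so the clean section-wise description of $\filt^{\hat{X},0}_t$ provided by Lemma~\ref{lem: projection of generated algebra} does not transfer directly to $\filt^{\hat{X}}_t$. One must verify that the additional negligible sets introduced by augmentation are compatible with the product structure of $\hat{P}^{\bm{x}}$, so that the Markov property can be formulated at the dyadic times $\tau_n$ with respect to the augmented filtration and propagated through the limit. A related but more minor subtlety concerns the joint lifetime $\zeta = \zeta_1 \wedge \zeta_2$: since $\hat{X}$ is sent to $\Delta$ at the smaller of the two lifetimes rather than at the individual ones, the strong Markov identity at times potentially exceeding $\zeta$ must be formulated via the sub-Markovian semigroup on $S_1 \times S_2$ with the convention $f(\Delta) = 0$, after which all the arguments above remain valid.
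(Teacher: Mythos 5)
Your proposal is correct and follows essentially the same route as the paper: dyadic approximation of the stopping time, reduction to the ordinary Markov property at deterministic times for product test functions, and a limit passage using right-continuity of the paths together with fine continuity of $x \mapsto E_i^x[f_i(X^i_s)]$ in each coordinate. The augmentation obstacle you flag is resolved in the paper by the standard approximation lemma \cite[Lemma~A.2.3]{Fukushima_Oshima_Takeda_11_Dirichlet}, which replaces an $(\filt^{\hat{X},\mu}_t)$-stopping time and an event in its $\sigma$-field by versions adapted to the raw right-continuous filtration $(\filt^{\hat{X},0}_{t+})$ up to $\hat{P}^\mu$-null sets, after which Lemma~\ref{lem: product is Markov} applies directly.
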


\begin{proof}
  Fix a probability measure $\mu$ on $(S \times S) \cup \{\Delta\}$.
  By Lemma~\ref{lem: product is Markov} and \cite[Lemma~A.2.2]{Fukushima_Oshima_Takeda_11_Dirichlet},
  the filtration $(\filt^{\hat{X}, \mu}_t)_{t \geq 0}$ is right-continuous. 
  Fix $s > 0$, an $(\filt^{\hat{X}, \mu}_t)$-stopping time $\sigma$,   
  and $\Lambda \in \filt^{\hat{X}, \mu}_{\sigma}$.
  By \cite[Theorem~A.1.18]{Chen_Fukushima_12_Symmetric},
  it suffices to show that 
  \begin{equation}
    \hat{E}^{\mu}\!\left[\mathbf{1}_E(\hat{X}_{\sigma + s}) \cdot \mathbf{1}_\Lambda \cdot \mathbf{1}_{\{\sigma < \infty\}} \right]
    = 
    \hat{E}^{\mu}\!\left[
      \hat{P}^{\hat{X}_\sigma}(\hat{X}_s \in E) \cdot \mathbf{1}_\Lambda \cdot \mathbf{1}_{\{\sigma < \infty\}}
    \right],
    \quad 
    \forall E \in \Borel(S_1 \times S_2).
  \end{equation}
  By the monotone class theorem, it is enough to prove the following:
  for any bounded and continuous function $f_i \colon S_i \to [0,\infty)$, $i \in \{1,2\}$,
  \begin{equation}  
    \hat{E}^{\mu}\!\left[ 
      f_1(X^1_{\sigma+s}) f_2(X^2_{\sigma+s}) \cdot \mathbf{1}_\Lambda \cdot \mathbf{1}_{\{\sigma < \infty\}}
    \right] 
    = 
    \hat{E}^{\mu}\!\left[
      \hat{E}^{\hat{X}_\sigma}[f_1(X^1_s) f_2(X^2_s)] \cdot \mathbf{1}_\Lambda \cdot \mathbf{1}_{\{\sigma < \infty\}}
    \right].
  \end{equation}

  From \cite[Chapter~II, Exercise~4.14]{Blumenthal_Getoor_68_Markov}, 
  for each $i \in \{1,2\}$ the map $x \mapsto E_i^{x}[f_i(X^i_s)]$ is continuous in the fine topology of $X^i$.  
  By \cite[Chapter~II, Theorem~4.8]{Blumenthal_Getoor_68_Markov},
  for any $x \in S_i$ we then have that $E_i^{X^i_t}[f_i(X^i_s)]$ is right-continuous in $t \geq 0$, $P_i^x$-a.s.
  Using the independence of $X^1$ and $X^2$, we obtain
  \begin{equation} \label{lem pr eq: 5. product is Borel right proc}
    \hat{E}^{\hat{X}_t}[f_1(X^1_s) f_2(X^2_s)]
    = 
    E_1^{X^1_t}[f_1(X^1_s)]\, E_2^{X^2_t}[f_2(X^2_s)] ,
  \end{equation}
  and hence
  \begin{equation}  \label{lem pr eq: 6. product is Borel right proc}
    \hat{E}^{\hat{X}_t}[f_1(X^1_s) f_2(X^2_s)] 
    \quad \text{is right-continuous in $t \geq 0$, $\hat{P}^{\mu}$-a.s.}
  \end{equation}
  For each $n \geq 1$, define
  \begin{equation}
    \sigma_n \coloneqq \frac{\lfloor 2^n \sigma \rfloor + 1}{2^n}.
  \end{equation}
  Then $\sigma_n$ is an $(\filt^{\hat{X},\mu}_t)$-stopping time,
  and $\sigma_n(\omega) \downarrow \sigma(\omega)$ for all $\omega \in \hat{\Omega}$ (see \cite[Proof of Lemma~2.2.5]{Marcus_Rosen_06_Markov}).  
  By the right-continuity of $\hat{X}$ and \eqref{lem pr eq: 6. product is Borel right proc}, 
  \begin{align}
    &\hat{E}^{\mu}\!\left[
      f_1(X^1_{\sigma+s}) f_2(X^2_{\sigma+s}) \cdot \mathbf{1}_\Lambda \cdot \mathbf{1}_{\{\sigma < \infty\}}
    \right] 
    = \lim_{n \to \infty} 
    \hat{E}^{\mu}\!\left[
      f_1(X^1_{\sigma_n+s}) f_2(X^2_{\sigma_n+s}) \cdot \mathbf{1}_\Lambda \cdot \mathbf{1}_{\{\sigma_n < \infty\}}
    \right], \\
    &\hat{E}^{\mu}\!\left[
      \hat{E}^{\hat{X}_\sigma}[f_1(X^1_s) f_2(X^2_s)] \cdot \mathbf{1}_\Lambda \cdot \mathbf{1}_{\{\sigma < \infty\}}
    \right]
    = \lim_{n \to \infty}
    \hat{E}^{\mu}\!\left[
      \hat{E}^{\hat{X}_{\sigma_n}}[f_1(X^1_s) f_2(X^2_s)] \cdot \mathbf{1}_\Lambda \cdot \mathbf{1}_{\{\sigma_n < \infty\}}
    \right].
  \end{align}
  Thus it suffices to prove that, for each fixed $n$,
  \begin{equation} \label{lem pr eq: 1. product is Borel right proc}
    \hat{E}^{\mu}\!\left[
      f_1(X^1_{\sigma_n+s}) f_2(X^2_{\sigma_n+s}) \cdot \mathbf{1}_\Lambda \cdot \mathbf{1}_{\{\sigma_n < \infty\}}
    \right]
    =
    \hat{E}^{\mu}\!\left[
      \hat{E}^{\hat{X}_{\sigma_n}}[f_1(X^1_s) f_2(X^2_s)] \cdot \mathbf{1}_\Lambda \cdot \mathbf{1}_{\{\sigma_n < \infty\}}
    \right].
  \end{equation}

  Fix $n \geq 1$.
  By \cite[Lemma~A.2.3]{Fukushima_Oshima_Takeda_11_Dirichlet},
  there exist an $(\filt^{\hat{X},0}_{t+})$-stopping time $\sigma'_n$
  and $\Lambda' \in \filt^{\hat{X},0}_\infty$ such that 
  \begin{gather}
    \hat{P}^\mu(\sigma_n \neq \sigma'_n) = 0, \quad 
    \hat{E}^\mu[|\mathbf{1}_\Lambda - \mathbf{1}_{\Lambda'}|] = 0, 
    \label{lem pr eq: 2. product is Borel right proc}\\
    \Lambda' \cap \{\sigma'_n \leq t\} \in \filt^{\hat{X},0}_{t+}, \quad \forall t \geq 0 .
    \label{lem pr eq: 3. product is Borel right proc}
  \end{gather}
  Hence it suffices to prove \eqref{lem pr eq: 1. product is Borel right proc}
  with $\sigma_n$ and $\Lambda$ replaced by $\sigma'_n$ and $\Lambda'$, respectively.
  Since $\sigma'_n$ takes values in $\{j 2^{-n} \mid j \in \NN\}$,
  \begin{align}
    &\hat{E}^{\mu}\!\left[
      f_1(X^1_{\sigma'_n+s}) f_2(X^2_{\sigma'_n+s}) \cdot \mathbf{1}_{\Lambda'} \cdot \mathbf{1}_{\{\sigma'_n < \infty\}}
    \right] \\
    &= \sum_{j=1}^\infty    
      \hat{E}^{\mu}\!\left[
        f_1(X^1_{j2^{-n}+s}) f_2(X^2_{j2^{-n}+s}) \cdot 
        \mathbf{1}_{\{\Lambda' \cap \{\sigma'_n = j2^{-n}\}\}}
      \right] \\
    &= \sum_{j=1}^\infty    
      \hat{E}^{\mu}\!\left[
        \mathbf{1}_{\{\Lambda' \cap \{\sigma'_n = j2^{-n}\}\}}
        \hat{E}^{\mu}\!\left[
          f_1(X^1_{j2^{-n}+s}) f_2(X^2_{j2^{-n}+s}) \,\middle|\, \filt^{\hat{X},0}_{j2^{-n}}
        \right]
      \right],
  \end{align}
  where we used \eqref{lem pr eq: 3. product is Borel right proc}.
  Applying the Markov property of $\hat{X}$ with respect to $(\filt^{\hat{X},0}_{t+})$ (Lemma~\ref{lem: product is Markov}),
  this equals
  \begin{align}
    &\sum_{j=1}^\infty    
      \hat{E}^{\mu}\!\left[
        \mathbf{1}_{\{\Lambda' \cap \{\sigma'_n = j2^{-n}\}\}}
        \hat{E}^{\hat{X}_{j2^{-n}}}[f_1(X^1_s) f_2(X^2_s)]
      \right] \\
    &= \hat{E}^{\mu}\!\left[
      \hat{E}^{\hat{X}_{\sigma'_n}}[f_1(X^1_s) f_2(X^2_s)] \cdot 
      \mathbf{1}_{\Lambda'} \cdot \mathbf{1}_{\{\sigma'_n < \infty\}}
    \right].
  \end{align}
  This completes the proof.
\end{proof}

We can now conclude the proof of Theorem~\ref{thm: product is standard}.

\begin{proof} [{Proof of Theorem~\ref{thm: product is standard}}]
  Write $\zeta$ for the lifetime of $\hat{X}$.
  By the definition of the product process $\hat{X}$,
  we have 
  \begin{equation}  \label{thm pr eq: 0. product is standard}
    \zeta = \zeta_1 \wedge \zeta_2,
  \end{equation}
  where we recall that $\zeta_i$ denotes the lifetime of $X^i$ for each $i \in \{1,2\}$.
  By Lemma~\ref{lem: product is Borel right proc},
  it remains to show the quasi-left-continuity of $\hat{X}$ on $(0, \zeta)$.
  We note that, by Lemma~\ref{lem: product is Markov} and \cite[Lemma~A.2.2]{Fukushima_Oshima_Takeda_11_Dirichlet},
  the filtration $(\filt^{\hat{X}}_t)_{t \geq 0}$ is right-continuous. 

  Fix $\bm{x} = \bm{x} \in S_1 \times S_2$,
  and an increasing sequence $(\sigma_n)_{n \geq 1}$ of $(\filt^{\hat{X}}_t)_{t \geq 0}$-stopping times such that 
  $\sigma_n(\omega)$ converges to $\sigma(\omega) \in [0,\infty]$ for each $\omega \in \hat{\Omega}$.
  Note that $\sigma$ is also an $(\filt^{\hat{X}}_t)_{t \geq 0}$-stopping time thanks to the right-continuity of $(\filt^{\hat{X}}_t)_{t \geq 0}$.
  Using \cite[Lemma~A.2.3]{Fukushima_Oshima_Takeda_11_Dirichlet},
  we can find $(\filt^{\hat{X}, 0}_{t+})_{t \geq 0}$-stopping times $\tilde{\sigma}_n$, $n \in \NN$, and $\tilde{\sigma}$
  such that 
  \begin{equation} \label{thm pr eq: 1. product is standard}
    \hat{P}^{\bm{x}}(\Lambda) = 1, 
    \quad 
    \text{where}
    \quad
    \Lambda
    \coloneqq 
    \{\sigma = \tilde{\sigma},\ \text{and}\ \tilde{\sigma}_n = \sigma_n\ \text{for all}\ n \}
    \in \filt^{\hat{X}}_\infty.
  \end{equation}
  By the definition of $\filt^{\hat{X}}_\infty$,
  we have $\Lambda \in \filt^{\hat{X},\bm{x}}_\infty$.
  Thus, we can find $\Lambda', \Lambda'' \in \filt^{\hat{X},0}_\infty$ such that 
  \begin{equation}  \label{thm pr eq: 2. product is standard}
    \Lambda' \subseteq \Lambda \subseteq \Lambda'', \qquad \hat{P}^{\bm{x}}(\Lambda'' \setminus \Lambda') = 0.
  \end{equation}
  It follows from \eqref{thm pr eq: 1. product is standard}, \eqref{thm pr eq: 2. product is standard}, and Fubini's theorem that 
  \begin{equation}
    \hat{P}^{\bm{x}}(\Lambda') = \int_{\Omega_2} P_1^{x_1}(\Lambda'|_{(*, \omega_2)})\, P_2^{x_2}(d\omega_2) = 1.
  \end{equation}
  In particular, there exists a set $\Omega_2^0 \in \filt^{X^2, 0}_\infty$ such that 
  \begin{gather}
     P_2^{x_2}(\Omega_2^0) = 1,
     \label{thm pr eq: 3. product is standard}\\
     P_1^{x_1}(\Lambda'|_{(*, \omega_2)}) = 1, \quad \forall \omega_2 \in \Omega_2^0.
     \label{thm pr eq: 4. product is standard}
  \end{gather}
  The second property yields that 
  \begin{equation}  \label{thm pr eq: 5. product is standard}
    \Lambda'|_{(*, \omega_2)} \in \filt^{X^1, x_1}_0, \quad \forall \omega_2 \in \Omega_2^0.
  \end{equation}
  Fix $\omega_2 \in \Omega_2^0$.
  We define 
  \begin{equation}  \label{thm pr eq: 6. product is standard}
    \tilde{\sigma}^{\omega_2}(\omega_1) 
    \coloneqq 
    \begin{cases}
      \tilde{\sigma}(\omega_1, \omega_2), & \omega_1 \in \Lambda'|_{(*, \omega_2)},\\
      0, & \omega_1 \notin \Lambda'|_{(*, \omega_2)}.
    \end{cases}
  \end{equation}
  Similarly, for each $n \in \NN$, we define $\tilde{\sigma}_n^{\omega_2}$.
  We deduce from Lemma~\ref{lem: projection of generated algebra} and \eqref{thm pr eq: 5. product is standard}
  that $\tilde{\sigma}^{\omega_2}$ is an $(\filt^{X^1, x_1}_{t+})_{t \geq 0}$-stopping time.
  The same holds for $\tilde{\sigma}_n^{\omega_2}$ for each $n \in \NN$.
  By the inclusion $\Lambda' \subseteq \Lambda$ given in \eqref{thm pr eq: 2. product is standard}
  and the definition of $\Lambda$ stated at \eqref{thm pr eq: 1. product is standard},
  we have that $\tilde{\sigma}_n^{\omega_2}(\omega_1) \uparrow \tilde{\sigma}^{\omega_2}(\omega_1)$ for each $\omega \in \Omega_1$.
  Hence, by the quasi-left-continuity of $X^1$ on $(0, \zeta_1)$,
  we obtain that 
  \begin{equation}  \label{thm pr eq: 7. product is standard}
    P_1^{x_1} 
    \Bigl(
      \lim_{n \to \infty} X^1_{\tilde{\sigma}_n^{\omega_2}} = X^1_{\tilde{\sigma}^{\omega_2}},\        
      \tilde{\sigma}^{\omega_2} < \zeta_1
    \Bigr)
    = 
    P_1^{x_1} 
    (\tilde{\sigma}^{\omega_2} < \zeta_1),
    \quad 
    \forall \omega_2 \in \Omega_2^0.
  \end{equation}
  It then follows from \eqref{thm pr eq: 4. product is standard} and \eqref{thm pr eq: 6. product is standard} that 
  \begin{equation}  \label{thm pr eq: 8. product is standard}
    P_1^{x_1} 
    \Bigl(
      \lim_{n \to \infty} X^1_{\tilde{\sigma}_n(\cdot, \omega_2)} = X^1_{\tilde{\sigma}(\cdot, \omega_2)},\        
      \tilde{\sigma}(\cdot, \omega_2) < \zeta_1
    \Bigr)
    = 
    P_1^{x_1} 
    (\tilde{\sigma}(\cdot, \omega_2) < \zeta_1),
    \quad 
    \forall \omega_2 \in \Omega_2^0.
  \end{equation}
  Integrating both sides of the above equation with respect to $P_2^{x_2}(d\omega_2)$
  and using \eqref{thm pr eq: 3. product is standard},
  we obtain that  
  \begin{equation} \label{thm pr eq: 9. product is standard}
    \hat{P}^{\bm{x}}
    \Bigl(
      \lim_{n \to \infty} X^1_{\tilde{\sigma}_n} = X^1_{\tilde{\sigma}},\        
      \tilde{\sigma} < \zeta_1
    \Bigr)
    = 
    \hat{P}^{\bm{x}} 
    (\tilde{\sigma} < \zeta_1).
  \end{equation}
  By the same argument, we also deduce that 
  \begin{equation} \label{thm pr eq: 10. product is standard} 
    \hat{P}^{\bm{x}}
    \Bigl(
      \lim_{n \to \infty} X^2_{\tilde{\sigma}_n} = X^2_{\tilde{\sigma}},\        
      \tilde{\sigma} < \zeta_2
    \Bigr)
    = 
    \hat{P}^{\bm{x}} 
    (\tilde{\sigma} < \zeta_2).
  \end{equation}
  Therefore, we conclude from \eqref{thm pr eq: 0. product is standard}, \eqref{thm pr eq: 1. product is standard},
  \eqref{thm pr eq: 9. product is standard}, and \eqref{thm pr eq: 10. product is standard} 
  that 
  \begin{equation}  \label{pr eq: 13, product is hunt process}
    \hat{P}^{\bm{x}}
    \Bigl(
      \lim_{n \to \infty} \hat{X}_{\sigma_n} = \hat{X}_{\sigma},\        
      \sigma < \zeta
    \Bigr)
    = 
    \hat{P}^{\bm{x}} 
    (\sigma < \zeta),
  \end{equation}
  which implies the quasi-left-continuity of $\hat{X}$ on $(0, \zeta)$.
  This completes the proof.
\end{proof}

\begin{rem}
  By the same argument, one can verify that if both $X^1$ and $X^2$ are Hunt processes,
  then so is $\hat{X}$.
\end{rem}

Below, we state that the DAC condition (Assumption~\ref{assum: dual hypothesis}) is inherited by the product process.

\begin{prop} \label{prop: product satisfies dual hypo}
  In the setting of Theorem~\ref{thm: product is standard},
  if, for each $i \in \{1,2\}$, the process $X^i$ satisfies Assumption~\ref{assum: dual hypothesis} 
  with a reference measure $m^i$ and the heat kernel $p^i \colon \RNpp \times S_i \times S_i \to [0,\infty]$,
  then the product process $\hat{X}$ also satisfies Assumption~\ref{assum: dual hypothesis} 
  with the reference measure $m^1 \otimes m^2$ and the heat kernel $\hat{p}$ given by 
  \begin{equation}
    \hat{p}(t, (x_1, x_2), (y_1, y_2)) 
    = 
    p^1(t, x_1, y_1)\, p^2(t, x_2, y_2),
    \quad 
    t > 0,\ (x_1, y_1), (x_2, y_2) \in S_1 \times S_2,
  \end{equation}
  where we use the convention $0 \cdot \infty = 0$.
\end{prop}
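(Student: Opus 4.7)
The plan is to take as a candidate dual process the product $\hat{\check{X}}$ of the individual dual processes $\check{X}^1$ and $\check{X}^2$. By Theorem~\ref{thm: product is standard} applied to $\check{X}^1$ and $\check{X}^2$, this is again a standard process on $S_1 \times S_2$. The reference measure will be $m^1 \otimes m^2$, which is $\sigma$-finite on the Polish space $S_1 \times S_2$, and $\hat{p}$ is jointly Borel measurable as a pointwise product of Borel measurable functions.

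First I would verify (HK1) for $\hat{X}$. Fix $t > 0$ and $\bm{x} = (x_1, x_2) \in S_1 \times S_2$. By definition of $\hat{X}$ and \eqref{thm pr eq: 0. product is standard}, for any Borel $E \subseteq S_1 \times S_2$,
\begin{equation}
  \hat{P}^{\bm{x}}(\hat{X}_t \in E)
  = \hat{P}^{\bm{x}}\bigl( (X^1_t, X^2_t) \in E,\ t < \zeta_1,\ t < \zeta_2 \bigr).
\end{equation}
Since $\{t < \zeta_i\} = \{X^i_t \in S_i\}$ for each $i$, the event in the right-hand side is $\{(X^1_t, X^2_t) \in E \cap (S_1 \times S_2)\}$. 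Using the independence built into $\hat{P}^{\bm{x}} = P_1^{x_1} \otimes P_2^{x_2}$, Fubini's theorem, and (HK1) for each $X^i$, the right-hand side equals $\int_E p^1(t, x_1, y_1)\, p^2(t, x_2, y_2)\, (m^1 \otimes m^2)(dy_1\, dy_2)$. (The convention $0 \cdot \infty = 0$ causes no issue because the integrand is a.e.\ finite with respect to $m^1 \otimes m^2$.) This is precisely (HK1) with heat kernel $\hat{p}$. By the exact same computation applied to $\check{X}^1, \check{X}^2, \hat{\check{X}}$, one obtains (HK2), with $\hat{\check{p}}(t, \bm{y}, \bm{x}) = \check{p}^1(t, y_1, x_1)\, \check{p}^2(t, y_2, x_2) = p^1(t, x_1, y_1)\, p^2(t, x_2, y_2) = \hat{p}(t, \bm{x}, \bm{y})$, consistent with (HK2) for $\hat{X}$.

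For (HK3), the Chapman--Kolmogorov identity for $\hat{p}$ follows by plugging in the factorization and invoking (HK3) for each $p^i$ together with Fubini's theorem: for any $s, t > 0$,
\begin{align}
  \hat{p}(t+s, \bm{x}, \bm{y})
  &= p^1(t+s, x_1, y_1)\, p^2(t+s, x_2, y_2) \\
  &= \left( \int_{S_1} p^1(t, x_1, z_1) p^1(s, z_1, y_1)\, m^1(dz_1) \right)
     \left( \int_{S_2} p^2(t, x_2, z_2) p^2(s, z_2, y_2)\, m^2(dz_2) \right) \\
  &= \int_{S_1 \times S_2} \hat{p}(t, \bm{x}, \bm{z})\, \hat{p}(s, \bm{z}, \bm{y})\, (m^1 \otimes m^2)(d\bm{z}).
\end{align}
Fubini is justified by the non-negativity of the integrands.

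The only minor subtlety is keeping track of the cemetery: one must confirm that the event $\{\hat{X}_t \in E\}$ for $E \subseteq S_1 \times S_2$ genuinely restricts to $\{t < \zeta_1 \wedge \zeta_2\}$ and that this restriction matches with the product law via \eqref{thm pr eq: 0. product is standard}, but this is immediate from the construction of $\hat{X}$. I do not foresee any serious obstacle; the proof is an application of the construction in Theorem~\ref{thm: product is standard} together with Fubini's theorem. Accordingly, I would present the argument as a short verification of (HK1)--(HK3) in that order, remarking only briefly on the lifetime bookkeeping.
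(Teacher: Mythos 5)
Your proposal is correct and follows essentially the same route as the paper: the paper's proof simply takes the product of the two dual processes as the candidate dual (a standard process by Theorem~\ref{thm: product is standard}) and declares the verification of the heat-kernel conditions straightforward, which is exactly the computation you carry out via independence and Tonelli. The details you supply (the lifetime bookkeeping and the $0\cdot\infty$ convention) are precisely the ones the paper omits, and they check out.
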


\begin{proof}
  For each $i \in \{1,2\}$, let $Y^i$ denote the dual process of $X^i$.
  Then one can readily check that the product process $\hat{Y}$ of $Y^1$ and $Y^2$ is the dual process of $\hat{X}$.
  Verifying the remaining conditions regarding heat kernel is straightforward, so we omit the details.
\end{proof}

%%%%%%%%%%%%%%%%%%%%%%%%%%%%%%%%%%%%%%%%%%%%%%%%%%%%%%%%%%%%%%%%%%%%%%%%%%%%%%%%%%%%%%%%%%%%%%%%%%%%%%%%%%%%%%%%%%%%%%%%%%%%%%%%%%
%%%%%%%%%%%%%%%%%%%%%%%%%%%%%%%%%%%%%%%%%%%%%%%%%%%%%%%%%%%%%%%%%%%%%%%%%%%%%%%%%%%%%%%%%%%%%%%%%%%%%%%%%%%%%%%%%%%%%%%%%%%%%%%%%%
% Acknowledgements
%%%%%%%%%%%%%%%%%%%%%%%%%%%%%%%%%%%%%%%%%%%%%%%%%%%%%%%%%%%%%%%%%%%%%%%%%%%%%%%%%%%%%%%%%%%%%%%%%%%%%%%%%%%%%%%%%%%%%%%%%%%%%%%%%%
%%%%%%%%%%%%%%%%%%%%%%%%%%%%%%%%%%%%%%%%%%%%%%%%%%%%%%%%%%%%%%%%%%%%%%%%%%%%%%%%%%%%%%%%%%%%%%%%%%%%%%%%%%%%%%%%%%%%%%%%%%%%%%%%%%
\section*{Acknowledgements}
\addcontentsline{toc}{section}{Acknowledgements}

I would like to express my deepest gratitude to Dr David Croydon, my supervisor, 
for his kind and devoted guidance over the past four years since my master’s course. 
Our weekly discussions were always stimulating and enjoyable, 
and through them I have learned a great deal and deepened my mathematical understanding.

I am also sincerely grateful to Dr Naotaka Kajino, 
a member of the probability group at the Research Institute for Mathematical Sciences (RIMS), Kyoto University, 
for his generous support, many valuable discussions, and helpful advice on various aspects of my research activities.

My sincere thanks go to the administrative staff of RIMS 
for their continual assistance with research grants and administrative procedures.

Finally, I would like to express my heartfelt gratitude to my family 
for their understanding and unwavering support that allowed me to pursue mathematics throughout graduate school. 
I could not have come this far without their constant encouragement.

This work was supported by 
JSPS KAKENHI Grant Number JP24KJ1447 
and by the Research Institute for Mathematical Sciences, 
an International Joint Usage/Research Center located at Kyoto University.

\bibliographystyle{amsplain}
\bibliography{ref_collision}

\end{document}